\def\expandafter\label\expandafter#\expandafter1\expandafter{\expandafter\lowercase\expandafter{\label{#1}}}
\def\expandafter\ref\expandafter#\expandafter1\expandafter{\expandafter\lowercase\expandafter{\ref{#1}}}
\def\expandafter\eqref\expandafter#\expandafter1\expandafter{\expandafter\lowercase\expandafter{\eqref{#1}}}
\newtheorem{thm}{Theorem}[chapter]
\newtheorem{lem}[thm]{Lemma}
\newtheorem{cor}[thm]{Corollary}
\theoremstyle{definition}
\newtheorem{defn}[thm]{Definition}
\theoremstyle{remark}
\newtheorem{rem}[thm]{Remark}
\numberwithin{section}{chapter}
\numberwithin{equation}{chapter}
\def\Ld{\Lambda}
\def\bfi {\mathbf{i}}
\def\bfj{\mathbf{j}}
\def\f{\frac}
\def\vi{\varphi}
\def\({\left(}
\def \){ \right)}
\def\Bl{\Bigl}
\def\Br{\Bigr}
\def\ee{{\textnormal{e}}}
\def\tr{{\triangle}}
\def\Ga{\Gamma}
\def\ta{\theta}
\def\al{{\alpha}}
\def\da{{\delta}}
\def\sa{{\sigma}}
\def\b{{\beta}}
\def\ga{{\gamma}}
\def\k{{\kappa}}
\def\t{{\theta}}
\def\va{\varepsilon}
\def\ib{{\mathbf i}}
\def\kb{{\mathbf k}}
\def\nb{{\mathbf n}}
\def\EEE{{\mathcal E}}
\def\NN{{\mathbb N}}
\def\RR{{\mathbb R}}
\def\SS{{\mathbb S}}
\def\ZZ{{\mathbb Z}}
\def\proj{\operatorname{proj}}
\def\sph{\mathbb{S}^{d-1}}
\def\Og{\Omega}
\def\al{\alpha}
\newcommand{\wt}{\widetilde}
\newcommand{\wh}{\widehat}
\def\sub{\substack}
\def\osc{\operatorname{osc}}
\def\p{\partial}
\def\ld{\lambda}
\def\bl{\bigl}
\def\br{\bigr}
\def\og{\omega}
\def\Ld{\Lambda}
\newcommand{\R}{{\mathbb{R}}}
\newcommand{\eps}{\varepsilon}
\newcommand{\diam}{{\rm diam}}
\newcommand{\dist}{{\rm dist}}
\newcommand{\spn}{{\rm span}}
\def\be{\begin{equation}}
\def\ee{\end{equation}}
\begin{document}
\frontmatter
\title{Polynomial approximation on $C^2$-domains}


\author{Feng Dai}
\address{Department of Mathematical and Statistical Sciences\\
	University of Alberta\\ Edmonton, AB, T6G 2G1, Canada}
\email{fdai@ualberta.ca}

\author{Andriy Prymak}
\address{Department of Mathematics, University of Manitoba, Winnipeg, MB, R3T2N2, Canada}
\email{prymak@gmail.com}

\thanks{
	The first author was supported by  NSERC of Canada
	grant RGPIN 04702-15, and the second author  was supported by NSERC of Canada grant RGPIN 04863-15.
}

\date{}
\subjclass[2010]{Primary 41A10, 41A17, 41A27, 41A63;\\Secondary 41A55, 65D32}
\keywords{$C^2$-domains, polynomial approximation, modulus of smoothness, Jackson inequality, inverse theorem, tangential Bernstein inequality, Marcinkiewicz-Zygmund inequality, positive cubature formula 
 }

\begin{abstract}
	
	 We  introduce appropriate computable moduli of smoothness
	to 
	characterize
	the rate of best  approximation  by multivariate
	polynomials on a connected and  compact $C^2$-domain $\Omega\subset \mathbb{R}^d$. This  new modulus of smoothness is defined via finite differences along the directions of coordinate axes, and along a number of  tangential directions from the boundary.  With this modulus, we  prove  both the  direct Jackson inequality    and the corresponding  inverse for   best polynomial approximation in  $L_p(\Omega)$. The Jackson inequality is established for the full range of $0<p\leq \infty$,  while  its proof  relies on  (i)  Whitney type  estimates with constants depending only on certain  parameters; and  (ii)  highly localized  polynomial partitions of  unity on a $C^2$-domain. Both (i) and (ii)  are of independent interest. In particular, our   Whitney type estimate (i) is established for directional moduli of smoothness rather than the ordinary  moduli of smoothness, and is applicable to functions on  a very wide class of  domains  (not necessarily convex). It  generalizes  an earlier result of Dekel and Leviatan on     Whitney type   estimates   on  convex domains.
	The inverse inequality is established for $1\leq p\leq \infty$, and its  proof relies on a   new  Bernstein type inequality associated with the tangential derivatives on the boundary of $\Omega$.  Such an  inequality also allows us to establish the Marcinkiewicz-Zygmund type inequalities, positive cubature formula, as well as the inverse theorem for Ivanov's  average moduli of smoothness on general compact $C^2$-domains.
	
\end{abstract}

\maketitle

\setcounter{page}{4}
\tableofcontents

\mainmatter
%
%
%


\chapter{Introduction}

One of the primary questions of approximation theory is to characterize the
rate of approximation by a given system in terms of some  modulus
of  smoothness.   It is  well known (\cite{Ni}) that  the quality of approximation  by algebraic polynomials  increases towards the boundary of the underlying domain. As a result,  characterization of the class of functions with a prescribed rate of best  approximation by algebraic polynomials on a compact domain with nonempty boundary 
cannot be described by the ordinary moduli of smoothness. 
Several  successful  moduli of smoothness were introduced to solve  this problem in the setting of one variable. Among them the most established   ones are the  Ditzian-Totik moduli of smoothness  \cite{Di-To} and the average moduli of smoothness of K. Ivanov  \cite{Iv2}  (see the survey paper  \cite{Dit07} for details).  
Successful  attempts were also   made to solve the problem in more variables,    the  most notable  being the work  of K. Ivanov   for polynomial approximation on piecewise $C^2$-domains in $\RR^2$  \cite{Iv}, and the recent works of Totik for polynomial approximation  on general  polytopes and algebraic domains  \cite{To14, To17}; we will describe~\cite{Iv} and~\cite{To17} in more details below. The following list is not meant to be exhaustive, but we would like to also mention several other related works: results for simple polytopes by Ditzian and Totik~\cite{Di-To}*{Chapter~12}, an announcement of a characterization of approximation classes by Netrusov~\cite{Ne}, possibly reduction to local approximation by Dubiner~\cite{Du}, results for simple polytopes for $p<1$ by Ditzian~\cite{Di96}, a new modulus of smoothness and characterization of approximation classes on the unit ball by the first author and Xu~\cite{DX}, and a different alternative approach on the unit ball by Ditzian~\cite{Di14a,Di14b}.

The  main aim in this paper is to introduce a  computable  modulus of smoothness for functions on  $C^2$-domains,  for which both the direct Jackson inequality and  the corresponding  converse hold. As is well known,  the definition of  such a modulus must take into account  the boundary of the underlying domain. 

We start with some necessary notations. 
Let $L^p(\Og)$, $0<p<\infty$ denote the Lebesgue $L^p$-space defined with respect to the Lebesgue measure on a compact domain $\Og\subset \RR^d$. In the limit case we set $L^\infty(\Og)=C(\Og)$, the space of all continuous functions  on $\Og$ with the uniform norm $\|\cdot \|_\infty$.
Given  $\xi, \eta\in \RR^d$, and  $r\in\NN$,  we define 
$$ \tr_\xi^r f(\eta) :=\sum_{j=0}^r (-1)^{r+j} \binom{r} {j} f(\eta+j\xi ),$$
where we assume that $f$ is defined everywhere on the set  $\{\eta+j\xi:\  \   j=0,1,\dots, r\}$.
For a function $f: \Og\to\RR$, we also define  
\begin{equation}\label{finite-diff}
\tr_\xi^r (f, \Og, \eta):=\begin{cases}
\tr_\xi^r f(\eta),\   \ &\text{if  $[\eta, \eta+r\xi]\subset \Og$,}\\
0, &\   \  \text{otherwise},
\end{cases}
\end{equation}
where $[x,y]$ denotes the line segment connecting any two points $x,y\in\RR^d$. 
The best approximation of  $f\in L^p(\Og)$ by means of algebraic polynomials of total degree at most $n$ is defined as
$$ E_n(f)_p=E_n(f)_{L^p(G)}:=\inf\Bl\{ \|f-Q\|_p:\   \  Q\in \Pi^d_n\Br\},$$
where $\Pi^d_n$ is the space of algebraic polynomials of total degree $\le n$ on $\RR^d$.
Given a set  $E\subset \RR^d$, we denote by $|E|$ its Lebesgue measure in $\RR^d$, and   define  $\dist(\xi, E):=\inf_{\eta\in E}\|\xi-\eta\|$ for $\xi\in \RR^d$, (if $E=\emptyset$, then define $\dist(\xi, E)=1$). Here and throughout the paper,   $\|\cdot\|$ denotes the Euclidean norm. 
Finally, let $\sph\subset \RR^d$ be the unit sphere of $\RR^d$, and let $e_1=(1,0,\dots, 0), \dots, e_d =(0, \dots, 0, 1)$ denote the standard canonical basis  in $\RR^d$.

Next, we describe   the work of K. Ivanov in \cite{Iv}, where  a new modulus of smoothness was introduced  to study the best algebraic polynomial approximation
for functions of two variables on a bounded domain   with piecewise $C^2$ boundary.  To avoid technicalities,  we always   assume  that  $\Og\subset \RR^d$ is  the closure of an open,  bounded, and connected  domain in $\R^d$    with $C^2$ boundary $\Ga$.   Consider the following metric on $\Og$:
\begin{equation}\label{metric}\rho_\Og (\xi,\eta):=\|\xi-\eta\|+ \Bl|\sqrt{\dist(\xi, \Gamma)} -\sqrt{\dist(\eta, \Gamma)}\Br|,\   \  \xi, \eta\in\Og. \end{equation}
For $\xi\in\Og$ and $t>0$, set 
$ U( \xi, t):= \{\eta\in\Og:\  \  \rho_\Og(\xi,\eta) \leq t\}$.
For $0< q\leq p\leq \infty$,  the average   $(p,q)$-modulus of order $r\in\NN$ of $f\in L^p(\Og)$  was defined in  \cite{Iv} by \footnote{Both the metric $\rho_\Og$ and  the average moduli of smoothness $\tau_r(f,t)_{p,q}$  were defined in \cite{Iv} for a more general domain $\Og\subset \RR^2$. } 
\begin{equation}\label{eqn:ivanov} \tau_r (f; \da)_{p,q} :=\Bl\| w_r (f, \cdot, \da)_q \Br\|_p,\end{equation}
where    
$$ w_r (f, \xi, \da)_q : =\begin{cases}
\displaystyle \Bl( \f 1 {|U(\xi,\da)|} \int_{U(\xi,\da)} |\tr_{(\eta-\xi)/r} ^r (f,\Og,\xi)|^q \, d\eta\Br)^{\f1q},\  \  & \text{if $0<q <\infty$};\\
\sup_{\eta\in U( \xi,\da)} |\tr_{(\eta-\xi)/r}^r (f,\Og,\xi)|,\   \ &\text {if $q=\infty$}.\end{cases}$$
With this modulus,  the following result was  announced  without proof in  \cite{Iv} for a   bounded domain  in the plane   with piecewise $C^2$ boundary.
\begin{thm}\label{thm:ivanov}\cite{Iv} Let $\Og$ be the closure of a bounded open domain in the plane $\RR^2$ with piecewise $C^2$-boundary $\Ga$. 	
	If $f\in L^p(\Og)$, $1\leq q\leq p \leq \infty$ and $r\in\NN$, then
	\begin{equation}\label{1-4-0} E_n (f)_p \leq C_{r, \Og} \tau_r (f, n^{-1})_{p,q}.\end{equation}
	Conversely, if either $p=\infty$ or $\Og$ is a parallelogram or a disk and $1\leq p\leq \infty$, then
	\begin{equation}\label{1-5-0}\tau_r (f, n^{-1})_{p,q} \leq C_{r,\Og} n^{-r} \sum_{s=0}^n (s+1)^{r-1} E_s (f)_p.\end{equation}
\end{thm}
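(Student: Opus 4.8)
\emph{Proof strategy.} My plan is to prove \eqref{1-4-0} and \eqref{1-5-0} by the two complementary halves of the standard program for characterising algebraic polynomial approximation, with $\rho_\Og$ in the role of the Ditzian--Totik metric: the direct estimate reduces to a \emph{local} Whitney-type estimate on pieces of $\Og$ adapted to $\rho_\Og$, glued by a polynomial partition of unity; the converse reduces to a \emph{Bernstein-type inequality} for the $\rho_\Og$-derivative followed by a routine dyadic telescoping. Since $1\le q\le p\le\infty$, $L^p$ is a genuine normed space, and I will use freely the doubling property $\tau_r(f,\l\da)_{p,q}\le C(\l)\tau_r(f,\da)_{p,q}$ and the stability bound $\tau_r(g,\da)_{p,q}\le C_r\|g\|_p$.

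\emph{Direct inequality.} Fix $n$ and put $\da=1/n$. Since $\Ga$ is piecewise $C^2$, cover $\Og$ by $\mathcal O(n^2)$ overlapping ``caps'' $R_1,\dots,R_N$, each comparable to a $\rho_\Og$-ball $U(\xi_j,c\da)$: interior caps are Euclidean disks of radius $\sim\da$; a near-boundary cap is the pull-back, through a local $C^2$ chart straightening $\Ga$, of a rectangle $[0,c\da]\times[0,c\da^2]$ --- the one-dimensional Ditzian--Totik geometry in the normal variable --- and the finitely many corners are handled by ad hoc local charts. On each cap invoke a \emph{directional} Whitney estimate, producing $Q_j\in\Pi^2_{r-1}$ with $\|f-Q_j\|_{L^p(R_j)}$ bounded by $\|w_r(f,\cdot,c_1\da)_q\|_{L^p(R_j^*)}$ on a slightly dilated cap $R_j^*$; this is the step that genuinely needs Whitney in \emph{directional} form, because \eqref{eqn:ivanov} only involves differences along the chords $\eta-\xi$, which near $\Ga$ sweep a cone of ``mostly tangential'' directions, and one must verify that these chords realise a set of directions rich enough to run Whitney on the straightened near-rectangular cap (this generalises the Dekel--Leviatan estimate). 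Then construct a highly localised polynomial partition of unity $\psi_1,\dots,\psi_N\in\Pi^2_{Cn}$ subordinate to $\{R_j\}$: $0\le\psi_j\le1$, $\sum_j\psi_j\equiv1$ on $\Og$, and $\psi_j(x)\le C_k(1+n\,\rho_\Og(x,R_j))^{-k}$ for every $k$. Put $Q=\sum_j\psi_jQ_j\in\Pi^2_{Cn}$; writing $f-Q=\sum_j\psi_j(f-Q_j)$ and splitting each $f-Q_j$ into its parts on $R_j^*$ and on $\Og\setminus R_j^*$ (the latter controlled by the decay of $\psi_j$, a neighbouring $Q_{j'}$, and the overlap difference $Q_{j'}-Q_j$), the finite-overlap of $\{R_j^*\}$ and $\sum_j(1+n\,\rho_\Og(x,R_j))^{-k}\le C$ give $\|f-Q\|_p\le C_{r,\Og}\tau_r(f,c_1\da)_{p,q}$, and the doubling property absorbs $c_1$ into $\da=1/n$.

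\emph{Converse.} Suppose $\Og$ is a disk or a parallelogram and $1\le p\le\infty$, or $p=\infty$ and $\Ga$ piecewise $C^2$. The engine is the Bernstein-type inequality $\tau_r(P,\da)_{p,q}\le C_r(m\da)^r\|P\|_p$ for $P\in\Pi^2_m$, $0<\da\le1/m$. To prove it, write the finite difference as an iterated integral of a directional derivative, so $|\tr^r_{(\eta-\xi)/r}(P,\Og,\xi)|\le r^{-r}\sup_{\zeta\in[\xi,\eta]}|D^r_{\eta-\xi}P(\zeta)|$; for $\eta\in U(\xi,\da)$ the chord $\eta-\xi$ has length $\le\da$ and, near $\Ga$, normal component $\lesssim\da\,\dist(\xi,\Ga)^{1/2}+\da^2$, so $w_r(P,\xi,\da)_q$ is bounded by $C_r\da^r$ times a combination of $|\p_T^rP|$, $(\dist(\xi,\Ga)^{1/2}+\da)^r|\p_N^rP|$ and mixed tangential/normal derivatives at a nearby point, with $T,N$ the unit tangent and normal at the boundary. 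Taking $L^p$ in $\xi$ reduces the claim to the weighted Bernstein inequality $\|\varphi^r\p_N^rP\|_p+\|\p_T^rP\|_p\le C_rm^r\|P\|_p$, $\varphi=\dist(\cdot,\Ga)^{1/2}$ (cut off away from $\Ga$, where the interior is covered by the classical Bernstein inequality with $\varphi\sim1$, and the extra $\da^r$ from the normal term is lower order since $\da\le1/m$). For the disk this separates, by rotational symmetry, into a \emph{tangential} (trigonometric) Bernstein inequality on concentric circles and a radial one that is the one-dimensional Ditzian--Totik Bernstein inequality along diameters; for a parallelogram one passes by an affine map to $[0,1]^2$ and tensors the one-dimensional Ditzian--Totik Bernstein inequality, the finitely many corners being absorbed into the tensor analysis; for $p=\infty$ on a general piecewise $C^2$ domain one patches local half-disk models transported by $C^2$ charts, using that $\|\cdot\|_\infty$ is a local quantity. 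Granting the Bernstein inequality, \eqref{1-5-0} is the usual dyadic argument: pick near-best $P_m\in\Pi^2_m$ with $\|f-P_m\|_p\le2E_m(f)_p$; then $\tau_r(f,1/n)_{p,q}\le\tau_r(f-P_n,1/n)_{p,q}+\tau_r(P_n,1/n)_{p,q}\le C_rE_n(f)_p+\tau_r(P_n,1/n)_{p,q}$, and writing $P_n=P_1+\sum_{2^k\le n}(P_{2^k}-P_{2^{k-1}})+(P_n-P_{2^\nu})$ with $2^\nu\le n<2^{\nu+1}$, apply the Bernstein inequality to each summand (all of degree $\lesssim n$) and sum, using $E_n(f)_p\le C_rn^{-r}\sum_{s=0}^n(s+1)^{r-1}E_s(f)_p$ and $\sum_k2^{kr}E_{2^{k-1}}(f)_p\lesssim\sum_{s=0}^n(s+1)^{r-1}E_s(f)_p$.

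\emph{Expected obstacles.} The hard points are exactly those the abstract flags. On the direct side it is building a polynomial partition of unity simultaneously subordinate to the \emph{anisotropic} near-boundary caps of a (piecewise) $C^2$ domain and rapidly decaying in $\rho_\Og$, together with the directional refinement of Whitney's theorem on the straightened caps; these are what make \eqref{1-4-0} hold for all piecewise $C^2$ planar domains and all $1\le q\le p\le\infty$. On the converse side it is the weighted/tangential Bernstein inequality near a \emph{curved} boundary: the disk and the parallelogram are precisely the cases where it collapses to one-dimensional (trigonometric on circles, algebraic Ditzian--Totik on diameters or edges) Bernstein inequalities, and $p=\infty$ is the case where $L^\infty$-locality permits patching local models, so the restriction in \eqref{1-5-0} reflects the reach of these elementary reductions; lifting it for $1\le p<\infty$ on a general curved boundary is the genuinely hard problem, and is precisely what the new tangential Bernstein inequality of the paper is meant to achieve.
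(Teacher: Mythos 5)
Your program is essentially the program the paper itself follows (in generalized form), so the architecture is sound: a directional Whitney estimate on $\rho_\Og$-adapted local pieces glued by a rapidly decaying polynomial partition of unity for the direct inequality, and a Bernstein-type inequality in the $\rho_\Og$-geometry plus a dyadic telescope for the converse. Be aware, though, that the paper does \emph{not} prove Theorem~\ref{thm:ivanov} as stated: it explicitly says the result was announced \emph{without proof} in \cite{Iv}, and the paper's own Corollary~\ref{cor-4-4-0} and Theorem~\ref{thm-15-1-00} (which supersede both halves for all $C^2$ domains in $\RR^{d+1}$, with $0<p\le\infty$ for the direct part) are stated for $C^2$ domains, not piecewise $C^2$. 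The packaging also differs from yours: the paper first proves Jackson and inverse for its new modulus $\og_\Og^r$ (Theorems~\ref{Jackson-thm}, \ref{inverse-thm}), then transfers to $\tau_r$ by the comparison $\og_\Og^r(f,t;A_0)_p\le C\,\tau_r(f,c_0t)_{p,q}$ (Theorem~\ref{thm-9-1-00}) for the direct estimate, and re-runs the argument for $\tau_r$ directly in Chapter~\ref{sec:15} for the inverse. Your one-step route to $\tau_r$ would need the combinatorial device of Lemma~\ref{lem-9-1:Dec} (which converts chord differences into directional differences) built in from the start, because the Whitney estimate on a straightened near-boundary cap sees only finitely many directions, not the full cone of chords $\eta-\xi$ with $\eta\in U(\xi,c\da)$.

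Two concrete gaps. First, your $p=\infty$ argument for the converse --- ``patches local half-disk models transported by $C^2$ charts, using that $\|\cdot\|_\infty$ is a local quantity'' --- does not work as stated: a $C^2$ chart does not send polynomials to polynomials, so no Bernstein inequality can be transported along it. The correct pointwise device on a $C^2$ domain is the inscribed rolling disk of fixed radius $r_0(\Og)$ tangent to $\Ga$ at the nearest boundary point; restrict the polynomial to that disk and invoke the disk's tangential/weighted-normal Bernstein inequality there. (This is exactly the geometric input behind~\eqref{9-6} in the paper's proof of Theorem~\ref{thm-9-1}.) Second, your handling of corners for the piecewise $C^2$ case (``the finitely many corners are handled by ad hoc local charts,'' ``corners being absorbed into the tensor analysis'') is not a proof. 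Corners change the local geometry of $\rho_\Og$ --- $U(\xi,t)$ near a corner is comparable to a sector, not a half-disk --- and both the Whitney and Bernstein constants genuinely depend on the cone angle; the paper's covering lemma~\ref{lem-2-1-18}, partition of unity (Theorem~\ref{polyPartition00}), and tangential Bernstein inequality (Theorem~\ref{thm-10-1-00}) are all proved only for $C^2$ (corner-free) boundaries. Since the disk and the parallelogram have no corners in the problematic sense (the parallelogram is an affine image of a square, which has a tensor structure), the special cases of~\eqref{1-5-0} you sketch are fine in outline, but the piecewise $C^2$ half of~\eqref{1-4-0} is not actually established by either you or the paper.
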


It remained open in \cite{Iv} whether the inverse inequality~\eqref{1-5-0} holds for the full range of $1\leq p\leq \infty$  for more general  $C^2$-domains other than parallelograms and  disks.  The methods developed in this paper allow us to give a positive answer to this question.  In fact, we shall prove the Jackson inequality~\eqref{1-4-0} for $0<p\leq \infty$ and the inverse inequality~\eqref{1-5-0} for $1\leq p\leq \infty$ for all compact, connected $C^2$-domains $\Og\subset \RR^d$. Our results   apply to higher dimensional domains as well.

Finally, we describe the recent  work of Totik  in \cite{To17}, where a new modulus of smoothness using the univariate moduli of smoothness on circles and line segments was introduced to study polynomial approximation on  algebraic domains.
Let  $\Og\subset \RR^d$ be the closure of a bounded, finitely connected domain with $C^2$ boundary $\Ga$. Such a domain  is called an algebraic domain  if for   each connected  component $\Ga'$ of the boundary $\Ga$,   there  is  a  polynomial $\Phi(x_1,\dots, x_d)$ of $d$ variables  such that $\Ga'$  is one of the components of  the surface $\Phi(x_1,\dots, x_d)=0$ and $\nabla \Phi(\xi) \neq 0$ for each $\xi\in\Ga'$.   
The $r$-th order  modulus of smoothness of $f\in C(\Og)$ on a circle $\mathcal{C}\subset \Og$ is defined  as in the classical trigonometric approximation theory by  
\begin{align*}
\wh{\og}_{\mathcal{C}} ^r (f,t) :
&=\sup_{0\leq \ta \leq t} \sup_{0\leq \vi\leq 2\pi } \Bl| \sum_{k=0}^r (-1)^k \binom{r} k   f_{\mathcal{C}}\bl (\vi+(\f r2-k)\ta\br)\Br|
\end{align*}
where  we identify the circle $\mathcal{C}$ with the interval $[0, 2\pi)$ and  $f_{\mathcal{C}}$ denotes   the restriction of $f$ on $\mathcal{C}$.
Similarly, if $I=[a,b]\subset \Og$ is a line segment and $e\in\SS^{d-1}$ is the direction of $I$, then with  $\wt{d}_I (e, z): =\sqrt{\|z-a\|\|z-b\|}$,  we may define 
the modulus of smoothness of $f\in C(\Og)$   on $I$  as  
\begin{align*}
\wh{\og}_I^r (f,\da)
&=\sup_{0\leq h\leq \da} \sup_{z\in I} \Bl|\sum_{j=0}^r (-1)^j \binom{r} j f\Bl( z+(\f r2 -j) h \wt{d}_I (e, z)e\Br)\Br|,
\end{align*}
where it is agreed that the sum on the right hand side is zero  if either of the points $z\pm (r/2-j) h \wt{d}_I (e, z)$, $j=0, 1,\dots, r$ does not belong to $I$. 
Now we  define the $r$-th order  the modulus of smoothness of $f\in C(\Og)$ on the domain $\Og$ as   $$ \wh{\og}^r(f,\da)_\Og=\max\Bl( \sup_{\mathcal{C}_\rho} \wh{\og}_{\mathcal{C}_\rho}^r (f,\da), \sup_I \wh{\og}_I^r (f,\da)\Br),$$
where the supremums are taken for all circles $\mathcal{C}_\rho\subset \Og$ of some radius $\rho$  which are parallel with a coordinate plane,  and for all segments $I\subset \Og$ that are parallel with one of the coordinate axes. 
With this modulus of smoothness,  Totik \cite{To17}  proved  

\begin{thm}\label{Totik:thm}\cite{To17}
	If $\Og\subset \RR^d$ is an algebraic domain and $f\in C(\Og)$, then 
	\begin{equation} \label{1-6-0}E_n (f)_{C(\Og)} \leq C \wh{\og}^r (f, n^{-1})_{\Og},\   \ n\ge rd,\end{equation}
	and 
	\begin{equation} \label{1-7-0} \wh{\og}^r(f, n^{-1})_{\Og}  \leq C n^{-r} \sum_{k=0}^n (k+1)^{r-1} E_k (f)_{C(\Og)} \end{equation}
	with a constant $C$ independent of $f$ and $n$.
\end{thm}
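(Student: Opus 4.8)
The plan is to follow the standard two-sided scheme for characterizing best approximation, handling the converse inequality \eqref{1-7-0} by a dyadic Bernstein argument and the direct inequality \eqref{1-6-0} by a local-to-global construction, with the algebraic nature of $\Og$ entering only through its effect on the local geometry of $\Ga$.

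For \eqref{1-7-0} I would proceed as follows. Fix $f\in C(\Og)$ and for each $k$ pick $Q_k\in\Pi^d_k$ with $\|f-Q_k\|_\infty=E_k(f)_{C(\Og)}$. Taking $n=2^m$ and telescoping, $Q_n=Q_1+\sum_{j=0}^{m-1}(Q_{2^{j+1}}-Q_{2^j})$, and using $\wh{\og}^r(g,\da)_\Og\le 2^r\|g\|_\infty$ together with the subadditivity and monotonicity of $\wh{\og}^r(\cdot,\da)_\Og$, the whole estimate reduces to a bound of the form $\wh{\og}^r(P,t)_\Og\le C(Nt)^r\|P\|_\infty$ for $P\in\Pi^d_N$ and $0<t\le N^{-1}$. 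Since the modulus is assembled from one-dimensional moduli on circles $\mathcal{C}_\rho$ parallel to a coordinate plane and on segments $I$ parallel to a coordinate axis, and the restriction of $P$ to such a circle is a trigonometric polynomial of degree $\le N$ while its restriction to such a segment is an algebraic polynomial of degree $\le N$, this bound follows from two classical one-variable facts: the Bernstein inequality for trigonometric polynomials, and the Markov--Bernstein inequality with the Chebyshev weight $\sqrt{(b-z)(z-a)}$ on $[a,b]$, which is precisely matched to the factor $\wt{d}_I(e,z)=\sqrt{\|z-a\|\,\|z-b\|}$ in $\wh{\og}_I^r$. Summing the resulting series $C n^{-r}\sum_{j}2^{jr}E_{2^j}(f)_{C(\Og)}$, comparing it with $n^{-r}\sum_{k}(k+1)^{r-1}E_k(f)_{C(\Og)}$, and passing from dyadic steps to all integers yields \eqref{1-7-0}. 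Here the algebraic hypothesis plays no role; only compactness of $\Og$, guaranteeing uniformly bounded radii and segment lengths, is used.

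For \eqref{1-6-0} the argument is genuinely two-scale. One covers $\Og$ by a bounded number of overlapping pieces adapted to a boundary collar of width $\sim n^{-2}$: interior pieces staying at distance $\ge cn^{-2}$ from $\Ga$, where the weight $\wt{d}_I$ is bounded below and ordinary multivariate Jackson theory already produces a polynomial of degree $\le cn$ approximating $f$ to within $C\wh{\og}^r(f,n^{-1})_\Og$; and finitely many boundary pieces. In a boundary piece one uses that $\Ga$ is locally a nonsingular algebraic hypersurface to introduce coordinates flattening $\Ga$, so that tangential motion corresponds to moving along the circles $\mathcal{C}_\rho$ (controlled by $\wh{\og}_{\mathcal{C}}^r$) and normal motion to moving along a segment $I$ abutting $\Ga$ with weight behaving like $\sqrt{\dist(\cdot,\Ga)}$ (controlled by $\wh{\og}_I^r$ via the univariate Ditzian--Totik machinery with a Jacobi-type weight). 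Approximating first in the tangential variables by trigonometric polynomials and then in the normal variable by the one-dimensional weighted theory gives a local polynomial approximant of degree $\le cn$ on each piece, and one patches the interior and boundary approximants together by a polynomial partition of unity subordinate to the cover, chosen highly localized so the error introduced by the cutoffs stays of order $\wh{\og}^r(f,n^{-1})_\Og$.

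The main obstacle is this final patching on an algebraic (hence possibly non-convex and multiply connected) domain: one must construct a partition of unity by polynomials of degree $\le cn$ whose members are essentially supported on the pieces of a cover refined to the $n^{-2}$ collar, and simultaneously verify that multiplying the local approximants by these polynomials preserves the Jackson bound, i.e.\ that the tails of the polynomial cutoffs decay fast enough relative to the inter-piece distances, which near $\Ga$ are as small as $n^{-2}$. Equivalently, one needs a quantitative, degree-efficient construction of highly localized polynomial bumps on $\Og$ with Bernstein-type control of their behaviour in the boundary collar; in the $d$-dimensional algebraic setting this is the heart of the matter, and it is exactly the kind of ingredient that must be developed before a direct theorem of this type can be deduced.
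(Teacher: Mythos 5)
This theorem is not proved in the present paper: it is quoted from Totik~\cite{To17} purely as background, and the authors remark immediately afterward that~\eqref{1-7-0} follows easily from classical one-variable inverse inequalities (even on general $C^2$-domains) whereas the direct inequality~\eqref{1-6-0} is ``much harder'' and is the substance of~\cite{To17}. So there is no proof here to compare your attempt against. On its own merits, your sketch of~\eqref{1-7-0} is essentially complete and consistent with the authors' remark: the restriction of a degree-$N$ polynomial to a coordinate-parallel circle is a trigonometric polynomial of degree $\le N$, its restriction to a coordinate-parallel segment $I=[a,b]$ is a univariate algebraic polynomial of degree $\le N$ with $\wt{d}_I(e,z)=\sqrt{\|z-a\|\,\|z-b\|}$ precisely the Ditzian--Totik weight on $I$, so the needed Bernstein estimate $\wh{\og}^r(P,t)_\Og\le C(Nt)^r\|P\|_\infty$ for $P\in\Pi^d_N$, $t\le N^{-1}$, reduces to the trigonometric Bernstein inequality and the univariate weighted Markov--Bernstein inequality, and dyadic telescoping then delivers~\eqref{1-7-0}. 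The algebraicity of $\Og$ is indeed not used.

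Your sketch of~\eqref{1-6-0} is, as you concede, an outline and not a proof; the missing partition-of-unity construction you name is the genuine hard part. But there is also a concrete flaw in the part you treat as easy. You assert that on interior pieces staying at distance $\ge cn^{-2}$ from $\Ga$ ``the weight $\wt{d}_I$ is bounded below and ordinary multivariate Jackson theory already produces a polynomial of degree $\le cn$ approximating $f$ to within $C\wh{\og}^r(f,n^{-1})_\Og$.'' This is false: at a point $z$ at distance $d$ from $\Ga$ one has, for any coordinate-parallel segment $I\subset\Og$ through $z$, $\wt{d}_I(e,z)\lesssim\sqrt{d}$, hence $\wt{d}_I(e,z)\sim n^{-1}$ when $d\sim n^{-2}$; consequently $\wh{\og}^r(f,n^{-1})_\Og$ only controls finite differences of $f$ at step size $\sim n^{-2}$ there, and unweighted Jackson estimates at that step length produce polynomials of degree $\sim n^2$, not $\sim n$. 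The local approximation must therefore be genuinely two-scale throughout a fixed-size neighbourhood of $\Ga$ (patches of tangential extent $\sim n^{-1}$ and normal extent degenerating like $(j+1)/n^2$ with the distance to $\Ga$), not merely in an $n^{-2}$-collar. This is exactly the geometry the present paper encodes in its tiles $I_{\ib,j}$, its local Whitney estimate (Theorem~\ref{THM-WT-OMEGA}) and its matching partition of unity (Theorem~\ref{strips-0}); Totik's proof in~\cite{To17} has to resolve the same two-scale issue by its own means. Without such a construction your interior--boundary split does not close the argument.
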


From the classical inverse inequalities in one variable, and 
the way the moduli of smoothness  $\wh{\og}^r(f,t)_\Og$ are defined,  one can easily show that the inverse inequality~\eqref {1-7-0} in fact  holds on more general $C^2$-domains $\Og$. On the other hand,  however,    it is much harder to show   the direct Jackson inequality~\eqref{1-6-0} even on algebraic domains (see \cite{To17}). In fact,   it remains open whether the Jackson inequality~\eqref{1-6-0}
holds on more general $C^2$-domains  as well. Moreover, it  is unclear   how to extend the results of Theorem~\ref{Totik:thm} to  $L^p$ spaces with $p<\infty$.

In this paper, we will introduce a new computable modulus of smoothness on a connected, compact $C^2$-domain $\Og\subset \RR^d$. Our new modulus of smoothness is defined via finite differences along the directions of coordinate axes, and along finitely many tangential directions on the boundary.  With this modulus, we shall prove  the  direct Jackson inequality   for the full range of $0<p\leq \infty$, and the corresponding  inverse for $1\leq p\leq \infty$.  The proof of the Jackson inequality relies on a Whitney type estimate on certain domains of special type, and a polynomial partition of unity on $\Og$. On the other hand, the proof of the inverse inequality is more difficult. It 
relies on a new tangential Bernstein inequality on $C^2$-domains, which  is of independent interest. 

We  give some preliminary materials in  Section~\ref{preliminary}.  After that, we define the new modulus of smoothness in Section~\ref{modulus:def}.  The main results of this paper are summarized in  Section~\ref{summary}, where we also  describe briefly the organization of the  remaining chapters.

%
%
%

\chapter{New moduli of smoothness and main results}

\section{Preliminaries }\label{preliminary}

We start with a brief description of some necessary notations. To simplify our discussion, we  find it   more convenient  to  work on the $(d+1)$-dimensional Euclidean space $\RR^{d+1}$ rather than    the  $d$-dimensional  space $\RR^d$. We shall often    write a point in $\RR^{d+1}$ in the form $(x, y)$  with  $x=(x_1,\dots, x_d)\in\RR^d$ and $y=x_{d+1}~\in~\RR$.     
Let $\SS^d:=\{\xi\in\RR^{d+1}:\  \ \|\xi\|=1\}$ denote the unit sphere of $\R^{d+1}$, where $\|\cdot\|$ denotes the Euclidean norm. 
Let $B_r[\xi]$ (resp., $B_r(\xi)$ )  denote  the closed ball (resp., open ball)  in $\R^{d+1}$ centered at $\xi\in\RR^{d+1}$   having radius $r>0$.  We denote by $[\xi, \eta]$ the line segment connecting any two points $\xi, \eta\in\RR^{d+1}$; that is, $[\xi, \eta]=\{ t\xi+(1-t) \eta:\   \  t\in [0,1]\}$. A   rectangular box in $\RR^{d+1}$ is a set that  takes the form $[a_1,b_1]\times \dots\times [a_{d+1}, b_{d+1}]$ with $-\infty<a_j<b_j<\infty$,  $j=1,\dots, d+1$.  
We  always assume that the sides of a  rectangular box   are parallel with  the coordinate axes.   If $R$ denotes either a parallelepiped or a ball in $\RR^{d+1}$, then  we  denote by $cR$ the dilation of  $R$ from its center by a factor $c>0$.
Let $e_1:=(1,0,\dots, 0)$, $\dots, e_{d+1} =(0,\dots, 0,1)\in\RR^{d+1}$ be the standard basis in $\RR^{d+1}$.  Given  $1\leq i\neq j\leq d+1$,  we call   the coordinate  plane spanned by the vectors $e_i$ and $e_j$  the $x_ix_j$-plane.  Finally, we use the notation  $A_1\sim A_2$ to mean that there exists a positive constant $c>0$ such that $c^{-1}A_1\leq A_2\leq c A_1$.

\subsection{Directional Moduli of smoothness} 

The $r$-th order   directional modulus of smoothness  on a domain   $\Og\subset \RR^{d+1}$  along  a  set $\mathcal{E}\subset \SS^d$ of directions    is defined  by 
$$ \og^r (f,  t; \mathcal{E})_p:=\sup_{\xi\in \mathcal{E}} \sup_{0<u\leq t } \|\tr_{u\xi}^r (f, \Og, \cdot)\|_{L^p(\Og)}=
\sup_{\xi\in \mathcal{E}} \sup_{0<u\leq t } \|\tr_{u\xi}^r f\|_{L^p(\Og_{ru\xi})},$$
where $\tr_{u\xi}^r f=\tr_{u\xi}^r(f,\Og, \cdot)$ is  given in~\eqref{finite-diff}, and 
$\Og_{\eta}:=\{\xi\in \Og:\  \  [\xi, \xi+\eta]\subset \Og\}$ for $\eta\in\RR^{d+1}$. 
Let 
$$ \og^r(f, \Og; \mathcal{E})_p:=
\og^r (f,  \diam(\Og); \mathcal{E})_p,$$ where $\diam (\Og) :=\sup_{\xi, \eta \in \Og} \|\xi-\eta\|$. 
If $\mathcal{E}=\SS^d$, then we write $\og^r (f,  t)_p=\og^r (f,  t; \SS^d)_p$ and  $\og^r(f, \Og)_p= \og^r(f, \Og; \SS^d)_p$, whereas if $\mathcal{E}=\{e\}$ contains only one direction $e\in\SS^d$,  we write  $\og^r (f,  t; e)_p=\og^r (f,  t; \mathcal{E})_p$ and  $\og^r(f, \Og; e)_p= \og^r(f, \Og; \mathcal{E})_p$.
We shall frequently use the following two properties of these directional moduli of smoothness, which can be easily verified from the definition: \begin{enumerate}[\rm (a)] 
	\item For each $\mathcal{E}\subset \SS^d$,   \begin{equation}\label{2-1-18}\og^r (f,  \Og; \mathcal{E})_p=\og^r (f,  \Og; \mathcal{E}\cup(-\mathcal{E}))_p.\end{equation}
	\item   If $T$ is an affine mapping given by  $T\eta =\eta_0 +T_0\eta$ for all $\eta\in\R^{d+1}$ with   $\eta_0\in\R^{d+1}$ and   $T_0$ being  a nonsingular linear mapping on $\R^{d+1}$,   then 
	\begin{equation}\label{5-1-eq}
	\og^r (f,   \Og; \mathcal{E})_p =\Bl|\text{det} \ (T_0)\Br|^{-\f1p}  \og^r( f\circ T^{-1},  T(\Og); \mathcal{E}_{T})_p,
	\end{equation}
	where  
	$\mathcal{E}_{T}=\bl\{\f{ T_0 x}{\|T_0 x\|}:\  \ x\in \mathcal{E}\br\}$.   Moreover, if   $\xi, e\in\SS^d$ is such that $e=T_0(\xi)$, then for any $h>0$,
	\begin{equation}\label{2-3-18}
	|\text{det} \  (T_0)|^{\f1p} 	\|\tr_{he} ^r ( f, \Og)\|_{L^p(\Og)}^p=\|\tr_{h\xi}^r (f\circ T^{-1}, T(\Og))\|_{L^p(T(\Og))}.
	\end{equation}	 
	
\end{enumerate} 
Next, we  recall that  the  analogue of the  Ditzian-Totik modulus on   $\Og\subset \RR^{d+1}$  along a direction $e\in\SS^d$    is defined as  (see \cite{To14, To17}):
\begin{equation}\label{2-3-DT}\og_{\Og, \vi}^r(f,t; e)_{p}:=
\sup_{\|h\|\leq \min\{t,1\}} \Bl\|\tr_{h\vi (e, \cdot) e}^r (f, \Og, \cdot)\Br\|_{L^p(\Og)},\      \   t>0,\end{equation}
where 
\begin{equation}\label{funct-vi} \vi_\Og (e, \xi):=\max\Bl\{ \sqrt{l_1l_2}:\  \  l_1, l_2\ge 0,\  \ [\xi-l_1 e, \xi+l_1 e]\subset \Og\Br\},\  \ \xi\in\Og.\end{equation}
For simplicity, we also define  $\vi_\Og (\da e, \xi)=\vi_\Og (e, \xi)$ for $e\in\SS^d$,  $\da>0$ and $\xi\in\Og$.

\subsection{Domains of special type}




A set  $G\subset \RR^{d+1}$ is called  an {\sl upward}  $x_{d+1}$- domain with base size $b>0$ and parameter $L\ge 1$  if 
it can be written in the form 
\begin{equation} \label{2-7-special}G=\xi+\Bl\{( x,  y):\  \  x\in (-b,b)^d,\   \  g(x)- L b< y \leq g(x)\Br\} \end{equation}
with   $\xi\in\RR^{d+1}$ and   $g\in C^2(\RR^d)$. 
For such a domain $G$,   and a parameter  $\ld\in (0, 2]$,  we define 
\begin{align}
G(\ld):&=\xi +\Bl\{ (x,   y):\  \  x\in (-\ld b, \ld b)^d,\   \   g(x)-\ld L b < y \leq g(x)\Br\},\\
\p'G(\ld)&:=\xi +\Bl\{ (x,  g(x)):\  \  x\in (-\ld b, \ld b)^d\Br\}.
\end{align} 
Associated with the set $G$ in~\eqref{2-7-special}, we also define 
\begin{align}
G^\ast:&=\xi+\Bl\{( x,   y):\  \  x\in (-2b,2b)^d,\   \ \min_{u\in [-2b, 2b]^d} g(u)-4Lb <  y \leq g(x)\Br\}.\label{G}
\end{align}


For later applications, we give the following remark on the above definition.
\begin{rem}\label{rem-2-1-0}
	In the above definition,
	we may choose the base size  $b$    as small as we wish, and we   may also  assume the parameter $L$ in~\eqref{2-7-special} satisfies 
	\begin{equation}\label{parameter-2-9}
	L\ge L_G:=4\sqrt{d} \max_{x\in [-2b, 2b]^d} \|\nabla g(x)\| +1,
	\end{equation}
	since otherwise we may consider a   subset of the form 
	\begin{equation*} G_0=\xi+\Bl\{( x,  y):\  \  x\in (-b_0,b_0)^d,\   \  g(x)- L_0 b_0< y \leq g(x)\Br\}\end{equation*}
	with  $L_0=Lb/b_0$ and $b_0\in (0, b)$ being  a sufficiently small constant. 
	Unless otherwise stated, we will always assume that  the condition~\eqref{parameter-2-9} is satisfied for each upward $x_{d+1}$-domain.

	%
	%
	%
\end{rem}

We  may  define 
an upward $x_j$-domain $G\subset \RR^{d+1}$    and the associated   sets $G(\ld)$,   $\p' G(\ld)$, $G^\ast$  for  $1\leq j\leq d$  in a similar manner, using  the reflection  
$$\sa_j (x) =(x_1,\dots, x_{j-1}, x_{d+1}, x_{j+1},\dots, x_d, x_j),\  \  x\in\RR^{d+1}.$$
Indeed,  $G\subset \RR^{d+1}$
is  an  {\it upward} $x_j$-domain with base size $b>0$  and parameter  $L\ge 1$ if  $E:=\sa_j (G)$ is an  upward $x_{d+1}$-domain with base size $b$ and parameter $L$, in which case we define   
$$G (\ld) = \sa_j \bl( E (\ld)\br),\    \ 
\p' G(\ld)= \sa_j\bl(\p' E (\ld)\br),\   \   \   G^\ast =\sa_j (E^\ast).$$

We can also  define a  {\it downward} $x_j$-domain   and the associated   sets $G(\ld)$,    $\p' G(\ld)$, using  the reflection with respect to the coordinate plane $x_j=0$:  
$$\tau_j (x) :=(x_1, \dots, x_{j-1}, -x_j, x_{j+1}, \dots, x_{d+1}),\   \ x\in\RR^{d+1}.$$
Indeed,   $G\subset \RR^{d+1}$
is  an  {\it downward} $x_j$-domain with base size $b>0$ and parameter $L\ge 1$  if  $H:=\tau_j (G)$ is an  upward $x_{j}$-domain with base size $b$ and parameter $L\ge 1$, in which case we define 
$$G (\ld) = \tau_j \bl( H (\ld)\br),\   \
\p' G(\ld)= \tau_j\bl(\p' H (\ld)\br),\   \    G^\ast =\tau_j(H^\ast).$$

%
%
%

We  say     $G\subset  \RR^{d+1}$  is  a domain  of special type   if it  is an upward or downward $x_j$-domain for some $1\leq j\leq d+1$, in which case   we call $\p' G(\ld)$ the essential boundary of $G(\ld)$, and write      $\p' G =\p' G(1)$ and $\p' G^\ast =\p' G (2)$.

\begin{defn}\label{Def-2-1}   Let $\Og\subset \RR^{d+1}$ be a bounded domain with boundary $\Ga=\p \Og$, and let $G\subset \Og$ be a domain of special type. 
	We say $G$ is attached to $\Ga$ if $\overline{G}^\ast\cap \Ga =\overline{\p' G^\ast}$ 
	and there exists an open rectangular box $Q$ in $\RR^{d+1}$  such that $ G^\ast =Q\cap \Og$.
\end{defn}


\subsection{$C^2$-domains}

In this paper, we shall mainly  work on   $C^2$-domains, defined as follows: 

\begin{defn}
	A bounded   domain  $\Og\subset \RR^{d+1}$ is called  $C^2$ if there exist numbers $\da>0$, $M>0$ and a finite cover of the boundary $\Ga:= \p \Og$  by connected open sets $\{ U_j\}_{j=1}^J$ such that: {\rm (i)}  for every  $x\in\Og$ with   $\dist(x, \Ga)<\da$,  there exists an index $j$ such that $x\in U_j$,  and $\dist(x, \p U_j)>\da$;  {\rm (ii)} for each $j$ there exists a  Cartesian coordinate system $(\xi_{j,1},\dots, \xi_{j,d+1})$ in $U_j$ such that the set $\Og\cap U_j$ can be represented by the inequality $\xi_{j,d+1}\leq f_j(\xi_{j,1}, \ldots, \xi_{j, d})$, where $f_j:\RR^{d}\to \RR$ is a $C^2$-function satisfying
	$\max_{1\leq i, k\leq d}\|\p_i\p_k f_j\|_\infty\leq M.$
	
\end{defn}

\section{New moduli of smoothness on $C^2$ domains}\label{modulus:def}

Let $\Og\subset \RR^{d+1}$ be  the closure of an open,   connected, bounded  $C^2$-domain  in $\RR^{d+1}$ with boundary $\Ga=\p \Og$. In this section, we shall give the definition of  our    new moduli  of smoothness on  the   domain $\Og$.

The definition requires  a tangential  modulus of smoothness $\wt{\og}^r_G (f, t)_p$ on a domain $G\subset \Og$ of special type, which is described  below.  We start with an upward  $x_{d+1}$-domain $G$   given in~\eqref{2-7-special} with  $\xi=0$. 
Let 
$$\xi_j(x):=  {e_j + \p_j g(x) e_{d+1}}\in\RR^{d+1},\  \ j=1,\dots, d,\   \  x\in (-2b, 2b)^d.$$
Clearly, $\xi_j(x)$ is the tangent vector to the essential boundary $\p' G^\ast$ of $G^\ast$  at the point $(x, g(x))$ that is parallel to the $x_jx_{d+1}$-coordinate plane.  Given  a parameter    $A_0>1$,  we  set 
\begin{equation}\label{eqn:a0}
G^t:= \bl\{\xi\in G:\  \ \dist(\xi, \p' G) \ge A_0 t^2\br\},\   \ 0\leq t\leq 1.
\end{equation} 
We then   define 
the    $r$-th order tangential   modulus of smoothness  $\wt{\og}^r_G (f, t)_p$, ($0<t\leq 1$)  of $f\in L^p(\Og)$   by 
\begin{align}\label{modulus-special}
\wt{\og}^r_G (f, t)_p&:= \sup_{\sub{0<s\leq t\\
		1\leq j\leq d}} \Bl(\int_{G^t}
\Bl[\f 1{(bt)^d}  \int_{I_x(tb)} |\tr_{s \xi_j(u)}^r (f, \Og,(x,y))|^p  \, du\Br] dxdy\Br)^{\f1p},
\end{align}
where $I_x(tb):=\{ u\in (-b, b)^d:\   \  \|u-x\|\leq tb\}$, and we use $L^\infty$-norm to replace  the $L^p$-norm  when $p=\infty$.
For $t>1$, we define   $ \wt{\og}^r_G (f, t)_p=\wt{\og}^r_G (f, 1)_p$.
Next, if  $G\subset \Og$ is a general domain of special type,  then we  define the tangential  moduli $\wt{\og}^r_G (f, t)_p$ 
through the identity,
$$ \wt{\og}^r_G (f, t)_p=\wt{\og}^r_{T(G)} (f\circ T^{-1}, t)_p,$$
where $T$ is   a  composition of a  translation and the reflections $\sa_j, \tau_j$ for some $1\leq j\leq d+1$ which takes $G$ to an upward $x_{d+1}$-domain  of  the form~\eqref{2-7-special} with  $\xi=0$.

To define the  new moduli of smoothness on $\Og$, we also  need  the following covering lemma, which  will be proved in  Chapter~\ref{decom-lem}:


\begin{lem}\label{lem-2-1-18}
	There exists a finite cover of the boundary $\Gamma=\p \Og$ by domains of special type  $G_1, \dots, G_{m_0}\subset \Og$ that are attached to $\Ga$. In addition,  we may select the domains $G_j$   in such a way that    the  size of each  $G_j$  is as small as we wish, and  the  parameter  of  each  $G_j$  satisfies  the condition~\eqref{parameter-2-9}.
\end{lem}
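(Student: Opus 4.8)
\textbf{Proof proposal for Lemma~\ref{lem-2-1-18}.}

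The plan is to use the compactness of $\Ga$ together with the $C^2$-regularity to produce, for each boundary point, a local chart in which $\Og$ sits below the graph of a $C^2$ function, and then to carve out of that chart a domain of special type which, after possibly shrinking, is attached to $\Ga$ in the sense of Definition~\ref{Def-2-1}. First I would fix a point $p\in\Ga$. By the definition of a $C^2$-domain there is a Cartesian coordinate system centred at $p$ (after a rigid motion $T_p$, a composition of a translation and a rotation) in which $\Og$ is locally $\{x_{d+1}\le f(x_1,\dots,x_d)\}$ for a $C^2$ function $f$ with bounded second derivatives, and $\Ga$ is locally the graph $x_{d+1}=f(x)$. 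Here we may also arrange, by subtracting the tangent plane and rotating, that $f(0)=0$ and $\nabla f(0)=0$; this is not strictly needed but makes the parameter bound~\eqref{parameter-2-9} easy to control on small neighbourhoods. Choosing a base size $b=b_p>0$ small enough that the graph chart is valid on $[-2b,2b]^d$ and that $4Lb$ is smaller than the ``reach'' of $\Ga$ near $p$, I would set, in these local coordinates,
\[
G_p:=T_p^{-1}\Bl(\Bl\{(x,y):\ x\in(-b,b)^d,\ g(x)-Lb<y\le g(x)\Br\}\Br),
\]
where $g$ is the localization of $f$ and $L\ge L_{G_p}$ as in Remark~\ref{rem-2-1-0}. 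This $G_p$ is by construction an upward $x_{d+1}$-domain (in the rotated frame) of base size $b$ and parameter $L$ satisfying~\eqref{parameter-2-9}.

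The key point to check is that $G_p$ is \emph{attached} to $\Ga$, i.e. $\overline{G_p}^{\,\ast}\cap\Ga=\overline{\p' G_p^{\,\ast}}$ and $G_p^{\,\ast}=Q\cap\Og$ for an open rectangular box $Q$. For the second condition, take (in the rotated frame) $Q$ to be the open box $(-2b,2b)^d\times(m-4Lb,\,\infty)$ truncated above by a large constant, where $m:=\min_{u\in[-2b,2b]^d}g(u)$; more precisely $Q=(-2b,2b)^d\times(m-4Lb,\,M_0)$ with $M_0$ chosen larger than $\max g$ over $[-2b,2b]^d$. Then $Q\cap\Og$ is exactly the set of points in $Q$ with $x_{d+1}\le g(x)$, which is $G_p^{\,\ast}$ as defined in~\eqref{G}, provided $b$ is small enough that the graph description of $\Og\cap U_j$ is valid on all of $Q$ — this is where I use that $\dist(\cdot,\p U_j)>\da$ in the definition of a $C^2$-domain, so that shrinking $b$ below a fixed multiple of $\da$ keeps $Q$ inside the chart $U_j$. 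For the first condition, a point of $\overline{G_p}^{\,\ast}$ lies on $\Ga$ exactly when $x_{d+1}=g(x)$ (again using that the chart describes $\Og$ as a strict subgraph, so interior points of the box with $x_{d+1}<g(x)$ are interior to $\Og$, and points with $x_{d+1}=g(x)$ and $x$ on the boundary of $[-2b,2b]^d$ are handled by the closure), which is precisely $\overline{\p' G_p^{\,\ast}}$. Shrinking $b$ further if necessary makes the size of $G_p$ as small as we wish, as required.

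Finally I would invoke compactness: the sets $\{\inte(\overline{G_p})\cap\Ga\}_{p\in\Ga}$, or more simply the open neighbourhoods of $p$ in $\Ga$ contained in $\p'G_p$, form an open cover of $\Ga$; extract a finite subcover corresponding to points $p_1,\dots,p_{m_0}$ and set $G_j:=G_{p_j}$. Each $G_j$ is a domain of special type attached to $\Ga$, each has base size as small as prescribed, and each satisfies~\eqref{parameter-2-9} by Remark~\ref{rem-2-1-0}; together they cover $\Ga$.

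\emph{Main obstacle.} The only genuinely delicate point is the attachment condition, specifically ensuring $G^\ast_j=Q\cap\Og$ \emph{globally} rather than just locally: one must choose the base size $b_j$ small enough (relative to the $\da$ and the $C^2$-data $M$ of the domain, and relative to the diameters of the charts $U_j$ and the separation between different boundary components) that the enlarged box $Q$ still meets $\Og$ only in the single graph piece and does not accidentally pick up a far-away part of $\Og$ or another sheet of $\Ga$; this is a quantitative use of the uniform $C^2$-structure and of the boundedness of $\Og$, but it requires no new idea beyond taking $b$ uniformly small. Everything else is a routine unwinding of the definitions in Section~\ref{preliminary}.
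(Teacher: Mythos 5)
There is a genuine gap: your construction does not produce a \emph{domain of special type} in the sense of the paper. An upward (or downward) $x_j$-domain is, by definition, a set of the form $\xi+\{(x,y): x\in(-b,b)^d,\ g(x)-Lb<y\le g(x)\}$ (possibly after applying the permutations $\sigma_j$ or sign flips $\tau_j$), where the variables $(x,y)$ refer to the \emph{fixed} standard coordinates of $\RR^{d+1}$; translations are allowed but rotations are not, and $j$ must be one of the $d+1$ coordinate indices. Your $G_p$ is obtained by pulling back an upward $x_{d+1}$-domain through a rigid motion $T_p$ that may include a rotation (this is unavoidable the moment you invoke the adapted coordinate system $(\xi_{j,1},\dots,\xi_{j,d+1})$ from the $C^2$-domain definition, and becomes explicit when you further rotate to make $\nabla f(0)=0$). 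In the original coordinates $G_p$ is therefore not of the prescribed form for any coordinate index, so the lemma as stated is not established. This is not a cosmetic point: the whole architecture of the moduli of smoothness in the paper is tied to the fixed coordinate directions $e_1,\dots,e_{d+1}$ and to tangential directions coming from these special-type charts, so one cannot quietly allow rotations.

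The paper's proof avoids this by choosing, for each $\xi\in\Ga$, the coordinate index $j$ for which $|n_\xi\cdot e_j|$ is maximal (hence $|n_\xi\cdot e_j|\ge 1/\sqrt{d+1}$), applying the implicit function theorem with $e_j$ as the projection direction, and shrinking the base size so that the resulting graph function $h_\xi$ over the coordinate hyperplane $e_j^\perp$ has small gradient. This produces a genuine upward or downward $x_j$-domain — no rotation required — and also forces one to verify $\Og_\xi\subset\Og$ via a quantitative estimate (the paper uses an interior tangent ball of uniform radius coming from the $C^2$ bounds), rather than relying solely on staying inside the chart $U_j$. Your treatment of the attachment condition and the final compactness argument are otherwise in the right spirit and would carry over once the coordinate issue is fixed; the additional rotation to $\nabla f(0)=0$ that you suggest as a convenience is exactly the step you must \emph{not} take.
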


Now we are in a position to define the    new moduli of smoothness    on $\Og$.
\begin{defn}\label{def:modulus}
	Given  $0<p\leq \infty$,  the $r$-th order  modulus of smoothness  of $f\in L^p(\Og)$ is defined by   
	\begin{equation}\label{eqn:defmodulus}
	\og_\Og^r(f, t)_p:=\og^r_{\Og, \vi} (f, t)_p +\og^r_{\Og, \tan} (f, t)_p,  
	\end{equation}
	where 
	$$
	\og_{\Og, \vi}^r (f,t)_p:=\max_{1\leq j\leq d+1} \og^r_{\Og, \vi} (f, t; e_j)_p\  \ \text{and}\  \ 
	\og_{\Og, \tan}^r (f, t)_p :=\sum_{j=1}^{m_0} \wt{\og}_{G_j}^r (f,t)_p.
	$$
	Here $G_1,\dots, G_{m_0}\subset 	\Og$ are  the domains of special type  from  Lemma~\ref{lem-2-1-18}.
\end{defn}

Note that  the second term on the right hand side of~\eqref{eqn:defmodulus} 
is defined via  finite differences   along certain  tangential directions of  the boundary $\Ga=\p \Og$. As a result, 
we call $\og^r_{\Og, \tan} (f, t)_p$  the tangential part of the $r$-th order modulus of smoothness on $\Og$.

We conclude this section with the following remark.

\begin{rem}\label{rem-3-2} The moduli of smoothness defined in this section rely on the parameter $A_0$ in~\eqref{eqn:a0}.  To emphasize  the dependence on this parameter,   we  often    write 
	$$\wt{\og}^r_G (f, t; A_0)_p:=\wt{\og}^r_G (f, t)_p,\   \    \  {\og}^r_{\Og} (f, t; A_0)_p:={\og}^r_{\Og} (f, t)_p.$$	
	By the Jackson theorem (Theorem~\ref{Jackson-thm}) and the univariate Remez inequality (see \cite{MT2}), it can be easily shown that given any two parameters $A_1, A_2\ge 1$,
	$$ {\og}^r_{\Og} (f, t; A_1)_p\sim {\og}^r_{\Og} (f, t; A_2)_p,\   \  t>0,\  \  0<p\leq \infty.$$	
\end{rem}

%

\section{Summary of main results}\label{summary}

In this section, we shall summarize the main results of this paper. As always, we assume that $\Og$ is the closure of  an open,  connected and  bounded $C^2$-domain in $\RR^{d+1}$. 
For simplicity, we identify with $L^\infty(\Og)$ the space $C(\Og)$ of continuous functions on $\Omega$.

First, the  main aim of  this paper is to prove the   Jackson type inequality and  the corresponding  inverse inequality for  the modulus of smoothness $\og_\Og^r(f,t)_p$  defined in~\eqref{eqn:defmodulus}, as  stated   in the following two theorems.

\begin{thm}\label{Jackson-thm} If   $r,n\in\NN$, $0<p\leq \infty$ and $f\in L^p(\Og)$,   then  
	$$ E_n (f)_{L^p(\Og)} \leq C \og_\Og^r(f, \f 1 n)_p,$$
	where the constant $C$ is independent of $f$ and $n$.  
	
\end{thm}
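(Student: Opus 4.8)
The plan is to establish the estimate locally and then patch the local polynomial approximants together with a smooth polynomial partition of unity. First I would reduce matters to the regime where $n$ is large, since for bounded $n$ the inequality is trivial after noting $E_n(f)_p\le \|f-\text{const}\|_p \lesssim \og_\Og^r(f,1)_p$ (using connectedness of $\Og$ and that the modulus controls oscillation). For large $n$, cover $\Og$ by two types of pieces: the interior part $\Og^\circ:=\{\xi\in\Og:\dist(\xi,\Ga)\ge c n^{-2}\}$, where the geometry is essentially Euclidean and ordinary Jackson theory applies, and a boundary layer that is covered by the finitely many special-type domains $G_1,\dots,G_{m_0}$ attached to $\Ga$ from Lemma~\ref{lem-2-1-18}. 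On each interior box of appropriate side length (of order $n^{-1}$ away from the boundary, shrinking like $\dist^{1/2}$ near it, in the spirit of the Ditzian--Totik machinery) I would invoke a Whitney-type estimate — the directional Whitney estimate advertised in the abstract as ingredient (i) — to get a local polynomial $Q$ of degree $\lesssim r$ with $\|f-Q\|_{L^p(\text{box})}$ bounded by the relevant local directional difference, which along coordinate directions is controlled by $\og^r_{\Og,\vi}(f,n^{-1};e_j)_p$.

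Next I would handle the boundary pieces. On each $G_j$, after the affine/reflection normalization bringing it to an upward $x_{d+1}$-domain of the form~\eqref{2-7-special}, partition $G_j(1)$ into Whitney-type cells: at height (distance to $\p'G$) comparable to $\delta^2$ use tangential cells of horizontal size $\sim\delta$ and vertical size $\sim\delta^2$. On such a cell the function looks, after rescaling, like a function on a domain of bounded eccentricity, so the directional Whitney estimate again produces a local polynomial whose error is controlled by finite differences in the coordinate-$x_{d+1}$ direction (the "normal-ish" vertical differences, absorbed by $\og^r_{\Og,\vi}(f,n^{-1};e_{d+1})_p$ via the $\vi$-weight, since $\vi_\Og(e_{d+1},\cdot)\sim\dist(\cdot,\Ga)^{1/2}$ near the boundary) and by finite differences along the tangential directions $\xi_j(u)$, which is precisely what the tangential modulus $\wt\og^r_{G_j}(f,n^{-1})_p$ in~\eqref{modulus-special} measures — the defining set $G^t$ with $\dist(\xi,\p'G)\ge A_0 t^2$ and the averaging over $I_x(tb)$ are exactly matched to these cells with $t\sim n^{-1}$.

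Then I would glue: construct a polynomial partition of unity $\{\psi_\nu\}$ of degree $O(n)$ subordinate to the cover by interior and boundary cells, with each $\psi_\nu$ highly localized (bounded away from zero on its own cell, decaying fast off it), which is ingredient (ii) from the abstract (the "highly localized polynomial partitions of unity on a $C^2$-domain"). Setting $Q:=\sum_\nu \psi_\nu Q_\nu$, where $Q_\nu$ is the local approximant on cell $\nu$, gives a polynomial of degree $O(n)$, and the standard telescoping/overlap argument — using that on overlapping cells $\|Q_\nu-Q_{\nu'}\|$ is bounded by the local errors, plus the finite-overlap and localization of the $\psi_\nu$ — yields $\|f-Q\|_{L^p(\Og)}^{\min(1,p)}\lesssim\sum_\nu\|f-Q_\nu\|_{L^p(\text{cell}_\nu)}^{\min(1,p)}$, and summing the local bounds recovers $\og^r_{\Og,\vi}(f,n^{-1})_p+\og^r_{\Og,\tan}(f,n^{-1})_p=\og_\Og^r(f,n^{-1})_p$. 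For $p<1$ one uses the $p$-triangle inequality throughout and the fact that the number of cells overlapping a given point is bounded.

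The main obstacle, I expect, is not the gluing but verifying that the rescaled Whitney estimate on a boundary cell produces errors genuinely controlled by the \emph{tangential} directions $\xi_j(u)$ appearing in~\eqref{modulus-special}, rather than by arbitrary directions: one must check that on a cell at height $\sim\delta^2$ the "missing" directions needed to span $\RR^{d+1}$ together with $e_{d+1}$ are, up to the allowed affine distortion of the cell (which is anisotropic, of ratio $\sim\delta$), comparable to the true tangent vectors $\xi_j$ of $\p'G^\ast$, and that the error incurred by replacing the variable tangent field $u\mapsto\xi_j(u)$ by a fixed direction over a cell is again absorbable — this is where the $C^2$ hypothesis ($\|\p_i\p_k g\|_\infty\le M$) and the averaging over $I_x(tb)$ in the modulus are essential, and where the constants' dependence only on $r,\Og$ (not on the cell) must be tracked carefully. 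A secondary technical point is ensuring the interior and boundary covers, at the common scale $\dist\sim n^{-2}$, match up so that no region is left uncovered and the partition of unity has the required degree and localization simultaneously.
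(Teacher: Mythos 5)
Your proposal captures the main ingredients of the paper's proof---local Whitney-type estimates (Chapter~\ref{Whitney:sec}), polynomial partitions of unity (Chapter~6), and a telescoping argument to glue local approximants---and your identification of the central technical difficulty (that the variable tangent field $u\mapsto\xi_j(u)$ must be controlled by the \emph{averaged} tangential modulus, using the $C^2$ bound on $\nabla^2 g$) is exactly where the paper expends effort in Section~\ref{subsection-8:1}. However, you propose a \emph{single-stage} gluing via a global partition of unity, whereas the paper uses a \emph{two-stage} scheme that is structurally different and worth noting. The paper first decomposes $\Og$ into an $n$-\emph{independent} finite collection of admissible pieces (the special-type domains $G_j$ covering a fixed-thickness boundary layer, plus fixed-size interior cubes; Lemma~\ref{LEM-4-2-18-0}), and glues \emph{between} pieces not by a partition of unity but by a recursive fast-decreasing-polynomial argument (Lemma~\ref{REDUCTION}, resting on Lemma~\ref{lem-4-3} and the Nikol'skii-type inequality of Lemma~\ref{lem-4-1}) that directly establishes $E_{cn}(f)_{L^p(H\cup G)}\lesssim \max\{E_n(f)_{L^p(H)},E_n(f)_{L^p(G)}\}$. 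The partition-of-unity-plus-telescoping machinery is deployed only \emph{within} a single $G_j$ (Theorem~\ref{strips-0}, subordinate to the cells $I_{\ib,j}$); the global partition of unity of Theorem~\ref{polyPartition00} is a stated main result but is not actually invoked in the proof of Theorem~\ref{Jackson-thm}.

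Your single-stage route could be made to work but has an underspecified step that the two-stage design is built to avoid. The telescoping inequality you assert, $\|f-Q\|_{L^p}^{\min(1,p)}\lesssim\sum_\nu \|f-Q_\nu\|_{L^p(\mathrm{cell}_\nu)}^{\min(1,p)}$, does not follow from ``localization'' and ``finite overlap,'' because the $\psi_\nu$ are polynomials supported on all of $\Og$. One must instead write, on cell $\nu_0$, $f-Q = (f-Q_{\nu_0}) + \sum_\nu \psi_\nu\,(Q_{\nu_0}-Q_\nu)$, and then control $\|Q_{\nu_0}-Q_\nu\|_{L^p(\mathrm{cell}_{\nu_0})}$ for \emph{distant} $\nu$ by constructing a chain of mutually overlapping cells from $\nu_0$ to $\nu$ and passing bounds along the chain via a polynomial doubling estimate; this is precisely the paper's claim~\eqref{claim-8-23} together with Lemma~\ref{lem-5-2}. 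Doing this globally, across the interfaces between differently oriented boundary cells of the various $G_j$ and the interior cells, would require the chains and doubling constants to be uniform across regions of quite different geometry---a genuine complication that Lemma~\ref{REDUCTION} sidesteps. One minor simplification: you do not need a separate small-$n$ case, since the paper's construction is uniform in $n\ge1$.
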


\begin{thm}\label{inverse-thm} If   $r, n\in\NN$,  $1\leq p\leq \infty$ and $f\in L^p(\Og)$,  then 
	$$\og_{\Og}^r (f, n^{-1})_{p} \leq\f{ C}{n^r}   \sum_{j=0}^n (j+1)^{r-1} E_j (f)_{L^p(\Og)},$$
	where the constant $C$ is independent of $f$ and $n$. 
\end{thm}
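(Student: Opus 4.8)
The plan is to prove the inverse (Bernstein-type) inequality by the classical route: establish a Bernstein inequality for each of the "pieces" of the modulus $\og_\Og^r$, then run the standard telescoping argument through a sequence of near-best polynomial approximants.

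\textbf{Step 1: Reduce to a Bernstein inequality for polynomials.}
I would first assemble the two ingredients that control $\og_\Og^r(P,t)_p$ for $P\in\Pi^{d+1}_n$: a bound for the Ditzian--Totik part $\og^r_{\Og,\vi}(P,t;e_j)_p$ and a bound for the tangential part $\wt\og^r_{G}(P,t;A_0)_p$. For the first, the directional Ditzian--Totik modulus in a fixed coordinate direction reduces, by Fubini along lines parallel to $e_j$, to the one-dimensional Ditzian--Totik modulus on intervals, where the classical univariate Bernstein inequality $\og^r_\vi(P,t)_{L^p[-1,1]}\le C t^r\,n^r\,\|P\|_{L^p[-1,1]}$ (valid for $1\le p\le\infty$) applies; integrating the $p$-th powers back gives $\og^r_{\Og,\vi}(P,t;e_j)_p\le C(tn)^r\|P\|_{L^p(\Og)}$ for $tn\le 1$. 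The genuinely new input is the tangential part: here I would invoke the \emph{tangential Bernstein inequality on $C^2$-domains} advertised in the introduction as the main technical tool behind Theorem~\ref{inverse-thm}. Concretely, for a domain $G$ of special type attached to $\Ga$ one needs an estimate of the form
$$
\Bl\|\tr^r_{s\,\xi_j(u)}(P,\Og,\cdot)\Br\|_{L^p(G^t)}\le C\,(sn)^r\,(tn)^{r}\!\cdot\!\text{(something)}\,\|P\|_{L^p(G^\ast)}
$$
— more precisely, that a single tangential finite difference of step $\sim s$ along $\xi_j(u)$, at points of $G^t$ (where $\dist(\cdot,\p'G)\gtrsim A_0 t^2$), is bounded by $Cs^r\|D_{\tan}^r P\|$ with $\|D_{\tan}^r P\|_{L^p}\lesssim n^{2r}\|P\|_{L^p}$ when one is $t^2$-away from the essential boundary, i.e. $\wt\og^r_G(P,t;A_0)_p\le C(tn)^r\|P\|_{L^p(\Og)}$ whenever $tn\le 1$. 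The precise form of this Bernstein inequality should already be stated in the chapters preceding Theorem~\ref{inverse-thm}, and I would quote it verbatim. Combining both pieces: for $P\in\Pi^{d+1}_n$ and $0<t\le 1/n$,
$$
\og_\Og^r(P,t)_p\le C\,(tn)^r\,\|P\|_{L^p(\Og)}.
$$

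\textbf{Step 2: Telescoping.}
With the Bernstein inequality in hand the argument is standard for $1\le p\le\infty$ (where $L^p(\Og)$ is a Banach space and the triangle inequality holds). Fix $n$ and choose $m$ with $2^m\le n<2^{m+1}$. Let $P_k\in\Pi^{d+1}_{2^k}$ be a polynomial with $\|f-P_k\|_p\le 2E_{2^k}(f)_p$, and set $P_{-1}=0$. Write $f-P_m=\sum_{k>m}(P_{k}-P_{k-1})$... — rather, the cleaner organization: estimate $\og_\Og^r(f,1/n)_p\le \og_\Og^r(f-P_m,1/n)_p+\og_\Og^r(P_m,1/n)_p$. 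The first term is controlled by the trivial bound $\og_\Og^r(g,t)_p\le C\|g\|_p$ (immediate from the definition, since each finite difference is a bounded combination of translates), giving $\le CE_{2^m}(f)_p\le CE_n(f)_p$. For the second term, telescope $P_m=\sum_{k=0}^m(P_k-P_{k-1})$ with $P_k-P_{k-1}\in\Pi^{d+1}_{2^k}$ and $\|P_k-P_{k-1}\|_p\le \|f-P_k\|_p+\|f-P_{k-1}\|_p\le C E_{2^{k-1}}(f)_p$; apply Step~1 to each difference with degree $2^k$ and step $1/n$:
$$
\og_\Og^r(P_m,1/n)_p\le C\sum_{k=0}^m (2^k/n)^r\,E_{2^{k-1}}(f)_p.
$$
Finally convert the dyadic sum to the full sum $n^{-r}\sum_{j=0}^n(j+1)^{r-1}E_j(f)_p$ by the usual comparison: $E_{2^{k-1}}(f)_p$ is, up to a constant, at most the average of $E_j(f)_p$ over $j\in[2^{k-2},2^{k-1}]$ weighted by $(j+1)^{r-1}$ since $E_j$ is non-increasing, and $\sum_k (2^k/n)^r\cdot(\text{block of }(j+1)^{r-1}\text{ of length }\sim 2^{k-1})\sim n^{-r}\sum_{j}(j+1)^{r-1}E_j(f)_p$. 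Adding the two contributions finishes the proof.

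\textbf{Main obstacle.}
The routine part is the telescoping in Step~2; the real content — and the step I expect to be hardest — is the tangential Bernstein inequality underlying the bound $\wt\og^r_G(P,t;A_0)_p\le C(tn)^r\|P\|_{L^p(\Og)}$. The subtlety is that $\xi_j(u)$ is a tangential direction at a \emph{boundary} point $(u,g(u))$ but the difference is evaluated at an \emph{interior} point $(x,y)\in G^t$ with $\|u-x\|\le tb$, so the segment along $\xi_j(u)$ of length $\sim rs\le rt$ stays inside $\Og$ precisely because $G^t$ keeps one at distance $\gtrsim A_0t^2$ from the curved essential boundary — this is exactly the $C^2$/parabolic scaling that makes the unweighted step size $s$ (rather than a $\vi$-weighted one) legitimate near $\Gamma$. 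Proving this requires a careful local analysis: flatten $G$ via the coordinates of \eqref{2-7-special}, compare the tangential derivative along $\xi_j(u)$ with genuine derivatives of $P$, and control the latter by an anisotropic Bernstein inequality on the lens-shaped region $G^t$ (Bernstein of order $n^2$ transverse to $\Gamma$, order $n$ tangentially), handling the $p<\infty$ normalization with the Remez inequality as hinted in Remark~\ref{rem-3-2}. I would treat this as a separate proposition (presumably already proved in an earlier chapter) and, in the proof of Theorem~\ref{inverse-thm} proper, simply cite it and carry out Steps~1--2.
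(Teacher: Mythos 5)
Your strategy—prove a Bernstein inequality for $\og_\Og^r$ and then run the dyadic telescoping—is correct and does deliver the theorem, but the paper takes a noticeably different route. Rather than proving a Bernstein bound for $\og_\Og^r$ directly, the paper first shows (Theorem~\ref{thm-9-1-00}, Chapter~\ref{ch:IvanovModuli}) that $\og_\Og^r(f,t)_p\lesssim \tau_r(f,c_0t)_{p,q}$, where $\tau_r$ is Ivanov's averaged modulus, and then proves the inverse inequality for $\tau_r$ (Theorem~\ref{thm-15-1-00}) by telescoping and a Bernstein-type bound for the local averaged quantity $w_r(P,\cdot,n^{-1})_p$, using Corollary~\ref{cor-11-2} and Theorem~\ref{thm-10-1-00} exactly as you anticipated. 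Both routes ultimately rest on the same tangential Bernstein inequality; the paper's detour buys the inverse theorem for Ivanov's modulus for free, and the averaged form $w_r$ (an $L^q$-average of differences over a metric ball) combines more cleanly with the Bernstein estimate, because $\tr^r_{(\eta-\xi)/r}(f,\Og,\xi)$ can be written as an integral of $\p^r_{\eta-\xi} f$ and then the direction $\eta-\xi$ is resolved into a tangential and a scaled normal component. Your direct approach, on the other hand, has to treat the Ditzian--Totik part and the tangential part of $\og_\Og^r$ separately, which is doable but a bit more bookkeeping.

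One concrete slip: in sketching the tangential Bernstein bound you write $\|D_{\tan}^r P\|_{L^p}\lesssim n^{2r}\|P\|_{L^p}$, which, combined with $s\le t\le 1/n$, gives $s^r n^{2r}\le (tn)^r n^r$—that does \emph{not} yield the claimed $\wt\og^r_G(P,t)_p\lesssim (tn)^r\|P\|_p$. The correct exponent for a pure $r$-th order tangential derivative is $n^r$, not $n^{2r}$: see Theorem~\ref{thm-10-1-00} with $j=l=0$, i.e.\ $\|\mathcal{M}_{n,\mu}^{r,0}P\|_{L^p(\Og)}\le Cn^r\|P\|_{L^p(\Og)}$ (equivalently Corollary~\ref{cor-11-2} with $i=j=0$, $|\pmb{\al}|=r$). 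The $n^{2}$-rate applies only to the normal direction. With $n^r$ in place of $n^{2r}$ your arithmetic closes, and the rest of the proposal (Fubini for the Ditzian--Totik part, the dyadic splitting, and the conversion of $\sum_k(2^k/n)^rE_{2^{k-1}}$ into $n^{-r}\sum_j(j+1)^{r-1}E_j$) is sound.
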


Note that  the Jackson inequality stated in  Theorem~\ref{Jackson-thm} holds  for the full range of $0<p\leq \infty$.

Second, we describe  two main ingredients in the proof of the direct Jackson theorem:   multivariate    Whitney type  inequalities  on certain  domains (not necessarily convex);  and localized  polynomial partitions  of   unity on $C^2$-domains. Both of these two results  are of independent interest.  

Indeed, the  Whitney type inequality  gives an upper estimate  for the error of local polynomial   approximation of a function via the behavior of its finite differences.
A   useful  multivariate Whitney type inequality was   established by Dekel and Leviatan  \cite{De-Le}  on a convex body (compact convex set with non-empty interior)  $G\subset \R^{d+1}$  asserting  that 
for any  $0<p\leq \infty$ ,   $r\in\NN$, and  $f\in L^p(G)$,
\begin{equation}\label{7-1-18-00}E_{r-1}  (f)_{L^p(G)} \leq C(p,d,r) \og^r (f, \diam (G))_{L^p(G)}.\end{equation}
It is remarkable that the constant $C(p,d,r)$ here  depends only on the three parameters $p,d,r$, but  is  independent of  the particular shape of the convex body $G$.  However, the  Whitney inequality~\eqref{7-1-18-00}  is NOT enough for our purpose because  our domain $\Og$ is not necessarily  convex, and the definition of our  moduli of smoothness   uses local  finite differences along a finite number of  directions only.
In Chapter~\ref{Whitney:sec}, we shall develop a new method to study  the following Whitney type inequality  for directional  moduli of smoothness on a more general domain $G\subset\RR^{d+1}$ (not necessarily convex):
\begin{equation}\label{7-8-18-00}E_{r-1} (f)_{L^p(G)} \leq C  \og^r(f, \diam(G); \mathcal{E})_p.\end{equation}
We  shall mainly work on the following two problems: (i)  the dependence of the constant $C$ in~\eqref{7-8-18-00} on certain geometric parameters of the  domain $G$;  and (ii) the minimum number of directions required in the set $\mathcal{E}\subset \SS^d$.
The  method developed in Chapter~\ref{Whitney:sec}  is simpler and can be used to deduce   the   Whitney type inequality on a  more   complicated   domain   from  the Whitney inequality  on  cubes or some other simpler domains. In particular,
our method  yields a simpler proof of the inequality~\eqref{7-1-18-00}  of \cite{De-Le}. Moreover,  in the case when $G$ is a compact convex domain  in $\R^2$,
we prove that   there exist two linearly independent unit vectors $\xi_1, \xi_2\in\R^2$  such that for all $0<p\leq \infty$ and  $f\in L^p(G)$, 
\begin{equation*}
E_{r-1}(f)_{L^p(G)}\leq C(p,r) \og^r (f, G; \{\xi_1, \xi_2\})_p,
\end{equation*}
where the constant $C(p,r)$ depends only on $p$ and $r$.

On the other hand, a polynomial partition of unity is a useful tool  to patch together local polynomial  approximation.  For simplicity,  we say 
a set  $\Ld$  in a metric space $(X,\rho)$   is   $\va$-separated  for some $\va>0$  if $\rho(\og, \og') \ge \va$ for any two distinct points $\og, \og'\in\Ld$,  and we call  an  $\va$-separated  subset $\Ld$  of $X$   maximal  if 
$\inf_{\og\in\Ld} \rho(x, \og)<\va$ for any $x\in X$. 
In Section~\ref{unity:sec}, relying on ideas by Dzjadyk and Konovalov~\cite{Dz-Ko}, we shall prove the following  localized  polynomial partitions of  unity on $C^2$-domains:
\begin{thm}\label{polyPartition00}Given  any parameter  $\ell>1$ and  positive integer $n$,   there exist a $\f 1n$-separated subset $\{\xi_j\}_{j=1}^{m_n}$ of $\Og$  with respect to the metric $\rho_\Og$ defined in~\eqref{metric}  and  a sequence  of   polynomials  $\{P_j\}_{j=1}^{m_n}\subset \Pi_{c_0 n}^{d+1}$   such that 	$\sum_{j=1}^{m_n} P_j (\xi) =1$  and 
	$  |P_j(\xi)| \leq C_1 (1+n\rho_\Og(\xi,\xi_j))^{-\ell}$, $j=1,2,\dots, m_n$ 
	for every $\xi\in \Og$,
	where the constants $c_0$ and $C_1$ depend only on $d$ and $\ell$.  	
\end{thm}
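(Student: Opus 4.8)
The goal is to construct, for each scale $n$, a $\frac1n$-separated net $\{\xi_j\}$ in the metric $\rho_\Og$ together with polynomials $P_j$ of degree $O(n)$ forming a partition of unity with the stated decay. The plan is to build, for each point $\eta\in\Og$, a single ``bump'' polynomial $\Psi_\eta\in\Pi^{d+1}_{c_0n}$ that is comparable to $1$ near $\eta$ and decays like $(1+n\rho_\Og(\xi,\eta))^{-\ell'}$ for a large auxiliary exponent $\ell'$, and then to normalize over a net. First I would recall (or establish) the basic fact that for the interval $[-1,1]$ there are univariate polynomials $q_m$ of degree $O(m)$ with $q_m(1)=1$, $0\le q_m(t)\le C(1+m\sqrt{1-t^2}+m|1-t|)^{-\ell'}$ or, more conveniently in the Dzjadyk–Konovalov form, with decay governed by $(1+m\,\sqrt{(1-t)}+m(1-t))^{-\ell'}$ type weights; this is the one-dimensional engine and is exactly the place where ideas of Dzjadyk–Konovalov~\cite{Dz-Ko} enter. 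The key geometric point is that the metric $\rho_\Og$ measures Euclidean distance plus the ``parabolic'' discrepancy $|\sqrt{\dist(\xi,\Ga)}-\sqrt{\dist(\eta,\Ga)}|$, and near the boundary this is precisely the Ditzian–Totik-type scaling that the univariate fast-decreasing polynomials are designed to track.

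\textbf{Construction of the bumps.} For $\eta$ far from $\Ga$ (distance $\gtrsim$ some fixed $\da$), one can take $\Psi_\eta$ to be a tensor-type product of univariate polynomials in the coordinates $x_i-\eta_i$, each of degree $O(n)$, giving Gaussian-like (i.e. faster than any polynomial rate) decay in $\|\xi-\eta\|$, which dominates the decay in $\rho_\Og$ since there $\rho_\Og(\xi,\eta)\sim\|\xi-\eta\|$ up to constants. For $\eta$ near $\Ga$, I would use the local $C^2$ chart: by the definition of a $C^2$-domain there is a neighborhood $U_j\ni\eta$ in which $\Og$ is $\{\xi_{j,d+1}\le f_j(\xi_{j,1},\dots,\xi_{j,d})\}$ with $\|\p_i\p_k f_j\|_\infty\le M$. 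In such a chart, $\dist(\xi,\Ga)$ is comparable to $f_j(\xi')-\xi_{d+1}$ (the ``height'' below the graph) up to a bounded factor, so $\rho_\Og(\xi,\eta)$ is comparable to $\|\xi'-\eta'\| + |\sqrt{h(\xi)}-\sqrt{h(\eta)}|$ where $h$ is that height. Then $\Psi_\eta$ is taken as a product: ordinary fast-decreasing polynomials of degree $O(n)$ in the tangential variables $\xi_{j,i}-\eta_{j,i}$, times a univariate Dzjadyk–Konovalov polynomial of degree $O(n)$ in the normal-like variable which provides decay of order $(1+n\sqrt{h(\xi)}\,)^{-\ell'}$ balanced against the corresponding quantity at $\eta$; a Taylor expansion of $f_j$ to second order (legitimate because $f_j\in C^2$ with bounded Hessian) shows the argument of these polynomials is itself a polynomial in $\xi$ of degree $2$, so the composition stays in $\Pi^{d+1}_{O(n)}$. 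One must also multiply by a cutoff polynomial that localizes to $U_j$ and kills the contribution outside the chart — again a product of fast-decreasing polynomials in a shifted coordinate — which is where the freedom to take $\ell'$ much larger than $\ell$ is used to absorb losses. Patching several charts and the interior piece via a partition-of-unity-of-charts argument (finitely many $U_j$, fixed $\da$) produces, after taking a suitable finite product/combination, a global $\Psi_\eta\in\Pi^{d+1}_{c_0n}$ with $\Psi_\eta(\eta)\ge c>0$, $\Psi_\eta\ge0$ on $\Og$, and $\Psi_\eta(\xi)\le C(1+n\rho_\Og(\xi,\eta))^{-\ell'}$ uniformly.

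\textbf{From bumps to a partition of unity.} Choose $\{\xi_j\}_{j=1}^{m_n}$ to be a maximal $\frac1n$-separated subset of $(\Og,\rho_\Og)$; a standard volume/packing estimate for the metric $\rho_\Og$ (the balls $U(\xi,t)$ have comparable measures, this is the kind of doubling property implicit in Ivanov's setup) gives that the number of $\xi_j$ within $\rho_\Og$-distance $k/n$ of any fixed point is $O(k^{d})$ — more precisely $O(k^{d+1})$ counting the parabolic direction — so that for $\ell'$ chosen larger than $\ell+d+2$ the sum $S(\xi):=\sum_j \Psi_{\xi_j}(\xi)$ is bounded above and, by maximality of the net, bounded below by a positive constant (every $\xi$ is within $\frac1n$ of some $\xi_j$, and $\Psi_{\xi_j}$ is $\gtrsim1$ on that ball). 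Then set $P_j:=\Psi_{\xi_j}/S$. The only defect is that $S$ is not a polynomial; this is repaired by the now-classical trick of replacing $1/S$ with a polynomial: one takes a geometric-series-type polynomial approximation, or equivalently notes $S(\xi)\in[c,C]$ on $\Og$ and uses a fixed univariate polynomial $R$ of degree $O(n)$ (again via Dzjadyk–Konovalov or just by reproducing-kernel/Jackson arguments for $1/t$ on $[c,C]$) with $|1-S(\xi)R(S(\xi))|\le \tfrac12$ pointwise; then $P_j:=\Psi_{\xi_j}\cdot R(S)\cdot\sum_{k\ge0}(1-SR(S))^k$ truncated to degree $c_0n$, or more cleanly one iterates: with $T:=SR(S)$ close to $1$, $P_j:=\Psi_{\xi_j}R(S)(1+(1-T)+\cdots+(1-T)^{N})$ for a fixed $N$ gives $\sum_jP_j=1$ exactly once $N$ is large enough that $(1-T)^{N+1}$ is annihilated — but since $1-T$ need not be nilpotent, the honest route is to take the exact identity $\sum_j P_j=1$ by defining $P_j:=\Psi_{\xi_j}/S$ only as a guide and instead invoking the Dzjadyk–Konovalov construction directly, which yields polynomial $P_j$ summing to exactly $1$. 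Either way the decay $|P_j(\xi)|\le C_1(1+n\rho_\Og(\xi,\xi_j))^{-\ell}$ follows from $|P_j|\le C'\Psi_{\xi_j}$ on $\Og$.

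\textbf{Main obstacle.} The delicate step is the near-boundary bump: getting the univariate ``parabolic'' decay $(1+n\rho_\Og(\xi,\eta))^{-\ell'}$ out of a degree-$O(n)$ polynomial while simultaneously (a) respecting that the natural variable is the second-order Taylor polynomial of the $C^2$ graph function $f_j$, not $f_j$ itself, so one incurs $O(\|\xi'-\eta'\|^2)$ errors that must be controlled by the tangential fast-decreasing factors, and (b) ensuring the product across finitely many charts does not blow the degree beyond $c_0n$ or destroy nonnegativity and the lower bound $\Psi_\eta(\eta)\gtrsim1$. Making the constants $c_0,C_1$ depend only on $d$ and $\ell$ (and not on $\Og$) requires all chart-dependent quantities — the number $J$ of charts, $M$, $\da$ — to enter only through fixed harmless factors, which is plausible because $\Og$ is fixed throughout but must be stated carefully. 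The cleanest organization, following Dzjadyk–Konovalov, is to prove everything first in a model half-space-with-parabolic-boundary situation and then transport via the $C^2$ charts; I expect the bulk of the work to be verifying the transport keeps the polynomial degree linear in $n$.
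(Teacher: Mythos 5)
Your plan of building individual bump polynomials $\Psi_\eta$ and then normalizing by $S:=\sum_j\Psi_{\xi_j}$ has a genuine gap precisely where you flag it: the theorem requires $\sum_j P_j\equiv 1$ on $\Og$ \emph{exactly}, and $1/S$ is not a polynomial. None of the repairs you sketch closes this. With $R$ a polynomial approximating $1/t$ on $[c,C]$ and $T:=SR(S)$, your candidate $P_j:=\Psi_{\xi_j}R(S)\bigl(1+(1-T)+\cdots+(1-T)^N\bigr)$ sums to $T\cdot\frac{1-(1-T)^{N+1}}{T}=1-(1-T)^{N+1}$, which is never identically $1$ for finite $N$ (and $(1-T)$ is not nilpotent). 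You then write that the ``honest route'' is to ``invoke the Dzjadyk--Konovalov construction directly'' --- but that is precisely the step the proof must supply, not cite. So the proposal does not establish the identity $\sum_j P_j=1$, which is the whole point.

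The paper achieves exactness by never building isolated bumps at all. First, on each domain $G$ of special type attached to $\Ga$, it constructs an \emph{exact} local partition of unity $\{q_{\bfi,j}\}$ (Theorem~\ref{strips-0}) by tensoring two univariate polynomial partitions of unity --- a uniform one (Lemma~\ref{uniform}) in the tangential coordinates and a Chebyshev-type one (Lemma~\ref{chebyshev}) evaluated at $f_\bfi(x)-y$, where $f_\bfi$ is the degree-two Taylor polynomial of the boundary graph at $x_\bfi$. Since each one-variable factor already sums to $1$ on its interval, the products sum to $1$ on $G$ for free; the localization in the parabolic variable $|\sqrt{g(x)-y}-\sqrt{\al_j}|$ is then recovered by comparing $f_\bfi$ to $g$. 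Second, to glue two partitions of unity $\{u_{\og_j}\}_{j\le n_0}$ on $H$ and $\{u_{\og_j}\}_{n_0<j\le n_0+n_1}$ on $K\subset G$ with $H\cap K$ containing a ball, the paper sets $w_j:=u_{\og_j}(1-R_n)$ for the first block and $w_j:=u_{\og_j}R_n$ for the second, where $R_n$ is the fast-decreasing localizer from Lemma~\ref{lem-4-3}. Because each $\sum_j u_{\og_j}$ equals $1$ on an open subset of $\RR^{d+1}$, analyticity of polynomials forces $\sum_{j\le n_0}u_{\og_j}\equiv 1$ and $\sum_{j>n_0}u_{\og_j}\equiv 1$ on all of $\RR^{d+1}$, so $\sum_j w_j=(1-R_n)+R_n\equiv 1$ with no error term, and the decay of each $w_j$ is salvaged via Lemma~\ref{lem-4-1}. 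This analytic-continuation gluing is exactly what your normalization step is missing, and it is the mechanism that lets the exact identity coexist with degree-$O(n)$ polynomials.
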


Third, we describe a new Bernstein inequality associated with the  tangential derivatives on the boundary $\Ga$, which plays a crucial role in the proof of our inverse theorem  (i.e., Theorem~\ref{inverse-thm}).
To this end, we need to introduce several notations. 
Given $\eta\in\Ga$, we denote by $\mathbf{n}_\eta$  the unit outer normal vector   to  $\Ga$ at  $\eta$, and $\mathcal{S}_\eta$  the set of all unit tangent vectors to $\Ga$ at $\eta$; that is, $\mathcal{S}_\eta:=\{ \pmb{\tau}\in\SS^d:\  \ \tau\cdot \mathbf{n}_\eta=0\}$.
Given nonnegative integers $l_1, l_2$ and a parameter $\mu\ge 1$, we define the maximal operators $\mathcal{M}_{n,\mu}^{l_1, l_2} $, $n=1,2,\dots$ by
$$ \mathcal{M}_{n,\mu}^{l_1, l_2} f(\xi):=\max_\eta \max_{\pmb{\tau}_\eta\in\mathcal{S}_\eta}
\bl|  \p_{\pmb{\tau}_\eta}  ^{l_1} \p_{\nb_\eta}^{l_2} f(\xi)\br|,\  \  \xi\in\Og,\   \  f\in C^\infty(\Og)$$
with    the left most maximum being  taken over all $\eta\in\Ga$ such that 
\begin{equation}\label{12-1-00}
\|\eta-\xi\|\leq  \mu \Bl(\sqrt{\dist(\xi, \Ga)} +n^{-1}\Br)=: \mu \vi_{n,\Ga} (\xi). 
\end{equation}
Here $\p_\xi^\ell=(\xi\cdot \nabla)^\ell$ for $\ell\in\NN$ and $\xi\in\RR^{d+1}$. 
In Section~\ref{sec:14}, we shall prove the following Bernstein inequality:

\begin{thm}\label{thm-10-1-00}  If     $f\in \Pi_n^{d+1}$,   $\mu>1$ and $0<p\leq \infty$, then 
	\begin{equation}\label{Bernstein-tan-0}
	\Bl\|\vi_{n, \Ga}^j \mathcal{M}_{n,\mu}^{r, j+l} f \Br\|_{L^p(\Og)}\leq C(\mu, p, d) n^{r+j+2l} \|f\|_{L^p(\Og)},\  \ r,j,l=0,1,\dots, 
	\end{equation}  
	where the function $\vi_{n,\Ga} $ is given in~\eqref{12-1-00}, and  the constant $C(\mu, p, d)$ is independent of $f$ and $n$.
\end{thm}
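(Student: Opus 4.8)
The plan is to reduce the global estimate on $\Og$ to a family of local estimates near the boundary, each of which can be analyzed in a flattened coordinate system, and then to invoke known Bernstein-type inequalities for polynomials on cubes/balls together with the special geometry of $C^2$-domains encoded in the weight $\vi_{n,\Ga}$. The starting observation is that away from the boundary (say $\dist(\xi,\Ga)\ge c n^{-2}$) the estimate is essentially the classical multivariate Bernstein inequality for $\Pi_n^{d+1}$ on the interior: here $\vi_{n,\Ga}(\xi)\sim\sqrt{\dist(\xi,\Ga)}$, the relevant points $\eta\in\Ga$ lie within a $\mu\sqrt{\dist(\xi,\Ga)}$-neighborhood, and one controls each derivative $\p_{\pmb\tau_\eta}^{l_1}\p_{\nb_\eta}^{l_2}f(\xi)$ by a sup of $|f|$ over a Euclidean ball of radius comparable to $\dist(\xi,\Ga)$ using a Cauchy/Markov estimate plus the fact that derivatives of order $m$ cost a factor $n^{2m}$ against the distance weight and $n^m$ without it. So the substance of the theorem is concentrated in the boundary layer $\dist(\xi,\Ga)\lesssim n^{-2}$.

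Next, I would cover the boundary layer by finitely many coordinate patches $U_j$ from the definition of a $C^2$-domain, within each of which $\Og$ is the subgraph $\xi_{j,d+1}\le f_j(\xi_{j,1},\dots,\xi_{j,d})$ of a $C^2$ function with bounded second derivatives. After an affine change of variables (rotation plus translation, which by \eqref{5-1-eq}/\eqref{2-3-18} and the invariance of $\Pi_n^{d+1}$ under affine maps does not affect the claim), work in a fixed patch and parametrize nearby boundary points $\eta=(u,f_j(u))$. The unit tangent vectors $\pmb\tau_\eta$ and the normal $\nb_\eta$ depend smoothly on $u$ and, crucially, differ from the coordinate directions $e_1,\dots,e_d$ (tangential at $u=0$) and $e_{d+1}$ (normal at $u=0$) by $O(\|u\|)$ because the Hessian of $f_j$ is bounded. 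Since in the boundary layer $\|\eta-\xi\|\le\mu\vi_{n,\Ga}(\xi)\lesssim \mu n^{-1}$, the admissible $u$ range over a set of diameter $\lesssim \mu n^{-1}$, so each $\p_{\pmb\tau_\eta}$ is a combination of $\p_{e_1},\dots,\p_{e_d}$ (pure tangential-at-origin derivatives) plus a term of size $O(n^{-1})$ times $\p_{e_{d+1}}$, and likewise $\p_{\nb_\eta}=\p_{e_{d+1}}+O(n^{-1})(\text{tangential})$. Expanding $\p_{\pmb\tau_\eta}^{r}\p_{\nb_\eta}^{j+l}$ this way produces a finite sum of monomials in $\p_{e_1},\dots,\p_{e_{d+1}}$, where each extra unit of "$e_{d+1}$-order beyond $j+l$" is compensated by an $n^{-1}$, and each extra tangential order beyond $r$ likewise; so it suffices to prove, for the model halfspace $y\le 0$ (or a cube straddling it), the anisotropic Bernstein estimate
$$
\bigl\|\,\vi_{n}^{j}\,\p_{e_{d+1}}^{j+l}\,\p_{x}^{\al}\,f\,\bigr\|_{L^p}\le C\,n^{\,r+j+2l}\,\|f\|_{L^p},\qquad |\al|\le r,
$$
with $\vi_n(\xi)=\sqrt{\dist(\xi,\{y=0\})}+n^{-1}=\sqrt{-y}+n^{-1}$, and then sum the $O(n^{-1})$-error monomials (which have strictly more $e_{d+1}$-order or strictly more tangential order, hence a better $n$-power by exactly the amount needed) into the main term. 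The model estimate is itself standard: it combines the one-dimensional weighted Bernstein–Markov inequality in the $y$ variable with weight $\varphi(y)=\sqrt{-y}$ (the Ditzian–Totik weight, giving $\|\varphi^{j}f^{(j+l)}\|_p\le Cn^{j+2l}\|f\|_p$ on intervals, applied on segments parallel to $e_{d+1}$ inside $\Og$, with the $n^{-1}$ in $\vi_n$ absorbing the endpoint behavior via a Remez-type argument), tensored with the plain Bernstein inequality $\|\p_x^{\al}f\|_p\le Cn^{|\al|}\|f\|_p$ in the tangential variables.

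I would carry this out in the order: (1) dispose of the interior region $\dist(\xi,\Ga)\gtrsim n^{-2}$ by classical Bernstein/Markov on balls; (2) fix a $C^2$ coordinate patch, flatten by an affine map, and write each $\p_{\pmb\tau_\eta},\p_{\nb_\eta}$ in the coordinate frame with explicit $O(\|u\|)=O(\mu n^{-1})$ remainders, bounding $\max_\eta\max_{\pmb\tau_\eta}$ by a finite sum over coordinate-direction monomials with matching $n$-powers; (3) prove the model anisotropic weighted Bernstein inequality on the half-cube by the tensor argument above; (4) transfer back through the affine map and patch the finitely many $U_j$ together with a triangle inequality (for $p\ge 1$) or the $p$-triangle inequality and finite overlap (for $p<1$), noting the number of patches is fixed so constants stay controlled. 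The main obstacle I anticipate is step (2)–(3) done \emph{uniformly in $\mu$-shape and in which boundary point $\eta$ realizes the max}: one must show that replacing the true normal/tangent directions at a nearby $\eta$ by the coordinate directions at the patch origin costs only factors of $n^{-1}$ \emph{per unit of order}, which requires carefully tracking that the curvature-induced tilt of $\Ga$ over a window of size $\mu n^{-1}$ is $O(\mu n^{-1})$ and that this cheaply-bought extra $e_{d+1}$-derivative is exactly what the weight $\vi_n$ (size $\lesssim n^{-1}$ in this regime) can afford; getting the bookkeeping of $n$-powers to close without loss — in particular matching the $n^{r+j+2l}$ on the nose rather than $n^{r+j+2l+\va}$ — is the delicate point, and it is handled precisely because each substituted $\p_{e_{d+1}}$ beyond the allotted $j+l$ carries a compensating $\vi_n\lesssim n^{-1}$ from its partner in the product $\vi_n^{j}\mathcal M_{n,\mu}^{r,j+l}$, while each substituted tangential derivative beyond $r$ carries a compensating $n^{-1}$ from the tilt estimate.
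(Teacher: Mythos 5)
Your outline — split off a boundary layer, flatten in $C^2$ coordinate patches, replace $\p_{\pmb\tau_\eta},\p_{\nb_\eta}$ by coordinate derivatives with small tilt errors, and invoke a flat-half-space model estimate — is reasonable in spirit and shares the patching structure of the paper's proof (which likewise splits off the interior by Fubini plus univariate Bernstein and then reduces via Lemma~\ref{lem-2-1-18} to domains of special type). But there are two places where the argument as stated does not close. First, the claim that the region $\dist(\xi,\Ga)\gtrsim n^{-2}$ is ``essentially classical'' is not correct. If one uses Markov or Bernstein on a Euclidean ball of radius $s\sim\dist(\xi,\Ga)$ centered at $\xi$, each derivative costs $n^2/s$ (Markov) or at best $n/s$ (Bernstein at the center), and the resulting bound on $\vi_n^j\mathcal M_{n,\mu}^{r,j+l}f$ is of order $n^{2(r+j+l)}s^{-r-j/2-l}$ or $n^{r+j+l}s^{-r-j/2-l}$, neither of which collapses to $n^{r+j+2l}$ as $s$ ranges down to $n^{-2}$. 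What is needed in this region is precisely the anisotropic fact that tangential derivatives along $\Ga$ cost $n$ while only the normal one costs $n/\sqrt s$; for a curved boundary this \emph{is} the tangential Bernstein inequality and is not obtained for free from Markov on inscribed balls (and for $p<\infty$ a pointwise ball estimate does not convert to an $L^p(\Og)$ estimate without further work). Consequently the delicate part is the \emph{entire} boundary layer of fixed thickness, not just the $n^{-2}$-thin strip, and your tilt errors over a window of radius $\mu\vi_n(\xi)$ are $O(\mu\vi_n(\xi))$, not $O(\mu n^{-1})$, throughout most of that layer.

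Second, the transfer from the flat half-cube model to the curved domain is exactly the nontrivial content, and your steps (2)--(4) do not address it. An affine map only makes the boundary tangent to a coordinate plane at one point; afterwards the domain is still bounded by a curved graph $y=g(x)$, so the Fubini tensor argument of step (3) (1D weighted Bernstein in $y$ tensored with classical Bernstein in $x$) does not apply: the coordinate derivative $\p_{e_i}$ is \emph{not} tangential at the other boundary points in the window, so it costs $n/\sqrt{\dist}$ there, not $n$. The paper resolves this with a polynomial change of variables $\Phi_A(z,t)=(z+t,\,g(z)+g'(z)t-\tfrac A2 t^2)$: because $Q_A$ is quadratic, $f\circ\Phi_A(z,\cdot)$ is still a polynomial of degree $\le 2n$ in $t$ for each $z$, and the $t$-derivative at $t=0$ equals the tangential derivative $\mathcal D_z f$, so the one-variable (weighted, doubling) Bernstein inequality in $t$ directly yields the tangential estimate on $G$ (Theorem~\ref{THM:2D BERN}, then lifted to $d>1$ via Lemma~\ref{lem-11-3} and to general $\Og$ via Lemma~\ref{lem-2-1-18}). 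Your proposal has nothing playing the role of $\Phi_A$: the tilt-substitution addresses only which \emph{direction} is being differentiated, not the fact that the $L^p$ norm, the weight $\vi_{n,\Ga}$, and the admissible polynomials are all tied to the curved domain, and this is where the argument as written has a genuine gap.
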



Fourth,  the Bernstein inequality stated in Theorem~\ref{thm-10-1-00}  allows us to establish Marcinkiewicz-Zygmund type inequalities and positive cubature formulas  on $\Og$.  To describe these results, let   $ U( \xi, t):= \{\eta\in\Og:\  \  \rho_\Og(\xi,\eta) \leq t\}$ for  $\xi\in\Og$ and $t>0$.  As can be easily shown,  
\begin{equation}\label{4-5-00}
|U(\xi, t)| \sim t^{d+1} ( t +\sqrt{\dist (\xi, \Ga)}),\  \  \xi\in\Og,\   \  t\in (0,1).
\end{equation}

In Chapter~\ref{sec:16}, we shall prove the following two results: 

\
\begin{thm}\label{cor-16-2-0}
	There  exists a number $\va_0\in (0,1)$ which depends only on $\Og$ and $d$  and has  the following property:  if $\Ld$ is a maximal $\f \va n$-separated subset of $\Og$ with respect to the metric $\rho_\Og$ with  $\va \in (0, \va_0)$, then for any $1\leq p\leq \infty$ and $f\in \Pi_n^{d+1}$,
	\begin{align*}
	C_1 \Bl(\sum_{ \xi\in\Ld} \Bl|U(\xi, \f \va n)\Br| \max_{\eta\in U(\xi, \va/n)} |f(\eta)|^p \Br)^{\f 1p}\leq \|f\|_{L^p(\Og)} \leq 	C_2 \Bl(\sum_{ \xi\in\Ld} \Bl|U(\xi, \f \va n)\Br| \min_{\eta\in U(\xi, \va/n)} |f(\eta)|^p \Br)^{\f 1p},
	\end{align*}
	with the usual change of $\ell^p$-norm when $p=\infty$,  where  the constants $C_1, C_2>0$ are independent of $f$, $\va$  and $n$.	
\end{thm}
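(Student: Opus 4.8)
The plan is to reduce the claimed equivalence to two ingredients that have already been prepared: the tangential Bernstein inequality (Theorem~\ref{thm-10-1-00}) and the volume estimate~\eqref{4-5-00} for the metric balls $U(\xi,t)$. First I would prove the right-hand inequality, which is the harder direction. Fix $f\in\Pi_n^{d+1}$ and a maximal $\frac{\va}{n}$-separated set $\Ld$ with respect to $\rho_\Og$. Since $\Ld$ is maximal, the balls $\{U(\xi,\va/n)\}_{\xi\in\Ld}$ cover $\Og$, so $\|f\|_{L^p(\Og)}^p\le\sum_{\xi\in\Ld}\int_{U(\xi,\va/n)}|f(\eta)|^p\,d\eta$. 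The task is therefore to show that on each cell the $L^p$-average of $|f|^p$ is comparable to the minimum: for $\eta\in U(\xi,\va/n)$,
\begin{equation*}
|f(\eta)|^p\le C\,\min_{\zeta\in U(\xi,\va/n)}|f(\zeta)|^p.
\end{equation*}
This is a Remez/Nikolskii-type oscillation estimate: I would bound $|f(\eta)-f(\zeta)|$ for $\eta,\zeta$ in the same cell by integrating the directional derivative of $f$ along a path inside $\Og$ joining $\eta$ to $\zeta$ whose $\rho_\Og$-length is $O(\va/n)$, and control that derivative by Theorem~\ref{thm-10-1-00} with $(r,j,l)=(1,0,0)$, which gives $|\nabla f(\zeta')|\lesssim n\,\vi_{n,\Ga}(\zeta')^{-1}\|f\|_{\infty,\text{local}}$ type bounds after combining normal and tangential components; the factor $\vi_{n,\Ga}(\zeta')\sim\vi_{n,\Ga}(\xi)$ on the cell cancels against the $\rho_\Og$-length of the path, yielding $|f(\eta)-f(\zeta)|\le C\va\,\max_{U(\xi,\va/n)}|f|$. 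Choosing $\va_0$ small then forces $\max\le 2\min$ on each cell, which is exactly what is needed. Summing over $\xi\in\Ld$ and using~\eqref{4-5-00} to replace $\int_{U(\xi,\va/n)}$ by $|U(\xi,\va/n)|\min_{U(\xi,\va/n)}|f|^p$ gives the upper bound with $C_2$ independent of $f,\va,n$.

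For the left-hand inequality I would argue dually. Again using~\eqref{4-5-00} and the same cell oscillation estimate (now in the form $\max\sim\min$ on each cell once $\va<\va_0$), it suffices to show $\sum_{\xi\in\Ld}|U(\xi,\va/n)|\,\max_{U(\xi,\va/n)}|f|^p\le C\|f\|_{L^p(\Og)}^p$. Since $\max_{U(\xi,\va/n)}|f|^p\le 2\min_{U(\xi,\va/n)}|f|^p\le\frac{2}{|U(\xi,\va/2n)|}\int_{U(\xi,\va/2n)}|f|^p$ (shrinking the ball keeps the min bounded below by a constant times the average, again by the oscillation estimate), and since by the separation of $\Ld$ the balls $\{U(\xi,\va/2n)\}_{\xi\in\Ld}$ have bounded overlap — here I use that $\rho_\Og$ is a genuine metric so $\va/2n$-separation gives finite overlap of $\va/2n$-balls, with a bound on the overlap multiplicity depending only on $d$ via a standard volume-doubling packing argument from~\eqref{4-5-00} — the sum is controlled by a constant multiple of $\|f\|_{L^p(\Og)}^p$. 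The case $p=\infty$ is immediate from the cell estimate $\max_{U(\xi,\va/n)}|f|\sim\min_{U(\xi,\va/n)}|f|$ together with the cover/separation properties.

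The main obstacle is the per-cell oscillation estimate, and specifically making the path argument work uniformly near the boundary $\Ga$, where $\rho_\Og$ degenerates and the cells $U(\xi,\va/n)$ become anisotropic (thin in the normal direction, of size $\sim\sqrt{\dist(\xi,\Ga)}\cdot\va/n$, and longer $\sim\va/n$ tangentially when $\dist(\xi,\Ga)\lesssim(\va/n)^2$). One must choose the connecting path so that its normal displacement is matched against $\p_{\nb}f$ (controlled by the $j=1$ weight $\vi_{n,\Ga}$ in Theorem~\ref{thm-10-1-00}, which is exactly the $\sqrt{\dist}+1/n$ factor appearing in $\rho_\Og$) and its tangential displacement against $\p_{\pmb\tau}f$, so that in both regimes the derivative bound times the path length is $O(\va)\max|f|$. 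This is where Theorem~\ref{thm-10-1-00} is used in full strength — both the tangential ($l_1=r$) and normal ($l_2$) parts, and the weight $\vi_{n,\Ga}^{\,j}$ — and where the geometry of the $C^2$-domain (flattening the boundary via the charts from the definition of a $C^2$-domain) enters to guarantee such paths stay inside $\Og$ with the asserted $\rho_\Og$-length; once this is established, the summation and overlap bookkeeping is routine.
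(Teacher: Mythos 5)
Your proposed per-cell estimate
\[
\max_{\eta\in U(\xi,\va/n)}|f(\eta)|\le C\min_{\eta\in U(\xi,\va/n)}|f(\eta)|,
\]
which you deduce from a claimed oscillation bound $\osc\bl(f;U(\xi,\va/n)\br)\le C\va\max_{U(\xi,\va/n)}|f|$, is false. Take $f=T_n$, a Chebyshev polynomial of odd degree $n$, on $[-1,1]$ and a cell $U(\xi,\va/n)$ centered near the origin: $T_n$ vanishes in the cell, so $\min_U|T_n|=0$ while $\max_U|T_n|\sim\va>0$, and $\osc(T_n;U)\sim\max_U|T_n|$ rather than $\va\cdot\max_U|T_n|$. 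The error traces to a misreading of Theorem~\ref{thm-10-1-00}: it is a \emph{global} inequality $\|\vi_{n,\Ga}\mathcal M_{n,\mu}^{1,0}f\|_{L^p(\Og)}\le Cn\|f\|_{L^p(\Og)}$, so the path argument only yields $\osc\bl(f;U(\xi,\va/n)\br)\le C\va\|f\|_{L^\infty(\Og)}$ (for $p=\infty$), with the global norm on the right — there is no ``$\|f\|_{\infty,\text{local}}$'' version available, and proving one would be circular since it is essentially the $p=\infty$ case of the theorem you are trying to establish. Your right-hand inequality therefore collapses, because you cannot replace the local average of $|f|^p$ over a cell by the local minimum.

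What the paper actually proves (and what you need) is the weaker $L^p$ oscillation estimate of Theorem~\ref{thm-16-1:MZ}:
\[
\Bl(\sum_{\xi\in\Ld}\Bl|U(\xi,\tfrac\va n)\Br|\,\osc\bl(f;U(\xi,\tfrac{\ell\va}n)\br)^p\Br)^{1/p}\le C_\ell\,\va\,\|f\|_{L^p(\Og)},
\]
with the global $L^p$ norm on the right. This does not give $\max\sim\min$ per cell, but it suffices via a Minkowski inequality bootstrap: writing $a_\xi=|U(\xi,\va/n)|^{1/p}\max_{U(\xi,\va/n)}|f|$, $b_\xi$ the analogue with $\min$, and $o_\xi$ the analogue with the oscillation, one has $a_\xi\le b_\xi+o_\xi$, so $\|a\|_{\ell^p}\le\|b\|_{\ell^p}+C\va\|f\|_p$; combining this with $\|f\|_p\le\|a\|_{\ell^p}$ (from the maximal covering) and $\|b\|_{\ell^p}\le C\|f\|_p$ (from bounded overlap, Remark~\ref{rem-6-2}) gives both inequalities once $\va<\va_0$, absorbing the $C\va\|f\|_p$ error term. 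This is the route indicated in the paper (``a direct consequence of Theorem~\ref{thm-16-1:MZ} and~\eqref{4-5-00}'', following~\cite{DX2}). Note also that the $L^p$ oscillation estimate for $p<\infty$ is not derivable by a pointwise path argument alone; the paper proves it near the boundary (Lemma~\ref{lem-16-2}) by an anisotropic rectangular cell decomposition combined with H\"older, the Bernstein inequality of Theorem~\ref{THM:2D BERN}, and the Markov--Bernstein weight estimates, and in the interior (Lemma~\ref{lem-16-2b}) by a Poincar\'e inequality summed in $L^p$ — your path idea is closest in spirit to the interior lemma, but the boundary case requires the more delicate argument.
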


\begin{thm}\label{cor-16-3-0}There  exists a number $\va_0\in (0,1)$ which depends only on $\Og$ and $d$  and has  the following property:   if $\Ld$ is a maximal $\f \va n$-separated subset of $\Og$ (with respect to the metric $\rho_\Og$) with $\va \in (0, \va_0)$,  then there exists a sequence $\{ \ld_\xi:\  \ \xi\in\Ld\}$ of positive numbers such that 
	$$\ld_\xi \sim \Bl|U(\xi, \f \va n)\Br|,\   \  \forall \xi\in\Ld,$$
	and 
	$$ \int_{\Og} f(\eta)\, d\eta =\sum_{\xi\in\Ld} \ld_\xi f(\xi),\   \   \  \forall f\in \Pi_n^{d+1}.$$

\end{thm}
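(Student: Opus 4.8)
The plan is to derive the cubature formula from the Marcinkiewicz--Zygmund inequality of Theorem~\ref{cor-16-2-0} by a standard duality/linear-algebra argument. Fix $\va\in(0,\va_0)$ with $\va_0$ as in Theorem~\ref{cor-16-2-0}, and let $\Ld=\{\xi_1,\dots,\xi_N\}$ be a maximal $\tf{\va}{n}$-separated subset of $\Og$ with respect to $\rho_\Og$. Write $w_k:=|U(\xi_k,\tf{\va}{n})|$. The $L^1$-version ($p=1$) of Theorem~\ref{cor-16-2-0} already gives, for every $f\in\Pi_n^{d+1}$,
$$
C_1\sum_{k=1}^N w_k \max_{\eta\in U(\xi_k,\va/n)}|f(\eta)| \;\le\; \|f\|_{L^1(\Og)} \;\le\; C_2\sum_{k=1}^N w_k \min_{\eta\in U(\xi_k,\va/n)}|f(\eta)|,
$$
and in particular the two-sided bound $C_1\sum_k w_k|f(\xi_k)|\le\|f\|_{L^1(\Og)}\le C_2\sum_k w_k|f(\xi_k)|$ on the restriction map $f\mapsto(f(\xi_k))_{k}$. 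First I would set up the finite-dimensional convex-geometry framework: consider the linear map $T\colon\Pi_n^{d+1}\to\RR^N$, $Tf=(f(\xi_k))_{k=1}^N$, which is injective by the left-hand MZ inequality. The functional $f\mapsto\int_\Og f$ on $\Pi_n^{d+1}$ factors through $T$ (again by injectivity) to a linear functional on the subspace $V:=T(\Pi_n^{d+1})\subset\RR^N$; the goal is to represent it as $\sum_k\ld_k v_k$ with $\ld_k>0$ and $\ld_k\sim w_k$, where $v_k$ is the $k$-th coordinate functional restricted to $V$.

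The key step is an application of the Hahn--Banach / Farkas-type separation argument in the spirit of the classical construction of positive quadrature from MZ inequalities (as in the work of Mhaskar--Narcowich--Ward and its descendants). Concretely, I would consider on $V$ the two norms: the ambient $\ell^1$-type norm with weights, $\|v\|_{*}:=\sum_k w_k|v_k|$, and show that the functional $L(Tf):=\int_\Og f\,d\eta$ satisfies $|L(Tf)|\le\|f\|_{L^1(\Og)}\le C_2\|Tf\|_{*}$, so $L$ is bounded on $(V,\|\cdot\|_*)$ with norm $\le C_2$. Extend $L$ by Hahn--Banach to all of $\RR^N$ equipped with the weighted $\ell^1$ norm; its dual is the weighted $\ell^\infty$ norm, so the extension is given by a vector $(\mu_k)$ with $|\mu_k|\le C_2 w_k$ and $\int_\Og f=\sum_k\mu_k f(\xi_k)$ for all $f\in\Pi_n^{d+1}$. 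This already yields a (signed) cubature with $|\mu_k|\lesssim w_k$; to upgrade the weights to be \emph{positive} and to get the matching lower bound $\ld_k\gtrsim w_k$, I would run the refinement argument: partition $\{1,\dots,N\}$ by which cells $U(\xi_k,\va/n)$ are "large"; use that $\sum_k\mu_k=\int_\Og 1=|\Og|$ together with $\sum_k w_k\sim|\Og|$ (which follows from \eqref{4-5-00} and the maximality/separation of $\Ld$, since the cells $U(\xi_k,\tf{\va}{2n})$ are disjoint and the $U(\xi_k,\tf{\va}{n})$ cover $\Og$); and invoke the right-hand inequality of Theorem~\ref{cor-16-2-0} applied to suitable test polynomials (localized bumps from the polynomial partition of unity of Theorem~\ref{polyPartition00}, squared to order $\le n$) to force each weight to be comparable to $w_k$ from below. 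If one prefers, the cleanest route to positivity is to choose $\va$ small enough that the perturbation argument applies: start from the "obvious" positive weights $w_k$, note $\big|\int_\Og f-\sum_k w_k f(\xi_k)\big|$ is small relative to $\|f\|_{L^1}$ when $\va$ is small (comparing the sum to a Riemann-type sum for the integral and using the Bernstein inequality of Theorem~\ref{thm-10-1-00} to control the oscillation of $f$ across each cell), and then correct by a bounded operator on the finite-dimensional space whose norm is $<1$, so that the corrected weights stay in $(\tf12 w_k,\tf32 w_k)$; a Neumann-series/fixed-point argument then produces exact positive weights $\ld_k\sim w_k$.

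The main obstacle I anticipate is the quantitative oscillation estimate: showing that for $f\in\Pi_n^{d+1}$ the variation of $f$ over a single cell $U(\xi_k,\tf{\va}{n})$ is at most $C\va\cdot\big(\text{average of }|f|\text{ over a slightly larger cell}\big)$, with $C$ independent of $n$, $k$, and $\va$. This is exactly where the tangential Bernstein inequality \eqref{Bernstein-tan-0} of Theorem~\ref{thm-10-1-00} is essential: near the boundary the cells $U(\xi,\tf{\va}{n})$ are anisotropic (of size $\sim\tf{\va}{n}$ tangentially but $\sim\tf{\va^2}{n^2}$ normally near $\Ga$, consistent with \eqref{4-5-00}), and one must bound a directional derivative $\p_{\pmb\tau_\eta}f$ in tangential directions by $n$ and the normal derivative by $n^2\vi_{n,\Ga}^{-1}$-type factors; integrating these bounds along the $\rho_\Og$-geodesic between two points of the cell, together with the MZ inequality to pass from pointwise maxima back to $L^1$-averages, yields the required $O(\va)$ control. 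Once this oscillation bound is in hand, the perturbation/fixed-point step is routine finite-dimensional linear algebra, and choosing $\va_0$ small enough (depending only on $\Og$ and $d$ through the constants in Theorems~\ref{cor-16-2-0} and \ref{thm-10-1-00} and in \eqref{4-5-00}) completes the proof.
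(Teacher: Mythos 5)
Your proposal is correct and follows essentially the same route as the paper, which derives the positive cubature formula from a Farkas-lemma/Hahn–Banach argument combined with a Marcinkiewicz–Zygmund oscillation estimate (citing the standard constructions in Narcowich--Petrushev--Ward and Dai--Xu). The ``main obstacle'' you identify --- the bound $\bigl|\int_\Omega f-\sum_k w_kf(\xi_k)\bigr|\le C\va\,\|f\|_{L^1}$ coming from the tangential Bernstein inequality --- is precisely the paper's Theorem~\ref{thm-16-1:MZ}, which it proves exactly as you anticipate and then feeds into the Farkas-type linear-algebra step; one small slip worth noting is that injectivity of the sampling map $T$ follows from the \emph{right}-hand MZ inequality (via the $\min$ over cells), not the left-hand one.
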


In these results, the cardinality of $\Ld$ is of order $n^{d+1}$ (see Remark~\ref{rem:cardinality-max-sep}), which is optimal because the dimension of $\Pi_n^{d+1}$ is of the same order.

Fifth,  we  compare the  moduli of smoothness $\og_\Og^r(f,t)_p$  with 
the average $(p,q)$-moduli of smoothness  $ \tau_r (f, t)_{p,q}$ introduced by Ivanov \cite{Iv}.  
It turns out  that the  moduli  $\og_\Og^r(f,t)_p$ 
can be controlled above by the average moduli  $ \tau_r (f, t)_{p,q}$, as shown in the following theorem that will be proved in  Chapter~\ref{ch:IvanovModuli}:

\begin{thm}\label{thm-9-1-00}
	For any $0<q\leq p\leq \infty$ and $f\in L^p(\Og)$, 
	$$\og_\Og^r(f, t; A_0)_p \leq C \tau_r (f, c_0 t)_{p,q},\   \   0<t\leq 1,$$
	where the constant $C$ is independent of $f$ and $t$. 
\end{thm}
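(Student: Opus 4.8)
The plan is to compare the two moduli piece by piece: the Ditzian–Totik part $\og_{\Og,\vi}^r(f,t;e_j)_p$ and the tangential part $\wt\og^r_{G_j}(f,t)_p$ are each bounded above by a constant multiple of $\tau_r(f,c_0t)_{p,q}$. For both parts the key mechanism is the same: a finite difference $\tr_\xi^r(f,\Og,\eta)$ appearing in $\og_\Og^r$ along a fixed direction and step must be shown to be dominated, after averaging over $\eta$, by the local average $w_r(f,\cdot,\da)_q$ that enters $\tau_r$. The essential point is that $U(\eta,\da)$ (the $\rho_\Og$-ball) contains, for $\da\sim t$, a set of points $\zeta$ of positive measure — comparable to $|U(\eta,\da)|$ — for which the segment $[\eta',\eta'+r\cdot\tfrac{\zeta-\eta'}{r}]$ used in $w_r(f,\eta',\da)_q$ coincides (up to reparametrization) with the segment $[\eta,\eta+r\xi]$ we care about; hence $|\tr_\xi^r(f,\Og,\eta)|$ can be recovered as a value of $|\tr^r_{(\zeta-\eta')/r}(f,\Og,\eta')|$ for a range of $\zeta$, and we can write $|\tr_\xi^r(f,\Og,\eta)|^q \le \frac{C}{|U(\eta',\da)|}\int_{U(\eta',\da)}|\tr^r_{(\zeta-\eta')/r}(f,\Og,\eta')|^q\,d\zeta = C\,w_r(f,\eta',\da)_q^q$ for a suitable nearby base point $\eta'$.

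First I would handle the $\vi$-part. Fix $j$ and a step $h$ with $\|h\|\le\min\{t,1\}$; we must estimate $\|\tr^r_{h\vi_\Og(e_j,\cdot)e_j}(f,\Og,\cdot)\|_{L^p(\Og)}$. At a point $\eta$ where the difference is nonzero, the segment $[\eta,\eta+rh\vi_\Og(e_j,\eta)e_j]$ has Euclidean length $\lesssim t\,\vi_\Og(e_j,\eta)$; since $\vi_\Og(e_j,\eta)\sim\sqrt{\dist(\eta,\Ga)}+t$ on the relevant region (the standard geometry of $C^2$-domains, also reflected in~\eqref{4-5-00}), the whole segment lies inside $U(\eta,c_0t)$ for an appropriate absolute $c_0$, and moreover a full $r$-dimensional family of points $\zeta$ within $O(t\vi_\Og)$ of $\eta$ parametrizes the same segment direction. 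Using~\eqref{4-5-00} to see that $|U(\eta,c_0t)|\sim (c_0t)^{d+1}\vi_\Og(e_j,\eta)$ one gets $|\tr^r_{h\vi_\Og(e_j,\eta)e_j}f(\eta)|^q\le C\,w_r(f,\eta,c_0t)_q^q$ pointwise; raising to power $p/q$, integrating in $\eta$, and taking the supremum over $h$ and $j$ yields $\og_{\Og,\vi}^r(f,t)_p\le C\tau_r(f,c_0t)_{p,q}$. The case $q=\infty$ (hence $p=\infty$) is the same argument with $\sup$ replacing the average.

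Next the tangential part. On an upward $x_{d+1}$-domain $G$ of the form~\eqref{2-7-special}, for $(x,y)\in G^t$, step $s\le t$ and direction $\xi_k(u)$ with $u\in I_x(tb)$, the segment $[(x,y),(x,y)+rs\,\xi_k(u)]$ has length $\lesssim t$ and — because $(x,y)\in G^t$ means $\dist((x,y),\p'G)\ge A_0t^2$ — it stays inside $\Og$; one checks its endpoints lie in $U((x,y),c_0t)$, whose measure is $\sim t^{d+1}(t+\sqrt{\dist((x,y),\Ga)})$ and this is $\gtrsim (tb)^d\cdot(bt)$ roughly matching the normalizing factor $(bt)^{-d}$ in~\eqref{modulus-special} times the $du$-measure of $I_x(tb)$. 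So again $|\tr^r_{s\xi_k(u)}(f,\Og,(x,y))|^q$ is, for each $(x,y)$, controlled by $C\,w_r(f,\zeta,c_0t)_q^q$ for a base point $\zeta$ near $(x,y)$; inserting this into~\eqref{modulus-special}, using that the inner integral over $I_x(tb)$ divided by $(bt)^d$ is an average over a set inside $U((x,y),c_0t)$, and then summing the finitely many $\wt\og^r_{G_j}$ from Lemma~\ref{lem-2-1-18}, we obtain $\og^r_{\Og,\tan}(f,t)_p\le C\tau_r(f,c_0t)_{p,q}$.

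The main obstacle, and the step deserving the most care, is the geometric matching in the tangential estimate: showing that the segment along the tangent direction $\xi_k(u)$ at a shifted base point $u\ne x$ actually lies in $\Og$ and inside a $\rho_\Og$-ball of radius $\sim t$ centered appropriately, and that the Jacobian of the change of variables from "endpoint $\zeta$" to "the triple (base point, direction, step)" is bounded below, so that the pointwise domination by $w_r(f,\zeta,c_0t)_q$ does not lose a factor that blows up as $\dist((x,y),\Ga)\to 0$. This is exactly where the cutoff $G^t=\{\dist(\cdot,\p'G)\ge A_0t^2\}$ and the relation~\eqref{4-5-00} are used, and where one must track the $C^2$-regularity constants of $g$ (via~\eqref{parameter-2-9}) to keep $c_0$ and the implied constants independent of $f$, $t$, and the particular $G_j$. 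Once this is in place the rest is routine Fubini and Hölder bookkeeping, with the $q=\infty$ cases handled verbatim by replacing averages with suprema.
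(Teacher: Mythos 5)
The central claim in your argument is a pointwise bound of the form
\begin{equation*}
|\tr_\xi^r(f,\Og,\eta)|^q \le \frac{C}{|U(\eta',\da)|}\int_{U(\eta',\da)}|\tr^r_{(\zeta-\eta')/r}(f,\Og,\eta')|^q\,d\zeta,
\end{equation*}
justified by saying that a set of $\zeta$ of measure comparable to $|U(\eta',\da)|$ ``parametrizes the same segment.'' This does not work. The finite difference $\tr^r_{(\zeta-\eta')/r}(f,\Og,\eta')$ samples $f$ at the $r+1$ \emph{equally spaced} points between $\eta'$ and $\zeta$; as $\zeta$ moves (even along the fixed ray through $\eta$ and $\eta+r\xi$), the sample points change and the value of the difference changes with them. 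There is exactly one $\zeta$ (namely $\eta+r\xi$, with $\eta'=\eta$) for which you recover $\tr^r_\xi(f,\Og,\eta)$; the rest of the $\zeta$-integral carries no a priori information about that single value. Indeed the claimed pointwise inequality already fails for $d=0$, $r=1$, $q=1$: take $f$ to be the indicator of a tiny interval of length $\va$ containing $\eta+\xi$; then the left side is $1$ while the right side is $O(\va/\da)\to 0$. So ``coinciding segment up to reparametrization'' is not enough — there is no reparametrization-invariance for equispaced finite differences.

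What is actually needed, and what the paper supplies, is a combinatorial identity that rewrites $\tr^r_h f(\xi)$ \emph{exactly} as a signed sum of differences of the special ``radial'' form $\tr^r f[u,v]=\tr^r_{(v-u)/r}f(u)$, where one endpoint is fixed on the segment $[\xi,\xi+rh]$ and the other endpoint is an arbitrary $\eta$ (see \eqref{9-2-0}, quoted from \cite{Di-Pr08}*{Lemma~7.3}). Because the identity holds for \emph{every} $\eta$, one may then average over $\eta$ ranging over a convex set $E^\xi\subset E$ of measure $\ge\da_0|E|$, change variables, and land precisely on the averaged modulus $w_r(f,\cdot,\da)_q$; this is Lemma~\ref{lem-9-1:Dec}. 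After that the two-part structure you describe (the $\vi$-piece near and away from the boundary, and the tangential piece on each $G_j$) is correct: the paper sets up $S_{\ib,j}^\diamond$ and $E_{\ib,j}$ so that the convexity and inclusion hypotheses of Lemma~\ref{lem-9-1:Dec} are satisfied on each tile and then compares these sets with $U(\cdot,A_1/n)$ via Lemma~\ref{metric-lem} and Remark~\ref{rem-6-2}. The geometric bookkeeping you flag at the end (tangent segments staying in $\Og$, the role of $G^t$, the $C^2$ constants) is indeed present and nontrivial, but by itself it cannot replace the missing combinatorial mechanism: without \eqref{9-2-0} there is no way to pass from a single directional difference to an average over all of $U(\eta,\da)$.
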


As an immediate consequence of Theorem~\ref{thm-9-1-00} and Theorem~\ref{Jackson-thm}, we  obtain a  Jackson type  inequality for the average moduli of smoothness  for  any dimension   $d\ge 1$ and  the full range of $0<q\leq p\leq \infty$.
\begin{cor}  \label{cor-4-4-0}	
	If $f\in L^p(\Og)$, $0< q\leq p \leq \infty$ and $r\in\NN$, then
	$$ E_n (f)_p \leq C_{r, \Og} \tau_r (f, \f {c_0} n)_{p,q}.$$
\end{cor}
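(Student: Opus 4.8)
The plan is to derive Corollary~\ref{cor-4-4-0} directly by chaining Theorem~\ref{thm-9-1-00} with Theorem~\ref{Jackson-thm}. Since both results are already available in the excerpt, the proof is essentially a two-line composition, and no substantial new idea is needed; the only care required is matching up the range of parameters and the scaling of the argument inside the moduli.

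\begin{proof}[Proof of Corollary~\ref{cor-4-4-0}]
Let $f\in L^p(\Og)$ with $0<q\leq p\leq\infty$ and $r\in\NN$. Applying the Jackson inequality of Theorem~\ref{Jackson-thm} with $t=\frac1n$, we have
$$ E_n(f)_{L^p(\Og)} \leq C\,\og_\Og^r\Bl(f,\tf1n;A_0\Br)_p,$$
where $A_0>1$ is the fixed parameter entering the definition of the modulus (see Remark~\ref{rem-3-2}), and the constant $C$ depends only on $r$, $\Og$ (and the fixed $A_0$). Next, by Theorem~\ref{thm-9-1-00}, there are constants $C'$ and $c_0$, independent of $f$ and $n$, such that
$$ \og_\Og^r\Bl(f,\tf1n;A_0\Br)_p \leq C'\,\tau_r\Bl(f,\tf{c_0}n\Br)_{p,q},\   \  n\ge 1,$$
since $\frac1n\in(0,1]$. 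Combining the two displays gives
$$ E_n(f)_{L^p(\Og)}\leq C_{r,\Og}\,\tau_r\Bl(f,\tf{c_0}n\Br)_{p,q},$$
with $C_{r,\Og}=CC'$ depending only on $r$ and $\Og$, which is the claimed estimate.
\end{proof}

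There is essentially no obstacle here, since the corollary is a formal consequence of two theorems proved elsewhere in the paper. The only points worth flagging are bookkeeping: one should make sure the constant $c_0$ in the statement is the same as (or absorbed into) the one produced by Theorem~\ref{thm-9-1-00}, and that the dependence of the final constant on $A_0$ is harmless because $A_0$ is a fixed parameter (by Remark~\ref{rem-3-2}, different admissible choices of $A_0$ yield equivalent moduli, so the statement does not depend on this choice). For $n$ in any fixed finite range the inequality is trivial after adjusting the constant, so one may freely assume $n$ large; but in fact the chained estimate above already holds for all $n\ge1$ without this reduction.
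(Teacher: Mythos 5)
Your proof is correct and is exactly the argument the paper has in mind: the corollary is stated as an immediate consequence of Theorem~\ref{thm-9-1-00} and Theorem~\ref{Jackson-thm}, and your two-step composition (plus the bookkeeping remarks about $A_0$ and $c_0$) matches that. Nothing further is needed.
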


As mentioned  in the introduction,  Corollary~\eqref{cor-4-4-0} for $1\leq p\leq \infty$ and $d=1$  was  announced  in  \cite{Iv}  for   a piecewise $C^2$-domain $\Og\subset \RR^2$.  

We shall   prove a corresponding  inverse theorem for the average moduli of smoothness $\tau_r(f, t)_{p,q}$ as well:

\begin{thm} \label{thm-15-1-00}If $r\in\NN$, $1\leq q\leq  p\leq \infty$ and $f\in L^p(\Og)$, then 
	$$\tau_r (f, n^{-1})_{p,q} \leq C_{r} n^{-r} \sum_{s=0}^n (s+1)^{r-1} E_s (f)_p.$$
\end{thm}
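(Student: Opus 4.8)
The plan is to derive the inverse inequality for Ivanov's average moduli $\tau_r(f,t)_{p,q}$ from the Bernstein-type inequality of Theorem~\ref{thm-10-1-00}, following the classical scheme used to pass from a Bernstein inequality to an inverse theorem, but carried out in the metric $\rho_\Og$. The first step is to record a Bernstein-type estimate directly for the averaged quantities $w_r(f,\cdot,\da)_q$: given a polynomial $Q\in\Pi_n^{d+1}$, one must bound $\tau_r(Q,n^{-1})_{p,q}$ by $C n^{-r}$ times a suitable norm of $Q$. To do this, fix $\xi\in\Og$ and $\eta\in U(\xi,n^{-1})$; the segment $[\xi,\eta]$ has length $\rho_\Og(\xi,\eta)\le n^{-1}$ in the Euclidean metric as well (since $\|\xi-\eta\|\le\rho_\Og(\xi,\eta)$), and by Taylor's formula along this segment the $r$-th difference $\tr_{(\eta-\xi)/r}^r Q(\xi)$ is controlled by $\|\eta-\xi\|^r$ times a maximal $r$-th order directional derivative of $Q$ at points on the segment. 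The key geometric observation is that $\rho_\Og(\xi,\eta)\le n^{-1}$ forces $\|\eta-\xi\|\lesssim \vi_{n,\Ga}(\xi)^2\cdot n \lesssim n^{-1}\vi_{n,\Ga}(\xi)\cdot(\text{something bounded})$; more precisely one checks $\|\xi-\eta\|\lesssim n^{-1}\vi_{n,\Ga}(\xi)$ whenever $\rho_\Og(\xi,\eta)\le n^{-1}$, using the definition~\eqref{metric} of $\rho_\Og$ and $\dist(\eta,\Ga)\sim\dist(\xi,\Ga)$ for such $\eta$. Hence $|\tr_{(\eta-\xi)/r}^rQ(\xi)|\lesssim (n^{-1}\vi_{n,\Ga}(\xi))^r \max_{\zeta\in[\xi,\eta]}\max_{|v|=1}|\p_v^rQ(\zeta)|$, and the directional derivative of order $r$ is in turn dominated by the maximal operator $\mathcal M_{n,\mu}^{r,0}Q$ evaluated at $\xi$ (for an appropriate $\mu$), since every point $\zeta$ near the boundary is within $\mu\vi_{n,\Ga}(\xi)$ of some $\eta'\in\Ga$ and the unit direction $v$ can be decomposed into tangential and normal components — the normal component costs an extra factor $n$ each time, which is absorbed because $\vi_{n,\Ga}(\xi)^2\cdot n^2\lesssim 1$ only up to the $O(n^{-1})$ scale; one must be slightly careful here and may need to expand $\p_v^r$ into a sum of terms $\p_{\pmb\tau}^{l_1}\p_{\n}^{l_2}$ with $l_1+l_2=r$ and bound the $l_2$-normal-derivative terms using the $\vi_{n,\Ga}^j$-weighted form~\eqref{Bernstein-tan-0} of Theorem~\ref{thm-10-1-00}.

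Once this pointwise bound is in hand, taking the $L^q$ average over $\eta\in U(\xi,n^{-1})$ and then the $L^p$ norm over $\xi\in\Og$ gives, via Theorem~\ref{thm-10-1-00} with $j=0$ (and the auxiliary weighted versions for the normal terms), the polynomial-level estimate
$$\tau_r(Q,n^{-1})_{p,q}\le C\,n^{-r}\|Q\|_{L^p(\Og)},\qquad Q\in\Pi_n^{d+1}.$$
The second step is the standard dyadic decomposition. Let $Q_0\in\Pi_1^{d+1}$ be a best approximation to $f$, and for $k\ge 1$ let $Q_{2^k}\in\Pi_{2^k}^{d+1}$ be a best approximation of degree $2^k$, so that $\|f-Q_{2^k}\|_p\le E_{2^{k-1}}(f)_p$ and $\|Q_{2^k}-Q_{2^{k-1}}\|_p\le C E_{2^{k-1}}(f)_p$. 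Fix $n$ and pick $N$ with $2^N\le n<2^{N+1}$. Using subadditivity of $\tau_r(\cdot,t)_{p,q}$ in its first argument (which holds for $1\le q\le p\le\infty$ by Minkowski's inequality applied first inside the $L^q$ average and then in the $L^p$ norm), monotonicity in $t$, the trivial bound $\tau_r(g,t)_{p,q}\le C\|g\|_p$, and the polynomial Bernstein estimate above applied to each difference $Q_{2^k}-Q_{2^{k-1}}$, one writes
$$\tau_r(f,n^{-1})_{p,q}\le \tau_r(f-Q_{2^N},n^{-1})_{p,q}+\tau_r(Q_1,n^{-1})_{p,q}+\sum_{k=1}^{N}\tau_r(Q_{2^k}-Q_{2^{k-1}},n^{-1})_{p,q},$$
and bounds the first term by $C\|f-Q_{2^N}\|_p\le CE_{2^{N-1}}(f)_p$, the middle term by $Cn^{-r}\|Q_1\|_p\le Cn^{-r}\|f\|_p$, and the $k$-th summand by $Cn^{-r}\cdot 2^{kr}\cdot E_{2^{k-1}}(f)_p$ (using that $2^k\le 2^N\le n$, so $\tau_r(Q_{2^k}-Q_{2^{k-1}},n^{-1})_{p,q}\le C(2^k)^{-r}\|Q_{2^k}-Q_{2^{k-1}}\|_p$ and then $(2^k)^{-r}\le (2^k/n)^{-r}n^{-r}$ — wait, one instead directly uses $\tau_r(\cdot,n^{-1})\le Cn^{-r}(2^k)^0\cdots$; the cleaner route is $\tau_r(R,n^{-1})_{p,q}\le C\min\{(n^{-1})^r(2^k)^r, 1\}\|R\|_p$ for $R\in\Pi_{2^k}^{d+1}$, and since $2^k\le n$ the relevant bound is $C\,2^{kr}n^{-r}\|R\|_p$ only when... ). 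The standard summation-by-parts / Abel rearrangement then converts $\sum_{k=1}^N 2^{kr}n^{-r}E_{2^{k-1}}(f)_p$ into $C n^{-r}\sum_{s=0}^n(s+1)^{r-1}E_s(f)_p$, completing the proof.

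The main obstacle is the first step — establishing the polynomial-level Bernstein estimate for $\tau_r$ itself. The difficulty is that the difference $\tr_{(\eta-\xi)/r}^rQ(\xi)$ involves the direction $(\eta-\xi)/r$, which for $\eta\in U(\xi,n^{-1})$ is an essentially arbitrary unit vector, not a tangential one, so one cannot apply Theorem~\ref{thm-10-1-00} directly; one must decompose an arbitrary direction at a near-boundary point into tangential and normal components, track how the mixed higher-order derivatives $\p_{\pmb\tau}^{l_1}\p_{\n}^{l_2}Q$ enter, and check that the geometric constraint $\rho_\Og(\xi,\eta)\le n^{-1}$ supplies exactly enough powers of $\vi_{n,\Ga}(\xi)$ to pair with the $\vi_{n,\Ga}^j$ weights in~\eqref{Bernstein-tan-0} so that every normal-derivative term is absorbed. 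This is where the precise form of Theorem~\ref{thm-10-1-00}, with its weight $\vi_{n,\Ga}^j$ compensating the extra $n^{2l}$ for $l$ normal derivatives, is indispensable. A secondary but routine technical point is verifying the subadditivity and the elementary estimates $\tau_r(g,t)_{p,q}\le C\|g\|_p$ and the monotonicity of $\tau_r$ in $t$; these follow from Minkowski's inequality (needing $q\ge 1$, hence the restriction $1\le q\le p\le\infty$) and the triangle inequality for finite differences.
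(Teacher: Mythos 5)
Your overall scheme coincides with the paper's: reduce to a polynomial-level Bernstein estimate for the averaged quantity $w_r(\cdot,\cdot,n^{-1})_q$, then run the standard dyadic decomposition with Abel summation. The key mechanism you identify in the final paragraph---expand $\p_{\eta-\xi}^r$ into mixed terms $\p_{\pmb\tau}^{l_1}\p_{\n}^{l_2}$ and pair each normal derivative with one power of $\vi_{n,\Ga}$ so it can be absorbed by the weight $\vi_{n,\Ga}^j$ in Theorem~\ref{thm-10-1-00}---is exactly what the paper does. (The paper writes, in the special-type-domain coordinates, $\eta-\xi=\f1n\bigl(z_\zeta(u)+s\,\vi_n(\xi)e_{d+1}\bigr)$ with $z_\zeta(u)$ tangential and $|s|$ bounded, then applies \eqref{Bernstein-tan-0} with $j=l_2$, $l=0$.)

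There is, however, a genuine error in the middle of your first step. The claim that $\rho_\Og(\xi,\eta)\le n^{-1}$ forces $\|\xi-\eta\|\lesssim n^{-1}\vi_{n,\Ga}(\xi)$ is \emph{false}: take $\xi\in\Ga$ and $\eta\in\Ga$ with $\|\xi-\eta\|=n^{-1}$; then $\rho_\Og(\xi,\eta)=n^{-1}$ but $\vi_{n,\Ga}(\xi)=n^{-1}$, so you would be asserting $n^{-1}\lesssim n^{-2}$. Consequently the displayed pointwise bound $|\tr_{(\eta-\xi)/r}^rQ(\xi)|\lesssim(n^{-1}\vi_{n,\Ga}(\xi))^r\max_{|v|=1}|\p_v^rQ|$ is also wrong, and it is too lossy in any case because $\max_{|v|=1}|\p_v^r Q|$ near the boundary can only be controlled by the crude $n^{2r}$ bound, not $n^r$. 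The correct geometric fact, which the paper uses, is that only the \emph{normal component} of $\eta-\xi$ is $\lesssim n^{-1}\vi_{n,\Ga}(\xi)$, while the tangential component can be as large as $\sim n^{-1}$. That asymmetry is precisely why the binomial expansion of $\p_{\eta-\xi}^r$ produces the weight $\vi_n(\xi)^{l_2}$ attached to the normal block $\p_{\n}^{l_2}$, matching the $\vi_{n,\Ga}^{j}$ weight in \eqref{Bernstein-tan-0}. Your closing paragraph shows you understand this; the intermediate paragraph should be rewritten accordingly, and the false displayed estimate deleted.

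Two smaller structural remarks. First, the paper splits the estimate into a near-boundary part (Lemma~\ref{lem-15-1}, handled on the covering domains $G_j$ of special type via the tangential Bernstein inequality) and a far-from-boundary part (Lemma~\ref{lem-15-2}, handled trivially by the interior Bernstein inequality); your sketch implicitly does the boundary case only, so you should state the reduction via the covering Lemma~\ref{lem-2-1-18}. Second, the paper replaces the $L^q$-average over the metric ball $U(\xi,n^{-1})$ by an average over the cells $I_{\ib,j}^\ast$ of the partition from Chapter~\ref{Sec:8} and then invokes Corollary~\ref{cor-11-2}; this detail is worth spelling out because the point $u$ in the tangential direction $z_\zeta(u)$ must be allowed to range over a full $\Xi_{n,\mu}(\xi)$-neighborhood, not just along the segment $[\xi,\eta]$, before the maximal operator $\mathcal M^{l_1,l_2}_{n,\mu}$ can be brought in.
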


In the case when $\Og\subset \RR^2$ (i.e., $d=1$),  Theorem~~\ref{thm-15-1-00}  was announced   without detailed proofs  in  \cite{Iv}  for  the case   $p=\infty$  and  the case when  $1\leq p\leq \infty$ and   $\Og$ is a parallelogram or a disk.

The rest of the paper is organized as follows. Chapter~\ref{decom-lem} is devoted to the proof of the covering lemma,  Lemma~\ref{lem-2-1-18}, which gives a finite cover of the boundary by domains of special type attached to $\Ga$. This lemma  allows us to reduce  considerations near the boundary to certain problems on domains of special type.

Chapter~\ref{sec:reduction} - Chapter~\ref{ch:direct} are devoted to the proof of the Jackson theorem (Theorem~\ref{Jackson-thm}). To be more precise,  in Chapter~\ref{sec:reduction}, we reduce the proof of Theorem~\ref{Jackson-thm} to certain Jackson type estimates on domains of special type.  The proofs  of  these Jackson estimates also  require two  additional  results:  (i)  the Whitney type inequality  for directional  moduli of smoothness on  non-convex  domains, which is established in  Chapter~\ref{Whitney:sec};  and (ii) the localized  polynomial partitions of the  unity on $C^2$-domains, established in  Sections~\ref{sec:5}--\ref{unity:sec}.

In Chapter~\ref{ch:IvanovModuli}, we compare our moduli of smoothness $\og_\Og^r(f,t)_p$ with the average moduli of smoothness $\tau_r(f,t)_{p,q}$. The main result of Chapter~\ref{ch:IvanovModuli} is stated in   Theorem~\ref{thm-9-1-00}. 

Sections~\ref{sec:12}-\ref{sec:14} are devoted to the proof of the  Bernstein inequality stated in Theorem~\ref{thm-10-1-00}. 
The proof  is rather involved and is divided into several parts. Indeed,  Theorem~\ref{thm-10-1-00}  can be deduced from  certain Bernstein inequalities on domains of special type, which are proved in Chapter~\ref{sec:12} for $r=1$ and in Section~\ref{sec:13} for $r>1$. The proof of  Theorem~\ref{thm-10-1-00} is given in Section~\ref{sec:14}. 

In Chapter~\ref{sec:15}, we prove  the inverse theorems as stated in Theorem~\ref{inverse-thm} and  Theorem~\ref{thm-15-1-00}.

Finally, in 
the last chapter, Chapter~\ref{sec:16}, we prove Theorem~\ref{cor-16-2-0} and Theorem~\ref{cor-16-3-0},  which give   the  Marcinkiewicz-Zygmund type inequalities and positive cubature formulas  on compact $C^2$-domains.

\chapter{A covering  lemma}\label{decom-lem}

The main purpose in this chapter is to prove    Lemma~\ref{lem-2-1-18}, the covering  lemma  that is used in the definition of the moduli of smoothness  $\og_\Og^r(f,t)_p$  in Section~\ref{modulus:def}.

\begin{proof}[Proof of Lemma~\ref{lem-2-1-18}]Since $\Og$ is the closure of an open $C^2$-domain,  
	there exists a number  $r_0\in (0,1)$  depending only on $\Og$ such that    for each $\xi\in\Ga$, we can find a closed ball of radius $8(d+1)r_0$ in $\RR^{d+1}$  which   touches $\Ga$ at $\xi$  but lies entirely in  $\Og$.  
	
	Fix   $\xi=(\xi_x, \xi_y)\in \Ga$  temporarily, and let $n_\xi$  denote the unit outer normal to $\Ga$ at $\xi$.  
	Without loss of generality, we may assume that  $n_\xi\cdot~ e_{d+1}=\max_{1\leq i\leq d+1} |n_\xi\cdot e_i|$.  Then $n_\xi\cdot e_{d+1} \ge \f 1{\sqrt{d+1}}$.   By  the implicit function theorem,  there exist  an open  rectangular box  $V_\xi=\xi+\bl(I_\xi \times (-a_\xi', a_\xi)\br)$ with $I_\xi:=(-\delta_\xi, \delta_\xi)^d$, $\delta_\xi\in (0, r_0)$, $ a_\xi, a_\xi'>0$  and a $C^2$-function $h_\xi$ on $\RR^d$ such that 	$ \Ga_\xi:=V_\xi\cap \Ga=\xi+\{(x, h_\xi(x)):\  \  x\in I_\xi\},$ $0 =h_\xi(0)$ 
	and   $y\leq h_\xi(x)$ for every  $(x,y) \in (\Og\cap V_\xi)-\xi$.  
	By continuity, we may  choose the constant  $\delta_\xi$   small enough  so that   $n_\eta\cdot e_{d+1} \ge \f 1{2\sqrt{d+1}}$ for every $\eta\in \Ga_\xi$, and 
	\begin{equation}\label{5-1-0-18}
	\|\nabla h_\xi\|_{L^\infty(I_\xi)}\le 10^{-4} r_0 d^{-1} \delta_\xi^{-1}. 
	\end{equation}
	Next, we define
	\begin{align*}
	\Og_\xi :&=\xi+\{ (x, y)\in\RR^{d+1}:\  \  x\in  I_\xi,\   \  h_\xi(0) -6r_0 \leq y  \leq  h_\xi(x)\}.\end{align*}
	Note that $\Og_\xi$ can also be written in the form 
	$$\Og_\xi=	\varsigma +\Bl \{ (x, y):\  \  x\in  (-\delta_\xi, \delta_\xi)^d,\   \  0 \leq y  \leq  g_\xi(x)\Br\},		
	$$
	where     $\varsigma:=(\xi_x, h_\xi(0)-6r_0)$, and 
	$
	g_\xi(x):= h_\xi ( x)- h_\xi(0)+6r_0$ for $ x\in \RR^d$.
	We  claim that   $\Og_\xi\subset \Og$. 
	Indeed, if $\xi+(x,y)\in \Og_\xi$, then   $\eta=\xi+(x, h_\xi(x))\in\Ga_\xi$ and we can find a  closed  ball  $B_\eta\subset \Og$ of  radius  $8(d+1)r_0$  which  touches $\Ga$ at $\eta$.   
	A  simple geometric argument shows  that  if  $0\leq s \leq 16 \sqrt{d+1} r_0 (n_\eta \cdot e_{d+1})$, then $\eta-s e_{d+1}\in B_\eta\subset \Og$. 
	Since  $n_\eta\cdot e_{d+1} \ge \f 1{2\sqrt{d+1}}$,  it follows that $\eta -s e_{d+1}\in  \Og$ whenever   $0\leq s\leq 8\sqrt{d+1} r_0$. Now we write 
	$\xi+(x,y)=\eta -(h_\xi(x)-y) e_{d+1}$ and note that  by~\eqref{5-1-0-18}, 
	$$ 0\leq h_\xi(x)-y \leq h_\xi(x)-h_\xi(0) +6r_0 \leq (6+4\cdot 10^{-4})r_0 \leq 8\sqrt{d+1} r_0.$$
	This implies   $(x,y)\in  \Og$, and hence proves the claim.

	Next, 	we  define 	an upward $x_{d+1}$-domain $G_\xi$ by  $$G_\xi =\varsigma+  \Bl\{ ( x, y):  \  \ x\in (- b_\xi, b_\xi)^d,\   \  g_\xi(x)-L_\xi b_\xi<  y \leq g_\xi(x)\Br\},$$
	where 	$b_\xi\in (0, \da_\xi/2)$, and $L_\xi=\da_\xi / b_\xi$. We may choose $b_\xi$ small enough so that $L_\xi$ satisfies $$ L_\xi \ge 4\sqrt{d}  \|\nabla h_\xi\|_{L^\infty(I_\xi)} +1.$$
	Note that by~\eqref{5-1-0-18},  $g_\xi(x) >5r_0$ for $x\in [-2b_\xi, 2b_\xi]^d$. Since $L_\xi b_\xi=\da_\xi\leq r_0$,  it  follows that 
	$$ \k_\xi:= \min _{x\in [-2b_\xi, 2b_\xi]^d} g_\xi(x) -4L_\xi b_\xi>r_0> 0.$$ 
	Thus, 
	\begin{align*}G_\xi^\ast &=\varsigma+  \Bl\{ ( x, y):  \  \ x\in (- 2b_\xi,2 b_\xi)^d,\   \  \k_\xi<  y \leq g_\xi(x)\Br\} \subset \Og_\xi \subset \Og.\end{align*}
	Moreover,  it is easily seen that 
	$ \p G_\xi^\ast \cap \Ga =\overline{\p' G^\ast}$, and 
	$G_\xi^\ast =\Og\cap Q_\xi$ with 
	$Q_\xi=\varsigma + (-2b_\xi, 2b_\xi)^d \times (\k_\xi, a_\xi'')$, and $ a_\xi''=a_\xi -h_\xi(0) +6r_0$.  This shows that  $G_\xi\subset \Og$ is an upward  $x_{d+1}$-domain attached to $\Ga$.

	Finally, we may define the set $G_\xi$ for a general $\xi\in\Ga$ in a similar manner. Indeed,  if $1\leq j\leq d+1$ is such that $|n_\xi\cdot e_j| =\max_{1\leq i\leq d+1} |n_\xi\cdot e_i|$, then $G_\xi\subset \Og$ is an upward or downward $x_j$-domain attached to $\Ga$ according to whether $n_\xi\cdot e_j>0$ or $n_\xi\cdot e_j<0$. 
	Since  $\Ga=\bigcup_{\xi\in\Ga} \p' G_\xi$ and each  $\p' G_\xi$ is open relative to the topology of $\Ga$, we may
	find  a finite cover $\{\p' G_{\xi_i} \}_{i=1}^{m_0}$ of $\Ga$. This completes the proof of  Lemma~~\ref{lem-2-1-18}.
\end{proof}

%
%
%


%

\chapter{Geometric reduction near the boundary}\label{sec:reduction}

Our main goal in this chapter is to   show that    the Jackson inequality in   Theorem~\ref{Jackson-thm} can be deduced from  the following     Jackson-type estimates on domains  of special type.

\begin{thm}\label{THM-4-1-18} 	
	If $0<p\leq \infty$, and   $G\subset \Og$ is   an upward or downward $x_j$-domain  attached to $\Ga$ for some $1\leq j\leq d+1$,   then 
	\begin{equation}\label{Jackson:special}
	E_n (f)_{L^p (G)}\leq C \Bl[\wt{\og}_{G}^r (f, \f \tau n)_p+ \og_{\Og,\vi}^r (f, \f 1n;  e_j)_{p}\Br],\   
	\end{equation}
	where 
	the constants $C, \tau>1$ are independent of $f$ and $n$.  
\end{thm}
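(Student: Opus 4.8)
\textbf{Proof proposal for Theorem~\ref{THM-4-1-18}.}

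The plan is to reduce, via the affine/reflection invariance in~\eqref{5-1-eq}--\eqref{2-3-18}, to the model case where $G$ is an upward $x_{d+1}$-domain of the form~\eqref{2-7-special} with $\xi=0$, so $j=d+1$. The key geometric fact is that, after a bi-Lipschitz change of variables that straightens the graph $y=g(x)$ into a flat slab, the domain $G$ becomes comparable to a product domain $(-b,b)^d\times(-Lb,0]$ while the tangential vectors $\xi_j(u)$ become (uniformly bounded perturbations of) the coordinate directions $e_j$, $j=1,\dots,d$, and the essential boundary $\p'G$ becomes the top face $y=0$. Under this straightening the tangential modulus $\wt{\og}_G^r(f,\tau n^{-1})_p$ controls the $r$-th order differences in all $d$ horizontal coordinate directions with step size $\sim n^{-1}$ on the interior region $G^{\tau/n}$ (where the distance to $\p'G$ exceeds $A_0\tau^2 n^{-2}$), while $\og_{\Og,\vi}^r(f,n^{-1};e_{d+1})_p$ controls the $(d+1)$-st (near-normal) direction with the Ditzian--Totik weight $\vi_\Og(e_{d+1},\cdot)\sim\sqrt{\dist(\cdot,\Ga)}$, which is exactly the step size needed as one approaches the flattened boundary.

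The main step is then a Whitney-type argument on a pyramidal (or ``tent'') decomposition of $G$ into dyadic layers $G_k$, where $G_k$ consists of points at distance $\sim 2^{-k}$ from $\p'G$ intersected with horizontal cubes of side $\sim 2^{-k}$ (for $2^{-k}\gtrsim n^{-2}$) together with a bottom ``core'' layer at distance $\gtrsim n^{-2}$. On each such layer $G_k$, the horizontal step $n^{-1}$ is $\gtrsim$ the layer's horizontal scale $2^{-k/2}$ only for the coarse layers, so one must be careful: I would instead rescale each layer to unit size, apply the directional Whitney inequality~\eqref{7-8-18-00} of Chapter~\ref{Whitney:sec} with the $d+1$ coordinate directions $\mathcal{E}=\{e_1,\dots,e_{d+1}\}$ on the rescaled layer, and track how the rescaling converts $\og^r(\cdot;\mathcal{E})$ on the layer back into the tangential modulus (for the $e_1,\dots,e_d$ components, where the anisotropic scaling $2^{-k/2}$ horizontal vs.\ $2^{-k}$ vertical turns a step $2^{-k/2}$ into a step $\sim n^{-1}$ on the scale-$n^{-2}$ bottom layer and matches $\wt\og_G^r$ on the others) and the weighted modulus $\og_{\Og,\vi}^r(\cdot;e_{d+1})$ (for the $e_{d+1}$ component, using $\vi_\Og(e_{d+1},\xi)\sim\sqrt{\dist(\xi,\p'G)}\sim 2^{-k/2}$ on $G_k$). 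This gives a near-best local polynomial $Q_k\in\Pi_{r-1}^{d+1}$ on each layer with $\|f-Q_k\|_{L^p(G_k)}$ bounded by the appropriate piece of the right-hand side of~\eqref{Jackson:special}.

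Finally I would patch the local polynomials $Q_k$ together. The standard device is to telescope: since consecutive layers overlap in a set of comparable measure, $\|Q_k-Q_{k+1}\|_{L^p(G_k\cap G_{k+1})}$ is controlled by the two local errors, and since $Q_k-Q_{k+1}$ is a polynomial of fixed degree $r-1$ on a domain of bounded eccentricity, a Remez/Nikolskii-type inequality propagates this bound to all of $G_k$; summing the geometric-type series (using $p$-triangle inequality for $p<1$ with the $\min(1,p)$ power, and the fact that the number of layers is $O(\log n)$ but the errors decay geometrically in $k$ off the dominant scale) yields a single polynomial $Q\in\Pi_{c n}^{d+1}$—or more precisely, after multiplying the $Q_k$ by a partition of unity subordinate to the layers and invoking Theorem~\ref{polyPartition00}-type localization to push the degree up to $cn$—with $\|f-Q\|_{L^p(G)}$ bounded by the right-hand side of~\eqref{Jackson:special}. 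The hard part will be the bookkeeping in the anisotropic rescaling of the near-boundary layers: one must verify that the horizontal directions $\xi_j(u)$, which are only tangent at the boundary point directly above, remain within the admissible cone for the rescaled Whitney inequality throughout each layer (this uses the $C^2$ bound on $g$ and the condition~\eqref{parameter-2-9} on $L$), and that the weight $\vi_{\Og}$ genuinely behaves like $\sqrt{\dist(\cdot,\p'G)}$ and not $\sqrt{\dist(\cdot,\Ga)}$ near the lateral faces of $G$—which is where the hypothesis that $G$ is \emph{attached} to $\Ga$ (Definition~\ref{Def-2-1}), i.e.\ $G^*=Q\cap\Og$, is essential.
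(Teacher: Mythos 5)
Your overall architecture---local Whitney estimate on an anisotropic decomposition of $G$, followed by patching via a polynomial partition of unity---is in the same spirit as the paper, but the decomposition you propose has a genuine gap. The paper partitions $G$ into $\sim n^{d+1}$ anisotropic cells $I_{\mathbf{i},j}$ whose horizontal side is $\sim b/n$ and whose vertical thickness is $\sim(j+1)/n^2\sim\sqrt{\delta}/n$ (a Chebyshev partition in the variable $g(x)-y$). Each cell's dimensions match \emph{exactly} the step sizes that appear in the two moduli on the right-hand side of~\eqref{Jackson:special}, so Whitney (applied via~\eqref{8-7-0} and Theorem~\ref{cor-7-3}, since $I^*_{\mathbf{i},j}$ is sandwiched between two comparable parallelepipeds with edge directions $\mathcal{E}(x^*_{\mathbf{i}})\cup\{e_{d+1}\}$) yields a local error whose $\ell^p$-sum over all $n^{d+1}$ cells is controlled directly by the averaged forms of $\widetilde\omega^r_G$ and $\omega^r_{\Omega,\varphi}(\cdot;e_{d+1})$. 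Your dyadic layers $G_k$ with pieces of size $2^{-k/2}$ (horizontal) by $2^{-k}$ (vertical) only have the right scale at the finest layer $k\approx 2\log_2 n$; on coarser layers the pieces are much larger than the step $1/n$, so after rescaling to unit size Whitney produces a modulus with step $\gg 1/n$, which is not controlled by the right-hand side. Worse, on a coarse layer the tangent vectors $\xi_j(u)$ vary across a piece by $O(2^{-k/2}\|g''\|_\infty)$, which is $O(1)$ for small $k$, so they do not all lie in a single admissible Whitney cone for the rescaled layer. To repair this you would have to refine each layer into $1/n$-sized horizontal cells and $\sqrt{\delta}/n$-thick vertical slices, at which point you have essentially reconstructed the paper's Chebyshev partition.

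The patching is also underspecified. The ``geometric-type series'' you invoke has no visible geometric decay in $k$: the dyadic layers' local Whitney errors all scale like the right-hand side of~\eqref{Jackson:special}, so summing $O(\log n)$ of them naively loses a logarithmic factor, and telescoping does not obviously remove it. The paper instead replaces the logarithmic stack of layers with the full $n^{d+1}$-cell partition, on which it proves the localized estimate $E_n(f)_{L^p(G)}\le C\,\omega^r_{\mathrm{loc}}(f,n^{-1})_{L^p(G)}$ of Theorem~\ref{THM-WT-OMEGA}; the patching there multiplies the local best polynomials $s_\gamma\in\Pi^{d+1}_{(d+1)(r-1)}$ by the partition of unity $\{q_\gamma\}$ from Theorem~\ref{strips-0} and controls $\sum_\gamma(s_\beta-s_\gamma)q_\gamma$ on each cell by chaining along a discrete grid path, using the doubling property of Lemma~\ref{lem-5-2} and the exponential localization of the $q_\gamma$. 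Two further remarks: straightening $y=g(x)$ can only be a heuristic since the straightening map destroys the polynomial structure---the paper uses local \emph{linear} (and, for the partition of unity, quadratic) approximations of $g$ instead; and your observation that $\varphi_\Omega$ behaves like $\sqrt{\dist(\cdot,\partial'G)}$ on $G$, supplied by Lemma~\ref{metric-lem}, is indeed a key ingredient in the reduction carried out in Section~\ref{subsection-8:1}.
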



The proof of Theorem~\ref{THM-4-1-18} will be given in Section~\ref{Sec:8}. In this chapter, we  will  show how Theorem~\ref{Jackson-thm} can be deduced from Theorem~\ref{THM-4-1-18}. The idea of our proof is close to that in   \cite[Chapter 7]{To17}.  

\section{Lemmas and geometric reduction} 


We need  a series of  lemmas, the first of which gives a well known Jackson type estimate on a rectangular box. 

\begin{lem}\label{lem-6-1-0}\cite[Lemma 2.1]{Di96}
	Let $B$ be a compact  rectangular box  in $\RR^{d+1}$.  Assume that  $f\in L^p(B)$ if  $0<p< \infty$ and $f\in C(B)$ if $p=\infty$.
	Then for $0<p\leq \infty$, 
	$$\inf_{P\in \Pi_n^{d+1}} \|f-P\|_{L^p(B)} \leq C \max_{1\leq j\leq d+1}\og_{B,\vi}^r (f, \f 1n, e_j)_{p},$$
	where $C$ is independent of $f$ and $B$.
\end{lem}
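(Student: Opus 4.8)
\textbf{Proof plan for Lemma~\ref{lem-6-1-0}.}

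The plan is to reduce to the one-dimensional case by the standard tensor-product (coordinate-by-coordinate) argument, and to invoke the classical univariate Ditzian--Totik Jackson theorem on an interval in each coordinate direction. First I would note that by an affine change of variables (which only rescales the $L^p$-norm by a constant depending on $\det$, cancelling on both sides of the inequality, cf.~\eqref{5-1-eq}) it suffices to treat the unit cube $B=[-1,1]^{d+1}$, so the constant $C$ becomes genuinely independent of $B$. Then I would proceed by induction on the dimension, writing $B=B'\times[-1,1]$ with $B'\subset\RR^d$ a box, and for each fixed $x'\in B'$ apply the univariate result: there is a polynomial $P_{x'}$ of degree $\le n$ in the last variable with $\|f(x',\cdot)-P_{x'}\|_{L^p([-1,1])}\le C\,\og_{[-1,1],\vi}^r(f(x',\cdot),\tfrac1n)_p$, where the univariate Ditzian--Totik modulus uses the weight $\vi(y)=\sqrt{1-y^2}$, which is exactly (up to constants) the restriction of $\vi_B(e_{d+1},\cdot)$ to the segment. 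A near-optimal such $P_{x'}$ can be produced by an explicit linear operator (a de la Vall\'ee Poussin / Freud-type operator applied in the last variable), so that the coefficients of $P_{x'}$ depend measurably on $x'$ and one controls $\|f-P_{x'}\|_{L^p(B)}$ by integrating the univariate estimate in $x'$ and using Fubini.

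Next I would remove the remaining $x'$-dependence of the coefficients. The coefficients of $P_{x'}$ are themselves functions of $x'$ lying in $L^p(B')$ (linear images of $f$), and one approximates each of them, and hence $P_{x'}$ as a whole, by applying the dimension $d$ case of the lemma (the induction hypothesis) in the variables $x'$; this replaces $P_{x'}$ by a genuine polynomial $P\in\Pi_n^{d+1}$ at the cost of an error controlled by $\max_{1\le j\le d}\og_{B,\vi}^r(g,\tfrac1n,e_j)_p$ for the relevant intermediate functions $g$. A routine but slightly careful point here is that the modulus of the auxiliary (smoothed-in-the-last-variable) function $g$ in the directions $e_1,\dots,e_d$ is bounded by the modulus of $f$ in those directions, because the smoothing operator in the last variable is a bounded averaging operator that commutes with differences in the other coordinates. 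Combining the two stages via the triangle inequality (for $p\ge1$) or the $p$-triangle inequality (for $0<p<1$, absorbing the extra constant into $C$) gives the claimed bound with the maximum over all $d+1$ coordinate directions.

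I would expect the main obstacle to be bookkeeping rather than anything deep: producing the intermediate polynomial $P_{x'}$ by an \emph{explicit linear} operator so that measurability in $x'$ and the $L^p$-integration are legitimate, and tracking that the directional moduli of the auxiliary functions do not exceed (constant multiples of) the directional moduli of $f$. The case $0<p<1$ needs the quasi-norm triangle inequality and the fact that the univariate Jackson theorem and the boundedness of the smoothing operators persist for $p<1$; since this is a known result (the citation \cite{Di96} handles exactly this range), I would either cite those univariate facts or sketch that the de la Vall\'ee Poussin-type operators are bounded on $L^p$ for $p<1$ on an interval. No genuinely new idea is required; the statement is a multivariate Jackson theorem on a box obtained from the one-dimensional Ditzian--Totik theorem by the tensor-product method, and I would present it as such, citing \cite{Di-To} for the univariate ingredient.
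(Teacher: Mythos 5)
The paper does not prove this lemma; it cites it verbatim from Ditzian~\cite{Di96}*{Lemma~2.1}, so there is no internal proof to compare against. Your tensor-product induction is the standard way this kind of Jackson estimate on a box is obtained, and for $1\le p\le\infty$ your outline is essentially sound once the degree bookkeeping (the iterated operator produces degree $\le (d+1)n$, so one starts with $\lfloor n/(d+1)\rfloor$ and uses subadditivity of the modulus) is fixed up.

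The genuine gap is your treatment of $0<p<1$, and it is not a bookkeeping gap. You propose to produce the intermediate approximant $P_{x'}$ by an explicit \emph{linear} operator and to ``sketch that the de la Vall\'ee Poussin-type operators are bounded on $L^p$ for $p<1$.'' That statement is false, and it cannot be repaired within a linear framework: for a nonatomic measure the dual of $L^p$ with $0<p<1$ is trivial, so any continuous linear operator from $L^p([-1,1])$ to a finite-dimensional space such as $\Pi_n^1$ is identically zero. There is therefore no bounded linear $L^p\to\Pi_n$ operator to apply coordinate-by-coordinate. This is precisely why the $p<1$ range is hard and why~\cite{Di96} exists as a separate paper from~\cite{Di-To}*{Chapter~12}: the construction must be nonlinear (near-best local approximants via Whitney-type estimates on a tensor grid, glued with a polynomial partition of unity in all variables simultaneously). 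Once $P_{x'}$ is chosen by a nonlinear, near-best rule, both pillars of your reduction collapse: the coefficients $c_k(x')$ are no longer linear images of $f$, the operator no longer commutes with differences in $x'$, and the directional moduli of the $c_k$ are not controlled by those of $f$, so the induction hypothesis cannot be invoked on them. A correct proof for the full range $0<p\le\infty$ has to bypass the coordinate-by-coordinate induction and build the multivariate polynomial directly from local (Whitney) approximants, which is a different argument, not a refinement of the one you sketch.
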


Our second lemma is  a simple observation on domains of special type. 
Recall that  unless otherwise stated we  always assume that the parameter $L$
of a domain  of special type   satisfies the condition~\eqref{parameter-2-9}. 

\begin{lem}\label{lem-6-2:Dec} Let $G\subset \Og$ be an (upward or download) $x_j$- domain  of special type attached to $\Ga$ for some $1\leq j\leq d+1$. Then for  each parameter $\mu\in (\f12, 1]$, there exists an open rectangular box $Q_\mu$ in $\RR^{d+1}$ such that 
	\begin{equation}\label{eqn:decomp} \p' G(\mu) \subset S_{\mu, G}:=Q_\mu\cap \Og \subset G(\mu)\   \ \text{and}  \  \  \overline{Q_\mu}\subset Q_1\   \  \text{provided $\mu<1$}.   \  \end{equation}
\end{lem}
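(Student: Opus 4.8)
The plan is to work with the upward $x_{d+1}$-domain model
\[
G=\varsigma+\Bl\{(x,y):\ x\in(-b,b)^d,\ g(x)-Lb<y\le g(x)\Br\},
\]
since the general case follows by applying the reflections $\sa_j,\tau_j$ and noting that rectangular boxes with axis-parallel sides are preserved by these reflections. By translating we may also take $\varsigma=0$. For a parameter $\mu\in(\tf12,1]$ I would simply try the candidate
\[
Q_\mu:=(-\mu b,\mu b)^d\times\bl(m-\mu Lb,\ M_\mu\br),
\]
where $m:=\min_{u\in[-2b,2b]^d}g(u)$ and $M_\mu>\max_{x\in[-\mu b,\mu b]^d}g(x)$ is a height chosen just large enough that the top face of $Q_\mu$ lies strictly above the essential boundary piece $\p'G(\mu)$ but still below the top face of $Q_1$ (here $Q_1$ is the box furnished by Definition~\ref{Def-2-1}, for which $G^\ast=Q_1\cap\Og$; one can read off that $Q_1$ has $x$-extent $(-2b,2b)^d$ and lower $y$-face at height $\min_{[-2b,2b]^d}g-4Lb$, so there is ample room). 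The point of the lower face being at height $m-\mu Lb$ rather than $\min_x g(x)-\mu Lb$ over the smaller cube is that it must agree with the defining slab of $G(\mu)$ from below; I will need to check $g(x)-\mu Lb\ge m-\mu Lb$ trivially and that the slab $G(\mu)$ indeed reaches down to $g(x)-\mu Lb$.

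The three inclusions in~\eqref{eqn:decomp} are then verified in turn. First, $S_{\mu,G}=Q_\mu\cap\Og\subset G(\mu)$: a point $(x,y)\in Q_\mu\cap\Og$ has $x\in(-\mu b,\mu b)^d$; since $G^\ast=Q_1\cap\Og$ and the boundary graph over $(-2b,2b)^d$ is $y=g(x)$, membership in $\Og$ locally forces $y\le g(x)$, and the lower bound $y>m-\mu Lb\ge g(x)-\mu Lb$ needs the elementary estimate $g(x)-m\le \mathrm{osc}_{[-2b,2b]^d}g\le (?)$ — this is where the parameter condition~\eqref{parameter-2-9}, $L\ge 4\sqrt d\,\max\|\nabla g\|+1$, enters: it gives $g(x)-m\le 4\sqrt d\,b\,\max\|\nabla g\|\le Lb$, hence for $\mu\in(\tf12,1]$ one gets $g(x)-\mu Lb\le m-\mu Lb+(g(x)-m)\le$ the lower face, wait — I must be careful with the direction of the inequality; the clean way is $m-\mu Lb\le g(x)-\mu Lb$ only if $m\le g(x)$, which is false in general, so instead I will argue $y>m-\mu Lb$ and $g(x)\ge m$... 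Rather than fight this, the robust route is to enlarge $M_\mu$'s counterpart on the bottom only if needed, or observe that the actual containment wanted is $y>g(x)-\mu Lb$ which follows from $y>m-2Lb\ge \min_{[-2b,2b]^d}g-2Lb$ together with $g(x)\le m+Lb$; choosing the lower face of $Q_\mu$ at height $m-2Lb$ (still $\ge \min g-4Lb$, so still inside $Q_1$, and still $<g(x)-\mu Lb$ since $g(x)-\mu Lb\ge m-\mu Lb> m-2Lb$) makes this work for all $\mu\in(\tf12,1]$. Second, $\p'G(\mu)\subset S_{\mu,G}$: the essential boundary $\p'G(\mu)=\{(x,g(x)):x\in(-\mu b,\mu b)^d\}$ consists of boundary points of $\Og$ with $x\in(-\mu b,\mu b)^d$ and $y=g(x)$, and since $m-2Lb<g(x)<M_\mu$ these points lie in $Q_\mu$; being on $\Ga\subset\Og$, they lie in $Q_\mu\cap\Og=S_{\mu,G}$. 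Third, $\overline{Q_\mu}\subset Q_1$ for $\mu<1$: the $x$-extent $[-\mu b,\mu b]^d\subset(-2b,2b)^d$ strictly; the lower face $m-2Lb>\min_{[-2b,2b]^d}g-4Lb$; and $M_\mu$ was chosen below the top face of $Q_1$ — here I should just define $M_\mu:=\tf12\bl(\max_{[-\mu b,\mu b]^d}g+M_1^{\mathrm{top}}\br)$ where $M_1^{\mathrm{top}}$ is the top coordinate of $Q_1$, which exceeds $\max_{(-2b,2b)^d}g\ge\max_{[-\mu b,\mu b]^d}g$, giving strict inequality.

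\textbf{Main obstacle.} The only real subtlety is bookkeeping the vertical extents so that a \emph{single} box $Q_\mu$ simultaneously (a) caps $\p'G(\mu)$ from above and sits inside $\Og$-slab $G(\mu)$ from below, and (b) has closure strictly inside $Q_1$; this forces the quantitative use of~\eqref{parameter-2-9} to compare $\mathrm{osc}\,g$ with $Lb$, and a small amount of care because the natural "floor" $g(x)-\mu Lb$ varies with $x$ while a box has a flat floor. Picking the flat floor at the safe height $m-2Lb$ (independent of $\mu$ and of $x$, squeezed between $\min_{[-2b,2b]^d}g-4Lb$ and $g(x)-\mu Lb$) resolves this cleanly, and the rest is the routine verification of set inclusions indicated above. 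Pulling back through the reflection/translation $T$ then yields the statement for a general domain of special type, since $T$ maps axis-parallel boxes to axis-parallel boxes and commutes with the formation of $G(\mu)$, $\p'G(\mu)$, and $G^\ast$.
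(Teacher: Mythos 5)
There is a genuine gap in the placement of the \emph{floor} of your box $Q_\mu$, and it is not a bookkeeping slip: your two candidates both put the floor too low, which breaks the inclusion $S_{\mu,G}=Q_\mu\cap\Og\subset G(\mu)$.

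Recall $G(\mu)=\{(x,y):x\in(-\mu b,\mu b)^d,\ g(x)-\mu Lb<y\le g(x)\}$ is a thin slab under the graph of $g$; it does \emph{not} reach down to the bottom of $G^\ast$. If the floor of $Q_\mu$ sits at some height $h$ with $h<g(x_0)-\mu Lb$ for even one $x_0\in(-\mu b,\mu b)^d$, then any point $(x_0,y)$ with $h<y\le g(x_0)-\mu Lb$ lies in $G^\ast\subset\Og$ (because $y$ is still above $\min_{[-2b,2b]^d}g-4Lb$ and $y\le g(x_0)$) and also in $Q_\mu$, hence in $S_{\mu,G}$, but it is not in $G(\mu)$. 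So the floor must satisfy $h\ge g(x)-\mu Lb$ for every $x\in(-\mu b,\mu b)^d$, i.e.\ $h\ge \max_{[-b,b]^d}g-\mu Lb$. At the same time, to keep $\p'G(\mu)\subset Q_\mu$ you must have $h<\min_{[-b,b]^d}g$. Your first candidate puts $h=m-\mu Lb$ with $m=\min_{[-2b,2b]^d}g\le g(x)$, so $h\le g(x)-\mu Lb$; your ``robust'' fix pushes $h$ even lower, to $m-2Lb$. Both are below the required threshold. (The spot where you write ``$m-\mu Lb\ge g(x)-\mu Lb$ [\ldots] only if $m\le g(x)$, which is false in general'' has the logic backwards: $m\le g(x)$ is always true since $m$ is a minimum; what you need is $m\ge g(x)$, which is generally false — and lowering the floor does not repair this.)

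The missing idea is exactly the one the paper uses: the floor must be squeezed into the interval $(\max_{[-b,b]^d}g-\mu Lb,\ \min_{[-b,b]^d}g)$, and this interval is nonempty precisely because~\eqref{parameter-2-9} gives
\[
\max_{[-b,b]^d}g-\min_{[-b,b]^d}g\le 2\sqrt d\,b\max_{[-b,b]^d}\|\nabla g\|\le\tfrac12 Lb<\mu Lb
\]
for $\mu>\tfrac12$. That is the whole content of the restriction $\mu\in(\tfrac12,1]$, which your argument never actually exploits. With $a_{1,\mu}$ chosen in that interval (and decreasing as $\mu$ increases to force nesting), and $a_{2,\mu}$ chosen between $\max_{[-b,b]^d}g$ and the top face $a_2$ of the box $Q$ with $G^\ast=Q\cap\Og$, the three inclusions in~\eqref{eqn:decomp} are then routine. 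One small additional point: in~\eqref{eqn:decomp} the set $Q_1$ denotes the $\mu=1$ member of the family $\{Q_\mu\}$ being constructed, not the larger box $Q$ from Definition~\ref{Def-2-1}; so $\overline{Q_\mu}\subset Q_1$ is the requirement $a_{1,1}<a_{1,\mu}$, $a_{2,\mu}<a_{2,1}$, $\mu b<b$, which is why the paper arranges the families $a_{1,\mu}$ and $a_{2,\mu}$ monotonically rather than reasoning about containment in the fixed $G^\ast$-box.
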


\begin{proof}
	Without loss of generality, we may assume that $G$ is given in~\eqref{2-7-special} with $\xi=0$.  Let
	$g_{\max}:=\max_{x\in [-b, b]^d} g(x)$ and $g_{\min} :=\min_{x\in [-b, b]^d} g(x)$.
	Using~\eqref{parameter-2-9}, we have 
	$$g_{\max}-g_{\min} \leq 2 \sqrt{d} b \max_{x\in [-b, b]^d}\|\nabla g(x)\|\leq \f 12 Lb.$$
	Thus, given each parameter $\mu\in (\f 12, 1]$, we may find a constant $a_{1,\mu}$ such that
	$$ g_{\max} -\mu L b < a_{1,\mu} <g_{\min}.$$
	We may choose the constant $a_{1,\mu}$ in such a way that $a_{1,1}<a_{1,\mu}$ if $\mu<1$. 
	On the other hand, since $G$ is attached to $\Ga$, we may find an open rectangular box $Q$ of the form $(-2b, 2b)^d\times (a_1, a_2)$ such that $G^\ast =Q\cap \Og$, where $a_1, a_2$ are two constants and    $a_2>g_{\max}$. 
	Let  $a_{2, 1} =a_2$ and let  $a_{2,\mu}$ be a constant so  that $g_{\max} < a_{2,\mu} <a_2$ for $\mu\in (\f12, 1)$.   Now  setting 
	\begin{equation}\label{6-3-Dec}
	Q_\mu :=(-\mu b, \mu b)^d\times (a_{1,\mu}, a_{2,\mu})\   \   \ \text{and}\   \ S_{\mu, G}: = Q_\mu \cap \Og,
	\end{equation}
	we obtain~\eqref{eqn:decomp}.
\end{proof}


\begin{rem}
	Note that~\eqref{eqn:decomp} implies that $\proj_j (Q_\mu) = \proj_j (G(\mu))$ for $\mu\in (\f 12, 1]$, where $\proj_j$ denotes the orthogonal projection onto the coordinate plane $x_j=0$.
\end{rem}

Now let $G_1,\dots, G_{m_0}\subset \Og$ be the domains of special type   in Lemma~\ref{lem-2-1-18}. 
Note that  for  every  domain $G$ of special type, its essential boundary  can be expressed as 
$\p ' G=\bigcup_{n=1}^\infty \p' G (1-n^{-1})$.  
Since $\Ga$ is compact and each $\p' G_j$ is open relative to the topology of $\Ga$, there exists $\ld_0\in (\f 12, 1)$ such that $\Ga =\bigcup_{j=1}^{m_0} \p' G_j (\ld_0)$. 
For convenience, we call $S\subset \Og$ an admissible subset of $\Og$ if either $S=S_{G_j, \ld_0}$ for some $1\leq j\leq m_0$ or $S$ is an open cube in $\RR^{d+1}$ such that $4S\subset \Og$. 

Our third lemma  gives a useful decomposition of the domain $\Og$.

\begin{lem}\label{LEM-4-2-18-0}  There exists  a sequence $\{\Og_s\}_{s=1}^J$ of admissible subsets of $\Og$ such that   $\Og=\bigcup_{j=1}^J \Og_j,$ and 
	$\Og_s \cap \Og_{s+1}$ contains an open ball of radius $\ga_0>0$ in $\RR^{d+1}$ 	for  each  $s=1,\dots, J-1$, where 
	the  parameters $J$ and $\ga_0$ depend only on the domain $\Og$. 	
\end{lem}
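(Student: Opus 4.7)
The plan is to produce a finite cover of $\Og$ by admissible subsets and then use the connectedness of $\Og$ to arrange the members of this cover into a chain with the required overlap property.

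For the cover, I would start from the boundary admissible sets $\{S_{G_j,\ld_0}\}_{j=1}^{m_0}$ built via Lemma~\ref{lem-6-2:Dec}. These are relatively open in $\Og$ and together cover an $\Og$-neighborhood of $\Ga$, because each $S_{G_j,\ld_0}$ contains $\p'G_j(\ld_0)$ and $\{\p'G_j(\ld_0)\}_{j=1}^{m_0}$ covers $\Ga$ by the choice of $\ld_0$. Consequently $K:=\Og\setminus\bigcup_{j=1}^{m_0}S_{G_j,\ld_0}$ is compact with $\dist(K,\Ga)>0$, and a compactness argument yields finitely many open cubes $C_1,\dots,C_M$ covering $K$ with side length $\ll\dist(K,\Ga)$ and $4C_i\subset\Og$ for every $i$. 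The collection $\mathcal{F}:=\{S_{G_j,\ld_0}\}_{j=1}^{m_0}\cup\{C_i\}_{i=1}^{M}$ is then a finite cover of $\Og$ by admissible subsets.

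Next, I would consider the graph $\mathcal{G}$ on vertex set $\mathcal{F}$ in which two members are joined by an edge iff their intersection contains an open Euclidean ball of positive radius. Before using $\mathcal{G}$, I refine $\mathcal{F}$ by inserting bridging interior cubes: whenever two members $S,S'\in\mathcal{F}$ have nonempty intersection in $\Og$, one chooses a point that lies strictly inside $\Og$ and belongs to both (such a point exists because each $S_{G_j,\ld_0}=Q_{\ld_0}^{(j)}\cap\Og$ penetrates a definite distance into the interior of $\Og$, and the cubes $C_i$ already lie in the interior) and one inserts a small open cube $C$ around that point with $4C\subset\Og$, $C\subset S\cap S'$. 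After this refinement the connectedness of $\Og$ forces $\mathcal{G}$ to be connected, for otherwise $\Og$ would split into a disjoint union of two nonempty relatively open sets. Since $|\mathcal{F}|<\infty$ there is a uniform lower bound $\ga_0>0$ on the radii of the balls witnessing adjacency.

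Finally, I would pick a spanning tree of $\mathcal{G}$ and perform a depth-first walk along its edges; this produces a sequence $\Og_1,\Og_2,\dots,\Og_J$ of members of $\mathcal{F}$ (with $J\le 2|\mathcal{F}|$, repetitions allowed) whose consecutive terms are adjacent in $\mathcal{G}$ and whose union is $\bigcup\mathcal{F}=\Og$. By construction $\Og_s\cap\Og_{s+1}$ contains an open ball of radius $\ga_0$ in $\RR^{d+1}$ for every $s=1,\dots,J-1$, as required. The main obstacle is the bridging step: one must guarantee that adjacent cover members share a Euclidean ball of uniform radius rather than merely a nonempty relatively open intersection that could be pinched against $\Ga$ (where no ball in $\RR^{d+1}$ fits inside $\Og$). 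The interior bridging cubes resolve this, using that each $S_{G_j,\ld_0}$ extends into the interior of $\Og$ by a positive distance depending only on $Q_{\ld_0}$.
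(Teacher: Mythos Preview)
Your overall plan---build a finite admissible cover, use connectedness of $\Og$ to organize it into a chain, and traverse the resulting graph---matches the paper's proof. The difference lies in how you handle what you call ``the main obstacle.''

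You worry that two admissible sets might intersect only in a region pinched against $\Ga$, where no Euclidean ball of $\RR^{d+1}$ fits, and you propose inserting bridging interior cubes to repair this. The paper sidesteps this entirely with a one-line topological observation: each admissible set is relatively open in $\Og$, so if $E_j\cap E_{j'}\neq\emptyset$ then $E_j\cap E_{j'}=V\cap\Og$ for some open $V\subset\RR^{d+1}$; since $\Og$ is the closure of an open set, any point of $V\cap\Og$ has interior points of $\Og$ arbitrarily close to it, so $V\cap\Og$ already contains an interior point of $\Og$ and hence an open Euclidean ball. No bridging cubes are needed. Your own justification for the existence of such an interior point (``each $S_{G_j,\ld_0}$ penetrates a definite distance into the interior'') is not quite an argument---that each set individually reaches the interior does not by itself force their intersection to---whereas the paper's density argument does it cleanly.

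After this observation the two proofs are essentially identical: the paper shows (by contradiction with connectedness of $\Og$) that every member of the cover is chain-connected to a fixed $E_1$ through nonempty intersections, and then concatenates the chains $[E_1:E_2],[E_2:E_1],[E_1:E_3],\dots$ to produce the sequence; your spanning-tree DFS walk achieves the same end. The uniform $\ga_0$ comes in both cases from the finiteness of the cover.
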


To state the fourth  lemma,  let  $\{\Og_s\}_{s=1}^{J}$  be the sequence of sets     in Lemma~~\ref{LEM-4-2-18-0}, and let   $H_m:=\bigcup_{j=1}^m \Og_j$ for $m=1,\dots, J$.
For $1\leq j\leq J$, define $\wh{\Og}_{j}=G_i$ if $\Og_j=S_{G_i, \ld_0}$ for some $1\leq i\leq m_0$;  and $\wh{\Og}_j =2Q$ if $\Og_j$ is an open  cube $Q$ such that $4 Q\subset \Og$.

\begin{lem}\label{REDUCTION}   
	If $0<p\leq \infty$ and $1\leq j<J$, then there exist constants $c_0,C>1$ depending only on $p$ and $\Og$  such that     
	$$ E_{c_0n} (f)_{L^p(H_{j+1})} \leq C \max \Bl\{ E_n (f)_{L^p(\wh{\Og}_{j+1})}, \   E_n (f)_{L^p (H_j)}\Br\}. $$
\end{lem}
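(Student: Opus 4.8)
\textbf{Proof proposal for Lemma~\ref{REDUCTION}.}

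The plan is to combine two local polynomial approximants---one good on $\wh{\Og}_{j+1}$ and one good on $H_j$---by gluing them across the overlap region, using a standard partition-of-unity type argument adapted to polynomials. Let $P$ be a near-best approximant to $f$ from $\Pi_n^{d+1}$ on $H_j$ and let $R$ be a near-best approximant to $f$ from $\Pi_n^{d+1}$ on $\wh{\Og}_{j+1}$, so that $\|f-P\|_{L^p(H_j)}\le 2E_n(f)_{L^p(H_j)}$ and $\|f-R\|_{L^p(\wh{\Og}_{j+1})}\le 2E_n(f)_{L^p(\wh{\Og}_{j+1})}$. The difference $P-R$ is a polynomial of degree $\le n$, and on the overlap $\Og_j\cap\Og_{j+1}$ (which by Lemma~\ref{LEM-4-2-18-0} contains a ball $B$ of radius $\ga_0$) it is small: indeed $\|P-R\|_{L^p(B)}\le C(\|f-P\|_{L^p(B)}+\|f-R\|_{L^p(B)})\le CE_n(f)_{L^p(H_j)}+CE_n(f)_{L^p(\wh{\Og}_{j+1})}$ (for $p<1$ using the $p$-triangle inequality, which only costs a constant). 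First I would invoke a Remez/Nikolskii-type inequality on the ball $B$ versus the bounded set $H_{j+1}$: since $H_{j+1}\subset\Og$ has diameter bounded in terms of $\Og$ and $B$ has fixed radius $\ga_0$, for any $S\in\Pi_n^{d+1}$ one has $\|S\|_{L^p(H_{j+1})}\le e^{c n}\|S\|_{L^p(B)}$ with $c$ depending only on $\Og$. This exponential blow-up is too lossy by itself, so the construction must be more careful.

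The cleaner route is the classical Dzjadyk-type gluing: choose a polynomial $\chi\in\Pi_{c_1 n}^{d+1}$ that is close to $1$ on $H_j$ and close to $0$ on $\wh{\Og}_{j+1}\setminus H_j$, with the transition happening across the overlap ball $B$; such $\chi$ exists because one can take a one-dimensional polynomial in the coordinate separating the two pieces (the admissible sets $\Og_j,\Og_{j+1}$ are, up to the coordinate reflections $\sigma_j,\tau_j$, graph domains or cubes, so a suitable linear functional $\ell(\xi)$ separates $H_j\cap\wh{\Og}_{j+1}$ from the "new" part, and one pulls back a univariate polynomial approximating a step function), using a Chebyshev-type polynomial with the spectral gap placed inside $\ell(B)$. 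Then set $\Phi:=\chi P+(1-\chi)R\in\Pi_{c_0 n}^{d+1}$ with $c_0=c_1+1$. On $H_j$ we estimate $f-\Phi=\chi(f-P)+(1-\chi)(f-R)+(\chi-1)(P-R)\cdot\mathbf 1$... more precisely $f-\Phi=(f-P)+(1-\chi)(P-R)$, and $(1-\chi)$ is uniformly small on $H_j$ while $P-R$ is controlled on $H_j$ again by a Nikolskii inequality from the ball $B$---but now the exponential factor is killed because $(1-\chi)$ is exponentially small there. On $\wh{\Og}_{j+1}\setminus H_j$ we write $f-\Phi=(f-R)+\chi(R-P)$ and use that $\chi$ is exponentially small there. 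On the remaining part $H_j\cap\wh{\Og}_{j+1}$ both bounds are available. Summing the $L^p$ (or $\ell^p$) contributions over the finitely many pieces and using $H_{j+1}=H_j\cup\wh{\Og}_{j+1}$ (note $\Og_{j+1}\subset\wh{\Og}_{j+1}$) gives $\|f-\Phi\|_{L^p(H_{j+1})}\le C\max\{E_n(f)_{L^p(H_j)},E_n(f)_{L^p(\wh{\Og}_{j+1})}\}$, which is the claim.

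The main obstacle is constructing the cutoff polynomial $\chi$ with the right quantitative behaviour: it must be bounded by an absolute constant on all of $\Og$, exponentially close to $1$ on $H_j$, exponentially close to $0$ off a neighbourhood of $H_j$, and of degree only $O(n)$, with all the constants depending solely on $\Og$ (through $\ga_0$, $J$, and the geometry of the admissible sets) and on $p$. The degree-$O(n)$ requirement forces the transition zone to have width bounded below by a constant, which is exactly what the ball $B$ of radius $\ga_0$ in $\Og_j\cap\Og_{j+1}$ provides; the construction then reduces to a univariate fact about polynomials approximating a step function on an interval with a guaranteed gap around the jump, which is classical (a shifted/scaled power of a Chebyshev polynomial, or an explicit Dzjadyk kernel). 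One must also verify that the whole argument survives $p<1$: every use of the triangle inequality is replaced by the $p$-inequality $\|a+b\|_p^p\le\|a\|_p^p+\|b\|_p^p$, which only inflates constants, and the Nikolskii inequalities hold for all $0<p\le\infty$. Finally, I would note that the constants $c_0$ and $C$ produced this way depend only on $p$ and $\Og$, as required, since the number of pieces $J$ and the overlap radius $\ga_0$ are determined by $\Og$ alone.
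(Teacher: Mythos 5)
Your overall strategy matches the paper's: glue two local near-best approximants $P$ (on $H_j$) and $R$ (on $\wh{\Og}_{j+1}$) via a polynomial cutoff of degree $O(n)$, and defeat the exponential blow-up from the Remez/Nikolskii inequality (Lemma~\ref{lem-4-1}) by making the cutoff exponentially small ($\theta^n$) exactly where the ``other'' approximant would otherwise dominate. The issue is the cutoff you propose cannot be built. You want $\chi\in\Pi^{d+1}_{O(n)}$ with $1-\chi\leq\theta^n$ on \emph{all} of $H_j$ and $\chi\leq\theta^n$ on $\wh{\Og}_{j+1}\setminus H_j$, obtained by pulling back a univariate step-approximation through a ``separating linear functional $\ell$.'' But $H_j$ is a union of up to $j$ admissible sets scattered around $\Og$; there is no single hyperplane (nor any linear functional $\ell$) separating $H_j$ from $\wh{\Og}_{j+1}\setminus H_j$. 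Moreover Lemma~\ref{LEM-4-2-18-0} only guarantees that $\Og_j\cap\Og_{j+1}$ contains one ball of radius $\ga_0$; it does \emph{not} guarantee a transition zone of uniformly bounded width along the whole common boundary of $H_j$ and $\wh{\Og}_{j+1}\setminus H_j$, which is what a degree-$O(n)$ cutoff with $\theta^n$-decay for such a general region would require.

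The paper avoids this by building the cutoff relative to the rectangular box structure of the \emph{new} admissible piece, not relative to $H_j$. By Lemma~\ref{lem-6-2:Dec}, $\Og_{j+1}=S_{G,\ld_0}=Q_{\ld_0}\cap\Og$ sits inside a strictly larger box $Q_1$ with $S_{G,1}=Q_1\cap\Og\subset\wh{\Og}_{j+1}$; Lemmas~\ref{lem-4-2}--\ref{lem-4-3} then produce $R_n\in\Pi^{d+1}_{O(n)}$ (a tensor product of one-dimensional Chebyshev-type cutoffs) with $1-R_n\le\theta^n$ on $S_{G,\ld_0}$, $R_n\le\theta^n$ on $\Og\setminus S_{G,1}$, and $R_n\in[0,1]$ in the box-annulus $Q_1\setminus Q_{\ld_0}$. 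Setting $P:=R_nP_1+(1-R_n)P_2$ with $P_1$ near-best on $S_{G,1}$ and $P_2$ near-best on $H_j$, the correct decomposition is \emph{three-way}: on $H_j\cap S_{G,1}$ the cutoff is unconstrained but both $P_1,P_2$ are good; on $H_j\setminus S_{G,1}$ one has $R_n\le\theta^n$; on $S_{G,\ld_0}$ one has $1-R_n\le\theta^n$. Your two-region split $H_j$ versus $\wh{\Og}_{j+1}\setminus H_j$ forces the cutoff to be $\approx1$ on \emph{all} of $H_j$, which is unnecessary and (for general $H_j$) impossible. With the box-adapted cutoff and the three-region split, the rest of your argument (the Remez step on $B$, the $p$-triangle inequality for $p<1$, the degree $c_0n$ bookkeeping, and the dependence of constants only on $p,\Og$) goes through exactly as you sketched.
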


Now we take Theorem~\ref{THM-4-1-18}, Lemma~\ref{LEM-4-2-18-0} and Lemma~\ref{REDUCTION}  for granted and proceed with the proof of  Theorem~\ref{Jackson-thm}.

\begin{proof}[Proof of Theorem~\ref{Jackson-thm}]
	Applying    Lemma~\ref{REDUCTION} $J-1$ times and recalling $H_J =\Og$, we obtain 
	\begin{equation}\label{6-3-0}E_{c_1n} (f)_{L^p(\Og)} \leq  C \max_{1\leq j\leq J} E_n (f)_{L^p (\wh{\Og}_j)},\end{equation}
	where $C, c_1>1$ depend only on  $p$ and $\Og$. 
	If ${\Og}_j =S_{G_i,\ld_0}$ for some $1\leq i\leq m_0$, then $\wh{\Og}_j =G_i$, and by  Theorem~\ref{THM-4-1-18}, 
	$$E_n (f)_{L^p(\wh{\Og}_j)} \leq \max_{1\leq i\leq m_0} E_n (f)_{L^p(G_i)}\leq C \og^r_\Og (f, n^{-1})_p.$$
	If  $\Og_j=Q$ is a cube such that $4Q\subset \Og$, then $\wh{\Og}_j =2Q$ and by Lemma~\ref{lem-6-1-0},
	$$E_n (f)_{L^p(2Q)}\leq C \max_{1\leq j\leq d+1}\og^r_{2Q,\vi} (f, n^{-1}; e_j)_p\leq C \og_{\Og,\vi}^r (f, n^{-1})_p,$$
	where the last step uses the fact that for any   $S\subset \Og$,  
	\begin{equation}\label{4-3-0-18}
	\max_{1\leq j\leq d+1}\og^r_{S,\vi} (f, t;  e_j)_{p}\leq C  \og_{\Og, \vi} ^r (f, t)_p.
	\end{equation}
	Thus, in either case, we have 
	$$ E_n(f)_{L^p(\wh{\Og}_j)}\leq C \og_{\Og}^r (f, n^{-1})_p.$$
	Theorem~\ref{Jackson-thm} then  follows  from the estimate~\eqref{6-3-0}.
\end{proof}

To  complete the reduction argument in this chapter, it remains  to prove  Lemma~\ref{LEM-4-2-18-0}  and    Lemma~\ref{REDUCTION}.

\section{Proof of Lemma~\ref{LEM-4-2-18-0}}
The proof of  Lemma~\ref{LEM-4-2-18-0}   is inspired by~\cite[p.~17]{To14} but written  in somewhat different language.
 Let $S_j =S_{G_j, \ld_0}$ for $1\leq j\leq m_0$.  Note that  $S_j$   is an  open neighborhood of $\p' G_j(\ld)$  relative to the topology of $\Og$.
	Since  $\p' G_j(\ld_0) \subset S_j\subset \Og$, and  $S_j$   is   open relative to the topology of $\Og$ for each $1\leq j\leq m_0$ ,  there exists $\va\in (0,r_0)$ such that  $$ \Ga_\va:=\{ \xi\in\Og:\  \ \dist (\xi,\Ga):<16\sqrt{d+1}\va\}\subset \bigcup_{j=1}^{m_0} S_j.$$
	Let us cover the remaining set $\Og\setminus \Ga_{\va}$ by finitely many  open cubes $Q_j$,  $j=m_0~+~1,\dots,  M_0$ of side length $\va$ such  that $4 Q_j\subset \Og$ for each $j$.
	Thus, setting $E_j=S_j$ for $1\leq j\leq m_0$, and $E_j=Q_j$ for $m_0<j\leq M_0$, 
	we have
	$ \Og=\bigcup_{j=1}^{M_0} E_j.$


	First, note that if $E_j\cap E_{j'}\neq \emptyset$ for some  $1\leq j,  j'\leq M_0$, then $E_j\cap E_{j'}$ must contain a nonempty open ball in $\RR^{d+1}$.  Indeed, 
	since  $E_j\cap E_{j'}$ is open relative to the topology of $\Og$, there exists an open set $V$ in $\RR^{d+1}$ such that $V\cap \Og=E_j\cap E_{j'}\neq \emptyset$. 
	Since $\Og$ is the closure of an open set in $\RR^{d+1}$,  the set $V\cap \Og$ must contain   an interior point of $\Og$.
	
	Next, we set  $\mathcal{A}=\{E_1,\dots, E_{M_0}\}$.  We say   two sets $A, B$ from the collection $\mathcal{A}$ are connected with each other  if there exists  a sequence of distinct sets $A_1, \dots, A_n$ from the collection $\mathcal{A}$  such that $A_1=A$, $A_n=B$ and $A_i\cap A_{i+1}\neq \emptyset$ for $i=1,\dots, n-1$, in which case we  write $[A:B]=\bigcup_{j=1}^n A_j$ and $(A: B)=\bigcup_{j=2}^{n-1} A_j$.
	We  claim   that every set in the collection $\mathcal{A}$ is connected with the set $E_1$.  Once this claim is proved, then  Lemma~\ref{LEM-4-2-18-0} will follow since 
	\begin{align*}\label{connected graph}
	\Og=\bigcup_{j=1}^{M_0} E_j = [E_1: E_2] \cup (E_2: E_1)\cup[E_1: E_3]\cup    \dots \cup [E_1: E_{M_0}]. 
	\end{align*}

	To show the claim, 
	let  $\mathcal{B}$ denote  the collection of all    sets $E_j$ from the collection $\mathcal{A}$   that are connected with $E_1$. 
	Assume that $\mathcal{A}\neq \mathcal{B}$. We obtain a contradiction as follows.  Let  $H:=\bigcup_{E\in\mathcal{B}} E$.  Then a set $E$ from the collection $\mathcal{A}$ is connected with $E_1$ (i.e., $E\in\mathcal{B}$)  if and only if $E\cap H\neq \emptyset$.
	Since $\mathcal{A}\neq \mathcal{B}$,   there exists $E\in\mathcal{A}$ such that $E\cap H=\emptyset$, which
	in particular, implies  that  $H$ is a proper subset of $\Og=\bigcup_{A\in \mathcal{A}} A$.
	Since $\Og$ is a connected subset of $\RR^{d+1}$, $H$ must have  nonempty boundary  relative to the topology of $\Og$. Let $x_0$ be a boundary point of $H$ relative to the topology of $\Og$. Since $H$ is open relative to  $\Og$,  $x_0\in \Og\setminus H=\bigcup_{A\in\mathcal{A}} A \setminus H$.  Let $A_0\in\mathcal{A}$ be   such that    $x_0\in A_0$. Then   $A_0$ is an open neighborhood of  $x_0$ relative to the topology of $\Og$, and hence   $A_0\cap H\neq \emptyset$, which  in turn implies   $A_0\in \mathcal{B}$ and $A_0\subset H$. But this is impossible as $x_0\notin H$.

%
%


\section{ Proof of  Lemma~\ref{REDUCTION}}

We now  turn to the proof of  Lemma~\ref{REDUCTION}.  The proof relies on three additional  lemmas. 
The first one  is similar to~\cite[Lemma~14.3]{To14}, however, we could not follow the conclusion of its proof in~\cite{To14}, where some averaging argument appears to be missing. Our proof below uses a multivariate Nikol'skii inequality which simplifies the transition to the multivariate case.
\begin{lem}\label{lem-4-1}
	If  $B$  is  a  ball in $\RR^{d+1}$ and $\ld>1$, then  for each $P\in\Pi_n^{d+1}$ and $0<q\leq \infty$,
	\begin{equation}\label{eq1}
	\|P\|_{L^q (\ld B)} \leq C_{d,q}(5 \ld)^{n+\f {d+1}q} \|P\|_{L^q (B)}.
	\end{equation}
\end{lem}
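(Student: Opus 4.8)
The plan is to reduce to a one-dimensional estimate along a diameter of the ball by using the fact that a polynomial restricted to a line is a univariate polynomial, and then to apply the classical growth estimate for univariate polynomials (a consequence of the Chebyshev/Remez-type inequality) together with an integration in the transverse variables. First I would normalize: by translating and dilating we may assume $B=B_1(0)$ is the unit ball centered at the origin, since the inequality is invariant under these operations (the constant $(5\lambda)^{n+(d+1)/q}$ is unaffected by translation, and a dilation scales both sides by the same factor). So it suffices to prove $\|P\|_{L^q(\lambda B_1(0))}\le C_{d,q}(5\lambda)^{n+(d+1)/q}\|P\|_{L^q(B_1(0))}$.

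For the pointwise growth I would invoke the standard fact: if $p$ is a univariate polynomial of degree $\le n$ and $\lambda>1$, then $\max_{|t|\le\lambda}|p(t)|\le T_n(\lambda)\max_{|t|\le 1}|p(t)|$ where $T_n$ is the Chebyshev polynomial, and $T_n(\lambda)\le(2\lambda)^n\le(5\lambda)^n$. To get an $L^q$ version rather than a sup version, one combines this with a univariate Nikol'skii-type inequality (mentioned in the lemma's preamble) comparing $\|p\|_{L^\infty[-1,1]}$ with $\|p\|_{L^q[-1,1]}$ — or, more cleanly, one can argue directly for $L^q$ using the substitution/averaging over chords. The cleanest route: for $\xi\in\lambda B_1(0)$ write $\xi=r\theta$ with $\theta\in\SS^d$ and $0\le r\le\lambda$; the function $t\mapsto P(t\theta)$ is a univariate polynomial of degree $\le n$, and comparing its values on $[-\lambda,\lambda]$ with those on $[-1,1]$ via the Chebyshev bound, then integrating in polar coordinates over the ball, transfers the estimate. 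One must account for the Jacobian $r^d\,dr$, which produces the extra factor $\lambda^{(d+1)/q}$ after taking $q$-th roots: $\int_0^\lambda r^d\,dr/\int_0^1 r^d\,dr=\lambda^{d+1}$.

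Concretely, the steps in order: (1) reduce to the unit ball by affine invariance; (2) for each $P\in\Pi_n^{d+1}$ with $\|P\|_{L^q(B_1(0))}\le 1$, apply a multivariate Nikol'skii inequality on $B_1(0)$ to get $\|P\|_{L^\infty(B_1(0))}\le C_{d,q}$; (3) apply the Chebyshev growth bound along each ray through the origin to obtain $\|P\|_{L^\infty(\lambda B_1(0))}\le C_{d,q}(2\lambda)^n$; (4) bound $\|P\|_{L^q(\lambda B_1(0))}\le |\lambda B_1(0)|^{1/q}\|P\|_{L^\infty(\lambda B_1(0))}=|B_1(0)|^{1/q}\lambda^{(d+1)/q}\|P\|_{L^\infty(\lambda B_1(0))}$; (5) combine to get the claimed constant, absorbing $|B_1(0)|^{1/q}$ and the Nikol'skii constant into $C_{d,q}$ and using $2\lambda\le 5\lambda$. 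The mild point to be careful about is that for $q<1$ the $L^q$-``norm'' is only a quasi-norm, so step (4) uses $\|P\|_{L^q(E)}\le |E|^{1/q}\|P\|_{L^\infty(E)}$, which still holds, and the multivariate Nikol'skii inequality in step (2) is still valid for $0<q\le\infty$ (this is standard, e.g. via a covering of the ball by small sub-balls on which $P$ does not oscillate much, or by the known Nikol'skii inequality on convex bodies).

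The main obstacle, such as it is, is really just bookkeeping: making sure the transverse integration produces exactly $\lambda^{(d+1)/q}$ and no worse power of $\lambda$, and confirming the Nikol'skii constant in step (2) genuinely depends only on $d$ and $q$ and not on $n$ in a way that would spoil the exponent — it does not, since Nikol'skii on a fixed convex body gives $\|P\|_\infty\le C_{d,q}n^{(d+1)/q}\|P\|_{L^q}$, and the polynomial factor $n^{(d+1)/q}$ is dwarfed by $(5\lambda)^n$ only if $\lambda$ is bounded below; to be safe one should instead use the sharper route through polar coordinates that keeps the Chebyshev factor $T_n(\lambda)$ as the only $n$-dependence, or simply note that $n^{(d+1)/q}\le C_{d,q}(5/4)^n\le C_{d,q}(5\lambda/4)^n$ trivially when $\lambda\ge 1$, so $n^{(d+1)/q}(2\lambda)^n\le C_{d,q}(5\lambda)^n$. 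Either way the exponent $n+\tfrac{d+1}{q}$ comes out correctly.
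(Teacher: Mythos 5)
Your proof is correct and follows essentially the same route as the paper's: reduce to the unit ball by affine invariance, pass to the $L^\infty$ growth bound (you use the Chebyshev estimate $(2\lambda)^n$; the paper cites Totik's Lemma~4.2 for $(4\lambda)^n$ — either works), then bridge $L^q$ and $L^\infty$ via H\"older and the multivariate Nikol'skii inequality on the unit ball, absorbing the residual $n^{(d+1)/q}$ into $(5/4)^n$. Your step~(2) as initially stated drops the $n^{(d+1)/q}$ factor from Nikol'skii, but you catch and repair this correctly in the closing paragraph, so the argument stands.
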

\begin{proof} By dilation and translation, we may assume that $B=B_1[0]$.~\eqref{eq1} with the   explicit constant  $(4\ld)^n$   was proved  in~\cite[Lemma~4.2]{To14} for $q=\infty$. 
	For $q<\infty$, we have 	
	\begin{align*}
	\|P\|_{L^q (\ld B)} &\leq C_d \ld^{\f{d+1}q} \|P\|_{L^\infty (\ld B)}\leq C_d \ld^{\f {d+1}q} (4\ld)^n \|P\|_{L^\infty (B)}\\
	&\leq C_{d,q} \ld^{\f {d+1}q} (4\ld)^n n^{\f {d+1}q} \|P\|_{L^q (B)}
	\leq C_{d,q}  (5\ld)^{n+\f {d+1}q} \|P\|_{L^q (B)},
	\end{align*}
	where we used H\"older's inequality in the first step,~\eqref{eq1}  for the already proven case $q=\infty$ in the second step, and Nikolskii's  inequality for algebraic polynomials on the unit ball (see~\cite{Da06} or \cite[Section~7]{Di-Pr16}) in the third step. 
\end{proof}

The second lemma  is probably well known. It can be proved in the same way as in~\cite[Lemma~4.3]{To14}.
\begin{lem}\label{lem-4-2} Let $I$ be a parallelepiped in $\RR^d$.  Then given parameters  $R>1$ and $\theta,\mu\in (0,1)$, there exists a polynomial  $P_n$ of  degree at most $ C(\theta, \mu, R, d) n$  such that  $ 0\leq P_n(\xi)\leq 1$ for  $ \xi\in B_R[0]$,  
	$ 1-P_n(\xi) \leq \theta^n$ for  $\xi\in \mu I$, and 
	$P_n(\xi)\leq \theta^n$ for $\xi\in  B_R[0]\setminus I$,
	where  $\mu I$ denotes the dilation of $I$  from its center by a factor $\mu$.
\end{lem}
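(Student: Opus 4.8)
\textbf{Plan of proof for Lemma~\ref{lem-4-2}.}

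The strategy is to reduce to the one-dimensional case and take a product. First I would normalize: after an affine change of coordinates we may assume $I=[-1,1]^d$, at the cost of enlarging $R$ and shrinking $\mu$ by factors depending only on the eccentricity of the parallelepiped, i.e.\ on $d$ and the data; this affects only the admissible constants. So it suffices to build, for each coordinate, a univariate polynomial $p_n$ of degree $\le C(\theta,\mu,R)n$ with $0\le p_n(s)\le 1$ on $[-R,R]$, with $1-p_n(s)\le \theta_1^n$ on $[-\mu,\mu]$ and $p_n(s)\le\theta_1^n$ on $[-R,R]\setminus[-1,1]$, where $\theta_1\in(0,1)$ is a slightly smaller auxiliary constant to be chosen so that after multiplying $d$ copies the losses accumulate to at most $\theta^n$; $d$ being fixed, any $\theta_1<\theta^{1/d}$ handled with a little room will do, modulo the $0\le p_n\le 1$ clipping below.

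For the univariate building block I would use a standard Chebyshev-type construction. Pick $a$ with $\mu<a<1$ and consider $T_m$, the Chebyshev polynomial of degree $m$, composed with an affine map $\ell$ sending $[-a,a]$ into $[-1,1]$ and sending the complement of $(-1,1)$ inside $[-R,R]$ to $|t|\ge 1+\delta$ for some fixed $\delta>0$ depending on $a,R$. Then $|T_m(\ell(s))|\le 1$ for $|s|\le a$ while $|T_m(\ell(s))|\ge \cosh(m\,\mathrm{arccosh}(1+\delta))$ grows like $\rho^m$ with $\rho=\rho(\delta)>1$ for $s\in[-R,R]\setminus[-1,1]$; a slightly more careful choice makes the transition happen across $(a,1)$. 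The quantity $q_n(s):=\bigl(1+ c\,T_m(\ell(s))^2\bigr)^{-1}$ with $m\sim n$ and a suitable constant $c$ is then $\ge 1-\theta_1^n$ on $[-\mu,\mu]$ (where $T_m$ is small — indeed one should arrange $|T_m(\ell(s))|$ to be exponentially small there, e.g.\ by instead taking a product of shifted Chebyshev factors vanishing to high order, or by using $1-q_n = \frac{cT_m^2}{1+cT_m^2}$ and the Chebyshev decay on a subinterval) and $\le \theta_1^n$ on $[-R,R]\setminus[-1,1]$. To get an actual polynomial rather than a rational function one replaces $(1+x)^{-1}$ by a truncated geometric/Taylor polynomial of degree $O(n)$, valid because the relevant argument stays in a fixed range; the truncation error is again exponentially small. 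Finally, $0\le p_n\le 1$ is enforced by clipping: the constructed $q_n$ already satisfies $|q_n|\le 1+\theta_1^n$ on $[-R,R]$, and one replaces it by $q_n(1-q_n^{2}/2)$ or a similar fixed low-degree polynomial that maps $[-1-\eta,1+\eta]$ into $[0,1]$ while being close to the identity near $0$ and near $1$; this costs only a constant factor in degree and a harmless adjustment of the exponential bases.

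The main obstacle is bookkeeping: making sure the three competing requirements — boundedness on the large ball $B_R$, exponential closeness to $1$ on $\mu I$, and exponential smallness on $B_R\setminus I$ — are met simultaneously with a single degree bound $C(\theta,\mu,R,d)n$, and then that taking a $d$-fold product (using $1-\prod_i p_n^{(i)}\le \sum_i (1-p_n^{(i)})$ on the set where all factors are near $1$, and $\prod_i p_n^{(i)}\le \min_i p_n^{(i)}$ elsewhere, since each factor lies in $[0,1]$) does not destroy any of them. Once the univariate block is in hand this product step is immediate. Since the argument is essentially the one in~\cite[Lemma~4.3]{To14}, I would present the reduction and the product step in full and refer to that source for the detailed univariate estimates, noting only the modifications needed to accommodate the parameters $\theta,\mu,R$ here.
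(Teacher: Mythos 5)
Your overall strategy---normalize to $I=[-1,1]^d$, build a univariate building block, and tensorize---is the right one, and it is exactly how Totik's \cite[Lemma~4.3]{To14} proceeds; the paper itself gives no argument and simply cites that lemma, so your eventual deferral to the same source matches the paper.

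However, the specific univariate construction you sketch in the middle has a genuine gap. You propose $q_n(s)=\bigl(1+c\,T_m(\ell(s))^2\bigr)^{-1}$ and want $q_n\ge 1-\theta_1^n$ on $[-\mu,\mu]$, which requires $|T_m(\ell(s))|$ to be \emph{exponentially small} on an entire subinterval of $[-1,1]$. That is false: $T_m$ equioscillates between $-1$ and $1$ on all of $[-1,1]$, so its maximum modulus on any fixed subinterval (of length independent of $m$) stays equal to $1$. Consequently $q_n$ is only bounded below by $1/(1+c)$ on $\ell^{-1}([-1,1])$, not exponentially close to $1$. You flag this yourself (``indeed one should arrange $|T_m(\ell(s))|$ to be exponentially small there\dots''), but the alternatives you float (``a product of shifted Chebyshev factors,'' ``Chebyshev decay on a subinterval'') are either not carried out or rest on the same misconception.

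The clean way to build the univariate block is the standard integrated-bump step function: set
\[
A_m(t)\;=\;\frac{\int_{-1}^{t}(1-u^2)^m\,du}{\int_{-1}^{1}(1-u^2)^m\,du},
\]
a polynomial of degree $2m+1$, monotone increasing from $0$ to $1$ on $[-1,1]$, with $A_m(t)\le C(1-\delta^2)^m$ for $t\le-\delta$ and $1-A_m(t)\le C(1-\delta^2)^m$ for $t\ge\delta$. Composing $1-A_m$ with an affine map that compresses $[-R,R]$ into $[-1,1]$ while keeping the transition interval of width $\gtrsim 1-\mu$ away from the endpoints gives, with $m\sim n$, the one-sided step satisfying $0\le S_n\le 1$ on $[-R,R]$, $S_n\ge 1-\theta_1^n$ to the left of the transition, and $S_n\le\theta_1^n$ to the right. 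Taking $p_n(s)=S_n(s)S_n(-s)$ handles both endpoints of $[-1,1]$, and then $P_n(\xi)=\prod_i p_n(\xi_i)$ tensorizes exactly as you describe (one factor $\le\theta_1^n$ kills the product off $I$; all factors $\ge 1-\theta_1^n$ give $P_n\ge 1-d\theta_1^n$ on $\mu I$; taking $\theta_1\le\theta/d$ closes the estimate). This replaces the rational-function-plus-truncation-plus-clipping machinery, which is not needed here and would only add bookkeeping. Finally, your remark that the constant in the degree bound must also depend on the shape of the parallelepiped $I$ (not just on $\theta,\mu,R,d$) is correct; the paper's statement is slightly imprecise on this point, though harmless in its applications since $I$ is fixed there.
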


As a consequence of Lemma~\ref{lem-4-2}, we have

\begin{lem}\label{lem-4-3}    Let   $G\subset \Og$  be   a domain of special type attached to $\Ga$, and $S_{G, \mu}: =\Og \cap Q_\mu$ be  as defined in Lemma~\ref{lem-6-2:Dec} with  $\mu\in (\f 12, 1]$. Let $R\ge 1$ be such that $Q_1 \cup \Og \subset B_R[0]$. 
	Then given   $\ld\in (\f 12, 1)$ and $\t \in (0,1)$, there exists a polynomial $P_n$ of degree at most $C(d,  \t,  R,  G, \ld)
	n$ with the properties that  $0\leq P_n(\xi)\leq 1$ for $\xi\in B_R[0]$,  $1-P_n(\xi)\leq \t^n$ for $\xi \in S_{G,\ld}$ and $P_n(\xi)\leq \t^n$ for $\xi\in \Og\setminus  S_{G,1}$.	\end{lem}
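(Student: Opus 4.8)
The plan is to deduce Lemma~\ref{lem-4-3} from Lemma~\ref{lem-4-2} via the explicit description of $S_{G,\mu}$ given in Lemma~\ref{lem-6-2:Dec}. First I would reduce to the model situation: after a translation and the reflections $\sa_j,\tau_j$, which are isometries of $\RR^{d+1}$ preserving all the relevant quantities (degrees of polynomials, balls, the shape of $S_{G,\mu}$), we may assume $G$ is the upward $x_{d+1}$-domain in~\eqref{2-7-special} with $\xi=0$. By Lemma~\ref{lem-6-2:Dec}, for each $\mu\in(\tf12,1]$ there is an open rectangular box $Q_\mu=(-\mu b,\mu b)^d\times(a_{1,\mu},a_{2,\mu})$ with $\p'G(\mu)\subset S_{G,\mu}=Q_\mu\cap\Og\subset G(\mu)$ and, crucially, $\overline{Q_\mu}\subset Q_1$ when $\mu<1$. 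Apply this with $\mu=\ld$ and $\mu=1$: we get boxes $Q_\ld$ and $Q_1$ in $\RR^{d+1}$ with $\overline{Q_\ld}\subset Q_1$.

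The key step is then to invoke Lemma~\ref{lem-4-2} with the parallelepiped $I:=Q_1$ (in $\RR^{d+1}$ rather than $\RR^d$ — the statement and proof of Lemma~\ref{lem-4-2} are dimension-free), with $R$ as given, and with a dilation factor $\mu_0\in(0,1)$ chosen so that $Q_\ld\subset \mu_0 Q_1$; such a $\mu_0$ exists precisely because $\overline{Q_\ld}\subset Q_1$ (both boxes share comparable centers only up to the translation absorbed above, so more carefully one picks $\mu_0$ close enough to $1$ that $\mu_0 Q_1\supset \overline{Q_\ld}$, using that $Q_\ld$ is compactly contained in the open box $Q_1$ after we recenter; if the centers differ one may instead apply Lemma~\ref{lem-4-2} in the form where $I$ is recentered, or simply note $Q_\ld$ compactly contained in $Q_1$ gives a slightly shrunken concentric box containing it). Taking $\t\in(0,1)$ from the hypothesis, Lemma~\ref{lem-4-2} produces a polynomial $P_n$ of degree $\le C(\mu_0,R,d)n=C(d,\t,R,G,\ld)n$ with $0\le P_n\le1$ on $B_R[0]$, with $1-P_n\le\t^n$ on $\mu_0 Q_1\supset Q_\ld$, and with $P_n\le\t^n$ on $B_R[0]\setminus Q_1$.

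It remains to translate these three properties back to the sets $S_{G,\ld}$ and $S_{G,1}$. Since $S_{G,\ld}=Q_\ld\cap\Og\subset Q_\ld\subset\mu_0Q_1$, the bound $1-P_n\le\t^n$ holds on $S_{G,\ld}$. Since $\Og\subset B_R[0]$, for $\xi\in\Og\setminus S_{G,1}$ we have $\xi\in B_R[0]$ and $\xi\notin Q_1$ (because $\xi\in\Og$ but $\xi\notin Q_1\cap\Og=S_{G,1}$), hence $P_n(\xi)\le\t^n$. Finally $0\le P_n\le1$ on $B_R[0]\supset\Og$ is immediate. This gives exactly the asserted conclusion.

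The main obstacle I anticipate is the bookkeeping around centers and the precise containment $Q_\ld\subset\mu_0Q_1$: Lemma~\ref{lem-6-2:Dec} only asserts $\overline{Q_\ld}\subset Q_1$, not that they are concentric, so one must either recenter (absorbing a translation into the earlier normalization) or state Lemma~\ref{lem-4-2} flexibly enough that $\mu I$ can be replaced by "a box compactly contained in $I$." Either way the issue is routine, since a compact subset of an open box is contained in some concentric dilate of a slightly enlarged box; the only care needed is that all parameters entering the degree bound — $\ld$, $\t$, $R$, and the geometry of $G$ (through $Q_1$, hence through $b$ and the $a_{i,\mu}$) — are legitimately allowed in $C(d,\t,R,G,\ld)$. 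No new analytic input beyond Lemma~\ref{lem-4-2} is required.
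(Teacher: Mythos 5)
Your proposal is correct and takes essentially the same approach as the paper: apply Lemma~\ref{lem-4-2} with $I=Q_1$, using $\overline{Q_\ld}\subset Q_1$ to find a concentric dilate $\mu_0 Q_1$ containing $Q_\ld$, and then translate the conclusions back to $S_{G,\ld}$ and $S_{G,1}$ via the identity $\Og\setminus S_{G,1}=\Og\setminus Q_1\subset B_R[0]\setminus Q_1$. The initial reduction to the model $x_{d+1}$-domain is unnecessary (Lemma~\ref{lem-4-2} applies to an arbitrary parallelepiped in $\RR^{d+1}$, so one can work with $Q_\ld,Q_1$ directly), and the concern about centers resolves exactly as you say: a compact subset of an open box is contained in some concentric dilate $\mu_0 Q_1$ with $\mu_0<1$; the paper glosses over this point, so your spelling it out is welcome.
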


\begin{proof}
	Since $\ld<1$ and  $Q_\ld$ is an open  rectangular box such that  $\overline{Q_\ld}\subset Q_1$, it follows by Lemma~\ref{lem-4-2} that  there exists a  polynomial $P_n$  of degree at most $Cn$ such that 
	$0\leq P_n(\xi)\leq 1$ for all $\xi\in B_{R} [0]$,  $1-P_n(\xi) \leq \ta^n$ for all $\xi\in Q_\ld$ and $P_n(\xi) \leq \ta^n$ for all $\xi\in B_R[0]\setminus Q_1$.
	To complete the proof, we just need to observe that 
	$$ \Og \setminus S_{G,1} =\Omega \setminus (Q_1 \cap \Og)=\Omega\setminus Q_1\subset B_R[0]\setminus Q_1.$$
\end{proof}

We are now in a position to prove Lemma~\ref{REDUCTION}.

\begin{proof}[Proof of Lemma~\ref{REDUCTION}]

	The proof is essentially a repetition of that of~\cite[Lemma~4.1]{To14} or~\cite[Lemma~3.3]{To17} for our situation.   Let $R>1$ be such that $\Og\subset B_R[0]$, and set  $\theta:=\min\{\frac{\ga_0}{5R}, \f12\}$.   Write $H=H_j$ and  $S=\Og_{j+1}$.
	Without loss of generality, we may assume that $S=S_{G,\ld_0}$  for some  domain $G$ of special type attached to $\Ga$. (The  case  when  $S$ is a cube $Q$ such that $4Q\subset \Og$ can be proved similarly using Lemma~\ref{lem-4-2} instead of Lemma~\ref{lem-4-3}).  Then $S_{G,\ld_0}\cap H$ contains a ball $B$  of radius $\ga_0$. 	 
	By  Lemma~\ref{lem-4-3}, there exists  a polynomial $R_n$ of degree $\leq C(d, R, G)n$ such that
	$0\leq R_n(x)\leq 1$ for all $x\in B_{R}[0]$, $R_n(x)\leq \theta^{-n}$ for $x\in \Og\setminus  S_{G,1}$ and $1-R_n(x)\leq \theta^{-n}$ for $x\in  S_{G,\ld_0}$.
	Let 
	$P_1, P_2\in\Pi_n^{d+1}$ be such that 
	$$ E_n(f)_{L^p(S_{G,1})} =\|f-P_1\|_{L^p(S_{G,1})}\   \  \text{and}\  \   \   E_n(f)_{L^p(H)} =\|f-P_2\|_{L^p(H)}.$$
	Define  
	$$ P(x):=R_n(x) P_1(x) +(1-R_n(x)) P_2(x)\in \Pi_{cn}^{d+1}.$$
	Then  
	\begin{align*}
	E_{cn} (f)_{L^p(H_{j+1})}&\leq 
	\|f-P\|_{L^p(H\cup S_{G,\ld_0})}\\
	& \leq  \|f-P\|_{L^p(H\cap S_{G,1})}+\|f-P\|_{L^p(H\setminus S_{G,1})}+\|f-P\|_{L^p(S_{G,\ld_0})}.
	\end{align*}  
	
	First, we can estimate the term $\|f-P\|_{L^p(H\cap S_{G,1})}$ as follows: 
	\begin{align*}
	\|f-P\|_{L^p(H\cap S_{G,1})}&=\|R_n (f-P_1) +(1-R_n) (f-P_2)\|_{L^p(H\cap  S_{G,1})}\\
	&\leq C_p \max\Bl\{\|f-P_1\|_{L^p(S_{G,1})}, \  \|f-P_2\|_{L^p(H)}\Br\}\\
	&\leq C_p \max \Bl\{ E_n (f)_{L^p(S_{G,1})}, E_n(f)_{L^p(H)} \Br\}.
	\end{align*}
	
	Second, we show  
	\begin{align}\label{6-7-00}
	\|f-P\|_{L^p(H\setminus S_{G,1})}\leq  & C_{p,\ga_0,R} \max \Bl\{ E_n (f)_{L^p(S_{G,1})}, E_n(f)_{L^p(H)} \Br\}.
	\end{align}
	Indeed,   we have 
	\begin{align}
	\|f-P\|_{L^p(H\setminus S_{G,1})}&=\|( f-P_2) + R_n (P_2-P_1)\|_{L^p(H\setminus S_{G,1})}\notag\\
	&\leq C_p  E_n(f)_{L^p(H)}+ C_p \theta^{n} \|P_1-P_2\|_{L^p(\Og)}.\label{second}\end{align}
	However, by  Lemma~\ref{lem-4-1}, 
	\begin{align}
	\|P_1-P_2\|_{L^p (\Og)}& \leq \|P_1-P_2\|_{L^p (B_{R}[0])}\leq C\Bl( \f {5 R}{\ga_0}\Br)^{n+\f {d+1}p} \|P_1-P_2\|_{L^p(B)}\notag\\
	&\leq C\Bl( \f {5 R}{\ga_0}\Br)^{n+\f {d+1}p} \|P_1-P_2\|_{L^p(H\cap S_{G,1})}\notag\\
	&\leq C(R, d, \ga_0,p) \theta^{-n} \max\Bl\{E_n(f)_{L^p(S_{G,1})}, \  E_n(f)_{L^p(H)}\Br\}.\label{3-2-eq}
	\end{align}
	Thus, combining~\eqref{second} with~\eqref{3-2-eq}, we obtain~\eqref{6-7-00}.
	
	Finally, we estimate the term $\|f-P\|_{L^p(S_{G,\ld_0})}$  as follows: 
	\begin{align*}
	\|f-P\|_{L^p(S_{G,\ld_0})} & =\|f-P_1 +(1-R_n) (P_1-P_2)\|_{L^p(S_{G,\ld_0})} \\
	&\leq C_p \|f-P_1\|_{L^p(S_{G,1})}  + C_p \theta^{n} \|P_1-P_2\|_{L^p (\Og)}\\
	&\leq C_{p,\ga_0,R}\max\Bl\{E_n(f)_{L^p(S_{G,1})}, \  E_n(f)_{L^p(H)}\Br\},
	\end{align*}
	where the last step uses~\eqref{3-2-eq}.
	
	Now putting the above estimates together, and noticing $S_{G,1} \subset G=\wh{\Og}_{j+1}$, we  complete the proof of  Lemma~\ref{REDUCTION}.
\end{proof}

%
%
%


%
%
%

\chapter{Multivariate Whitney type inequalities}\label{Whitney:sec}

\section{Whitney inequalities for regular and directional moduli}

The  Whitney inequality gives an upper estimate  for the error of local polynomial   approximation of a function via the behavior of its finite differences.
The  constant in the Whitney inequality, called the Whitney constant,  plays a very important role  in various applications  for obtaining global upper estimates on the errors of  approximation.  In the case of one variable,   sharp Whitney constants  had been studied extensively in literature  (see~\cite{IT, GKS, Sen} and the references therein).  A remarkable    multivariate Whitney type inequality  was   established in  \cite{De-Le}  on a  general convex body  $G\subset \R^{d+1}$  asserting  that 
given $0<p\leq \infty$ and  $r\in\NN$, there exists a constant $C\equiv C(d,r, p) $ depending only on $d,p,r$ such that for every  function $f\in L^p(G)$,
\begin{equation}\label{7-1-18}
E_{r-1}  (f)_{L^p(G)} \leq C \og^r (f, G)_p.
\end{equation}
(Recall that $\og^r (f, G)_p=\og^r (f, \diam (G))_p$.) The crucial  point here   lies in the fact that  the constant $C$   does not depend on the shape of the convex body $G$. 

The   Whitney type inequality~\eqref{7-1-18} on convex domains will not be  enough for our purposes in this paper. 
The main goal in this chapter is to establish  a more general version of this result for  directional  moduli of smoothness on certain non-convex  domains. In particular, we will give a simpler proof of the inequality~\eqref{7-1-18}  of \cite{De-Le}.

The inequality~\eqref{7-1-18} is, in fact, an equivalence. Indeed, as $\og^r (Q, G)_p=0$ for any $Q\in\Pi^{d+1}_{r-1}$, choosing $Q$ to be the polynomial of best approximation to $f$, we get 
\[
\og^r (f, G)_p=\og^r (f-Q, G)_p\le C\|f-Q\|_{L^p(G)}\le C E_{r-1}  (f)_{L^p(G)}.
\]
For directional moduli, we may have that $\og^r (Q, G; \mathcal{E})_p=0$ is valid for a wider space of polynomials $Q$ than the polynomials of total degree $<r$. Therefore, a proper generalization of~\eqref{7-1-18} for directional moduli calls for approximation by polynomials from the space $\Pi^{d+1}_{r-1}(\mathcal{E})$ which we will now define and which may be different from $\Pi^{d+1}_{r-1}$.

For a set of directions $\mathcal{E}\subset\SS^{d}$ satisfying $\spn(\EEE)=\RR^{d+1}$, we define $\Pi^{d+1}_{r-1}(\mathcal{E})$ as the set of all algebraic polynomials $Q$ on $\RR^{d+1}$ such that for any fixed $x\in\RR^d$ and $e\in\EEE$ the function $Q_{x,e}(t):=Q(x+te)$ is an algebraic polynomial of degree $<r$ in $t\in\RR$. It is not hard to observe that $\Pi^{d+1}_{r-1}(\mathcal{E})$ is, in fact, a finite dimensional space of functions, moreover, $\Pi^{d+1}_{r-1}(\mathcal{E})\subset \Pi^{d+1}_{(d+1)(r-1)}$, which follows from the fact that polynomials of degree $<r$ in each variable have total degree $\le(d+1)(r-1)$. We also define the corresponding modification of the error of approximation:
\[
E_{r-1}(f;\EEE)_{L^p(G)}:=\inf\Bl\{ \|f-Q\|_p:\   \  Q\in \Pi^{d+1}_{r-1}(\EEE)\Br\}.
\]
With the new notations, an appropriate generalization of~\eqref{7-1-18} will have the form
\begin{equation}\label{eqn:new whitney type}
E_{r-1}  (f;\EEE)_{L^p(G)} \leq C \og^r (f, G;\EEE)_p
\end{equation}
for which the converse can be verified to be valid under certain mild and natural assumptions on $G$ and $\EEE$.

\section{General results}

Our method is   
based on the following result, showing  that   the   Whitney type inequality on a geometrically  complicated   domain can be deduced from that on  certain  relatively  simpler subdomain:  

\begin{thm} \label{lem-3-3} Assume that    $r\in\NN$, and  $K, J\subset \RR^{d+1}$ are two  measurable sets satisfying the following condition for a vector 
	$h\in\RR^{d+1}\setminus \{0\}$:
	
	\begin{equation}\label{7-2-0-18}
	J\subset  \bigcap_{j=1}^r (K+jh)	\  \  \text{and}\  \   \text{$[\xi-rh, \xi] \subset K\cup J$,\  \      $\forall \xi\in J$. }
	\end{equation}  	 	  
	Then for each   $0<p\leq \infty$,  $f\in L^p(K\cup J)$ and $\EEE\subset\SS^{d+1}$ satisfying $\spn(\EEE)=\RR^{d+1}$ and $e\in\EEE$  with $e:=h/\|h\|$ and $\ta:=\min\{p,1\}$, we have 
	\begin{align}\label{5-2-eq-18}
	E_{r-1} (f;\EEE)^\ta_{L^p(K\cup J)} &\leq \og^r(f, K\cup J; e)_p ^\ta + 2^r E_{r-1}(f;\EEE)_{L^p(K)}^\ta.
	\end{align}
\end{thm}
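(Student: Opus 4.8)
The idea is to patch together a polynomial from $\Pi^{d+1}_{r-1}(\EEE)$ that is good on $K$ with the structure forced by the $r$-th finite difference along $e$ on the strip $J$. First I would pick $Q\in\Pi^{d+1}_{r-1}(\EEE)$ realizing (or nearly realizing) the best approximation $E_{r-1}(f;\EEE)_{L^p(K)}$, and replace $f$ by $g:=f-Q$; by $\og^r(Q,\cdot\,;e)_p=0$ this does not change the directional modulus, so it suffices to produce $R\in\Pi^{d+1}_{r-1}(\EEE)$ with $\|g-R\|_{L^p(K\cup J)}^\ta\lesssim \og^r(g,K\cup J;e)_p^\ta + 2^r\|g\|_{L^p(K)}^\ta$, after which the triangle inequality in $L^p$ (with exponent $\ta=\min\{p,1\}$) and $Q+R\in\Pi^{d+1}_{r-1}(\EEE)$ finish the estimate. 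The natural choice is $R:=0$ on the $K$-part, so the whole problem reduces to bounding $\|g\|_{L^p(J)}$.

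\textbf{Main step: pointwise control on $J$.} For $\xi\in J$ the hypothesis \eqref{7-2-0-18} gives $\xi-jh\in K$ for $j=1,\dots,r$ (from $J\subset\bigcap_j(K+jh)$), while the segment $[\xi-rh,\xi]$ lies in $K\cup J$, so the finite difference $\tr_{-h}^r g(\xi)$ is an honest (non-truncated) value and equals $\pm\tr_{h}^r(g,K\cup J,\xi-rh)$. Expanding
\[
\tr_{-h}^r g(\xi)=\sum_{j=0}^r(-1)^{j}\binom{r}{j} g(\xi-jh),
\]
the $j=0$ term is $g(\xi)$ and every term with $j\ge 1$ has its argument $\xi-jh\in K$. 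Hence
\[
|g(\xi)|^\ta\le |\tr_{-h}^r g(\xi)|^\ta + \sum_{j=1}^r\binom{r}{j}^\ta |g(\xi-jh)|^\ta
\le |\tr_{-h}^r g(\xi)|^\ta + 2^{r}\sum_{j=1}^r |g(\xi-jh)|^\ta,
\]
using $\binom{r}{j}^\ta\le\binom{r}{j}\le 2^r$ and $\ta$-subadditivity of $t\mapsto t^\ta$. Integrating $|\cdot|^\ta$-th powers over $\xi\in J$: the first term integrates to at most $\og^r(g,K\cup J;e)_p^{p\ta/p}$... more precisely, since $J_{rh}\subset J$ one has $\int_J|\tr_{-h}^rg(\xi)|^p\,d\xi\le \og^r(g,K\cup J;e)_p^p$ (and the analogous sup bound for $p=\infty$), giving $\|{\tr_{-h}^r g}\|_{L^p(J)}^\ta\le\og^r(g,K\cup J;e)_p^\ta$. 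For the second group, each change of variables $\xi\mapsto\xi-jh$ maps $J$ into $K$, so $\int_J|g(\xi-jh)|^p\,d\xi\le\|g\|_{L^p(K)}^p$, whence $\sum_{j=1}^r\|g(\cdot-jh)\|_{L^p(J)}^\ta\le r\cdot\|g\|_{L^p(K)}^\ta$. Absorbing the factor $r$ into the constant (or being slightly more careful: $2^r\sum_{j=1}^r\|g(\cdot-jh)\|_{L^p(J)}^\ta$ — one can instead expand $|g(\xi)|^\ta\le|\tr_{-h}^rg(\xi)|^\ta+\sum_{j=1}^r\binom rj^\ta|g(\xi-jh)|^\ta$ and note $\sum_{j=1}^r\binom rj^\ta\le\big(\sum_{j=1}^r\binom rj\big)^\ta\cdot$ when $\ta\le1$ by power-mean, but $\sum_j\binom rj<2^r$, so in fact $\sum_{j=1}^r\binom rj^\ta\le 2^{r\ta}\le 2^r$ already when... ) we get exactly $\|g\|_{L^p(J)}^\ta\le\og^r(g,K\cup J;e)_p^\ta+2^r\|g\|_{L^p(K)}^\ta$.

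\textbf{Assembling.} Combining with $\|g\|_{L^p(K)}^\ta=\|f-Q\|_{L^p(K)}^\ta=E_{r-1}(f;\EEE)_{L^p(K)}^\ta$ (taking $Q$ to be the best approximant, or an $\eps$-near one and letting $\eps\to0$), and $\|g\|_{L^p(K\cup J)}^\ta\le\|g\|_{L^p(K)}^\ta+\|g\|_{L^p(J)}^\ta$, we obtain
\[
E_{r-1}(f;\EEE)_{L^p(K\cup J)}^\ta\le\|g\|_{L^p(K\cup J)}^\ta\le\og^r(f,K\cup J;e)_p^\ta+(1+2^r)E_{r-1}(f;\EEE)_{L^p(K)}^\ta,
\]
which is \eqref{5-2-eq-18} up to adjusting how the constant $2^r$ is bookkept; a cleaner count (dropping the $j=0$ redundancy in the binomial sum so that $\sum_{j=1}^r\binom rj=2^r-1$, hence $\le2^r-1$, and noting $\|g\|_{L^p(K)}$ already appears once) yields the stated $2^r$ exactly. \textbf{The only delicate point} is the measure-theoretic bookkeeping of the translated integrals — ensuring each substitution $\xi\mapsto\xi-jh$ genuinely lands in $K$ for a.e.\ $\xi\in J$, which is exactly the content of the first inclusion in \eqref{7-2-0-18}, and handling the $p=\infty$ case by replacing integrals with essential suprema throughout; everything else is the standard finite-difference expansion plus $\ta$-triangle inequality.
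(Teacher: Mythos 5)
Your proposal is correct and matches the paper's argument essentially step for step: expand $\tr_{-h}^r(f-q)(\xi)$ for $\xi\in J$, isolate the $j=0$ term, apply the $\ta$-triangle inequality, use the inclusion $J\subset\bigcap_{j=1}^r(K+jh)$ to land each translated integral in $K$, and finally add back the $K$-part. The only wobble is in your aside about $\sum_{j=1}^r\binom{r}{j}^\ta$ — the power-mean inequality goes the wrong way for $\ta\le1$; the correct (and simpler) observation is $\binom{r}{j}^\ta\le\binom{r}{j}$ since $\binom{r}{j}\ge1$, giving $\sum_{j=1}^r\binom{r}{j}^\ta\le 2^r-1$, which together with the extra $+1$ from $\|g\|_{L^p(K)}^\ta$ in $\|g\|_{L^p(K\cup J)}^\ta\le\|g\|_{L^p(K)}^\ta+\|g\|_{L^p(J)}^\ta$ recovers the clean $2^r$ in the statement.
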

\begin{proof}     Note first that  for each function   $F:K\cup J \to \RR$,
	\begin{align*}
	(-1)^r\tr_h^r F (\xi-rh)=\tr_{-h}^r  F(\xi) =\sum_{j=0}^r (-1)^j \binom{r} j  F(\xi-jh),\   \ \xi\in J.
	\end{align*}
	It follows that
	$$|F(\xi)| \leq |\tr_{h}^r F(\xi-rh)|+  \sum_{j=1}^r\binom{r}j |F(\xi-jh)|,\   \  \xi\in J.$$
	Taking the $L^p$-norm over the set $J$ on both sides of this last inequality , and keeping in mind  that $J\subset \bigcap_{j=1}^r (K+jh)$,  we obtain
	$$ \|F\|_{L^p(J)}^\ta \leq  \|\tr_{h}^r F\|^\ta_{L^p (J-rh)}  +  (2^r-1)\|F\|^\ta_{L^p(K)}.$$
	This  in turn implies that
	\begin{equation}\label{2-3-18-55}
	\|F\|_{L^p(K\cup J)}^\ta \leq  \|\tr_{h}^r F\|^\ta_{L^p (J-rh)}  + 2^r \|F\|_{L^p(K)}^\ta.
	\end{equation}
	Let $q\in\Pi_{r-1}^{d+1}(\EEE)$ be such that $E_{r-1} (f) _{L^p(K)}=\|f-q\|_{L^p(K)}$. Then 
	applying~\eqref{2-3-18-55} with  $F=f-q$, we obtain the desired inequality~\eqref{5-2-eq-18}.
\end{proof}

\begin{rem}\label{rem:whitney key tool seq appl}
	According to Theorem~\ref{lem-3-3}, 	for a  pair of sets $(K, J)$ satisfying~\eqref{7-2-0-18}, provided $e\in\EEE$, the  Whitney type inequality   on the set $K$,
	\begin{equation}\label{whitney} E_{r-1} (f;\EEE)_{L^p(K)} \leq M \og^r (f, K; \mathcal{E})_p,\end{equation}
	implies the following   Whitney type inequality on  the   larger  set $K\cup J$:
	$$ E_{r-1} (f;\EEE)_{L^p(K\cup J)} \leq  C_{p,r} \Bl[ \og^r (f, K\cup J; e)_p+ M \og^r(f, K;\mathcal{E})_p\Br].
	$$
	In general,  	 	 	to  establish a Whitney type inequality on a   domain $G\subset \RR^{d+1}$, we   often need to  apply  Theorem~\ref{lem-3-3} iteratively, decomposing $G$ as a finite union  $G=\bigcup_{j=0}^m K_j$  with  $K_j\subset K_{j+1}$ and  the pairs of the sets 
	$(K,J):=(K_j,K_{j+1}\setminus K_j)$   satisfying the condition~\eqref{7-2-0-18} for possibly different  vectors  $h_j$,  $j=0,1,\dots, m-1$. We need to assume that the directions of $h_j$ are in $\EEE$, so if we require too many directions, we will end up with $\Pi^{d+1}_{r-1}(\EEE)=\Pi^{d+1}_{r-1}$.  With our approach, we often select  an   initial set  $K_0$  on which     a Whitney type inequality    holds.  For instance,  we may choose $K_0$ to be  any  compact 
	parallelepiped in $\RR^{d+1}$,  according to  Lemma~\ref{lem-5-3-0}  stated below.
\end{rem}

\begin{rem} \label{rem-7-2}The  above proof of Theorem~\ref{lem-3-3} actually yields a more general  estimate under a  weaker assumption. Indeed, it follows by~\eqref{2-3-18-55} that if $J\subset  \bigcap_{j=1}^r (K+jh)$ and $h/\|h\|\in\EEE$, then   for each $f\in L^p(K\cup J)$ and  $q\in \Pi_{r-1}^{d+1}(\EEE)$,
	\begin{align}\label{5-2-eq-0-18}
	\|f-q\|^\ta_{L^p(K\cup J)} &\leq \| \tr_{h}^r f\|^\ta_{L^p({J-rh})}  + 2^r \|f-q\|^\ta_{L^p(K)},\   \   \text{with $\t:=\min\{p,1\}$}.
	\end{align} 
	Note here that   we do not assume that $[\xi-rh, \xi] \subset K\cup J$ for each    $\xi\in J$.
\end{rem}

\begin{lem}\label{lem-5-3-0}\cite[Lemma 2.1]{Di96} Given $0<p\leq \infty$ and $r\in\NN$, there exists a constant $C(d,p,r)$ such that for every   compact 
	parallelepiped $S\subset \RR^{d+1}$ and each  $f\in L^p(S)$, 
	$$E_{r-1}(f;\EEE(S))_{L^p(S)} \leq C(d,p,r) \og^r(f, \diam (S), \mathcal{E}(S))_{L^p(S)},$$
	where $\mathcal{E}(S)$ denotes  the set of all edge directions of $S$. 
\end{lem}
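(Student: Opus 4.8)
\textbf{Proof proposal for Lemma~\ref{lem-5-3-0}.}

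The plan is to reduce the multivariate statement to a one-dimensional Whitney inequality applied successively in each of the $d+1$ coordinate (edge) directions of $S$, using Theorem~\ref{lem-3-3} (more precisely its iterative form described in Remark~\ref{rem:whitney key tool seq appl}) to pass from approximation in one direction to approximation in all directions simultaneously. By an affine change of variables of the form appearing in property~\eqref{5-1-eq}–\eqref{2-3-18-55} we may assume $S=[0,1]^{d+1}$, so that $\mathcal{E}(S)=\{e_1,\dots,e_{d+1}\}$ and $\Pi^{d+1}_{r-1}(\mathcal{E}(S))$ is exactly the space of polynomials of degree $<r$ in each variable separately; the affine invariance also shows that no constant depending on the shape of $S$ can appear, only on $d,p,r$. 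The core one-dimensional fact I would invoke (or quickly reprove via a Taylor/finite-difference argument) is the classical univariate Whitney inequality: for $g\in L^p[0,1]$ there is $q\in\Pi^1_{r-1}$ with $\|g-q\|_{L^p[0,1]}\le C(p,r)\,\omega^r(g,1)_{L^p[0,1]}$.

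First I would handle the $d=0$ base case (a segment), which is exactly the univariate Whitney inequality. Then I would argue by induction on the number of ``active'' directions: suppose we already have a polynomial $q_{k}$, of degree $<r$ in each of the first $k$ variables, with $\|f-q_k\|_{L^p(S)}\le C\sum_{j=1}^k\omega^r(f,1;e_j)_p$. To add the $(k+1)$-st direction, apply the univariate Whitney inequality in the variable $x_{k+1}$ fiberwise to $f-q_k$ and then integrate the $p$-th powers over the remaining variables (using Fubini, and for $p<1$ the quasi-triangle inequality $\|\cdot\|_p^\theta$ with $\theta=\min\{p,1\}$ exactly as in the proof of Theorem~\ref{lem-3-3}); this produces a correction term that is itself a polynomial of degree $<r$ in $x_{k+1}$ whose coefficients are functions of the other variables. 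The subtlety is that this correction need not be polynomial in $x_1,\dots,x_k$, so $q_{k+1}$ built naively would leave the induction hypothesis. The clean way around this is to instead invoke Theorem~\ref{lem-3-3} directly: decompose $S$ into a chain of slabs $K_0\subset K_1\subset\cdots\subset K_m=S$, thin in one coordinate direction, where the pair $(K_i,K_{i+1}\setminus K_i)$ satisfies~\eqref{7-2-0-18} with $h$ a small multiple of the appropriate $e_j$, start from a slab $K_0$ so thin that it is essentially lower-dimensional (or just a box on which the inequality is already known by a previous stage of the induction), and iterate~\eqref{5-2-eq-18}; the geometric decay $2^r$ per step is absorbed because the number of steps $m$ is controlled by $d,r$ once the slab thicknesses are chosen as fixed fractions.

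Concretely, I would set this up as: peel off the direction $e_{d+1}$ by writing $S=[0,1]^d\times[0,1]$ as a union of $m\sim r$ slabs $K_i=[0,1]^d\times[0,(i+1)/m]$ with $h_i=\frac1m e_{d+1}$, check $K_{i+1}\setminus K_i=[0,1]^d\times(\tfrac{i+1}{m},\tfrac{i+2}{m}]\subset\bigcap_{\ell=1}^r(K_i+\ell h_i)$ and the segment condition in~\eqref{7-2-0-18} (both hold for $m\ge r$), and apply Theorem~\ref{lem-3-3} $m-1$ times to reduce to a Whitney inequality on the initial thin slab $K_0=[0,1]^d\times[0,\tfrac1m]$; rescaling that slab back to a unit cube and recursing on the dimension completes the argument, with the final constant $C(d,p,r)$ obtained by composing the finitely many steps. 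The main obstacle, and the point deserving the most care, is exactly this bookkeeping: ensuring at each reduction that the error term $\omega^r(f,K_i\cup(K_{i+1}\setminus K_i);e_j)_p$ is bounded by $\omega^r(f,\operatorname{diam}(S);e_j)_p$ (monotonicity and the fact that the step length $1/m$ times $r$ stays $\le\operatorname{diam}$), that the accumulated constant stays finite (it is a product of $\le C(d,r)$ factors, each $\le 2^{r/\theta}$ or the univariate Whitney constant), and that the class of approximants genuinely stays inside $\Pi^{d+1}_{r-1}(\mathcal{E}(S))$ at every stage — which Theorem~\ref{lem-3-3} guarantees automatically since it is stated for $E_{r-1}(\,\cdot\,;\mathcal{E})$ with $e\in\mathcal{E}$ throughout.
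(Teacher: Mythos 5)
The paper's own ``proof'' of this lemma is a one-line citation: the rectangular-box case is \cite[Lemma~2.1]{Di96}, and the general parallelepiped case follows from the affine invariance in~\eqref{5-1-eq}. You instead attempt a self-contained derivation from Theorem~\ref{lem-3-3} via a chain of slabs, which is a genuinely different route — but it has a gap that you did not close.

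The gap is that the slab chain never produces a base case. Theorem~\ref{lem-3-3} can only \emph{transfer} a Whitney estimate from $K$ to $K\cup J$; it cannot create one out of nothing (if $K=\emptyset$ then the hypothesis~\eqref{7-2-0-18} forces $J=\emptyset$). Your chain reduces the estimate on $[0,1]^{d+1}$ to the estimate on the initial slab $K_0=[0,1]^d\times[0,1/m]$, but $K_0$ is just another nondegenerate $(d+1)$-dimensional rectangular box. Stretching it by $T_0=\operatorname{diag}(1,\dots,1,m)$ sends it exactly to $[0,1]^{d+1}$, fixes each $e_j$, and, by~\eqref{5-1-eq}, multiplies both $E_{r-1}(f;\EEE)_{L^p}$ and $\og^r(f,\cdot;\EEE)_p$ by the same factor $m^{-1/p}$ — so the ratio, and hence the required constant, is unchanged. ``Rescaling that slab back to a unit cube'' therefore gives you back the original problem verbatim; there is no lower-dimensional object to recurse on, and the recursion does not terminate. (Taking $m$ large also does not help: the $2^r$ loss per slab in~\eqref{5-2-eq-18} compounds to $2^{rm}$.) Your earlier, abandoned ``fiberwise'' route already identified the real difficulty — the one-variable Whitney approximant produced on each fiber is not polynomial in the remaining variables — and the slab chain does not address it, it only relabels it. What actually closes this (and is, in effect, what~\cite{Di96} does) is to build the approximant explicitly as a tensor product $L_1L_2\cdots L_{d+1}f$ of univariate linear quasi-interpolation operators, each bounded on $L^p$, reproducing $\Pi^1_{r-1}$ in its own variable, and satisfying $\|g-L_jg\|\le C\,\og^r(g,\cdot;e_j)_p$; the product is then automatically in $\Pi^{d+1}_{r-1}(\EEE(S))$ and the error telescopes. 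If you want a self-contained proof, that is the construction to supply; Theorem~\ref{lem-3-3} alone cannot do it.
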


\begin{proof}  Lemma~\ref{lem-5-3-0} was proved in \cite[Lemma 2.1]{Di96} in the case when $S$ is a rectangular box. The  more  general  case follows from~\eqref{5-1-eq}.
\end{proof}


For the  remainder  of this chapter, we will show how to apply   Theorem~\ref{lem-3-3} to establish   Whitney type inequalities    on  various   multivariate domains with constants depending only on certain given parameters.

We start with a  result that   will be used  in the next chapter.

\begin{thm}\label{cor-7-3}
	Let $ G:=\{(x, y):\  \  x\in D,\  \  g_1(x)\leq y\leq g_2(x)\}$ be a domain in $\R^{d+1}$ that lies  between the graphs of two functions $y=g_1(x)$ and $y=g_2(x)$ on  a   compact parallelepiped $D$ in $\R^d$.  Assume that there exist a linear function 
	$y=H(x)$ on $\RR^d$, a number $\da>0$  and a parameter $L>1$ such that 
	$$\da \leq (-1)^i(g_i(x)-H(x)) \leq L\da\quad \forall x\in D, \quad i=1,2.$$
	In other words, $S\subset G\subset S_L$, where $S$ and $S_L$ are the two   compact 
	parallelepipeds  given by 
	\begin{align*}
	S:&=\{(x,y):\  \  x\in D,\  \  H(x)-\da\leq y\leq H(x)+\da\},\\
	S_L:&= \{(x,y):\  \  x\in D,\  \  H(x)-L\da\leq y\leq H(x)+L\da\}.
	\end{align*}  	  
	Then for each $r\in\NN$, $0<p\leq \infty$ and $f\in L^p(G)$,
	$$E_{r-1} (f;\EEE\cup\{e_{d+1}\})_{L^p(G)} \leq C(p,d, r, L)\Bl[\og^r (f, S; \mathcal{E})_p+\og^r (f, G; e_{d+1})_p\Br],$$
	where 
	$\mathcal{E}$ is the set of edge directions of the  parallelepiped $D$.
\end{thm}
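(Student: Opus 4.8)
The idea is to apply Theorem~\ref{lem-3-3} iteratively in the single direction $e_{d+1}$, sweeping the region $G$ downward (and upward) from the central slab $S$ until we have covered all of $G\subset S_L$. First I would set up the geometry: after subtracting the linear function $H$ (an affine change of variables of the type allowed by~\eqref{5-1-eq} and~\eqref{2-3-18}, which changes the moduli only by a harmless Jacobian factor), we may assume $H\equiv 0$, so that $S=\{(x,y):x\in D,\ |y|\le\da\}$, $S_L=\{(x,y):x\in D,\ |y|\le L\da\}$, and $S\subset G\subset S_L$. The base case is the Whitney inequality on the parallelepiped $S$, which is exactly Lemma~\ref{lem-5-3-0}: $E_{r-1}(f;\EEE)_{L^p(S)}\le C(p,d,r)\,\og^r(f,S;\EEE)_p$, where $\EEE=\EEE(D)$ is the set of edge directions of $D$ (viewed inside $\RR^{d+1}$, these span the coordinate hyperplane $x_{d+1}=0$, and together with $e_{d+1}$ they span $\RR^{d+1}$).

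The main step is the induction. Fix a small step $h=\da e_{d+1}$ (or, more carefully, $h=(\da/r)e_{d+1}$ so that the $r$ translates stay within one slab). Define the stack of sets $K_m:=S_L\cap\{(x,y):x\in D,\ -\da - m\da/r \le y\}$ for $m=0,1,2,\dots$, starting from $K_0=S$ and growing downward in steps of width $\da/r$; after finitely many steps — at most $\lceil r(L-1)\rceil$ of them, a number depending only on $r$ and $L$ — we reach a set containing $\{x\in D,\ -L\da\le y\le \da\}$, hence containing the lower half of $S_L$; then we repeat symmetrically upward with $h=-(\da/r)e_{d+1}$. At each stage the pair $(K,J)=(K_m,K_{m+1}\setminus K_m)$ satisfies~\eqref{7-2-0-18} with the vector $h=-(\da/r)e_{d+1}$: indeed $J$ is a thin slab of vertical width $\da/r$ sitting just below $K_m$, so $J+jh$ for $j=1,\dots,r$ lies inside $K_m$ (it is shifted up into the previous slabs, all of which belong to $K_m$ since $K_m$ is "downward closed" within $S_L$), and for any $\xi\in J$ the segment $[\xi-rh,\xi]=[\xi+\da e_{d+1},\xi]$ is a vertical segment of length $\da$ running from $J$ up into $K_m$, hence contained in $K_m\cup J$. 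Since the only direction used is $e_{d+1}$, Theorem~\ref{lem-3-3} gives, with $\ta=\min\{p,1\}$,
\[
E_{r-1}(f;\EEE\cup\{e_{d+1}\})^\ta_{L^p(K_{m+1})}\le \og^r(f,K_{m+1}\cup K_m;e_{d+1})_p^\ta + 2^r\, E_{r-1}(f;\EEE\cup\{e_{d+1}\})^\ta_{L^p(K_m)},
\]
and since $K_{m+1}\subset S_L\subset G\cup(S_L\setminus G)$ — but in fact we only ever need the estimate on $G$, so I would instead intersect every $K_m$ with $G$ from the start. Observe that $K_m\cap G$ and $(K_{m+1}\setminus K_m)\cap G$ still satisfy~\eqref{7-2-0-18} relative to each other, because the only obstruction to the segment condition would be points of $G$ whose vertical segment upward leaves $G$, and that cannot happen: $G$ is the region between the graphs of $g_1$ and $g_2$, so moving upward from a point of $G$ keeps you in $G$ until you exit through the upper graph $y=g_2(x)$, and the slabs are arranged so that we always move toward that graph. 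Thus each of the $O(r,L)$-many applications costs a factor $2^r$ and adds one $\og^r(\cdot;e_{d+1})_p$ term bounded by $\og^r(f,G;e_{d+1})_p$; after summing the geometric-in-$\ta$ recursion over the finitely many steps we land on $E_{r-1}(f;\EEE\cup\{e_{d+1}\})_{L^p(G)}\le C(p,d,r,L)[\og^r(f,S;\EEE)_p+\og^r(f,G;e_{d+1})_p]$, which is the claim.

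**The main obstacle.** The delicate point is the bookkeeping of the geometry so that the hypothesis~\eqref{7-2-0-18} genuinely holds at every stage when we intersect with $G$ rather than with the ambient box $S_L$ — in particular verifying that for $\xi\in J\cap G$ the whole segment $[\xi-rh,\xi]$ stays in $(K\cup J)\cap G$ and not merely in $K\cup J$. This needs the monotone (graph) structure of $G$ together with the fact that the slabs $K_m$ are chosen "downward-complete" inside $G$, plus a check that the translates $J+jh$ for $1\le j\le r$ remain inside $K\cap G$, which is why one takes the small step $\da/r$ rather than $\da$. The affine reduction to $H\equiv 0$ and the fact that $\og^r(f,S_L;\EEE)_p$ can itself be bounded by a constant times $\og^r(f,G;e_{d+1})_p+\og^r(f,S;\EEE)_p$ are comparatively routine, and the passage from $S$ to $S_L$ uses that moduli on larger sets dominate those on smaller ones only after one has recovered them via the telescoping argument — which is exactly what the induction provides.
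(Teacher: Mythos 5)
Your approach is essentially identical to the paper's: both begin from the Whitney inequality on the central parallelepiped $S$ supplied by Lemma~\ref{lem-5-3-0}, then apply Theorem~\ref{lem-3-3} iteratively to slabs of height $\delta/r$ in the direction $e_{d+1}$, sweeping up and down (a total of roughly $(L-1)r$ steps each way) to exhaust $G$ — the paper treats $G_+$ upward and then $G_-$, you go down then up, which is an immaterial reordering, and your affine normalization $H\equiv 0$ is likewise a harmless convenience the paper simply omits. One small sign slip in your verification of~\eqref{7-2-0-18}: with $h=-(\delta/r)e_{d+1}$ and $J$ sitting below $K_m$, the requirement $J\subset\bigcap_{j=1}^r(K_m+jh)$ means that $J-jh$ (not $J+jh$) must lie in $K_m$, i.e.\ $J$ translated \emph{upward} — which is exactly what your parenthetical "shifted up into the previous slabs" describes, so this is just a typo and the argument is sound.
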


\begin{proof}
	Define $G_+:=\{(x,y)\in G: H(x)-\delta< y\le g_2(x)$.
	Since  $S\subset G_+\subset S_L$, 	we may  decompose the set $G_+$ as 
	$G_+=\bigcup_{j=1}^{(L-1)r} K_j$, where 
	$K_0:=S$, and for $j\ge1$,
	\begin{align*}
	K_j:&=\Bl\{(x,y)\in G:\  \ H(x)-\da \leq y\leq H(x)+\da+\f {j\da}{r}\Br\}.
	\end{align*}
	According to the assumptions on the set $G$,  
	for  $0\leq j\leq (L-1)r-1$,  $\xi\in K_{j+1}\setminus K_j$ implies that $$ 
	[\xi-\da e_{d+1}, \xi-r^{-1} \da e_{d+1}]\subset K_j\  \  \text{and}\  \  [\xi-\da e_{d+1}, \xi]\subset K_{j+1}.$$
	Since $K_0=S$ is a compact parallelepiped, using Lemma~\ref{lem-5-3-0}, and applying Theorem~\ref{lem-3-3} (see also Remark~\ref{rem:whitney key tool seq appl}) iteratively to the pairs of sets $K=K_j$ and $J=K_{j+1}\setminus K_j$, $j=0,\dots, (L-1)r-1$ with $h=\f \da r e_{d+1}$, we obtain the Whitney inequality as stated in Theorem~~\ref{cor-7-3} with $G_+$ in place of $G$. Repeating the same procedure for $G_-:=\{(x,y)\in G: g_1(x)< y\le H(x)+\delta\}$ completes the proof.
\end{proof}

\begin{rem}\label{rem-7-4} Clearly in the above proof of Theorem~\ref{cor-7-3}, we may choose the  initial set $K_0$ to be  any set $K$ containing the parallelepiped $S$  on which the  Whitney type inequality holds.  Thus,  
	if the  Whitney type inequality~\eqref{whitney} holds on a  set $K\supset S$, then  
	$$E_{r-1} (f;\EEE\cup\{e_{d+1}\})_{L^p(G\cup K)} \leq C(p,d,  r, L)\Bl[\og^r (f, G\cup K, e_{d+1})_p+ M\og^r (f,  K)_p\Br].$$
\end{rem}

For convenience, we introduce the following definition.

\begin{defn} 
	A set   $H\subset \RR^{d+1}$ is called a  special  directional  domain  in the direction of  $e\in \SS^d$ with parameter $L>1$ if   there exists a  rotation $\pmb{\rho}$ such that $\pmb{\rho} e=e_{d+1}$, and the set $G=\pmb{\rho} (H)$ satisfies the conditions of Theorem~\ref{cor-7-3} with    $ S\subset  \RR^{d+1}$ being a cube in $\R^{d+1}$  and $g_1$ a constant function, in which case we call the set  $\pmb{\rho}^{-1} (S)$  the base of $H$.    	
\end{defn}

Theorem~\ref{cor-7-3}, in particular,  allows us to give a simpler proof of the following  result of \cite{De-Le} on convex domains.  

\begin{cor}\label{De-Le} \cite{De-Le} Given $r\in\NN$ and $0<p\leq \infty$, there exists a constant $C(p, d,r)$ depending only on $p, d, r$ such that for every compact  convex body $G\subset \RR^{d+1}$, and every $f\in L^p(G)$,
	$$ E_{r-1}(f)_{L^p(G)} \leq C(p,d,r) \og^r (f, G)_p.$$
\end{cor}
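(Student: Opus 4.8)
The plan is to derive Corollary~\ref{De-Le} from Theorem~\ref{cor-7-3} (with $\EEE=\mathcal{E}(D)$ the full set of coordinate-box edge directions, so that $\Pi^{d+1}_{r-1}(\EEE\cup\{e_{d+1}\})=\Pi^{d+1}_{r-1}$ and the directional modulus is dominated by the ordinary modulus $\og^r(f,G)_p$) by covering a general convex body with finitely many ``special directional'' slabs and then patching the local approximants together. First I would reduce to a normalized situation: by the affine invariance~\eqref{5-1-eq}–\eqref{2-3-18} of the modulus and of best approximation, and by John's ellipsoid theorem, I may assume $B_1[0]\subset G\subset B_{d+1}[0]$, so that $G$ is sandwiched between two concentric balls of radii comparable to absolute constants; all geometric parameters that appear below will then depend only on $d$.

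Next I would produce the covering. Fix a small absolute constant $\rho_0$ and, for each direction $e$ in a fixed finite $\rho_0$-net $\mathcal{N}\subset\SS^d$ (whose cardinality depends only on $d$), slice $G$ by hyperplanes orthogonal to $e$ into slabs of thickness comparable to $\rho_0$. On each such slab $G\cap\{c\le x\cdot e\le c'\}$, the ``top'' boundary piece (the part of $\partial G$ where the outward normal is within angle, say, $\pi/4$ of $e$) is the graph of a concave function over a convex base in the hyperplane $e^\perp$; truncating the slab appropriately and circumscribing/inscribing cubes, each truncated slab becomes a special directional domain in the direction $e$ with parameter $L$ bounded in terms of $d$ only — this is exactly the hypothesis set of Theorem~\ref{cor-7-3}. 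Two facts need to be checked here: (i) finitely many such slabs, over all $e\in\mathcal{N}$, cover all of $G$ (every boundary point of $G$ has some net direction within $\pi/4$ of its outward normal, and the interior is covered by the thick central slabs), and (ii) the overlap structure is ``chained'': the slabs can be linearly ordered $\Og_1,\dots,\Og_J$ so that consecutive ones overlap in a set containing a ball of radius $\ga_0$ depending only on $d$. For (ii) I would invoke the same connectedness argument already carried out in the proof of Lemma~\ref{LEM-4-2-18-0}, applied verbatim to this finite family of convex (hence overlap-ball-containing) subsets of the connected set $G$.

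With the chained covering in hand, I would run the patching lemma. Theorem~\ref{cor-7-3} gives, on each slab $\Og_s$ (with $\EEE=\mathcal{E}$ the box edges of its base), $E_{r-1}(f)_{L^p(\Og_s)}\le C(p,d,r)\og^r(f,G)_p$, since the special-domain modulus terms are each $\le\og^r(f,G)_p$. Then the argument of Lemma~\ref{REDUCTION} — using the polynomial cutoff of Lemma~\ref{lem-4-2} supported on the overlap ball, together with the Remez–Nikolskii growth estimate Lemma~\ref{lem-4-1} to absorb the $\theta^{-n}$ factors — lets me combine the local best approximants into a single polynomial of degree $c_0 n$ with $E_{c_0 n}(f)_{L^p(G)}\le C\max_s E_{r-1}(f)_{L^p(\Og_s)}$; taking $n$ fixed (say $n=r-1$, after rescaling degrees) yields $E_{r-1}(f)_{L^p(G)}\le C(p,d,r)\og^r(f,G)_p$. (Strictly, one applies the patching with polynomials of bounded degree, which only inflates the constant $C$, not the degree $r-1$ appearing in $E_{r-1}$; alternatively one argues as in the proof of Theorem~\ref{Jackson-thm} and then uses that $\og^r(f,G)_p$ already controls $E_{(d+1)(r-1)}(f)_p$ and a standard comparison of $E_{r-1}$ with $E_{(d+1)(r-1)}$ via one more application of the Whitney inequality on a single cube inside $G$.)

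The main obstacle I anticipate is geometric bookkeeping rather than analysis: verifying uniformly (with all constants depending on $d$ alone) that after the John-position normalization every truncated slab genuinely fits the hypotheses of Theorem~\ref{cor-7-3} — in particular that the relevant boundary piece is a graph over a genuine parallelepiped base with the two-sided bound $\delta\le(-1)^i(g_i-H)\le L\delta$, and that the ``side'' faces of the slab (where $\partial G$ is nearly parallel to $e$) are correctly handled by choosing the net fine enough and overlapping the slab families for different net directions. Ensuring the chaining property (ii) with a uniform overlap radius is the second delicate point, but it is handled by the already-available connectedness argument together with the observation that the pairwise intersection of two overlapping convex bodies with nonempty interior contains an interior point, hence a ball. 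Everything else — the Whitney estimate per slab, the cutoff patching, the affine reduction — is supplied by results proved earlier in the paper.
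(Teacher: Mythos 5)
There are two genuine gaps in your proposal, one conceptual and one technical, and both are avoided by the paper's actual argument.

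The conceptual gap is your claim that with $\EEE$ the coordinate directions one has $\Pi^{d+1}_{r-1}(\EEE\cup\{e_{d+1}\})=\Pi^{d+1}_{r-1}$. This is false for $r\ge 2$: by definition, $\Pi^{d+1}_{r-1}(\{e_1,\dots,e_{d+1}\})$ consists of polynomials of degree $<r$ \emph{in each coordinate separately}, which is a strictly larger space than polynomials of total degree $<r$ (for $r=2$, $d+1=2$, it contains $x_1x_2$). Since $\Pi^{d+1}_{r-1}\subsetneq\Pi^{d+1}_{r-1}(\EEE)$, the Whitney bound from Theorem~\ref{cor-7-3} controls $E_{r-1}(f;\EEE)$, which is a \emph{smaller} quantity than $E_{r-1}(f)$ — the wrong direction of the inequality. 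The equality $\Pi^{d+1}_{r-1}(\EEE)=\Pi^{d+1}_{r-1}$ holds precisely when $\EEE=\SS^d$ (or is otherwise rich enough that a polynomial with univariate degree $<r$ along every direction in $\EEE$ has total degree $<r$), and this is exactly why the paper's proof runs the whole iteration with $\EEE=\SS^d$. Your ``alternative'' — upgrade $E_{(d+1)(r-1)}(f)_p\le C\og^r(f,G)_p$ to $E_{r-1}(f)_p\le C\og^r(f,G)_p$ ``via one more application of the Whitney inequality on a single cube'' — does not close this gap either: a Whitney estimate on a cube interior to $G$ bounds $E_{r-1}$ only on that cube, not on $G$. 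What would actually be needed is a finite-dimensional equivalence-of-norms argument on $\Pi^{d+1}_{(d+1)(r-1)}/\Pi^{d+1}_{r-1}$, uniform over the compact family $\{G: B_1[0]\subset G\subset B_{d+1}[0]\}$; that is a genuinely different route from the one you sketch, and you have not supplied it.

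The technical gap is your use of the Chapter~\ref{sec:reduction} patching machinery (Lemma~\ref{REDUCTION}, built on the cutoff polynomials of Lemmas~\ref{lem-4-1}--\ref{lem-4-3}). That machinery glues local polynomials of degree $n$ into a global polynomial of degree $c_0 n$ with $c_0>1$, so from local degree-$(r-1)$ approximants you only get $E_{c_0(r-1)}(f)_p\lesssim\og^r(f,G)_p$, not $E_{r-1}(f)_p\lesssim\og^r(f,G)_p$. Your parenthetical that this ``only inflates the constant $C$, not the degree'' is incorrect: the cutoff $R_n$ has degree $\sim n$ and multiplies the approximant, so degree genuinely increases. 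The paper sidesteps this entirely by \emph{not} using cutoffs at all here: it decomposes $G$ (after John normalization) into a central cube $G_0$ and a finite family of cone-like domains $G_j=\{\eta+t\xi_j\in G:\eta\in I_j,\,t\ge0\}$, each a special directional domain whose base parallelepiped $S_j$ is contained in $G_0$, and then iterates Theorem~\ref{lem-3-3}/Remark~\ref{rem-7-4} — which preserve the degree $r-1$ exactly — with $\EEE=\SS^d$. Your slab decomposition $G\cap\{c\le x\cdot e\le c'\}$ is also geometrically weaker than the paper's cone decomposition: the cones all share the central cube $G_0$ as a common base on which the Whitney inequality (Lemma~\ref{lem-5-3-0}) holds, so there is no ``chaining'' step to arrange, while your slabs require both a nontrivial verification that truncated slabs fit the hypotheses of Theorem~\ref{cor-7-3} and a separate chaining argument. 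In short: switch to $\EEE=\SS^d$, replace the slab/cutoff scheme with the cone decomposition and direct iteration of Theorem~\ref{lem-3-3}, and the proof goes through; as written, neither the polynomial space nor the degree comes out right.
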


\begin{proof}
	
	By  John's theorem (\cite[Proposition 2.5]{De-Le}),
	without loss of generality	we may assume  that $B_1[0] \subset G\subset B_{d+1}[0]$.
	Following  \cite{Di-Pr08}, we may   decompose $G$ as a finite union $G=\bigcup_{j=0}^m G_j$ of $(m+1)\leq C_d$ special directional domains $G_j$ with parameters $\leq C_d$ such that $G_0:=[-\f 1{\sqrt{d+1}}, \f 1{\sqrt{d+1}}]^{d+1}$, and   the  base of  each   $G_j$ is contained in $G_0$. Indeed, the decomposition  can be constructed as follows. First,   cover the sphere $\{ \xi\in\RR^{d+1}:\  \ \|\xi\|=d+1\}$ with 
	$m\leq C_d$ open balls  $B_1, \dots, B_m$  of radius $1/ (2(d+1))$ in $\RR^{d+1}$.    Denote by $\xi_{j}$ the unit vector pointing from the origin to the center of the ball $B_j$, and let $I_j$ denote a $d$-dimensional cube  with  side length $1/(d+1)$ that is centered at the origin and perpendicular to $\xi_j$. Define $ G_0:=[-\f 1{\sqrt{d+1}}, \f 1{\sqrt{d+1}}]^{d+1}$ and 
	$$ G_j:=\Bl\{ \eta+t\xi_j\in G:\  \  \eta\in I_j,\   \  t\ge 0\Br\},\  \ j=1,\dots, m.$$
	Since $G$ is convex and $B_1[0]\subset G\subset B_{d+1}[0]$, it is easily seen  that  $G=\bigcup_{j=0}^m G_j$ and for each  $1\leq j\leq m$, the set  $G_j$ is a special domain in the direction of $\xi_j$ with parameter $\leq C_d$ and a base parallelepiped
	$$S_j:=\{\eta+t\xi_j:\  \ \eta\in I_j, \  \  0\leq t\leq \sqrt{d}/(2(d+1))\}\subset G_0,$$
	where we used that $I_j\subset B_{\sqrt{d}/(2(d+1))}[0]$, $\sqrt{d}/(2(d+1))<1/\sqrt{d+1}$ and $B_{1/\sqrt{d+1}}[0]\subset G_0$.
	
	Finally, on $G_0$ we can invoke Lemma~\ref{lem-5-3-0}, then with $\EEE=\SS^d$, applying Theorem~\ref{lem-3-3} to the  decomposition $G=\bigcup_{j=0}^m G_j$ iteratively, and using Theorem~\ref{cor-7-3} and Remark~\ref{rem-7-4}, we  deduce   the inequality~\eqref{7-1-18} with constant $C(p,d,r)$ depending only on $p,d,r$. 
\end{proof}

The above proof also yields the Whitney inequality for the directional moduli of smoothness. However,  the number of directions required in the directional modulus  would be  very large (in fact, it is of the order $C d^{d}$ as $d\to \infty$), and the corresponding polynomial space would be simply $\Pi^{d+1}_{r-1}$.

\section{On choices of directions}

For the purpose of obtaining the direct estimate of polynomial approximation on $C^2$ domains, it will be sufficient to use Theorem~\ref{cor-7-3}. We believe it is an interesting question of independent interest to study for which domains $G$ and sets of directions $\EEE$ the Whitney-type inequality~\eqref{eqn:new whitney type} is valid. We will present some results here employing our technique and make some remarks hopefully not taking us too far from the main subject of this work.

\begin{rem} If the condition $\text{span}(\mathcal{E})=\RR^{d+1}$ in the definition of $\Pi^{d+1}_{r-1}(\EEE)$ is not assumed, then $\Pi^{d+1}_{r-1}(\EEE)$ may be an infinite-dimensional space. Indeed, if $\spn(\mathcal{E})\neq \RR^{d+1}$, then there exists $\xi_0\in \SS^d$ such that $\xi_0\cdot \xi=0$ for all $\xi\in\mathcal{E}$. Each function $f_j(\eta):=(\xi_0\cdot \eta)^{j}$, $j\ge0$, is an element of $\Pi^{d+1}_{r-1}(\EEE)$. In this situation, the element of best approximation by $\Pi^{d+1}_{r-1}(\EEE)$ may not exist and the closure of $\Pi^{d+1}_{r-1}(\EEE)$ may be rather large, e.g., will include all functions $\tilde f(\eta):=f(\xi_0\cdot \eta)$ for $f\in L^p(\R)$.
\end{rem}	 

\begin{rem}
	On the other hand, if $\EEE$ is sufficiently large and contains directions which are in a certain sense independent, then $\omega^r(f,G;\EEE)_p$ may be equivalent to $\omega^r(f,G)_p$ and $\Pi^{d+1}_{r-1}(\EEE)=\Pi^{d+1}_{r-1}$ making~\eqref{eqn:new whitney type} a regular Whitney-type inequality for polynomials of total degree $<r$ and regular (non-directional) modulus of smoothness. We refer the reader to~\cite{Di-Iv} where such equivalence is studied in the case $p\ge1$.
\end{rem}

\begin{rem}\label{rem:triangle counterexample for directional Whitney}
	The inequality~\eqref{eqn:new whitney type} is, generally speaking, invalid for arbitrary convex body in $\R^2$ and $\EEE\subset\SS^1$ consisting of arbitrary two linearly independent vectors, at least for $p=\infty$. Indeed, let $G$ be the triangle with the vertices $(0,0)$, $(1,2)$ and $(2,1)$, and $\EEE=\{(1,0),(0,1)\}$. It is straightforward to verify that for $f_n(x,y):=\max\{n,\ln(x+y)\}$ one has $E_{r-1}(f_n;\EEE)_{L^\infty(G)}\to\infty$ while $\omega^r(f_n,G;\EEE)_{p}<C$ as $n\to\infty$.
\end{rem}

The next result shows that for arbitrary convex body $G$ in $\R^2$ it is always possible to choose a two-element set of directions $\EEE\subset\SS^1$ depending on $G$ so that~\eqref{eqn:new whitney type} is satisfied.

\begin{thm}\label{cor-7-7}
	If $G$ is a convex body in $\R^2$, then there exists two linearly independent vectors $\xi_1, \xi_2\in\SS^1$  such that for all $0<p\leq \infty$ and  $f\in L^p(G)$, 
	\begin{equation}\label{key-13-11}
	E_{r-1}(f;\{\xi_1, \xi_2\})_{L^p(G)}\leq C(p,r) \og^r (f, G; \{\xi_1, \xi_2\})_p.
	\end{equation}
\end{thm}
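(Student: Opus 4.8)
The plan is to choose the two directions $\xi_1,\xi_2$ so that the convex body $G$ admits a decomposition of the type handled by Theorem~\ref{cor-7-3}, using only these two directions plus the already-available Whitney inequality on a base parallelogram (Lemma~\ref{lem-5-3-0}). First I would fix a pair of distinct extreme directions of $G$: intuitively, pick two boundary points $p_1,p_2$ where the ``width'' of $G$ in suitable directions is realized, and let $\xi_1,\xi_2$ be linearly independent unit vectors chosen so that, after an affine change of coordinates, $G$ sits between the graphs of two concave/convex functions over a segment and the edge directions of the relevant base parallelogram are exactly $\xi_1$ and $\xi_2$. Since an affine map sends line segments in direction $e$ to line segments in direction $T_0e/\|T_0e\|$, by~\eqref{5-1-eq} it suffices to prove~\eqref{key-13-11} for the image $T(G)$ with the transformed pair of directions; so I would normalize $\xi_1=e_1$, $\xi_2=e_2$ and reduce to a standard position for a convex body in $\RR^2$.

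In this normalized position, the convex body $G$ can be written as $\{(x,y): x\in[a,b],\ g_1(x)\le y\le g_2(x)\}$ with $g_1$ convex and $g_2$ concave on $[a,b]$. The key geometric step is to exhaust $G$ from a central parallelogram (actually a square, after the normalization) by slices in the $e_2$-direction: near the ``fat'' middle of $G$ one has a square $S$ on which Lemma~\ref{lem-5-3-0} gives the Whitney inequality with an absolute constant, and then one grows $S$ upward and downward by adding thin horizontal slabs. The point of the affine normalization is to make the parameter $L$ in Theorem~\ref{cor-7-3} (the ratio of the heights $g_2-H$ and the base thickness) bounded by an absolute constant — this is where the freedom in choosing $\xi_1,\xi_2$ is spent. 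Concretely I expect to invoke a John-type / width argument: choose $\xi_1,\xi_2$ so that $G$ contains a parallelogram $S$ with edge directions $\xi_1,\xi_2$ and $G$ is contained in a bounded dilate of $S$; then Theorem~\ref{cor-7-3} (applied with $e_{d+1}$ replaced by $\xi_2=e_2$, over the one-dimensional ``base parallelepiped'' $D=[a,b]$) yields
\[
E_{r-1}(f;\{\xi_1,\xi_2\})_{L^p(G)}\le C(p,r)\bigl[\og^r(f,S;\{\xi_1\})_p+\og^r(f,G;\{\xi_2\})_p\bigr]\le C(p,r)\,\og^r(f,G;\{\xi_1,\xi_2\})_p,
\]
where in the last step I bound $\og^r(f,S;\{\xi_1\})_p\le\og^r(f,G;\{\xi_1\})_p$ using $S\subset G$ and $\diam(S)\le\diam(G)$, together with~\eqref{2-1-18}.

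The main obstacle, I expect, is the geometric selection of $\xi_1,\xi_2$: one must guarantee simultaneously that (i) $G$ genuinely lies between the graphs of two functions over a segment in coordinates adapted to $\xi_1,\xi_2$ (so that the slicing in Theorem~\ref{cor-7-3} is legitimate — this can fail if $G$ has edges parallel to $\xi_2$, which forces a careful choice or a perturbation argument), and (ii) the resulting parameter $L$ is universally bounded, which amounts to finding an inscribed parallelogram $S\subset G$ with edge directions $\xi_1,\xi_2$ whose dilate by a bounded factor covers $G$. For a general convex body in $\RR^2$ this is a John-ellipsoid-style statement: take the maximal-area inscribed parallelogram (or use the John ellipse and an inscribed parallelogram aligned with its axes), whose constant dilate contains $G$ with an absolute ratio; the edge directions of that parallelogram are the sought $\xi_1,\xi_2$. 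Once this parallelogram is in hand, the rest is the affine reduction above plus one application of Theorem~\ref{cor-7-3}, so the heart of the argument is entirely this two-dimensional convex-geometry lemma.
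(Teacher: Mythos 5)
Your high-level plan — affine normalization, reduce to coordinate directions $\xi_1=e_1$, $\xi_2=e_2$, then invoke Theorem~\ref{cor-7-3} starting from a base parallelogram where Lemma~\ref{lem-5-3-0} applies — matches the paper's. But there are two genuine gaps in the execution.

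First, your literal application fails. You write $G=\{(x,y): x\in[a,b],\, g_1(x)\le y\le g_2(x)\}$ with $D=[a,b]$ the full horizontal extent of $G$ and propose a single use of Theorem~\ref{cor-7-3}. But Theorem~\ref{cor-7-3} requires $\delta\le g_2(x)-H(x)$ \emph{and} $\delta\le H(x)-g_1(x)$ for all $x\in D$, i.e., the two graphs stay uniformly on opposite sides of the line $y=H(x)$. A compact convex body has $g_1(a)=g_2(a)$ and $g_1(b)=g_2(b)$, so the gap collapses at the endpoints and the hypothesis cannot hold on $[a,b]$. You must restrict to a strict subinterval and then deal separately with the two ``wings'', which already forces a second application of Theorem~\ref{cor-7-3}.

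Second, even the two-application, John-parallelogram version of your plan does not close. Suppose $S\subset G\subset cS$ with $S$ a parallelogram (after normalization, $S=[-1,1]^2$ and $G\subset[-c,c]^2$). Slicing in the $e_2$ direction over $\{|x|\le1\}$ and in the $e_1$ direction over $\{|y|\le1\}$ gives you the Whitney inequality on $\{|x|\le1\}\cup\{|y|\le1\}$ intersected with $G$ — but nothing forces $G\subset\{|x|\le1\}\cup\{|y|\le1\}$. A square like $[-1.5,1.5]^2$ satisfies $[-1,1]^2\subset G\subset[-2,2]^2$ yet has corners in neither slab. The John bound controls the \emph{size} of $G$ relative to $S$ but not its \emph{shape} near the corners, which is exactly what is needed. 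The paper's fix is Besicovitch's theorem: one can inscribe an affine image of a regular hexagon in any planar convex body. Normalizing the hexagon to vertices $(\pm1,\pm1)$, $(0,\pm2)$, convexity traps $G$ inside the hexagram (star-of-David) obtained by extending the hexagon's sides, and that hexagram is contained in $\{|x|\le1\}\cup\{|y|\le1\}$ with the two slabs having uniformly bounded aspect ratios ($L=2$ and $L=3$). Two applications of Theorem~\ref{cor-7-3} (the second via Remark~\ref{rem-7-4} with initial set the output of the first) then exhaust $G$. So the Besicovitch hexagon is the convex-geometry lemma your plan is missing — an inscribed parallelogram with bounded eccentricity is not enough.
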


\begin{proof} 
	A geometric result of Besicovitch~\cite{Be} asserts that it is possible to inscribe an affine image of a regular hexagon into any planar convex body. Since~\eqref{key-13-11} does not change under affine transform, we may assume the hexagon with the vertices $(\pm1,\pm1)$ and $(0,\pm2)$ (which are the vertices of a regular hexagon after a proper dilation along one of the coordinate axes) is inscribed into $G$, i.e. each vertex of the hexagon is on the boundary of $G$. Convexity of $G$ then implies that $G$ contains the hexagon and is contained in a non-convex $12$-gon (resembling the outline of the star of David) obtained by extending the sides of the hexagon, i.e. having additional nodes $(\pm2,0)$ and $(\pm1,\pm3)$. Choosing $\xi_1=(1,0)$ and $\xi_2=(0,1)$, the proof is now completed by two applications of Theorem~\ref{cor-7-3} for $S=[-1,1]^2$: first in the direction of $\xi_1$ with $L=2$, and second (see Remark~\ref{rem-7-4}) in the direction of $\xi_2$ with $L=3$.
\end{proof}

It would be interesting to see a generalization of Theorem~\ref{cor-7-7} to higher dimensional convex bodies without any additional smoothness requirements with directions allowed to depend on the domain. Our last result in this section shows that if the boundary of a convex body in $\R^{d+1}$ satisfies certain cone-type condition with sufficiently large angle, then one can take $\EEE$ to be arbitrary $d+1$ linearly independent directions (independent of $G$). 

\begin{defn}
	A cone with vertex $v\in\RR^{d+1}$ in the direction of $u\in\SS^d$ of height $h>0$ and angle $\arccos\lambda$, where $\lambda\in(0,1)$, is the set 
	\[
	K(v,u,h,\alpha):=v+\{x\in\RR^{d+1}:\lambda \|x\| \le x\cdot u\le h \}.
	\]
	We say that a domain $G\subset \RR^{d+1}$ satisfies cone condition with parameters $h>0$ and $\lambda\in(0,1)$ if for any $v\in G$ there exists $u\in\SS^d$ such that $K(v,u,h,\lambda)\subset G$. 
\end{defn}




\begin{thm}\label{cor-7-8-18}
Let $\mathcal{E}\subset \SS^d$ be a set of $d+1$ directions such that $\spn(\mathcal{E})=\RR^{d+1}$, define 	
	\begin{equation}\label{7-8-0-18}
	\va_0:=\min_{\xi\in\SS^d} \max_{\eta\in \mathcal{E}} |\xi\cdot \eta|.	
	\end{equation} 
	Let $G$ 
be a convex body in $\RR^{d+1}$ satisfying the cone condition with parameters $h>0$ and $\lambda<\frac{\va_0}{4\sqrt{r}}$. Then for any $0<p\leq \infty$, $r\in\NN$ and $f\in L^p(G)$,
	\begin{equation}\label{7-8-18}E_{r-1} (f;\EEE)_{L^p(G)} \leq C  \og^r(f, G; \mathcal{E})_p,
	\end{equation}
	where the constant $C$ depends only on $p, d, r, h, \diam(G)$ and the parameter $\va_0$ 
	if it is close to zero. 
\end{thm}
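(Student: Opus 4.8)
The plan is to reduce Theorem~\ref{cor-7-8-18} to an iterated application of Theorem~\ref{cor-7-3} (via Remark~\ref{rem:whitney key tool seq appl}), starting from a fixed cube on which Lemma~\ref{lem-5-3-0} gives the base Whitney inequality. The key geometric fact we need is that, because of the cone condition with a sufficiently flat cone, every point of $G$ can be connected back to a fixed central region through a ``staircase'' of line segments whose directions all lie in $\EEE$. First I would normalize: by an affine change (which, by~\eqref{5-1-eq}, changes $\og^r(f,G;\EEE)_p$ and $E_{r-1}(f;\EEE)_{L^p(G)}$ only by harmless factors, though one must be slightly careful since affine maps do not preserve $\EEE$ — so instead I would only rescale and translate, not shear) we may assume $B_h[0]\subset G$ or at least that some fixed cube $K_0$ of side comparable to $h$ centered near a fixed interior point sits inside $G$, and $G\subset B_{\diam(G)}[0]$.

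The heart of the argument is the following claim: the condition $\lambda<\va_0/(4\sqrt r)$ guarantees that for any $v\in G$, the cone $K(v,u,h,\lambda)\subset G$ is wide enough (i.e. $u$ is close enough to every direction) that $K(v,u,h,\lambda)$ contains a segment $[v,v-th_j]$ of fixed length in \emph{some} direction $\pm e_j\in\EEE$, and moreover a full ``special directional domain'' neighborhood of such a segment — precisely the configuration $S\subset G'\subset S_L$ required by Theorem~\ref{cor-7-3}, with $e_{d+1}$ there replaced by the chosen $e_j$. Indeed, by the definition~\eqref{7-8-0-18} of $\va_0$, for the unit vector $u$ there is $\eta\in\EEE$ with $|u\cdot\eta|\ge\va_0$; the factor $4\sqrt r$ and the cone angle $\arccos\lambda$ are chosen so that the segment of length $\sim h$ from $v$ in the direction of $-\sign(u\cdot\eta)\eta$ stays inside $K(v,u,h,\lambda)$ together with a cube's worth of transversal room, so that Theorem~\ref{cor-7-3} (with $S$ a cube, $g_1$ constant) applies along that segment, picking up the extra direction $\eta$ which is already in $\EEE$. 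One then covers $G$ by finitely many (a number bounded in terms of $p,d,r,h,\diam(G),\va_0$) such special directional domains $G_1,\dots,G_N$, ordered so that $K_0\cup G_1\cup\dots\cup G_j$ is connected and the base of $G_{j+1}$ is contained in $K_0\cup G_1\cup\dots\cup G_j$; this is possible because $G$ is convex and hence ``chainable'' to $K_0$ through cones. Applying Theorem~\ref{cor-7-3} and Remark~\ref{rem-7-4} inductively along this chain yields~\eqref{7-8-18}, since at each step the new direction $e_j$ added is already a member of $\EEE$, so the polynomial space never grows beyond $\Pi^{d+1}_{r-1}(\EEE)$.

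The step I expect to be the main obstacle is the geometric covering: showing that finitely many special directional domains of the type handled by Theorem~\ref{cor-7-3}, each built around a segment in a direction from the fixed set $\EEE$, actually cover all of $G$, with a uniform bound on their number and parameters, and can be linearly ordered so that each one's base already lies in the union of the previous ones together with $K_0$. The convexity of $G$ plus the cone condition should make this work — one essentially pushes from any boundary-ward point back toward the center $0$ along a bounded-length polygonal path each of whose edges is in $\pm\EEE$ and each of whose edges admits the required transversal thickness — but making the constants depend only on the stated parameters (and degenerate gracefully as $\va_0\to0$) requires care. The counting of how many steps of length $\sim h$ are needed to reach $0$ from a point of $G$ is controlled by $\diam(G)/h$; the dependence on $\va_0$ enters through how thin the transversal cubes must be taken and hence the parameter $L$ in each application of Theorem~\ref{cor-7-3}. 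The remaining bookkeeping — tracking the $\theta=\min\{p,1\}$ powers through the finitely many applications of Theorem~\ref{lem-3-3}, and confirming $\og^r(f,G_j;\EEE)_p\le\og^r(f,G;\EEE)_p$ and $\og^r(f,S;\EEE')_p\le\og^r(f,G;\EEE)_p$ for the relevant sub-sets of directions — is routine.
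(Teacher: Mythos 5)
Your route is genuinely different from the paper's, and it contains a concrete gap at exactly the step you flag as the ``main obstacle.''

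The paper does \emph{not} reduce to Theorem~\ref{cor-7-3}. Instead it introduces a separate key lemma (Lemma~\ref{lem-3-2-0}, stated and proved entirely inside the proof of Theorem~\ref{cor-7-8-18}) about thickening a ball by a small multiplicative factor $\sigma=1+\va_0^2/(4r)$: if a Whitney inequality holds on a set $K_0$ containing a ball $B_0$ of radius $\delta$, and $S$ is convex with $B_0\subset B\subset S\subset\sigma B$ for some ball $B$, then the Whitney inequality extends to $K_0\cup S$. The proof of that lemma decomposes $\sigma B$ into $2(d+1)$ caps $E_j=\{\eta:\|\eta\|\le\sigma\delta,\ \eta\cdot\xi_j\ge\va_0\|\eta\|\}$, one per direction $\xi_j\in\pm\EEE$, and the central estimate is that the translate $[\eta-rh_j,\eta-h_j]$ of any $\eta\in E_j$ falls back into $B_0$ for $h_j=\f{\va_0\delta}{2r}\xi_j$; this is where the inequality $\|\zeta-u\xi_j\|^2\le\sigma^2+u^2-2\sigma u\va_0\le1$ and hence the precise constant $\va_0^2/(4r)$ enter. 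The global decomposition of $G$ is then into a chain of balls $K$ in the interior plus convex sets $G_j\cap G$ with $B_j\subset G_j\subset(1+\va)B_j$ near the boundary, each $B_j$ being a ball of radius $\delta_0=h/3$ whose \emph{boundary} passes within $\va\delta_0/2$ of a given boundary-ward point $v$. In particular the paper never needs to build a slab reaching $v$ itself; the ball $B_j$ stops short of $v$, and it is the small multiplicative inflation $(1+\va)B_j$ that captures $v$.

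Your version, by contrast, wants a ``special directional domain'' inside $K(v,u,h,\lambda)$ (and hence inside $G$) whose graph direction $\eta\in\pm\EEE$ points along a segment emanating from $v$ and whose base cube $S$ sits at the far end of that segment. The difficulty is that the cone has zero transversal room at its apex: the cross-section at distance $t$ from $v$ has radius $\sim t\sqrt{1-\lambda^2}\to0$. The requirement in Theorem~\ref{cor-7-3} that $\da\le g_2(x)-H(x)\le L\da$ for \emph{every} $x\in D$ (so the slab has thickness at least $2\da$ over the whole base) is incompatible with the cone pinching to a point at $v$, unless you allow the base $D$ to shrink near $x_v$ --- but then $v$ is on the boundary of a cap rather than strictly inside, and you have not verified that this degenerate configuration is still covered. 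One can try to repair this by letting $g_2$ follow $\p G$ and tuning $\da$ against the angular gap $\arccos\lambda-\arccos\va_0$, but that is precisely the quantitative analysis which uses the factor $4\sqrt r$ in the hypothesis, and which you do not carry out. There is a second, unacknowledged issue: you declare you will ``only rescale and translate, not shear,'' yet Theorem~\ref{cor-7-3} is stated with $e_{d+1}$ as the graph direction and $D\subset\RR^d\times\{0\}$. To align a general $\eta\in\EEE$ with $e_{d+1}$ requires a rotation, after which the remaining directions $\EEE\setminus\{\eta\}$ are generally \emph{not} in $\RR^d\times\{0\}$, so the edge directions of $D$ will not pull back to $\EEE\setminus\{\eta\}$; to fix this one must shear, which you have ruled out. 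Both obstacles are ones the paper's ball-based Lemma~\ref{lem-3-2-0} is specifically designed to avoid, which is why the authors state and prove it rather than reusing Theorem~\ref{cor-7-3}.
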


\begin{rem}
	In particular, any convex $C^2$ domain satisfies this cone-type condition, but we remark that it is not possible to drop the $C^2$ condition due to the example from Remark~\ref{rem:triangle counterexample for directional Whitney}.
\end{rem}

\begin{proof}
	Set   $\va:= \f {\va_0^2}{8r}$, $\delta_0:=\frac h3$ and $\alpha:=\arccos \lambda$. By $0<\lambda<\frac{\eps_0}{4\sqrt{r}}$, it is easy to verify that  $(\frac1{\sin\alpha}+1)\delta_0\le h$ and $\frac1{\sin\alpha}-1\le\frac\va2$. Thus, for arbitrary $v\in G$, if $u\in\SS^d$ is such that $K(v,u,h,\alpha)\subset G$, then the closed ball $B$ of radius $\delta_0$ centered at $v+(\frac1{\sin\alpha}-1)\delta_0 u$ satisfies $B\subset K(v,u,h,\alpha)\subset G$ and $\dist(v,B)\le\frac{\va\delta_0}{2}$. In other words, for any $v\in G$ there exists a closed ball $B\subset G$ of radius $\delta_0$ and a point $\xi\in\partial B$ such that $\|v-\xi\|\le\frac{\va\delta_0}{2}$. We will use this fact in a moment. 
	
	Next we remark that for any radius $\delta>0$ and any subset $\Gamma\subset G$, it is possible to find points $x_1,\dots,x_m\in\Gamma$ such that the union of the open balls of radius $\delta$ centered at these points cover $\Gamma$ while $m\le c(d,\delta,\diam(G))$. This can be done by a standard packing/covering argument. Namely, we begin with arbitrary $x_1\in \Gamma$ and if $x_1,\dots,x_k\in\Gamma$ are already chosen but do not provide the desired cover, we pick a point $x_{k+1}\in\Gamma$ with $\|x_j-x_{k+1}\|\ge\delta$ for all $j<k$. Then the balls of radius $\delta/2$ centered at $x_j$ have non-overlapping interior and the bound on $m$ follows by volume estimates. 
	
	We prove that the domain $G$ can be decomposed as follows:
	\begin{equation}\label{decom2}
	G= \bigcup _{j=1}^{m} G_j\cup K,
	\end{equation}
	where    $m=m(d,\da_0, \va, \diam(G))$, each $G_j$ is a convex set containing an open ball $B_j$ of radius $\da_0$ such that 
	$B_j \subset G_j\subset (1+\va) B_j$ and   $\f 12 B_j  \subset K$,  and the set $K$ is a union of $M=M(d, \da_0)$ open balls $B^\ast_j$, $1\leq j\leq M$ of radius $\da_0/2$ such that $B^\ast_j\cap B^\ast_{j+1}$ contains an open ball of radius $\da_0/4$ for each $1\leq j<M$. 
	To this end, we first cover the set   $\Ga_0:=\{\xi\in G:\  \  \dist(\xi, \p G) \leq \frac{15}{16} \da_0\}$
	with $m=m( d, \da_0, \va, \diam(G))$ open balls $B_{v_j} (\frac{\va \da_0}2)$, $j=1,\dots, m$, of radius $\frac{\va \da_0}2$ such that  $v_1,\dots, v_{m}\in \Ga_0$. Invoking the fact from the beginning of the proof, for each $j$ we find an open ball $B_j$  of radius $\da_0$ and a point $\xi_j\in G$ such that $\xi_j\in \overline{B_j}\subset G$ and $\|v_j-\xi_j\|\le\frac{\va\delta_0}{2}$, implying that  $\dist(\xi_j,\partial G)\le \delta_0$ (as $\eps\le\frac1{8}$) and $\Ga_0\subset\bigcup_{j=1}^mB_{\xi_j}$. Define 
	$G_j :=( (1+\va) B_j)\cap G$. It is then easily seen that $\Ga_0 \subset \bigcup_{j=1}^{m} G_j\subset G$ and 
	$\f 12 B_j \subset \Ga_1:= \{\xi\in G:\  \  \dist(\xi, \p G) \ge  \da_0/2\}$ for each $j$.
	Next,  we cover   the set $\Ga_1$ with $m_1=m_1(d, \da_0, \diam(G))$ open balls $B_{\zeta_j}(\da_0/4)$, $j=1,\dots, m_1$ of radius $\da_0/4$ such that 
	$\{\zeta_j\}_{j=1}^{m_1} \subset  \Ga_1$. 
	Note that  $B_{\zeta_j}(\da_0/2) \subset G$ for each  $1\leq j\leq m_1$. 
	Since $\Ga_1$ is convex and hence connected,  following the proof of Lemma~\ref{LEM-4-2-18-0}, we  can form a sequence of sets $\{E_j\}_{j=1}^M$ from possibly repeated copies of  the sets
	$B_{\zeta_j}(\da_0/4)\cap \Ga_1$, $1\leq j\leq m_1$  such that  	$M\leq 2m_1^2$, 
	$\Ga_1=\bigcup_{j=1}^M E_j$, and  $E_j\cap E_{j+1} \neq \emptyset $ for  $1\leq j<M$.
	We then define  
	$ B^\ast_j=B_{\zeta_i} (\da_0/2)\subset G$   for $1\leq j\leq M$, where $i=i_j\in[1, m_1]$ is the index  such that  $E_j=B_{\zeta_i} (\da_0/4)\cap \Ga_1$. Let
	$K:=\bigcup_{j=1}^M B^\ast_j$. Clearly, $B^\ast_j\cap B^\ast_{j+1}$ contains an open ball of radius $\da_0/4$ for each  $1\leq j<M$,  and moreover  $\Ga_1\subset K$. Thus,  
	$G=\bigcup_{j=1}^m (G_j\cup K)$ is the decomposition of $G$ with the  above mentioned properties. 
	
	In addition to the decomposition~\eqref{decom2}, we also need  the following    lemma.

	\begin{lem}\label{lem-3-2-0} 	 Let   $\mathcal{E}\subset \SS^d$ and $\va_0\in (0,1)$  be the same as above (as in Theorem~\ref{cor-7-8-18}). 				
		Let  $K_0\subset \R^{d+1}$ be  a set containing   an open ball $B_0$ of radius $\da>0$. Assume that   the following Whitney   inequality  holds  for a given function $f\in L^p(\RR^{d+1})$ with $p>0$: 
		\begin{equation}\label{2-1}
		E_{r-1}(f;\mathcal{E})_{L^p(K_0)} \leq C_0  \og^r(f, K_0; \mathcal{E})_p.
		\end{equation}  
		Let  $S\subset \R^{d+1}$ be   a convex subset,  and assume that  there exist a parameter  $L\ge 1$ and   an open ball $B$ of radius $L\da$ such that 			
		$B_0\subset B\subset S\subset \sa B$, where $\sa:=1+\f {\va_0^2}{4r}$.   Then 
		\begin{equation}\label{2-4-0}
		E_{r-1} (f;\mathcal{E})_{L^p (K_0\cup S)}\leq C(L, d,p, r, \va_0) C_0 \og^r (f, K_0\cup S; \mathcal{E})_p.
		\end{equation} 
	\end{lem}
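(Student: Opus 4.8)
Lemma~\ref{lem-3-2-0} establishes a Whitney-type transfer from an initial set $K_0$ (with a ball of radius $\delta$) to the union $K_0\cup S$, where $S$ is squeezed between concentric balls of radii $L\delta$ and $\sigma L\delta$, the dilation factor $\sigma$ being just slightly larger than $1$. The whole point is that $\sigma$ is close enough to $1$ — controlled by $\varepsilon_0^2/(4r)$ — that one can reach every point of $S$ from $B$ by taking a finite difference step of length $O(\delta)$ along one of the $d+1$ coordinate-type directions in $\mathcal{E}$, with all intermediate points (and the shifted copy) remaining inside $K_0\cup S$. Let me sketch the plan.

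The plan is to apply Theorem~\ref{lem-3-3} (in the iterated form of Remark~\ref{rem:whitney key tool seq appl}) along a short chain of sets interpolating between $K_0$ and $K_0\cup S$. First I would normalize: translate so that the common center of $B_0$ and $B$ is the origin, so $B_0=B_\delta(0)$, $B=B_{L\delta}(0)$, and $S\subset B_{\sigma L\delta}(0)$. The key geometric observation is the definition of $\varepsilon_0$ in~\eqref{7-8-0-18}: for \emph{every} unit vector $\xi$ there is some $\eta\in\mathcal{E}$ with $|\xi\cdot\eta|\ge\varepsilon_0$; equivalently, every point $v$ of $S$ lies within a thin ``shell'' of $B$, and moving from $v$ toward the origin by a small multiple of a suitable $\eta\in\mathcal{E}$ lands inside $B\subset K_0$. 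Quantitatively, if $\|v\|\le\sigma L\delta$ with $\sigma=1+\varepsilon_0^2/(4r)$, pick $\eta=\eta(v)\in\mathcal{E}$ with $|v\cdot\eta|\ge\varepsilon_0\|v\|$; then for appropriate step size $h\sim (\sigma-1)L\delta/(\varepsilon_0 r)$ one checks $\|v-j h\eta\|<L\delta$ for $j=1,\dots,r$, so $[v-rh\eta,\,v]\subset S\cup B\subset K_0\cup S$ and $v-jh\eta\in B\subset K_0$. Since $\mathcal{E}$ is finite, cover $S$ by finitely many ``patches'' $S_\eta:=\{v\in S: \eta(v)=\eta\}$, one for each $\eta\in\mathcal{E}$, and within each patch subdivide the radial excess into $r$ layers $K_0\cup\{v\in S_\eta:\|v\|\le L\delta+\tfrac{i}{r}(\sigma-1)L\delta\}$, $i=0,1,\dots,r$, so that consecutive layers differ by a slab crossable by the single step $h=\tfrac{(\sigma-1)L\delta}{r}\cdot(\text{sign})$ along $\eta$. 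Applying Theorem~\ref{lem-3-3} to each such pair $(K,J)$ with $h\eta$ (checking~\eqref{7-2-0-18}: $J\subset\bigcap_{j=1}^r(K+jh\eta)$ and $[\xi-rh\eta,\xi]\subset K\cup J$), and iterating over the $|\mathcal{E}|=d+1$ patches, transforms~\eqref{2-1} into~\eqref{2-4-0}. The total number of iterations is $r(d+1)$, and each introduces a factor $2^{r/\theta}$ in the $\theta$-triangle inequality of~\eqref{5-2-eq-18} plus a directional modulus term $\omega^r(f,K_0\cup S;\eta)_p\le\omega^r(f,K_0\cup S;\mathcal{E})_p$; summing these yields the constant $C(L,d,p,r,\varepsilon_0)C_0$, with the $\varepsilon_0$-dependence entering only through the step size $h\sim\delta\varepsilon_0/(\dots)$ — hence only when $\varepsilon_0$ is close to zero, as claimed.

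The one technical wrinkle is verifying the chain condition~\eqref{7-2-0-18} at each layer, and in particular making sure the intermediate points $v-jh\eta$ stay inside $S\cup B$ and not merely inside the ambient ball — but convexity of $S$ together with $B\subset S$ handles this: the segment from $v\in S$ to a point of $B\subset S$ lies in $S$ by convexity, so $[v-rh\eta,v]\subset S\subset K_0\cup S$, and the truncated tails $v-jh\eta$ for $j\ge1$ land in $B$ once the step size is calibrated so that $\|v\|-jh\varepsilon_0\le L\delta$ at $j=1$ — which is exactly what the bound $\sigma-1\le\varepsilon_0^2/(4r)$ buys (the factor $r$ absorbs the number of layers, the extra $\varepsilon_0$ versus $\varepsilon_0^2$ gives slack for the non-radial component of the step).

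The main obstacle I anticipate is bookkeeping rather than conceptual: one must be careful that the patches $S_\eta$, though not disjoint and not necessarily connected, can each be layered independently and that applying Theorem~\ref{lem-3-3} to patch $\eta$ only requires the Whitney inequality on the \emph{current} accumulated set (which always contains $K_0\supset B_0$), so the hypothesis~\eqref{2-1} propagates. A clean way to organize this is: process one direction $\eta\in\mathcal{E}$ at a time, at each stage enlarging the ``good'' set from $K_0\cup(\text{already-processed patches})$ to include $S_\eta$ via $r$ applications of Theorem~\ref{lem-3-3}; after all $d+1$ directions are processed the good set contains $\bigcup_{\eta}S_\eta\supseteq S$, hence contains $K_0\cup S$, and~\eqref{2-4-0} follows. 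The constant is a product of $r(d+1)$ factors each of the form $C_{p,r}$, times $C_0$, which is of the stated form.
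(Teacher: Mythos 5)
Your plan captures the right one-step geometry (the $\varepsilon_0$-compatible direction, the slab chain, applying Theorem~\ref{lem-3-3}), but it silently imports two assumptions that the lemma does not grant, and together they make the argument fail for the actual hypotheses.

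\textbf{First, $B$ need not be contained in $K_0$.} The hypothesis gives only $B_0\subset K_0$, where $B_0$ has the \emph{smaller} radius $\delta$, while $B$ has radius $L\delta$. Your chain condition (7-2-0-18) requires the shifted points $v-jh\eta$ to land in the \emph{already-established} Whitney set, which at the first stage is only $K_0$, hence only $B_0$. You repeatedly write that these truncated points ``land in $B\subset K_0$,'' and your base layer $i=0$ is $K_0\cup(S_\eta\cap B)$, a set on which no Whitney inequality is yet available. If $L$ is large, the shift needed to bring a point of $\partial(\sigma B)$ into $B_0$ has length on the order of $(L-1)\delta$, not $(\sigma-1)L\delta\sim\varepsilon_0^2\delta L/r$; and since the direction $\eta$ is only $\varepsilon_0$-aligned with the radial direction, the lateral drift over such a long shift overshoots $B_0$ entirely. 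A quick check: from $\|v\|=\sigma L\delta$ with $v\cdot\eta\ge\varepsilon_0\|v\|$, the closest approach of the ray $v-t\eta$ to the origin is $\|v\|\sqrt{1-\varepsilon_0^2}\ge L\delta\sqrt{1-\varepsilon_0^2}$, which exceeds $\delta$ whenever $L$ is not within a factor $\approx 1+\varepsilon_0^2/2$ of $1$. So the one-step version cannot bridge $S$ to $B_0$ when $L$ is large; you need to iterate the dilation, growing the good set from $K_0\cup B_0$ to $K_0\cup\sigma B_0$ to $K_0\cup\sigma^2 B_0$, etc., until it covers $B$ --- about $\lceil\log L/\log\sigma\rceil$ rounds, which is exactly why the constant is allowed to depend on $L$.

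\textbf{Second, you assume $B_0$ and $B$ are concentric.} You ``translate so that the common center of $B_0$ and $B$ is the origin,'' but the lemma only asserts $B_0\subset B$; the centers can be anywhere, including $B_0$ tangent to $\partial B$. In that configuration the entire opposite side of $S$ is at distance $\approx 2L\delta$ from $B_0$, and neither your single slab step nor any number of concentric dilations of $B_0$ will reach it. The paper handles this by splitting into three cases: first the concentric base case $B=B_0$, then the case $\|\xi_0\|\le\delta/2$ (center of $B$ near center of $B_0$) by first growing $B_{\delta/2}(\xi_0)$, and finally the general case by laying down a chain of overlapping balls $B_j=B_{\delta/\sigma}(\eta_j)$ along the segment from $0$ to $\xi_0$, each obtained from the previous by one application of the base case, so that the good set ``walks'' from $B_0$ over to $B$ before the final dilation step. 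Your sketch is essentially the base case of that hierarchy; the iteration over dilations and the chain-of-balls translation are the missing content, and they are precisely what introduces the $L$-dependence in the constant of~\eqref{2-4-0}.
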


	We  postpone the proof of Lemma~\ref{lem-3-2-0}  until the end of this section. For the moment, we take it for granted and show how to deduce Theorem~\ref{cor-7-8-18} from the decomposition~\eqref{decom2}. 
	We start with the special case when $G$ itself is a ball $B\subset \R^{d+1}$.  	
	Since  dilations   and translations  do not change the directions in the set $\mathcal{E}$, 
	we  may also assume in this case that $G=B=B_1(0)$. Let $A$ be the $(d+1)\times(d+1)$ matrix whose $j$-th column vector is $\xi_j$. Then by~\eqref{7-8-0-18}
	\[
	\|A^t\mu\|\ge \|A^t\mu\|_{\infty}\ge \eps_0\|\mu\|, \quad \forall \mu\in\R^{d+1},
	\] 
	so from the fact that a matrix and its transpose have the same singular values, we get 
	\begin{equation}\label{norm-equiv}
	\va_0\|x\|\leq \|\sum_{j=1}^{d+1} x_j \xi_j\|\leq \sqrt{d+1} \|x\|,\   \  \forall x=(x_1,\dots, x_{d+1})\in\R^{d+1}.
	\end{equation}
	In particular, this implies that the parallelepiped
	$$H:=\Bl\{ \sum_{j=1}^{d+1} x_j \xi_j:\   \  x=(x_1,\dots, x_{d+1})\in [-\f 1 {d+1},\f 1{d+1}]^{d+1}\Br\}$$
	whose edge directions lie in the set $\mathcal{E}$,  satisfies  
	$ B_{\va_0/(d+1)}[0]\subset H\subset B_1[0].$
	Thus, applying Lemma~\ref{lem-3-2-0} to the pair of sets  $K_0=H$ and  $S=B=B_1(0)$,  and taking  into account Lemma~\ref{lem-5-3-0}, we prove  the Whitney inequality~\eqref{7-8-18} in the case when $G$ is an open ball. 
	
	Next, we apply  Lemma~\ref{lem-3-2-0} iteratively to the pairs of sets  $K_0=\bigcup_{i=1}^j B^\ast_i$ and  $S=B^\ast_{j+1}$ for $j=1,\dots, M-1$,  and using the inequality~\eqref{7-8-18} for the already proven case of $G=B^\ast_1$, we deduce the inequality~\eqref{7-8-18} with $K$ in place of $G$. 
	
	Finally, 
	using the decomposition~\eqref{decom2}, and  
	applying   Lemma~\ref{lem-3-2-0} iteratively to the pairs of sets  $K_0=K$ and  $S=G_j$ for $j=1,\dots, m$, we establish  the inequality~\eqref{7-8-18} on a general convex $C^2$ domain  $G$.
	This completes the proof of Theorem~\ref{cor-7-8-18}. 
\end{proof}

It remains to prove Lemma~\ref{lem-3-2-0}.

\begin{proof}[Proof of Lemma~\ref{lem-3-2-0}]
	Without loss of generality, we may assume   that $B_0=B_\da (0)$, and $\mathcal{E}=\{ \xi_j\}_{j=1}^{2(d+1)}$, where $\xi_{d+1+j}=-\xi_j$ for $1\leq j\leq d+1$. 
	We start with the special case when  $B=B_0$.
	In this special  case, we use~\eqref{7-8-0-18} and the assumption $\mathcal{E}=-\mathcal{E}$  to  decompose the ball  $\sa B$ as 
	$\sa B  =\bigcup_{j=1}^{2(d+1)} E_j$, where   
	\begin{equation*}\label{key-13-5}
	E_j:=\Bl\{\eta\in \RR^{d+1}:    \|\eta\|\leq \sa \da,  \   \     \  \eta \cdot \xi_j\ge \va_0\|\eta\|\Br\}. 
	\end{equation*}
	Setting    $h_j:= \f {\va_0 \da}{2r}  \xi_j$,  we  claim that 
	\begin{equation}\label{3-3-0}
	[\eta-rh_j, \eta-h_j]\subset B_0,\   \    \forall \eta\in E_j. 
	\end{equation}
	Indeed,  if  $\xi\in 	[\eta-rh_j, \eta-h_j]$, then we may write it    in the form
	$\xi= \da ( \zeta  -u \xi_j)$ with $\|\zeta\|\leq \sa$,\  \ $\zeta\cdot \xi_j\ge \va_0\|\zeta\|$ and  $ \f {\va_0} {2r}\leq u\leq \f {\va_0} 2$, however,  
	\begin{align*}
	\|\zeta-u \xi_j\|^2& \leq \sa^2 +u^2 -2\sa u\va_0
	\leq (\sa- u\va_0)^2 +u^2 \leq 1-\f {u\va_0}2 +u^2 \leq 1.
	\end{align*}
	This shows the claim~\eqref{3-3-0}.

	Next,   we decompose $S$ as $S=S\cap (\sa B) =\bigcup_{j=1}^{2(d+1)} S_j$ with $S_j :=S\cap E_j$. Define $K_j =K_{j-1} \cup S_j$ for $j=1,\dots, 2(d+1)$. Then  $B_0\subset K_0\subset K_j$ for all $1\leq j\leq 2(d+1)$ and $K_{2(d+1)}=K_0\cup S$. Thus, applying~\eqref{5-2-eq-0-18} from Remark~\ref{rem-7-2}  iteratively to the pairs of the  sets $K=K_{j-1}$ and $J=S_j\subset E_j$ along  the directions $h=h_j$  for $j=1,\dots, 2(d+1)$,  and  taking into account~\eqref{3-3-0},  we obtain that  for any $q\in\Pi_{r-1}^{d+1}$, 
	\begin{align}\label{7-16-0}
	\|f-q\|_{L^p(K_j)}\leq C(p,r) \|\tr_{h_j}^r f\|_{L^p({ S_j-rh_j})}+ C(p,r) \|f-q\|_{L^p(K_{j-1})}.
	\end{align}	
	Note that   for each $\xi\in  S_j-rh_j$, we have that $\xi+rh_j\in  S_j\subset S$ and by~\eqref{3-3-0},  $[\xi, \xi+(r-1)h_j]\subset B\subset S$. Since $S$ is a  convex set, this implies that 
	$[\xi, \xi+rh_j]\subset S$  for each $\xi\in  S_j-rh_j$. Thus,   $\tr_{h_j}^r f (\xi)=\tr_{h_j}^r (f, S, \xi)$ for each $\xi\in  S_j-rh_j$. 		
	It then  follows from~\eqref{2-1}  and~\eqref{7-16-0}  that 
	\begin{align*}\label{13-7}
	E_{r-1}(f)_{L^p(K_0\cup S)}&\leq   C (p, d,r)C_0 \og^r(f, K_0\cup S,\mathcal{E})_p.
	\end{align*}	
	This gives the desired result in the special case of $B=B_0$.   
	
	Let us also point out that an iterative application of the  already proven result for   the above special case in fact implies some  more general results.  Indeed,  if  $B_0$ is  any open ball in $K_0$ and  $\ell\in\NN$, then 
	applying the already proven  result  iteratively to the pairs of sets $K_0\cup \sa^{j-1} B_0$ and  $S=\sa^j B_0$ for $j=1,\dots, \ell$, we obtain that 
	\begin{equation}\label{7-16-18}
	E_{r-1} (f) _{L^p (K_0\cup \sa^\ell B_0)} \leq C(p,r,d, \ell)C_0 \og^r(f, K_0\cup \sa^\ell B_0; \mathcal{E})_p.
	\end{equation} 
	More generally, if $a>\sa$, then  		
	using~\eqref{7-16-18} with $\ell\in \NN$ being such that $\sa^\ell \leq a <\sa^{\ell+1}$, and 
	applying  the result of the special case  to the pair of sets $K_0\cup \sa^\ell B_0$ and $S=a B_0$, we obtain that 
	\begin{equation}\label{7-17-18}
	E_{r-1} (f) _{L^p (K_0\cup a B_0)} \leq C(p,r,d, a) C_0\og^r(f, K_0\cup a B_0; \mathcal{E})_p.
	\end{equation}

	Next, we consider  a more  general case when  the center $\xi_0$  of the ball $B=B_{L\da} (\xi_0)$ satisfies that $\|\xi_0\|\leq \f \da 2$. In this case,  $ B_{\da/2} (\xi_0) \subset B_0\subset K_0$.  Thus, using~\eqref{7-17-18} with $B_{\da/2} (\xi_0)$ in place of the ball $B_0$, and with the parameter $a=2L$, we deduce 
	$$ E_{r-1} (f) _{L^p (K_0\cup B)} \leq C(p,r,d, L) \og^r(f, K_0\cup B; \mathcal{E})_p.$$
	The   Whitney  inequality on the set $K_0\cup S$ then  follows by applying  the already proven special case   to the initial set  $K_0\cup B$ (instead of $K_0$).

	Finally, we treat the remaining case; that is,  the case when   $\f \da 2 <\|\xi_0\|\leq L\da$.   
	Let    $\eta_0:=\f {\xi_0}{ \|\xi_0\|}$ be the direction of the center of the ball $B$.   Let  $n_0\leq \f {L\sa}{\sa-1}$ be a positive integer satisfying 
	$$ n_0 \f{(\sa-1)\da}\sa < \|\xi_0\|\leq (n_0+1) \f {(\sa-1) \da}\sa.$$	
	Let $\eta_j =j\f{(\sa-1)\da}\sa \eta_0$,  $j=0,1,\dots, n_0$ be the equally spaced points on the line segment $[0, \xi_0]$.  
	Define  $B_j =B_{\da/\sa}(\eta_j)$ for $j=1,\dots, n_0$ and 	recall that $B_0=B_\da (0)$.  It is easily seen that    $B_j\subset \sa B_{j-1} \subset B$ for $j=1,\dots, n_0$.
	Let   $K_j:=K_{j-1}\cup \sa B_j$ for $j=1,\dots, n_0$.  Then   $B_j\subset \sa B_{j-1}\subset K_{j-1}$ for $1\leq j\leq n_0$.  Thus, applying the result for  the already proven special case iteratively  to  the pairs of sets $(K_0, B_0):=(K_{j-1}, B_j)$,  we conclude that 
	\begin{equation}\label{7-18-18-0}
	E_{r-1} (f) _{L^p (K_{j})} \leq C(p,r,d, L) C_0\og^r(f, K_{j}; \mathcal{E})_p,\  \  j=1,\dots, n_0.
	\end{equation}
	Since $ \|\xi_0-\eta_{n_0} \|\leq (\sa-1)\da/\sa$, we have that $$B_{\da/\sa }(\xi_0) \subset B_{\da} (\eta_{n_0})= \sa B_{n_0} \subset K_{n_0}.$$
	Recall also that $K_0\subset K_{n_0}\subset K_0\cup B$. 
	Thus, using~\eqref{7-18-18-0} with $j=n_0$, and applying~\eqref{7-17-18} to  the pair of sets $(K_0, B_0):=(K_{n_0}, B_{\da/\sa} (\xi_0))$ and the parameter $a:=L\sa$, we obtain  
	\begin{equation*}\label{7-18-18}
	E_{r-1} (f) _{L^p (K_0\cup B)} \leq C(p,r,d, L) C_0\og^r(f, K_{0}\cup B; \mathcal{E})_p.
	\end{equation*}
	The desired result  then follows by applying the already proven  result for  special  case once again. This completes the proof. 
\end{proof}

\chapter{Polynomial partitions of the unity }

	\section{Polynomial partitions of the unity on domains of special type}
\label{sec:5}
The main purpose in this section is to prove a localized  polynomial partition of the  unity on a domain   $G\subset \RR^{d+1}$  of special type.  Without loss of generality, we may assume that $G$ is an upward $x_{d+1}$-domain given in~\eqref{2-7-special} with $\xi=0$, small base size $b>0$ and parameter $L=b^{-1}$. Namely,  	
\begin{align*}
G:=\{ (x, y):\  \  x\in [-b,b]^{d},\   \  g(x)-1\leq  y\leq  g(x)\},
\end{align*}
where $b\in (0,(2d)^{-1})$ is a sufficiently small constant and  $g$ is a $C^2$-function on $\RR^d$ satisfying that $\min_{x\in [-b, b]^d} g(x)\ge 4$.

Our construction of localized polynomial partition of the  unity relies on  a partition of the domain $G$, which we now  describe.  
Given a positive integer $n$, let   $\Ld^d_n:=\{ 0, 1,\dots, n-1\}^d\subset \ZZ^d$ be an index set. 
We shall use boldface letters $\mathbf{i}, \mathbf{j},\dots$ to denote indices in the set $\Ld_n^d$.  For each $\mathbf{i}=(i_1,\dots, i_{d})\in \Ld_n^d$, define 
\begin{equation}\label{partition-b}\Delta_{\bfi}:=[t_{i_1}, t_{i_1+1}]\times \dots \times [t_{i_{d}}, t_{i_{d}+1}]  \   \  \ \text{with}\  \    t_{i}=-b+\f {2i}n b.
\end{equation}
Then  $\{\Delta_{\bfi}\}_{\bfi\in\Ld_n^d}$ forms a   partition of the cube $[-b,b]^d$.  
Next, let    $N:=N_n:=2\ell_1 n$
and  $\al:= 1/(2\sin^2\f \pi{2\ell_1})$,  where  $\ell_1$ is  a fixed  large positive integer satisfying \begin{equation}\label{5-2-18}
\al\ge 5d\max_{x\in [-4b, 4b]^d} (|g(x)|+\max_{1\leq i, j\leq d} |\p_i\p_j g(x)|). 
\end{equation}
Let  $\{\al_j:=2\al \sin^2 (\f {j\pi}{2N})\}_{j=0}^N$ denote   the Chebyshev partition of the interval $[0, 2\al]$ of order $N$ such that  $\al_n=1$. Then  $\{\al_j\}_{j=0}^n$ forms a partition of the interval $[0,1]$.  
Finally, we  define a  partition of the domain $G$ as follows:  
\begin{align*}
G&=\Bl\{(x,y):\  \  x\in [-b,b]^d,\   \   g(x)-y\in [0,1]\Br\} =\bigcup_{\bfi\in\Ld_n^d} \bigcup_{j=0}^{n-1} I_{\mathbf{i},j},
\end{align*}
where 
$$I_{\mathbf{i},j}:=\Bl\{ (x, y):\  \  x\in \Delta_{\bfi},\  \   g(x)-y\in [\al_{j}, \al_{j+1}]\Br\}.$$
Note that $\Ld_n^d \times \{0,\dots, n-1\}=\Ld_n^{d+1}$.

With the above notation, we have

\begin{thm}\label{strips-0}
	For any   $m\ge2$, there exists a sequence of   polynomials $\bl\{q_{\bfi, j}:\  \   (\bfi, j) \in\Ld_n^{d+1}\br\}$ of degree at most  $ C(m, d) n$ on $\RR^{d+1}$ such that $$\sum_{(\bfi, j)\in\Ld_n^{d+1}} q_{\bfi,j}(x,y)=1\   \   \  \text{for all $(x,y)\in G$},$$
	and  for each $(x,y)\in I_{\mathbf{k}, l} $ with $(\mathbf{k},l)\in\Ld_n^{d+1}$, 
	\be\label{strips-ineq}
	| q_{\bfi,j}(x,y)|\le \frac{C_{m,d}}{\Bl(1+\max\{ \|\bfi-\mathbf{k}\|, |j-l|\}\Br)^m}.
	\ee
\end{thm}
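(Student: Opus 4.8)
The plan is to build the polynomials $q_{\bfi,j}$ as tensor products of one-dimensional "bump" polynomials adapted to the two different partitions in play: the equally spaced partition $\{t_i\}$ in the tangential $x$-variables, and the Chebyshev partition $\{\al_j\}$ in the variable $s:=g(x)-y$. First I would recall the classical univariate construction (this is exactly the Dzjadyk--Konovalov device cited in the excerpt): for a fixed large integer, one produces polynomials $P_k$ of degree $O(n)$ on $[-1,1]$ (for the equally spaced case) and polynomials adapted to the Chebyshev partition, which form a partition of unity and satisfy a localization estimate of the form $|P_k(t)| \le C_m (1+|k-\nu|)^{-m}$ when $t$ lies in the $\nu$-th subinterval, with the exponent $m$ controlled by the fixed multiplicity parameter $\ell_1$. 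The key point of the Chebyshev-partition version is that the natural "distance" there is the index distance $|j-l|$, which is comparable to $N\cdot|\sqrt{\al_j}-\sqrt{\al_l}|/\sqrt\al$ plus $N^2|\al_j - \al_l|/\al$ — matching the metric $\rho_\Og$ — but for this theorem we only need the clean index-distance bound~\eqref{strips-ineq}. I would denote the tangential factors by $A_{i_\nu}(x_\nu)$, $\nu = 1,\dots,d$ (polynomials of degree $O(n)$ with $\sum_{i} A_i \equiv 1$ on $[-b,b]$ and $|A_i(x_\nu)| \le C_m(1+|i-k_\nu|)^{-m}$ when $x_\nu \in [t_{k_\nu}, t_{k_\nu+1}]$), and the radial factors by $B_j(s)$ (polynomials of degree $O(n)$ in $s$ with $\sum_j B_j \equiv 1$ on $[0,1]$ and $|B_j(s)| \le C_m(1+|j-l|)^{-m}$ when $s \in [\al_l,\al_{l+1}]$).

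Next I would set
\[
q_{\bfi,j}(x,y) := A_{i_1}(x_1)\cdots A_{i_d}(x_d)\, B_j\bl(g(x)-y\br).
\]
Here the crucial subtlety is that $g(x)-y$ is not a polynomial, because $g$ is only a $C^2$ function, so $B_j(g(x)-y)$ is not a polynomial in $(x,y)$. To fix this I would replace $g$ by a polynomial approximant: by the Jackson theorem for $C^2$ functions on the cube $[-4b,4b]^d$ there is a polynomial $\tilde g$ of degree $O(n)$ with $\|g - \tilde g\|_\infty \le C n^{-2}$ and $\|\nabla(g-\tilde g)\|_\infty \le C n^{-1}$ on $[-4b,4b]^d$ (the role of the enlarged cube and of the size condition~\eqref{5-2-18} on $\al$ is to guarantee $\tilde g$ still satisfies the geometric bounds needed below). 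Then $B_j(\tilde g(x)-y)$ is a genuine polynomial of degree $O(n)$ in $(x,y)$, and I would actually define
\[
q_{\bfi,j}(x,y) := A_{i_1}(x_1)\cdots A_{i_d}(x_d)\, B_j\bl(\tilde g(x)-y\br),
\]
so $\sum_{(\bfi,j)} q_{\bfi,j} \equiv 1$ on $G$ follows from $\sum_i A_i \equiv 1$ on $[-b,b]$ (applied in each $x_\nu$) and $\sum_j B_j \equiv 1$ on an interval containing $\tilde g(x) - y$ for all $(x,y)\in G$ — which holds provided $n$ is large enough that $|g-\tilde g| \le Cn^{-2}$ keeps $\tilde g(x)-y$ inside the slightly enlarged interval where $\sum_j B_j \equiv 1$; one handles the small boundary discrepancy by taking the univariate $B_j$'s to sum to $1$ on a fixed slightly larger interval $[-\eta,1+\eta]$.

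For the localization estimate~\eqref{strips-ineq}, fix $(x,y)\in I_{\mathbf k,l}$, so $x_\nu \in [t_{k_\nu}, t_{k_\nu+1}]$ and $s := g(x)-y \in [\al_l,\al_{l+1}]$. The tangential factors immediately give $\prod_\nu |A_{i_\nu}(x_\nu)| \le C_{m,d}\prod_\nu (1+|i_\nu - k_\nu|)^{-m} \le C_{m,d}(1+\|\bfi - \mathbf k\|)^{-m'}$ after reducing the exponent slightly (using $\prod(1+a_\nu)^{-m} \le C_d (1+\max a_\nu)^{-m}$ when $m > $ something, or more simply $(1+\|\bfi-\mathbf k\|_\infty)^{-1}\le\prod(1+|i_\nu-k_\nu|)^{-1}\cdot(\text{const})$). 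For the radial factor I need $|B_j(\tilde s)| \le C_{m,d}(1+|j-l|)^{-m}$ where $\tilde s := \tilde g(x)-y$; since $s\in[\al_l,\al_{l+1}]$ and $|\tilde s - s| \le Cn^{-2}$, the point $\tilde s$ lies in $[\al_{l'},\al_{l'+1}]$ for some $l'$ with $|l'-l| \le C$ (here one uses that consecutive Chebyshev gaps $\al_{l+1}-\al_l$ have comparable size to the left and right, so a perturbation of size $O(n^{-2}) = O(\al_{l+1}-\al_l)$ moves the index by at most a bounded amount; this is the standard Chebyshev-partition mesh-ratio fact), and then $|B_j(\tilde s)| \le C_m(1+|j-l'|)^{-m} \le C_m(1+|j-l|)^{-m}$ up to adjusting constants. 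Multiplying the two bounds and taking $\ell_1$ large enough (so the native exponent exceeds the desired $m$) yields~\eqref{strips-ineq}, and the degree of $q_{\bfi,j}$ is $O(n) + \deg B_j\cdot\deg\tilde g = C(m,d)n$.

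The main obstacle I anticipate is precisely the passage from the non-polynomial $g(x)-y$ to the polynomial $\tilde g(x)-y$: one must verify (i) that $\tilde g$ can be chosen with the required $C^1$-closeness and still obeying the geometric inequalities (bounds on $\tilde g$ and its second differences) that were imposed on $g$ via~\eqref{5-2-18}, so that the enlarged domain $G^\ast$ and the Chebyshev structure are not disturbed; and (ii) the perturbation argument showing that a point in the $l$-th Chebyshev cell of $s$-space is pushed by $\tilde g - g$ into at most the $(l\pm C)$-th cell — this requires the quantitative comparison $\al_{l+1}-\al_l \sim \al\bl(\sqrt{\al_{l+1}}-\sqrt{\al_l}\br)\cdot\bl(\sqrt{\al_l}+n^{-1}\bl)/n \gtrsim n^{-2}\al$ uniformly in $l$, which is the heart of why the Chebyshev partition is the right one and why $n^{-2}$ (not $n^{-1}$) accuracy of $\tilde g$ is both available and sufficient. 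Everything else — the univariate partition-of-unity polynomials and their localization — is classical and can be quoted from Dzjadyk--Konovalov~\cite{Dz-Ko}.
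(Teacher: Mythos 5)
Your overall architecture is aligned with the paper's — tensor products of a tangential localization factor (here $v_{\bfi}$, Lemma~\ref{uniform}) and a radial factor built from the Chebyshev partition-of-unity polynomials $u_j$ (Lemma~\ref{chebyshev}) — and you correctly identified the central obstacle: $g(x)-y$ is not a polynomial, so one cannot simply feed it into $u_j$.

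However, the fix you propose — replace $g$ by a \emph{single global} polynomial approximant $\tilde g$ of degree $O(n)$ obtained from the Jackson theorem for $C^2$ functions — destroys the degree bound. You want $\|g-\tilde g\|_\infty \lesssim n^{-2}$, which for $g\in C^2$ (and no more) forces $\deg\tilde g\sim n$. Then $B_j\bigl(\tilde g(x)-y\bigr)$ has degree $\deg B_j\cdot\deg\tilde g\sim n\cdot n=n^2$, not $O(n)$. Your last sentence of the main paragraph writes ``$O(n)+\deg B_j\cdot\deg\tilde g = C(m,d)n$'', and that equality is simply false: the product of two degree-$O(n)$ quantities is $O(n^2)$. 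So the resulting $q_{\bfi,j}$ are not admissible. There is a genuine tension here: you need $n^{-2}$ accuracy (degree $\sim n$) but also bounded degree in the substitution, and a single global approximant cannot have both.

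The paper resolves exactly this tension by making the substituted polynomial \emph{local to each tile} and of \emph{fixed degree $2$}: it defines, for each $\bfi$, the quadratic $f_{\bfi}(x):=g(x_{\bfi})+\nabla g(x_{\bfi})\cdot(x-x_{\bfi})+\tfrac{M}{2}\|x-x_{\bfi}\|^2$. Taylor's theorem with the $C^2$-bound on $g$ gives $f_{\bfi}(x)-M\|x-x_{\bfi}\|^2\le g(x)\le f_{\bfi}(x)$, so $f_{\bfi}$ approximates $g$ to $O(n^{-2})$ precisely where $\|x-x_{\bfi}\|\lesssim 1/n$, and the tangential localization of $v_{\bfi}$ kills the contribution of $u_j(f_{\bfi}(x)-y)$ where $f_{\bfi}$ is a bad approximant. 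Since $\deg f_{\bfi}=2$, the composition $u_j(f_{\bfi}(x)-y)$ has degree $2\deg u_j=O(n)$, fixing the degree. Note also the \emph{one-sided} inequality $g\le f_{\bfi}$ is not incidental: it guarantees $f_{\bfi}(x)-y\ge g(x)-y\ge 0$ on $G$, so the argument of $u_j$ stays inside $[0,2\alpha]$ where the Chebyshev localization $|u_j(t)|\lesssim (1+n|\sqrt t-\sqrt{\al_j}|)^{-4m}$ is valid; your $\tilde g$ has no sign control and $\tilde g(x)-y$ can dip below $0$ near $y=g(x)$, precisely where the $\sqrt{\cdot}$-based estimate is most delicate. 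Your remaining steps (the mesh-ratio fact $\al_{l+1}-\al_l\sim (l+1)/n^2$, the index perturbation argument, collapsing the tails, and the reduction of the product $\prod(1+|i_\nu-k_\nu|)^{-m}$ to $(1+\|\bfi-\mathbf k\|)^{-m'}$) are sound and mirror the paper's comparison step~\eqref{claim-4-5} and~\eqref{key-5-11}, so with the per-tile quadratic substituted for your global $\tilde g$ the plan goes through.
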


Theorem~\ref{strips-0}  is motivated by~\cite[Lemma~2.4]{Dz-Ko}, but  some important details of the proof were omitted there. In this chapter, we shall give a complete and simpler proof of the theorem. 

In the sequel, we  often use Greek letters $\xi,\eta,\dots$ to denote points in $\RR^{d+1}$ and write $\xi\in\RR^{d+1}$ in the form $\xi=(\xi_x, \xi_y)$ with $\xi_x\in\RR^d$ and $\xi_y\in\RR$.

\begin{rem}\label{rem-5-2}
	For applications in later chapters, we  introduce    the following metric on the domain   $G$:  for  $\xi=(\xi_x, \xi_y)$ and  $\eta=(\eta_x, \eta_y)\in G$, 
	\begin{equation}\label{rhog}
	\wh{\rho}_G(\xi, \eta):=\max\Bl\{\|\xi_x-\eta_x\|,
	\Bl|\sqrt{g(\xi_x)-\xi_y}-\sqrt{g(\eta_x)-\eta_y}\Br|\Br\}.
	\end{equation}
	It can be easily seen that    if $ \xi\in I_{\bfi, j}$ and $ \eta\in I_{\mathbf {k}, \ell}$, then 
	\begin{equation}\label{Chapter-5-1}
	1+n\wh{\rho}_G(\xi,\eta) \sim 1+\max\{ \|\bfi-\mathbf{k}\|, |j-\ell|\}.
	\end{equation}
	This implies that  
	\begin{equation}\label{6-6-18}
	| q_{\bfi,j}(\xi)|\le \frac{C_{m,d}}{(1+n\wh{\rho}_G(\xi,\og_{\bfi,j}))^m},\   \   \ \forall  \xi\in G,\   \ \forall \og_{\ib, j} \in I_{\ib, j}.
	\end{equation}
	
\end{rem}
\begin{rem}\label{rem-6-3}
	If  $r\in\NN$ and $n\ge 10 r$, then the polynomials $q_{\ib,j}$ in Theorem~\ref{strips-0}  can be chosen to be of total degree $\le n/r$. Indeed, this can be obtained by invoking   Theorem~\ref{strips-0} with $n/r$ in place of $n$, relabeling the indices, and setting some of the polynomials to be zero. 
\end{rem}
For the proof of Theorem~\ref{strips-0}, we need two additional   lemmas, the first of which is well known.

\begin{lem}\label{chebyshev}\cite[Theorem~1.1]{Dz-Ko}
	Given any parameter $\ell>1$, there exists a sequence of     polynomials $\{u_j\}_{j=1}^n$ of degree at most  $ 2n$ on $\RR$  such that $\sum_{j=0}^{n-1}u_j(x)=1$ for all $x\in [-1,1]$ and
	$$| u_j(\cos\t)|\leq \frac{C_{\ell}}
	{( 1+n|\t -\f {j\pi}{n}|)^{\ell}},\   \  \t\in [0,\pi],\  \ j=0,\dots,n-1.$$
\end{lem}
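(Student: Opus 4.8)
The plan is to build the $u_j$ directly as samples of a single de~la~Vall\'ee--Poussin--type smoothed Chebyshev reproducing kernel taken over a Gauss--Chebyshev quadrature grid; the partition-of-unity property then comes from exactness of the quadrature, and the localization is read off in the $\theta$-variable via the substitution $x=\cos\theta$ (under which an algebraic polynomial of degree $\le 2n$ in $x$ is an even cosine polynomial of degree $\le 2n$ in $\theta$). So it suffices to exhibit polynomials $u_j$ of degree $\le 2n$ with $\sum_{j=0}^{n-1}u_j\equiv1$ on $[-1,1]$ and $|u_j(\cos\theta)|\le C_\ell(1+n|\theta-\theta_j|)^{-\ell}$ around $\theta_j=j\pi/n$.

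First I would fix, once and for all, a cutoff $\Phi\in C^\infty(\RR)$ with $\Phi\equiv1$ on $[0,\tf12]$, $\operatorname{supp}\Phi\subset[0,1)$, $0\le\Phi\le1$, and set
\[
K_n(x,y):=\sum_{k=0}^{2n-1}\Phi\Bl(\tf{k}{2n}\Br)\,c_k\,T_k(x)T_k(y),\qquad c_0:=1,\ \ c_k:=2\ (k\ge1),
\]
with $T_k$ the $k$-th Chebyshev polynomial; by the support of $\Phi$ this is a polynomial of degree $\le 2n-1$ in each variable. Using $T_k(\cos\theta)=\cos k\theta$ one obtains the closed form $K_n(\cos\theta,\cos\psi)=\tf12\bl(\mathcal D_n(\theta-\psi)+\mathcal D_n(\theta+\psi)\br)$, where $\mathcal D_n(t):=\sum_{|k|<2n}\Phi(|k|/2n)e^{ikt}$ is the usual smoothed Dirichlet kernel. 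The decisive estimate
\[
|\mathcal D_n(t)|\le C_\ell\,n\,\bl(1+n\,\dist(t,2\pi\ZZ)\br)^{-\ell},\qquad\ell>1,
\]
I would prove by summation by parts: multiplying $\mathcal D_n(t)$ by $(e^{it}-1)^{\lceil\ell\rceil}$ replaces the coefficients $\Phi(k/2n)$ by their $\lceil\ell\rceil$-th order finite differences in $k$, which are $O(n^{-\lceil\ell\rceil})$ (smoothness of $\Phi$) and supported on $O(n)$ indices, so $\|(e^{it}-1)^{\lceil\ell\rceil}\mathcal D_n\|_\infty\le C_\ell n^{1-\lceil\ell\rceil}$; combining this with the trivial bound $|\mathcal D_n(t)|\le 4n$ and with $|e^{it}-1|\sim\dist(t,2\pi\ZZ)$ yields the estimate. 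Hence $|K_n(\cos\theta,\cos\psi)|\le C_\ell\,n\,(1+n|\theta-\psi|)^{-\ell}$ for $\theta,\psi\in[0,\pi]$.

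Next I would use the $n$-point Gauss--Chebyshev quadrature --- nodes $y_j=\cos\bl(\tf{(2j+1)\pi}{2n}\br)$, equal weights $\tf1n$, $j=0,\dots,n-1$ --- which is exact on polynomials of degree $\le 2n-1$. Set $u_j(x):=\tf1n K_n(x,y_j)$; each $u_j$ is a polynomial of degree $\le 2n-1\le 2n$. For $x\in[-1,1]$, interchanging the two finite sums and using exactness together with $\int_{-1}^1 T_k\,d\mu=\da_{k,0}$, $d\mu(x):=\tf{dx}{\pi\sqrt{1-x^2}}$,
\[
\sum_{j=0}^{n-1}u_j(x)=\sum_{k=0}^{2n-1}\Phi\Bl(\tf{k}{2n}\Br)c_k\,T_k(x)\sum_{j=0}^{n-1}\tf1n\,T_k(y_j)=\sum_{k=0}^{2n-1}\Phi\Bl(\tf{k}{2n}\Br)c_k\,T_k(x)\int_{-1}^1\! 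T_k\,d\mu=\Phi(0)=1 .
\]
Moreover $|u_j(\cos\theta)|=\tf1n|K_n(\cos\theta,\cos\theta_j')|\le C_\ell(1+n|\theta-\theta_j'|)^{-\ell}$ with $\theta_j'=\tf{(2j+1)\pi}{2n}$, and since $|\theta_j'-\tf{j\pi}{n}|=\tf{\pi}{2n}$ one has $1+n|\theta-\tf{j\pi}{n}|\le(1+\tf{\pi}{2})\bl(1+n|\theta-\theta_j'|\br)$; thus, after this harmless half-spacing reindexing, the bound holds about the points $\theta_j=j\pi/n$, completing the proof.

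The one step that requires genuine care --- and hence the main obstacle --- is the localization estimate for the smoothed Dirichlet kernel $\mathcal D_n$: extracting $\ell$-th power decay out of the fixed ($n$-independent) smoothness of $\Phi$, with a constant independent of $n$ but necessarily growing with $\ell$. The rest --- the substitution $x=\cos\theta$, the closed form for $K_n$, exactness of Gauss--Chebyshev quadrature, and the half-spacing reindexing --- is routine bookkeeping.
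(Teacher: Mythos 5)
Your proof is correct. Note that the paper itself does not prove this lemma---it is cited verbatim from Dzjadyk--Konovalov \cite{Dz-Ko}*{Theorem 1.1}---so there is no in-paper argument to compare against. Your route (sampling a smoothed Chebyshev reproducing kernel at the Gauss--Chebyshev nodes, obtaining the partition-of-unity identity from exactness of the $n$-point Gauss--Chebyshev rule on $\Pi^1_{2n-1}$, and reading off localization from the standard finite-difference decay estimate for the smoothed Dirichlet kernel $\mathcal D_n$) is a clean and well-established alternative to the Dzjadyk--Konovalov construction. All the details check out: the closed form $K_n(\cos\theta,\cos\psi)=\tfrac12\bigl(\mathcal D_n(\theta-\psi)+\mathcal D_n(\theta+\psi)\bigr)$; the fact that for $\theta,\psi\in[0,\pi]$ one has $\operatorname{dist}(\theta\pm\psi,2\pi\ZZ)\ge|\theta-\psi|$, so both terms obey the same decay in $|\theta-\psi|$; the bound $|\mathcal D_n(t)|\le C_\ell\,n(1+n\operatorname{dist}(t,2\pi\ZZ))^{-\lceil\ell\rceil}\le C_\ell\,n(1+n\operatorname{dist}(t,2\pi\ZZ))^{-\ell}$ obtained by $\lceil\ell\rceil$-fold summation by parts (the coefficient $\Phi(|k|/2n)$ is constant for $|k|\le n$, so the apparent nonsmoothness at $k=0$ is harmless); the degree bound $2n-1\le2n$; and the $\pi/(2n)$ reindexing from the Gauss--Chebyshev nodes $\theta_j'=(2j+1)\pi/(2n)$ to the prescribed points $\theta_j=j\pi/n$, which only costs a factor $(1+\pi/2)^\ell$ in the constant. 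The one step that genuinely carries the proof is, as you identify, the $\ell$-th power localization of $\mathcal D_n$ with constant independent of $n$; the rest is bookkeeping.
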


The second lemma gives a    polynomial partition of the  unity   associated with the partition $\{\Delta_{\bfj}:\  \ \bfj\in\Ld_n^d\}$ of  the cube $[-b,b]^d$. 

\begin{lem}\label{uniform} Given any  parameter $\ell>1$,  there exists a sequence of    polynomials $\{v_{\mathbf{j}}^d\}_{\mathbf{j}\in\Ld_n^d} $ of total  degree $\le  2dn$ on $\RR^d$ such that for all $x\in [-b, b]^{d}$,  $\sum_{\mathbf{j}\in \Ld_n^d}v_{\mathbf{j}}^d(x)= 1$   and
	$$|v_{\mathbf{j}}^d(x)|\le \f{C_{\ell,d}} { (1+n\|x-x_{\mathbf{j}}\|)^\ell},\    \  \mathbf{j}\in\Ld_n^d,
	$$
	where $x_{\bf j}$ is an arbitrary point in $\Delta_{\bf j}$.
\end{lem}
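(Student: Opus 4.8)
The plan is to build the $d$-dimensional partition as a tensor product of the one-dimensional partition from Lemma~\ref{chebyshev}. First I would set up the one-dimensional building blocks: apply Lemma~\ref{chebyshev} (with the parameter $\ell$, or rather with a slightly larger parameter $\ell'$ to be chosen) to get polynomials $\{u_j\}_{j=0}^{n-1}$ of degree $\le 2n$ on $\RR$ with $\sum_{j=0}^{n-1}u_j(x)=1$ on $[-1,1]$ and $|u_j(\cos\t)|\le C_{\ell'}(1+n|\t-j\pi/n|)^{-\ell'}$. The nodes $t_i=-b+(2i/n)b$ in~\eqref{partition-b} are, up to the affine rescaling $x\mapsto (x+b)/b - 1$ carrying $[-b,b]$ onto $[-1,1]$, essentially the images of the Chebyshev-type nodes; more precisely, I would introduce $\wt u_i(x):=u_i\bigl((x+b)/b-1\bigr)$ for $i\in\{0,\dots,n-1\}$, which are polynomials of degree $\le 2n$ in the single variable $x$, summing to $1$ on $[-b,b]$. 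The key one-variable estimate to record is that $|\wt u_i(x)|\le C_\ell(1+n|x-s_i|)^{-\ell}$ for $x\in[-b,b]$, where $s_i$ is any point of $[t_i,t_{i+1}]$; this follows from the $\cos\t$ estimate in Lemma~\ref{chebyshev} together with the fact that near the endpoints the Chebyshev spacing is $\sim n^{-2}$ while the uniform spacing is $\sim n^{-1}$, so that $|x-s_i|\gtrsim |\t_x - i\pi/n|^2$ in the worst case — hence one loses a factor of $2$ in the exponent, which is why I start from parameter $2\ell$ (or $\ell'=2\ell$) rather than $\ell$. Actually, since the $t_i$ are uniformly spaced, a cleaner route is to observe directly that the map $x\mapsto\t_x$ (with $x = -b + b(1-\cos\t_x)$, i.e. $(x+b)/b-1 = -\cos\t_x$, so $\t_x\in[0,\pi]$) is bi-Lipschitz from $[-b,b]$ onto... no: it degenerates at the ends. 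So the honest statement is the quadratic loss, and I would simply start Lemma~\ref{chebyshev} with exponent $2\ell+ (\text{something})$ to absorb it; for the lemma as stated, any fixed exponent on the right is fine since the conclusion only asks for some $C_{\ell,d}(1+n\|x-x_{\bf j}\|)^{-\ell}$.

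Next I would define the multivariate polynomials by the product
\[
v_{\bf j}^d(x):=\prod_{k=1}^d \wt u_{j_k}(x_k),\qquad {\bf j}=(j_1,\dots,j_d)\in\Ld_n^d,\ x=(x_1,\dots,x_d),
\]
each of total degree $\le 2dn$ (degree $\le 2n$ in each of $d$ variables). The partition-of-unity property is immediate from Fubini-type factorization:
\[
\sum_{{\bf j}\in\Ld_n^d}v_{\bf j}^d(x)=\prod_{k=1}^d\Bigl(\sum_{j_k=0}^{n-1}\wt u_{j_k}(x_k)\Bigr)=1,\qquad x\in[-b,b]^d.
\]
For the decay bound, fix $x\in[-b,b]^d$ and let $x_{\bf j}=(s_{j_1},\dots,s_{j_d})$ with $s_{j_k}\in[t_{j_k},t_{j_k+1}]$ (any choice; different choices change $x_{\bf j}$ by $O(n^{-1})$, harmless). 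Then
\[
|v_{\bf j}^d(x)|\le\prod_{k=1}^d\frac{C_\ell}{(1+n|x_k-s_{j_k}|)^{\ell}}.
\]
Since $\|x-x_{\bf j}\|\le\sqrt d\,\max_k|x_k-s_{j_k}|$, one of the factors, say the $k_0$-th, satisfies $1+n|x_{k_0}-s_{j_{k_0}}|\ge 1+\tf{n}{\sqrt d}\|x-x_{\bf j}\|\ge \tf1{\sqrt d}(1+n\|x-x_{\bf j}\|)$; bounding the remaining $d-1$ factors by $C_\ell^{d-1}$ gives $|v_{\bf j}^d(x)|\le C_{\ell,d}(1+n\|x-x_{\bf j}\|)^{-\ell}$, as required.

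The only real subtlety, and the step I expect to demand the most care, is the quadratic distortion between the Chebyshev spacing (on which Lemma~\ref{chebyshev} gives decay in the angular variable $\t$) and the uniform spacing of the nodes $t_i$ in~\eqref{partition-b}: near $x=\pm b$ a gap of uniform length $2b/n$ corresponds to an angular gap of only order $n^{-1/2}$... let me recompute — with $(x+b)/b - 1=-\cos\t$ one has $dx = b\sin\t\,d\t$, so $|x-s_i|\sim n^{-1}\sin\t_x$ while $|\t_x - i\pi/n|\sim$ the angular gap; near $\t=0$, $\sin\t_x\sim\t_x\sim |\t_x-i\pi/n|\cdot(\text{index})$, and one ends up with $1+n|x-s_i|\gtrsim (1+n|\t_x-i\pi/n|)^{1/2}$ at worst. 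Either way the loss is at most a squaring of the base, so invoking Lemma~\ref{chebyshev} with parameter $2\ell$ in place of $\ell$ repairs everything; I would carry this out explicitly in the one-dimensional estimate for $\wt u_i$ and then the multivariate part is purely formal.
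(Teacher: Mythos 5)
There is a genuine gap in the one-dimensional step, and your "quadratic-distortion at the endpoints" diagnosis misses where the actual problem lies. With $\wt u_i(x):=u_i(x/b)$, the polynomial $\wt u_i$ is peaked near the Chebyshev node $b\cos(i\pi/n)$, which for most $i$ has nothing to do with the $i$-th uniform interval $[t_i,t_{i+1}]$. Already at $i=1$ the Chebyshev node sits at $b\cos(\pi/n)\approx b$ (or, with the reversed orientation, at $-b\cos(\pi/n)\approx -b$, still at distance $\sim b\pi^2/(2n^2)$ from the endpoint), whereas $[t_1,t_2]$ starts at $-b+2b/n$ --- a gap of order $b/n$ after one step, growing to order $b$ further in. Concretely, take $x$ near $+b$ (angular parameter $\theta_x$ near $0$): then $n|\theta_x-\pi/n|=O(1)$ so $\wt u_1(x)$ is of order one, yet your claimed bound $C_\ell(1+n|x-s_1|)^{-\ell}$ with $s_1\in[t_1,t_2]\approx -b$ is of size $n^{-\ell}$. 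The one-variable estimate you isolate as "the key estimate to record" is simply false, and no choice of exponent in Lemma~\ref{chebyshev} repairs it, because the problem is not a loss of exponent but that $\wt u_i$ is localized at the wrong place.

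The paper's fix, which your proposal does not contain, is a regrouping step: the polynomial attached to the $j$-th uniform slot is defined as
$$v_j(x)=\sum_{i\,:\, s_j<\cos(i\pi/n)\le s_{j+1}} u_i(x),$$
i.e.\ one sums all the Chebyshev-localized bumps whose peak falls in $[s_j,s_{j+1}]$ (with the two extremal slots catching everything that overshoots $[-1/2,1/2]$). Only then is $v_j$ peaked near the uniform node $s_j$, and the decay $|v_j(x)|\le C_\ell(1+n|x-s_j|)^{-\ell}$ follows by splitting the exponent $2\ell$ as $\ell+\ell$: one factor transfers from the angular variable to $|x-s_j|$ via the bi-Lipschitz inequality $|u-v|\le|\arccos u-\arccos v|\le \pi|u-v|$ on $[-1/2,1/2]$, and the other factor is summed over $i$. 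Your tensor-product passage from $d=1$ to general $d$, including the observation that the maximum coordinate of $|x-x_{\bf j}|$ is within $\sqrt d$ of the Euclidean norm, is exactly the paper's route and is fine; it is the one-dimensional building block that needs to be replaced by the regrouped $v_j$.
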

This lemma is probably well known, but for completeness, we present a proof below.
\begin{proof}
	Without loss of generality, we may assume that  $d=1$ and $b=\f12$. 
	The  general case can be deduced easily  using tensor products of polynomials in one variable.
	Let $\{u_j\}_{j=0}^{ n-1}$ be a sequence of
	polynomials of degree at most $2n$ as given in  Lemma~\ref{chebyshev} with $2\ell$ in place of $\ell$. Noticing that    for  $u\in [-1,1]$ and $v\in [-\f 12, \f12]$, 
	\begin{equation}\label{4-1}
	|u-v|\leq |\arccos u-\arccos v |\leq \pi |u-v|,
	\end{equation}
	we obtain  
	\begin{equation}\label{4-2}
	| u_j(x)|\leq\f{ C_\ell}{ (1+n|x-\cos \f {j\pi}n|)^{2\ell}},\  \ x\in \Bigl[-\f12,\f12\Bigr].
	\end{equation}
	Next, we define a sequence of polynomials $\{v_j\}_{j=0}^{n-1}$ of degree at most $2n$ on $[-\f12, \f12]$ as follows: 
	$$ v_j(x)=\sum_{i:\  \   s_{j}<\cos \f {i\pi}n\leq s_{j+1}} u_i(x),$$
	where $0\leq i\leq n-1$,  $s_0=-2$, $s_n =2$,  $s_j=t_j =-\f12 +\f {j}n$ for $1\leq j\leq n-1$, and  we define  $v_j(x)=0$ if the sum is taken over the  emptyset.  Clearly,   $\sum_{j=0}^{n-1} v_j(x)=\sum_{i=0}^{n-1} u_i(x)=1$ for all $x\in [-\f12,\f12]$. 
	Furthermore, using~\eqref{4-2}, we have 
	\begin{align*}
	| v_j(x) |\leq \f {C_\ell}{ (1+n|x-s_j|)^{\ell}} \sum_{i=0}^{n-1} \f{ 1}{ (1+n|x-\cos \f {i\pi}n|)^{2\ell}}\leq  \f {C_\ell}{ (1+n|x-s_j|)^{\ell}}, 
	\end{align*}
	where the last step uses~\eqref{4-1}.
	This completes the proof.
\end{proof}

We are now in a position to prove Theorem~\ref{strips-0}.\\

\begin{proof}[Proof of Theorem~\ref{strips-0}]
	Set \[M:=d\max_{1\leq i,j\leq d}\max_{\|x\|\leq b} |\p_i\p_jg(x)|+1.\]
	For each  $\bfi\in\Ld_n^d$,  let  $x_{\bfi}\in \Delta_{\bfi}$ be   an arbitrarily fixed point in the cube $\Delta_{\bfi}$, and  define 
	$$f_{\bfi}( x):=g( x_{\bfi})+\nabla g( x_{\bfi}) \cdot ( x- x_{\bfi})+\f M2 \| x- x_{\bfi}\|^2.$$
	By   Taylor's theorem,  it is easily seen  that for each $x\in [-b,b]^d$,
	\begin{align}\label{5-8-aug}
	f_{\bfi}(x) -M\|x-x_{\bfi}\|^2 \leq g(x) \leq f_{\bfi}(x).
	\end{align}
	Since $0<b<(2d)^{-1}$, this implies that  for each $\bfi\in\Ld_n^d$,
	$$ G \subset\Bl \{ ( x,y):\  \ x\in [-b,b]^{d},\   \   0\leq f_{\bfi}( x)-y\leq M+1\Br\}.$$
	Recall that 
	$\{\al_j\}_{j=0}^N$
	is a Chebyshev partition of $[\al_0, \al_N]=[0, 2\al]$ of degree $ N=2\ell_1n$,  $\al_n=1$   and according to~\eqref{5-2-18}, $\al\ge 4M+1$. Thus, 
	\begin{align*}
	G\subset \bigcup_{\bfi\in\Ld_n^d} \bigcup_{j=0}^{N-1} \Bl \{ ( x,y):\  \ x\in \Delta_{\bfi},\   \   \al_{j}\leq f_{\bfi}( x)-y\leq \al_{j+1}\Br\}. 
	\end{align*}
	
	Next, using   Lemma~\ref{chebyshev}, we obtain   a sequence of  polynomials $\{u_j\}_{j=0}^{N-1}$of degree at most $4\ell_1 n$ on $[0, 2\al]$  such that   $\sum_{j=0}^{N-1} u_j(t) =1$ for all $t\in [0, 2\al]$,   and
	\begin{equation}\label{key-5-6}
	| u_j (t)| \leq \f {C_m}{ (1+n |\sqrt{t}-\sqrt{\al_j}|)^{4m}},\   \  t\in [0, 2M]\subset [0, \al].
	\end{equation}
	Similarly, using  Lemma~\ref{uniform},  we may  obtain  a sequence of   polynomials $\{v_{\mathbf{j}}\}_{\mathbf{i}\in\Ld_n^d} $ of total  degree $\le n$ on the cube $[-b, b]^d$  such that $\sum_{\mathbf{j}\in \Ld_n^d}v_{\mathbf{j}}( x)= 1$ for all $ x\in [-b, b]^{d}$,   and
	\begin{equation}\label{key-5-7}
	| v_{\mathbf{j}}( x)|\le \f{C_m} { (1+n\| x- x_{\mathbf{j}}\|)^{4m}},\   \ x\in [-b, b]^d.
	\end{equation}
	Define  a sequence $\{q^\ast_{\bfi,j}: \  \  \bfi\in\Ld_n^d,\   \  0\leq j\leq N-1\}$ of   auxiliary polynomials  as follows: 
	\begin{equation}\label{key-5-8-0}
	q^\ast_{\bfi,j}( x,y):=u_j(f_{\bfi}( x)-y)v_{\bfi}( x).
	\end{equation}
	It is easily seen from~\eqref{key-5-6} and~\eqref{key-5-7} that
	for each $( x, y)\in G$, 
	\begin{align}\label{5-7-chapter}
	|  q^\ast_{\bfi,j}(x, y)|\leq \f {C_m}{(1+n\|x- x_{\bfi}\|)^{4m} (1+n|\sqrt{f_{\bfi}({x})-y}-\sqrt{\al_j}|)^{4m}}.
	\end{align}
	
	We   claim that for each  $({x},y)\in G$,
	\begin{align}
	|  q^\ast_{\bfi,j}(x, y)|&\leq
	\f {C_m}{(1+n\|{x}- x_{\bfi}\|)^{2m} (1+n|\sqrt{g({x})-y}-\sqrt{\al_j}|)^{2m}}.\label{claim-4-5}
	\end{align}
	Note that~\eqref{claim-4-5}  follows directly from~\eqref{5-7-chapter} if $ 6M\| x- x_{\bfi}\|>  |\sqrt{g({x})-y}-\sqrt{\al_j}|$.
	Thus, for the proof of~\eqref{claim-4-5}, 
	 it suffices to prove that  the equivalence
	\begin{equation}\label{key-5-11}
	|\sqrt{f_{\bfi}({x})-y}-\sqrt{\al_j}|\sim |\sqrt{g({x})-y}-\sqrt{\al_j}|,
	\end{equation}
	holds 
	under the assumption  
	\begin{equation}\label{key-5-13-18} 6M\| x- x_{\bfi}\|\leq  |\sqrt{g({x})-y}-\sqrt{\al_j}|. \end{equation}
	Indeed, if  $\sqrt{f_{\bfi}({x})-y}+ \sqrt{g({x})-y}\leq 2M\|{x}-{x}_{\bfi}\|$, then~\eqref{key-5-13-18} implies 
	$$\sqrt{\al_j}\ge 4M \|{x}-{x}_{\bfi}\|\ge 2 \max\{\sqrt{f_{\bfi}({x})-y},  \sqrt{g({x})-y}\},$$ 
	and hence 
	$$|\sqrt{f_{\bfi}({x})-y}-\sqrt{\al_j}|\sim \sqrt{\al_j}\sim |\sqrt{g({x})-y}-\sqrt{\al_j}|.$$
	On the other hand, if  $\sqrt{f_{\bfi}({x})-y}+ \sqrt{g({x})-y}> 2M\|{x}-{x}_{\bfi}\|$, then  by~\eqref{key-5-13-18} and~\eqref{5-8-aug}, we have 
	\begin{align*}
	&\Bl|\sqrt{f_{\bfi}({x})-y}- \sqrt{g({x})-y}\Br|=\f{|f_{\bfi}({x})-g({x})|}{\sqrt{f_{\bfi}({x})-y}+ \sqrt{g({x})-y}}\\
	&\leq \f {M\|{x}-{x}_{\bfi}\|^2}{2M\|{x}-{x}_{\bfi}\|}= \f12 \|{x}-{x}_{\bfi}\|\leq \f 1{12M}|\sqrt{g({x})-y}-\sqrt{\al_j}|,
	\end{align*}
	which in turn  implies~\eqref{key-5-11}. This completes the proof of~\eqref{claim-4-5}.

	Finally,   we define  for  $\bfi\in\Ld_n^d$,
	$$q_{\bfi,j}(x, y)=\begin{cases}
	q^\ast_{\bfi,j}(x,y),\  \  \text{ if $0\leq j\leq n-2$},\\
	\sum_{k=n-1}^{N-1} q^\ast_{\bfi,k}(x,y),\   \  \text{if $j=n-1$.}
	\end{cases}$$
	Clearly, each $q_{\bfi, j}$ is a  polynomial of degree at most $Cn$. 
	Since  for any $( x, y)\in G$ the polynomial $u_j$ in the definition~\eqref{key-5-8-0} is evaluated at the  point $f_{\bfi}( x)-y$, which lies in the interval $[0, M+1]\subset [\al_0, \al_N]$, it follows  that  for any $( x, y)\in{G}$,
	$$\sum_{\bfi\in\Ld_n^d} \sum_{j=0}^{n-1} q_{\bfi,j}(x,y)=\sum_{\bfi\in\Ld_n^d} \sum_{j=0}^{N-1} q^\ast_{\bfi,j}(x)=\sum_{\bfi\in\Ld_n^d} v_{\bfi}^d ( x) \sum_{j=0}^{n-1} u_{j} (f_{\bfi}( x) -y)=1.$$
	To complete the proof, 
	by~\eqref{claim-4-5}, it remains to estimate $q_{\bfi, j}$ for $j=n-1$.   
	Note  that for $j\ge n$,
	$$\sqrt{\al_j}-\sqrt{g({x})-y}\ge \sqrt{\al_n}-\sqrt{g({x})-y}\ge 0.$$
	Thus,  using~\eqref{claim-4-5}, and recalling that $m\ge 2$,  we obtain  that  
	\begin{align*}
	|q_{\bfi,n-1}(x)|&\leq  \f {C_m}{(1+n\|{x}- x_{\bfi}\|)^{2m} (1+n|\sqrt{g({x})-y}-\sqrt{\al_n}|)^{m}} \\
	&\qquad\qquad\qquad \cdot
	\sum_{j=n}^{N}\f 1{(1+n|\sqrt{g({x})-y}-
		\sqrt{\al_j}|)^{m}}\\
	&\leq \f {C_m}{(1+n\|{x}- x_{\bfi}\|)^{2m} (1+n|\sqrt{g({x})-y}-\sqrt{\al_n}|)^{m}}.
	\end{align*}
	This completes the proof.
\end{proof}

\section{Polynomial partitions of the unity on general $C^2$-domains}
\label{unity:sec}
In this section, we shall extend Theorem~\ref{strips-0} to the $C^2$-domain $\Og$. We will use the metric $\rho_\Og$ defined by~\eqref{metric}. 
Our  goal is to show the following theorem: 
\begin{thm}\label{polyPartition}Given  any $m>1$ and any positive integer $n$,   there exist a finite subset $\Ld$ of $\Og$ and  a sequence $\{\vi_\og\}_{\og\in\Ld}$ of   polynomials of degree at most $C( m) n$ on the domain $\Og$  satisfying 
	\begin{enumerate}[\rm (i)]
		\item $ \rho_{\Og} (\og,\og') \ge \f 1n$ 	for any two distinct points $\og,\og'\in\Ld$;  
		\item for every $\xi\in \Og$,	$\sum_{\og \in\Ld}  \vi_\og (\xi)=1$  and 
		\item for any  $\xi\in \Og$ and $\og\in\Ld$, 
		$$  |\vi_\og(\xi)| \leq C_m (1+n\rho_\Og(\xi,\og))^{-m}.$$  
	\end{enumerate} 	
\end{thm}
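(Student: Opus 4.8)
The plan is to patch together the local partitions of unity on the domains of special type from Theorem~\ref{strips-0} with a partition of unity associated to the interior cubes. First I would apply Lemma~\ref{lem-2-1-18} to fix a finite cover $G_1,\dots,G_{m_0}$ of $\Ga$ by domains of special type attached to $\Ga$, chosen (as in the proof of Lemma~\ref{LEM-4-2-18-0}) so that the essential boundary pieces $\p'G_i(\ld_0)$ already cover $\Ga$ for some $\ld_0<1$. Fix $\va>0$ so that the $16\sqrt{d+1}\va$-neighborhood $\Ga_\va$ of $\Ga$ in $\Og$ is contained in $\bigcup_i S_{G_i,\ld_0}$, and cover the compact remainder $\Og\setminus\Ga_\va$ by finitely many cubes $Q_k$ with $4Q_k\subset\Og$. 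On each $G_i$ I would invoke Theorem~\ref{strips-0} (with $\ell$ in place of $m$, $\ell$ large, and degree parameter $n$ after rescaling $G_i$ to the normalized form via the map $T_i$ of Definition~\ref{def:modulus}); this produces polynomials $q^{(i)}_{\bfi,j}$ summing to $1$ on $G_i$ with the decay~\eqref{6-6-18} in the intrinsic metric $\wh\rho_{G_i}$. On each interior cube $Q_k$ I would use a tensor-product Chebyshev partition (Lemma~\ref{uniform} in $d+1$ variables, rescaled to $2Q_k$) to get polynomials $p^{(k)}_{\mathbf m}$ summing to $1$ on $2Q_k$ with Euclidean (hence $\rho_\Og$-equivalent, since $\dist(\cdot,\Ga)\gtrsim\va$ there) polynomial decay.

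The second step is to glue these local systems. Using Lemma~\ref{lem-4-2}/Lemma~\ref{lem-4-3}, choose polynomials $R_1,\dots,R_{m_0+M_0}$ of degree $O(n)$ forming a ``polynomial partition of unity subordinate to the cover'': concretely, build $\psi_s$ inductively so that $\psi_s$ is $\approx 1$ on the $s$-th cover element, $\approx 0$ off a slightly larger set, with $0\le\psi_s\le 1$ on $\Og$ and $\sum_s\psi_s\equiv 1$ on $\Og$ up to an error that is $\le\t^n$ pointwise — then absorb the error by dividing by $\sum_s\psi_s$ (which is $\ge 1-\t^n\ge 1/2$, and whose reciprocal can be uniformly approximated by a polynomial of degree $O(n)$ on $[1/2,3/2]$ by the classical one-variable Jackson/Chebyshev estimate, cf. the Remez-type reasoning in Remark~\ref{rem-3-2}). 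Multiplying the $s$-th local partition $\{q^{(s)}_{\bfi,j}\}$ (or $\{p^{(k)}_{\mathbf m}\}$) by the corresponding (renormalized) $\psi_s$ and summing over all cover elements gives global polynomials indexed by pairs (cover element, local cell). These sum to $1$ on $\Og$ and each has degree $O(n)$.

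The third step is to relabel and verify the decay in the global metric $\rho_\Og$. For each local cell I would pick a representative point $\og$ in the cell; after discarding cells whose representatives are not $\tfrac1n$-separated (keeping a maximal $\tfrac1n$-separated subset $\Ld$ and adding the polynomials of discarded cells to a nearby retained one — a finite, bounded-multiplicity operation that preserves the decay up to constants), I get property (i). Property (ii) is immediate. For property (iii) the key point is the comparison $\rho_\Og(\xi,\og)\sim\wh\rho_{G_i}(\xi,\og)$ for $\xi,\og\in G_i$ and $\rho_\Og(\xi,\og)\sim\|\xi-\og\|$ for $\xi,\og$ in a fixed interior cube; combined with the observation that $\psi_s(\xi)$ is already super-polynomially small once $\xi$ is $\rho_\Og$-far (at scale $\gtrsim\va\gg 1/n$ is not enough, so here I really need the $\psi_s$ to decay, which is why Lemma~\ref{lem-4-2} must be applied at scale comparable to the cell size, not the cover size — this forces using many bump factors, one per cell, rather than one per cover element). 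I expect the main obstacle to be exactly this last point: arranging the cutoff/normalization polynomials so that the product $\psi_s\cdot q^{(s)}_{\bfi,j}$ inherits the $(1+n\rho_\Og(\xi,\og))^{-m}$ decay uniformly across the overlaps of different cover elements, in particular handling a point $\xi$ lying in several $G_i$'s or in a $G_i\cap Q_k$ overlap where the two intrinsic metrics must be shown mutually equivalent and both equivalent to $\rho_\Og$. This requires a careful but routine geometric lemma about how $\rho_\Og$, $\wh\rho_{G_i}$, and the Euclidean metric compare near $\Ga$, using the $C^2$ structure (bounded curvature, $\dist(\xi,\Ga)\sim\dist(\xi,\p'G_i)$ on $G_i$).
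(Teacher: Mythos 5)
Your overall architecture (local partitions on the $G_i$'s from Theorem~\ref{strips-0} and on interior cubes from Lemma~\ref{uniform}, then a gluing step using the cutoff polynomials of Lemma~\ref{lem-4-2}/Lemma~\ref{lem-4-3}, then a merging step to force $\f1n$-separation) matches the paper, and you correctly identify that the gluing is where the difficulty lies. However, the specific gluing mechanism you propose has a genuine gap. You build a ``subordinate'' family $\{\psi_s\}$ with $\sum_s\psi_s=1+O(\t^n)$ and then try to ``absorb the error by dividing by $\sum_s\psi_s$'', approximating the reciprocal by a polynomial. This cannot produce an exact partition of unity: dividing by a polynomial leaves the non-polynomial class, while multiplying by a polynomial approximant of the reciprocal destroys the exact identity $\sum_\og\vi_\og\equiv 1$ required by property~(ii). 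There is no room for an $O(\t^n)$ error here, and you give no mechanism that removes it exactly.

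The paper's proof avoids this entirely by gluing \emph{one cover element at a time}, by induction along the chain $H_1\subset H_2\subset\dots\subset H_J=\Og$ from Lemma~\ref{LEM-4-2-18-0}. At each step one introduces a \emph{single} cutoff $R_n$ (from Lemma~\ref{lem-4-3}) and sets $w_j:=u_{\og_j}(1-R_n)$ for the indices of the already-constructed partition on $H$, and $w_j:=u_{\og_j}R_n$ for the indices of the new local partition on $K$. Because $\sum_j u_{\og_j}$ (old) and $\sum_j u_{\og_j}$ (new) are polynomials equal to $1$ on open sets, analyticity forces each to equal $1$ identically, hence $\sum_j w_j = (1-R_n)+R_n\equiv 1$ \emph{exactly}, with no renormalization at all. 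The decay estimate on overlap regions, which you flag as the main obstacle and propose to resolve by ``many bump factors, one per cell'', is instead handled by a single application of the doubling inequality (Lemma~\ref{lem-4-1}): on the region where the cutoff is small one bounds $|w_j(\xi)|\le \t^n\|u_{\og_j}\|_{L^\infty(B_L[0])}\le C\t^n(10L/\ga_0)^n$, and choosing $\t=\ga_0/(20L)$ yields $\le C2^{-n}$, which beats any polynomial rate $(1+n\rho_\Og(\xi,\og))^{-m}$ on the bounded set $\Og$. Your ``one bump per cell'' fix would also reintroduce the sum-to-one problem you are trying to avoid.
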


Recall that   $\wh{\rho}_G$ denotes  the metric on a domain  $G$ of special type  as defined in~\eqref{rhog}. Of crucial importance in the proof of Theorem~\ref{polyPartition}   is  the following   lemma,  which states   that if $G\subset \Og$ is a domain of special type attached to $\Ga$, then restricted on $G$, the two  metrics  $\rho_\Og$ and   $\wh{\rho}_G$ are equivalent. 

\begin{lem}\label{metric-lem} If $G \subset \Og$ is a domain of special type attached to $\Ga$, then 
	\begin{equation}\label{6-1-metric}\wh{\rho}_G(\xi,\eta)\sim \rho_{\Og} (\xi,\eta),\    \    \  \xi, \eta\in G.\end{equation}
\end{lem}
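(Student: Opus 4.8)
\textbf{Proof proposal for Lemma~\ref{metric-lem}.}

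The plan is to reduce everything to an upward $x_{d+1}$-domain $G$ of the form~\eqref{2-7-special} with $\xi=0$, as in the definition of $\wh\rho_G$; since $G$ is attached to $\Ga$, the essential boundary $\p'G^\ast$ is exactly $\overline{G}^\ast\cap\Ga$, and near $G$ the only part of $\Ga$ that matters is this graph $\{(x,g(x)):x\in(-2b,2b)^d\}$. The first step is a purely local observation: because $G^\ast=Q\cap\Og$ for an open box $Q$, for any $\xi\in G$ the nearest point of $\Ga$ to $\xi$ lies on $\p'G^\ast$ (points of $\Ga$ outside $Q$ are further away than the vertical drop to the graph, by the base-size/parameter bookkeeping in Remark~\ref{rem-2-1-0} and~\eqref{parameter-2-9}), so $\dist(\xi,\Ga)=\dist(\xi,\p'G^\ast)$ for $\xi\in G$. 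Next I would show that for a point $\xi=(\xi_x,\xi_y)\in G$ the quantity $\dist(\xi,\p'G^\ast)$ is comparable to the vertical gap $g(\xi_x)-\xi_y$: trivially $g(\xi_x)-\xi_y\ge\dist(\xi,\p'G^\ast)$ since $(\xi_x,g(\xi_x))\in\p'G^\ast$, while the reverse $g(\xi_x)-\xi_y\le C\,\dist(\xi,\p'G^\ast)$ follows because the graph of $g$ is Lipschitz with a controlled constant ($\|\nabla g\|_\infty$ bounded via~\eqref{parameter-2-9}), so the nearest boundary point cannot be much lower than the point directly above $\xi$. Thus
\[
\dist(\xi,\Ga)\sim g(\xi_x)-\xi_y,\qquad \xi=(\xi_x,\xi_y)\in G,
\]
with constants depending only on the Lipschitz bound for $g$.

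Given this comparability, the two metrics are compared term by term. For $\xi,\eta\in G$ we have $\|\xi-\eta\|\le \|\xi_x-\eta_x\|+|\xi_y-\eta_y|$, and I would bound $|\xi_y-\eta_y|$ by $|(g(\xi_x)-\xi_y)-(g(\eta_x)-\eta_y)|+|g(\xi_x)-g(\eta_x)|$, the last term being $\le C\|\xi_x-\eta_x\|$ by the Lipschitz bound on $g$; hence $\|\xi-\eta\|\le C\big(\|\xi_x-\eta_x\|+|(g(\xi_x)-\xi_y)-(g(\eta_x)-\eta_y)|\big)$. For the square-root term, use the elementary inequality $|\sqrt a-\sqrt b|\le\sqrt{|a-b|}$ together with $|(g(\xi_x)-\xi_y)-(g(\eta_x)-\eta_y)|\le C\|\xi_x-\eta_x\|+|(g(\xi_x)-\xi_y)-(g(\eta_x)-\eta_y)|$ (trivially) to dominate $|\sqrt{\dist(\xi,\Ga)}-\sqrt{\dist(\eta,\Ga)}|$ — up to the constant from the comparability above — by $\|\xi_x-\eta_x\|^{1/2}\cdots$; more cleanly, I would observe $|\sqrt{a}-\sqrt{b}|$ with $a=\dist(\xi,\Ga)\sim g(\xi_x)-\xi_y$ is comparable to $|\sqrt{g(\xi_x)-\xi_y}-\sqrt{g(\eta_x)-\eta_y}|$ after splitting $g(\xi_x)-\xi_y = (g(\eta_x)-\eta_y) + \big((g(\xi_x)-g(\eta_x)) - (\xi_y-\eta_y)\big)$ — which gives $\rho_\Og(\xi,\eta)\le C\,\wh\rho_G(\xi,\eta)$. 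For the reverse direction, $\|\xi_x-\eta_x\|\le\|\xi-\eta\|\le\rho_\Og(\xi,\eta)$ is immediate, and $|\sqrt{g(\xi_x)-\xi_y}-\sqrt{g(\eta_x)-\eta_y}|\le|\sqrt{\dist(\xi,\Ga)}-\sqrt{\dist(\eta,\Ga)}|\cdot C + (\text{a }\|\xi_x-\eta_x\|^{1/2}\text{-type error})$ again using the comparability and the Lipschitz control on $g$; collecting gives $\wh\rho_G(\xi,\eta)\le C\,\rho_\Og(\xi,\eta)$.

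The main obstacle is the first step — verifying that for $\xi\in G$ the distance to all of $\Ga$ is realized (up to a constant) within $\p'G^\ast$, i.e. that the attached-domain hypothesis genuinely localizes $\dist(\cdot,\Ga)$. This needs the geometry packaged into Definition~\ref{Def-2-1} and Remark~\ref{rem-2-1-0}: the box $Q$ with $G^\ast=Q\cap\Og$ separates $G$ from the rest of $\Ga$, and the parameter condition~\eqref{parameter-2-9} forces the graph to be flat enough that the vertical drop from any $\xi\in G$ stays inside $Q$ and is comparable to the true distance. A mild technical point is that $\dist(\xi,\Ga)$ could in principle be smaller than the distance to $\p'G^\ast$ if some other sheet of $\Ga$ dipped into $Q$, but attachment ($\overline G^\ast\cap\Ga=\overline{\p'G^\ast}$) rules exactly this out. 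Once this localization is in hand, everything else is the routine two-sided estimate sketched above, with all constants depending only on $\max_{[-2b,2b]^d}\|\nabla g\|$, hence only on $\Og$ and $G$.
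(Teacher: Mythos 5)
Your plan correctly identifies two of the paper's three ingredients: the localization $\dist(\xi,\Ga)=\dist(\xi,\p'G^\ast)$ for $\xi\in G$ close to $\Ga$, and the comparison $\dist(\xi,\Ga)\sim g(\xi_x)-\xi_y$ (this is exactly the paper's Lemma~\ref{lem-9-1}). Your handling of the Euclidean parts, bounding $|\xi_y-\eta_y|$ via the Lipschitz constant of $g$, is also fine.

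The gap is in the comparison of the two square-root terms. You want to conclude $\bigl|\sqrt{\dist(\xi,\Ga)}-\sqrt{\dist(\eta,\Ga)}\bigr|\sim\bigl|\sqrt{g(\xi_x)-\xi_y}-\sqrt{g(\eta_x)-\eta_y}\bigr|$ from the pointwise comparability $\dist(\zeta,\Ga)\sim g(\zeta_x)-\zeta_y$. That implication is false in general: if $d\sim u$ and $d'\sim u'$ with ratios only pinned to $[c,1]$, there is no bound on $|\sqrt{d}-\sqrt{d'}|$ in terms of $|\sqrt{u}-\sqrt{u'}|$ (take $d=u=4$, $d'=1$, $u'=4$; all ratios lie in $[1/4,1]$, yet the right side is $0$ while the left is $1$). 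Your fallback $|\sqrt a-\sqrt b|\le\sqrt{|a-b|}$ produces exactly the $\|\xi_x-\eta_x\|^{1/2}$ remainder you flag, and that cannot be absorbed: both $\rho_\Og$ and $\wh\rho_G$ have $\|\xi_x-\eta_x\|$ at order one, so a $\|\xi_x-\eta_x\|^{1/2}$ error overwhelms the metric when the points are close.

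What actually makes the termwise comparison go through is that the \emph{ratio} $\lambda(\xi):=(g(\xi_x)-\xi_y)/\dist(\xi,\Ga)$ is a Lipschitz function of $\xi$ near $\Ga$, converging to $\sqrt{1+\|\nabla g(t_\xi)\|^2}$ as $\dist(\xi,\Ga)\to 0$; then the cross-error in the square-root comparison is of order $\sqrt{\dist(\eta,\Ga)}\,|\lambda(\xi)-\lambda(\eta)|=O(\sqrt{\va})\,\|\xi-\eta\|$, which \emph{is} controlled by the metrics. But proving $\lambda$ is Lipschitz requires precisely the local parametrization the paper builds: write $\xi=\bigl(x(t_\xi,s_\xi),\,g(t_\xi)-s_\xi A(t_\xi)\bigr)$ with $s_\xi=\dist(\xi,\Ga)$ and $t_\xi$ the foot of the normal, set $F(t,s)=g(x(t,s))-g(t)+sA(t)$ so that $g(\xi_x)-\xi_y=F(t_\xi,s_\xi)$, and use the $C^2$ control of $g$ to estimate $\p F/\p s$ and $\nabla_t F$ (the paper's~\eqref{9-3-0}--\eqref{9-4-0}). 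The paper then compares both square-root terms to the common quantity $|\sqrt{s_\xi}-\sqrt{s_\eta}|$, and compares $\|\xi_x-\eta_x\|$ with $\|t_\xi-t_\eta\|$, rather than comparing the metrics to each other head-on. Without some version of this argument (or an equivalent Lipschitz estimate on $\lambda$), your ``more cleanly, I would observe \ldots\ is comparable to \ldots'' step is the load-bearing claim, and it is not justified, so the proposal as written does not close the proof.
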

\begin{proof}
	Without loss of generality, we may assume that $G$ takes the form: 
	\begin{align}\label{standard}
	G:=\{ (x, y):\  \  x\in (-b,b)^{d},\   \  g(x)-1<  y\leq  g(x)\},
	\end{align}
	where $b>0$ is a sufficiently small constant and  $g\in C^2(\RR^d)$ satisfies  $\min_{x\in [-4b, 4b]^d} g(x)\ge4$. Then 
	$$  \Ga':=\{ (x, g(x)):\   \  x\in [-2b, 2b]^{d}\}
	\subset \Ga,$$
	and 
	there exists an  open rectangular box $Q\subset \RR^{d+1}$ such that 
	$$G^\ast=\{ (x, y):\  \  x\in (-2b,2b)^{d},\   \  0< y\leq  g(x)\}= \Og\cap Q.$$
	This in particular  implies that there exists a small constant $\va\in (0, b)$ depending only on $\Og$ such that  for every  $\xi\in G$ with  $\dist(\xi, \Ga)<\va$,  $\dist(\xi,\Ga')=\dist(\xi, \Ga)$.

	Recall that    a point $\xi\in\RR^{d+1}$ is often written in the form $\xi=(\xi_x, \xi_y)$ with $\xi_x\in\RR^d$ and $\xi_y\in\RR$. 
	Our proof relies on the following lemma, which states that $\dist(\xi, \Ga')\sim g(\xi_x)-\xi_y$ for every $\xi=(\xi_x, \xi_y)\in G$. 
	
	\begin{lem}\label{lem-9-1} If $\xi=(\xi_x, \xi_y)\in  G$, then 
		$$c_\ast (g(\xi_x)-\xi_y)\leq \dist(\xi, \Ga')\leq g(\xi_x)-\xi_y,$$
		where 
		$ c_\ast =  \f 1 {8 \sqrt{ 1+\|\nabla g\|_\infty^2}},$
		and  the $L^\infty$-norm in $\|\nabla g\|_\infty$	 is taken over the cube $[-2b, 2b]^d$. 
	\end{lem}
	
	\begin{proof}
		Clearly, for each $\xi=(\xi_x, \xi_y)\in G$, 
		$$\dist(\xi, \Ga')\leq \|(\xi_x,\xi_y)-(\xi_x, g(\xi_x))\|=g(\xi_x)-\xi_y.$$
		To show the inverse inequality,  we note that  for  an arbitrary point   $(x, g(x))\in\Ga'$, 
		\begin{align}
		\|\xi-(x,g(x))\|^2&=\|\xi_x-x\|^2 +|\xi_y -g( x)|^2\label{6-1-18}\\
		&=\|\xi_x- x\|^2 +|\xi_y -g(\xi_x)|^2+
		|g(\xi_x)-g(x)|^2\notag\\
		&\   \   \   \  +2 (\xi_y-g(\xi_x))\cdot (g(\xi_x)-g(x)).\label{eq-9-2}
		\end{align}
		If  $\|x-\xi_x\|\ge   c_\ast (g(\xi_x)-\xi_y)$, then by~\eqref{6-1-18},
		$$ \|\xi-(x,g(x))\|\ge \| x-\xi_x\|\ge c_\ast (g(\xi_x)-\xi_y).$$
		If $\|x-\xi_x\|\leq  c_\ast (g(\xi_x)-\xi_y)$, then  
		\begin{align*}
		& \|\xi_x- x\|^2+
		|g(\xi_x)-g( x)|^2+2 (\xi_y-g(\xi_x))\cdot (g(\xi_x)-g( x))\\
		&\leq (1+\|\nabla g\|_\infty^2) \|\xi_x-x\|^2 +2\|\nabla g\|_\infty (g(\xi_x)-\xi_y)\| \xi_x- x\|\\
		&\leq \Bl[c_\ast^2 (1+\|\nabla g\|_\infty^2)+ 2 \|\nabla g\|_\infty c_\ast\Br] (g(\xi_x)-\xi_y)^2\\
		&\leq \f 12 (g(\xi_x)-\xi_y)^2,
		\end{align*}
		which, using~\eqref{eq-9-2},   implies that
		$$
		\|\xi-(x,g(x))\|^2
		\ge \f12 |\xi_y -g(\xi_x)|^2.$$
		Now taking the infimum over $(x, g(x))\in\Ga'$, we obtain the  inverse inequality:
		$$\dist (\xi,\Ga')\ge c_\ast (g({\xi}_x)-\xi_y).$$
	\end{proof}

	Now we continue with  the proof of Lemma~\ref{metric-lem}. Let   $\xi=(\xi_x, \xi_y), \eta=(\eta_x, \eta_y)\in G$.
	According to  Lemma~\ref{lem-9-1},  if  $\dist(\xi, \Ga)\ge \va$ or $\|\xi-\eta\|\ge \va$, then 
	$$ \rho_{\Og} (\xi, \eta) \sim \wh{\rho}_G (\xi, \eta)\sim \|\xi-\eta\|.$$
	Thus, without loss of generality we may assume that $\dist(\xi, \Ga), \dist(\eta,\Ga) \leq \va$ and $\|\xi-\eta\|\leq \va$. 
	We may also  assume that  $b\leq 2\va$ since otherwise  we may replace the function $g$ with   $g(\cdot +\xi_x)$ and consider the corresponding  domain of special type with base size   $2\va$.

	Since $\dist(\xi, \Ga) =\dist(\xi, \Ga')$, there exists $t_\xi\in [-2b, 2b]^d$ such that 
	$$s_\xi:= \dist(\xi, \Ga)=\|\xi-(t_\xi, g(t_\xi))\|.$$
	Since $\Ga$ is a $C^2$-surface, by a straightforward calculation we have that  
	$$ \xi=(\xi_x, \xi_y) = \Bl( x(t_\xi, s_\xi),\   \    g(t_\xi) -s_\xi A(t_\xi)\Br),$$
	where 
	\begin{equation}\label{def-9-3}x(t,s):=t+s A(t)\nabla  g(t), \    \    \    A(t)=(\sqrt{1+\|\nabla g(t)\|^2})^{-1},  \end{equation}
	for  $t\in [-2b,2b]^d$ and $s\in [0, \va]$. It then  follows that 
	\begin{equation}\label{1-1}
	g(\xi_x)-\xi_y =F(t_\xi, s_\xi),
	\end{equation}
	where  $F$ is a function   on  the rectangular box   $ [-2b, 2b]^{d}\times [0, 2\va]$ given by 
	\begin{equation}\label{def-9-3b}  F(t,s):=g\bl(x(t,s)\br)-g(t)+s A(t).\    \    \    \end{equation}
	A straightforward calculation shows that for $t\in [-2b, 2b]^d$ and $s\in [0, 2\va]$ with $0<b\leq 2\va$, 
	\begin{align}
	\f {\p F}{\p s}(t,s)&=\Bl[ 1+(\nabla g)(x(t,s))\cdot \nabla g(t)\Br]A(t)=\sqrt{ 1+\|\nabla g(0)\|^2} +O(\va),\label{9-3-0}\end{align}
	and \begin{align}
	\|\nabla_t F(t,s)\|
	&=\Bl\| \nabla g(x(t,s))-\nabla g(t)+s \nabla A(t)\notag\\
	&	+s\nabla g(x(t,s))\Bl[ (\nabla g(t))^{tr}\nabla A(t) + A(t) H_g(t)\Br]\Br\| =O(\va),\label{9-4-0}
	\end{align}
	where $\nabla_t $ denotes  the gradient operator $\nabla$  acting on the variable $t$, $\nabla g$ is treated   as a row vector, and 
	$H_g:=(\p_i\p_j g)_{1\leq i, j\leq d}$ denotes the Hessian matrix of $g$.

	Clearly, we may define $t_\eta$ and $s_\eta$ in a similar way  for the vector $\eta$. Then 	
	using Lemma~\ref{lem-9-1} and~\eqref{1-1}, we obtain  that
	\begin{align*}
	\Bl|\sqrt{g(\xi_x)-\xi_y}&-\sqrt{g(\eta_x)-\eta_y}\Br|
	\sim \f { \Bl|F(t_\xi, s_\xi)-F(t_\eta, s_\eta)\Br|}{\sqrt{s_\xi}+\sqrt{s_\eta}},\end{align*}
	which,  by the mean value theorem,~\eqref{9-3-0} and~\eqref{9-4-0}, can be written in the form 
	\begin{align} \sqrt{1+|\nabla g(0)|^2} |\sqrt{s_\xi}-\sqrt{s_\eta}| +O(\va)  |\sqrt{s_\xi}-\sqrt{s_\eta}|.\label{1-3}
	\end{align}
	
	On the other hand, 
	by~\eqref{def-9-3}, we have
	\begin{align*}
	& \|\xi_x-\eta_x\|=\|x(t_\xi, s_\xi) -x(t_\eta, s_\eta)\|
	=\Bl\| t_\xi-t_\eta+s_\xi A(t_\xi) \nabla g(t_\xi)-s_\eta A(t_\eta) \nabla g(t_\eta)\Br\|,\end{align*}
	which, using the mean value theorem, equals 
	\begin{align}
	&\Bl\| t_\xi-t_\eta+ A(0) \nabla g(0) (s_\xi-s_\eta)\Br\| + O(\|t_\xi\|) |s_\xi-s_\eta| +O( s_\eta) \|t_\xi-t_\eta\|\notag\\
	&= \|t_\xi-t_\eta\|+ O(\sqrt{\va}) |\sqrt{s_\xi}-\sqrt{s_\eta}|+O(\va) \|t_\xi-t_\eta\|.\label{1-4}
	\end{align}
	Since we may choose $\va>0$ as small as we wish, we obtain from~\eqref{1-3} and~\eqref{1-4} that 
	\begin{align*}
	&\wh{\rho}_G(\xi,\eta) =\max\Bl\{\Bl|\sqrt{g(\xi_x)-\xi_y}-\sqrt{g( {\eta_x})-\eta_y}\Br|,  \|\xi_x-\eta_x\|\Br\} \\
	\sim& |\sqrt{s_\xi}-\sqrt{s_\eta}|+\|t_\xi-t_\eta\|\sim |\sqrt{s_\xi}-\sqrt{s_\eta}|+\|\xi_x-\eta_x\|.
	\end{align*}
	Note that by the definition~\eqref{rhog}
	\begin{align*}
	|g(\xi_x)-g(\eta_x) -\xi_y +\eta_y|\leq C \wh{\rho}_G (\xi,\eta),
	\end{align*}	
	which in turn implies that 
	$$|\xi_y-\eta_y| \leq C \wh{\rho}_G (\xi,\eta).$$
	It follows that 
	\begin{align*}
	\wh{\rho}_G (\xi, \eta)& \sim |\sqrt{s_\xi}-\sqrt{s_\eta}|+\|\xi_x-\eta_x\|\sim |\sqrt{s_\xi}-\sqrt{s_\eta}|+\|\xi-\eta\|
	\sim \rho_\Og (\xi, \eta).
	\end{align*}
	This completes the proof.	
\end{proof}

\begin{rem}\label{rem-6-2}
	For $\xi\in\Og$ and $\da>0$, define$$ U(\xi,\da):=\{\eta\in\Og:\  \  \rho_{\Og}(\xi,\eta)\leq \da\}.$$
	By Lemma~\ref{metric-lem}  and Lemma~\ref{lem-9-1}, it is easily seen  that 
	$$|U(\xi, \f 1n)|\sim \f 1{n^{d+1}} \Bl( \f 1n +\sqrt{\dist(\xi, \Ga)}\Br),\   \   \   \xi\in\Og.$$
\end{rem}

We are now in a position to prove Theorem~\ref{polyPartition}.\\

\begin{proof}[Proof of Theorem~\ref{polyPartition}]
	
	For convenience, we say  a subset  $K\subset \Og$ admits a polynomial partition of the unity of degree $Cn$ with parameter $m>1$   if there exist a finite subset $\Ld \subset \Og$ and  a sequence $\{\vi_\og\}_{\og\in\Ld}$ of   polynomials of degree at most $C n$  such that $\rho_\Og(\og,\og') \ge \f 1n$ for any two distinct points $\og,\og'\in\Ld$,  $\sum_{\og \in\Ld} \vi_\og (x) =1$ for every $x\in K$ and 
	$ |\vi_\og (x)| \leq C (1+n\rho_{\Og} (x,\og))^{-m}$ for every $x\in K$ and $\og\in\Ld$, in which case    $\{ \vi_\og\}_{\og\in\Ld}$ is called  a polynomial partition of the unity of degree $Cn$ on   the set $K$.
	According to Theorem~\ref{strips-0}, Remark~~\ref{rem-5-2}, and Lemma~\ref{metric-lem},  if   $G\subset \Og$ is a domain  of special type attached to $\Ga$ or if $G=Q$ is a cube such that $4Q\subset \Og$, then for any $m>1$,  $G$ 
	admits a polynomial partition of the unity of degree $Cn$ with  parameter $m$.

	Our  proof relies on the decomposition in Lemma~\ref{LEM-4-2-18-0}. 
	Let $\{\Og_s\}_{s=1}^J$ be the sequence of subsets of $\Og$ given in Lemma~\ref{LEM-4-2-18-0}.  For $1\leq j\leq J$, let $H_j =\bigcup_{s=1}^j \Og_s$.  Assume that for some $1\leq j\leq J-1$, $H_j$ admits a polynomial partition $\{u_{\og_i}\}_{i=1}^{n_0}$   of the unity  of degree $Cn$ with parameter $m>1$. By induction and  Lemma~\ref{LEM-4-2-18-0}, it  will suffice to show that $H_{j+1}$ also  admits a polynomial partition of the unity of degree $Cn$ with   parameter $m>1$.
	For simplicity, we write $H=H_j$ and $K=\Og_{j+1}$. 
	Without loss of generality, we may assume that $K=S_{G,\ld_0}$ with $\ld_0\in (\f12, 1)$ and  $G\subset \Og$ a domain of special type attached to $\Ga$.  The case when $K=Q$ is a cube such that $4Q\subset \Og$ can be treated similarly, and in fact, is simpler.

	By Theorem~\ref{strips-0}, $G$ admits a   polynomial partition $\{u_{\og_j}\}_{j=n_0+1}^{n_0+n_1}$ of the  unity of degree $Cn$ with parameter $m>1$.
	Recall $H\cap G$ contains an open ball of radius $\ga_0\in (0,1)$.  Let  $L>1$ be  such that $\Og\subset B_L[0]$, and let 
	$\t:=\f {\ga_0}{20L} \in (0,1)$. 
	According to Lemma~\ref{lem-4-3},  there exists a polynomial $R_n$ of degree at most $Cn$ such that $0\leq R_n(\xi)\leq 1$ for $\xi\in B_L[0]$,  $1-R_n(\xi)\leq \t^n$ for $\xi \in K$ and $R_n(\xi)\leq \t^n$ for $x\in \Og\setminus  G$.
	We now define 
	$$ w_j(\xi) =\begin{cases} u_{\og_j}(\xi) (1-R_n(\xi)), &\   \  \text{if $1\leq j\leq n_0$},\\
	u_{\og_j}(\xi)R_n(\xi), &\   \  \text{if $n_0+1\leq j\leq n_0+n_1$}.\end{cases}$$
	Clearly, each $w_j$ is a  polynomial of degree at most $Cn$ on $\RR^{d+1}$. 
	Since  polynomials are analytic functions  and $H\cap G$ contains an open ball of radius $\ga_0$, it follows that  
	$$\sum_{j=1}^{n_0+n_1}
	w_j(\xi) =R_n(\xi)+1-R_n(\xi)=1,\   \  \forall \xi\in \RR^{d+1}.$$
	
	Next, we prove  that for each $1\leq j\leq n_0+n_1$, 
	\begin{equation}\label{key-6-2-1}
	| w_j(\xi)|\leq C (1+n\rho_{\Og}(\xi, \og_j))^{-m},\   \    \forall \xi\in H\cup K.
	\end{equation}
	Indeed,  if $1\leq j\leq n_0$, then for $\xi\in H$,
	$$|w_j(\xi)|\leq |u_{\og_j}(\xi)|\leq C (1+n\rho_{\Og}(\xi, \og_j))^{-m},
	$$
	whereas   for $\xi\in K\subset G$,
	\begin{align*}
	| w_j(\xi)| &\leq \t^n \|u_{\og_j}\|_{L^\infty (B_L[0])}\leq C\t ^n \Bl( \f {10L}{\ga_0}\Br)^n\|u_{\og_j}\|_{L^\infty(H\cap G)}\\
	\leq & C 2^{-n}\leq  C_m (1+n\rho_{\Og}(\xi, \og_j))^{-m} ,
	\end{align*} 
	where the second step uses Lemma~\ref{lem-4-1}.
	Similarly,  if $n_0<j\leq n_0+n_1$, then for $\xi\in G$,
	$$| w_j(\xi)|\leq |u_{\og_j}(\xi)|\leq C_m (1+n\rho_{\Og}(\xi, \og_j))^{-m},
	$$
	whereas  for $\xi\in H\setminus G$, 
	$$ | w_j(\xi)| \leq \t^n \|u_{\og_j}\|_{L^\infty (B_L[0])}\leq C\t ^n \Bl( \f {10L}{\ga_0}\Br)^n\leq C 2^{-n}\leq  C_m (1+n\rho_{\Og}(\xi, \og_j))^{-m}.$$ 
	Thus, in either case, we prove the estimate~\eqref{key-6-2-1}.
	
	Finally, 
	we write the set    $A:=\{\og_1,\dots, \og_{n_0+n_1}\}$ as  a disjoint union 
	$A=\bigcup_{\og\in\Ld} I_{\og}$, where  $\Ld$ is  a subset of $A$ satisfying that  
	$\min_{\og\neq  \og'\in\Ld} \rho_{\Og}(\og,\og')\ge \f 1n$, and
	$ I_\og\subset \{\og'\in A:\  \  \rho_{\Og} (\og, \og') \leq \f1n\}$  for each $\og\in\Ld$.  We then define
	$$\vi_\og(\xi): =\sum_{j:\  \    \og_j\in I_\og} w_j(\xi),\   \ \xi \in H\cup G,\  \  \og\in\Ld,$$
	where $1\leq j\leq n_0+n_1$.
	Clearly,  each $\vi_\og$ is a polynomial of degree at most $C n$ and 
	$$\sum_{\og\in\Ld} \vi_{\og} (\xi) =\sum_{j=1}^{n_0+n_1} w_j(\xi)=1,\   \   \  \forall \xi\in H\cup G.$$
	On the other hand, we recall that  
	$\rho_{\Og}(\og_i, \og_j) \ge \f 1n$ if $1\leq i\neq j\leq n_0$ or $n_0+1\leq i\neq j\leq n_0+n_1$.
	Thus,  using   Remark~\ref{rem-6-2} and  a volume comparison argument, 
	we have that  $\# I_\og \leq C(\Og, m)$ for each $\og \in \Ld$, where $\# I$ denotes the cardinality of a set $I$. It then follows from~\eqref{key-6-2-1}   that 
	$$|\vi_\og (\xi) |\leq C (1+n\rho_\Og (\xi,\og))^{-m},\   \  \xi\in H\cup G,\   \  \og \in\Ld.$$
	Thus, we have shown that  the set $H\cup K$ admits a polynomial partition of the unity of degree $cn$ with parameter $m$, completing     the induction. 
\end{proof}

\begin{rem}
	The above proof implies $\#\Lambda=O(n^{d+1})$; recall that $\Omega\subset\R^{d+1}$.
\end{rem}

%
%
%
%

\chapter{The  direct Jackson theorem}\label{ch:direct}

\section{Jackson inequality on domains of special type}\label{Sec:8}
We will first prove  the Jackson inequality,  Theorem~\ref{THM-4-1-18}, on  a domain $G$ of special type that is attached to $\Ga=\p \Og$. Without loss of generality,
we may assume that
\begin{align*}
G:=\{ (x, y):\  \  x\in (-b,b)^{d},\   \  g(x)-1\le  y\leq  g(x)\},
\end{align*}
where $b\in (0,(2d)^{-1})$ is the base size of $G$,  and  $g$ is a $C^2$-function on $\RR^d$ satisfying that $\min_{x\in [-4b, 4b]^d} g(x)\ge4$. We may choose the base size $b$ to be sufficiently small so that 
\begin{equation}\label{8-1-18}
\max_{x\in [-4b, 4b]^d}  \|\nabla g(x)\|\leq \f 1{200db}.
\end{equation}

We first   recall some  notations from Section~\ref{sec:5} and Section~\ref{modulus:def}.
Given  $n\in\NN$,  the    partition $\{\Delta_{\bfi}\}_{\bfi\in\Ld_n^d}$ of the cube $[-b,b]^d$ is  defined  by 
\begin{equation}\label{partition-a}\Delta_{\bfi}:=[t_{i_1}, t_{i_1+1}]\times \dots \times [t_{i_{d}}, t_{i_{d}+1}]  \   \  \ \text{with}\  \    t_{i}=(-1+\f {2i}n)b,
\end{equation}
where 
$\Ld_n^d:=\{ 0, 1,\dots, n-1\}^d\subset \ZZ^d$ is the  index set.  For simplicity, we also set  $t_i=-b$ for  $i<0$, and $t_i =b$ for $i>n$, and therefore, $\Delta_{\ib}$ is defined for all $\ib\in\ZZ^d$.
Next,  the sequence,
\begin{equation}\label{8-3-0-18}
\al_j:=2\al \sin^2 (\f {j\pi}{2N}),\   \  j=0,1,\dots, N:=2\ell_1 n, 
\end{equation}
forms a Chybeshev partition of the interval $[0, 2\al]$, where 
$\al:= 1/(2\sin^2\f \pi{2\ell_1})$,  and  $\ell_1$ is   a fixed  large  positive integer  for which~\eqref{5-2-18} is satisfied. 
Note that $\al_n=1$, and 
\begin{align}\label{8-4-18-0}
\f {4j\al} {N^2} \leq \al_j-\al_{j-1} \leq \f { \pi^2 j\al} {N^2},\   \  j=1,\dots, N.
\end{align}
Finally, a  partition of the domain $G$ is  defined  as   
\begin{align*}
G&=\Bl\{(x,y):\  \  x\in [-b,b]^d,\   \   g(x)-y\in [0,1]\Br\} =\bigcup_{(\bfi,j)\in\Ld_n^{d+1}} I_{\mathbf{i},j},
\end{align*}
where 
$$I_{\mathbf{i},j}:=\Bl\{ (x, y):\  \  x\in \Delta_{\bfi},\  \   g(x)-y\in [\al_{j}, \al_{j+1}]\Br\}.$$

Next,  we introduce a few  new notations for this chapter.  Without loss of generality, we assume that $n\ge 50$. Let  $10\leq m_0, m_1\leq n/5$ be two fixed large integer parameters  satisfying 
\begin{equation}\label{8-5-18}
m_1\ge \f {32\ell_1^2 m_0^2 b^2}{\al} \|\nabla^2 g\|_{L^\infty ([-b, b]^d)}.
\end{equation}
We define,  for $\bfi\in \Ld_n^d$,  
$$\Delta_{\bfi}^\ast =[t_{i_1-m_0}, t_{i_1+m_0}]\times [t_{i_2-m_0}, t_{i_2+m_0}]\times \dots\times [t_{i_{d}-m_0}, t_{i_{d}+m_0}],$$
and for $(\ib, j) \in\Ld_n^{d+1}$, 
$$I_{\bfi,j}^\ast:=\Bl\{ (x, y):\  \ x\in \Delta_{\bfi}^\ast,\   \  \al^\ast_{j-m_1}\leq g(x)-y\leq \al^\ast_{j+m_1}\Br\},$$
where
$\al_j^\ast =\al_j$ if $0\leq j\leq n$,  $\al_j^\ast =0$ if $j<0$ and $\al_j^\ast =1$ if $j>n$.  
Let $x_{\bfi}^\ast$ be   
an arbitrarily given  point    in the set $ \Delta_{\bfi}^\ast$.   Denote by   $\zeta_{k}(x_{\bfi}^\ast)$  the unit tangent vector to the boundary $\Ga$ at the point 
$({x}^\ast_{\bfi}, g({x}^\ast_{\bfi}) )$  that is  parallel to the   $x_ky$-plane and satisfies $\zeta_{k} (x_{\bfi}^\ast)\cdot e_k>0$ for $k=1,\dots, d$; that is,  
$\zeta_{ k} (x_{\bfi}^\ast):= \f { e_k + \p_k g( x_{\bfi}^\ast) e_{d+1} }{\sqrt{1+|\p_k g( x_{\bfi}^\ast)|^2}}.$
Set
$$\mathcal{E}(x_{\bfi}^\ast):=\{\zeta_{1} (x_{\bfi}^\ast), \dots, \zeta_{d} (x_{\bfi}^\ast)\},\   \  \ib\in\Ld_n^d.$$
By Taylor's theorem, we have 
\begin{equation}\label{8-9-18}
\Bl|g(x) -H_{\bfi}(x)  \Br| \leq M_0 n^{-2},\   \  \forall x\in \Delta_{\bfi}^\ast,
\end{equation}
where     $$H_{\bfi} (x):=g(x_{\bfi}^\ast)+ \nabla g(x_{\bfi}^\ast)\cdot (x-x_{\bfi}^\ast),\   \  x\in\RR^d,$$
and 
$M_0:=8 m_0^2 b^2 \|\nabla^2 g\|_{L^\infty( [-b,b]^d)}+C_d A_0.$
Here we recall that $A_0$ is the parameter in~\eqref{modulus-special}. 
Thus, setting 
\begin{align}\label{8-7-1-18}
S_{\bfi,j}:=\Bl\{ (x,y):  x\in\Delta_{\bfi}^\ast,    H_{\bfi} (x) -\al^\ast_{j+m_1} +\f {M_0} {n^2}\leq y\leq H_{\bfi}(x) -\al^\ast_{j-m_1} -\f {M_0} {n^2}\Br\}
\end{align}
and 
\begin{equation}\label{8-8-1}
S_{\bfi, j}^\ast:=\Bl\{ (x,y):\  \  x\in\Delta_{\bfi}^\ast,\   \  H_{\bfi} (x) -\al^\ast_{j+m_1} -\f {M_0} {n^2}\leq y\leq H_{\bfi}(x) -\al^\ast_{j-m_1} +\f {M_0} {n^2}\Br\},
\end{equation}
we have  
\begin{equation}\label{8-7-0}
S_{\bfi,j} \subset I_{\bfi,j}^\ast \subset S_{\bfi,j}^\ast,\   \  (\bfi, j)\in\Ld_n^{d+1}.
\end{equation}
On the other hand, it is easily seen from~\eqref{8-3-0-18},~\eqref{8-5-18} and~\eqref{8-4-18-0} that $S_{\bfi,j}\neq \emptyset$ and 
\begin{equation} \al^\ast_{j+m_1} -\al^\ast_{j-m_1} -\f {2M_0}{n^2} \sim \f {j+M_0}{n^2}.\end{equation}
Thus,  $S_{\bfi, j}$ and $S_{\bfi,j}^\ast$ are two nonempty  compact parallepipeds with the same set $\mathcal{E}(x^\ast_{\bfi})\cup \{e_{d+1}\}$ of edge directions 
and  comparable side lengths.

With the above notations, we introduce the following local modulus of smoothness on $G$:

\begin{defn}\label{def-8-1} For $0<p\leq \infty$, define the local modulus of smoothness of  order $r$  of $f\in L^p( G)$ by  
	$$ \og_{\text{loc}}^r (f, n^{-1})_{L^p( G)}:=
	\Bl[\sum_{(\bfi,j)\in\Ld_n^{d+1}} \Bl(\og^r (f, I_{\bfi,j}^\ast; e_{d+1})_p  ^p+\og^r (f, S_{\bfi,j}; \mathcal{E}(x^\ast_{\bfi}))_p  ^p\Br) \Br]^{1/p},$$
	with the usual change of the   $\ell^p$-norm over the set $(\bfi, j)\in\Ld_n^{d+1}$  for  $p=\infty$.  
\end{defn}

In this chapter, we shall prove  the following Jackson type estimate for the above  local modulus of smoothness, from which  Theorem~\ref{THM-4-1-18} will follow.

\begin{thm}\label{THM-WT-OMEGA} For $0<p\leq \infty$, and $f\in L^p(G)$,
	\[
	E_{n} (f)_{L^p(G)} \leq C  \omega_{\text{loc}}^r(f,  n^{-1})_{L^p( G)},
	\]
	where the constant  $C$  is independent of $f$ and $n$. 
\end{thm}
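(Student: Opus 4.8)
The plan is to build a single near-best polynomial approximation to $f$ on $G$ by patching together local polynomial approximants on the pieces $I_{\bfi,j}^\ast$ using the polynomial partition of unity from Theorem~\ref{strips-0} (with the metric equivalence from Remark~\ref{rem-5-2}). First I would, for each $(\bfi,j)\in\Ld_n^{d+1}$, choose a polynomial $P_{\bfi,j}\in\Pi^{d+1}_{C(r,d)}(\mathcal{E}(x^\ast_{\bfi})\cup\{e_{d+1}\})$ that is near-best on the slab $I^\ast_{\bfi,j}$; applying the Whitney-type inequality for directional moduli, namely Theorem~\ref{cor-7-3} together with~\eqref{8-7-0} (since $S_{\bfi,j}$ is a parallelepiped, $S_{\bfi,j}\subset I^\ast_{\bfi,j}\subset S^\ast_{\bfi,j}$, and the two parallelepipeds have comparable side lengths and the common edge set $\mathcal{E}(x^\ast_{\bfi})\cup\{e_{d+1}\}$), we get
\[
\|f-P_{\bfi,j}\|_{L^p(I^\ast_{\bfi,j})}\le C\Bl[\og^r(f,S_{\bfi,j};\mathcal{E}(x^\ast_{\bfi}))_p+\og^r(f,I^\ast_{\bfi,j};e_{d+1})_p\Br].
\]
Here the key geometric point is that the slab $I^\ast_{\bfi,j}$ lies between graphs of two functions over the cube $\Delta^\ast_{\bfi}$ and, by~\eqref{8-9-18}, is squeezed between the parallelepipeds $S_{\bfi,j}$ and $S^\ast_{\bfi,j}$ whose common linear "axis" is $H_{\bfi}$; this is exactly the hypothesis of Theorem~\ref{cor-7-3}, and the parameter $L$ there is controlled by $m_1,m_0,b,\|\nabla^2 g\|_\infty$ via~\eqref{8-5-18}, hence bounded independently of $n$.

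Next I would set $P:=\sum_{(\bfi,j)\in\Ld_n^{d+1}}q_{\bfi,j}P_{\bfi,j}$, which by Remark~\ref{rem-6-3} is a polynomial of total degree $\le n$ provided we invoke Theorem~\ref{strips-0} with $n/C(r,d)$ in place of $n$ (absorbing the degrees $C(r,d)$ of the $P_{\bfi,j}$). Using $\sum q_{\bfi,j}\equiv 1$ on $G$, on each piece $I_{\mathbf{k},l}$ we write $f-P=\sum_{(\bfi,j)}q_{\bfi,j}(f-P_{\bfi,j})$. The estimate then reduces to bounding, for $(x,y)\in I_{\mathbf{k},l}$,
\[
|f(x,y)-P(x,y)|^{\ta}\le\sum_{(\bfi,j)}|q_{\bfi,j}(x,y)|^{\ta}|f(x,y)-P_{\bfi,j}(x,y)|^{\ta},\qquad \ta=\min\{p,1\},
\]
and integrating. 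The decay~\eqref{strips-ineq}, $|q_{\bfi,j}(x,y)|\le C_{m,d}(1+\max\{\|\bfi-\mathbf{k}\|,|j-l|\})^{-m}$ with $m$ chosen large (say $m> (d+1)/\ta +1$), kills the off-diagonal sum: one needs to control $\|f-P_{\bfi,j}\|_{L^p(I_{\mathbf{k},l})}$ in terms of the already-estimated $\|f-P_{\bfi,j}\|_{L^p(I^\ast_{\bfi,j})}$, which works because $I_{\mathbf{k},l}\subset I^\ast_{\bfi,j}$ once $\max\{\|\bfi-\mathbf{k}\|,|j-l|\}\le c\min\{m_0,m_1\}$, while for larger index-distance one instead uses a Nikol'skii/Remez-type bound (Lemma~\ref{lem-4-1} applied on a ball) to compare $\|f-P_{\bfi,j}\|_{L^p(I_{\mathbf{k},l})}$ with $\|f-P_{\bfi,j}\|_{L^p(I^\ast_{\bfi,j})}$ at the cost of a polynomial factor in the index-distance, which is defeated by the faster polynomial decay of $q_{\bfi,j}$. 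Summing over $(\mathbf{k},l)$ and using that each $I^\ast_{\bfi,j}$ overlaps boundedly many $I_{\mathbf{k},l}$'s (again a volume/packing count), we arrive at
\[
E_n(f)_{L^p(G)}^{\ta}\le\|f-P\|_{L^p(G)}^{\ta}\le C\sum_{(\bfi,j)\in\Ld_n^{d+1}}\Bl[\og^r(f,S_{\bfi,j};\mathcal{E}(x^\ast_{\bfi}))_p^{\ta\cdot(p/\ta)}+\og^r(f,I^\ast_{\bfi,j};e_{d+1})_p^{\,p}\Br]^{\ta/p},
\]
which, after collecting, is exactly $C\,\omega^r_{\mathrm{loc}}(f,n^{-1})_{L^p(G)}$; the case $p=\infty$ is handled by the usual replacement of $\ell^p$-sums by suprema, where the decay of $q_{\bfi,j}$ makes the relevant sums of $(1+\mathrm{dist})^{-m}$ converge.

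I expect the main obstacle to be the bookkeeping in the off-diagonal estimate: precisely quantifying the inclusion $I_{\mathbf{k},l}\subset I^\ast_{\bfi,j}$ in terms of $m_0,m_1$ and the Chebyshev-partition spacing~\eqref{8-4-18-0} (which is non-uniform, so $|j-l|$ bounded does not directly give $\al_{j+m_1}^\ast\ge g(x)-y\ge\al_{j-m_1}^\ast$ uniformly — one must use that $\al_j-\al_{j-1}\sim j/n^2$ and argue that a bounded shift in the index keeps one inside the fattened slab), and then marrying this with the polynomial-growth comparison (Lemma~\ref{lem-4-1}) for the far pieces so that the total weight $\sum_{(\bfi,j)}|q_{\bfi,j}|^{\ta}\times(\text{growth factor})$ stays summable. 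The geometry is exactly parallel to the one-dimensional Ditzian–Totik patching and to the argument underlying Theorem~\ref{polyPartition}, so the structure is clear; it is the uniformity of all constants in $n$ (guaranteed by~\eqref{8-5-18},~\eqref{8-1-18},~\eqref{parameter-2-9}) that requires care. Once Theorem~\ref{THM-WT-OMEGA} is established, Theorem~\ref{THM-4-1-18} follows by noting that each term $\og^r(f,I^\ast_{\bfi,j};e_{d+1})_p$ is dominated by $\og^r_{\Og,\vi}(f,\tfrac1n;e_{d+1})_p$ (since on the thin slab the step $\sim n^{-1}$ in the $e_{d+1}$-direction is comparable to $\vi_\Og(e_{d+1},\cdot)/n$, using $\dist(\cdot,\Ga)\sim g(x)-y\sim\al_j\sim (j/n)^2$ on $I^\ast_{\bfi,j}$) and each $\og^r(f,S_{\bfi,j};\mathcal{E}(x^\ast_{\bfi}))_p$ is dominated by a summand of $\wt\og^r_G(f,\tfrac\tau n)_p$ for a suitable fixed $\tau$, so that the $\ell^p$-sum over $(\bfi,j)$ collapses to $C[\wt\og^r_G(f,\tfrac\tau n)_p+\og^r_{\Og,\vi}(f,\tfrac1n;e_{d+1})_p]$.
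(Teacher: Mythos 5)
Your overall architecture matches the paper's: local Whitney estimates on the $I^\ast_{\bfi,j}$ via Theorem~\ref{cor-7-3} and~\eqref{8-7-0}, then patch the local approximants with the partition of unity from Theorem~\ref{strips-0}, and kill the off-diagonal sum using the decay~\eqref{strips-ineq}. However, your off-diagonal estimate contains a genuine error. You propose to bound $\|f-P_{\bfi,j}\|_{L^p(I_{\mathbf{k},l})}$ for far-away $(\mathbf{k},l)$ by comparing it to $\|f-P_{\bfi,j}\|_{L^p(I^\ast_{\bfi,j})}$ via Lemma~\ref{lem-4-1}. But Lemma~\ref{lem-4-1} is a Remez/Nikol'skii growth lemma and applies only to algebraic polynomials of bounded degree; the function $f-P_{\bfi,j}$ is not a polynomial ($f$ is an arbitrary $L^p$ function). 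Moreover there is no way to bound $f$ on $I_{\mathbf{k},l}$ using only its behaviour on $I^\ast_{\bfi,j}$ when these sets are far apart, so that step cannot be salvaged.

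The correct fix, which the paper implements, is to subtract the \emph{local} polynomial before summing: on each fixed piece $I_\b$, write $f-P=(f-P_\b)+\sum_\ga q_\ga(P_\b-P_\ga)$. The first term is handled by the Whitney estimate; the second involves the difference of two polynomials $P_\b-P_\ga$ of fixed degree, to which a growth lemma legitimately applies. The relevant tool is Lemma~\ref{lem-5-2}, which gives a polynomial bound $\|Q\|_{L^p(I_\b)}\leq C(1+\|\b-\ga\|_\infty)^\ell\|Q\|_{L^p(I_\ga)}$ for $Q\in\Pi_r^{d+1}$; its proof handles precisely the nonuniformity of the Chebyshev spacing you flagged, via Lemma~\ref{lem-5-1}. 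But even this is not quite enough: for far-apart indices, $P_\b$ and $P_\ga$ are determined by $f$ on disjoint regions and are a priori unrelated. The paper therefore builds a chain $\ga=\ga_1,\ga_2,\dots,\ga_{N_0}=\b$ of $N_0\lesssim(1+\|\b-\ga\|_\infty)^2$ indices with $I_{\ga_j}\subset I^\ast_{\ga_{j+1}}$ at each step; along the chain $\|P_{\ga_j}-P_{\ga_{j+1}}\|$ is controlled on the overlap by the two local Whitney quantities, and Lemma~\ref{lem-5-2} transports these bounds back to $I_\ga$ at polynomial cost. Summing the resulting claim~\eqref{claim-8-23} against the decay $(1+\|\b-\ga\|_\infty)^{-m p_1}$ with $m$ large gives the desired bound. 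Without the add-and-subtract of $P_\b$ and without the chaining, the argument does not close.
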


\begin{rem}\label{rem:loc mod pnt choice}
	Note that $\omega_{\text{loc}}^r(f,  n^{-1})_{L^p( G)}$ depends on the choice of $x_{\bfi}^\ast$, which is an arbitrary point in $\Delta_{\bfi}^\ast$. It follows from the proof that the constant $C$ in Theorem~\ref{THM-WT-OMEGA} is independent of the selection of the points $x_{\bfi}^\ast\in \Delta_{\bfi}^\ast$.
\end{rem}

We divide the rest of this chapter  into two parts that will be  given in the two  subsequent   sections respectively. In the first part,  we shall assume Theorem~\ref{THM-WT-OMEGA}, and show how it implies Theorem~\ref{THM-4-1-18}, while the second part is devoted to the proof of Theorem~\ref{THM-WT-OMEGA}.  
\section{Proof of Theorem~\ref{THM-4-1-18}}\label{subsection-8:1}
The aim in this  section is to show that Theorem~\ref{THM-4-1-18} can be deduced from  Theorem~~\ref{THM-WT-OMEGA}. 
Recall that  for each $\bfi\in\Ld_n^d$,  $\mathcal{E}(x^\ast_{\bfi})$ is the set of  unit  tangent vectors to $\p' G^\ast$ at the point $(x_{\bfi}^\ast, g(x_{\bfi}^\ast))$, where $x_{\bfi}^\ast\in \Delta_{\bfi}^\ast$.   Thus, by Definition~\ref{def-8-1},   Theorem~\ref{THM-WT-OMEGA}, and Remark~\ref{rem:loc mod pnt choice}, to show  Theorem~\ref{THM-4-1-18}, 
it suffices to prove  that 
\begin{equation}\label{8-3-18}
\Sigma_1:=\sum_{(\bfi,j)\in\Ld_n^{d+1}}  \og^r(f, I_{\bfi, j}^\ast; e_{d+1})^p_p\leq   C\og_{\Og,\vi}^r (f, \f 1n; e_{d+1})^p_{p},\  
\end{equation}
and for $k=1,\dots, d$,
\begin{equation}\label{8-4-18}
\Sigma_2 (k):=n^d \sum_{(\bfi, j)\in\Ld_n^{d+1}}  \int_{\Delta_{\bfi}^\ast} \og^r(f, S_{\bfi,j}; \zeta_k(x_{\bfi}^\ast))^p_p dx_{\bfi}^\ast\leq C   \wt{\og}_{G}^r (f, \f 1 n)^p_p 
\end{equation}
with the usual change of the $\ell^p$ norm in the case of $p=\infty$.

To prove the estimates~\eqref{8-3-18} and~\eqref{8-4-18}, we need to use  the   average modulus of smoothness  of order $r$ on a compact interval $I=[a_I, b_I]\subset \RR$ defined as   
$$w_r(f, t; I)_p  :=\Bl(\f1t \int_{t/4r}^t\Bl( \int_{I-rh} |\tr_h^r f(x)|^p  dx\Br)\, dh  \Br)^{1/p},\     \  0<p\leq \infty,$$
with the usual change when $p=\infty$.
The average modulus $w_r(f, t; I)_p$ turns out to be equivalent to the regular modulus $\og^r(f,t)_p:=\sup_{0<h\leq t} \|\tr_h^r f\|_{L^p(I-rh)}$,  as is well known.  

\begin{lem}\cite[p.~373, p.~185]{De-Lo} \label{lem-8-1} For $f\in L^p(I)$ and $0<p\leq \infty$,
	\begin{equation}\label{key-equiv-mod-0}
	C_1 w_r(f, t; I)_p \leq \og^r (f,t)_p\leq C_2 w_r(f,t; I)_p,\   \  0<t\leq |I|,
	\end{equation}
	where the constants $C_1, C_2>0$ depend only on $p$ and $r$.
\end{lem}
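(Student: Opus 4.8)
\emph{Plan of proof.} The lower estimate $C_1 w_r(f,t;I)_p\le\og^r(f,t)_p$ is immediate and I would settle it first: for every step $h\in[t/(4r),t]$ monotonicity of $\og^r(f,\cdot)_p$ in the step gives $\|\tr_h^rf\|_{L^p(I-rh)}\le\og^r(f,t)_p$, so averaging over $h\in[t/(4r),t]$ (or taking the supremum when $p=\infty$) yields $w_r(f,t;I)_p\le\og^r(f,t)_p$, and one may take $C_1=1$.

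The content is the reverse estimate $\og^r(f,t)_p\le C_2\,w_r(f,t;I)_p$, for which it suffices to bound $\|\tr_h^rf\|_{L^p(I-rh)}$ by $C_2\,w_r(f,t;I)_p$ for a fixed but arbitrary $h\in(0,t]$. The idea is to compare the difference of step $h$ with an average of genuine $r$-th differences of $f$ whose steps stay comparable to $t$, using an $r$-th order Steklov-type regularization $f_t$ of $f$ at scale comparable to $t$ (built from the $r$-fold sliding average $T_\tau^rg(x)=\tau^{-r}\int_{[0,\tau]^r}g(x+u_1+\dots+u_r)\,du$, corrected so as to reproduce $\Pi^1_{r-1}$, and averaged over $\tau$ in a short interval inside $[t/(4r),t]$), designed so that (a) $f-f_t$ is a bounded superposition of pure $r$-th differences $\tr_v^rf$ whose steps $v$ stay in $[t/(4r),t]$, and (b) $f_t\in C^r$ with $\tr_h^rf_t(x)=\int_{[0,h]^r}f_t^{(r)}(x+u_1+\dots+u_r)\,du$ and $f_t^{(r)}$ itself a bounded superposition of $t^{-r}\tr_v^rf$ with $v\in[t/(4r),t]$ (this last point resting on the elementary identity $(T_\tau^rg)^{(r)}=\tau^{-r}\tr_\tau^rg$). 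Granting (a) and (b), write $\|\tr_h^rf\|_{L^p(I-rh)}^{\theta}\le\|\tr_h^r(f-f_t)\|_{L^p}^{\theta}+\|\tr_h^rf_t\|_{L^p}^{\theta}$ with $\theta:=\min\{1,p\}$; from (a), $\|\tr_h^r(f-f_t)\|_{L^p}\le C_{r,p}\sup_v\|\tr_v^rf\|_{L^p}$ over the relevant $v$ (discarding the outer operator via $\|\tr_h^r\psi\|_p\le C_{r,p}\|\psi\|_p$), while from (b), $\|\tr_h^rf_t\|_{L^p}\le h^r\|f_t^{(r)}\|_{L^p}\le C_{r,p}(h/t)^r\sup_v\|\tr_v^rf\|_{L^p}\le C_{r,p}\sup_v\|\tr_v^rf\|_{L^p}$ since $h\le t$. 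In both cases the surviving quantity is, after unfolding the construction, an $L^1$-average of $\|\tr_v^rf\|_{L^p}$ over $v$ in a subinterval of $[t/(4r),t]$, and a single application of H\"older's inequality in the step variable turns it into the $L^p$-average defining $w_r$, giving the claim with $C_2=C_2(p,r)$.

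Two points need attention. The minor one is the region within $O(t)$ of $\partial I$, where the regularization $f_t$ is undefined: there I would use a left-pointing analogue of the sliding average, which is legitimate because $\abs I\ge t$ leaves room of length $\abs I$ on the far side, after shrinking the admissible scales slightly so that the two one-sided regions still cover $I-rh$. The genuine obstacle, and the step I expect to be the most delicate, is arranging (a) and (b) so that \emph{only} $r$-th differences of steps comparable to $t$ occur: the $r$-fold sliding averages, once expanded into differences of $f$, produce mixed differences $\tr_{u_1}\!\cdots\tr_{u_r}f$, and such a difference with two nearly-equal steps contains a pure $r$-th difference of very small step, whose norm could only be bounded back in terms of $\og^r(f,t)_p$ itself --- a circular estimate. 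Avoiding this forces one to choose the regularizing kernel (equivalently, the ranges over which the steps are averaged) so that all steps occurring are pairwise comparable to $t$, which is the real technical heart. With that in hand the dependence of the constants on $p$ and $r$ is harmless (powers of $2^r$, $(4r)^r$, and the quasi-triangle inequality with exponent $\theta$ for $p<1$), and the lemma follows; this is in substance the argument of~\cite[pp.~185, 373]{De-Lo}, whose details I would not reproduce.
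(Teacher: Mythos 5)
The paper gives no proof of Lemma~\ref{lem-8-1}; it is cited directly to DeVore--Lorentz, the two page references corresponding to the separate treatments of $1\le p\le\infty$ (Chapter~6) and $0<p<1$ (Chapter~12). Your lower estimate is correct. The principal gap in your upper estimate lies in the $p<1$ regime: your claim (a), that $\|f-f_t\|_{L^p}$ is controlled by (ultimately) an $L^1$-average of $\|\tr^r_v f\|_{L^p}$, requires taking the $L^p$-norm inside the continuous average defining $f_t$, which is Minkowski's integral inequality and fails for $p<1$. The $\theta$-quasi-triangle inequality you invoke governs only finite superpositions and does not rescue the continuous averaging over the scale $\tau$ built into your construction. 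This is exactly why DeVore--Lorentz prove the $p<1$ case on a separate page, by an argument built on local near-best polynomial approximation (Whitney-type estimates on subintervals of length comparable to $t$) rather than Steklov smoothing. As written, your sketch therefore covers at most $1\le p\le\infty$.

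Even for $p\ge1$, the step you flag as the "real technical heart'' is not merely deferred but also somewhat misdiagnosed. For the standard $r$-fold sliding mean $f_h$ one has $f-f_h=(-1)^r h^{-r}\int_{[0,h]^r}\tr^r_{(u_1+\cdots+u_r)/r}f\,du$, which is already an average of pure $r$-th differences, not mixed ones; the actual difficulty is simply that the effective step runs over all of $[0,h]$ rather than being bounded away from $0$. Forcing the kernel to live on steps in $[t/(4r),t]$ is not innocuous: if $f-f_t=\int_a^b K(v)\tr_v^r f\,dv$ with $a>0$ and a one-sided kernel, the resulting $f_t$ contains, for $r$ odd, a nontrivial multiple of $f$ itself and hence is no smoother than $f$, so the construction must be modified (a two-sided kernel, say). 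The standard and cleaner resolution is not to eliminate small steps at all, but to note that their density near $s=0$ vanishes like $s^{r-1}$; the contribution of steps $s\le t/(4r)$ to $\|f-f_h\|_p$ is therefore a fixed fraction strictly less than $1$ of $\og^r(f,t)_p$ and can be absorbed into the left-hand side of the final inequality. The specific lower limit $t/(4r)$ in the definition of $w_r$ is tailored so that this absorption constant is $<1$; this absorption mechanism, rather than kernel-support engineering, is what your sketch needs to identify and carry out.
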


For simplicity, we will assume $p<\infty$. The proof below with slight modifications works equally well for the case $p=\infty$. 

We start with the proof of~\eqref{8-3-18}. 
Using~\eqref{8-4-18-0} and~\eqref{key-equiv-mod-0}, we have    
\begin{align*}\og^r(f, I_{\bfi,j}^\ast;  e_{d+1})_p^p &=\sup_{0<h<\f {c(j+1)}{n^2}} \int_{\Delta_{\bfi}^\ast}\Bl[ \int_{g(x)-\al_{j+m_1}}^{g(x)-\al_{j-m_1}} |\tr_{h e_{d+1}}^r (f, I_{\bfi, j}^\ast, (x,y))|^p dy\Br] \, dx \\
&\sim \f {n^2} {j+1}\int_{\f {c(j+1)}{4rn^2}}^{\f {c(j+1)}{n^2}} \int_{I_{\bfi,j}^\ast } |\tr_{he_{d+1}}^r (f, I_{\bfi,j}^\ast, \xi)|^p d\xi dh.
\end{align*}
By~\eqref{funct-vi}, we note  that for $\xi=(x,y)\in I_{\bfi, j}^\ast-\f {c(j+1)}{4n^2} e_{d+1}$, 
\begin{align*} \vi_\Og (e_{d+1}, \xi)&\sim \sqrt{g(x)-y}\sim \f {j+1}n,\   \  0\leq j\leq n.\end{align*}
Thus, 
performing the  change of variable 
$h=s\vi_\Og(e_{d+1}, \xi)$ for each  fixed $\xi\in I_{\bfi,j}^\ast-\f {c(j+1)}{4n^2} e_{d+1}$,  we obtain 
\begin{align*}
& \og^r(f, I_{\bfi,j}^\ast;  e_{d+1})_p^p 
\leq C   n \int_{I_{\bfi,j}^\ast}\Bl[ \int_{0}^{\f cn} |\tr_{s\vi_\Og (e_{d+1}, \xi)e_{d+1}}^r(f, I_{\bfi, j}^\ast, \xi)|^p \, ds\Br] d\xi.
\end{align*}
It then follows that 
\begin{align*}
\Sigma_1
&\leq Cn\sum_{j=0}^{n-1} \sum_{\bfi\in\Ld_n^d}  \int_0^{\f cn}\Bl[ \int_{I_{\bfi, j}^\ast} |\tr_{s\vi_\Og (e_{d+1}, \xi)e_{d+1}}^r(f, \Og,x)|^p d\xi\Br] ds\\
&\leq C n\int_0^{\f cn} \int_{\Og }|\tr_{u\vi_\Og (e_{d+1}, \xi)e_{d+1}}^r (f,\Og,\xi)|^p\, d\xi ds\leq C \og^r_{\Og,\vi}(f, n^{-1}; e_{d+1})_p^p.
\end{align*}
This proves the estimate~\eqref{8-3-18}.

The estimate~\eqref{8-4-18} can be proved in a similar way.  Indeed, by~\eqref{2-3-18} and~\eqref{key-equiv-mod-0},   it is easily seen  that 
\begin{align*}
\og^r(f, S_{\bfi, j}; \zeta_k(x_{\bfi}^\ast))_p^p\sim n \int_0^{\f cn} \|\tr_{h \zeta_k(x_{\bfi}^\ast)}^r (f, S_{\bfi, j}) \|_{L^p(S_{\bfi,j})}^p\, dh.
\end{align*}
It follows that 
\begin{align*}
\Sigma_2(k) &\leq C n^{d+1} \int_0^{\f cn} \Bl[ \sum_{(\bfi, j)\in\Ld_n^{d+1}}  \int_{\Delta_{\bfi}^\ast} 
\|\tr_{h \zeta_k(x_{\bfi}^\ast)}^r (f, S_{\bfi, j}) \|_{L^p(S_{\bfi,j})}^p\, dx_{\bfi}^\ast\Br]\, dh\\
& \leq C n^{d}\sup_{0<h\leq \f cn}   \sum_{(\bfi, j)\in\Ld_n^{d+1}}   
\int_{S_{\bfi, j}} \int_{\|u-\xi_x\|\leq \f c n} |\tr_{h \zeta_k(u)}^r (f, S_{\bfi, j},\xi)|^p\, du\, d\xi\\
& \leq C  n^{d}\sup_{0<h\leq \f cn}   
\int_{G^n} \int_{\|u-\xi_x\|\leq \f c n} |\tr_{h \zeta_k(u)}^r (f, G,\xi)|^p\, du\, d\xi\leq C \wt{\og}_G^r(f, \f cn)_p^p,
\end{align*}
where $G^n:=\{\xi\in G:\  \  \dist(\xi, \p' G) \ge \f {A_0}{n^2}\}$. 
This proves~\eqref{8-4-18}.

\section{Proof of Theorem~\ref{THM-WT-OMEGA} }\label{subsection-8:2}
This section  is devoted to the proof of Theorem~~\ref{THM-WT-OMEGA}. The proof relies on several lemmas.

\begin{lem}\label{thm-2-1} Let  $(\bfi,j)\in \Ld_n^{d+1}$. Then for $0<p\leq \infty$ and $r\in\NN$,  
	$$E_{(d+1)(r-1)}(f)_{L^p(I_{\bfi,j}^\ast)}\leq C(p, r, d, G) \Bl[\og^r (f, I_{\bfi,j}^\ast; e_{d+1})_p+\og^r (f, S_{\bfi,j}; \mathcal{E}(x^\ast_\bfi))_p\Br].$$
\end{lem}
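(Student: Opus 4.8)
\textbf{Proof plan for Lemma~\ref{thm-2-1}.}

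The statement asks for a Whitney-type estimate on the ``thickened'' cell $I_{\bfi,j}^\ast$ using two kinds of finite differences: those along $e_{d+1}$ on $I_{\bfi,j}^\ast$ itself, and those along the tangential directions $\mathcal{E}(x_\bfi^\ast)$ but only on the smaller parallelepiped $S_{\bfi,j}$. The natural route is to apply Theorem~\ref{cor-7-3}. Indeed, by~\eqref{8-9-18} the graph functions cutting out $I_{\bfi,j}^\ast$ from above and below are $g(x)-\al_{j\mp m_1}^\ast$, which differ from the affine functions $H_\bfi(x)-\al_{j\mp m_1}^\ast$ by at most $M_0 n^{-2}$; together with the definitions~\eqref{8-7-1-18} and~\eqref{8-8-1} of $S_{\bfi,j}$ and $S_{\bfi,j}^\ast$ this gives the sandwiching $S_{\bfi,j}\subset I_{\bfi,j}^\ast\subset S_{\bfi,j}^\ast$ from~\eqref{8-7-0}, with $S_{\bfi,j}$ and $S_{\bfi,j}^\ast$ being parallelepipeds over the same base $\Delta_\bfi^\ast$ with the same edge directions $\mathcal{E}(x_\bfi^\ast)\cup\{e_{d+1}\}$ and, by the displayed estimate following~\eqref{8-8-1}, with comparable heights (ratio bounded in terms of $G$ only, uniformly in $(\bfi,j)$ and in the choice of $x_\bfi^\ast$).

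First I would set up the coordinate change that makes Theorem~\ref{cor-7-3} directly applicable. Write $I_{\bfi,j}^\ast=\{(x,y):x\in\Delta_\bfi^\ast,\ g(x)-\al_{j+m_1}^\ast\le y\le g(x)-\al_{j-m_1}^\ast\}$, so it is exactly a region between two graphs $y=\tilde g_1(x)$ and $y=\tilde g_2(x)$ over the compact parallelepiped $\Delta_\bfi^\ast\subset\RR^d$, with the linear function $H(x):=H_\bfi(x)-\tfrac12(\al_{j+m_1}^\ast+\al_{j-m_1}^\ast)$ and $\da:=\tfrac12(\al_{j+m_1}^\ast-\al_{j-m_1}^\ast)-M_0 n^{-2}$. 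By the estimate displayed after~\eqref{8-8-1}, $\da>0$ and the ratio $L$ between the half-height of $S_{\bfi,j}^\ast$ and that of $S_{\bfi,j}$ is bounded above by a constant depending only on $G$ (the constants $\ell_1$, $m_0$, $m_1$, $b$, $\|\nabla^2 g\|_\infty$, $A_0$, all fixed by $G$). Thus the hypotheses of Theorem~\ref{cor-7-3} hold with $S=S_{\bfi,j}$, the ambient region $G$ there taken to be $I_{\bfi,j}^\ast$, and $\mathcal{E}$ there taken to be the edge directions of $\Delta_\bfi^\ast$. Here I should note that the edge directions of $\Delta_\bfi^\ast$ are the coordinate directions $e_1,\dots,e_d$, whereas the lemma wants differences along $\mathcal{E}(x_\bfi^\ast)=\{\zeta_1(x_\bfi^\ast),\dots,\zeta_d(x_\bfi^\ast)\}$; but $\zeta_k(x_\bfi^\ast)$ is the image of $e_k$ under the shear $(x,y)\mapsto(x,y+\nabla g(x_\bfi^\ast)\cdot x)$ (up to normalization), which is precisely the affine map taking the graph of $H_\bfi$ to a horizontal hyperplane, so after applying that shear — under which moduli transform by~\eqref{5-1-eq}/\eqref{2-3-18}, $e_{d+1}$ is preserved, and $\Delta_\bfi^\ast$-fibred parallelepipeds become genuine rectangular boxes — the coordinate edge directions in the sheared picture pull back exactly to $\mathcal{E}(x_\bfi^\ast)$ in the original. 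So I would first perform this shear, apply Theorem~\ref{cor-7-3} in the sheared coordinates (where $S$ becomes a box and $\mathcal{E}$ its coordinate edges), and then transform back.

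Theorem~\ref{cor-7-3} then yields
\[
E_{r-1}\bigl(f;\mathcal{E}(x_\bfi^\ast)\cup\{e_{d+1}\}\bigr)_{L^p(I_{\bfi,j}^\ast)}\le C(p,d,r,G)\Bl[\og^r(f,S_{\bfi,j};\mathcal{E}(x_\bfi^\ast))_p+\og^r(f,I_{\bfi,j}^\ast;e_{d+1})_p\Br].
\]
Finally, since $\Pi^{d+1}_{r-1}(\mathcal{E}(x_\bfi^\ast)\cup\{e_{d+1}\})\subset\Pi^{d+1}_{(d+1)(r-1)}$ (polynomials of degree $<r$ in each of $d+1$ linearly independent directions have total degree $\le(d+1)(r-1)$ — this is exactly the observation recorded just before the definition of $E_{r-1}(f;\EEE)$), we have $E_{(d+1)(r-1)}(f)_{L^p(I_{\bfi,j}^\ast)}\le E_{r-1}(f;\mathcal{E}(x_\bfi^\ast)\cup\{e_{d+1}\})_{L^p(I_{\bfi,j}^\ast)}$, which gives the claim. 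I would also remark that, by Remark~\ref{rem:loc mod pnt choice}'s concern, all constants are uniform in the choice of $x_\bfi^\ast\in\Delta_\bfi^\ast$ because the parameter $L$ above is controlled purely by $\|\nabla^2 g\|_\infty$, $b$, $\ell_1$, $m_0$, $m_1$, $A_0$.

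The main obstacle is bookkeeping rather than conceptual: one must verify carefully that the height ratio $L=L(G)$ is genuinely independent of $(\bfi,j)$ and of $x_\bfi^\ast$ — this relies on the comparability $\al_{j+m_1}^\ast-\al_{j-m_1}^\ast-2M_0 n^{-2}\sim (j+M_0)/n^2$ from~\eqref{8-9-18} and the surrounding display, valid because of the choice~\eqref{8-5-18} of $m_1$ relative to $m_0$, $\al$, $b$ — and that the shear transforming $\zeta_k(x_\bfi^\ast)$ to $e_k$ has operator norm (and inverse norm) bounded by a $G$-dependent constant via~\eqref{8-1-18}, so that the direction set and the parallelepiped shapes in the transformed problem are nondegenerate with $G$-uniform bounds. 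Once this uniformity is pinned down, the rest is a direct citation of Theorem~\ref{cor-7-3} together with the invariance~\eqref{5-1-eq}.
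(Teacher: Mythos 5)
Your argument is correct and takes essentially the same route as the paper: the paper's proof is a one-line citation of the sandwiching $S_{\bfi,j}\subset I_{\bfi,j}^\ast\subset S_{\bfi,j}^\ast$ from \eqref{8-7-0} together with Theorem~\ref{cor-7-3}, and you are simply spelling out the affine normalization (the content of Lemma~\ref{lem-5-3-0} and the invariance \eqref{5-1-eq} inside that theorem's proof) and the passage from $E_{r-1}(f;\mathcal{E}(x_\bfi^\ast)\cup\{e_{d+1}\})$ to $E_{(d+1)(r-1)}(f)$ that the paper leaves implicit. The one small slip is the sign of the flattening shear, which should be $(x,y)\mapsto(x,\,y-\nabla g(x_\bfi^\ast)\cdot x)$.
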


\begin{proof}Lemma~\ref{thm-2-1} follows directly from~\eqref{8-7-0} and Theorem~~\ref{cor-7-3}.
\end{proof}

\begin{lem}\label{lem-5-1} Given   $0<p\leq \infty$ and  $r\in\NN$, there  exist positive  constants $C=C(p, r)$ and $s_1=s_1(p,r)$ depending only on $p$ and $r$  such that  for any  integers $0\leq k, j\leq N/2$  and any $P\in\Pi_r^1$,
	\begin{equation}\label{5-2a}
	\|P\|_{L^p[\al_{j}, \al_{j+1}]}\leq C(p,r) (1+|j-k|)^{s_1}\|P\|_{L^p[\al_{k},\al_{k+1}]}.
	\end{equation}
\end{lem}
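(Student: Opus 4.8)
The plan is to reduce the $L^p$ inequality to a sup--norm inequality, exploiting that all norms on the finite--dimensional space $\Pi_r^1$ are comparable, and then to prove the sup--norm inequality by estimating $P$ on $[\al_j,\al_{j+1}]$ through a \emph{single} interval containing both $[\al_j,\al_{j+1}]$ and $[\al_k,\al_{k+1}]$; this is exactly what yields a polynomial (rather than exponential) dependence on $|j-k|$. Concretely, for a compact interval $I\subset\RR$ and $P\in\Pi_r^1$, an affine change of variable onto $[-1,1]$ together with the equivalence of norms on $\Pi_r^1$ gives $c(p,r)\,|I|^{1/p}\|P\|_{L^\infty(I)}\le\|P\|_{L^p(I)}\le|I|^{1/p}\|P\|_{L^\infty(I)}$ (with the usual reading when $p=\infty$). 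Writing $I_m:=[\al_m,\al_{m+1}]$, it therefore suffices to establish $\|P\|_{L^\infty(I_j)}\le C(r)\,(1+|j-k|)^{2r}\|P\|_{L^\infty(I_k)}$ and then to absorb a factor $(|I_j|/|I_k|)^{1/p}$.

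For the sup--norm estimate, set $J:=[\al_{\min(j,k)},\al_{\max(j,k)+1}]$, so that $I_j\cup I_k\subset J$. Since $\al_m$ is increasing in $m$ and~\eqref{8-4-18-0} applies throughout the range of indices involved (as $j,k\le N/2$), for $j\ge k$ one gets
\[
\frac{|J|}{|I_k|}=\frac{\al_{j+1}-\al_k}{\al_{k+1}-\al_k}\le\frac{\pi^2\al N^{-2}(j+1)(j+1-k)}{4(k+1)\al N^{-2}}=\frac{\pi^2}{4}\cdot\frac{j+1}{k+1}\cdot(1+|j-k|)\le\frac{\pi^2}{4}(1+|j-k|)^2,
\]
using $\al_{j+1}-\al_k\le\sum_{m=k+1}^{j+1}\pi^2 m\al N^{-2}$, $\al_{k+1}-\al_k\ge 4(k+1)\al N^{-2}$ and $\tfrac{j+1}{k+1}\le 1+|j-k|$; the case $j<k$ is analogous and gives the better bound $\tfrac{\pi^2}{4}(1+|j-k|)$. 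On the other hand, for nested intervals $I\subset J$ the classical Remez (Chebyshev) inequality for $\Pi_r^1$ gives $\|P\|_{L^\infty(J)}\le(C_0|J|/|I|)^r\|P\|_{L^\infty(I)}$. Applying this with $I=I_k$ and using $I_j\subset J$ yields $\|P\|_{L^\infty(I_j)}\le\|P\|_{L^\infty(J)}\le C(r)(1+|j-k|)^{2r}\|P\|_{L^\infty(I_k)}$.

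Finally, combining $\|P\|_{L^p(I_j)}\le|I_j|^{1/p}\|P\|_{L^\infty(I_j)}$, the sup--norm estimate just obtained, and $\|P\|_{L^\infty(I_k)}\le c(p,r)^{-1}|I_k|^{-1/p}\|P\|_{L^p(I_k)}$, together with $|I_j|/|I_k|\le\tfrac{\pi^2}{4}(1+|j-k|)$ (again from~\eqref{8-4-18-0}), one arrives at~\eqref{5-2a} with $s_1:=2r+\lceil 1/p\rceil$ (and $s_1=2r$ when $p=\infty$). The computation is essentially routine; the only genuinely important point is the choice in the second paragraph to pass through the single large interval $J$ instead of iterating a comparison between consecutive subintervals, since the latter would only produce a constant of the shape $C(r)^{|j-k|}$. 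The length estimate~\eqref{8-4-18-0} for the Chebyshev partition is precisely what keeps $|J|/|I_k|$ polynomially bounded in $|j-k|$.
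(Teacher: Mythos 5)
Your argument is correct, and it reaches the conclusion by a route that differs from the paper's in an interesting way, even though both ultimately rest on a Remez-type inequality and the length estimate~\eqref{8-4-18-0}. The paper stays in $L^p$ throughout: it first establishes a doubling property
$\|P\|_{L^p(I_{2t}(x))}\leq L_{p,r}\|P\|_{L^p(I_t(x))}$
for the ``square-root'' intervals $I_t(x)=\{y\in[0,2\al]:\ |\sqrt{x}-\sqrt{y}|\leq\sqrt{2\al}\,t\}$, obtained by comparing $I_t(x)$ and $I_{2t}(x)$ to ordinary concentric intervals of comparable length and invoking Lemma~\ref{lem-4-1} (an $L^q$ Remez-type inequality); it then iterates the doubling in the standard way to extract a polynomial bound with exponent $s_1=\log_2 L_{p,r}$, and finally identifies each Chebyshev cell $[\al_k,\al_{k+1}]$ as an $I_{t_k}(x_k)$. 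You instead reduce to the sup-norm (via the norm equivalence on $\Pi_r^1$ after an affine change of variable), apply the Chebyshev/Remez comparison $\|P\|_{L^\infty(J)}\leq(C_0|J|/|I_k|)^r\|P\|_{L^\infty(I_k)}$ once, on the smallest interval $J$ containing both cells, and absorb the factor $(|I_j|/|I_k|)^{1/p}$ coming from the two normalizations. Your approach is more self-contained and arguably shorter; the paper's formulation via doubling is more uniform in $p$ (no separate step to reconcile $L^p$ and $L^\infty$) and fits the pattern it reuses for the analogous multivariate statement (Lemma~\ref{lem-8-4}). Either way, the decisive quantitative input is~\eqref{8-4-18-0}, which is what keeps the ratios $|J|/|I_k|$ and $|I_j|/|I_k|$ polynomial in $|j-k|$; your observation that a naive cell-by-cell comparison would produce an exponential constant is exactly the point, and both proofs avoid that trap, the paper by doubling (so only $\log_2$ many steps) and you by a single large comparison.
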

\begin{proof} First, we prove that 
	\begin{equation}\label{5-1}
	\|P\|_{L^p(I_{2t} (x))}  \leq L_{p,r}\|P\|_{L^p(I_t (x))},  \  \   \forall  P\in \Pi_r^1,\  \ \forall x\in [0, 2\al],\  \ \forall t\in (0, 1],
	\end{equation}
	where 
	$$I_t(x):=\Bl\{ y\in [0, 2\al]:\  \ |\sqrt{x}-\sqrt{y}|\leq \sqrt{2\al} t\Br\}.$$
	To see this, we note that   with   $\rho_t(x)=2\al t^2+t\sqrt{2\al  x}$, 
	\begin{equation}\label{8-17-0} [x-\f18 \rho_t(x), x+\f18 \rho_t(x)]\cap [0, 2\al]\subset I_t(x)\subset I_{2t}(x) \subset [x-4\rho_t(x), x+4\rho_t(x)],\end{equation}
	where  the first relation  can be deduced by considering the cases $0\leq x\leq \al t^2$ and $\al t^2<x\leq 4\al$ separatedly. 
	By Lemma~\ref{lem-4-1}, this implies  that  with $I_t=I_t(x)$ and  $J=[x-4\rho_t(x), x+4\rho_t(x)]$,  
	\begin{align*}
	\|P\|_{L^p(I_{2t})} &\leq \|P\|_{L^p(J)} \leq C_{p,r} \|P\|_{L^p (\f 1{32} J \cap [0, 2\al])} \leq C_{p,r}\|P\|_{L^p(I_t)},
	\end{align*}
	which  proves~\eqref{5-1}.

	Next, we note that  the doubling property~\eqref{5-1} implies that for any $x, x'\in [0, 2\al]$ and any $t\in (0, 1]$, 
	\begin{equation}\label{8-14-18-00}
	\|P\|_{L^p(I_{t} (x))}  \leq L_{p,r} \Bl( 1+ \f {|\sqrt{x}-\sqrt{x'}|}{\sqrt{2\al} t}\Br)^{s_1}\|P\|_{L^p(I_t (x'))},  \  \   \forall  P\in \Pi_r^1,
	\end{equation}
	where $s_1=(\log L_{p,r})/\log 2$.

	Finally, for each $1\leq k\leq N/2$, we may write  $ [\al_k, \al_{k+1}] =I_{t_k} (x_k)$
	with  $t_k:=\f {\sqrt{\al_{k+1}}-\sqrt{\al_k}}{2\sqrt{2\al}}$ and  $x_k :=\f {(\sqrt{\al_k} +\sqrt{\al_{k+1}})^2} 4$. Note also that    
	by~\eqref{8-4-18-0},  
	\begin{align}\label{8-15-18-00}
	\f {\sqrt{2}|k-j|}{2N}\leq 	\f{|\sqrt{\al_j}-\sqrt{ \al_k}|}{\sqrt{2\al}}\leq \f {\pi|k-j|} {2N},\   \ 0\leq k, j\leq N/2.
	\end{align}
	It then follows by~\eqref{8-14-18-00} and~\eqref{8-15-18-00} that 
	\begin{align*}
	\|P\|_{L^p[\al_{j}, \al_{j+1}]}&\leq \|P\|_{L^p (I_{\pi/(4N)}(x_j))} \leq L_{p,r} \Bl ( 1+ \f {4N|\sqrt{x_j}-\sqrt{x_k}|}{\sqrt{2\al} \pi}\Br)^{s_1}
	\|P\|_{L^p (I_{\pi/(4N)}(x_k))} \\
	&\leq L_{p,r}^4 ( 1+|k-j|)^{s_1} \|P\|_{L^p[\al_{k}, \al_{k+1}]}.
	\end{align*}
\end{proof}

For $x=(x_1,\dots, x_d)\in\R^d$, we set  $\|x\|_\infty :=\max_{1\leq j\leq d} |x_j|$. 

\begin{lem}\label{lem-8-4} Given   $0<p\leq \infty$ and  $r\in\NN$, there  exist positive  constants $C=C(p, r, d)$ and $s_2=s_2(p,r,d)$ depending only on $p$, $r$ and $d$ such that  for any  $\ib, \kb\in\Ld_n^d$  and $Q\in\Pi_r^d$,
	\begin{equation}\label{5-2b}
	\|Q\|_{L^p(\Delta_{\bfi})}\leq C(p,r,d) (1+\|\bfi-\kb\|_\infty)^{s_2}\|Q\|_{L^p(\Delta_{\kb})}.
	\end{equation}
\end{lem}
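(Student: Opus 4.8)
The plan is to reduce Lemma~\ref{lem-8-4} to the one-dimensional statement of Lemma~\ref{lem-5-1}, or rather to its simpler analogue for a \emph{uniform} partition, and then tensorize. First I would observe that the cubes $\Delta_{\bfi}$ come from a uniform partition of $[-b,b]^d$ into $n^d$ congruent subcubes of side $2b/n$, so by an affine change of variables (using property~\eqref{5-1-eq}, or simply direct substitution, which does not affect the form of the claimed inequality) it suffices to treat the model case $b=1/2$, i.e.\ the partition of $[-1,1]^d$ into the cubes $\Delta_{\bfi}=\prod_{k=1}^d[\frac{2i_k}{n}-1,\frac{2i_k+2}{n}-1]$. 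The key one-dimensional fact is: for $P\in\Pi_r^1$ and two intervals $[a,a']$, $[c,c']$ of equal length $h$ inside an interval of length $O(1)$, one has $\|P\|_{L^p[a,a']}\le C(p,r)(1+\mathrm{dist}/h)^{s}\|P\|_{L^p[c,c']}$ with $s=s(p,r)$; this is an immediate consequence of Lemma~\ref{lem-4-1} (the Remez/Nikolskii-type growth bound $\|P\|_{L^q(\lambda B)}\le C(5\lambda)^{n+\cdots}\|P\|_{L^q(B)}$ applied to a chain of overlapping intervals, exactly as in the first half of the proof of Lemma~\ref{lem-5-1} but now with the Euclidean, rather than the square-root, metric, so no doubling subtleties arise).

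Granting this, the plan for $d$ variables is a standard slicing argument. Given $Q\in\Pi_r^d$ (degree $\le r$ in each variable), I would pass from $\Delta_{\bfi}$ to $\Delta_{\kb}$ one coordinate at a time. Write $\bfi=(i_1,\dots,i_d)$, $\kb=(k_1,\dots,k_d)$ and set $\bfi^{(\ell)}:=(k_1,\dots,k_\ell,i_{\ell+1},\dots,i_d)$, so $\bfi^{(0)}=\bfi$ and $\bfi^{(d)}=\kb$. For each $\ell$, the cubes $\Delta_{\bfi^{(\ell-1)}}$ and $\Delta_{\bfi^{(\ell)}}$ differ only in the $\ell$-th factor, where the interval shifts from the $i_\ell$-th to the $k_\ell$-th subinterval of $[-1,1]$. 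Fixing the remaining $d-1$ variables and viewing $Q$ as a univariate polynomial in $x_\ell$ of degree $\le r$, the one-dimensional inequality gives
\begin{equation*}
\int_{\Delta_{\bfi^{(\ell-1)}}}|Q|^p\,dx \le C(p,r)^p(1+|i_\ell-k_\ell|)^{s p}\int_{\Delta_{\bfi^{(\ell)}}}|Q|^p\,dx,
\end{equation*}
after integrating the fixed variables out by Fubini (for $p=\infty$ one takes suprema instead). Chaining these $d$ estimates and using $\max_\ell(1+|i_\ell-k_\ell|)\le 1+\|\bfi-\kb\|_\infty$ yields \eqref{5-2b} with $s_2:=ds$ and $C(p,r,d):=C(p,r)^d$; the exponent $s_2$ and constant depend only on $p,r,d$ as required.

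The only mildly delicate point — and what I would flag as the main obstacle — is making sure the one-dimensional constant in the intermediate step really is uniform in $n$ and in the positions $i_\ell,k_\ell$, i.e.\ that it does not degrade as the subintervals become short. This is where one must be a little careful to invoke Lemma~\ref{lem-4-1} correctly: the bound there is scale-invariant (it depends only on the \emph{ratio} $\lambda$ of the dilation), so covering the gap between the $i_\ell$-th and $k_\ell$-th subintervals by a single interval of length comparable to $(1+|i_\ell-k_\ell|)\cdot(2/n)$ and then dilating back produces a polynomial-in-$|i_\ell-k_\ell|$ factor independent of $n$. I would also note the trivial edge cases where $i<0$ or $i>n$ (the degenerate subintervals set to $-b$ or $b$ in the surrounding text); these contribute zero-length factors and can simply be absorbed. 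Beyond this, everything is routine Fubini and chaining, so I would keep the write-up short, essentially mirroring the structure of the proof of Lemma~\ref{lem-5-1} with the square-root metric replaced by the Euclidean one.
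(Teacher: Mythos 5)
Your argument is correct, and it leans on the same key tool the paper invokes — Lemma~\ref{lem-4-1} — but you organize it differently. The paper's own proof is a one-liner: it observes that the claim is a \emph{direct} consequence of Lemma~\ref{lem-4-1} in $\RR^d$, with no slicing required. Concretely, since all the cubes $\Delta_{\bfi}$ are congruent (side $2b/n$), one can take a ball $B$ inscribed in $\Delta_{\kb}$ and a dilate $\lambda B$ with $\lambda\sim 1+\|\bfi-\kb\|_\infty$ that contains $\Delta_{\bfi}$; a single application of Lemma~\ref{lem-4-1} then gives $\|Q\|_{L^p(\Delta_{\bfi})}\le\|Q\|_{L^p(\lambda B)}\le C_{d,p}(5\lambda)^{r+d/p}\|Q\|_{L^p(B)}\le C_{d,p}(5\lambda)^{r+d/p}\|Q\|_{L^p(\Delta_{\kb})}$, which is exactly~\eqref{5-2b} with $s_2=r+d/p$. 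Your dimension-by-dimension tensorization also works and is a legitimate alternative, but it is more elaborate than necessary: the reason chaining (or slicing) is needed in Lemma~\ref{lem-5-1} is that the Chebyshev partition there has cells of wildly varying length, so one must use the doubling property iteratively; in the present uniform partition a single dilation already does the job. Two minor imprecisions in your write-up worth noting: first, $\Pi_r^d$ in this paper denotes polynomials of \emph{total} degree $\le r$, not degree $\le r$ in each variable (this does not break your slicing step, since total degree $\le r$ implies degree $\le r$ in each coordinate, but the phrasing should be corrected); second, the edge cases $i<0$ or $i>n$ do not arise here since $\bfi,\kb$ range over $\Ld_n^d=\{0,\dots,n-1\}^d$ — the extension of $t_i$ outside that range is only used for the starred sets $\Delta^\ast_{\bfi}$, not in this lemma.
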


\begin{proof} The proof of Lemma~\ref{lem-8-4} is similar to that of Lemma~\ref{lem-5-1}, and in fact, is simpler.  It is a direct consequence of Lemma~\ref{lem-4-1}. 
\end{proof}

\begin{lem}\label{lem-5-2} Given $0<p\leq \infty$ and $r\in\NN$, there exists a positive number $\ell=\ell(p,r,d)$ such that 	
	for any  $(\bfi, j), (\mathbf{k}, l)\in \Ld_n^{d+1}$ and any $Q\in\Pi_r^{d+1}$, 
	\begin{equation}\label{desire}
	\|Q\|_{L^p(I_{\bfi, j})} \leq C \Bl(1+\max\{\|\bfi-\mathbf{k}\|_\infty,  |j-l|\}\Br)^{\ell}\|Q\|_{L^p(I_{\mathbf{k}, l})},
	\end{equation}
	where the constant $C$ depends only on $p, d, r$ and $\|\nabla^2 g\|_\infty$.
\end{lem}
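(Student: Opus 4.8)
The plan is to prove Lemma~\ref{lem-5-2} by reducing the comparison of $L^p$-norms of $Q\in\Pi_r^{d+1}$ over the curved ``boxes'' $I_{\bfi,j}$ and $I_{\mathbf{k},l}$ to the product-type situation already handled in Lemma~\ref{lem-5-1} (the Chebyshev direction $y$) and Lemma~\ref{lem-8-4} (the uniform directions $x$). The key geometric observation is that, thanks to~\eqref{8-1-18}, the function $g$ is nearly flat on $[-b,b]^d$, so the curved slab $I_{\bfi,j}=\{(x,y):x\in\Delta_{\bfi},\ g(x)-y\in[\al_j,\al_{j+1}]\}$ is, up to the shear $(x,y)\mapsto(x,g(x)-y)$, comparable to the genuine product box $\Delta_{\bfi}\times[\al_j,\al_{j+1}]$. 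More precisely, I would introduce slightly enlarged product boxes $\widetilde I_{\bfi,j}:=\Delta_{\bfi}\times[\al_{j-1},\al_{j+2}]$ (truncated to $[0,2\al]$) and note that, since $|g(x)-g(x')|\le\|\nabla g\|_\infty\sqrt d\,\tf{2b}{n}\le \tf{c}{n}$ for $x,x'\in\Delta_{\bfi}$ while $\al_{j+1}-\al_j\sim \tf{j+1}{n^2}$ may be much smaller, one cannot directly sandwich $I_{\bfi,j}$ between two product boxes of comparable size. So instead I would argue through the shear map.

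First I would fix $\bfi$ and let $T_{\bfi}$ be the (measure-preserving, as $\det=1$) affine-type map... actually $g$ is not affine, so I would instead use the \emph{diffeomorphism} $\Psi_{\bfi}(x,y)=(x,\, H_{\bfi}(x)-y)$ where $H_{\bfi}$ is the linear Taylor polynomial of $g$ at the center $x_{\bfi}$ of $\Delta_{\bfi}$; by~\eqref{8-9-18}-type estimates (Taylor's theorem with the bound~\eqref{8-1-18} on $\|\nabla^2 g\|_\infty$ and the side length $\tf{2b}{n}$ of $\Delta_{\bfi}$), $|g(x)-H_{\bfi}(x)|\le C n^{-2}\|\nabla^2 g\|_\infty$ on $\Delta_{\bfi}$, which is $\lesssim (\al_{j+1}-\al_j)$ only when $j+1\gtrsim \|\nabla^2 g\|_\infty$. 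This is the same mild technical nuisance already present in Section~\ref{subsection-8:2}, and the standard fix is to absorb the few small-$j$ indices into $\Pi_r^1$-comparisons directly via Lemma~\ref{lem-4-1}. For the bulk of the indices, $\Psi_{\bfi}$ carries $I_{\bfi,j}$ into a set trapped between $\Delta_{\bfi}\times[\al_j-Cn^{-2},\al_{j+1}+Cn^{-2}]$ and $\Delta_{\bfi}\times[\al_j+Cn^{-2},\al_{j+1}-Cn^{-2}]$, i.e. up to constants between genuine product boxes built from neighbouring Chebyshev intervals. Since $\Psi_{\bfi}$ has Jacobian $1$, $\|Q\|_{L^p(I_{\bfi,j})}=\|Q\circ\Psi_{\bfi}^{-1}\|_{L^p(\Psi_{\bfi}(I_{\bfi,j}))}$, and $Q\circ\Psi_{\bfi}^{-1}$ is again a polynomial of degree $\le r$ in the $y$-variable and of degree $\le r$ in each $x$-variable (composition with the linear shear does not raise these partial degrees). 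Hence on the product box I can separate variables and apply Lemma~\ref{lem-8-4} in $x$ (gaining $(1+\|\bfi-\kb\|_\infty)^{s_2}$) followed by Lemma~\ref{lem-5-1} in $y$ for each fixed $x$ (gaining $(1+|j-l|)^{s_1}$), then undo the shears $\Psi_{\bfi}^{-1}$, $\Psi_{\kb}$, using that $\Psi_{\kb}\circ\Psi_{\bfi}^{-1}$ is a shear by a bounded linear map (the difference of the linear parts of $H_{\bfi}$, $H_{\kb}$ is $O(\|\nabla^2 g\|_\infty\cdot\diam\Delta)=O(n^{-1})$, harmless), so it again preserves the partial-degree structure and moves product boxes to comparable product boxes.

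Concretely, the order of steps would be: (1) reduce to the case where $Q$ has partial degree $\le r$ in every variable after the shear — verify this closure property of $\Pi_r^{d+1}$ and of partial-degree classes under $\Psi_{\bfi}$; (2) establish via Taylor and~\eqref{8-1-18} that $\Psi_{\bfi}(I_{\bfi,j})$ is sandwiched, for $j$ not too small, between two product boxes $\Delta_{\bfi}\times[\beta_j^-,\beta_j^+]$ with $[\beta_j^-,\beta_j^+]\subset[\al_{j-1},\al_{j+2}]$ and $\beta_j^+-\beta_j^-\sim\al_{j+1}-\al_j$, and handle the finitely many small $j$ by a direct Remez/Nikolskii comparison via Lemma~\ref{lem-4-1}; (3) on product boxes, apply Fubini plus Lemma~\ref{lem-8-4} and Lemma~\ref{lem-5-1} to get the bound $C(1+\|\bfi-\kb\|_\infty)^{s_2}(1+|j-l|)^{s_1}$, absorbing $s_1,s_2$ into a single exponent $\ell=\ell(p,r,d)$ and noting $(1+a)^{s_2}(1+b)^{s_1}\le C(1+\max\{a,b\})^{s_1+s_2}$; (4) transport the estimate back through $\Psi_{\bfi}^{-1}$ and $\Psi_{\kb}$, checking the index shifts $\al_{j-1},\al_{j+2}$ only change constants because $|\sqrt{\al_{j+2}}-\sqrt{\al_j}|\lesssim 1/N$. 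The main obstacle I anticipate is precisely Step~(2): making the shear comparison uniform in $j$ despite $\al_{j+1}-\al_j$ shrinking like $j/N^2$ near $j=0$, which forces the Taylor error $Cn^{-2}$ to be compared against an interval of the same order only for $j\gtrsim 1$; the clean way around is to note that $I_{\bfi,0}^\ast$-type neighbourhoods already appear in~\eqref{8-7-0}, so one can either enlarge the target intervals by a bounded number of Chebyshev steps (harmless by~\eqref{8-15-18-00}) or treat $0\le j\le C(\|\nabla^2g\|_\infty)$ as a base case handled directly by Lemma~\ref{lem-4-1} applied to $[0,2M]\subset[0,\al]$. Everything else is bookkeeping of polynomial degrees under linear shears and combining two one-directional doubling inequalities.
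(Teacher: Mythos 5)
Your plan is essentially the same approach as the paper's: reduce the curved slab to a straightened one via the change of variable $u=g(x)-y$ (equivalently your shear $\Psi_{\bfi}$), apply the two one-directional doubling lemmas, Lemma~\ref{lem-5-1} in $u$ and Lemma~\ref{lem-8-4} in $x$, and absorb the quadratic Taylor error via the ball-doubling inequality of Lemma~\ref{lem-4-1}. The difference is in the order of operations, and that difference is what creates the obstacle you anticipate. The paper first writes the inner integral in the variable $u=g(x)-y$ and, for each \emph{fixed} $x$, applies Lemma~\ref{lem-5-1} to the univariate polynomial $u\mapsto Q(x,g(x)-u)$ on $[\al_{j-1},\al_j]$, moving the $u$-interval to $[\al_{l-1},\al_l]$ and gaining $(1+|j-l|)^{s_1}$ with no curvature issue at all; only \emph{then} does it linearize $g$ to $t_{\bfi}$, swap the order of integration, and apply Lemma~\ref{lem-8-4} for each fixed $u$, picking up $(1+\|\bfi-\kb\|_\infty)^{s_2}$. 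Finally, instead of a second shear $\Psi_{\kb}$, the paper simply de-linearizes over $\Delta_{\kb}$ (Taylor error now $O(A(1+\|\kb-\bfi\|_\infty^2)n^{-2})$) and contracts the enlarged $u$-interval back to $[\al_{l-1},\al_l]$ via Lemma~\ref{lem-4-1}, using $\al_\ell-\al_{\ell-1}\ge cn^{-2}$, paying only a polynomial factor $(A(1+\|\kb-\bfi\|_\infty))^{2rp+4}$.

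Two remarks on your anticipated obstacle. First, the small-$j$ worry is not a genuine one: the Taylor error on $\Delta_{\bfi}$ is $O(\|\nabla^2 g\|_\infty n^{-2})$ and $\al_{j+1}-\al_j\ge c n^{-2}$ for every $j\ge 0$, so the ratio is bounded by a constant depending only on $\|\nabla^2 g\|_\infty$, which the lemma allows; no base-case treatment of finitely many $j$ is needed, and in the paper's ordering the issue never even arises since the $u$-doubling is applied before linearization, on a genuinely straight interval. Second, your statement that the shear does not raise partial degrees is not quite accurate — $Q(x,H_{\bfi}(x)-u)$ has degree up to $2r$ in $x$ — but this is harmless since Lemma~\ref{lem-8-4} allows constants depending on $r$. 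In short, your outline is correct and the right tools are identified, but the paper's ordering (Lemma~\ref{lem-5-1} before linearization, then Lemma~\ref{lem-8-4}, then one Remez-type contraction via Lemma~\ref{lem-4-1}, no second shear) avoids both the product-box sandwiching and the small-$j$ case split that make your version heavier.
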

\begin{proof} For simplicity, we shall prove Lemma~\ref{lem-5-2} for the case of $0<p<\infty$ only. The proof below with slight modifications works for  $p=\infty$.

	Writing 
	$$\|Q\|^p_{L^p(I_{\bfi, j})} = \int_{\Delta_{\bfi}}\Bl[\int_{\al_{j-1}}
	^{\al_{j}} |Q({x}, g({x})-u)|^p\, du\Br] d{x},$$
	and using  Lemma~\ref{lem-5-1}, we  obtain 
	\begin{align}\label{8-18-0}
	\|Q\|^p_{L^p(I_{\bfi, j})}\leq C(p,r) (1+|j-l|)^{s_1} \Bl[ \int_{\Delta_{\bfi}}\int_{g({x})-\al_{l}}
	^{g({x})-\al_{l-1}} |Q({x}, y)|^p\, dy d{x}\Br].
	\end{align}
	Using   Taylor's theorem, we have that
	\begin{equation}\label{5-3}
	|g({x})-t_{\bfi} ({x})|\leq \f {A}{2b^2}\|{x}-{x}_{\bfi}\|_\infty^2,\    \    \ \forall x\in [-b,b]^d,
	\end{equation}
	where   ${x}_{\bf i}$ is the center of the cube $\Delta_{\bfi}$, 
	$t_{\bfi}({x}):=g({x}_{\bfi})+\nabla g({x}_{\bfi})\cdot ({x}-{x}_{\bfi})$, and 
	$A:=2b^2d^2\|\nabla^2 g\|_{L^\infty[-b,b]^d}/2$.
	Thus,  the double  integral in the square brackets on the right hand side of~\eqref{8-18-0} is bounded above by 
	\begin{align*}
	&   \int_{\al_{\ell-1}}^{\al_\ell+\f {A}{n^2}}\Bl[  \int_{\Delta_{\bfi}}\Bl|Q\Bl({x}, t_{\bfi}({x})
	-u+\f A {2n^2}\Br)\Br|^p\, d{x}\Br] du=: I.
	\end{align*}
	However, applying  Lemma~\ref{lem-8-4} to this last  inner integral in the square brackets, we obtain 
	\begin{align}\label{8-20-1}
	I&\leq C(p,r,d)     (1+\|\bfi-\mathbf{k}\|_\infty)^{s_2} \int_{\Delta_{\mathbf k}}\Bl[\int_{t_{\bfi}({x})
		-\al_\ell-\f A {2n^2}}^{t_{\bfi}({x})
		-\al_{\ell-1}+\f A {2n^2}} |Q({x}, u)|^p\, du\Br] d {x}.
	\end{align}
	By~\eqref{5-3},  this last   integral in the square brackets on the right hand side of~\eqref{8-20-1} is bounded above by 
	\begin{align*}
	& \int_{g({x})
		-\al_\ell-\f {4A(1+\|\mathbf{k}-\bfi\|_\infty^2)}{n^2}}^{g({x})
		-\al_{\ell-1}+\f {4A(1+\|\mathbf{k}-\bfi\|_\infty^2)}{n^2}} |Q({x}, u)|^p\, du 
	=\int^{
		\al_\ell+\f {4A(1+\|\mathbf{k}-\bfi\|_\infty^2)}{n^2}}_{\al_{\ell-1}-\f {4A(1+\|\mathbf{k}-\bfi\|_\infty^2)}{n^2}} |Q({x}, g({x})-y)|^p\, dy, \end{align*}
	which,  using  Lemma~\ref{lem-4-1} and the fact that $\al_\ell-\al_{\ell-1}\ge c n^{-2}$,  is controlled above  by 
	\begin{align*}
	C(p,r)  \Bl(A (1+\|\mathbf {k}-\bfi\|_\infty)\Br)^{2rp+4}  \int^{\al_\ell}_{
		\al_{\ell-1}} |Q({x}, g({x})-y)|^p\, dy.
	\end{align*}

	Putting the above together, we prove  that 
	\begin{align*}
	\|Q\|^p_{L^p(I_{\bfi, j})}\leq C(p,r,d) (1+|j-l|)^{s_1}   (1+\|\bfi-\mathbf{k}\|_\infty)^{s_2+2rp+4}\|Q\|^p_{L^p(I_{\kb, l})}.
	\end{align*}
	This leads to the desired estimate~\eqref{desire} with $\ell= (s_1+s_2+2rp+4)/p$.
\end{proof}

Now we are in the position to prove Theorem~\ref{THM-WT-OMEGA}. 
\begin{proof}[Proof of Theorem~\ref{THM-WT-OMEGA}]  We shall prove the result for the case of   $0<p<\infty$ only.
	The proof below with slight modifications works equally well for the case $p=\infty$.
	
	For simplicity, we  use the Greek letters $\ga, \b,\dots$ to denote  indices in the set $\Ld_n^{d+1}$. 
	By Lemma~\ref{thm-2-1}, for each  $\ga:=(\ib, j)\in\Ld_n^{d+1}$  there exists a polynomial  $s_{\ga}\in\Pi_{(d+1)(r-1)}^{d+1}$  such that 
	\begin{align}\label{5-4}
	\|f-s_{\ga}\|_{L^p(I_{\ga}^\ast)} \leq C(p, r, d) W^r(f, I_{\ga}^\ast)_p,
	\end{align}
	where 
	$$  W^r(f, I_{\ga}^\ast)_p:=\og^r (f, I_{\ga}^\ast; e_{d+1})_p+\og^r (f, S_{\ga}; \mathcal{E}(x^\ast_\bfi))_p.$$
	Let $\{q_{\ga}:\  \ \ga\in\Ld_n^{d+1}\}\subset \Pi_{\lfloor n/(r(d+1))\rfloor}^{d+1}$ be the polynomial  partition of the unity as  given in Theorem~\ref{strips-0} and  Remark~\ref{rem-6-3} with a  large parameter $m>2d+2$, to be specified later. Define 
	$$P_n(\xi):=\sum_{\ga\in\Ld_n^{d+1}} s_\ga (\xi) q_\ga(\xi)\in\Pi_{n}^{d+1}.$$
	Clearly, it is sufficient  to prove that 
	\begin{equation}\label{8-22-00}
	\|f-P_n\|_{L^p(G)}\leq C \og^r_{\text{loc}} (f, \f1n)_p.
	\end{equation}

	To show~\eqref{8-22-00},  we write, for each  $\b\in \Ld_n^{d+1}$,
	\begin{align*}
	f(\xi)-P_n(\xi)&=f(\xi)-s_\b(\xi)+\sum_{\ga\in \Ld_n^{d+1}} (s_\b(\xi)-s_\ga (\xi))q_\ga (\xi).
	\end{align*}
	It follows by Theorem~\ref{strips-0}  that 
	\begin{align*}
	\|f-P_n\|_{L^p(I_\b)}^p &\leq C_p \|f-s_\b\|_{L^p(I_\b)}^p +C_p \sum_{\ga\in\Ld_n^{d+1}} \|s_\b-s_\ga\|^p_{L^p(I_\b)} (1+\|\b-\ga\|_\infty)^{-mp_1},
	\end{align*}
	where $p_1:=\min\{p,1\}$.
	Using~\eqref{5-4}, we then  reduce  to showing that   \begin{align}\label{8-23-00}
	\Sigma_n'&:=\sum_{\b\in\Ld_n^{d+1}}\sum_{\ga\in\Ld_n^{d+1}} \|s_\b-s_\ga\|^p_{L^p(I_\b)} (1+\|\b-\ga\|_\infty)^{-mp_1}\leq C \og_{\text{loc}}^r(f,\f1n)_p^p.
	\end{align}
	
	To show~\eqref{8-23-00},  we  claim that  there exists a positive number $s_3=s_3(p,d,r)$ such that for any  $\ga,\b\in\Ld_n^{d+1}$,
	\begin{equation}\label{claim-8-23} \|s_\ga-s_\b\|^p_{L^p(I_\ga)} \leq C (1+\|\ga-\b\|_\infty)^{s_3 p} \sum_{\eta\in \mathcal{I}_{k_0} ( \ga)} W^r(f, I^\ast_\eta)^p_p,\end{equation}
	where  $k_0:=1+\|\b-\ga\|_\infty$, and
	$$ \mathcal{I}_t(\ga):=\{ \eta\in\Ld_n^{d+1}:\  \ \|\ga-\eta\|_\infty\leq t\}\   \ \text{for $ \ga\in\Ld_n^{d+1}$ and $t>0$}.$$
	For the moment, we assume~\eqref{claim-8-23} and proceed with the proof of~\eqref{8-23-00}. Indeed, 
	we have 
	\begin{align*}
	\Sigma_n'  &\leq C\sum_{\b\in\Ld_n^{d+1}}\sum_{k=1}^\infty  k^{-mp_1}\sum_{\ga\in \mathcal{I}_k(\b)\setminus \mathcal{I}_{k-1}(\b)}\|s_\b-s_\ga\|_{L^p(I_\b)}^p,\end{align*}
	which,      using~\eqref{claim-8-23}, is bounded above by 
	\begin{align*}
	&  C\sum_{k=1}^\infty k^{-mp_1+s_3p+2d+2} \sum_{\eta\in  \Ld_n^{d+1}}W^r(f, I_\eta^\ast)_p^p.
	\end{align*}  Choosing  the parameter $m$ to be bigger than $s_3p/p_1 + (2d+4)/p_1$, we then  prove~\eqref{8-23-00}.

	It remains to prove the claim~\eqref{claim-8-23}.  A crucial ingredient in the proof   is to construct a sequence $\{\ga_1,\dots, \ga_{N_0}\}$ of distinct indices in $\Ld_n^{d+1}$ with the properties that  $N_0\leq  C ( 1+\|\ga-\b\|_\infty)^2$,
	$\ga_1=\ga$, $\ga_{N_0}=\b$, and for $j=0,\dots, N_0-1$,
	\begin{align}\label{8-25-00}
	I_{\ga_j} \subset I_{\ga_{j+1}}^\ast\   \  \text{and}\   \   \|\ga_j-\ga\|\leq 1+\|\ga-\b\|.
	\end{align}
	Indeed, once such a sequence is constructed, then 
	we have 
	\begin{align*}
	\|s_\ga-s_\b\|_{L^p(I_\ga)}^p&\leq N_0^{\max\{p,1\}-1}  \sum_{j=1}^{N_0-1}\|s_{\ga_j}-s_{\ga_{j+1}}\|^p_{L^p(I_{\ga})}, \end{align*}
	which, using~\eqref{8-25-00} and  Lemma~\ref{lem-5-2} with $\ell=\ell(p,r,d)>0$, is estimated above by
	\begin{align*}
	&\leq  C N_0^{\max\{p,1\}-1} (1+\|\ga-\b\|_\infty)^{\ell p}  \sum_{j=1}^{N_0-1}\|s_{\ga_j}-s_{\ga_{j+1}}\|^p_{L^p(I_{\ga_j})}.\end{align*}
	However, using~\eqref{8-25-00} and~\eqref{5-4}, we have that 
	\begin{align*}
	\|s_{\ga_j}-s_{\ga_{j+1}}\|^p_{L^p(I_{\ga_j})}\leq &C_p \Bl[ \|f-s_{\ga_j}\|_{L^p(I_{\ga_j})}^p +\|f-s_{\ga_{j+1}}\|_{L^p(I^\ast_{\ga_{j+1}})}^p\Br]\\
	\leq& C(p,r,d)\Bl[  W^r(f, I_{\ga_j}^\ast)_p^p+ W^r(f, I_{\ga_{j+1}}^\ast)_p^p\Br].
	\end{align*}
	Putting the above together, we prove the claim~\eqref{claim-8-23} with $s_3:=\ell+2\max\{1, \f 1p\}$.

	Finally, we construct the sequence $\{\ga_1,\dots, \ga_{N_0}\}$ as follows.  Assume that  $\ga=(\mathbf{k}, l)$, and  $\b=(\mathbf{k}', l')$. Without loss of generality, we may assume that   $l\leq l'$. (The case $l>l'$ can be treated similarly.)
	Recall that 
	$ \Delta_{\ib}:=\Bl\{x\in \RR^d:\  \ \|x-x_{\ib}\|_\infty\leq \f b n\Br\},$
	where  ${x}_{\bfi}$ is  the center of the cube $\Delta_{\bfi}$.
	Let $\{z_j\}_{j=0}^{n_0+1}$ be a sequence of points on the line segment $[x_{\kb}, x_{\kb'}]$ satisfying that  $z_0 =x_{\kb}$, $z_{n_0+1} =x_{\kb'}$, $\|z_j-z_{j+1}\|_\infty =\f {3b} n $ for $j=0,1,\dots, n_0-1$  and $\f {3b} n \leq \|z_{n_0}-z_{n_0+1}\|_\infty<\f {6b}n$, where $n_0+1\leq  \f 23 \|\kb-\kb'\|_\infty$.  Let $\ib_j \in\Ld_n^d$ be such that $z_j\in\Delta_{\ib_j}$ for $0\leq j\leq n_0+1$. 
	Since $\f {3b} n \leq \|z_j-z_{j+1}\|_\infty \leq  \f {6b} n$, the cubes $\Delta_{\ib_j}$ are distinct and  moreover
	\begin{equation}\label{8-25-0}
	\Delta_{\ib_j} \subset 9 \Delta_{\ib_{j+1}},\  \  j=0,1,\dots, n_0.
	\end{equation}
	In particular, this implies that $\ib_0=\kb$ and $\ib_{n_0+1} =\kb'$.
	It can also be easily seen from the construction  that for $j=0,\dots, n_0+1$, 
	\begin{equation}\label{8-27}
	\|\ib_j -\kb\|_\infty \leq \|\kb-\kb'\|_\infty+1.
	\end{equation}
	Next,  we order   the indices $(\ib_j, k)$, $0\leq j\leq n_0+1$, $l\leq k\leq l'$ as follows: 
	\begin{align*}
	(\ib_0, l), (\ib_0, l+1),\dots, (\ib_0, l'),
	(\ib_1, l'), (\ib_1, l'-1),\dots, (\ib_1, l), (\ib_2, l),\dots, (\ib_{n_0+1}, l').      
	\end{align*}
	We  denote the resulting  sequence by  
	$ \{\ga_1, \ga_2,\dots, \ga_{N_0}\},$
	where
	$$N_0\leq (1+|l-l'|) (n_0+2) \leq ( 1+\|\ga-\b\|_\infty)^2.$$ 
	Clearly, $\ga_1=\ga$, and  $\ga_{N_0}=\b$. Moreover, by~\eqref{8-27}, we have $\|\ga_j-\ga\|_\infty\leq 1+\|\ga-\b\|_\infty$ for $j=1,\dots, N_0$, whereas by~\eqref{8-25-0}, 
	$I_{\ga_j} \subset I_{\ga_{j+1}}^\ast $ for  $j=0,\dots, N_0-1$.
	This completes the proof. 
\end{proof}

\chapter{Camparison with average  moduli}\label{ch:IvanovModuli}
In this chapter, we shall prove that the   moduli of smoothness,   defined in~\eqref{eqn:defmodulus}  can be controlled above by  Ivanov's moduli of smoothness, defined  in~\eqref{eqn:ivanov}.  By Remark~\ref{rem-3-2}, it is enough to show 

\begin{thm}\label{thm-9-1}
	There exist 	a parameter $A_0>1$  and a constant $A>1$ such that for any $0<q\leq p\leq \infty$,
	$$\og_\Og^r(f, \f 1n; A_0)_p \leq C \tau_r (f, \f A n)_{p,q},$$
	where the constant $C$ is independent of $f$ and $n$. 
\end{thm}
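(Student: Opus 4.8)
The plan is to bound each of the two pieces of $\og_\Og^r(f,\frac1n;A_0)_p$ in Definition~\ref{def:modulus} separately by $\tau_r(f,\frac An)_{p,q}$, namely the Ditzian--Totik part $\og^r_{\Og,\vi}(f,\frac1n;e_j)_p$ along each coordinate direction $e_j$, and the tangential part $\wt\og^r_{G}(f,\frac1n)_p$ on each domain $G=G_k$ of special type attached to $\Ga$. The common mechanism is that both quantities are built out of finite differences $\tr_{s\xi}^r(f,\Og,\cdot)$ taken along segments $[\eta,\eta+rs\xi]\subset\Og$ whose length $rs$ and whose base point stay within a controlled multiple of $\rho_\Og$-distance; and Ivanov's $w_r(f,\xi,\da)_q$ is an $L^q$-average of exactly such differences over the $\rho_\Og$-ball $U(\xi,\da)$. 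So for a fixed direction/base point appearing in our modulus, the corresponding difference either appears directly inside some $U(\xi,c\da)$ or can be produced by averaging, and one uses the elementary fact that $\sup$ is controlled by an $L^q$-average after enlarging the averaging set by a fixed factor (because $\tr^r_\xi$ applied to a polynomial of degree $<r$ vanishes, a Remez/Nikol'skii-type argument lets one pass from an $L^q$-average of a difference over a slightly larger ball to its pointwise value — this is where Remark~\ref{rem-3-2}'s use of the Jackson theorem and Remez inequality is echoed).

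\textbf{Key steps, in order.} First I would record the geometric comparison: for $\xi,\eta\in\Og$ with $[\xi,\eta]$ suitably short, $\|\xi-\eta\|+|\sqrt{\dist(\xi,\Ga)}-\sqrt{\dist(\eta,\Ga)}|=\rho_\Og(\xi,\eta)$, and crucially that a finite difference $\tr^r_{h e_j}(f,\Og,\xi)$ with step size on the DT scale, i.e. $h\sim \frac1n\vi_\Og(e_j,\xi)\sim\frac1n(\frac1n+\sqrt{\dist(\xi,\Ga)})$, has all its nodes $\xi,\xi+he_j,\dots,\xi+rhe_j$ inside a ball $U(\xi_0,\frac An)$ for a suitable center $\xi_0$ (one can take $\xi_0$ to be a midpoint of the node set) — because the $\rho_\Og$-diameter of that node set is $\lesssim\frac1n$. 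Second, for the DT part: fix $e_j$; given the pair $(\xi,h)$ realizing $\tr^r_{h\vi_\Og(e_j,\xi)e_j}(f,\Og,\xi)$, write $\xi'=(\eta-\xi_0)/r$-type relabelling so that this difference is one of the differences $\tr^r_{(\eta-\xi_0)/r}(f,\Og,\xi_0)$ with $\eta\in U(\xi_0,\frac An)$; then $|\tr^r_{h\vi e_j}f(\xi)|\le w_r(f,\xi_0,\frac An)_\infty$, and to get the $L^q$ version one averages $\xi_0$ over a $\frac cn$-neighbourhood and applies the Nikol'skii/Remez-type step mentioned above to bound $w_r(\cdot)_\infty$ by $Cw_r(\cdot)_q$ on a slightly dilated ball, then integrates in $\xi$ against the measure, giving $\og^r_{\Og,\vi}(f,\frac1n;e_j)_p\le C\tau_r(f,\frac An)_{p,q}$. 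Third, for the tangential part on $G=G_k$: recall from Chapter~\ref{ch:direct} that the tangential directions $\xi_j(u)=e_j+\p_j g(u)e_{d+1}$ are tangent to $\p'G^\ast$, that on $G^t$ the relevant step sizes are $s\le t=\frac1n$, and that $|u-x|\le tb$; I would check that for $(x,y)\in G^t$ and such $u,s$ the node set $\{(x,y)+ks\xi_j(u)\}_{k=0}^r$ again has $\rho_\Og$-diameter $\lesssim\frac1n$ and lies in $\Og$ (using the definition of $G^t$ via $\dist(\cdot,\p'G)\ge A_0 t^2$ to stay inside $\Og$ — this is precisely why the parameter $A_0$ is there, and this forces $A_0$ to be chosen large enough depending on $\Og$), hence is contained in some $U(\xi_0,\frac An)$; then the same averaging/Remez argument converts the inner $\frac1{(bt)^d}\int_{I_x(tb)}$-average of the $p$-th power of these differences into a bound by $w_r(f,\xi_0,\frac An)_q^p$ up to constants, and integrating over $G^t\subset\Og$ gives $\wt\og^r_{G}(f,\frac1n;A_0)_p\le C\tau_r(f,\frac An)_{p,q}$. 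Summing over $j=1,\dots,d+1$ and over $k=1,\dots,m_0$ yields the theorem with $A=c_0$.

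\textbf{Main obstacle.} The delicate point is the passage from an $L^q$-average of the finite difference $\tr^r_{\cdot}(f,\Og,\cdot)$ over a ball $U$ to its pointwise/supremal value over a comparable ball (needed since our moduli use suprema over step sizes and, for $p<q$ is impossible, but here we only need $q\le p$ and a sup-over-$s$ bounded by an $L^q$-average, which is where a genuine inequality is used). This is not literally a Nikol'skii inequality for $f$, since $f$ need not be a polynomial; the standard device is: subtract the local best polynomial $P\in\Pi^{d+1}_{r-1}$ (which does not change $\tr^r$), so that on the ball the difference equals $\tr^r_{\cdot}(f-P)$, and then one estimates $\|f-P\|_{L^\infty}$ on a slightly smaller ball by $C$ times its $L^q$-average on the larger ball via Whitney (Corollary~\ref{De-Le}) plus a Remez-type inequality for the polynomial remainder — exactly the circle of ideas invoked in Remark~\ref{rem-3-2} for the $A_0$-independence. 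Handling the scaling anisotropy of $U(\xi,t)$ near $\Ga$ (thin in the normal direction, $\sim t^2$, wide tangentially, $\sim t$, per~\eqref{4-5-00}) uniformly in $\dist(\xi,\Ga)$ is the part that needs the most care, and I expect the bulk of the technical work to be a careful affine change of variables straightening $\p'G^\ast$ locally so that $U(\xi,\frac An)$ becomes comparable to a fixed box on which Whitney and Remez apply with constants independent of $\xi$ and $n$.
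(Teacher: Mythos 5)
There is a genuine gap, and it lives exactly in what you flag as the ``main obstacle'': the passage from $w_r(f,\xi_0,\cdot)_\infty$ to $w_r(f,\xi_0,\cdot)_q$. Your device --- subtract a local polynomial $P\in\Pi^{d+1}_{r-1}$ (which leaves $\tr^r_h$ unchanged), and then bound $\|f-P\|_\infty$ on a slightly smaller ball by a constant times its $L^q$-average on a larger one ``via Whitney plus Remez'' --- does not work. Whitney (Corollary~\ref{De-Le}) bounds $E_{r-1}(f)$ by $\omega^r(f)$ \emph{in the same norm}; it gives no $L^\infty\lesssim L^q$ comparison for the remainder $f-P$, and Remez/Nikol'skii inequalities apply to polynomials, not to $f-P$. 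Indeed such a comparison is false for a general $f\in L^p$: take $f$ to be a bump concentrated on a set of tiny measure inside $U(\xi_0,\delta)$; then $w_r(f,\xi_0,\delta)_\infty\sim 1$ while $w_r(f,\xi_0,\delta)_q$ can be made as small as you like, so no constant can relate them. After step one of your DT-part argument you only get $\|\tr^r_{h\vi e_j}f\|_{L^p}\le\tau_r(f,A/n)_{p,\infty}$, which is \emph{weaker} than the target $\tau_r(f,A/n)_{p,q}$ for $q<\infty$.

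The paper closes this exact gap with an entirely different tool: the Ditzian--Prymak combinatorial identity on mixed $r$-th differences, namely~\eqref{9-2-0}, packaged as Lemma~\ref{lem-9-1:Dec}. That identity writes a single difference $\tr^r_h f(\xi)$ as a finite signed sum of differences $\tr^r f[u,v]$ between pairs of points, with $\eta$ appearing as a free parameter; averaging in $\eta$ over a convex set $E^\xi$ containing $[\xi,\xi+rh]$ then produces, purely algebraically, an $L^q$-average in $\eta$ of precisely the differences appearing in $w_r(f,\cdot,\delta)_q$ --- no polynomial approximation, no Whitney, no Remez. The geometric work is then to exhibit, for each of the pieces $\og^r_{\Og,\vi}$ and $\wt\og^r_{G_k}$, suitable convex sets $E^\xi$ comparable to $\rho_\Og$-balls $U(\cdot,A/n)$; this part of your proposal (controlling the $\rho_\Og$-diameter of the node set, using $A_0$ to stay inside $\Og$, and using the anisotropic structure of $U$ near $\Gamma$) is on the right track and aligns with the paper's constructions involving $S^\diamond_{\bfi,j}$, $E_{\bfi,j}$, and the balls $B_\xi$ tangent to $\Gamma$. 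But without the combinatorial identity the analytic core of the argument fails.
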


As a result, we may establish the Jackson inequality for Ivanov's moduli of smoothness for  any dimension   $d\ge 1$ and  the full range of $0<q\leq p\leq \infty$.

\begin{cor}  	
	If $f\in L^p(\Og)$, $0< q\leq p \leq \infty$ and $r\in\NN$, then
	$$ E_n (f)_p \leq C_{r, \Og} \tau_r (f, \f A n)_{p,q}.$$
\end{cor}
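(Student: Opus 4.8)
The plan is to deduce Theorem~\ref{thm-9-1} from Theorem~\ref{thm-9-1-00} (or equivalently from the main comparison estimate $\og_\Og^r(f,t;A_0)_p\leq C\tau_r(f,c_0t)_{p,q}$) combined with the direct Jackson theorem, Theorem~\ref{Jackson-thm}. In fact the corollary requires no new work beyond a chaining of two results already in hand.

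First I would observe that by Theorem~\ref{Jackson-thm}, applied with $t=\tfrac1n$, we have
\[
E_n(f)_{L^p(\Og)}\leq C\,\og_\Og^r\!\Bl(f,\tfrac1n\Br)_p=C\,\og_\Og^r\!\Bl(f,\tfrac1n;A_0\Br)_p,
\]
where the constant $C$ depends only on $r$, $d$, $p$ and $\Og$, and where $A_0>1$ is the fixed parameter for which the comparison inequality of Theorem~\ref{thm-9-1} (equivalently Theorem~\ref{thm-9-1-00}) holds. Here I use Remark~\ref{rem-3-2}, which guarantees that changing the parameter $A_0$ only affects the modulus by bounded constants, so that the Jackson theorem and the comparison theorem may be used with a single common value of $A_0$.

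Next I would invoke Theorem~\ref{thm-9-1} with $t=\tfrac1n$ to obtain
\[
\og_\Og^r\!\Bl(f,\tfrac1n;A_0\Br)_p\leq C\,\tau_r\!\Bl(f,\tfrac An\Br)_{p,q},
\]
valid for all $0<q\leq p\leq\infty$, with $C$ independent of $f$ and $n$ (the parameters $A_0$ and $A$ coming from Theorem~\ref{thm-9-1} itself). Composing the two displays yields
\[
E_n(f)_{L^p(\Og)}\leq C\,\tau_r\!\Bl(f,\tfrac An\Br)_{p,q},
\]
which is exactly the assertion of the corollary, with $C_{r,\Og}$ absorbing all the intermediate constants. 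Since both ingredients are already established in the paper, the proof is a two-line deduction and there is no genuine obstacle; the only point requiring a word of care is the alignment of the auxiliary parameter $A_0$ across the two cited theorems, which is precisely what Remark~\ref{rem-3-2} provides. One should also note the harmless mismatch that Theorem~\ref{Jackson-thm} is stated for the range $0<p\leq\infty$ while Theorem~\ref{thm-9-1} covers $0<q\leq p\leq\infty$, so the composite statement holds on the full range $0<q\leq p\leq\infty$ claimed in the corollary.
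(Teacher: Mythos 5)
Your proof is correct and matches the paper's own derivation: the paper presents this corollary explicitly as ``an immediate consequence of Theorem~\ref{thm-9-1-00} and Theorem~\ref{Jackson-thm},'' which is exactly the two-step chaining you carry out. The one nuance you flag — aligning the parameter $A_0$ — is handled adequately by your appeal to Remark~\ref{rem-3-2}, though one could also note directly that Theorem~\ref{Jackson-thm} holds for any fixed $A_0$ (with the constant allowed to depend on it), so it may simply be invoked at the particular $A_0$ that Theorem~\ref{thm-9-1} produces.
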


Recall that for  $S\subset \R^d$,
$$S_{rh}:=\Bl\{\xi\in S:\  \ [\xi, \xi+rh]\subset S\Br\},   \   r>0, h\in\R^d.$$
The proof of Theorem~\ref{thm-9-1}  relies on the following lemma, which  generalizes Lemma~7.4 of~\cite{Di-Pr08}.
\begin{lem}\label{lem-9-1:Dec}
	Let     $r\in\NN$,   $h\in\R^d$ and $\da_0\in (0,1)$.  Assume that  $(S,E)$ is a pair of  subsets   of $\R^d$  satisfying  that  for  each $\xi\in S_{rh}$,  there exists   a convex subset $E^\xi$ of $E$ such that  $|E^\xi|\ge \da_0 |E|$
	and   $[\xi, \xi+rh]\subset E^\xi$.
	Then for any $0<q\leq p <\infty$ and $f\in L^p(E)$, we have 
	\begin{equation}\label{9-1-18}
	\|\tr_h^r (f, S, \cdot)\|_{L^p(S)} \leq  C(q, d, r)\Bl(\int_S \Bl(\f 1 {\da_0 |E|}  \int_E \bl| \tr_{(\eta-\xi)/r} ^r (f, E, \xi)\br|^q\, d\xi\Br)^{\f pq}\, d\eta\Br)^{\f1p},
	\end{equation}
	where the constant $C(q,d, r)$ is independent of $S$, $E$ and $q$ if $q\ge 1$.
\end{lem}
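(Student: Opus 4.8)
The plan is to reduce the inequality to a pointwise-in-$\eta$ estimate on $\tr_h^r(f,S,\eta)$ and then integrate. Fix $\eta\in S$. If $\eta\notin S_{rh}$, then $\tr_h^r(f,S,\eta)=0$ by the convention in \eqref{finite-diff}, so there is nothing to prove; hence assume $\eta\in S_{rh}$ and let $E^\eta\subset E$ be the convex subset with $|E^\eta|\ge\da_0|E|$ and $[\eta,\eta+rh]\subset E^\eta$. The idea is to average the identity $\tr_h^r f(\eta)=\tr_{(\xi'-\eta)/r}^r f(\eta)$ over points $\xi'$ close to $\eta+rh$: more precisely, write, for $\xi$ ranging over a suitable small set around $\eta$,
$$
\tr_h^r f(\eta)=\sum_{j=0}^r(-1)^{r-j}\binom rj f\Bl(\eta+\tf{j}{r}(\eta+rh-\eta)\Br),
$$
and compare with $\tr_{(\xi-\eta)/r}^r f(\eta)=\sum_{j=0}^r(-1)^{r-j}\binom rj f(\eta+\tf jr(\xi-\eta))$ using a telescoping/iteration argument that is standard for passing between finite differences with nearby step vectors (this is the content of Lemma~7.4 of \cite{Di-Pr08}, which we are generalizing). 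First I would recall that lemma's mechanism: one expresses a difference with step $h$ as a finite sum of differences with step $(\xi-\eta)/r$ for various $\xi$ lying in a ball of radius comparable to $\|rh\|$ around $\eta+rh$, plus lower-order differences, and iterates $r$ times; because $E^\eta$ is convex and contains both $\eta$ and $\eta+rh$, all the intermediate points $\eta+\tf jr(\xi-\eta)$ needed in the expansion lie in $E^\eta\subset E$, so each $\tr_{(\xi-\eta)/r}^r f(\eta)$ equals $\tr_{(\xi-\eta)/r}^r(f,E,\eta)$.

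Carrying this out, I would obtain a pointwise bound of the form
$$
|\tr_h^r(f,S,\eta)|^q\le C(q,d,r)\,\f1{|B|}\int_{B}\bigl|\tr_{(\xi-\eta)/r}^r(f,E,\eta)\bigr|^q\,d\xi,
$$
where $B$ is a ball (or comparable convex set) of volume comparable to that of the region where $\xi$ may roam — and here the key geometric point is that this region can be taken inside $E^\eta$ and of measure $\gtrsim |E^\eta|\ge\da_0|E|$, at the cost of a constant depending only on $d$ and $r$ (for $q\ge1$ this is Jensen/convexity with no $q$-dependence; for $q<1$ one uses $\sum|a_j|^q\le(\sum|a_j|)^q\cdot\#\{j\}^{1-q}$-type estimates producing a $C(q,d,r)$ factor). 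Thus
$$
|\tr_h^r(f,S,\eta)|^q\le C(q,d,r)\,\f1{\da_0|E|}\int_{E}\bigl|\tr_{(\xi-\eta)/r}^r(f,E,\eta)\bigr|^q\,d\xi,
$$
since extending the integral from $E^\eta$ to $E$ only increases it (the integrand is nonnegative). Raising to the power $p/q$ and integrating $d\eta$ over $S$ gives exactly the right-hand side of \eqref{9-1-18}; note that passing the $L^{p/q}$-norm through is legitimate since $p/q\ge1$, and the final constant is $C(q,d,r)^{1/q}$, which again is independent of $q$ when $q\ge1$.

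The main obstacle — really the only nontrivial step — is the combinatorial lemma that rewrites $\tr_h^r f(\eta)$ as an average of $\tr_{(\xi-\eta)/r}^r f(\eta)$ over nearby $\xi$ with a uniform constant and with all intermediate evaluation points staying inside the convex set $E^\eta$. This is the generalization of \cite[Lemma~7.4]{Di-Pr08}; the subtlety is bookkeeping the iteration so that the radius of the $\xi$-region stays bounded by $\|rh\|$ (so that convexity of $E^\eta$, containing $\eta$ and $\eta+rh$, suffices) while the number of terms and the constants depend only on $r$ and $d$. I would handle it by induction on $r$: the case $r=1$ is a direct averaging of $f(\eta+h)-f(\eta)$ against $f(\xi)-f(\eta)$ over $\xi$ near $\eta+h$; the inductive step applies the $r=1$ construction to the first-order difference and then invokes the hypothesis for order $r-1$, keeping careful track that the convex hull of the points produced lies in $E^\eta$. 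Once this lemma is in place, the remaining manipulations (Jensen, the $q<1$ power trick, Fubini, and the change from $E^\eta$ to $E$) are routine.
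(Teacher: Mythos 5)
Your plan reduces everything to a pointwise-in-$\eta$ inequality
\[
\bigl|\tr_h^r(f,S,\eta)\bigr|^q\le \frac{C(q,d,r)}{\delta_0|E|}\int_E\bigl|\tr_{(\xi-\eta)/r}^r(f,E,\eta)\bigr|^q\,d\xi,
\]
with the base point fixed at $\eta$, and then integrates. That inequality is false, already for $r=1$. Take $d=1$, $S=E=[0,2]$, $h=1$, $\delta_0=1$, $E^\xi=E$, and $f=\mathbf 1_{[1-\eps,1+\eps]}$. At $\eta=0$ the left side is $|\tr_h^1 f(0)|^q=|f(1)-f(0)|^q=1$, while the right side is $\frac{1}{2}\int_0^2|f(\xi)|^q\,d\xi=\eps$, which goes to $0$. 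So no constant $C(q,d,r)$ can make the pointwise bound hold.

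The error is in your reading of the identity you are invoking (Lemma~7.3 of \cite{Di-Pr08}, reproduced as~\eqref{9-2-0} in the paper). That identity does \emph{not} express $\tr_h^r f(\xi)$ as a combination of $r$-th differences $\tr^r f[\xi,\cdot]$ all based at $\xi$; rather it produces terms $\tr^r f\bigl[\xi+jh,\ \tfrac jr(\xi+rh)+(1-\tfrac jr)\eta\bigr]$ and $\tr^r f\bigl[(1-\tfrac jr)\xi+\tfrac jr\eta,\ \xi+rh\bigr]$, whose \emph{base points shift away from $\xi$ by $jh$ or by a convex combination with $\eta$}. Averaging these over $\eta\in E^\xi$ still produces an integral in which the two difference endpoints are entangled with both $\xi$ and $\eta$. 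The paper then takes the $L^p$-norm over $\xi\in S_{rh}$ and performs, for each $j$, an affine change of variables $u=\xi+jh$, $v=\tfrac jr(u+(r-j)h)+(1-\tfrac jr)\eta$ (and analogously for the second sum), which recentres the double integral onto $S\times E$ in the desired form $\tr^r_{(u-v)/r}(f,E,v)$. That recentring genuinely uses the $\xi$-integration and cannot be done one point $\xi$ at a time — which is exactly why the pointwise bound you wrote down fails. In short: the combinatorial identity is the right tool, but it must be fed into the $L^p$-norm before the change of variables is performed; keeping the base point frozen and trying to conclude pointwise drops the $j\ge1$ terms of the identity, and that is where the argument breaks.
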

Lemma~\ref{lem-9-1:Dec} was  proved in \cite[Lemma 7.4]{Di-Pr08}
in the case  when  $p=q$ and   $E=S$ is convex.   For the general case, it can be obtained by  modifying the proof there.  
\begin{proof}   
	The proof is based  on the following combinatorial identity, which was  proved in \cite[Lemma 7.3]{Di-Pr08}:   if $\xi, \eta\in\R^d$ and $f$ is defined on the convex hull of the set $\{\xi, \xi+rh, \eta\}$, then 
	\begin{align}\label{9-2-0}
	\tr^r_h f(\xi) =&\sum_{j=0}^{r-1} (-1)^j \binom r j \tr^r f\Bl[\xi+jh, \  \f jr (\xi+rh)+(1-\f jr)\eta\Br]\\
	&	-\sum_{j=1}^r (-1)^j \binom r j \tr^r f \Bl[ (1-\f jr) \xi+\f jr \eta,\  \  \xi+rh \Br],\notag
	\end{align}
	where  we used the notation   
	$\tr^r f[u, v] :=\tr_{(v-u)/r} ^r f(u)$ for $u,v\in\RR^d$. 
	
	Since $E^\xi$ is a convex set containing the line segment $[\xi, \xi+rh]$ for each $\xi\in S_{rh}$, we obtain from~\eqref{9-2-0} that   for  $\xi\in S_{rh}$,
	\begin{align*}
	|\tr_h^r f(\xi) |\leq&  C_r \max_{0\leq j\leq r-1}
	\Bl(\f 1 {|E^\xi|}	\int_{E^\xi}	\Bl|\tr^rf\bl[\xi+jh, \  \f jr (\xi+rh)+(1-\f jr)\eta\br]\Br|^q\, d\eta\Br)^{\f1q}\\
	&+  C_r \max_{1\leq j\leq r}
	\Bl(\f 1 {|E^\xi|}	\int_{E^\xi}\Bl|\tr^r f \bl[ (1-\f jr) \xi+\f jr \eta,\  \  \xi+rh \br]\Br|^q\, d\eta\Br)^{\f1q}.
	\end{align*}
	Taking the $L^p$-norm  over the set  $S_{rh}$ on both sides of this last inequality,   we obtain
	\begin{align*}
	\Bl(\int_{S_{rh}}	|\tr_h^r f(\xi) |^p\, d\xi\Br)^{\f1p}\leq&  C_{r}\da_0^{-\f1q} \Bl[\max_{0\leq j\leq r-1} I_j(h) + \max_{1\leq j\leq r}K_j(h)\Br],
	\end{align*}
	where 
	\begin{align*}
	I_j(h):&=\Bl(\int_{S_{rh}}\Bl(\f 1{|E|}	\int_{E^\xi}	\Bl|\tr^r f\bl[\xi+jh, \  \f jr (\xi+rh)+(1-\f jr)\eta\br]\Br|^q\, d\eta\Br)^{\f pq}\, d\xi\Br)^{\f1p},\\
	K_j(h):&=\Bl(\int_{S_{rh}}\Bl(\f 1{|E|}	\int_{E^\xi}\Bl|\tr^r f \bl[ (1-\f jr) \xi+\f jr \eta,\  \  \xi+rh \br]\Br|^q\, d\eta\Br)^{\f pq}\, d\xi\Br)^{\f1p}.	
	\end{align*}
	For the term   $I_j(h)$ with  $0\leq j\leq r-1$, we have
	\begin{align*}
	I_j(h)&=\Bl(\int_{S_{rh}+jh}\Bl(\f 1 {|E|}	\int_{E^{u-jh}}	\Bl|\tr^r f\bl[u, \  \f jr (u+(r-j)h)+(1-\f jr)\eta\br]\Br|^q\, d\eta\Br)^{\f pq}\, du\Br)^{\f1p}\\
	&\leq r^{d/q}
	\Bl(\int_{S_{rh}+jh}\Bl(\f 1 {|E|}	\int_{E^{u-jh}}	\Bl|\tr^rf[u, \  v]\Br|^q\, dv\Br)^{\f pq}\, du\Br)^{\f1p},
	\end{align*}
	where we used the change of variables  $u=\xi+jh$ in the first step,  the change of variables  $v=\f jr (u+(r-j) h)+(1-\f jr) \eta$  and the fact that each set  $E^\xi$ is convex  in the second step.
	Since 
	$[u,v] \subset E^{u-jh} \subset E$  whenever $u\in S_{rh}+jh$ and $v\in E^{u-jh}$ and since  $\tr^r f [u,v]=\tr^r f[v,u]$, it follows that 
	$$ I_j(h) \leq r^{d/q} 	\Bl(\int_{S}\Bl(\f 1 {|E|}	\int_{E}	\Bl|\tr^r_{(u-v)/r} (f, E, v)\Br|^q\, dv\Br)^{\f pq}\, du\Br)^{\f1p}.$$
	
	The  terms $K_j(h)$,  $1\leq j\leq r$ can be estimated in a similar way. In fact,  making  the change of variables $u=\xi+rh$ and $v=(1-\f jr)(u-rh) +\f jr \eta$, we   obtain 
	\begin{align*}
	K_j(h)&=\Bl(\int_{S_{-rh}} 
	\Bl(\f 1 {|E|} 	\int_{E^{u-rh}}\Bl|\tr^r f \bl[ (1-\f jr) (u-rh)+\f jr \eta,\  \  u \br]\Br|^q\, d\eta\Br)^{\f pq}\, du\Br)^{\f1p}\\
	&\leq r^{d/q} \Bl(\int_{S_{-rh}} 
	\Bl(\f 1 {|E|} 	\int_{E^{u-rh}}\Bl|\tr^r f [v,\  \  u ]\Br|^q\, dv\Br)^{\f pq}\, du\Br)^{\f1p}\\
	&\leq r^{d/q} 	\Bl(\int_{S}\Bl(\f 1 {|E|}	\int_{E}	\Bl|\tr^r_{(u-v)/r} (f, E, v)\Br|^q\, dv\Br)^{\f pq}\, du\Br)^{\f1p}.
	\end{align*}
	
	Putting the above together,  we complete  the proof. 	
\end{proof}

We are now in a position to prove Theorem~\ref{thm-9-1}.

\begin{proof}[Proof of  Theorem~\ref{thm-9-1}]
	We shall  prove Theorem~\ref{thm-9-1} for  $p<\infty$ only.
	The case $p=\infty$ can be deduced by letting $p\to\infty$. In fact,  all the general constants below are independent of $p$ as $p\to\infty$.

	
	
	By Lemma~\ref{lem-2-1-18}, there exists $\da_0\in (0,1)$ such that 
	$\Og\setminus \Og(\da_0) \subset \bigcup_{j=1}^{m_0} G_j,$
	where 
	$$\Og(\da_0):=\{\xi\in\Og:\  \  \dist(\xi, \Ga) > \da_0\}.$$
	We claim that 
	for any $ 0<t< \f {\da_0}{ 8\diam (\Og) +8}$,
	\begin{align}\label{9-5-2}
	\sup_{\|h\|\leq  t} \Bl\|\tr_{h\vi_{\Og} (h, \cdot)}^r (f, \Og, \cdot)\Br\|_{L^p(\Og(\da_0))}\leq C_{q,d} \tau_r (f, A_1t)_{p,q}.
	\end{align} Indeed,  using Fubini's theorem and Lemma~\ref{lem-8-1},  we have 
	\begin{align*}
	\sup_{\|h\|\leq  t} \Bl\|\tr_{h\vi_{\Og} (h, \cdot)}^r f\Br\|_{L^p(\Og(\da_0))}\leq C_d  \sup_{\|h\|\leq t} \Bl\|\tr_{h}^r f\Br\|_{L^p(\Og(\da_0/2))}.
	\end{align*}
	Let $\{\og_1,\dots, \og_{N}\}$ be a subset of $\Og(\da_0/2)$ such that $\min_{1\leq i\neq j\leq N} \|\og_i-\og_j\|\ge t$ and 
	$\Og(\da_0/2) \subset \bigcup_{j=1}^{N} B_j$,  where $B_j:=B_{t} (\og_j)$.
	Using Lemma~~\ref{thm-9-1}, we then  have 
	\begin{align*}
	&\sup_{\|h\|\leq t} \Bl\|\tr_{h}^r f\Br\|^p_{L^p(\Og(\da_0/2))}\leq C_{p} \sum_{j=1}^{N} \sup_{\|h\|\leq t} \Bl\|\tr_{h}^r(f, 2B_j, \cdot)\Br\|^p_{L^p(B_j)}\\
	&\leq C_{q}  \sum_{j=1}^{N}\int_{2B_j} \Bl(\f 1{t^{d+1}} \int_{ B_{4t} (\xi)} |\tr_{(\eta-\xi)/r}^r f (\xi)|^q \, d\xi\Br)^{\f pq} \, d\eta\leq C_{q,d} \tau_r (f, A_1 t)_{p,q}^p.
	\end{align*}
	This proves the claim~\eqref{9-5-2}.

	Now using~\eqref{9-5-2} and Definition~\ref{def:modulus}, we   reduce to showing that  for each $x_i$-domain $G\subset \Og$   attached to $\Ga$, and a sufficiently large parameter $A_0$, 
	\begin{equation}\label{9-3}
	\wt{\og}^r_G (f, \f1n; A_0)_{L^p(G)}\leq C  \tau_r (f, \f {A_1} n)_{p,q}
	\end{equation}
	and  
	\begin{equation}\label{9-4}
	\sup_{0<s\leq  \f 1n} \|\tr_{s \vi_{\Og} (e_i,\cdot)e_i }^r (f, G,\cdot)\|_{L^p(G)}\leq C  \tau_r (f, \f {A_1} n)_{p,q}.
	\end{equation}

	Without loss of generality, we may assume that $e_i=e_{d+1}$, $G$ takes the form~\eqref{standard} with small base size  $b\in(0,1)$, and $n\ge N_0$, where $N_0$ is a large positive integer depending only on the set $\Og$.
	We  follow  the same notations as   in  Chapter~~\ref{Sec:8} with sufficiently large  parameters $m_0$ and $m_1$. Thus,  	
	$\{I_{\ib, j}:\  \  (\ib, j) \in\Ld_{n}^{d+1}\}$ is a partition of $G$, and  $S_{\ib,j}\subset I_{\ib,j}$ is  the compact parallepiped as defined in\eqref{8-7-1-18}.

	We start with the proof of~\eqref{9-3}.
	Given a  parameter $\ell>1$, we   define 
	\begin{align*}
	S_{\bfi,j}^ \diamond:=\Bl\{ (x,y):\  \  & x\in(\ell \Delta_{\bfi}^\ast)\cap [-2b, 2b]^d,\   \  H_{\bfi} (x) -\al^\ast_{j+m_1} +\f {M_0-\ell} {n^2}\leq y\leq\\
	&\leq  H_{\bfi}(x) -\al^\ast_{j-m_1} -\f {M_0-\ell} {n^2}\Br\},
	\end{align*}
	where $\ell \Delta_{\ib}^\ast$ denotes the dilation of the cube  $\Delta_{\ib}^\ast$ from its center $x_{\ib}$.  
	We  choose   the parameter $\ell$ sufficiently large  so that 
	\begin{enumerate}[\rm (i)]
		\item for any $\xi=(\xi_x,\xi_y)\in I_{\ib,j}$ and  $u\in B_{n^{-1} } (\xi_x)\subset \R^d$,  	$ \Bl[\xi, \xi +\f rn \zeta_k(u)\Br]\subset      S_{\ib,j}^ \diamond$ for all $1\leq k\leq d$; 
		\item  there exists a constant $c_0>0$ such that 
		$I_{\ib, j} \subset S_{\ib,j}^{\diamond}\subset G^\ast$ whenever  $\ib\in\Ld_n^{d}$ and  $j\ge c_0 \ell$.
	\end{enumerate}
	Furthermore, we may also choose the parameter $A_0$  large enough  so that 
	with $\Ld_{n,\ell}^{d+1}:=\{(\ib,j)\in \Ld_n^{d+1}:\   \  c_0\ell\leq j\leq n\}$, 
	$$   G_n:=\Bl\{\xi\in G:\  \  \dist(\xi, \p' G) \ge  \f {A_0} {n^2}\Br\}\subset \bigcup_{(\ib,j)\in\Ld_{n,\ell}^{d+1}} I_{\ib,j}.$$
	With the above notation, we  have that  for any $0<s\leq \f 1n$ and $k=1,\dots, d$, 	
	\begin{align*}
	&n^d\int_{G_n} \int_{\|u-\xi_x\|\leq \f1n} |\tr_{s \zeta_k(u)}^r (f, G^\ast,\xi)|^p  \, du d\xi\leq C_d \sum_{(\ib,j)\in\Ld_{n,\ell}^{d+1}} \sup_{\zeta \in\SS^d} \int_{S^{\diamond}_{\ib,j}}  |\tr_{s \zeta}^r (f, S_{\ib,j}^\diamond,\xi)|^p  d\xi,\end{align*}
	which, using  Lemma~\ref{lem-9-1:Dec}, is estimated above  by 
	\begin{equation}\label{9-5}
	C_{q,d,r} \sum_{(\ib,j)\in\Ld_{n,\ell}^{d+1}}
	\int_{S_{\ib,j}^\diamond}\Bl( \f 1 {|S_{\ib,j}^{\diamond}|}  \int_{S_{\ib,j}^\diamond} \bl| \tr_{(\eta-\xi)/r} ^r f( \xi)\br|^q\, d\xi\Br)^{\f pq}\, d\eta.
	\end{equation}
	However, By  Lemma~\ref{metric-lem},  there exists a constant $A_1>1$ such that for each $(\ib, j)\in\Ld_{n,\ell}^{d+1}$, 
	$$    U(\eta_{\ib,j}, \f 1{n A_1})\subset  S_{\ib, j}^{\diamond}\subset  U(\eta_{\ib,j}, \f {A_1}{2n}) \    \  \text{for some  $\eta_{\ib,j}\in S_{\ib,j}^\diamond$}.$$
	Thus,  by  Remark~\ref{rem-6-2},  the sum in~\eqref{9-5} is controlled above by  a constant multiple of 
	\begin{align*}
	\int_{\Og}\Bl( \f 1 {|U(\xi, \f {A_1} n)|}  \int_{U(\xi, \f {A_1} n)} \bl| \tr_{(\eta-\xi)/r} ^r (f,\Og, \xi)\br|^q\, d\xi\Br)^{\f pq}\, d\eta= \tau_r (f, \f {A_1} n)_{p,q}^p.
	\end{align*}
	This completes the proof of~\eqref{9-3}.

	It remains to prove~\eqref{9-4}.  First, by the $C^2$ assumption of the domain $\Og$ (see, e.g.~\cite{Wa}), there exists a constant $r_0\in (0,1)$ such that for each  $\xi=(\xi_x, \xi_y)\in G$,  there exists a closed ball   $B_\xi\subset G^\ast$  of  radius $r_0\in (0, 1)$ that touches the boundary  $\Ga$ at the point $\ga(\xi):=(\xi_x, g(\xi_x))$. 
	Given   a large parameter  $A$, we 	
	define
	\begin{equation}\label{9-6}
	E_\xi:=\Bl\{ \eta \in  B_\xi:\  \  \dist(\eta, T_\xi)\leq \f {A} {n^2}\Br\},\   \  \xi\in G,
	\end{equation}
	where  $T_\xi$ denotes  the tangent plane to $\Ga$ at the point $\ga(\xi)$. Clearly,   $E_\xi\subset G^\ast$ is convex, 
	\begin{equation}\label{9-6-0}
	U(\ga(\xi), \f {c_1} {n}) \subset E_\xi\subset U(\ga(\xi), \f {c_2}n),
	\end{equation}
	where the constants  $c_1, c_2>0$ depend only on $G$ and the parameter $A$. 
	Next, recall that  $S_{\ib,j}^\ast$ is  the compact parallepiped  defined in~\eqref{8-8-1}. By definition, there exists a positive integer $j_0$ depending only on $G$ such that $S_{\ib,j}^\ast\subset G^\ast$ whenever $j_0<j\leq n$.  Furthermore,    according to  Lemma~\ref{metric-lem}, we have that
	\begin{equation}\label{9-8-1}
	\sup_{\xi\in S^\ast_{\ib, j}} \|\xi-\ga(\xi)\|\leq \f {c_3} {n^2},\   \ \text{ for $0\leq j\leq j_0$, }
	\end{equation}
	and    
	\begin{equation}\label{9-7}U(\eta_{\ib,j}, \f {c_4} n) \subset I^\ast_{\ib,j} \subset S_{\ib,j}^\ast\cap G^\ast  \subset U(\eta_{\ib,j}, \f {c_5} n),\    \  \forall (\ib,j) \in\Ld_n^{d+1},\end{equation}
	for some point   $\eta_{\ib,j} \in I_{\ib,j}$,  
	where $c_3, c_4, c_5$ are positive constants depending only on the set $G$. 
	By~\eqref{9-8-1}, we may choose the parameter $A$ in~\eqref{9-6} large enough so that if $0\leq j\leq j_0$ and  $\xi\in I^\ast_{\ib,j}$, then  $[\xi, \ga(\xi)]\subset E_\xi$. 
	Note that if $\xi\in I_{\ib,j}^\ast$ with  $0\leq j\leq j_0$, then by~\eqref{9-7} and~\eqref{9-8-1}, 
	$$\rho_{\Og} (\eta_{\ib,j}, \ga(\xi)) \leq \f {c_6}n,$$
	where  $c_6>0$ is a constant depending only on $G$.    
	Now we define, for   $(\ib,j) \in\Ld_n^{d+1}$, 
	$$ E_{\ib, j} =\begin{cases}
	S_{\ib,j}^\ast, \   \  & \text{ if $j_0<j\leq n$}, \\
	U(\eta_{\ib,j}, \f {c_2+c_6} n), \  \ & \text{ if $0\leq j\leq j_0$}.
	\end{cases}$$
	Thus,  $E_{\ib, j}\subset G^\ast$, and  by~\eqref{9-6-0},~\eqref{9-8-1} and~\eqref{9-7},  we have  that for $\leq j\leq j_0$.
	\begin{equation}\label{9-10}
	\bigcup_{\xi\in I^\ast_{\ib,j}} E_\xi\subset  \bigcup_{\xi\in I^\ast_{\ib,j}} U(\ga(\xi), \f {c_2} n)\subset E_{\ib,j}.
	\end{equation} 
	Thus, setting $e=e_{d+1}$, and using Lemma~\ref{lem-8-1},  we have 
	\begin{align*}
	\sup_{0<s\leq  \f 1n}& \|\tr_{s \vi_{\Og} (e_i,\cdot)e_i }^r (f, G,\cdot)\|_{L^p(G)}^p\leq  C n\int_0^{\f 1n} \int_{G }|\tr_{s\vi_G (e, \xi)e}^r (f, G,\xi)|^p\, d\xi ds\\
	&\leq C\sum_{(\ib,j) \in\Ld_n^{d+1}} \sup_{0<s\leq \f {cj^2} {n^3}}  \int_{I_{\bfi, j}^\ast} |\tr_{se}^r(f, I_{\ib, j}^\ast, \xi)|^p d\xi.
	\end{align*}
	However, by~\eqref{9-6-0},~\eqref{9-10} and Lemma~\ref{lem-9-1:Dec},  this last sum can be estimated above by a constant multiple of 
	\begin{align*}
	& \sum_{(\ib,j) \in\Ld_n^{d+1}}   \int_{I_{\bfi, j}^\ast} \Bl( \f 1{|E_{\ib, j}|}\int_{E_{\ib,j}} |\tr_{(\eta-\xi)/r}^r(f, \Og, \xi)|^q d\eta\Br)^{\f pq}\, d\xi\\
	&\leq C \sum_{(\ib,j) \in\Ld_n^{d+1}}  \int_{I_{\bfi, j}^\ast} \Bl( \f 1{|U(\xi, \f {A_1} n)|}\int_{U(\xi, \f {A_1} n)} |\tr_{(\eta-\xi)/r}^r(f, \Og, \xi)|^q d\eta\Br)^{\f pq}\, d\xi
	\leq C \tau_r (f, \f {A_1} n)_{p,q}^p,
	\end{align*}
	where $A_1:= 2(c_2+c_5+c_6)$. This completes the proof.
\end{proof}

\chapter{Bernstein's inequality}

\section{Bernstein inequality on domains of special type in $\RR^2$}\label{sec:12}

Our main goal in this section is to establish a Bernstein type inequality related to   tangential derivatives on a domain $G\subset \RR^2$ of special type.
Without loss of generality, we may assume that the domain  $G$   takes  the  form,
\begin{equation}\label{10-2} G:=\Bl\{ (x, y):\  \ x\in (-a,a),\   \  g(x)-a< y\leq g(x)\Br\},\end{equation}
for some constant  $a\ge 1$  and function  $g\in C^2[-2a,2a]$ satisfying  that $g(x)\ge 4a$ for all $x\in [-2a, 2a]$, (since otherwise, we  may  rescale the domain $G$ or consider a reflection of $G$). 
Then for   a parameter $\mu\in (0,2]$, 
\begin{align}
G(\mu):&=\{ (x, y):\  \ x\in (-\mu a,\mu a),\   \  g(x)-\mu a<y\leq g(x)\},\     \label{10-8}\\
\p' G (\mu) :&=\{ (x, g(x)):\  \   x\in (-\mu a, \mu a)\},\notag
\end{align} 
and $\p' G=\p' G(1)$. 

For $(x,y)\in G(2)$,  define  
\begin{equation}
\da(x,y):=g(x)-y\   \   \text{and}\  \   \
\vi_n(x,y) :=\sqrt{\da(x,y)} +\f 1n,\   \  n=1,2,\dots.
\end{equation}
According to Lemma~\ref{metric-lem}, we have 
\begin{equation}\label{10-6}
\da(x,y)=g(x)-y \sim \dist (\xi, \p' G),\   \    \  \forall \xi=(x,y)\in G,
\end{equation}

Next, for each   fixed  $x_0\in [-a, a]$, we define an $\ell$-th order  differential operator ${\mathcal{D}}_{x_0}^\ell$  by 
\begin{equation}
{\mathcal{D}}_{x_0}^\ell: =\Bigl (\p_1 +g'(x_0)\p_2\Bigr)^{\ell}= \sum_{i=0}^\ell \binom{\ell}{i} (g'(x_0))^{i} \p_1^{\ell-i} \p_2^i.
\end{equation}
Thus,  up to a constant, ${\mathcal{D}}_{x_0}^\ell$ is the $\ell$-th order directional derivative in the direction of $(1, g'(x_0))$ (i.e., the  tangential direction   to  $\p' G$ at the point $(x_0, g(x_0))$). We also define the operator ${\mathcal{D}}^\ell$ by  
$$ {\mathcal{D}}^\ell f(x,y):=({\mathcal{D}}_x^\ell f)(x,y),\   \ \  \  (x,y)\in G,\  \    f\in C^1(G).$$
Note that  the operators ${\mathcal{D}}^\ell$ and $\p_2$ are commutative, as can be easily seen from the definition.

Finally, let $M$  denote a constant satisfying  that $M>10$ and 
\begin{equation}\label{10-7}
\|g''\|_{L^\infty([-2 a,2 a])}\le M\  \  \text{and}\  \  |g' (0)|\le M.
\end{equation}

In this  section, we shall prove  

\begin{thm}\label{THM:2D BERN} 
	If   $0<p\leq \infty$, $\ld \in (1,2)$ and      $f\in \Pi_n^2$, then 
	\begin{equation}\label{10-7-18}
	\|\vi_n^{i}{\mathcal{D}}^r\partial_2^{i+j} f\|_{L^p(G)}\le  c n^{r+i+2j}\|f\|_{L^p(G_\ast)},\  \  r,i,j=0,1,\dots, 
	\end{equation}
	where $G_\ast =G(\ld)$, and  $c$ is a positive constant depending  only on $M$, $\ld$, $r, i,j$,  and $p$.
\end{thm}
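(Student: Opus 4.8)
\textbf{Proof proposal for Theorem~\ref{THM:2D BERN}.}

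The plan is to reduce the tangential Bernstein inequality on the curved domain $G$ to the classical univariate Bernstein--Markov inequalities on intervals, transported along the ``vertical'' fibers of $G$ and along curves parallel to the essential boundary $\p'G$. First I would observe that the operators ${\mathcal D}$ and $\p_2$ commute (stated in the excerpt), so it suffices to bound ${\mathcal D}^r$ and $\p_2$ essentially separately, in an iterated fashion. Writing $\da(x,y)=g(x)-y$, and using $\vi_n\sim\sqrt{\da}+n^{-1}$ together with $\da(x,y)\sim\dist(\xi,\p'G)$ from~\eqref{10-6}, the inequality~\eqref{10-7-18} has the flavor of the one-dimensional weighted estimate
\[
\|(\sqrt{t}+n^{-1})^{i}\,\p_t^{i+j} P\|_{L^p[0,a]}\le c\,n^{i+2j}\|P\|_{L^p[-\va,a]},\qquad P\in\Pi_n^1,
\]
applied on each vertical segment $\{x\}\times(g(x)-a,g(x)]$ reparametrized by $t=\da=g(x)-y$ (so $\p_t=-\p_2$): this is exactly the Ditzian--Totik type Bernstein inequality for $[0,a]$ with the weight $\varphi(t)=\sqrt{t}$, which accounts for the factor $\vi_n^i n^{i+2j}$ once one notes $(\sqrt{t}\,)^i$ near the endpoint $t=0$ and $n^{-i}$ in the interior both reproduce $\vi_n^i$. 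Taking $L^p$ in $x$ over $(-a,a)$ and using that $f(x,\cdot)\in\Pi_n^1$ for fixed $x$ (since $f\in\Pi_n^2$) gives the estimate for the pure power $\p_2^{i+j}f$, with $G_\ast=G(\ld)$ providing the slightly larger domain needed as the ``input'' of the one-dimensional inequality (the constant $\ld>1$ gives the room $-\va$ in the model inequality).

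Next I would handle the tangential derivative ${\mathcal D}^r$. The key point is that ${\mathcal D}^r f(x,y)=({\mathcal D}_x^r f)(x,y)=\frac{d^r}{ds^r}\big|_{s=0}f\big(x+s,\,y+g'(x)s\big)$, i.e.\ it is the $r$-th derivative along the line through $(x,y)$ with direction $(1,g'(x))$, the tangent to $\p'G$ at $(x,g(x))$. For a point $(x,y)\in G$ at height $\da=g(x)-y$ below the boundary, the maximal length of such a tangent segment staying inside $G_\ast$ is bounded below by a constant times $\sqrt{\da}+$const (this is the $C^2$/curvature estimate: a chord of the boundary at distance $\da$ has length $\gtrsim\sqrt{\da}$, and one can also always go a fixed distance since $a\ge1$ and $G_\ast$ contains a fixed neighborhood). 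Hence the segment has length $\gtrsim\vi_1\sim 1$ near the ``deep'' part and $\gtrsim\sqrt{\da}$ near $\p'G$; restricting $f$ (a polynomial of degree $\le n$) to such a segment and applying the classical Markov--Bernstein inequality on an interval of that length yields the loss of $n^r$ times a weight. Combining: along each such tangent direction ${\mathcal D}^r$ costs a factor $\le c\,n^{r}$ (with the understanding that near $\p'G$ the relevant interval is shorter, but there the estimate is compensated because we are not asking for a $\vi_n$-type gain on the tangential part — note the exponent of $\vi_n$ in~\eqref{10-7-18} is $i$, matching only the $\p_2$-derivatives). One then integrates in the transverse direction; a change of variables straightening the tangent lines (the map $(x,y)\mapsto$ (arclength along $\p'G$, signed normal-ish distance), which has bounded Jacobian by~\eqref{10-7} and the smallness of $a$ after rescaling) reduces this to Fubini plus the univariate inequality, and the Jacobian bounds contribute only constants depending on $M,\ld$.

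Finally I would assemble the pieces: apply the $\p_2$-part inequality to the polynomial $g:={\mathcal D}^r\p_2^{\,l}f$ — but this requires knowing ${\mathcal D}^r\p_2^{\,l}f$ is still a polynomial of degree $\le Cn$ and controlling $\|{\mathcal D}^r\p_2^{\,l}f\|_{L^p(\text{slightly larger domain})}$ by the tangential-part inequality applied with $l=0$ to $\p_2^{\,l}f$; iterating and using commutativity of ${\mathcal D}$ and $\p_2$. The bookkeeping of the nested domains $G\subset G(\ld')\subset G(\ld)$ for intermediate $\ld'\in(1,\ld)$ — each application of a one-dimensional inequality needs a little extra room — is routine but must be done carefully so the final constant depends only on $M,\ld,r,i,j,p$. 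The main obstacle, I expect, is the tangential step: precisely quantifying the length of the tangent segment inside $G_\ast$ as a function of $\da$ (the curvature/chord-length estimate), making the straightening change of variables rigorous with uniform Jacobian bounds, and correctly tracking which weight ($\vi_n^i$ vs.\ no weight) attaches to the tangential versus the normal derivatives so that the exponents in~\eqref{10-7-18} come out exactly. The case $p<1$ needs the usual quasi-norm adjustments (using $\|\cdot\|_p^p$-subadditivity and Nikol'skii-type inequalities as in Lemma~\ref{lem-4-1}), and $p=\infty$ follows by the same argument with sup-norms or by letting $p\to\infty$.
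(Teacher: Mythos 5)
Your overall plan — reduce to one-dimensional Bernstein inequalities, treat the normal ($\partial_2$) derivatives via the Ditzian--Totik weighted Bernstein inequality on vertical fibers, treat the tangential derivative $\mathcal D^r$ by a change of variables transporting it to a univariate derivative, and iterate using the commutativity of $\mathcal D$ and $\partial_2$ — is the right general shape, and the $\partial_2$-part does match what the paper does (Lemma~\ref{lem:univ BM}). But the core of your tangential step has a genuine gap.

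You propose to restrict $f$ to the tangent segment at $(x,y)$ and apply Markov--Bernstein on that segment, noting its length is $\gtrsim\sqrt{\delta}$ near $\partial'G$. But Markov on a segment of length $\ell\sim\sqrt{\delta}$ yields $|f^{(r)}|\lesssim (n^2/\ell)^r\|f\|_\infty \sim n^{2r}\delta^{-r/2}\|f\|_\infty$, and even interior Bernstein gives $\sim n^r\delta^{-r/2}$; the target estimate~\eqref{10-7-18} with $i=j=0$ has no $\delta^{-r/2}$ loss, so the estimate you would produce blows up as $\delta\to 0$ and integration does not save you. The remark that "we are not asking for a $\vi_n$-type gain on the tangential part" correctly observes what the exponents in~\eqref{10-7-18} should be, but does not supply a mechanism that avoids the unavoidable loss from short intervals. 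Your fallback, a normal-coordinate straightening $(x,y)\mapsto(\text{arclength},\text{normal distance})$, has a second fatal defect: after such a nonpolynomial change of variables the pullback of $f$ is no longer a polynomial, so no one-dimensional polynomial inequality can be applied along the $s$-fibers.

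The paper's actual mechanism is quite different and is worth internalizing. It uses the parabolic change of variables $\Phi_A(z,t)=(z+t,\,Q_A(z,t))$ with $Q_A(z,t)=g(z)+g'(z)t-\tfrac A2 t^2$ (Lemma~\ref{lem-1-1:Berns}). This map is designed so that two things hold simultaneously: (a) for each fixed $z$ the pullback $t\mapsto f(\Phi_A(z,t))$ is still an algebraic polynomial in $t$ of degree $\le 2n$, because $Q_A$ is quadratic in $t$; and (b) the chain-rule identity~\eqref{eqn:part_der} gives $\mathcal D_{z+t}f(\Phi_A(z,t))=\tfrac{d}{dt}[f(\Phi_A(z,t))]+w_A(z,t)\,\partial_2 f(\Phi_A(z,t))$, where $w_A(z,t)\sim t\sim\sqrt{\delta}$, so the error term is exactly a $\sqrt{\delta}\,\partial_2 f$ controllable by the weighted one-dimensional Bernstein inequality. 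Crucially, the $t$-variable runs over a \emph{fixed-length} interval $[0,a_1]$ (not a shrinking one), and the Jacobian $|\det J_{\Phi_A}|\sim |t|$ supplies exactly the doubling weight needed for the weighted Bernstein inequality with no $\delta^{-r/2}$ loss. For $r>1$ one further needs the Fa\`a di Bruno identity (Lemma~\ref{lem:bern repres}) to expand $\tfrac{d^r}{dt^r}$, and — because $\tfrac{d^r}{dt^r}$ alone does not isolate $\mathcal D^r$ — a Vandermonde-type argument (Lemma~\ref{lem:A_0 A_r}) that evaluates the identity at several distinct parameters $A_0<\cdots<A_r$ and inverts the resulting linear system. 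None of these ingredients appear in your proposal; this is the missing idea that makes the tangential step go through.
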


A slightly stronger inequality can be deduced from   Theorem~\ref{THM:2D BERN}:  

\begin{cor}\label{cor-10-2} Let $\ld \in (1,2)$ and $\mu>1$ be two given parameters.  Let  $\vi^\ast_n(x,y):=\min\{\mu \vi_n(x,y), (\ld-1) a\}$ for $(x,y)\in G$. If  $0<p\leq \infty$  and       $f\in\Pi_n^2$,  then 
	\begin{align}
	\Bl\|\vi_n(x,y)^{i} &\max_{|t|\leq \vi^\ast_n(x,y)} \Bigl|  {\mathcal{D}}_{x+t}^{r}\partial_2^{i+j}f(x,y)\Br|\Br\|_{L^p(G; dxdy)}\\
	&\le  c\mu^r  n^{r+2j+i}\|f\|_{L^p(G_\ast)},\  \  r,i,j=0,1,\dots,\notag
	\end{align}
	where $G_\ast=G_\ast(\ld)$. 
\end{cor}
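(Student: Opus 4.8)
The plan is to derive Corollary~\ref{cor-10-2} from Theorem~\ref{THM:2D BERN} by a covering and rescaling argument, exploiting the fact that on a ball of radius comparable to $\varphi_n(\xi)$ around a point $\xi\in G$, the quantity $\varphi_n$ varies by a bounded factor, and that after an affine rescaling of such a ball the operator $\mathcal{D}^r_{x+t}$ with $|t|$ small becomes a controlled perturbation of the operator $\mathcal{D}^r$ already handled by Theorem~\ref{THM:2D BERN}.

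First I would fix parameters $\lambda\in(1,2)$ and $\mu>1$, and introduce an intermediate parameter, say $\lambda'\in(1,\lambda)$, setting $G_{\ast\ast}:=G(\lambda')$ so that $G\subset G_{\ast\ast}\subset G_\ast$. For $\xi=(x,y)\in G$, let $r_\xi:=c_0\varphi^\ast_n(x,y)$ with $c_0$ a small constant depending only on $\mu,\lambda,M$; the choice of $c_0$ is made so that the closed ball $B_{r_\xi}[\xi]$ is contained in $G_{\ast\ast}$ (using that $\varphi^\ast_n\le\mu\varphi_n$ and $\delta(x,y)\sim\dist(\xi,\partial'G)$ by~\eqref{10-6}, together with $\varphi_n(\xi)^2\sim\delta(\xi)+n^{-2}$), and also so that for every $\xi'=(x',y')\in B_{r_\xi}[\xi]$ and every $|t|\le\varphi^\ast_n(\xi)$ one has $x'+t$ still in the domain of $g$ and $|g'(x'+t)-g'(x)|\le \tfrac12 M'\varphi^\ast_n(\xi)\le$ a small constant. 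Then on $B_{r_\xi}[\xi]$ one has $\varphi_n(\xi')\sim\varphi_n(\xi)$ and $\varphi^\ast_n(\xi')\sim\varphi^\ast_n(\xi)$.

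Next, on the ball $B_{r_\xi}[\xi]$ I would expand $\mathcal{D}^r_{x+t}=(\partial_1+g'(x+t)\partial_2)^r$ binomially and write $g'(x+t)=g'(x)+(g'(x+t)-g'(x))$; each term in the resulting expansion of $\mathcal{D}^r_{x+t}\partial_2^{i+j}f$ is, up to bounded coefficients, of the form $(g'(x+t)-g'(x))^s\,\partial_1^a\partial_2^b f$ with $a+b=r+i+j$, $s\le r$. Applying the Markov brothers' / classical multivariate Bernstein inequality on the ball $B_{r_\xi}[\xi]\subset G_\ast$ (available since $f\in\Pi^2_n$), each mixed partial $\partial_1^a\partial_2^b f$ restricted to $\tfrac12 B_{r_\xi}[\xi]$ is bounded in $L^\infty$ by $C(n/r_\xi)^{r+i+j}$ times the $L^\infty$-norm of $f$ on $B_{r_\xi}[\xi]$, but this is wasteful in the $\partial_2$ direction; instead I would bound $\sup_{\tfrac12B_{r_\xi}[\xi]}|\mathcal{D}^r_{x+t}\partial_2^{i+j}f|$ directly in terms of $\sup_{B_{r_\xi}[\xi]}(|\mathcal{D}^r\partial_2^{i+j}f|+\text{lower order})$ using that the difference operator $\mathcal{D}^r_{x+t}-\mathcal{D}^r_{x'}$ has coefficients $O(\varphi^\ast_n(\xi))$ and then invoke Theorem~\ref{THM:2D BERN} applied with $G$ replaced by the rescaled image of $B_{r_\xi}[\xi]$. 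Concretely, after an affine map sending $B_{r_\xi}[\xi]$ to a normalized domain of special type with base size $1$ and the parameter $M$ replaced by a quantity still bounded by a function of $M$, Theorem~\ref{THM:2D BERN} yields $\|\varphi_n^i\mathcal{D}^r\partial_2^{i+j}f\|_{L^p}$ estimates with the scaling factors $\varphi_n(\xi)\sim r_\xi$ and $n$ combining to give exactly the power $n^{r+2j+i}$ after accounting for $\varphi_n(\xi)^{i}$ on the left and the local degree $\sim n r_\xi$ which is $\le n$; the extra $\mu^r$ comes from the $s\le r$ powers of $g'(x+t)-g'(x)=O(\mu\varphi_n)$ combined with the extra $\varphi_n^{-s}$ from differentiating, since $\mathcal{D}^r_{x+t}$ versus $\mathcal{D}^r$ costs at most $(\mu \varphi_n\cdot n)^s\le\mu^r(\varphi_n n)^r$ but is absorbed against the gain, leaving a clean $\mu^r$.

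Finally I would globalize: pick a maximal $\tfrac13 r_\xi$-separated set $\{\xi_k\}$ in $G$ in the Euclidean metric weighted by $\varphi^\ast_n$; the balls $B_{r_{\xi_k}}[\xi_k]$ have bounded overlap (a standard Besicovitch/volume argument, using $r_\xi\sim r_{\xi'}$ on overlapping balls), and their doubles $2B_{r_{\xi_k}}[\xi_k]$ still lie in $G_\ast$. Raising the pointwise estimate
\[
\varphi_n(x,y)^{i}\max_{|t|\le\varphi^\ast_n(x,y)}\bigl|\mathcal{D}^r_{x+t}\partial_2^{i+j}f(x,y)\bigr|\le C\mu^r n^{r+2j+i}\Bigl(\tfrac1{|B_{r_{\xi_k}}[\xi_k]|}\int_{B_{r_{\xi_k}}[\xi_k]}|f|^p\Bigr)^{1/p}
\]
valid for $(x,y)\in B_{r_{\xi_k}/2}[\xi_k]$ (for $p<\infty$; the $p=\infty$ case is the sup bound above) to the $p$-th power, integrating over $G=\bigcup_k B_{r_{\xi_k}/2}[\xi_k]$, and summing with bounded overlap gives $\|\varphi_n^{i}\max_{|t|\le\varphi^\ast_n}|\mathcal{D}^r_{x+t}\partial_2^{i+j}f|\|_{L^p(G)}^p\le C\mu^{rp}n^{(r+2j+i)p}\|f\|_{L^p(G_\ast)}^p$, as desired. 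The main obstacle I anticipate is the careful bookkeeping in the local step: verifying that the affine rescaling of $B_{r_\xi}[\xi]$ into a normalized special-type domain keeps the structural constant $M$ (equivalently $\|g''\|_\infty$ and $|g'(0)|$) under control, and that passing from $\mathcal{D}^r_{x+t}$ to $\mathcal{D}^r$ and from arbitrary mixed partials to the privileged $\mathcal{D}^r\partial_2^{i+j}$ form does not lose the sharp $n^{r+2j+i}$ scaling — this is where the interplay between the anisotropic weight $\varphi_n^i$, the tangential-versus-$\partial_2$ distinction, and the $\mu$-dependence must all be tracked simultaneously.
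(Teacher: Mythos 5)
Your middle paragraph contains the key observation — $\mathcal{D}_{x+t}-\mathcal{D}_{x}=\bigl(g'(x+t)-g'(x)\bigr)\partial_2$ with $|g'(x+t)-g'(x)|\le M|t|\le M\mu\varphi_n(x,y)$ — but you bury it in a covering-and-rescaling scheme that is both unnecessary and, as proposed, broken in two places. First, the geometric premise fails: you take $r_\xi=c_0\varphi^*_n(\xi)$ with $c_0$ a \emph{fixed} small constant and claim $B_{r_\xi}[\xi]\subset G_{\ast\ast}=G(\lambda')$, but by~\eqref{10-6} one has $\dist(\xi,\partial'G)\sim\delta(\xi)$, while $r_\xi\sim\varphi_n(\xi)=\sqrt{\delta(\xi)}+1/n$, and near $\partial'G$ the quantity $\sqrt{\delta(\xi)}+1/n$ is much larger than $\delta(\xi)$ — so a Euclidean ball of radius $r_\xi$ about such $\xi$ necessarily exits $\Omega$, for any fixed $c_0$. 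The local geometry compatible with $\varphi_n$ is parabolic (cells roughly $\varphi_n\times\varphi_n^2$), not Euclidean of radius $\varphi_n$. Second, even on the correct anisotropic cells the rescaling cannot reproduce the exponent: affine rescaling by $r_\xi$ does not change the degree $n$ of $f$, and the rescaled weight $\sqrt{\delta/r_\xi}+1/n$ does not bear the right relation to $\varphi_n=\sqrt{\delta}+1/n$; undoing the change of variables in Theorem~\ref{THM:2D BERN} leaves an uncontrolled negative power of $r_\xi$ that diverges near $\partial'G$, so the local contributions cannot be summed. This is a genuine obstruction: the tangential bound $\|\mathcal{D}^rf\|\lesssim n^r\|f\|$ is a global fact, whereas the best local Markov estimate on a cell of diameter $\rho\sim\varphi_n$ is $(n^2/\rho)^r$, which near $\partial'G$ is of order $n^{3r}\gg n^r$.

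None of the covering machinery is needed. Since $\mathcal{D}_x$ and $\partial_2$ commute as constant-coefficient operators once $x$ is frozen, binomial expansion with $\varepsilon:=g'(x+t)-g'(x)$ gives
\[
\mathcal{D}_{x+t}^{r}\partial_2^{i+j}=\sum_{r_1+r_2=r}\binom{r}{r_1}\varepsilon^{r_2}\,\mathcal{D}_x^{r_1}\partial_2^{\,r_2+i+j},
\qquad |\varepsilon|^{r_2}\le (M\mu)^{r_2}\varphi_n(x,y)^{r_2},
\]
so pointwise, for $(x,y)\in G$ and $|t|\le\varphi_n^*(x,y)$,
\[
\varphi_n(x,y)^i\bigl|\mathcal{D}_{x+t}^{r}\partial_2^{i+j}f(x,y)\bigr|
\le C\mu^r\max_{r_1+r_2=r}\varphi_n(x,y)^{i+r_2}\bigl|\mathcal{D}^{r_1}\partial_2^{\,r_2+i+j}f(x,y)\bigr|.
\]
The right-hand side is independent of $t$; taking the $L^p(G)$-norm and applying Theorem~\ref{THM:2D BERN} to each term — whose exponent is $r_1+(i+r_2)+2j=r+i+2j$ — gives the corollary directly, with no localization, rescaling, or covering.
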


For the moment, we  take Theorem~\ref{THM:2D BERN} for granted and show how  it implies  Corollary~\ref{cor-10-2}. 

\begin{proof}[Proof of  Corollary~\ref{cor-10-2} (assuming  
	Theorem~\ref{THM:2D BERN})]
	For any fixed   $x_0, t\in [-a,a]$, we have 
	\begin{equation}\label{10-8-eq} {\mathcal{D}}_{x_0+t} ={\mathcal{D}}_{x_0}+\Bl( g'(x_0+t)-g'(x_0)\Br) \p_2= {\mathcal{D}}_{x_0} +|t|O(x_0,t)  \p_2,\end{equation}
	where $\sup_{x, t\in [-a,a]} |O(x,t)|\leq M$.
	Since  the operators $\p_2$ and ${\mathcal{D}}_{x_0}^i$ are commutative,   we obtain from~\eqref{10-8-eq} that if $(x_0,y_0)\in G$  and $|t|\leq \vi^\ast_n(x_0,y_0)$, then 
	\begin{align*}
	\Bl|\vi_n(x_0,y_0)^i {\mathcal{D}}_{x_0+t} ^r \p_2 ^{i+j}  f(x_0,y_0)\Br| &\leq C \mu^r \max_{r_1+r_2=r} \vi_n(x_0,y_0)^{i+r_2} |{\mathcal{D}}_{x_0}^{r_1} \p_2^{i+j+r_2} f(x_0,y_0)|.
	\end{align*}
	It then follows from Theorem~\ref{THM:2D BERN} that 
	\begin{align*}
	\Bl\|&\vi_n(x,y)^{i} \max_{|t|\leq \vi^\ast_n(x,y)} \Bigl| {\mathcal{D}}_{x+t}^{r}\partial_2^{i+j}f(x,y)\Br|\Br\|_{L^p(G; dxdy)}\\
	&\leq C \mu^r \max_{r_1+r_2=r} \Bl\| \vi_n^{i+r_2} {\mathcal{D}}^{r_1} \p_2 ^{i+r_2+j} f\|_{L^p(G)}\leq C\mu^r n^{i+r+2j}\|f\|_{L^p(G_\ast)}.
	\end{align*}
	This completes the proof of Corollary~\ref{cor-10-2}.
	\end{proof} 

The rest of this section is devoted to the proof of Theorem~\ref{THM:2D BERN}. 
We start with  the following  simple  lemma,  which   is a direct  consequence of the univariate Bernstein inequality for algebraic polynomials:
\begin{lem}\label{lem:univ BM} 
	Assume that $f\in C(G)$ satisfies that  $f(x, \cdot)\in\Pi_n^1$  for each fixed  $x\in [-a,a]$. Then for $0<p\leq \infty$ and  $\ld\in (1,2)$, 
	\[
	\|\vi_n^i\partial_2^{i+j} f\|_{L^p(G)}\le  c n^{i+2j}\|f\|_{L^p(G_*)},\   \  i,j=0,1,\dots, 
	\]
	where $G_\ast =\overline{G_\ast(\ld)}$, and 
	$c$ is a  positive constant depending only on $M$, $\mu$,  $i+j$ and $p$. 
\end{lem}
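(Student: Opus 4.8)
\textbf{Proof plan for Lemma~\ref{lem:univ BM}.}

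The plan is to reduce the claim to the classical univariate Bernstein inequality on an interval, applied fibrewise in the $y$-variable for each fixed $x$, and then integrate (or take a supremum) in $x$. First I would fix $x\in[-a,a]$ and observe that by hypothesis $f(x,\cdot)$ is an algebraic polynomial of degree at most $n$ on the interval $J_x:=[g(x)-a,\,g(x)]$ (and in fact on the larger interval $J_x^\ast:=[g(x)-\lambda a,\,g(x)]$ along which $f$ extends polynomially, since $G_\ast=\overline{G_\ast(\lambda)}$). The weight $\vi_n(x,y)=\sqrt{\da(x,y)}+\tfrac1n$ with $\da(x,y)=g(x)-y$ is, up to a bounded factor depending only on $\lambda$, exactly the Ditzian--Totik weight $\sqrt{\dist\big((x,y),\partial'G\big)}+\tfrac1n$ adapted to the endpoint $y=g(x)$ of $J_x$; note this is the only endpoint of $J_x$ that is ``active'', since the other endpoint sits at distance $\sim a$ inside $G_\ast$, so no weight degeneration occurs there.

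The key step is the one-dimensional weighted Bernstein inequality: for a polynomial $P\in\Pi_n^1$ on an interval $I=[c,d]$ and the weight $w_n(y)=\sqrt{d-y}+\tfrac1n$, one has
\[
\big\|w_n^{\,i}\,P^{(i+j)}\big\|_{L^p[c,\,d-\eta]}\le C\,n^{\,i+2j}\,\|P\|_{L^p[c,d]},
\]
where the interval is shrunk near the active endpoint $d$ by an amount $\eta\sim(\lambda-1)a$ dictated by the ratio $|I^\ast|/|I|$; this is the standard Ditzian--Totik--type estimate (it follows from the classical Bernstein and Markov inequalities via the substitution $y=d-u^2$, or directly from known one-sided weighted estimates, and the constant depends only on $i+j$, $p$, and the dilation parameter). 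Applying this with $I=J_x^\ast$, $P=f(x,\cdot)$, and noting that shrinking $J_x^\ast$ by $\eta$ still covers $J_x$ (because $a\le\lambda a-\eta$ for the chosen $\eta$), I get
\[
\big\|\vi_n(x,\cdot)^i\,\partial_2^{i+j}f(x,\cdot)\big\|_{L^p(J_x)}\le C\,n^{\,i+2j}\,\|f(x,\cdot)\|_{L^p(J_x^\ast)},
\]
with $C=C(M,\lambda,i+j,p)$; the dependence on $M$ enters only through the uniform bound $g(x)\ge 4a$ and $\|g''\|_\infty\le M$ which guarantee $J_x,J_x^\ast$ have comparable, $x$-independent lengths and that $\da(x,y)\sim\dist((x,y),\partial'G)$ via Lemma~\ref{metric-lem} (or \eqref{10-6}).

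Finally I would integrate the $p$-th power of the last display over $x\in[-a,a]$ (using Fubini, since $G=\{(x,y):x\in(-a,a),\ y\in J_x\}$ and $G_\ast=\{(x,y):x\in(-\lambda a,\lambda a),\ y\in J_x^\ast\}$, and $J_x^\ast\subset$ the $y$-section of $G_\ast$ at $x$) to obtain
\[
\|\vi_n^i\partial_2^{i+j}f\|_{L^p(G)}^p\le C\,n^{(i+2j)p}\int_{-a}^a\|f(x,\cdot)\|_{L^p(J_x^\ast)}^p\,dx\le C\,n^{(i+2j)p}\,\|f\|_{L^p(G_\ast)}^p,
\]
and the case $p=\infty$ follows by taking suprema in place of integrals. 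The main obstacle is bookkeeping rather than depth: one must make sure the shrunk fibrewise interval on which the Bernstein estimate is asserted still contains all of $J_x$ (this is where $\lambda>1$ is used quantitatively, fixing $\eta=(\lambda-1)a$), and that the weight $w_n$ on the interval genuinely matches $\vi_n$ up to constants depending only on $\lambda$ — both are routine given $g(x)\ge 4a\ge 4$ and the $C^2$ bound on $g$. No multivariate polynomial structure beyond ``polynomial in $y$ for each $x$'' is needed, which is exactly why this lemma is the easy base case for the genuinely two-dimensional Theorem~\ref{THM:2D BERN}.
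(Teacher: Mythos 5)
Your plan matches the paper's proof exactly: the paper cites the univariate weighted Markov--Bernstein inequality \eqref{markov-bern}, with the weight $(\sqrt{t/b}+n^{-1})^i$ on $[0,b]$ controlled by the $L^p$-norm over the enlarged interval $[0,\alpha b]$, and then simply integrates over the vertical fibres. One small slip worth noting: you state the one-dimensional estimate on $[c,d-\eta]$, shrinking near the \emph{active} endpoint $d$ where the weight degenerates, but the slack $\eta\sim(\lambda-1)a$ belongs at the \emph{inactive} end, giving $[c+\eta,d]$; only then does the shrunk interval coincide with $J_x=[g(x)-a,g(x)]$, as your subsequent (correctly stated) fibrewise inequality requires.
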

\begin{proof}
	If $P$ is an algebraic  polynomial of one variable of degree $\le n$, then by the   univariate Bernstein inequality  (\cite[Lemma 2.2]{DJL}  or  \cite[p. 265]{De-Lo}),  we have that for any $b>0$ and $\al>1$, 
	\begin{equation}\label{markov-bern}
	\Bl	\|(\sqrt{b^{-1}t} +n^{-1})^iP^{(i+j)}(t)\Br\|_{L^p([0,b], dt)} \le  C_\al  n^{i+2j}  b^{-(i+j)}
	\|P\|_{L^p([0,\al b])}.
	\end{equation}
	Lemma~\ref{lem:univ BM} then follows  by integration over vertical line segments. 	
\end{proof}

To illustrate the idea, we shall  first prove  Theorem~\ref{THM:2D BERN} for  the  case of $r=1$ in Section~\ref{subsection:10-1},  which  is relatively simpler, but already  contains the  crucial  ideas required for the proof for $r>1$.   The proof of Theorem~\ref{THM:2D BERN} for  $r>1$ is technically more involved and will be  given in Section~\ref{subsection:10-2}.

\section{Proof of Theorem~\ref{THM:2D BERN} for $r=1$} \label{subsection:10-1}
We start with  some necessary notations.  Write  $\ld:=1+\f 1{M_\ld}$. Without loss of generality, we may assume that $M=M_\ld$, where  $M >10$ is a constant satisfying~\eqref{10-7}.  We shall keep this assumption for the rest of this section. 
Given a parameter  $A>0$, we   define 
$$ Q_A(z,t):=g(z)+g'(z) t -\f A2t^2,  \   \  z\in [-\ld a, \ld a],\  \ t\in\RR. $$ 
By  Taylor's theorem,  if $z, z+t\in [-\ld a, \ld a]$, then 
\begin{equation}\label{eqn:taylor} 
g(z+t)-Q_A(z,t) =\int_z^{z+t} [ A +g''(u)] (z+t-u)\, du.\end{equation}
Of crucial importance in our proof is the fact that   every point $(x,y)\in G$    can be represented uniquely in the form $(z+t, Q_A(z,t))$.
To be more precise,   set 
$$ E_A:=\Bl\{(z,t)\in\RR^2:\   \  z, z+t\in [-\lambda a,\lambda a],\   \   |t|\leq a_0:=\sqrt{\frac{2a\lambda}{A+M}}\Br \},$$
and 
define the  mapping $\Phi_A: E_A\to \RR^2$ by
\begin{equation}\label{1-2:Phi}
\Phi_A(z, t)=(x,y): =\big(z+t, Q_A(z,t)\big),\   \     (z,t) \in E_A.
\end{equation}
Also, we  denote by $\Phi_A^+$ and $\Phi_A^{-}$ the restrictions of $\Phi_A$ on the sets $E_A^+$ and $E_A^{-}$ respectively, where 
$E_A^{+} =\{(z,t)\in E_A:\  \ t\ge 0\}$ and $E_A^{-} =\{(z,t)\in E_A:\ \ t\leq 0\}$.

We collect some   useful facts related to   the mapping $\Phi_A: E_A\to\RR^2$ in the following lemma,  which  will play an important role in the proof of Theorem~\ref{THM:2D BERN}:  
\begin{lem}\label{lem-1-1:Berns} 	 Given a parameter $A\ge \overline{A}:=10M^2+M+1$,  the   following statements hold with $G_\ast=\overline{G_\ast(\ld)}$: 
	\begin{enumerate}[\rm (i)]
		\item   $
		G\subset \Phi_A(E_A^{+}) \subset \Phi_A (E_A) \subset G_\ast$. Moreover,  both the  mappings $\Phi_A^+: E_A^{+}\to G_\ast$ and $\Phi_A^{-} : E_A^{-} \to G_\ast$ are   injective.
		\item If  $(z,t)\in E_A$, then    
		\begin{equation}\label{1-3:jacobian}
		\bigl| \det  \left(J_{\Phi_A}(z,t)\right) \bigr|\equiv 
		\frac{\p(x,y)}{\p(z,t)} = 
		(A+g''(z)) |t|.
		\end{equation}
		\item Every  $(x,y)\in G$ can be represented uniquely in the form  
		$(x,y)=\Phi_A(z,t)$ with  $(z,t)\in E^{+}_A$  and $0\leq t\leq a_1$, where  $$ a_1:=a_1(M,a, A):=\sqrt{\frac{2a}{A-M}}<a_0. $$

		\item 	Let    $u_A$  be the  function on the set  $\Phi_A(E_A^{+})$ such that for  every  $(x,y)=\Phi_A(z,t)$ with  $(z, t)\in E_A^{+}$,
		\begin{equation}\label{10-12}
		u_A (x,y) =g'(z+t) -g'(z) +A t=:w_A(z,t).
		\end{equation}
		Then  for every $(x,y)\in \Phi_A (E_A^{+})$, 
		\begin{equation}\label{1-6:bern}
		\frac{(A-M)\sqrt{2}}{\sqrt{A+M}} \, \sqrt{\delta(x,y)} \le u_A(x,y)\le \frac{(A+M)\sqrt{2}}{\sqrt{A-M}} \, \sqrt{\delta(x,y)}.
		\end{equation}
	\end{enumerate}
\end{lem}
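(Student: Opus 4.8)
The statement collects four elementary geometric facts about the change of variables $\Phi_A(z,t)=(z+t,Q_A(z,t))$, where $Q_A(z,t)=g(z)+g'(z)t-\tfrac A2t^2$. The unifying tool is the Taylor identity~\eqref{eqn:taylor}, which gives
\[
g(z+t)-Q_A(z,t)=\int_z^{z+t}[A+g''(u)](z+t-u)\,du,
\]
so that, using $\|g''\|_{L^\infty([-2a,2a])}\le M$ and $A\ge\overline A=10M^2+M+1>M$, one has the two-sided bound
\[
\tfrac{A-M}{2}\,t^2\le g(z+t)-Q_A(z,t)\le\tfrac{A+M}{2}\,t^2
\]
whenever $z,z+t\in[-\lambda a,\lambda a]$. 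I will establish this bound first and then read off each of (i)--(iv) from it.

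For part (ii), I would simply compute the Jacobian matrix $J_{\Phi_A}(z,t)$: the map is $(z,t)\mapsto(z+t,\;g(z)+g'(z)t-\tfrac A2t^2)$, so the columns are $(1,\;g'(z)+g''(z)t-At)^{\mathrm T}$ (derivative in $z$, wait: $\partial_z Q_A = g'(z)+g''(z)t$) — more carefully, $\partial_z x=1$, $\partial_t x=1$, $\partial_z Q_A=g'(z)+g''(z)t$, $\partial_t Q_A=g'(z)-At$; the determinant is $(g'(z)-At)-(g'(z)+g''(z)t)=-(A+g''(z))t$, giving $|\det J_{\Phi_A}|=(A+g''(z))|t|$ since $A+g''(z)\ge A-M>0$. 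This proves~\eqref{1-3:jacobian}. For part (i) one checks the set inclusions: if $(x,y)\in G$ then $x\in(-a,a)$ and $0\le g(x)-y\le a$; taking $z=x$, $t=0$ shows $(x,y)=\Phi_A(z,0)$ is in the image of $E_A^+$ only in the degenerate sense, so instead I solve $y=Q_A(z,t)$ for the correct $(z,t)$ — this is exactly part (iii), so I will prove (iii) first and then (i) follows because the $(z,t)$ produced there lies in $E_A^+$ with $0\le t\le a_1<a_0$, and the codomain inclusion $\Phi_A(E_A)\subset G_\ast$ follows from the two-sided bound (which forces $0\le g(z+t)-Q_A(z,t)\le\tfrac{A+M}{2}a_0^2=a\lambda$, and $|z+t|\le\lambda a$). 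Injectivity of $\Phi_A^{\pm}$ on $E_A^{\pm}$: on $E_A^+$, the level set $\{Q_A(z,t)=y\}$ meets each vertical line $x=z+t=$const; parametrize by $x$ and $t$ via $z=x-t$, then $y=Q_A(x-t,t)$; differentiating in $t$ at fixed $x$ gives $-\partial_z Q_A+\partial_t Q_A=-(g'(x-t)+g''(x-t)t)+(g'(x-t)-At)=-(A+g''(x-t))t$, which is $\le0$ for $t\ge0$ and vanishes only at $t=0$, so $y$ is strictly decreasing in $t$ on $E_A^+$; hence at most one preimage. The same computation with the sign reversed handles $E_A^-$.

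For part (iii), fix $(x,y)\in G$ and look for $(z,t)\in E_A^+$ with $z+t=x$ and $Q_A(z,t)=y$, i.e. $Q_A(x-t,t)=y$; since (as just computed) $t\mapsto Q_A(x-t,t)$ is continuous, equals $g(x)\ge y$ at $t=0$, and is strictly decreasing with derivative $-(A+g''(x-t))t\le-(A-M)t$, the value drops by at least $\tfrac{A-M}{2}t^2$; choosing $t$ so that $\tfrac{A-M}{2}t^2=g(x)-y\le a$ gives a solution with $0\le t\le\sqrt{2a/(A-M)}=a_1$, and $a_1<a_0$ because $A-M<2(A+M)/\lambda$ (using $\lambda<2$). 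Uniqueness is the strict monotonicity again. Finally for part (iv), from $u_A(x,y)=w_A(z,t)=g'(z+t)-g'(z)+At$ and the mean value theorem, $g'(z+t)-g'(z)=g''(\xi)t$ with $|g''(\xi)|\le M$, so $(A-M)t\le w_A(z,t)\le(A+M)t$ (for $t\ge0$); then substitute $t=\sqrt{2(g(z+t)-Q_A(z,t))/(A+M')}$ with the appropriate $M'\in[-M,M]$ coming from the two-sided bound, which gives $\sqrt{\tfrac{2}{A+M}\delta(x,y)}\le t\le\sqrt{\tfrac{2}{A-M}\delta(x,y)}$, and combining yields~\eqref{1-6:bern}.

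\textbf{Main obstacle.} None of the steps is deep; the work is in being careful about which parameters ($A$, $M$, $\lambda$) enter each inequality and in keeping the domain restrictions ($z,z+t\in[-\lambda a,\lambda a]$, $|t|\le a_0$) consistent so that all the Taylor bounds are legitimately applicable. The one genuinely fiddly point is verifying the codomain inclusion $\Phi_A(E_A)\subset G_\ast=\overline{G_\ast(\lambda)}$ sharply: I must confirm that the bound $g(z+t)-Q_A(z,t)\le\tfrac{A+M}{2}a_0^2$ evaluates to exactly $\lambda a$ with the chosen $a_0=\sqrt{2a\lambda/(A+M)}$, so that the image stays inside the slab of height $\lambda a$ below the graph, and simultaneously that $Q_A(z,t)$ never exceeds $g(z+t)$ (nonnegativity of $\delta$), which is the lower half of the two-sided bound. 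Once the two-sided Taylor estimate is in hand, everything else is bookkeeping.
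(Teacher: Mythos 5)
Your plan matches the paper's own proof: the same Taylor identity, the same two‑sided bound $\tfrac{A-M}{2}t^2\le g(z+t)-Q_A(z,t)\le\tfrac{A+M}{2}t^2$, the same direct Jacobian computation, the same monotonicity‑in‑$t$ argument for injectivity and for the existence in (iii) (the paper phrases it via an integral identity, you via the sign of $\frac{d}{dt}Q_A(x-t,t)=-(A+g''(x-t))t$, which is the same thing), and the same MVT estimate for (iv).

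There is, however, one genuine error in the verification of $a_1<a_0$. You write that $a_1<a_0$ ``because $A-M<2(A+M)/\lambda$ (using $\lambda<2$).'' Since
\[
a_1^2=\frac{2a}{A-M},\qquad a_0^2=\frac{2a\lambda}{A+M},
\]
the correct equivalence is
\[
a_1<a_0 \iff \frac{1}{A-M}<\frac{\lambda}{A+M}\iff A+M<\lambda(A-M)\iff A>M\,\frac{\lambda+1}{\lambda-1},
\]
which is a \emph{lower} bound on $A$, not something that follows from $\lambda<2$. The inequality you quoted, $\lambda(A-M)<2(A+M)$, points in the opposite direction and is essentially vacuous. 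The lemma's hypothesis $A\ge\overline A=10M^2+M+1$ is there precisely to make the correct inequality hold: with the standing normalization $\lambda=1+1/M$ one has $M\frac{\lambda+1}{\lambda-1}=2M^2+M<\overline A$. (The same hypothesis is also what guarantees $x-t\in[-\lambda a,\lambda a]$ for $t\in[0,a_1]$ in part (iii), a domain check you correctly flag as bookkeeping but should actually carry out to the same standard.) Apart from this slip, the argument is sound.
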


Lemma~\ref{lem-1-1:Berns} with  several different parameters $A$  is  required in our proof of Theorem~\ref{THM:2D BERN} for $r>1$.   However,    for the proof in  the case of $r=1$,  it will be enough to have this lemma  for  one  fixed parameter $A\ge \bar{A}$ only. 
\begin{proof} 
	
	(i) First, the relation $G\subset \Phi_A(E_A^{+})$ follows directly from the statement (iii), which will proven later. Next, we prove  that $\Phi_A (E_A) \subset G_\ast.$ Indeed, if  $(x,y) =\Phi_A(z,t) $ with $(z, t) \in E_A$, then  $x=z+t\in [-a\lambda,a\lambda]$ and by~\eqref{eqn:taylor},  
	\begin{equation} \label{1-4:taylor} 
	g(x)-y=g(z+t)-Q_A(z,t) =\int_z^{z+t} (g''(u)+A) (z+t-u)\, du.
	\end{equation} 
	Since $|t|\leq a_0$, this implies  that 
	$0\leq g(x)-y\leq (M+A) a_0^2/2 = a\lambda$, which shows that  $(x, y)\in G_\ast$.
	
	Finally, we show that both of the mappings $\Phi_A^{+}$ and $\Phi_A^{-}$ are injective. 	  Assume that $\Phi_A(z_1, t_1)=\Phi_A(z_2, t_2)$ for some  $(z_1, t_1), (z_2, t_2) \in E_A$ with $t_1t_2\ge 0$ and $t_2\ge t_1$.  Then $z_1+t_1=z_2+t_2=:\bar{x}$, $Q_A(z_1,t_1)=Q_A(z_2,t_2)$, and hence, using~\eqref{1-4:taylor}, we obtain that for $i=1,2$,
	\begin{align*}
	g(\bar{x}) -Q_A(z_i,t_i) =g(z_i+t_i)-Q_A(z_i,t_i) =\int_0^{t_i} [ g''(\bar{x}-v) +A] v \, dv,
	\end{align*}
	which implies 
	\begin{equation}\label{1-5:inj}
	\int_{t_1}^{t_2} \Bl( g''(\bar{x} -v)+A\Br) v\, dv =0.
	\end{equation}
	Since $g''(\bar{x} -v)+A\ge A-M>0$ and $v$ doesn't change sign on the interval $[t_1, t_2]$,~\eqref{1-5:inj} implies that  $t_1=t_2$, which  in turn implies that $z_1=z_2$. Thus, both  $\Phi_A^+$ and $\Phi_A^{-}$ are  injective.

	(ii) Equation~\eqref{1-3:jacobian}  can be verified by straightforward calculations. 
	
	(iii)  It suffices to show that the existence of the  representation $(x,y)=\Phi_A (z,t)\in G$, as the uniqueness follows directly  from the already proven fact that the mapping  $\Phi_A^{+}$ is injective.  
	Note first that  $-a\leq x\leq a$, and  $g(x)-a\leq y\leq g(x)$. 
	Since   $a\ge 1$ and $A\ge \bar {A} >2M^2+M$, it follows that   for every  $t\in [0, a_1]$, 
	$-a\lambda<-a-a_1\leq x-t \leq a,$
	Thus, 
	using~\eqref{eqn:taylor}, we obtain that for any $t\in [0, a_1]$, 
	\[
	h(t):= g(x) -Q_A(x-t,t)=\int_{x-t} ^x (g''(u) +A) (x-u)\, du.
	\]
	Clearly, $h$ is a continuous function on $[0, a_1]$ satisfying that $h(0)=0$ and 
	\[
	h(a_1) \ge (A-M) \int_0^{a_1} v\, dv =  a.
	\]
	Since $0\leq g(x)-y\leq a$, it follows  by the Intermediate Value theorem  that there exists $t_0\in [0, a_1]$ such that 
	$h(t_0) =g(x)-y$, which in turn implies that  $y=Q_A(x-t_0,t_0)$. This shows that  $(x,y) =\Phi_A(x-t_0, t_0)$. Since $A\ge {\overline{A}}>M+\frac{2M}{\lambda-1}=2M^2+M$, we have 	$a_1<a_0$.

	(iv) Let $(x,y)=\Phi_A (z,t)$ with $(z,t)\in E_A^{+}$. 
	By~\eqref{10-12} and the mean value theorem, we have 
	\begin{equation}\label{10-16}
	(A-M)t\le u_A(x,y)\le (A+M)t.
	\end{equation}
	On the other hand, however,  using~\eqref{eqn:taylor}, we have that 
	\begin{equation*}
	g(x) -y =g(z+t) -Q_A(z,t) =\int_z^{z+t} ( g''(u) +A) (z+t-u) \, du,
	\end{equation*}
	which in particular  implies 
	\begin{equation}\label{10-17}
	(A-M)\tfrac{t^2}{2}\le g(x)-y\le (A+M)\tfrac{t^2}{2}.
	\end{equation}
	Finally, combining~\eqref{10-16} with~\eqref{10-17}, we deduce the desired estimates 
	\eqref{1-6:bern}.
\end{proof}

We are now in a position to prove  Theorem~\ref{THM:2D BERN} for $r=1$.

\begin{proof}[Proof of  Theorem~\ref{THM:2D BERN} for $r=1$]
	If   $f\in\Pi_n^2$ and $x\in [-2a, 2a]$, then  ${\mathcal{D}}_x f(x,\cdot)$ is a polynomial  of degree at most $n$ of a single variable.  
	Since  the operators $\mathcal{D}$ and $\p_2$ are commutative,  it follows from Lemma~\ref{lem:univ BM} that 	
	$$\|\vi_n^{i}\mathcal{D}\partial_2^{i+j} f\|_{L^p(G)}=\|\vi_n^{i}\partial_2^{i+j} \mathcal{D} f\|_{L^p(G)}\leq C_\al n^{i+2j} \|\mathcal{D} f\|_{L^p(G_\ast(\sqrt{\ld}))}.$$
	Thus,  it is sufficient  to show that for any $\ld\in (1,2)$, 	\begin{equation}\label{10-19:es}
	\|\mathcal{D}f\|_{L^p(G)} \le C n \|f\|_{L^p(G_\ast)},\  \  \forall f\in\Pi_n^2, 
	\end{equation} 
	where $G_\ast=G_\ast(\ld)$. Recall that we may assume, without loss of generality,   $M=\f 1{\ld-1}$ satisfies~\eqref{10-7}.  
	Also, in our proof below, we shall always  assume that $p<\infty$.  The  case $p=\infty$ can be treated similarly, and in fact, is simpler.

	To prove~\eqref{10-19:es}, we set 
	$$	I:=\|\mathcal{D}f\|_{L^p(G)}^p=	\iint_{G} \Bl|{\mathcal{D}}_x f(x,y)|^p\, dxdy,$$
	and let  $A\ge \bar{A}$ be a fixed parameter. Using   Lemma~\ref{lem-1-1:Berns} (i) and (ii), and  performing the  change of variables $x=z+t$ and $y=Q_A(z, t)$,  we   obtain 
	\begin{align*}
	I& =\iint_{(\Phi_A^{+})^{-1} (G)} \Bl| {\mathcal{D}}_{z+t} f(\Phi_A(z,t))\Br|^p (A +g''(z)) t \, dzdt.
	\end{align*}
	A straightforward calculation shows that  for each $(z,t)\in E_A$,
	\begin{equation}\label{eqn:part_der} 
	{\mathcal{D}}_{z+t} f( \Phi_A (z,t))=\f d{dt} \Bl[ f( \Phi_A(z,t))\Br] +w_A(z,t)\p_2 f (\Phi_A(z,t)),
	\end{equation}
	where $w_A(z,t)= g'(z+t)-g'(z) +At$. 
	Thus,     $I\leq 2^p  (I_1+I_2)$, where 
	\begin{align}
	I_1&:=\iint_{(\Phi_A^+)^{-1} (G)} \Bl| \f d {dt} \Bl[ f(\Phi_A(z,t))\Br]\Br|^p (A +g''(z)) t\, dzdt,\label{I1:def}\\
	I_2&:=  \iint_{(\Phi_A^+)^{-1} (G)} |w_A(z,t)|^p |\p_2 f(\Phi_A(z,t))|^p (A +g''(z)) t \, dzdt.\notag
	\end{align}
	
	For  the  double integral $I_2$,   performing the change of variables $(x,y)=\Phi_A^{+} (z,t)$, and 
	using    Lemma~\ref{lem-1-1:Berns} (i),  (ii) and (iv), we obtain   
	\begin{align}\label{1-7:Bern}
	I_2& =\iint_G |u_A(x,y)|^p |\p_2 f(x,y)|^p\, dxdy\leq C  \iint_G  |\sqrt{\da(x,y)}\p_2 f(x,y)|^p\, dxdy\notag\\
	&=\int_{-a}^a \int_{g(x)-a}^{g(x)} |\sqrt{\da(x,y)}\p_2 f(x,y)|^p\, dydx.\notag
	\end{align}
	It then follows by 
	the Markov-Bernstein-type  inequality~\eqref{markov-bern} that 
	\begin{align*}
	I_2\leq 	C n^p \int_{-a}^a  \int_{g(x)-\ld a}^{g(x)} |f(x,y)|^p\, dydx\leq C n^p \|f\|_{L^p(G_\ast)}^p.
	\end{align*}
	
	Thus, it remains to prove  that \begin{equation}\label{10-23-0}
	I_1	\leq C_p n^p \|f\|_{L^p(G_\ast)}^p.
	\end{equation}
	Since $A\ge  \bar{A}$, we have $a_0 <\f {a} {2M}$. Thus, 
	using  Lemma~\ref{lem-1-1:Berns} (iii),  we obtain  that in the $zt$-plane, 
	\begin{equation}\label{10-24-0}
	(\Phi_{A}^{+})^{-1} (G) \subset [-a-a_1,a]\times [0, a_1]\subset [-a-a_0, a] \times [-a_0, a_0] \subset E_{A}.
	\end{equation}	
	It follows that 
	\begin{align*}
	I_1		&\leq C(A) \int_{- a-a_1 }^{a} \Bl[ \int_{0}^{a_1}\Bl| \f d{dt} \Bl[ f(z+t, Q_A(z, t))\Br] \Br|^p |t|\, dt\Br]\, dz.
	\end{align*}
	Using Bernstein's inequality with doubling weights for algebraic polynomials (see \cite[Theorem 7.3]{MT2} for $p\ge 1$, and \cite[Theorem 3.1]{Er} for $0<p<1$),  we obtain 
	\begin{align*}	I_1	&\leq C(M,p) n^p  \int_{-a-a_1 }^{a} \Bl[ \int_{-a_0}^{a_0}\Bl| f(z+t, Q_A(z, t)) \Br|^p |t|\, dt\Br]\, dz\\
	&	\leq C(M,p)  n^p \iint_{E_A} |f(z+t, Q_A(z,t))|^p|t|\, dzdt. \end{align*}
	Splitting this last double integral into two parts $\iint_{E_A^{+}}+\iint_{E_A^{-}}$, and applying  the change of variables $(x,y)=\Phi_A(z,t)$ to each of them  separately, we obtain that 
	\begin{align*}
	I_1&	\leq   C(M,p)n^p \iint_{E_A^{+}\cup E_A^{-}} |f(\Phi_A(z,t))|^p(A +g''(z)) |t|\, dzdt \\
	&\leq C(M,p) n^p \Bl[ \iint_{\Phi(E_A^{+})}  |f(x,y)|^p dxdy+\iint_{\Phi(E_A^{-})}  |f(x,y)|^p dxdy \Br]\\ &\leq C(M,p) n^p \|f\|_{L^p(G_\ast)}^p,	 
	\end{align*}
	where we used Lemma~\ref{lem-1-1:Berns}~ (i), (ii) in the second step,  and  the fact that $\Phi_A (E_A) \subset G_\ast$ (i.e.,  Lemma~\ref{lem-1-1:Berns}~ (i)) in the third step. 
\end{proof} 

\section{Proof of Theorem~\ref{THM:2D BERN} for $r>1$}\label{subsection:10-2}  The proof for $r>1$   uses induction and   Lemma~\ref{lem-1-1:Berns} for   several distinct  parameters $A$. Recall that $\Phi_A (z,t)=(z+t, Q_A(z,t))$ for $(z,t)\in E_A$, where $$ Q_A(z,t):=g(z)+g'(z) t -\f A2t^2,  \   \  z\in [-\ld a, \ld a],\  \ t\in\RR. $$ 

The  proof of Theorem~\ref{THM:2D BERN} for $r>1$ relies on the following two lemmas:
\begin{lem}\label{lem:bern repres} 
	Let  $(x_0,y_0)\in G$ and let $(z_0, t_0) \in E_A^{+} $ be such that $(x_0, y_0)
	=\Phi_A (z_0, t_0)$, where $A\ge \bar{A}$ is an arbitrarily given parameter. 
	Let $f\in C^\infty (\RR^2)$ and  define  $F(t):=f(\Phi_A(z_0, t))$ for  $t\in\R$. Then for any $r\in\NN$,
	\begin{align}\label{eqn:f_s1_s2_s3}
	\frac{d^rF(t_0)}{dt^r}
	= S_1(A,x_0,y_0)+S_2(A,x_0,y_0)+S_3(A,x_0,y_0),
	\end{align}
	where
	\begin{align}
	S_1(A,x_0,y_0) &:=  \sum_{j=0}^r \binom{r} j (-u_A(x_0, y_0))^j ({\mathcal{D}}^{r-j} \partial_2^{j} f)(x_0,y_0),\label{s1}\\
	S_2(A,x_0,y_0) &:=  \sum_{\sub{i+j+2k=r\\
			k\ge1}} \frac{r!}{i!j!k!2^k}(-u_A(x_0, y_0))^j(-A)^k ({\mathcal{D}}^{i} \partial_2^{j+k} f) (x_0,y_0),\label{s2} \\
	S_3(A,x_0,y_0) &:=  \sum_{2(i+j)=r} \frac{r!}{i!j!2^{r/2}} (-A)^j ({\mathcal{D}}^{i} \partial_2^j f) (x_0,y_0).\label{s3}
	\end{align}
\end{lem}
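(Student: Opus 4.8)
\textbf{Proof strategy for Lemma~\ref{lem:bern repres}.}

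The plan is to compute $\frac{d^r F(t_0)}{dt^r}$ directly by iterated application of the chain rule, keeping track of how $z_0$ enters only as a fixed parameter. Write $\Phi_A(z_0,t) = (z_0+t,\, Q_A(z_0,t))$ with $Q_A(z_0,t) = g(z_0)+g'(z_0)t - \frac A2 t^2$, so that $\partial_t(z_0+t) = 1$ and $\partial_t Q_A(z_0,t) = g'(z_0) - At$. Then for a smooth $f$,
\[
F'(t) = (\partial_1 f)(\Phi_A(z_0,t)) + (g'(z_0)-At)(\partial_2 f)(\Phi_A(z_0,t)).
\]
The key observation is that $\partial_1 + g'(z_0)\partial_2 = {\mathcal{D}}_{z_0}$, so the ``derivative operator'' acting under the chain rule splits as ${\mathcal{D}}_{z_0} - At\,\partial_2$; that is, for any smooth $h$,
\[
\frac{d}{dt}\bigl[h(\Phi_A(z_0,t))\bigr] = \bigl(({\mathcal{D}}_{z_0} - At\,\partial_2)h\bigr)(\Phi_A(z_0,t)).
\]
First I would establish this first-order identity, then iterate it $r$ times. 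Since ${\mathcal{D}}_{z_0}$ and $\partial_2$ commute (noted in the paper), and the ``coefficient'' $-At$ depends on $t$, each iteration either applies the operator ${\mathcal{D}}_{z_0} - At\,\partial_2$ or differentiates an existing factor of $t$ (producing a factor of $-A$ and lowering the power of $t$ by one, with an extra $\partial_2$ already attached). Carrying out this bookkeeping via a Leibniz-type expansion, $\frac{d^rF}{dt^r}$ becomes a sum over the ways of distributing $r$ ``slots'' among three moves: apply ${\mathcal{D}}_{z_0}$ (say $i$ times), apply $-At\,\partial_2$ keeping the $t$ (say $j$ times), or apply $-At\,\partial_2$ and then later kill the $t$ (contributing a $k$-fold product of $(-A)$ with a combinatorial weight). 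Evaluating at $t=t_0$ and using $w_A(z_0,t_0) = g'(z_0+t_0)-g'(z_0)+At_0 = u_A(x_0,y_0)$ — wait, one must be careful: evaluating $(g'(z_0)-At)$ at $t_0$ gives $g'(z_0)-At_0$, whereas $u_A(x_0,y_0) = w_A(z_0,t_0) = g'(z_0+t_0)-g'(z_0)+At_0$.

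This sign/identity reconciliation is the step I expect to be the main obstacle, and it is where the decomposition into $S_1,S_2,S_3$ really comes from. The resolution is to rewrite $g'(z_0) = g'(z_0+t_0) - \bigl(g'(z_0+t_0)-g'(z_0)\bigr)$ and to note that at the evaluation point $\Phi_A(z_0,t_0)=(x_0,y_0)$ one has ${\mathcal{D}}_{z_0} = {\mathcal{D}}_{x_0 - t_0}$, which is \emph{not} the operator ${\mathcal{D}}_{x_0} = {\mathcal{D}}$ appearing in the statement. One must therefore also use ${\mathcal{D}}_{z_0} = {\mathcal{D}}_{x_0} - (g'(x_0)-g'(z_0))\partial_2 = {\mathcal{D}} - (g'(z_0+t_0)-g'(z_0))\partial_2$; combining the two rewritings turns each factor of $(g'(z_0)-At_0)$ sitting in front of a $\partial_2$ into $-u_A(x_0,y_0)\,\partial_2$ plus a ${\mathcal{D}}$-correction, and likewise converts every ${\mathcal{D}}_{z_0}$ into ${\mathcal{D}} - (\cdots)\partial_2$. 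After collecting terms by the total number of $\partial_2$'s and the number of explicit $(-A)$ factors, the three families emerge: the ``pure'' terms with no $(-A)$ factor (coefficient $(-u_A)^j$ with $i+j=r$) give $S_1$; the terms with $k\ge 1$ factors of $-A$ and at least one surviving $-u_A$ give $S_2$; and the ``all $t$'s killed'' terms, which occur only when $r$ is even and $i+j$ slots were each used twice, give $S_3$ with coefficient $(-A)^j/2^{r/2}$ and the multinomial weight $r!/(i!j!2^{r/2})$.

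Concretely, the steps I would carry out in order are: (1) prove the first-order chain-rule identity $\frac{d}{dt}[h\circ\Phi_A(z_0,\cdot)] = (({\mathcal{D}}_{z_0}-At\,\partial_2)h)\circ\Phi_A(z_0,\cdot)$; (2) iterate to get $\frac{d^rF}{dt^r} = \bigl(\prod \text{(operators with }t\text{-dependent coefficients)}\bigr)$ and expand via the generalized Leibniz rule into a sum indexed by which slots carry ${\mathcal{D}}_{z_0}$, which carry $(-At)\partial_2$, and which of the latter later get differentiated down; (3) evaluate at $t=t_0$, substitute the algebraic identities $g'(z_0)-At_0 \leftrightarrow -u_A(x_0,y_0)$ and ${\mathcal{D}}_{z_0}\leftrightarrow {\mathcal{D}}-(g'(x_0)-g'(z_0))\partial_2$, and note $g'(x_0)-g'(z_0) = g'(z_0+t_0)-g'(z_0)$ relates to $u_A$ via $u_A = (g'(z_0+t_0)-g'(z_0)) + At_0$; (4) regroup the resulting multinomial sum by the exponent of $(-A)$ and separate out the even-$r$ top-degree family, matching the coefficients against $S_1$, $S_2$, $S_3$. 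The combinatorial coefficients ($\binom rj$ in $S_1$, $\frac{r!}{i!j!k!2^k}$ in $S_2$, $\frac{r!}{i!j!2^{r/2}}$ in $S_3$) should fall out of the multinomial/Leibniz counting once the bookkeeping in step (2) is set up carefully; I would verify them on the small cases $r=1,2$ to fix conventions before writing the general induction.
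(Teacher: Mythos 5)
Your overall strategy --- compute $d^rF/dt^r$ by an iterated chain-rule (a Fa\`a di Bruno expansion) and then identify the coefficients --- is the same as the paper's; and your diagnosis of the obstacle is exactly right: the raw chain rule produces the operator ${\mathcal{D}}_{z_0} = \partial_1 + g'(z_0)\partial_2$ (not ${\mathcal{D}} = {\mathcal{D}}_{x_0}$) and the coefficient $g'(z_0)-At_0$ (not $-u_A(x_0,y_0)$), so a basis change is needed. Where the paper differs is in \emph{when} it does that change. The paper defines the sheared function $\wt f(u,v):=f(u,\,g'(x_0)u+v)$ once and for all, rewrites $F(t)=\wt f\bigl(z_0+t,\,Q(t)\bigr)$ with $Q(t):=Q_A(z_0,t)-g'(x_0)(z_0+t)$, and then invokes a citable multivariate Fa\`a di Bruno identity (Lemma~\ref{lem:mult faa di bruno}, from Constantine--Savits). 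Because $\partial_1^i\wt f = {\mathcal{D}}_{x_0}^i f$, $\partial_2\wt f = \partial_2 f$, $Q'(t_0) = g'(z_0)-At_0-g'(x_0) = -u_A(x_0,y_0)$, and $Q''\equiv -A$, the three families $S_1,S_2,S_3$ and their multinomial coefficients fall out of that formula with no further regrouping. In contrast, your plan carries out the iteration in the unsheared basis and then converts via ${\mathcal{D}}_{z_0} = {\mathcal{D}} - (g'(x_0)-g'(z_0))\partial_2$ and $g'(z_0)-At_0 = g'(x_0)-u_A$. That is mathematically sound, but it forces a \emph{second} multinomial expansion and a nontrivial re-collection to show that the cross terms reassemble into exactly the powers $(-u_A)^j$ with the coefficients $\binom rj$, $\tfrac{r!}{i!j!k!2^k}$, $\tfrac{r!}{i!j!2^{r/2}}$; this re-collection is the entire content of the lemma and is not discharged by the informal ``distribute $r$ slots among three moves'' tally nor by checking $r=1,2$. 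If you want to carry out your version, you should write down a precise Fa\`a di Bruno-type identity (as the paper does) and prove the recollection step carefully, rather than reasoning about ``slots'' and ``moves'' heuristically; otherwise, adopting the paper's one-shot shear is both cleaner and less error-prone.
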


\begin{lem}\label{lem:A_0 A_r} Given a positive integer $r$, there exist  constants $A_0, A_1, \dots, A_r$ depending only on $M$, $\ld$  and $r$ such that  $\bar{A}\leq A_0<\dots<A_r$ and  for any $(x_0,y_0)\in G$, and  $f\in C^r(\RR^2)$,
	\begin{equation}\label{eqn:a_v_bound}
	\max_{0\le v\le r}\left|(\delta(x_0,y_0))^{v/2}({\mathcal{D}}^{r-v}\partial_2^vf)(x_0,y_0)\right| \le c \max_{0\le v\le r} \left|S_1(A_v,x_0,y_0)\right|,
	\end{equation}
	where the constant $c$ depends only on $M$, $\ld$, $r$ and  $a$, and $	S_1(A,x_0,y_0)$ is given in~\eqref{s1}. 
\end{lem}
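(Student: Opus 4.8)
\textbf{Plan of proof for Lemma~\ref{lem:A_0 A_r}.}

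The idea is to view~\eqref{eqn:a_v_bound} as a statement about inverting a fixed $(r+1)\times(r+1)$ matrix. Fix $(x_0,y_0)\in G$ and abbreviate $\delta:=\delta(x_0,y_0)$, $D_v:=(\mathcal D^{r-v}\partial_2^v f)(x_0,y_0)$ for $0\le v\le r$, and $u_v:=u_{A_v}(x_0,y_0)$. By Lemma~\ref{lem-1-1:Berns}~(iv), for each choice of parameter $A_v\ge\bar A$ we have $u_v\sim\sqrt\delta$ with constants depending only on $M$, $\ld$ and $A_v$; in fact $u_v=c_v(x_0,y_0)\sqrt\delta$ for some quantity $c_v(x_0,y_0)$ lying in a fixed compact subinterval of $(0,\infty)$. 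Then from the definition~\eqref{s1},
$$
S_1(A_v,x_0,y_0)=\sum_{j=0}^r\binom rj(-u_v)^jD_j=\sum_{j=0}^r\binom rj(-c_v)^j\,\delta^{j/2}D_j,
$$
so setting $\widetilde D_j:=\delta^{j/2}D_j$ and $\vec S:=(S_1(A_0,\cdot),\dots,S_1(A_r,\cdot))^{t}$, $\vec{\widetilde D}:=(\widetilde D_0,\dots,\widetilde D_r)^t$, we get $\vec S=\mathcal M\,\vec{\widetilde D}$, where $\mathcal M=\big(\binom rj(-c_v)^j\big)_{0\le v,j\le r}$. The left side of~\eqref{eqn:a_v_bound} is exactly $\max_v|\widetilde D_v|\sim\|\vec{\widetilde D}\|_\infty$, and the right side is $\|\vec S\|_\infty$, so it suffices to show $\|\mathcal M^{-1}\|_\infty$ is bounded by a constant depending only on $M,\ld,r,a$ — which in particular requires showing $\mathcal M$ is invertible with the inverse uniformly controlled over all admissible $(x_0,y_0)$.

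The matrix $\mathcal M$ is (up to the harmless nonzero row-independent factors $\binom rj$ and column structure) a Vandermonde-type matrix in the nodes $-c_0,\dots,-c_r$: writing $\mathcal M=\mathcal V\cdot\operatorname{diag}(\binom r0,\dots,\binom rr)$ where $\mathcal V=\big((-c_v)^j\big)_{v,j}$ is the Vandermonde matrix of the nodes $\{-c_v\}$. Hence $\det\mathcal M=\big(\prod_j\binom rj\big)\prod_{v<w}(c_w-c_v)$. So the whole point is to \emph{choose} the parameters $A_0<A_1<\dots<A_r$ (all $\ge\bar A$) so that the resulting quantities $c_0(x_0,y_0),\dots,c_r(x_0,y_0)$ stay pairwise separated, uniformly in $(x_0,y_0)\in G$, by a gap bounded below by a positive constant depending only on $M,\ld,r$. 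This is where the explicit two-sided bound~\eqref{1-6:bern} is used quantitatively: from $u_A=\big(g'(z+t)-g'(z)+At\big)$ and $g(x)-y=\int_z^{z+t}(g''+A)(z+t-u)\,du$ one computes $c_A(x_0,y_0)=u_A/\sqrt\delta$ explicitly in terms of $t_0$ and the increments of $g'$; expanding, $c_A=\sqrt{2A}\,(1+O(t_0))$ with the $O(t_0)$ term bounded by $C(M)\,t_0\le C(M)a_1(M,a,A)$, which can be made arbitrarily small by taking the base size $a$ small (we are free to shrink $a$ by Remark~\ref{rem-2-1-0}). Thus $c_{A_v}$ is close to $\sqrt{2A_v}$; choosing e.g. $A_v$ with $\sqrt{2A_{v+1}}-\sqrt{2A_v}\ge 2$ (say $A_v=\bar A+v^2$ suitably scaled) forces $|c_{A_v}-c_{A_w}|\ge 1$ for $v\ne w$ once $a$ is small enough, uniformly in $(x_0,y_0)$.

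With the nodes $\{-c_v\}$ pairwise separated by at least $1$ and contained in a fixed bounded interval $[-\sqrt{2A_r}-1,-\sqrt{2\bar A}+1]$, the Vandermonde matrix $\mathcal V$ has $|\det\mathcal V|=\prod_{v<w}|c_w-c_v|$ bounded below by a positive constant and all entries bounded above, so $\mathcal V^{-1}$ (and hence $\mathcal M^{-1}$, since the diagonal factor $\operatorname{diag}(\binom rj)$ is fixed and invertible) has $\ell^\infty\to\ell^\infty$ operator norm bounded by a constant depending only on $M,\ld,r$. Applying $\mathcal M^{-1}$ to $\vec S=\mathcal M\vec{\widetilde D}$ gives $\|\vec{\widetilde D}\|_\infty\le\|\mathcal M^{-1}\|_\infty\,\|\vec S\|_\infty$, which is precisely~\eqref{eqn:a_v_bound}; the dependence of $c$ on $a$ enters only through the smallness threshold for $a$ and the implicit constants in~\eqref{1-6:bern}.

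\textbf{Main obstacle.} The routine part is the linear algebra; the real work is the \emph{uniform} node-separation estimate. One must show that $c_{A}(x_0,y_0)=u_A(x_0,y_0)/\sqrt{\delta(x_0,y_0)}$, as a function of the point $(x_0,y_0)\in G$, is pinned near $\sqrt{2A}$ with an error controlled solely by $M$ and the (shrinkable) base size $a$ — uniformly down to the essential boundary where $\delta\to0$ and $t_0\to0$. This needs a careful expansion of both $u_A=g'(z+t)-g'(z)+At$ and $\delta=\int_z^{z+t}(g''+A)(z+t-u)\,du$ in powers of $t$, keeping explicit track of the $C^2$-bound $M$, rather than the mere equivalence $u_A\sim\sqrt\delta$; and it must be checked that shrinking $a$ (legitimate by Remark~\ref{rem-2-1-0}) does not degrade the other constants in the statement.
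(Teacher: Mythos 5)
Your overall strategy — recast~\eqref{eqn:a_v_bound} as invertibility of a Vandermonde system in the nodes $B_v=-u_{A_v}(x_0,y_0)/\sqrt{\delta(x_0,y_0)}$, and choose the $A_v$'s so that the nodes are uniformly separated — is exactly the paper's. The gap is in your claimed mechanism for the separation.

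You assert $c_A(x_0,y_0)=u_A/\sqrt\delta=\sqrt{2A}\,(1+O(t_0))$ with the error bounded by $C(M)t_0$. This is not correct. Writing $u_A=g'(z+t_0)-g'(z)+At_0=(A+g''(\zeta))t_0$ for some $\zeta\in[z,z+t_0]$, and $\delta=\int_0^{t_0}(g''(z+t_0-s)+A)s\,ds=(A+\mu)t_0^2/2$ for some $\mu\in[-M,M]$, one gets
\[
c_A=\frac{(A+g''(\zeta))\sqrt2}{\sqrt{A+\mu}},
\]
and as $t_0\to0$ both $g''(\zeta)$ and $\mu$ tend to $g''(z)$, so $c_A\to\sqrt{2(A+g''(z))}$. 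The zeroth-order discrepancy $\sqrt{2(A+g''(z))}-\sqrt{2A}$ is of size $O(M/\sqrt A)$ and depends on the point through $g''(z)\in[-M,M]$; it does \emph{not} vanish as $t_0\to0$. Consequently, shrinking the base size $a$ (hence $t_0$) cannot pin $c_A$ near $\sqrt{2A}$, and your plan to force separation by combining $\sqrt{2A_{v+1}}-\sqrt{2A_v}\ge 2$ with small $a$ fails. There is also a structural problem: in the lemma the constants $A_0,\dots,A_r$ must depend only on $M,\lambda,r$ (not on $a$), and $a$ is fixed by the given domain $G$, so shrinking $a$ is not an available move here even in principle.

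The correct repair, which is what the paper does, is to use the two-sided bound~\eqref{1-6:bern} to control the spread multiplicatively: $c_A\in\bigl[\tfrac{(A-M)\sqrt2}{\sqrt{A+M}},\,\tfrac{(A+M)\sqrt2}{\sqrt{A-M}}\bigr]$, an interval whose width relative to $\sqrt{2A}$ shrinks as $A\to\infty$. Choosing the $A_v$'s growing fast enough (the paper arranges $2\,c_{A_v}<c_{A_{v+1}}$ pointwise by making the lower endpoint for $A_{v+1}$ exceed twice the upper endpoint for $A_v$) gives uniform node separation $\min_{i\ne j}|B_i-B_j|\ge c_M>0$ together with a uniform upper bound, after which Cramer's rule (or your $\|\mathcal M^{-1}\|_\infty$ bound) finishes the argument. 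So the mechanism you need is largeness of $A$ relative to $M$, not smallness of $a$ or $t_0$.
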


For the proof of Lemma~\ref{lem:bern repres}, 
we need the following additional  lemma for  computing  higher order derivatives of certain composite functions: 
\begin{lem}\label{lem:mult faa di bruno}
	Let $f\in C^\infty (\R^2)$ and  define  $F(t):= f(z+t,Q(t))$ for  $t\in\R$, where $z\in\RR$ and $Q$  is a univariate  polynomial of degree at most $2$.  Then for any $r\in\NN$,
	\begin{align*}
	\frac{d^rF}{dt^r} =  \sum_{i+j+2k=r} \frac{r!}{i!j!k!2^k}(Q')^j(Q'')^k (\partial_1^{i} \partial_2^{j+k}  f) + \sum_{2(i+j)=r} \frac{r!}{i!j!2^{r/2}} (Q'')^j (\partial_1^{i} \partial_2^j  f), 
	\end{align*}
	where the summations are taken over non-negative integers $i$, $j$, $k$, and the partial derivatives are evaluated at $(z+t,Q(t))$.
\end{lem}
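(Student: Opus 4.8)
\textbf{Proof plan for Lemma~\ref{lem:mult faa di bruno}.}

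The plan is to prove this by a direct application of the multivariate Fa\`a di Bruno formula, exploiting the crucial fact that $Q$ has degree at most $2$, so $Q''$ is constant and $Q^{(m)}=0$ for $m\ge 3$. First I would set up notation: write $u(t)=(z+t,Q(t))$, so $u'(t)=(1,Q'(t))$ and $u''(t)=(0,Q'')$, while $u^{(m)}(t)=0$ for $m\ge3$. Then $\dfrac{d^rF}{dt^r}$ is a sum over ways of grouping the $r$ differentiations among the two coordinate components, where each ``block'' of size $m$ contributes a factor of the $m$-th derivative of the corresponding component; only blocks of size $1$ and $2$ survive. A block of size $1$ hitting the first component contributes $1$ (and a $\partial_1$), a block of size $1$ hitting the second component contributes $Q'$ (and a $\partial_2$), a block of size $2$ hitting the first component contributes $0$, and a block of size $2$ hitting the second component contributes $Q''$ (and a $\partial_2$).

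Next I would carry out the bookkeeping. Suppose there are $i$ singleton blocks of type ``first component'', $j$ singleton blocks of type ``second component'', and $k$ doubleton blocks (necessarily of type ``second component''); then $i+j+2k=r$, the total number of $\partial_1$'s is $i$ and the total number of $\partial_2$'s is $j+k$. The number of set partitions of $\{1,\dots,r\}$ into $i$ labelled singletons, $j$ labelled singletons and $k$ labelled pairs, after accounting for the indistinguishability within each of the three categories, is the standard multinomial/Fa\`a di Bruno coefficient $\dfrac{r!}{i!\,j!\,k!\,2^k}$. Collecting terms with $k\ge1$ and separately with $k=0$: when $k=0$ we get $i+j=r$ and the term $\dfrac{r!}{i!\,j!}(Q')^j(\partial_1^i\partial_2^j f)$, which is absorbed into the first sum (taking $k=0$ there); however the stated formula instead splits off the terms with $2(i+j)=r$ — so I would reconcile this by noting the first displayed sum is over $i+j+2k=r$ with $k$ ranging over all $k\ge0$, and then observe that when one instead organizes by whether the ``pure $Q''$'' contribution (all differentiations in doubleton blocks, i.e. $Q'$ absent) is isolated, one recovers exactly the two-sum form written. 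Actually the cleanest route: prove the single-sum identity $\dfrac{d^rF}{dt^r}=\sum_{i+j+2k=r}\dfrac{r!}{i!j!k!2^k}(Q')^j(Q'')^k(\partial_1^i\partial_2^{j+k}f)$ first, then show it equals the two-sum expression in the statement by checking that the $k=r/2$, $j=0$ slice of the general sum is precisely $\sum_{2(i+j')=r}\dfrac{r!}{i!j'!2^{r/2}}(Q'')^{j'}(\partial_1^i\partial_2^{j'}f)$ — wait, this needs care, so I would instead simply verify both expressions agree by a generating-function / exponential-formula argument: $F(t)$ is obtained by substituting $u(t)$ into $f$, and $\sum_r \dfrac{d^rF(t_0)}{dt^r}\dfrac{s^r}{r!}$ equals the formal Taylor expansion of $f$ at $u(t_0)$ evaluated at $u(t_0+s)-u(t_0)=(s,\,Q'(t_0)s+\tfrac12 Q''s^2)$, i.e.
\[
f\bigl(u(t_0)+(s,\,Q's+\tfrac12 Q''s^2)\bigr)=\sum_{i,j\ge0}\frac{s^i(Q's+\tfrac12Q''s^2)^j}{i!\,j!}(\partial_1^i\partial_2^j f)(u(t_0)),
\]
and then expand $(Q's+\tfrac12Q''s^2)^j$ by the binomial theorem and match coefficients of $s^r$.

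\textbf{Main obstacle.} The only real subtlety is purely combinatorial: matching the coefficients so that the answer comes out in exactly the two-sum grouping displayed in the statement (rather than the more natural single sum over $i+j+2k=r$). I expect the generating-function approach above to make this transparent — expanding $(Q's+\tfrac12 Q''s^2)^j=\sum_{k=0}^j\binom{j}{k}(Q')^{j-k}(\tfrac12 Q'')^k s^{j+k}$ and collecting the coefficient of $s^r$ yields $i+(j-k)+ (\text{adjust indices})$ terms whose coefficient is readily seen to be $\dfrac{r!}{i!\,(j-k)!\,k!\,2^k}$ for the mixed terms and $\dfrac{r!}{i!\,k!\,2^k}$ with $2(i+k)=r$ interpretation for the pure ones; relabelling $j-k\mapsto j$ in the first family and $k\mapsto j$ in the second gives precisely the claimed formula. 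All derivative computations are routine, so no analytic difficulty arises; the bookkeeping is the entire content.
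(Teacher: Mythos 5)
Your generating-function derivation is the right idea and, in contrast to the paper (which just cites Constantine--Savits with no computation), actually supplies a proof; the single-sum identity you obtain,
\[
\frac{d^rF}{dt^r}=\sum_{i+j+2k=r}\frac{r!}{i!\,j!\,k!\,2^k}\,(Q')^j(Q'')^k\,\partial_1^i\partial_2^{j+k}f,
\]
is correct and follows exactly as you describe from expanding $(Q's+\tfrac12 Q''s^2)^j$ and matching coefficients of $s^r$. But your final ``relabelling'' step, which is supposed to recover the two-sum form in the lemma, does not go through, and in fact cannot: your single sum already accounts for every partition type $(i,j,k)$ with $i+j+2k=r$, including $k=0$ and the all-pairs slice $i=j=0$, $k=r/2$, so there is nothing left for a second sum to contribute. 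The specific bug in your last paragraph is that the ``pure $Q''$'' slice ($j=k$, no factor of $Q'$ remaining) satisfies $i+2k=r$, not $2(i+k)=r$; it is the single term $(i,j,k)=(0,0,r/2)$, not a sum over an entire antidiagonal, so the relabelling $k\mapsto j$ with constraint $2(i+j)=r$ produces something unrelated.

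The underlying reason your reconciliation must fail is that the two-sum formula as printed contains a spurious second sum. Check $r=2$: direct differentiation gives $F''=\partial_1^2f+2Q'\,\partial_1\partial_2 f+(Q')^2\partial_2^2 f+Q''\,\partial_2 f$, which is exactly the first sum, while the second sum contributes an extra $\partial_1 f+Q''\,\partial_2 f$, so the stated identity is false unless $\partial_1 f+Q''\partial_2 f\equiv 0$. (The error is harmless for the rest of the paper: the decomposition $F^{(r)}=S_1+S_2+S_3$ from Lemma~\ref{lem:bern repres} is only ever used in inequalities of the form $|S_1|\le|F^{(r)}|+|S_2|+|S_3|$, which remain valid --- indeed slightly sharper --- with $S_3\equiv 0$.) So your proof of the single-sum identity is complete and correct; what you cannot do, and should not try to do, is massage it into the two-sum form as stated.
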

\begin{proof}
	This is a special  case of the multivariate F\'aa di Bruno formula from~\cite[p.~ 505]{Co-Sa}.
\end{proof}

The proofs of Lemma~\ref{lem:bern repres} and Lemma~\ref{lem:A_0 A_r} are given as follows:

\begin{proof}[Proof of Lemma~\ref{lem:bern repres}]
	Define  $\wt f(u,v):=f(u,g'(x_0)u+v),\   \   \   (u,v)\in\RR^2,$
	and 	rewrite the function $F$ as 
	$$
	F(t)=f\bl(z_0+t, Q_A(z_0, t)\br) = \wt f\bl(z_0+t, Q(t)\br),$$
	where 
	$$ Q(t):= Q_A (z_0, t)-g'(x_0)(z_0+t)= g(z_0) +g'(z_0) t -\f A 2 t^2 -g'(z_0+t_0) (z_0+t).$$
	Clearly, for any $(u,v)\in\RR^2$, 
	$$\partial^i_1 \wt f(u,v) = {\mathcal{D}}^i_{x_0}f\bl(u, g'(x_0) u+v\br)\   \  \text{and}\   \  \p_2\wt f(u,v)=\partial_2 f\bl(u, g'(x_0) u +v\br).$$
	Thus, to complete the proof of~\eqref{eqn:f_s1_s2_s3}, we 
	just need to apply Lemma~\ref{lem:mult faa di bruno} to the function $F(t)$,  observing   that 
	$$ Q'(t_0) = g'(z_0) -A t_0 -g'(z_0+t_0) =-w_A(z_0, t_0)=-u_A(x_0, y_0),$$
	and 
	$Q''(t)=-A$.  
\end{proof}

\begin{proof}[Proof of  Lemma~\ref{lem:A_0 A_r}]  Without loss of generality, we may assume that $\da(x_0, y_0) >0$, since otherwise~\eqref{eqn:a_v_bound} holds trivially.	
	First, using Lemma~\ref{lem-1-1:Berns}~(iv), we can  find a strictly increasing sequence of  constants $A_0, A_1,\dots, A_r\ge \bar{A}$ depending only on $M$, $\ld$  and $r$  such that 
	for $i=0,1,\dots, r-1$, 
	$$ 0<c_{M} \leq  \f{u_{A_i} (x_0,y_0)}{\sqrt{\da(x_0,y_0)}}<2\f{ u_{A_i} (x_0,y_0)}{\sqrt{\da(x_0,y_0)}}<\f{u_{A_{i+1}}(x_0,y_0)}{\sqrt{\da(x_0,y_0)}} \leq C_{M}.$$	
	Next, setting 
	\begin{align*}
	B_j = -\f { u_{A_j}(x_0, y_0)} {\sqrt{\da(x_0, y_0)}}\   \ \text{and}\   \  u_j:=  \binom{r} j\da(x_0, y_0)^{\f j2} {\mathcal{D}}^{r-j} \p_2 ^j f (x_0, y_0)
	\end{align*} 
	for $j=0,1,\dots, r$, and using~\eqref{s1}, 
	we obtain that 
	$S_1 ( A_i, x_0, y_0) =\sum_{j=0}^r (B_i)^j u_j$ for  $ i=0,1,\dots, r$; that is, 
	\begin{align}\label{10-26-0}
	\mathbf{S} =\mathcal{M} \mathbf{u},
	\end{align}
	where 
	\begin{align*}
	\mathbf{S}&= \Bl (S_1 ( A_0, x_0, y_0), \dots, S_1 ( A_r, x_0, y_0)\Br)^t,\   \    \  
	\mathbf{u}= (u_0, u_1,\dots, u_r)^t,
	\end{align*} 
	and $\mathcal{M}$ is  the  $(r+1) \times (r+1)$ Vandermonde matrix 	with $(i,j)$-entry $\mathcal{M}_{i,j}:=(B_i)^j$. 
	Since 
	$$ 0< c_{M} \leq \min_{0\leq i\neq j \leq r} |B_j-B_i|\leq C_{M},$$
	the stated estimate~\eqref{eqn:a_v_bound} follows from~\eqref{10-26-0} and  Cramer's rule.
\end{proof}

Now we are in a position to prove   Theorem~\ref{THM:2D BERN}
for $r>1$. 
\begin{proof}[Proof of Theorem~\ref{THM:2D BERN} for $r>1$.]
	We use induction on $r\in\NN$. 
	The case  $r=0$ is given in Lemma~\ref{lem:univ BM}, whereas the case $r=1$ has already been proven in the last section.  Now suppose the statement  has been  established for all the  derivatives ${\mathcal{D}}^\ell \p_2^{i+j}$ with  $\ell=0,1,\dots, r-1$ and  $i,j=0,1,\dots.$  We then aim to prove that for any $f\in \Pi_n^2$ and $\ld\in (1,2)$,
	\begin{equation*}\label{10-7-18b}
	\|\vi_n^{i}{\mathcal{D}}^r\partial_2^{i+j} f\|_{L^p(G)}\le  c n^{r+i+2j}\|f\|_{L^p(G_\ast(\ld))},\  \  i,j=0,1,\dots.
	\end{equation*}
	Since the operators ${\mathcal{D}}^i$ and $\p_2^j$ are commutative,  as in   the case of $r=1$,  it is sufficient to show  that for every $f\in\Pi_n^2$ and any $\ld\in (1,2)$,
	\begin{align}\label{10-28}
	\|{\mathcal{D}}^r f\|_{L^p(G)}\le  c n^{r}\|f\|_{L^p(G_\ast(\ld))}.
	\end{align}
	Here and throughout the proof, the  constant $c$ depends only  on $p$, $a$, $\lambda$, $M$ and $r$. 
	
	To show~\eqref{10-28}, we start with the case of $p=\infty$, which is simpler. By  Lemma~\ref{lem:A_0 A_r}, it is enough to show that 
	\begin{equation}\label{10-32-0}\max_{0\leq v\leq r} \max_{(x_0, y_0) \in G} |S_1(A_v,x_0,y_0)|\le c n^r\|f\|_{L^\infty(G_*)}. \end{equation}
	
	Fix temporarily $0\leq v\leq r$ and  $(x_0,y_0)\in G$. By Lemma~\ref{lem-1-1:Berns}~(iii),  there exists  $(z_{v,0},t_{v,0})\equiv (z_0, t_0)\in E_{A_v}^+$  such that  $0\leq t_0\leq a_1$ and $\Phi_{A_v}(z_{ 0},t_{0})=(x_0,y_0)$.  Applying  Lemma~\ref{lem:bern repres} to the function  $F(t):=f(\Phi_{A_v}(z_0,t))$, and 
	recalling (Lemma~\ref{lem-1-1:Berns}~(iv))
	\begin{align}\label{eqn:bern delta bound}
	|u_{A_v}(x_0,y_0)|\le c \sqrt{\delta(x_0,y_0)},
	\end{align}
	we  obtain
	\begin{align*}
	|S_1(A_v, x_0, y_0)|&\leq |F^{(r)} (t_0)| + |S_2(A_v, x_0, y_0)|+|S_3(A_v, x_0, y_0)|\\
	&\leq |F^{(r)} (t_0)| +C\max_{ i+j+2k =r, k\ge 1} (\da(x_0, y_0))^{j/2} |{\mathcal{D}}^i \p_2^{j+k} f (x_0, y_0)|\\
	& + C \max_{2(i+j)=r} |{\mathcal{D}}^i \p_2^j f(x_0, y_0)|.
	\end{align*} 
	It then follows by induction hypothesis that 
	\begin{align*}
	|S_1(A_v, x_0, y_0)|\leq |F^{(r)} (t_0)| + C n^r\|f\|_{L^\infty(G_\ast)}.
	\end{align*}
	On the other hand, since the function $F(t):=f(\Phi_{A_v}(z_0,t))$ 
	is an algebraic polynomial of degree $\le 2n$,   by  the univariate Bernstein  inequality, we obtain
	\[
	|F^{(r)}(t_0)|\le \|F^{(r)}\|_{C([-a_1,a_1])}\le c(a_1,a_0) n^r \|F\|_{C([-a_0,a_0])}\le c n^r \|f\|_{L^\infty(G_*)},
	\]
	where the last step uses Lemma~\ref{lem-1-1:Berns}  (i) and the fact that $a_0\leq \f a {2M}$.
	Summarizing the above, we derive~\eqref{10-32-0} and hence complete the proof of  
	\eqref{10-28} for $p=\infty$.

	Next, we  prove~\eqref{10-28} for  $0< p<\infty$. By Lemma~\ref{lem:A_0 A_r}, it is enough to show that for each  $0\le v\le r$,
	\begin{equation}\label{eqn:s1_est}
	\iint_G |S_1 (A_v,x,y)|^p\, dxdy \le c n^{rp} \|f\|^p_{L^p(G_*)}.
	\end{equation}
	Fix $0\leq v\leq r$ and set 
	\[
	I_\ell :=\iint_G |S_\ell (A_v,x,y)|^p\, dxdy, \quad \ell=1,2,3.
	\]	
	Using~\eqref{s2},~\eqref{s3} and the induction hypothesis, we have 
	\begin{align}
	I_2& \leq C \max_{\sub{i+j+2k =r\\
			\  k\ge 1}} \iint_G \Bl|(\da(x,y))^{j/2} |{\mathcal{D}}^i \p_2^{j+k} f(x,y)\br|^p\, dxdy \leq C n^{rp} \|f\|_{L^p(G_\ast)}^p,\label{i2}\\
	I_3&\leq C \max_{2(i+j)=r} \iint_G |{\mathcal{D}}^i \p_2^j f(x,y)|^p\, dxdy \leq C  n^{rp} \|f\|_{L^p(G_\ast)}^p.\label{i3}
	\end{align}
	On the other hand,  performing the change of variables $x=z+t$ and $y=Q_{A_v}(z,t)$ and using Lemma~\ref{lem-1-1:Berns} (i), (ii),
	we obtain 
	\[
	I_\ell:= \iint_{(\Phi_{A_v}^+)^{-1} (G)} |S_\ell(A_v,z+t, Q_{A_v}(z,t))|^p (A_v +g''(z)) t \, dzdt, \quad \ell=1,2,3.
	\]
	Thus, setting 
	\[
	I: = \iint_{(\Phi_{A_v}^+)^{-1} (G)} \Bl| \f {d^r} {dt^r} \Bl[ f(z+t, Q_{A_v}(z,t))\Br]\Br|^p (A_v +g''(z)) t\, dzdt,
	\]
	and using~\eqref{i2},~\eqref{i3} and Lemma~\eqref{lem:bern repres},
	we obtain 
	$$I_1\le c_p(I+I_2+I_3)\leq c_pI + c_p n^{rp} \|f\|_{L^p(G_\ast)}^p.$$
	
	Thus, for the proof~\eqref{eqn:s1_est},  we reduce to  showing  that \begin{align}
	I: &= \iint_{(\Phi_{A_v}^+)^{-1} (G)} \Bl| \f {d^r} {dt^r} \Bl[ f(z+t, Q_{A_v}(z,t))\Br]\Br|^p (A_v +g''(z)) t\, dzdt\notag\\
	&\leq C   n^{rp} \|f\|_{L^p(G_\ast)}^p.\label{10-37}
	\end{align}
	The proof is very similar to that of~\eqref{10-23-0}  given in the last subsection for $r=1$. 
	Indeed,  using~\eqref{10-24-0}, Lemma~\ref{lem-1-1:Berns} and the   univariate weighted  Bernstein inequality, we have 
	\begin{align*}
	I	&\leq c \int_{-a-a_1 }^{a} \Bl[ \int_{0}^{a_1}\Bl| \f {d^r} {dt^r} \Bl[ f(z+t, Q_{A_v}(z,t))\Br] \Br|^p |t|\, dt\Br]\, dz\\
	& \leq c n^{rp} \int_{-a-a_1 }^{a} \Bl[ \int_{-a_0}^{a_0}\Bl| f(\Phi_{A_v}(z,t)) \Br|^p |t|\, dt\Br]\, dz	\leq c  n^{rp} \iint_{E_{A_v}} |f(\Phi_{A_v}(z,t))|^p|t|\, dzdt\\
	&\leq c n^{rp} \Bl[ \iint_{\Phi(E_{A_v}^{+})}  |f(x,y)|^p dxdy+\iint_{\Phi(E_{A_v}^{-})}  |f(x,y)|^p dxdy \Br]\leq c n^{rp} \|f\|_{L^p(G_\ast)}^p.	 
	\end{align*}
	This proves~\eqref{10-37} and hence completes the proof of the theorem.
\end{proof}

\section{Bernstein type inequality: the higher-dimensional case}
\label{sec:13}
In this section, we shall show how  to extend the Bernstein inequality (i.e., Theorem~\ref{THM:2D BERN}) to higher-dimensional domains. 
We start with a  domain $G\subset \RR^{d+1}$  of special type.
Without loss of generality, we may assume that   $G$   takes  the  form,
\begin{equation}\label{11-2-2} G:=\Bl\{ (x, y):\  \ x\in (-a,a)^d,\   \  g(x)-La< y\leq g(x)\Br\},\end{equation}
for some constant  $a\ge 1$  and function  $g\in C^2([-2a,2a]^d)$ satisfying  that $g(x)\ge 4a$ for all $x\in [-2a, 2a]^d$. 
For $(x,y)\in G_\ast(2)$,  set 
\begin{equation*}
\da(x,y):=g(x)-y\   \   \text{and}\  \   \
\vi_n(x,y) :=\sqrt{\da(x,y)} +\f 1n,\   \  n=1,2,\dots.
\end{equation*}
Given   $x_0\in [-2a, 2a]^d$ and $\ell\in\NN$,  we define  
\begin{equation}\label{11-3}
{\mathcal{D}}_{j, d+1, x_0}^\ell: =\bigl (\p_j  +\p_j g(x_0)\p_{d+1}\bigr)^{\ell}=\bigl( \xi_j (x_0)\cdot \nabla\bigr)^\ell,\   \ j=1,2,\dots, d, 
\end{equation}
where 
\begin{equation}\label{11-4-0}
\xi_j (x_0) :=e_j + \p_j g(x_0) e_{d+1}.
\end{equation}
Note that $\xi_j (x_0)$ is the  tangent vector    to  $\p' G$ at the point  $(x_0, g(x_0))$ that is parallel to   the $x_jx_{d+1}$-coordinate plane, while  ${\mathcal{D}}_{j, d+1, x_0}^\ell$ is a constant multiple of the $\ell$-th order directional derivative in the direction of $\xi_j(x_0)$. We will also consider mixed directional derivatives, defined as 
\begin{align}
{\mathcal{D}}_{\tan, x_0} ^{\pmb{\al}}  ={\mathcal{D}}_{1, d+1, x_0}^{\al_1} {\mathcal{D}}_{2, d+1, x_0}^{\al_2} \dots {\mathcal{D}}_{d, d+1, x_0}^{\al_d},\label{11-4}
\end{align} 
where  $\pmb{\al}=(\al_1,\dots, \al_d)\in \ZZ_+^d$, $x_0\in [-2a, 2a]^d$, $(x,y)\in G$ and  $  f\in C^1(G).$
We also define for  $(x,y)\in G$ and $j=1,\dots, d$,
$${\mathcal{D}}_{\tan} ^{\pmb{\al}}  f(x,y):=({\mathcal{D}}_{\tan, x}^{\pmb{\al}}  f)(x,y), \   \     \mathcal{D}_{j, d+1} ^r f(x,y) =\mathcal{D}_{j, d+1, x}^r f(x,y). $$
Note that  the operators ${\mathcal{D}}_{\tan}^{\pmb{\al}}$ and $\p_{d+1}$ are commutative. 
Finally, let $M>10$  be  a constant satisfying  that 
\begin{equation}\label{11-5-2}
\max_{1\leq i\leq j\leq d} \|\p_i\p_j g\|_{L^\infty([-2 a,2a]^d)}\le M\  \  \text{and}\  \  \|\nabla g (0)\|\le M.
\end{equation}

In this section, we shall prove the following higher dimensional extension of Theorem~\ref{THM:2D BERN}. 

\begin{thm}\label{thm:2d bern-2} 
	If   $0<p\leq \infty$, $\ld \in (1,2]$ and      $f\in \Pi_n^{d+1}$, then  for any $\pmb{\al}\in\ZZ_+^d$, 
	\begin{equation}\label{11-6-18a}
	\|\vi_n^{i}{\mathcal{D}}_{\tan}^{\pmb\al}\partial_{d+1}^{i+j} f\|_{L^p(G)}\le  c n^{|\pmb \al|+i+2j}\|f\|_{L^p(G_\ast)},\  \  i,j=0,1,\dots, 
	\end{equation}
	where $G_\ast =\overline{G_\ast(\ld)}$, and  $c$ is a positive constant depending  only on $M$, $d$, $\ld$, $\pmb{\al}, i,j$,  and $p$.
\end{thm}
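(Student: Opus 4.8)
\textbf{Proof proposal for Theorem~\ref{thm:2d bern-2}.}

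The plan is to reduce the $(d+1)$-dimensional estimate to the two-dimensional one already established in Theorem~\ref{THM:2D BERN}, by slicing $G$ with two-dimensional planes parallel to the $x_jx_{d+1}$-coordinate planes and treating the remaining $d-1$ tangential directions one at a time. The key observation is that each operator ${\mathcal{D}}_{j,d+1}$ acts only in the variables $(x_j, x_{d+1})$: for a fixed choice of the other coordinates $x' = (x_1,\dots,\widehat{x_j},\dots,x_d)$, the function $y\mapsto f(x',x_j,y)$ restricted to the slice of $G$ is a bivariate polynomial of degree $\le n$ on a domain of special type in $\RR^2$ with base function $x_j \mapsto g(x',x_j)$, and the second-derivative bound~\eqref{11-5-2} is inherited by this slice. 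So the first step is: fix all variables except $(x_j,x_{d+1})$, apply Theorem~\ref{THM:2D BERN} on the resulting two-dimensional special domain, and integrate the $p$-th power of the resulting inequality in the remaining $d-1$ variables $x'$ (using Fubini). This handles the case $\pmb\al = \alpha_j e_j$ of a single tangential direction, giving
\[
\|\vi_n^{i}{\mathcal{D}}_{j,d+1}^{\alpha_j}\partial_{d+1}^{i+j'} f\|_{L^p(G)}\le c\, n^{\alpha_j+i+2j'}\|f\|_{L^p(G_\ast)}.
\]
One must be slightly careful that the two-dimensional slices have base sizes and parameters uniformly controlled; this follows because $g\in C^2([-2a,2a]^d)$ and the slice base functions $g(x',\cdot)$ satisfy the same bound $M$ with $\nabla$ replaced by the one-variable derivative, and since $\vi_n(x,y)$ depends only on $\delta(x,y) = g(x)-y$, which on each slice is exactly the two-dimensional $\delta$.

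The second step is to iterate over the $d$ tangential directions to build up a general mixed operator ${\mathcal{D}}_{\tan}^{\pmb\al} = {\mathcal{D}}_{1,d+1}^{\alpha_1}\cdots{\mathcal{D}}_{d,d+1}^{\alpha_d}$. Here the subtlety is that ${\mathcal{D}}_{j,d+1,x}$ has $x$-dependent coefficients (through $\partial_j g(x)$), so the operators in the product~\eqref{11-4} do not simply commute, and applying one then another produces extra lower-order terms. The cleanest route is an induction on $|\pmb\al|$: assume the estimate for all multi-indices of total order $<|\pmb\al|$ and all powers of $\partial_{d+1}$. Pick an index $j$ with $\alpha_j\ge 1$ and write ${\mathcal{D}}_{\tan}^{\pmb\al} f = {\mathcal{D}}_{j,d+1}^{\alpha_j}\big({\mathcal{D}}_{1,d+1}^{\alpha_1}\cdots\widehat{{\mathcal{D}}_{j,d+1}^{\alpha_j}}\cdots{\mathcal{D}}_{d,d+1}^{\alpha_d} f\big)$ up to commutator corrections. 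Commuting ${\mathcal{D}}_{j,d+1,x}$ past ${\mathcal{D}}_{k,d+1,x}$ produces terms of the form $(\partial_j\partial_k g)(x)\,\partial_{d+1}(\cdots)$; since $|\partial_j\partial_k g|\le M$, each such commutator term has the same number of $\partial_{d+1}$'s but one fewer tangential derivative, plus — crucially — we gain a factor of $\vi_n^{-1}$ worth of "weight room" because $\delta(x,y)^{1/2}\le$ (base size) is bounded, so replacing a tangential derivative by a $\partial_{d+1}$ at the cost of losing the $\vi_n$ prefactor is controlled by the inductive hypothesis (the $\partial_{d+1}$ scaling $n^2$ versus the tangential scaling $n$ times $\vi_n \le C$ is exactly balanced — this is the same bookkeeping as in Corollary~\ref{cor-10-2} and in the passage from $S_2,S_3$ to the induction hypothesis in the proof of Theorem~\ref{THM:2D BERN}). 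Then apply the single-direction estimate from Step~1 to ${\mathcal{D}}_{j,d+1}^{\alpha_j}$ acting on the polynomial $g_\alpha := {\mathcal{D}}_{1,d+1}^{\alpha_1}\cdots\widehat{{\mathcal{D}}_{j,d+1}^{\alpha_j}}\cdots{\mathcal{D}}_{d,d+1}^{\alpha_d} f \in \Pi_n^{d+1}$, followed by the inductive hypothesis applied to $g_\alpha$ on a slightly larger domain $G_\ast(\ld')$ with $1<\ld'<\ld$, absorbing the extra dilation as in the proofs in Section~\ref{sec:12}.

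I expect the main obstacle to be the bookkeeping in Step~2: tracking exactly which terms arise from commuting the $x$-dependent tangential operators, verifying that every such term has strictly smaller tangential order (so the induction closes), and confirming that the weight powers $\vi_n^i$ and $\partial_{d+1}$-orders match up so that each term is dominated by $n^{|\pmb\al|+i+2j}\|f\|_{L^p(G_\ast)}$. This is essentially a multivariate Faà di Bruno / Leibniz computation analogous to Lemma~\ref{lem:mult faa di bruno}, but the fact that all the "bad" coefficients $\partial_i\partial_j g$ are uniformly bounded by $M$ makes every error term of the right form. A minor additional point to check is that the domain dilations can be nested finitely many times ($\ld > \ld' > \ld'' > \cdots$) with the number of nestings bounded by $|\pmb\al|$, so the final constant still depends only on the stated parameters. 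Apart from this, Step~1 is a routine Fubini slicing argument and invokes Theorem~\ref{THM:2D BERN} as a black box.
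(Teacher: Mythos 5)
Your Step~1 — Fubini slicing along the $x_jx_{d+1}$-plane to reduce the single tangential direction $\pmb\al=\al_je_j$ to Theorem~\ref{THM:2D BERN} — is exactly the paper's Lemma~\ref{lem-11}, and is correct. Step~2, however, has a genuine gap that cannot be repaired by more careful bookkeeping: the object
\[
g_\al(x,y):=\bigl(\mathcal{D}_{1,d+1,x}^{\al_1}\cdots\widehat{\mathcal{D}_{j,d+1,x}^{\al_j}}\cdots\mathcal{D}_{d,d+1,x}^{\al_d}f\bigr)(x,y)
\]
is \emph{not} a polynomial in $\Pi_n^{d+1}$, because the coefficients $\p_kg(x)$ of the operators $\mathcal{D}_{k,d+1,x}$ are merely $C^1$ functions of $x$. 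Consequently neither the single-direction estimate from Step~1, nor the inductive hypothesis, nor Theorem~\ref{THM:2D BERN} can be applied to $g_\al$. The same issue already appears after a single application: $\mathcal{D}_{j,d+1}h$ for $h\in\Pi_n^{d+1}$ is not a polynomial, and Lemma~\ref{lem-11} sidesteps this only by never iterating — it estimates $\mathcal{D}_{j,d+1}^rf$ in one shot by Fubini slicing, exploiting the fact that on each two-dimensional slice the restriction of $f$ is a bivariate polynomial. As soon as you try to iterate, you leave the polynomial world, and the commutator accounting you worry about is moot: the induction simply cannot close.

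The paper's fix is a different key lemma. Instead of factoring the mixed operator, it \emph{linearizes} it: the Kemperman-type identity of Lemma~\ref{lem-11-3} writes the mixed directional derivative
\[
\mathcal{D}_{\tan,x_0}^{\pmb\al}=\p_{\xi_1(x_0)}^{\al_1}\cdots\p_{\xi_d(x_0)}^{\al_d}
\]
(a \emph{constant}-coefficient operator for each fixed $x_0$) as a signed sum of at most $2^{|\pmb\al|}$ pure $|\pmb\al|$-th powers $\p_{\eta_S(x_0)}^{|\pmb\al|}$, and crucially each $\eta_S(x_0)$ has the form $c_S\bigl(\xi_S,\p_{\xi_S}g(x_0)\bigr)$ with the unit direction $\xi_S\in\SS^{d-1}$ \emph{independent of $x_0$}. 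Each such term equals $c_S^{|\pmb\al|}\wt D_{\xi_S}^{|\pmb\al|}f(x_0,y_0)$, and Lemma~\ref{lem-11-2} bounds $\wt D_\xi^r$ for a fixed $\xi\in\SS^{d-1}$ by Fubini slicing along that direction, never leaving $\Pi_n^{d+1}$. So the place where you need a new idea is precisely the passage from a single direction to a general $\pmb\al$, and the needed idea is the mixed-derivative identity~\eqref{11-12}, not induction over $|\pmb\al|$.

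Two smaller remarks. First, the extension to $i,j>0$ (the $\vi_n^i\p_{d+1}^{i+j}$ prefactors) is done in the paper by commuting $\mathcal{D}_{\tan}^{\pmb\al}$ with $\p_{d+1}$ (these do commute since $\p_{d+1}$ has constant coefficients) and applying the one-variable Bernstein inequality in the $x_{d+1}$-variable, reducing to the case $i=j=0$; your sketch gestures at this but doesn't isolate it. Second, your nested-dilation concern (that $\ld>\ld'>\cdots$ with $|\pmb\al|$-many steps must still land inside $G_\ast(\ld)$) is a legitimate thing to flag, but it doesn't arise in the paper's argument since there is no iteration over $|\pmb\al|$.
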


As a result, we can also extend  Corollary~\ref{cor-10-2} as follows:

\begin{cor}\label{cor-11-2} Let $\ld \in (1,2]$ and $\mu>1$ be two given parameters.  If  $0<p\leq \infty$  and       $f\in\Pi_n^{d+1}$,  then for $\pmb{\al}\in\ZZ_+^d$, and  $ i,j=0,1,\dots$,
	\begin{align*}
	\Bl\|\vi_n(\xi)^{i} &\max_{ u\in \Xi_{n,\mu,\ld}(\xi)}\Bigl|  {\mathcal{D}}_{\tan, u}^{\pmb{\al}}\partial_{d+1}^{i+j}f(\xi)\Br|\Br\|_{L^p(G; d\xi)} \le  c_\mu n^{|\pmb{\al}|+2j+i}\|f\|_{L^p(G_\ast(\ld))},	\end{align*}
	where 
	$$\Xi_{n, \mu,\ld} (\xi):= \Bl\{ u\in [-\ld a, \ld a]^d:\  \  \|u-\xi_x\|\leq \mu \vi_n(\xi)\Br\}.$$
\end{cor}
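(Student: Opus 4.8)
The plan is to deduce Corollary~\ref{cor-11-2} from Theorem~\ref{thm:2d bern-2} exactly as Corollary~\ref{cor-10-2} was deduced from Theorem~\ref{THM:2D BERN}, the only new ingredient being a careful bookkeeping of how the tangential operators change when the base point $u$ is moved away from $\xi_x$. First I would record the analogue of~\eqref{10-8-eq}: for $u, x_0 \in [-\ld a, \ld a]^d$ and each $j=1,\dots,d$,
\begin{equation*}
{\mathcal{D}}_{j,d+1,u} = {\mathcal{D}}_{j,d+1,x_0} + \bl(\p_j g(u) - \p_j g(x_0)\br)\p_{d+1} = {\mathcal{D}}_{j,d+1,x_0} + \|u-x_0\|\, O_j(u,x_0)\,\p_{d+1},
\end{equation*}
where $\sup |O_j|\le M$ by~\eqref{11-5-2} and the mean value theorem. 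Expanding the composite operator ${\mathcal{D}}_{\tan,u}^{\pmb\al} = \prod_{j=1}^d {\mathcal{D}}_{j,d+1,u}^{\al_j}$ via the multinomial theorem and using that ${\mathcal{D}}_{j,d+1,x_0}$ for different $j$ together with $\p_{d+1}$ all commute, one gets for any $\xi=(x,y)\in G$ and $u\in\Xi_{n,\mu,\ld}(\xi)$ (so $\|u-x\|\le \mu\vi_n(\xi)$) a bound of the form
\begin{equation*}
\bl|{\mathcal{D}}_{\tan,u}^{\pmb\al}\p_{d+1}^{i+j} f(\xi)\br| \le C \mu^{|\pmb\al|} \max_{\pmb\beta\le\pmb\al,\ 0\le s\le|\pmb\al|-|\pmb\beta|} \vi_n(\xi)^{s}\, \bl|{\mathcal{D}}_{\tan,x}^{\pmb\beta}\p_{d+1}^{i+j+s} f(\xi)\br|,
\end{equation*}
where I have replaced each factor $\|u-x\|$ by $\mu\vi_n(\xi)$ and collected the resulting powers. (Here ${\mathcal{D}}_{\tan,x}^{\pmb\beta}$ is evaluated at the running point $x$, i.e. it is ${\mathcal{D}}_{\tan}^{\pmb\beta}$ in the notation of the theorem.)

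Next I would multiply through by $\vi_n(\xi)^i$, take the $L^p(G;d\xi)$ norm, and pass the maximum over the finitely many pairs $(\pmb\beta,s)$ outside the norm at the cost of a constant. Each resulting term is
\begin{equation*}
\mu^{|\pmb\al|}\, \bl\|\vi_n^{i+s}\, {\mathcal{D}}_{\tan}^{\pmb\beta}\p_{d+1}^{(i+s)+j} f\br\|_{L^p(G)},
\end{equation*}
which is precisely of the shape controlled by Theorem~\ref{thm:2d bern-2} with exponent parameters $\pmb\beta$, $i+s$, $j$ in place of $\pmb\al$, $i$, $j$. Applying that theorem bounds it by $c\, \mu^{|\pmb\al|}\, n^{|\pmb\beta|+(i+s)+2j}\|f\|_{L^p(G_\ast(\ld))}$. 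Since $\pmb\beta\le\pmb\al$ and $s\le|\pmb\al|-|\pmb\beta|$ force $|\pmb\beta|+s\le|\pmb\al|$, every exponent satisfies $|\pmb\beta|+(i+s)+2j\le |\pmb\al|+i+2j$, so using $n\ge1$ each term is $\le c\,\mu^{|\pmb\al|}\, n^{|\pmb\al|+i+2j}\|f\|_{L^p(G_\ast(\ld))}$, and the maximum over the finitely many terms gives the claimed estimate with $c_\mu = c\,\mu^{|\pmb\al|}$.

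There is no genuinely hard step here — the corollary is a routine consequence of the theorem — but the one place that needs care is the combinatorial expansion of ${\mathcal{D}}_{\tan,u}^{\pmb\al}$ and the verification that every term that arises, after substituting $\|u-x\|\le\mu\vi_n(\xi)$, still has total ``order'' at most $|\pmb\al|+i+2j$ once one counts a factor of $\vi_n$ as contributing $1$ to the order and a factor of $\p_{d+1}$ as contributing $2$ (matching the weights $n^{|\pmb\al|+i+2j}$ on the right of~\eqref{11-6-18a}); this is exactly the bookkeeping that makes the powers of $n$ come out right. One should also note at the outset that $\Xi_{n,\mu,\ld}(\xi)\subset[-\ld a,\ld a]^d$ by definition, so all base points $u$ used lie in the region where~\eqref{11-5-2} applies and the operators ${\mathcal{D}}_{j,d+1,u}$ are defined. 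For $p=\infty$ the same argument works verbatim with the usual replacement of integrals by suprema, and for $0<p<1$ one uses the $p$-subadditivity of $\|\cdot\|_{L^p}^p$ when splitting the maximum, which only affects constants.
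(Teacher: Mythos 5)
Your proposal is correct and matches the paper's intended argument, which is simply the multi-index version of the proof of Corollary~\ref{cor-10-2}: expand ${\mathcal{D}}_{\tan,u}^{\pmb\al}$ around the running point via the binomial theorem, replace each factor $\|u-\xi_x\|$ by $\mu\vi_n(\xi)$, and feed the resulting terms into Theorem~\ref{thm:2d bern-2} with the shifted parameters $(\pmb\beta,\,i+s,\,j)$; the paper itself only states that the proof is ``very similar'' to that of Corollary~\ref{cor-10-2} and omits the details, and your write-up supplies exactly those details. The only cosmetic slip is that the mean-value estimate from~\eqref{11-5-2} gives $\sup|O_j|\le\sqrt{d}\,M$ rather than $M$, and the natural value of $s$ is exactly $|\pmb\al|-|\pmb\beta|$ rather than a free parameter in $[0,|\pmb\al|-|\pmb\beta|]$; neither affects the conclusion.
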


The proof of Corollary~\ref{cor-11-2} is very similar to that of Corollary~\ref{cor-10-2}, so we omit the dedails.

The rest of this section is devoted to the proof of Theorem~\ref{thm:2d bern-2}.  
To illustrate our idea, we start with the simpler case when $\pmb{\al}=r e_1$ and $i=j=0$.  In this case,  we can restate Theorem~\ref{thm:2d bern-2}   as follows: 
\begin{lem}\label{lem-11} 
	If   $0<p\leq \infty$, $\ld \in (1,2]$ and      $f\in \Pi_n^{d+1}$, then  
	\begin{equation}\label{11-6-18b}
	\|{\mathcal{D}}_{1, d+1}^{r} f\|_{L^p(G)}\le  c n^{r}\|f\|_{L^p(G_\ast)},\  \  r=0,1,\dots, 
	\end{equation}
	where $G_\ast =\overline{G_\ast(\ld)}$, and  $c$ is a positive constant depending  only on $M$, $d$, $\ld$, $r$,  and $p$.
\end{lem}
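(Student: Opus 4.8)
\textbf{Proof plan for Lemma~\ref{lem-11}.}
The idea is to reduce the higher-dimensional estimate~\eqref{11-6-18b} to the two-dimensional result, Theorem~\ref{THM:2D BERN}, by freezing all the variables except $x_1$ and $y=x_{d+1}$. Fix $x'=(x_2,\dots,x_d)\in(-a,a)^{d-1}$ and consider the ``slice'' $G_{x'}:=\{(x_1,y):\ (x_1,x',y)\in G\}$. Since ${\mathcal{D}}_{1,d+1,x_0}^r=(\partial_1+\partial_1 g(x_0)\partial_{d+1})^r$ only involves $\partial_1$ and $\partial_{d+1}$, and the coefficient $\partial_1 g(x_0)$ at the point $x_0=(x_1,x')$ is exactly $g_{x'}'(x_1)$ where $g_{x'}(x_1):=g(x_1,x')$, the restriction of ${\mathcal{D}}_{1,d+1}^r f$ to the slice $G_{x'}$ coincides with the operator ${\mathcal{D}}^r$ (in the sense of Section~\ref{sec:12}) applied to the two-variable function $f_{x'}(x_1,y):=f(x_1,x',y)$, with respect to the $C^2$-function $g_{x'}$. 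First I would check that for each fixed $x'$ the slice $G_{x'}$ is (up to rescaling) a domain of special type in $\RR^2$ of the form~\eqref{10-2}, that $g_{x'}$ satisfies the curvature bound~\eqref{10-7} with the same constant $M$ as in~\eqref{11-5-2} (uniformly in $x'$), and that $f_{x'}\in\Pi_n^2$. One mild technical point is that the required lower bound $g_{x'}(x_1)\ge 4a$ may need a preliminary vertical translation or a rescaling of $a$; this is harmless and already anticipated in the conventions of Section~\ref{sec:12}.

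With this slicing in place, for $p<\infty$ I would write
\[
\|{\mathcal{D}}_{1,d+1}^r f\|_{L^p(G)}^p=\int_{(-a,a)^{d-1}}\Big(\iint_{G_{x'}}|{\mathcal{D}}^r f_{x'}(x_1,y)|^p\,dx_1\,dy\Big)dx',
\]
apply Theorem~\ref{THM:2D BERN} (with $i=j=0$, and some fixed $\ld'\in(1,\ld)$) to the inner double integral for each fixed $x'$, obtaining
\[
\iint_{G_{x'}}|{\mathcal{D}}^r f_{x'}|^p\,dx_1\,dy\le c\,n^{rp}\iint_{(G_{x'})_\ast(\ld')}|f_{x'}(x_1,y)|^p\,dx_1\,dy,
\]
with a constant $c$ independent of $x'$ (this uniformity is the crux, and it follows because the constant in Theorem~\ref{THM:2D BERN} depends on $g_{x'}$ only through $M$, $a$, $\ld'$, $r$, $p$). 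Then I would integrate this inequality in $x'$ over $(-a,a)^{d-1}$ and observe that $\bigcup_{x'}(G_{x'})_\ast(\ld')\times\{x'\}\subset G_\ast(\ld)$ once the one-dimensional dilation parameter $\ld'$ is chosen appropriately relative to $\ld$; this yields
\[
\|{\mathcal{D}}_{1,d+1}^r f\|_{L^p(G)}^p\le c\,n^{rp}\|f\|_{L^p(G_\ast)}^p,
\]
which is~\eqref{11-6-18b}. The case $p=\infty$ is the same argument with the integral replaced by a supremum over $x'$, and is in fact simpler.

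The main obstacle I anticipate is \emph{not} the slicing identity itself, which is essentially bookkeeping, but rather making the geometry of the slices fit the hypotheses of Theorem~\ref{THM:2D BERN} uniformly in $x'$: one must verify that the enlarged slice $(G_{x'})_\ast(\ld')$ sits inside the enlarged full domain $G_\ast(\ld)$ for a single choice of parameters, and that the slice really has the product structure~\eqref{10-2} (in particular that the ``width'' parameter and the curvature constant can be taken independent of $x'$). Both are straightforward consequences of $g\in C^2([-2a,2a]^d)$ and the uniform bound~\eqref{11-5-2}, but they require a careful choice of the intermediate dilation factor $\ld'$; I would handle this by first shrinking the base size $a$ if necessary (as permitted by Remark~\ref{rem-2-1-0} and the conventions of this chapter) so that all the comparisons hold with room to spare. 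Once Lemma~\ref{lem-11} is established, the general case of Theorem~\ref{thm:2d bern-2} with arbitrary $\pmb\al$, $i$, $j$ follows by the same slicing together with the commutativity of ${\mathcal{D}}_{\tan}^{\pmb\al}$ with $\partial_{d+1}$ and an iteration over the directions $\xi_1,\dots,\xi_d$, reducing each factor ${\mathcal{D}}_{j,d+1}^{\al_j}$ to a two-dimensional estimate in the $x_jx_{d+1}$-plane, combined with Lemma~\ref{lem:univ BM} for the normal derivatives $\partial_{d+1}^{i+j}$ and the weight $\vi_n^i$.
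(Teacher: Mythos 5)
Your proposal is correct and is essentially the paper's own proof: the paper also deduces Lemma~\ref{lem-11} from Theorem~\ref{THM:2D BERN} by Fubini, freezing the transverse variables $x'=(x_2,\dots,x_d)$ and applying the two-dimensional Bernstein inequality to the slice function $g(\cdot,x')$ and the polynomial $f(\cdot,x',\cdot)$ uniformly in $x'$, then integrating. The only cosmetic difference is that the paper simply applies the $2$D theorem with the same $\ld$ and observes directly that the union of the enlarged slices sits inside $G_\ast(\ld)$ (since $(-a,a)\subset(-\ld a,\ld a)$ and $L\ge 1$), so your intermediate dilation parameter $\ld'$ is not actually needed; your verification that the slice hypotheses (base size, curvature bound, $g_{x'}\ge 4a$) hold uniformly in $x'$ is precisely what justifies the uniformity of the constant, as you correctly identify.
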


\begin{proof} This lemma can be deduced from  Theorem~\ref{THM:2D BERN} and Fubini's theorem. 
	To simplify our  notations,  we shall   give   the proof for the case of $d=2$  (i.e., 	 $G\subset \R^3$) and $0<p<\infty$ only. The proof  for $d>3$ is almost identical, while the case $p=\infty$ is simpler and can be treated similarly. 
	
	First, by Fubini's theorem, we may write 
	\begin{align}\label{2-2-0:bern}
	\iiint\limits_G &  \Bl|\mathcal{D}_{1, 3} ^r f(x_1, x_2, y)\Br|^pdx_1dx_2dy =\int_{-a}^a  I(x_2) \, dx_2,	
	\end{align}
	where 
	$$ I(x_2) :=  \int_{-a}^a  \int_{g(x_1, x_2)-a}^{g(x_1, x_2)} \Bl|\mathcal{D}_{1, 3}^r f(x_1, x_2, y)\Br|^p\, dy \, dx_1.$$
	For each fixed $x_2\in [-a, a]$, applying Theorem~\ref{THM:2D BERN}  to the function $g(\cdot, x_2)$ and the polynomial $f(\cdot, x_2, \cdot)$, we obtain
	\begin{align*}
	I(x_2) & \leq C n^r  \int_{-\ld a}^{\ld a} \int_{g(x_1, x_2) -\ld a}^{g(x_1, x_2)} |f (x_1,  x_2, y)|^p\, dy dx_1.
	\end{align*}
	Integrating this last inequality over $x_2\in [-a,a]$, and using~\eqref{2-2-0:bern},  we deduce  the desired estimate.
\end{proof}

The above proof of Lemma~\ref{lem-11}  already contains some    ideas  required  for the proof of Theorem~\ref{thm:2d bern-2} in the general case. Indeed, a similar method  can be applied to certain directional derivatives (instead of the derivative $\mathcal{D}_{1, d+1}^r$). To be more precise, given  $\xi\in\SS^{d-1}$,  we define the $\ell$-th order differential  operator $\wt {D}^\ell _{\xi}$  
by 
\begin{equation}\label{directional}
\wt {D}^\ell _{\xi} f(x,y)=\Bl(\f d{dt}\Br)^\ell \Bl[ f(x+t \xi, y+t \p_\xi g(x))\Br]\Bl|_{t=0},\   \  (x,y)\in G,\   \  f\in C^\ell (\RR^2).
\end{equation}
That is, 
$$\wt {D}^\ell _{\xi} f(x_0,y_0) = \Bl(\bigl(\xi, \p_\xi g(x_0)\bigr)\cdot \nabla \Br)^\ell f(x_0, y_0),\  \  (x_0, y_0)\in G, $$
where  $\p_\xi g(x_0)$ denotes the directional derivative of $g$ at the point $x_0$ in the direction of $\xi$.
Note  that  $(\xi, \p_\xi g(x))$
is the tangent vector   to $\p' G$ at the point $(x, g(x))$  that is   parallel to   the plane spanned by the vectors $(\xi,0)$ and $e_{d+1}$.

We can now  extend Lemma~\ref{lem-11} to   the case of more general  directional derivatives $\wt{D}_\xi^\ell$ as follows: 

\begin{lem}\label{lem-11-2} 
	If   $0<p\leq \infty$, $\ld \in (1,2]$ and      $f\in \Pi_n^{d+1}$, then  
	\begin{equation}\label{11-6-18c}
	\max_{\xi\in\SS^{d-1}} 	\| \wt{D}^r_\xi f\|_{L^p(G)}\le  c n^{r}\|f\|_{L^p(G_\ast)},\  \  r=0,1,\dots, 
	\end{equation}
	where $G_\ast =G_\ast(\ld)$, and  $c$ is a positive constant depending  only on $M$, $d$, $\ld$, $r$,  and $p$.
\end{lem}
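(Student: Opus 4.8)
The plan is to reduce Lemma~\ref{lem-11-2} to the special case $\xi = e_1$ already handled in Lemma~\ref{lem-11} by means of an orthogonal change of variables in the base cube, combined with Fubini's theorem exactly as in the proof of Lemma~\ref{lem-11}. The key observation is that the differential operator $\wt D_\xi^\ell$ defined in~\eqref{directional} is precisely the $\ell$-th order derivative, along the tangential direction $(\xi, \p_\xi g(x))$, of $f$ viewed on the graph; so if we rotate the $x$-variables by an orthogonal map $\rho$ of $\RR^d$ taking $e_1$ to $\xi$, then $\wt D_\xi^\ell$ is transformed into $\mathcal D_{1,d+1}^\ell$ associated with the rotated domain.

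Concretely, first I would fix $\xi \in \SS^{d-1}$ and choose an orthogonal matrix $\rho \in O(d)$ with $\rho e_1 = \xi$. Define the affine map $T(x,y) = (\rho^{-1}x, y)$ on $\RR^{d+1}$, set $\tilde g := g\circ\rho$ on $\rho^{-1}([-2a,2a]^d) \supset [-2a,2a]^d$, and let $\tilde G := T(G)$ (or rather the domain of special type with base function $\tilde g$ on the cube $[-a,a]^d$, which contains $T(G)$ and is contained in $T(G_\ast(\ld'))$ for a slightly larger $\ld'$, using that $\rho^{-1}$ maps a cube into a slightly dilated cube). One checks directly from the chain rule that for $\tilde f := f\circ T^{-1}$,
\[
\wt D^r_\xi f(x,y) = \Bl(\big(\xi, \p_\xi g(x)\big)\cdot\nabla\Br)^r f(x,y) = \Bl(\big(e_1, \p_1 \tilde g(\rho^{-1}x)\big)\cdot\nabla\Br)^r \tilde f(T(x,y)) = {\mathcal{D}}_{1,d+1}^r \tilde f\,(T(x,y)),
\]
since $\p_\xi g(x) = \nabla g(x)\cdot\xi = \nabla \tilde g(\rho^{-1}x)\cdot e_1 = \p_1\tilde g(\rho^{-1}x)$ and the tangential vector transforms correspondingly. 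Also $\|\nabla^2 \tilde g\|_\infty = \|\rho^t (\nabla^2 g) \rho\|_\infty \le C_d M$ and $\|\nabla\tilde g(0)\|\le M$, so $\tilde g$ satisfies the hypothesis~\eqref{11-5-2} with a constant $C_d M$ in place of $M$. Then by the change of variables $(x,y)\mapsto T(x,y)$, which has Jacobian $1$,
\[
\|\wt D^r_\xi f\|_{L^p(G)} = \|{\mathcal{D}}_{1,d+1}^r \tilde f\|_{L^p(\tilde G)} \le \|{\mathcal{D}}_{1,d+1}^r \tilde f\|_{L^p(\tilde G_0)},
\]
where $\tilde G_0$ is the domain of special type with base $\tilde g$ on $[-a,a]^d$. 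Applying Lemma~\ref{lem-11} to $\tilde f \in \Pi_n^{d+1}$ and $\tilde G_0$ gives $\le c\,n^r \|\tilde f\|_{L^p(\tilde G_0(\ld))} = c\,n^r\|f\|_{L^p(T^{-1}(\tilde G_0(\ld)))}$, and since $T^{-1}(\tilde G_0(\ld)) \subset G_\ast(\ld')$ for an appropriate $\ld' < 2$ (shrinking $\ld$ slightly at the start so that $\ld' \le$ the original $\ld$), the desired bound follows. Taking the supremum over $\xi$, and noting that all the constants introduced depend only on $M, d, r, \ld, p$ and not on $\xi$ (the orthogonal group is compact), completes the proof.

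The only genuinely delicate point is bookkeeping with the dilation parameters: the rotation $\rho^{-1}$ maps the cube $[-a,a]^d$ not to itself but into the ball of radius $a\sqrt d$, hence into $[-a\sqrt d, a\sqrt d]^d$, so one must arrange from the outset that the base size and the parameter $\ld$ are chosen so that after rotation everything still sits inside $G_\ast(\ld)$ with $\ld\le 2$; this is the reason the statement is formulated with $\ld\in(1,2]$ and with the flexibility (Remark~\ref{rem-2-1-0}) of shrinking the base size. I expect this rescaling/containment argument — rather than the chain-rule identity, which is routine — to be the part requiring the most care, but it presents no real obstacle: one simply fixes a small enough base size relative to $\ld$ before starting. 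Everything else is a direct transcription of the Fubini argument in Lemma~\ref{lem-11} together with the affine-invariance of $L^p$ norms.
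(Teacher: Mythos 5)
Your rotation-and-reduce plan has a genuine gap that is not ``bookkeeping'': the factor you incur when passing between a rotated cube and an enclosing axis-aligned cube is $\sqrt d$, it appears \emph{twice} (once for $T$ and once for $T^{-1}$), and that total factor of $d$ is incompatible with the constraint $\lambda\le 2$ already when $d=2$ (i.e.\ in $\RR^3$). Concretely, take $d=2$ and $\xi=(\tfrac1{\sqrt2},\tfrac1{\sqrt2})$, so $\rho$ is a rotation by $\pi/4$. Then $T(G)$ has base $\rho^{-1}((-a,a)^2)$, a diamond with vertices at distance $a\sqrt2$ along the axes, so any domain of special type $\tilde G_0$ containing $T(G)$ must have base at least $(-a\sqrt2,a\sqrt2)^2$; your claimed containment $\tilde G\subset\tilde G_0$ with $\tilde G_0$ on base $[-a,a]^d$ is simply false. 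Enlarging to $(-a\sqrt2,a\sqrt2)^2$, applying Lemma~\ref{lem-11} with some $\lambda'>1$, and pulling back by $T^{-1}$, the set $T^{-1}(\tilde G_0(\lambda'))$ has base $\rho((-\lambda' a\sqrt2,\lambda' a\sqrt2)^2)$, a diamond with vertices at distance $2\lambda' a$ along the axes. For this to lie inside $G_\ast(\lambda)$ with base $(-\lambda a,\lambda a)^2$ you need $2\lambda'\le\lambda\le 2$, forcing $\lambda'\le 1$, contradicting $\lambda'>1$. In general the requirement is $\lambda' d\le\lambda\le 2$, which fails for every $d\ge 2$. Shrinking the base size does not help: the $\sqrt d$ is a dimensionless ratio (circumradius to inradius of a cube), so it is scale-invariant. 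Also note that your preliminary claim $\rho^{-1}([-2a,2a]^d)\supset[-2a,2a]^d$ is false for any nontrivial rotation; the two sets have equal volume.

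The paper proves Lemma~\ref{lem-11-2} by a fundamentally different mechanism which sidesteps this entirely: instead of rotating the $d$-dimensional base cube, it slices $G$ by the family of two-dimensional planes through the direction $\xi$ (and $e_{d+1}$), applies the \emph{two-dimensional} Bernstein inequality Theorem~\ref{THM:2D BERN} to each slice, and integrates the resulting estimates over the orthogonal projection $E_{\mu,\xi}$ of the base cube onto $\xi^\perp$. The key point is that a one-dimensional section of a cube in the direction $\xi$, namely the interval $[a_{\mu,\xi}(\eta),b_{\mu,\xi}(\eta)]$, nests correctly under dilation of $\mu$ (adding a margin of $(\lambda-\mu)a$ at each end keeps you inside the $\lambda$-dilate), because intervals have no circumradius/inradius discrepancy. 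Your chain-rule computation identifying $\wt D^r_\xi f$ with $\mathcal D_{1,d+1}^r\tilde f\circ T$ is correct and is in the same spirit as the paper's identity $\wt D^r_\xi f(\eta+s\xi,y)=(\partial_1+g_\eta'(s)\partial_2)^r f_\eta(s,y)$, but the domain geometry in your global rotation approach does not close up, whereas the slice-wise Fubini argument does.
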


\begin{proof}  The proof follows along the same line as that of Lemma~\ref{lem-11}. 
	We shall  give the proof  for $p<\infty$ only, since the case $p=\infty$ is simpler and can be treated similarly.
	
	Let  $\xi\in\SS^{d-1}$ be a fixed direction, and denote by $E_{\mu,\xi}$
	the orthogonal  projection of the cube $[-\mu a, \mu a]^d$ onto the space $\xi^{\bot}:=\{\eta\in\RR^d:\  \    \eta\cdot \xi=0\}$:
	$$ E_{\mu,\xi}: =\Bl\{ x-(x\cdot \xi) \xi:\  \  x\in [-\mu a, \mu a]^d\Br\},\   \  \mu\ge 1.$$
	Given each     $\eta\in E_{\mu, \xi}$, the set 
	$\{t\in\RR:\   \  \eta+t\xi \in [-\mu a, \mu a]^d\}$
	is a compact interval in $\RR$, which we denote by $[a_{\mu,\xi}(\eta), b_{\mu,\xi}(\eta)]$. By Fubini's theorem, we then  have that for  $1<\mu <\ld$,
	\begin{align*}
	\int_G &| \wt {D}^r _{\xi} f(x,y)|^p dxdy= \int_{E_{1,\xi}} \int_{a_{1, \xi}(\eta)}^{b_{1, \xi}(\eta)}\int_{g(\eta +s \xi)-a}^{g(\eta+s\xi)} |\wt {D}^r _{\xi} f (\eta+s\xi,y)|^p \,  dy\, ds d\eta\\
	&\leq  \int_{E_{1,\xi}}\Bl[ \int_{a_{\mu, \xi}(\eta)}^{b_{\mu, \xi}(\eta)}\int_{g(\eta +s \xi)-a}^{g(\eta+s\xi)} \Bl|  \f {d^r} {dt^r} \Bl[ f \bl(\eta+(s+t)\xi,y+t\p_\xi g(\eta+s\xi)\br)\Br]\Br|_{t=0} \Br|^p \,  dy\, ds\Br] d\eta\\
	&=:I.
	\end{align*}
	For simplicity,
	we	define,   for each fixed   $\eta\in E_{1, \xi}$,
	\begin{align*}
	g_\eta (s) &:=g (\eta+s\xi),\   \   s\in [ a_{\mu,\xi} (\eta), b_{\mu, \xi} (\eta)],\\
	f_{\eta}(s, y)&: =f( \eta+s\xi, y),\   \    s\in [ a_{\mu,\xi} (\eta), b_{\mu, \xi} (\eta)],\   \  g_{\eta} (s)-a\leq y\leq g_{\eta}(s).
	\end{align*} 
	We then write the integral $I$ as 
	\begin{align*}
	I
	&= \int_{E_{1,\xi}} \int_{a_{\mu, \xi}(\eta)}^{b_{\mu, \xi}(\eta)}\int_{g_\eta(s) -a}^{g_{\eta}(s)} \Bl|  (\p_1 +g_\eta'(s)\p_2)^r f_\eta (s,y) \Br|^p \,  dy\, ds d\eta.
	\end{align*}
	Since $1<\mu <\ld$, it is easily seen  that  for each  $\eta\in E_{1, \xi}$, 
	$b_{\mu, \xi}(\eta)-a_{\mu, \xi}(\eta) \ge 2(\mu-1)a>0$, and 
	$$ 
	\Bl [a_{\mu,\xi} (\eta)-(\ld-\mu)a, b_{\mu,\xi} (\eta) +(\ld-\mu) a\Br]\subset [a_{\ld,\xi} (\eta), b_{\ld,\xi} (\eta)].$$
	It then follows from Theorem~\ref{THM:2D BERN} that 
	\begin{align*}
	I
	&\leq C n^{pr}  \int_{E_{1,\xi}}\Bl[ \int_{a_{\ld, \xi} (\eta)}^{b_{\ld, \xi}(\eta) }\int_{g_\eta(s)-\ld a }^{g_\eta(s)} |  f_\eta(s,y)|^p \,  dy\, ds\Br] d\eta
	\leq C n^{r p} \int_{G_\ast} |f(x,y)|^p\, dxdy.
	\end{align*}

	%
	%
\end{proof}

We also need a combinatorial identity   on mixed directional derivatives.   For convenience, we extend the definition   $\p_\xi:=\xi\cdot\nabla$ of  directional derivatives  to   all $\xi\in\RR^n$. Thus, $\p_{s\xi}=s\p_\xi$ for any $s\in\RR$ and $\xi\in\RR^n$.   As  a   consequence of the Kemperman lemma on mixed differences (see~\cite[(3.7)]{CD} or~\cite[Lemma 4.11, p.338]{BS}), we have 
\begin{lem}\label{lem-11-3}
	Let $\xi_1,\dots, \xi_r$ be arbitrary vectors in $\RR^n$.
	Then 
	\begin{equation}\label{11-12}
	\p_{\xi_1}\p_{\xi_2}\dots\p_{\xi_r}=\sum_{ S \subset \{1,2,\dots, r\}} (-1)^{|S|}  \p_{\xi_S}^r,
	\end{equation}
	where  
	the sum extends over all subsets $S$ of $\{1,2,\dots, r\}$, $|S|$ is the cardinality of $S$ and  $\xi_S =-\sum_{j\in  S} j^{-1} \xi_j$. 
\end{lem}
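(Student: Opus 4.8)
The statement to be proved is Lemma~\ref{lem-11-3}, the Kemperman-type combinatorial identity
\[
\p_{\xi_1}\p_{\xi_2}\cdots\p_{\xi_r}=\sum_{S\subset\{1,2,\dots,r\}}(-1)^{|S|}\p_{\xi_S}^r,
\qquad \xi_S=-\sum_{j\in S}j^{-1}\xi_j.
\]

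\textbf{Plan of proof.} The plan is to reduce this to a known identity on mixed finite differences (the Kemperman lemma, as cited) by the standard passage from differences to derivatives, and then check that the particular choice of coefficients $\xi_S=-\sum_{j\in S}j^{-1}\xi_j$ is exactly what makes the difference version collapse to the derivative version. First I would record the finite-difference analogue: for vectors $\eta_1,\dots,\eta_r$ and a sufficiently smooth $f$,
\[
\Delta_{\eta_1}\Delta_{\eta_2}\cdots\Delta_{\eta_r}f(x)=\sum_{\emptyset\neq T\subset\{1,\dots,r\}}(-1)^{r-|T|}\Delta^{\,|T|}_{\eta_T}f(x)+(\text{lower-order terms}),
\]
which is precisely the content of~\cite[(3.7)]{CD} / \cite[Lemma 4.11, p.338]{BS}: a product of $r$ first differences along possibly different directions is a signed sum of pure $|T|$-th differences along the partial sums $\eta_T=\sum_{j\in T}\eta_j$. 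I would then apply this with $\eta_j=t\,j^{-1}\xi_j$ for a scalar parameter $t\to0$; the key bookkeeping point is that $\Delta^r_{\eta}f(x)=t^r\p^r_{\xi}f(x)+o(t^r)$ when $\eta=t\xi$, so only the top-order term ($|T|=r$, i.e.\ the full set $T=\{1,\dots,r\}$ after regrouping — but with \emph{all} subsets appearing once we expand the product of differences rather than iterate) survives after dividing by $t^r$ and letting $t\to0$.

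\textbf{Key steps in order.} (1) State the Kemperman identity for mixed differences in the precise homogeneous form I need, citing~\cite{CD,BS}; if the cited form is stated only for pure differences I would first expand $\prod_{j=1}^r\Delta_{\eta_j}=\prod_{j=1}^r(\text{shift}_{\eta_j}-\mathrm{id})$ and collect terms, obtaining $\sum_{S\subset\{1,\dots,r\}}(-1)^{r-|S|}\,\text{shift}_{\sum_{j\in S}\eta_j}$, then rewrite each shift-block as a pure difference. (2) Substitute $\eta_j=t j^{-1}\xi_j$ and divide through by $t^r$. (3) Observe that $\p_{\xi_1}\cdots\p_{\xi_r}f=\lim_{t\to0}t^{-r}\Delta_{t\xi_1}\cdots\Delta_{t\xi_r}f$ (standard, by Taylor expansion, since each $\Delta_{t\xi_j}=t\p_{\xi_j}+O(t^2)$ as operators on $C^r$), and similarly each pure block contributes $\lim_{t\to0}t^{-|S|}\Delta^{|S|}_{t\,\xi_S'}f$ where $\xi_S'=\sum_{j\in S}j^{-1}\xi_j$; only the blocks with $|S|=r$... actually here I must be careful — I would instead match homogeneity degree directly, noting that the product of $r$ differences expanded as above is genuinely a sum over \emph{all} $S$, each term being a pure $|S|$-fold difference that contributes at order $t^{|S|}$, but we want order $t^r$; so the matching I actually want is the one where the limit is taken of the full product, each difference contributing exactly one factor of $t$. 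The clean way: apply the operator identity $\prod_j(\text{shift}_{\eta_j}-\mathrm{id})=\sum_S(-1)^{r-|S|}\text{shift}_{\eta_S}$ to a polynomial test function, differentiate $r$ times at $t=0$; since $\text{shift}_{\eta_S}$ applied and $r$-fold $t$-differentiated at $0$ gives $\p^r_{\sum_{j\in S}j^{-1}\xi_j}$ times a multinomial factor, and the sign $(-1)^{r-|S|}$ combined with the $r$-th $t$-derivative extracting the degree-$r$ part yields $(-1)^{|S|}$ after reindexing $S\mapsto S^c$ or absorbing signs — this is the step where I verify the coefficient $-j^{-1}$ is forced. (4) Conclude by density/linearity that the operator identity holds on $C^r$.

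\textbf{Main obstacle.} The genuine difficulty is purely combinatorial/bookkeeping: getting the signs and the $j^{-1}$ weights to come out exactly as stated. Specifically, one must track that when $\prod_{j=1}^r(\text{shift}_{t j^{-1}\xi_j}-\mathrm{id})$ is applied to a function and its $r$-th derivative in $t$ is taken at $t=0$, the surviving contribution from the term indexed by $S$ is $(-1)^{r-|S|}\cdot(\text{number of ways}) \cdot \p^r_{(\sum_{j\in S}j^{-1}\xi_j)}$, and that the $\tfrac1j$ weights are exactly what makes the "number of ways" factors cancel so that each $S$ appears with coefficient precisely $(-1)^{|S|}$ — the role of $\xi_S=-\sum_{j\in S}j^{-1}\xi_j$ (note the minus sign) being to flip $(-1)^{r-|S|}$ into the claimed $(-1)^{|S|}$ via $\p^r_{-v}=(-1)^r\p^r_v$ and relabeling. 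Everything else (reduction to the cited Kemperman lemma, the difference-to-derivative limit, extension from polynomials to $C^r$) is routine. Since the paper explicitly cites~\cite{CD,BS} for this, I expect the intended proof is a one-paragraph deduction of exactly this type, and I would present it as such rather than reproving Kemperman's lemma from scratch.
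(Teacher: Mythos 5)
Your proposal meanders but ends up sketching a valid argument that is genuinely \emph{different} from the paper's; let me disentangle the two and flag the errors along the way.

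The identity you write in your step (1) is not Kemperman's lemma. The cited result \cite[Lemma 4.11]{BS} (reproduced as \eqref{kemperman} in the paper) expresses $\prod_{j=1}^r\tr_{h_j}$ as a sum over \emph{all} subsets $S$ of terms $(-1)^{|S|}T_{h_S^\ast}\tr_{h_S}^r$ — each term is a pure difference of order exactly $r$ (with step $h_S=-\sum_{j\in S}j^{-1}h_j$), composed with a translation $T_{h_S^\ast}$ — not an $|S|$-fold difference, and there is no ``$+$ lower-order terms''. The $j^{-1}$ weights are already built into the definition of $h_S$. Consequently the paper's proof does \emph{not} weight the substitution: it sets $h_j=t\xi_j$ (plain), divides by $t^r$, and lets $t\to0^+$. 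On the left, the integral representation $\tr_{t\xi_1}\cdots\tr_{t\xi_r}f(x)=\int_{[0,t]^r}(\p_{\xi_1}\cdots\p_{\xi_r}f)(x+\sum u_j\xi_j)\,du$ gives the limit $\p_{\xi_1}\cdots\p_{\xi_r}f(x)$; on the right, each $t^{-r}\tr^r_{t\xi_S}f(\cdot+t\sum_{j\in S}\xi_j)\to\p^r_{\xi_S}f$. The Kemperman lemma's role is precisely to supply pure $r$-th differences so that dividing by $t^r$ lands on pure $r$-th derivatives.

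Your intermediate attempt — substitute $\eta_j=tj^{-1}\xi_j$ \emph{and} divide by $t^r$ — does not work: the left-hand limit is then $\prod_j j^{-1}\,\p_{\xi_1}\cdots\p_{\xi_r}f$, off by the factor $\prod j^{-1}=1/r!$, which you half-noticed. But the ``clean way'' you pivot to at the end is a correct alternative that bypasses Kemperman's lemma entirely: expand $\prod_{j=1}^r(T_{\eta_j}-I)=\sum_{S}(-1)^{r-|S|}T_{\sum_{j\in S}\eta_j}$ (plain distribution of the product, no combinatorial lemma needed), set $\eta_j=tj^{-1}\xi_j$, and take $\tfrac{d^r}{dt^r}\big|_{t=0}$ of both sides applied to $f\in C^r$. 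On the left, since each factor vanishes at $t=0$, the only surviving Leibniz term is the one where every factor receives exactly one derivative, giving $r!\prod_j(j^{-1}\p_{\xi_j})=\p_{\xi_1}\cdots\p_{\xi_r}$ (this is where $\beta_j=j^{-1}$ is exactly what cancels the $r!$). On the right, $\tfrac{d^r}{dt^r}\big|_{t=0}T_{tv}f=\p_v^r f$, so one gets $\sum_S(-1)^{r-|S|}\p^r_{\sum_{j\in S}j^{-1}\xi_j}=\sum_S(-1)^{r-|S|}(-1)^r\p^r_{\xi_S}=\sum_S(-1)^{|S|}\p^r_{\xi_S}$. This is a legitimate, somewhat more elementary derivation: it trades the citation of a nontrivial combinatorial identity for a careful (but routine) higher-order Leibniz computation. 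It is not, however, the paper's route, and your write-up would need to drop the misremembered Kemperman statement and the unworkable divide-by-$t^r$ step before presenting it.
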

\begin{rem}
	In the case when all the vectors $\xi_1,\dots, \xi_r$ are unit, the following interesting inequality with sharp constant was proved in \cite{CD}:
	$$|\p_{\xi_1}\dots\p_{\xi_r} f(x)|\leq \max_{\xi\in \SS^{n-1}} |\p_\xi^r f(x)|,\   \  x\in\RR^n,\   \  \xi_1,\dots,\xi_r\in\SS^{n-1},\  \  \ f\in C^r(\RR^n). $$
	Note that in Lemma~\ref{lem-11-3}  the vectors $\xi_1,\dots, \xi_r$ are not necessarily unit, and the sum on the right hand side of~\eqref{11-12} uses at most $2^r$ directions.   
	This is very important in our later applications. 	
	
\end{rem}

\begin{proof} The proof relies on the following  identity   on finite differences, whose proof can be found in  \cite[Lemma 4.11, p.338]{BS}:
	\begin{equation}\label{kemperman} \tr_{h_1} \tr_{h_2}\dots \tr_{h_r}=\sum_{ S \subset \{1,2,\dots, r\}} (-1)^{|S|} T_{h_S^\ast} \tr_{h_S}^r,\   \   \  \forall h_1,\dots, h_r\in \RR^n, \end{equation}
	where   $$h_S^\ast = \sum_{j\in S} h_j\   \   \  \text{and}\   \   \  h_S =-\sum_{j\in S} j^{-1} h_j.$$
	Here   we recall that  for $x, h\in\RR^n$ and $f:\RR^n\to \RR$, 
	$$ T_h f(x)=f(x+h),\   \   \tr_h f(x)=(T_h-I)f(x)=f(x+h)-f(x).$$ 
	
	Note that  for  $f\in C^r(\RR^n)$ and  $x, \xi_1,\dots,\xi_r\in\RR^n$, 
	\begin{align*}
	\tr_{t\xi_1} \tr_{t\xi_2} \dots  \tr_{t\xi_r} f(x)=\int_{[0,t]^r} (\p_{\xi_1}\dots\p_{\xi_r} f)\bl(x+ \sum_{j=1}^r u_j\xi_j\br)\,  du_1\dots du_r,  \  \  \forall t>0,
	\end{align*}
	which, in particular, implies 
	\begin{align*}
	\lim_{t\to 0+}  	t^{-r}	\bl(\tr_{t\xi_1} \dots \tr_{t\xi_r} f\br)(x)=\Bl(\p_{\xi_1}\dots \p_{\xi_r}f\Br)(x),\  \  f\in C^r(\RR^n).
	\end{align*}
	On the other hand, however,  using~\eqref{kemperman} with $h_j=t\xi_j$, we obtain
	\begin{align*}
	t^{-r}\Bl(	\prod_{j=1}^r \tr_{t\xi_j} \Br)f(x)= t^{-r}\sum_{ S\subset \{1,2,\dots, r\}} (-1)^{|S|} \tr_{t\xi_S}^r f\Bl(x+t\sum_{j\in S} \xi_j\Br).
	\end{align*}
	\eqref{11-12} then follows by letting $t\to 0+$. 
\end{proof}

Now we are in a position to prove Theorem~\ref{thm:2d bern-2}:\\

\begin{proof}[Proof of Theorem~\ref{thm:2d bern-2}]Since the operators $\mathcal{D}_{\tan}^{\pmb \al} $ and $\p_{d+1}$ are commutative, by the univariate Bernstein inequality (or the higher-dimensional analogue of Lemma~~\ref{lem:univ BM}),  it is sufficient to prove that 
	\begin{equation}\label{11-14}
	\|{\mathcal{D}}_{\tan}^{\pmb\al} f\|_{L^p(G)}\le  c n^{|\pmb \al|}\|f\|_{L^p(G_\ast)},\  \  \   \  f\in\Pi_n^{d+1}.
	\end{equation}

	Let  $(x_0, y_0)$ be an arbitrarily fixed  point in $G$.  Then  by~\eqref{11-3} and~\eqref{11-4}, we have  that for any ${\pmb\al} \in\ZZ_+^d$, 
	$$ \mathcal{D}_{\tan}^{\pmb{\al}} f(x_0, y_0) =  \p_{\xi_1(x_0)}^{\al_1} \dots \p_{\xi_d(x_0)}^{\al_d} f(x_0, y_0),$$
	where  the vectors $\xi_j(x_0)$ are given in~\eqref{11-4-0}.
	Let $\ell=|\pmb{\al}|$ and let $\{\eta_j(x_0)\}_{j=1}^\ell$ be the  sequence of vectors such that 
	$$\eta_{\al_{j-1}+1} (x_0)=\dots=\eta_{\al_{j-1} +\al_j}(x_0) =\xi_j(x_0),\   \ j=1,2,\dots, d,$$ where $\al_0=0$.
	Then using~\eqref{11-12}, we have that 
	\begin{align}\label{11-15-0}
	& \p_{\xi_1(x_0)}^{\al_1} \dots \p_{\xi_d(x_0)}^{\al_d} =\prod_{j=1}^\ell \p_{\eta_j(x_0)} 
	= \sum_{S\subset \{1,2,\dots, \ell\}} (-1)^{|S|} \p_{\eta_S(x_0)}^\ell, 
	\end{align}
	where 
	$\eta_S(x_0) = -\sum_{j\in S} j^{-1}\eta_j(x_0).$
	As can be easily seen from~\eqref{11-4-0},  for each $S\subset \{1,2,\dots, \ell\}$, the vector $\eta_S (x_0)$ can be written in the form
	$$ \eta_S (x_0)=c_S\Bl(\xi_S, \p_{\xi_S} g(x_0)\Br),\   \  c_S>0,\  \   \xi_S\in\SS^{d-1},$$
	where  $c_S$ and $\xi_S$ depend  only on the set $S$ and  $\pmb{\al}$ (but independent of $x_0$).   
	Thus, by~\eqref{11-15-0}, it follows that  
	\begin{align}
	\Bl| \p_{\xi_1(x_0)}^{\al_1} \dots \p_{\xi_d(x_0)}^{\al_d} f(x_0, y_0)\Br|\leq C_{\pmb\al } \max _{S\subset \{1,2,\dots, \ell\}} |\wt{D}_{\xi_S}^{\ell } f(x_0, y_0)|, \label{11-16}
	\end{align}
	where  the operator $\wt{D}_\xi^\ell f(x_0, y_0)$  is given in~\eqref{directional}; that is, $$\wt{D}_\xi ^\ell f(x_0, y_0)=\p_{(\xi, \p_\xi g(x_0))}^\ell 
	f(x_0, y_0),\   \    \xi\in \SS^{d-1}.$$
	
	Since the maximum in~\eqref{11-16} is taken over a set of $2^\ell$ elements, and all the vectors $\xi_S$ are independent of $(x_0, y_0)$, 
	it follows by Lemma~\ref{lem-11-2}  that 
	\begin{align*}
	\|{\mathcal{D}}_{\tan}^{\pmb\al} f\|_{L^p(G)}&\le C \max_{S\subset \{1,\dots, \ell\}} \|
	\wt{D}_{\xi_S}^{|\pmb \al|} f\|_{L^p(G)} \leq 
	c n^{|\pmb \al|}\|f\|_{L^p(G_\ast)}.
	\end{align*}
	This completes the proof. 
\end{proof}

\begin{rem} Note that in the above proof, it is very important that the vectors $\xi_S$ in the estimate~\eqref{11-16}   are independent of $(x_0, y_0)\in G$.  This is  the reason why we do not normalize the vectors $\xi_j(x_0)$   in~\eqref{11-4-0}.
\end{rem}


\section{Bernstein type inequality on general $C^2$ domains}\label{sec:14}

In this last section, we established a tangential   Bernstein type inequality  on  domains of special type (see Theorem~\ref{thm:2d bern-2}). The decomposition  Lemma (i.e.,   Lemma~\ref{lem-2-1-18}) proved   in Chapter 2  allows us to extend  this inequality to   a more general $C^2$-domain. 
To be more precise, let  $\Og\subset \RR^{d+1}$ be  the closure of a bounded, open and connected set with  $C^2$ boundary $\Ga=\p \Og$. 
Given $\eta\in\Ga$, we denote by $\mathbf{n}_\eta$  the unit outer normal vector   to  $\Ga$ at  $\eta$, and $\mathcal{S}_\eta$  the set of all unit tangent vectors to $\Ga$ at $\eta$; that is, $\mathcal{S}_\eta:=\{ \pmb{\tau}\in\SS^d:\  \ \tau\cdot \mathbf{n}_\eta=0\}$.
Given nonnegative integers $l_1, l_2$ and a parameter $\mu\ge 1$, we define the maximal operators $\mathcal{M}_{n,\mu}^{l_1, l_2} $, $n=1,2,\dots$ by
$$ \mathcal{M}_{n,\mu}^{l_1, l_2} f(\xi):=\max_\eta \max_{\pmb{\tau}_\eta\in\mathcal{S}_\eta}
\bl|  \p_{\pmb{\tau}_\eta}  ^{l_1} \p_{\nb_\eta}^{l_2} f(\xi)\br|,\  \  \xi\in\Og,\   \  f\in C^\infty(\Og)$$
with    the left most maximum being  taken over all $\eta\in\Ga$ such that 
\begin{equation}\label{12-1-0}
\|\eta-\xi\|\leq  \mu \Bl(\sqrt{\dist(\xi, \Ga)} +n^{-1}\Br)=: \mu \vi_{n,\Ga} (\xi). 
\end{equation}

The    main goal in this section is to show the following extension of Theorem~\ref{thm:2d bern-2}:

\begin{thm}\label{thm-10-1}  If     $f\in \Pi_n^{d+1}$,   $\mu>1$ and $0<p\leq \infty$, then 
	\begin{equation}\label{bernstein-tan}
	\Bl\|\vi_{n, \Ga}^j \mathcal{M}_{n,\mu}^{r, j+l} f \Br\|_{L^p(\Og)}\leq C_\mu n^{r+j+2l} \|f\|_{L^p(\Og)},\  \ r,j,l=0,1,\dots, 
	\end{equation}  
	where the function $\vi_{n,\Ga} $ is given in~\eqref{12-1-0}, and  the constant $C_\mu$ is independent of $f$ and $n$.
\end{thm}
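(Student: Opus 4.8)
\textbf{Proof plan for Theorem~\ref{thm-10-1}.}

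The plan is to deduce the global Bernstein inequality on $\Og$ from the local Bernstein inequality on domains of special type established in Theorem~\ref{thm:2d bern-2} (and its companion Corollary~\ref{cor-11-2}), using the covering Lemma~\ref{lem-2-1-18}. First I would fix the finite cover $G_1,\dots,G_{m_0}\subset\Og$ of the boundary $\Ga$ by domains of special type attached to $\Ga$, together with the parameters $\da_0$ and $\ld_0\in(\tfrac12,1)$ from Chapter~\ref{sec:reduction} so that $\Ga=\bigcup_{k=1}^{m_0}\p'G_k(\ld_0)$ and $\Og\setminus\Og(\da_0)\subset\bigcup_{k=1}^{m_0}G_k(\ld_0)$, where $\Og(\da_0)=\{\xi\in\Og:\dist(\xi,\Ga)>\da_0\}$. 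The proof then splits $\Og$ into the ``interior part'' $\Og(\da_0)$ and the ``boundary collars'' $G_k(\ld_0)$.

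On the interior part $\Og(\da_0)$ the function $\vi_{n,\Ga}$ is bounded above and below by constants, the maximum defining $\mathcal{M}_{n,\mu}^{r,j+l}f(\xi)$ ranges over boundary points at distance $\gtrsim\da_0$ (so one can absorb $\vi_{n,\Ga}^j$ into the constant), and any tangential/normal directional derivative $\p_{\pmb\tau_\eta}^r\p_{\nb_\eta}^{j+l}$ is a fixed linear combination of pure directional derivatives $\p_\zeta^{r+j+l}$ with unit $\zeta$ (by the Kemperman-type identity, Lemma~\ref{lem-11-3}, applied to the $r+j+l$ vectors among $\pmb\tau_\eta$ and $\nb_\eta$); hence $\mathcal{M}_{n,\mu}^{r,j+l}f\leq C\sup_{\zeta\in\SS^d}|\p_\zeta^{r+j+l}f|$ on $\Og(\da_0)$, and the required estimate $\|\mathcal{M}_{n,\mu}^{r,j+l}f\|_{L^p(\Og(\da_0))}\le Cn^{r+j+2l}\|f\|_{L^p(\Og)}$ follows from the ordinary (Markov--Bernstein) inequality for polynomials on a bounded domain, e.g.\ by covering $\Og(\da_0)$ by finitely many balls of fixed radius contained in $\Og$ and applying the univariate Bernstein inequality along each direction together with Lemma~\ref{lem-4-1}. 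The main work is the boundary collars.

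For a fixed collar $G=G_k$, written after a rigid motion as an upward $x_{d+1}$-domain of the form~\eqref{2-7-special} with $\xi=0$, I would show $\|\vi_{n,\Ga}^j\mathcal{M}_{n,\mu}^{r,j+l}f\|_{L^p(G(\ld_0))}\le Cn^{r+j+2l}\|f\|_{L^p(\Og)}$. The key geometric points, all consequences of the $C^2$ assumption and Lemma~\ref{metric-lem}/Lemma~\ref{lem-9-1}, are: (i) for $\xi\in G(\ld_0)$ with $\dist(\xi,\Ga)$ small, $\dist(\xi,\Ga)\sim g(\xi_x)-\xi_y=\da(\xi)$, so $\vi_{n,\Ga}(\xi)\sim\vi_n(\xi)=\sqrt{\da(\xi)}+n^{-1}$; (ii) every boundary point $\eta$ within distance $\mu\vi_{n,\Ga}(\xi)$ of $\xi$ lies over the essential boundary $\p'G^\ast$ and its base point $\eta_x$ satisfies $\|\eta_x-\xi_x\|\le C\mu\vi_n(\xi)$ (again by Lemma~\ref{metric-lem}); (iii) at such $\eta$ the unit normal $\nb_\eta$ and any unit tangent $\pmb\tau_\eta$ are expressible in the basis of the vectors $\xi_j(\eta_x)=e_j+\p_jg(\eta_x)e_{d+1}$ ($j=1,\dots,d$) together with $e_{d+1}$ (equivalently, the Gram matrix of $\{\xi_1(\eta_x),\dots,\xi_d(\eta_x),e_{d+1}\}$ is bounded and bounded below, uniformly in $\eta_x\in[-2b,2b]^d$, because $\|\nabla g\|_\infty$ is small), with coefficients bounded uniformly; consequently $\p_{\nb_\eta}$ is a bounded combination of $\p_{d+1}$ and the $\p_{\xi_j(\eta_x)}$, while $\p_{\pmb\tau_\eta}$ is a bounded combination of the $\p_{\xi_j(\eta_x)}$ only (since $\pmb\tau_\eta\cdot\nb_\eta=0$ forces the $e_{d+1}$-component to be controlled by the tangential ones). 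Expanding $\p_{\pmb\tau_\eta}^r\p_{\nb_\eta}^{j+l}f$ by the multinomial theorem and then applying Lemma~\ref{lem-11-3} to rewrite each product of first-order operators as a bounded combination of at most $2^{r+j+l}$ pure operators $\p_{(\xi_S,\p_{\xi_S}g(\eta_x))}^{r+l'}\p_{d+1}^{l''}$ (with $l'+l''$ controlled and the directions $\xi_S\in\SS^{d-1}$ depending only on the combinatorial data, not on $\eta_x$ or $\xi$), one gets the pointwise bound
\[
\vi_{n,\Ga}(\xi)^j\,\mathcal{M}_{n,\mu}^{r,j+l}f(\xi)\le C\,\vi_n(\xi)^j\!\!\max_{S}\max_{u\in\Xi_{n,C\mu,\ld_0}(\xi)}\bigl|\wt D_{\xi_S,u}^{\,r+l_1}\p_{d+1}^{\,j+l_2}f(\xi)\bigr|,
\]
where $l_1+l_2\le j+l$ and $\wt D_{\xi,u}$ is the operator at base point $u$ from~\eqref{directional}; here $\Xi_{n,C\mu,\ld_0}(\xi)=\{u\in[-\ld_0 a,\ld_0 a]^d:\|u-\xi_x\|\le C\mu\vi_n(\xi)\}$. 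Taking the $L^p(G(\ld_0))$-norm and invoking Corollary~\ref{cor-11-2} (applied with $\ld\in(\ld_0,1]$, parameter $C\mu$, the multi-index $\pmb\al$ corresponding to $r+l_1$ copies of a single coordinate direction after a further coordinate rotation sending $\xi_S$ to $e_1$ — or directly the directional version Lemma~\ref{lem-11-2} combined with the argument proving Corollary~\ref{cor-11-2}) yields $\le Cn^{(r+l_1)+(j+l_2)+2\cdot 0}\cdots$; more precisely one bounds the $j$-th power of $\vi_n$ times the tangential derivative of order $r+l_1$ and normal-to-the-graph derivative of order $j+l_2$ by $Cn^{(r+l_1)+j+2l_2}\|f\|_{L^p(G^\ast)}\le Cn^{r+j+2l}\|f\|_{L^p(\Og)}$, using $l_1+l_2\le j+l$ together with $\vi_n(\xi)^j\cdot(\text{extra }n\text{-powers})$ bookkeeping exactly as in Corollary~\ref{cor-11-2}. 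Summing over the finitely many collars $k=1,\dots,m_0$ and combining with the interior estimate completes the proof.

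\textbf{Main obstacle.} The delicate point is step (iii): controlling, \emph{uniformly} in the base point $\eta_x$ and in $\xi$, the passage from abstract tangential/normal derivatives $\p_{\pmb\tau_\eta}^r\p_{\nb_\eta}^{j+l}$ at a nearby boundary point $\eta$ to the graph-adapted operators $\wt D_{\xi_S,u}$ built from the vectors $\xi_j(u)$ at a base point $u$ with $\|u-\xi_x\|\lesssim\mu\vi_n(\xi)$ — and ensuring the finitely many resulting directions $\xi_S$ do not depend on $\xi$, so that Lemma~\ref{lem-11-2}/Corollary~\ref{cor-11-2} can be applied directly. This requires (a) the quantitative comparison $\dist(\xi,\Ga)\sim\da(\xi)$ and $\|\eta_x-\xi_x\|\lesssim\vi_n(\xi)$ from Lemma~\ref{metric-lem}, so that one may harmlessly enlarge $\mu$ and replace evaluation at the exact boundary point $\eta$ by evaluation at base points $u$ ranging over $\Xi_{n,C\mu,\ld_0}(\xi)$; and (b) the Kemperman identity Lemma~\ref{lem-11-3}, which is exactly what keeps the number of directions finite (at most $2^{r+j+l}$) and the directions independent of the point — this is stressed in Remark after Lemma~\ref{lem-11-3} and is the reason $\xi_j(u)$ is left un-normalized in~\eqref{11-4-0}. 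Once these are in place the estimate reduces, collar by collar, to Corollary~\ref{cor-11-2}, and the rest is routine.
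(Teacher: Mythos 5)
Your overall plan matches the paper's proof: split $\Og$ into an interior $\Og(\da_0)$ (where $\vi_{n,\Ga}$ is bounded and the ordinary Markov--Bernstein inequality suffices) and boundary collars covered by domains of special type via Lemma~\ref{lem-2-1-18}; on each collar use Lemma~\ref{metric-lem} to replace evaluation at nearby boundary points $\eta$ by base points $u\in\Xi_{n,c\mu}(\xi)$, express $\p_{\pmb\tau_\eta}$ and $\p_{\nb_\eta}$ in the frame $\{\xi_1(u),\dots,\xi_d(u),e_{d+1}\}$ with uniformly bounded coefficients, and then invoke Corollary~\ref{cor-11-2}. This is exactly the paper's route.

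However, there is a genuine gap in your power bookkeeping. Upon expanding $\p_{\pmb\tau_\eta}^r\p_{\nb_\eta}^{j+l}$ in that frame one obtains a bounded combination of $\mathcal{D}_{\tan,u}^{\pmb\al}\p_{d+1}^i$ with $|\pmb\al|+i=r+j+l$ and $i$ ranging over all of $\{0,1,\dots,j+l\}$. Your parameterization $\wt D_{\xi_S,u}^{r+l_1}\p_{d+1}^{j+l_2}$ forces $i=j+l_2\ge j$, so the terms with $i<j$ are not captured. Moreover the constraint ``$l_1+l_2\le j+l$'' you state is too weak: it does not imply $l_1+2l_2\le 2l$ (take $l_1=0$, $l_2$ close to $j+l$: then the claimed bound $n^{(r+l_1)+j+2l_2}$ exceeds $n^{r+j+2l}$ by roughly $n^{2j}$). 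Preservation of total degree actually forces $l_1+l_2=l$, which does yield $l_1+2l_2\le 2l$, and this fixes the case $i\ge j$. The remaining range $0\le i<j$ must be handled separately: write $\vi_n^j\le C\vi_n^i$ (bounded extra factor) and apply Corollary~\ref{cor-11-2} with weight exponent $i$ and zero extra $\p_{d+1}$-order, producing $n^{|\pmb\al|+i}=n^{r+j+l}\le n^{r+j+2l}$. The paper explicitly distinguishes these two cases; your write-up omits the second and misstates the constraint in the first.

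A secondary point: the additional application of Lemma~\ref{lem-11-3} at this stage is redundant and, as described, imprecise. Applying Kemperman's identity to the full length-$(r+j+l)$ product including the $\p_{d+1}$-factors produces ``pure'' directions that are generic linear combinations of $\xi_i(u)$ and $e_{d+1}$; these are generally \emph{not} of the graph-adapted form $(\xi_S,\p_{\xi_S}g(u))$, since the $e_{d+1}$-component acquires an extra term. The paper avoids this by stopping at the mixed operators $\mathcal{D}_{\tan,u}^{\pmb\al}\p_{d+1}^i$ and letting Theorem~\ref{thm:2d bern-2} internalize the Kemperman reduction on the tangential factors only, with the $\p_{d+1}$ factors handled via the univariate Bernstein inequality along vertical lines.
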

\begin{proof} Let $ f\in\Pi_n^{d+1}$, and    let  $\Og^\da:=\{\xi\in\Og:\  \  \dist(\xi, \Ga) > \da\}$ for $\da\in (0,1)$.  By Fubini's theorem and the univariate Bernstein inequality~\eqref{markov-bern},  we have   that for any $\da\in (0,1)$,
	\begin{equation}\label{10-1}
	\| \p^{\pmb{\b}} f \|_{L^p(\Og^\da)}\leq C_{{\pmb \b}, \da}  n^{|\pmb{\b}|} \|f\|_{L^p(\Og)},\   \  \forall \pmb {\b}\in \ZZ_{+}^{d+1}.
	\end{equation}
	which in turn implies that  
	\begin{equation*}\label{Bernstein-tan-1}
	\Bl\|\mathcal{M}_{n,\mu}^{r, l} f \Br\|_{L^p(\Og^\da)}\leq C n^{r+l} \|f\|_{L^p(\Og)},\  \  r, l=0,1,\dots. 
	\end{equation*}  
	Thus, to show~\eqref{bernstein-tan},  by Lemma~\ref{lem-2-1-18},  it suffices to prove  that    for each domain $G\subset \Og$ of special type  attached to $\Ga$, 
	\begin{equation}\label{bernstein-tan-2}
	\Bl\|\vi_{n, \Ga}^j \mathcal{M}_{n,\mu}^{r, j+l} f\Br\|_{L^p(G_\ast(c_0))}\leq C_{\mu, G} n^{r+j+2l} \|f\|_{L^p(G_\ast)}, r,j,l=0,1,\dots, 
	\end{equation} 
	where $G_\ast=G_\ast(2)$, and $c_0\in (0,1)$ is a small parameter depending only on $G$.
	Without loss of generality,  we may assume that $G$ is an $x_{d+1}$-domain of   the  form~\eqref{11-2-2} with $a\ge 1$ and  $g\in C^2([-2a, 2a]^d)$ satisfying    $\min _{x\in [-2a, 2a]^d} g(x)\ge 4a,$ 
	such that $G_\ast(2) \subset \Og$ and $\p' G_\ast(2)\subset \Ga$.  Then according to Lemma~\ref{metric-lem},  we have 
	\begin{equation}\label{12-5}
	\da(x,y):=g(x)-y \sim \dist (\xi, \Ga),\   \    \  \forall \xi=(x,y)\in G. 
	\end{equation}

	Next, given  $\xi=(\xi_x, \xi_y)\in G$, we  set  $\vi_n(\xi): =\sqrt{g(\xi_x) -\xi_y} +\f 1n$, and define 
	\begin{align*}
	\Ga_{n, \mu} (\xi): &=\{ \eta\in \Ga:\  \ \|\eta-\xi\|\leq \mu \vi_{n,\Ga} (\xi)\}.
	\end{align*}
	Clearly, we may choose the parameter  $c_0\in (0,1)$ small enough so that $\Ga_{n, \mu} (\xi) \subset \p' G_\ast$ for every $\xi\in G_\ast (c_0)$.
	Thus, using~\eqref{12-5}, we have that 
	$$ \Ga_{n,\mu} (\xi) \subset \Bl\{ (u, g(u)):\  \   u\in \Xi_{n, \mu} (\xi)\Br\}\subset \p' G_\ast,\  \ \forall \xi\in G_\ast(c_0),$$ 
	where 
	$$\Xi_{n, \mu} (\xi):= \Bl\{ u\in [-2a, 2a]^d:\   \   \|u-\xi_x\|\leq c_1\mu \vi_n(\xi)\Br\},$$ 
	and the constant $c_1$ depends only on the set $G$.
	
	Finally, we recall that 
	$$ \mathcal{D}_{\tan, x_0}^{\pmb{\al}} f(x, y) =  \p_{\xi_1(x_0)}^{\al_1} \dots \p_{\xi_d(x_0)}^{\al_d} f(x, y),\   \ {\pmb\al} \in\ZZ_+^d, \  \  x_0\in [-2a, 2a]^d, $$
	where  the vectors $\xi_j(x_0)$ are given in~\eqref{11-4-0}. Clearly,  for each $\xi=(a_1, a_2\dots, a_{d+1})\in\RR^{d+1}$ and $x_0\in [-2a, 2a]^d$, we have 
	$$ \xi=\sum_{j=1}^d a_j \xi_j(x_0) + \Bl( a_{d+1} -\sum_{j=1}^d a_j \p_j g(x_0) \Br) e_{d+1}.$$ This implies that  for each $\xi\in G_\ast (c_0)$, 
	\begin{align*}
	\mathcal{M}_{n,\mu} ^{ r, j+l} f(\xi) &=\max_{\eta\in\Ga_{n,\mu} (\xi)} \max_{\pmb{\tau}_\eta \in \mathcal{S}_\eta} 
	\bl|  \p_{\pmb{\tau}_\eta}  ^{r} \p_{\nb_\eta}^{j+l} f(\xi) \br|\\
	&\leq C  \max_{u\in\Xi_{n,c\mu} (\xi)} \max_{\sub{|\pmb{\al}|+i =r+j+l\\
			0\leq 	i\leq j+l}} 
	\bl|  \mathcal{D} _{\tan, u} ^{\pmb \al} \p_{d+1}^{i} f(\xi) \br|. 
	\end{align*}
	Thus, using Corollary~\ref{cor-11-2} and considering the  cases $j\leq i\leq j+l$ and $0\leq i<j$ separately, we obtain 
	\begin{align*}
	&\Bl\| \vi_n(\xi)^j \max_{u\in\Xi_{n,\mu} (\xi)} 
	\bl|  \mathcal{D} _{\tan, u} ^{\pmb \al} \p_{d+1}^{i} f(\xi) \br| \Br\|_{L^p(G_\ast(c_0), d\xi)} \leq   C n^{r+j+2l} \|f\|_{L^p(G^\ast)}.
	\end{align*}
	Combining the above estimates, we deduce~\eqref{bernstein-tan-2},  and hence  complete the proof of the theorem.
\end{proof}


\chapter{Inverse inequality for $1\leq p\leq \infty$}\label{sec:15}

The main purpose in this section is to show Theorem~\ref{inverse-thm}, the inverse theorem. By Theorem~\ref{thm-9-1-00}, $\og^r_\Og(f, t)_p\leq C_{p,q} \tau_r(f, Ct)_{p,q}$ for $1\leq q\leq p\leq \infty$,  where $\tau_r(f,t)_{p,q}$ is the $(q,p)$-averaged modulus of smoothness given in~\eqref{eqn:ivanov}. Thus, it is sufficient to prove 
\begin{thm} \label{thm-15-1}If $r\in\NN$, $1\leq q\leq  p\leq \infty$ and $f\in L^p(\Og)$, then 
	$$\tau_r (f, n^{-1})_{p,q} \leq C_{r} n^{-r} \sum_{s=0}^n (s+1)^{r-1} E_s (f)_p.$$
	
\end{thm}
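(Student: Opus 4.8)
The plan is to follow the classical Bernstein–Stechkin scheme for inverse theorems, with the novelty concentrated in which Bernstein inequality is invoked. First I would establish the key auxiliary estimate: for any $P\in\Pi_n^{d+1}$ and $1\le q\le p\le\infty$,
\begin{equation}\label{plan:bern-tau}
\tau_r(P, n^{-1})_{p,q}\le C\, n^{-r}\sum_{|\pmb\beta|=r}\|\vi_{n,\Ga}^{?}\,\p^{\pmb\beta}P\|_{?}\le C'\|P\|_{L^p(\Og)},
\end{equation}
or more precisely $\tau_r(P,n^{-1})_{p,q}\le C\|P\|_{L^p(\Og)}$ with a constant independent of $P$ and $n$. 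To get this, fix $\xi\in\Og$ and $\eta\in U(\xi,1/n)$; then $\tr^r_{(\eta-\xi)/r}(P,\Og,\xi)$, when nonzero, is an $r$-th order difference along the segment $[\xi,\eta]\subset\Og$ of length $\|\xi-\eta\|\le C/n$, so by Taylor's formula it is bounded by $(C/n)^r$ times a supremum of $r$-th order directional derivatives of $P$ at points on $[\xi,\eta]$. The direction $(\eta-\xi)/\|\eta-\xi\|$ decomposes, relative to the boundary geometry near $\xi$, into a tangential part and a normal part; crucially, because $\rho_\Og(\xi,\eta)\le 1/n$ forces $|\sqrt{\dist(\xi,\Ga)}-\sqrt{\dist(\eta,\Ga)}|\le 1/n$, the normal displacement is $O(n^{-1}(\sqrt{\dist(\xi,\Ga)}+n^{-1}))=O(n^{-1}\vi_{n,\Ga}(\xi))$, i.e. each normal derivative comes with a gain of $\vi_{n,\Ga}$. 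Hence the difference is controlled by $n^{-r}$ times $\max_{i+j+l=r}\vi_{n,\Ga}(\xi')^{j}\,\mathcal M_{n,C\mu}^{i,j+l}P(\xi')$ for $\xi'$ on the segment, and absorbing the sup over the short segment into the maximal operator (enlarging $\mu$), Theorem~\ref{thm-10-1-00} with $l=0$ gives $\|n^{-r}\vi_{n,\Ga}^{j}\mathcal M_{n,C\mu}^{i,j}P\|_{L^p(\Og)}\le C\|P\|_{L^p(\Og)}$. Taking the $L^q$-average over $\eta\in U(\xi,1/n)$ (trivially bounded since we already have a pointwise sup bound) and then the $L^p$-norm over $\xi$ yields \eqref{plan:bern-tau}.

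Second, I would record the elementary monotonicity/subadditivity properties of $\tau_r(\cdot,t)_{p,q}$: it is nondecreasing in $t$, $\tau_r(f+g,t)_{p,q}\le \tau_r(f,t)_{p,q}+\tau_r(g,t)_{p,q}$ (for $q,p\ge1$, using Minkowski in both the inner $L^q$ and outer $L^p$ integrals, which is exactly where $q\ge1$ is used), and $\tau_r(f,t)_{p,q}\le C\|f\|_{L^p(\Og)}$ for all $t$ (the difference is a fixed linear combination of at most $2^r$ translates, each an $L^p$-contraction up to the measure ratio, which is bounded). These are routine.

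Third, the Bernstein–Stechkin argument itself. Let $P_0$ be a constant best approximant and, for $k\ge0$, let $Q_k\in\Pi_{2^k}^{d+1}$ with $\|f-Q_k\|_{L^p(\Og)}=E_{2^k}(f)_p$; set $P_k:=Q_k$. Write $f=P_0+\sum_{k\ge0}(P_{k+1}-P_k)$ in $L^p$ (convergence from $E_{2^k}(f)_p\to0$). For $2^{m}\le n<2^{m+1}$ split the sum at $k=m$: the low-degree part $P_0+\sum_{k=0}^{m-1}(P_{k+1}-P_k)=P_m$ is a polynomial of degree $\le 2^m\le n$, to which \eqref{plan:bern-tau} applied to each dyadic block $P_{k+1}-P_k\in\Pi_{2^{k+1}}^{d+1}$ gives
\begin{equation*}
\tau_r(P_m,n^{-1})_{p,q}\le\tau_r(P_0,n^{-1})_{p,q}+\sum_{k=0}^{m-1}\tau_r(P_{k+1}-P_k,n^{-1})_{p,q}\le C\,n^{-r}\sum_{k=0}^{m-1}2^{kr}\|P_{k+1}-P_k\|_{L^p(\Og)},
\end{equation*}
and $\|P_{k+1}-P_k\|_{L^p(\Og)}\le E_{2^k}(f)_p+E_{2^{k+1}}(f)_p\le 2E_{2^k}(f)_p$; the high-degree tail $\sum_{k\ge m}(P_{k+1}-P_k)=f-P_m$ has $\tau_r(f-P_m,n^{-1})_{p,q}\le C\|f-P_m\|_{L^p(\Og)}=CE_{2^m}(f)_p$. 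Combining and passing from the dyadic sum $n^{-r}\sum_{k\le m}2^{kr}E_{2^k}(f)_p+E_{2^m}(f)_p$ back to the full sum $n^{-r}\sum_{s=0}^n(s+1)^{r-1}E_s(f)_p$ via the standard comparison (using monotonicity of $E_s(f)_p$ in $s$ and $\sum_{s\in[2^{k-1},2^k]}(s+1)^{r-1}\sim 2^{kr}$) yields the claimed bound.

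\textbf{Main obstacle.} The only genuinely delicate point is the first step: turning the finite difference $\tr^r_{(\eta-\xi)/r}(f,\Og,\xi)$ over $\eta\in U(\xi,1/n)$ into an expression that the tangential Bernstein inequality of Theorem~\ref{thm-10-1-00} can digest — specifically, verifying that the segment $[\xi,\eta]$ stays inside $\Og$ so the difference is a true derivative integral, that its direction has normal component $O(n^{-1}\vi_{n,\Ga})$ uniformly, and that the relevant boundary point $\eta_0\in\Ga$ realizing the tangential/normal frame lies within $\mu(\sqrt{\dist(\xi,\Ga)}+n^{-1})$ of $\xi$ so it is admissible in $\mathcal M_{n,\mu}^{r,j}$. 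This requires the equivalence $\rho_\Og\sim\wh\rho_G$ near the boundary (Lemma~\ref{metric-lem}) together with the local $C^2$ graph representation, and near the boundary one must also handle the case $\dist(\xi,\Ga)\lesssim n^{-2}$ separately, where $\vi_{n,\Ga}(\xi)\sim n^{-1}$ and the geometry degenerates; away from the boundary the ordinary Bernstein inequality \eqref{10-1} suffices. Everything else is bookkeeping.
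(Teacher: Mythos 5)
Your proposal follows essentially the same route as the paper's proof: both reduce the inverse theorem to a Bernstein-type estimate $\tau_r(P,n^{-1})_{p,q}\lesssim (k/n)^r\|P\|_{L^p(\Og)}$ for $P\in\Pi_k^{d+1}$ with $k\le n$ (the paper's display~\eqref{15-2} is exactly this, localized to a special-type domain), both prove it by decomposing $\eta-\xi$ with $\rho_\Og(\xi,\eta)\le 1/n$ into a tangential part of size $O(1/n)$ and a normal part of size $O(n^{-1}\vi_{n,\Ga}(\xi))$ and feeding this into the tangential Bernstein inequality (you invoke Theorem~\ref{thm-10-1-00} globally while the paper uses its local antecedent Corollary~\ref{cor-11-2} on special-type domains plus ordinary Bernstein in the interior — the same split you acknowledge), and both close with the standard Bernstein--Stechkin dyadic telescoping. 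The only thing to tidy is your statement of~\eqref{plan:bern-tau}: the version you actually use in step 3 must carry the degree dependence, $\tau_r(P,n^{-1})_{p,q}\le C(k/n)^r\|P\|_{L^p(\Og)}$ for $P\in\Pi_k^{d+1}$ with $k\le n$, rather than a constant ``independent of $P$ and $n$,'' since your dyadic sum $n^{-r}\sum_k 2^{kr}\|P_{k+1}-P_k\|_p$ requires precisely that factor.
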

Here we recall that $L^p(\Og)$ denotes  the space $L^p(\Og)$ for $p<\infty$  and  the space $C(\Og)$  for  $p=\infty$.

The proof of Theorem~\ref{thm-15-1} relies on two lemmas. To state these   lemmas, we recall that 
for $t>0$, $\xi\in\Og$ and $f\in L^p(\Og)$, 
$$U( \xi, t):= \{\eta\in \Og:\  \  \rho_\Og(\xi,\eta) \leq t\},$$  and 
$$ w_r (f, \xi,  t)_q : =\begin{cases}
\displaystyle \Bl( \f 1 {|U(\xi,t)|} \int_{U( \xi,t)} |\tr_{(\eta-\xi)/r} ^r (f,\Og,\xi)|^q \, d\eta\Br)^{\f1q},\  \  & \text{if $1\leq q <\infty$};\\
\sup_{\eta\in U( \xi,t)} |\tr_{(\eta-\xi)/r}^r (f,\Og,\xi)|,\   \ &\text {if $q=\infty$}.\end{cases}$$
\begin{lem} \label{lem-15-1}  Let $G\subset \Og$ be a domain of special type attached to $\Ga$. 
	If  $r\in\NN$, $1\leq q\leq  p\leq \infty$ and $f\in L^p(\Og)$, then 
	\begin{equation}\label{inverse:15-1} \Bl\| w_r (f, \cdot,  n^{-1})_q \Br\|_{L^p(G)}\leq C_{r,p} n^{-r} \sum_{s=0}^n (s+1)^{r-1} E_s (f)_{L^p(\Og)}.\end{equation}
\end{lem}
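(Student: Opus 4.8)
The plan is to prove Lemma~\ref{lem-15-1} by the classical "telescoping via near-best approximants plus Bernstein" scheme, but adapted to the local averaged modulus $w_r(f,\cdot,n^{-1})_q$ on a domain $G$ of special type. First I would fix, for each $s\ge 0$, a polynomial $P_s\in\Pi_n^{d+1}$ with $\|f-P_s\|_{L^p(\Og)}\le 2E_s(f)_{L^p(\Og)}$, and set $P_0$ together with the dyadic blocks $Q_j:=P_{2^j}-P_{2^{j-1}}$, so that $P_{2^k}=P_1+\sum_{j=1}^k Q_j$ telescopes and $\|Q_j\|_{L^p(\Og)}\le C E_{2^{j-1}}(f)_{L^p(\Og)}$. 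Choose $k$ with $2^k\sim n$. The key reduction is that $w_r(g,\xi,t)_q=0$ for any polynomial $g\in\Pi_{r-1}^{d+1}$ (and in fact $r$-th differences of any polynomial of degree $<r$ vanish), so subquadditivity of $w_r$ in the first argument gives, pointwise in $\xi\in G$,
\[
w_r(f,\xi,n^{-1})_q\le w_r(f-P_{2^k},\xi,n^{-1})_q+\sum_{j=1}^k w_r(Q_j,\xi,n^{-1})_q,
\]
and I would estimate the $L^p(G)$-norm of each term separately.

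For the first term I would use the trivial bound $w_r(h,\xi,t)_q\le C\,\bigl(|U(\xi,t)|^{-1}\int_{U(\xi,t)}\sum_{i=0}^r|h(\cdot)|^q\bigr)^{1/q}$ coming from expanding the finite difference, together with the fact that $\{U(\xi,n^{-1})\}$ has bounded overlap and $|U(\xi,n^{-1})|$ is comparable for nearby $\xi$ (Remark~\ref{rem-6-2} and~\eqref{4-5-00}); a Fubini/maximal-function argument then yields $\|w_r(f-P_{2^k},\cdot,n^{-1})_q\|_{L^p(G)}\le C\|f-P_{2^k}\|_{L^p(\Og)}\le CE_n(f)_{L^p(\Og)}$, which is absorbed into the right-hand side of~\eqref{inverse:15-1}. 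For the polynomial blocks $Q_j\in\Pi_{2^j}^{d+1}$ the point is to bound $w_r(Q_j,\xi,n^{-1})_q$ by a power of $2^j/n$ times a localized norm of derivatives of $Q_j$, and then invoke the tangential Bernstein inequality. Concretely, for $\xi\in G$ and $\eta\in U(\xi,n^{-1})$ one has $\|\eta-\xi\|\le Cn^{-1}$ and $\eta-\xi$ nearly tangential to $\Ga$ up to a normal component of size $O(n^{-1}(\sqrt{\dist(\xi,\Ga)}+n^{-1}))=O(n^{-1}\vi_{n,\Ga}(\xi))$; writing $\tr^r_{(\eta-\xi)/r}Q_j(\xi)$ via the integral form of the remainder in directions built from $\mathcal{S}_\eta$ and $\nb_\eta$, one gets
\[
|\tr^r_{(\eta-\xi)/r}Q_j(\xi)|\le C\Bigl(\tfrac{2^j}{n}\Bigr)^{r}\max_{0\le l\le r}\bigl(2^{-j}\vi_{2^j,\Ga}(\xi)\,n\bigr)^{l}\bigl(\vi_{2^j,\Ga}(\xi)\bigr)^{l}\,\mathcal{M}^{r-l,l}_{2^j,\mu}Q_j(\xi),
\]
after rescaling the step size relative to $\vi_{2^j,\Ga}$. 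Taking $L^q$ in $\eta$ over $U(\xi,n^{-1})$ and then $L^p$ in $\xi\in G$, and using $q\le p$ together with bounded overlap, reduces matters to $\|\vi_{2^j,\Ga}^{\,l}\mathcal{M}^{r-l,l}_{2^j,\mu}Q_j\|_{L^p(G)}$, to which Theorem~\ref{thm-10-1-00} applies and gives $\le C(2^j)^{r}\|Q_j\|_{L^p(\Og)}\le C(2^j)^rE_{2^{j-1}}(f)_{L^p(\Og)}$. Hence $\|w_r(Q_j,\cdot,n^{-1})_q\|_{L^p(G)}\le C(2^j/n)^r (2^j)^{?}\cdots$ — the clean bookkeeping yields $\|w_r(Q_j,\cdot,n^{-1})_q\|_{L^p(G)}\le C n^{-r}(2^j)^{r}E_{2^{j-1}}(f)_{L^p(\Og)}$, and summing over $1\le j\le k$ and converting the dyadic sum to the stated Abel sum $n^{-r}\sum_{s=0}^n(s+1)^{r-1}E_s(f)_{L^p(\Og)}$ finishes the proof.

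The step I expect to be the main obstacle is the passage from the finite difference $\tr^r_{(\eta-\xi)/r}Q_j(\xi)$, with $\eta$ ranging over $U(\xi,n^{-1})$, to an expression controlled by the anisotropic maximal operators $\mathcal{M}^{l_1,l_2}_{2^j,\mu}$ with the correct powers of $\vi_{2^j,\Ga}(\xi)$ — i.e. correctly decomposing the chord $\eta-\xi$ into its tangential part (of size $\lesssim n^{-1}$, contributing $r$ derivatives along $\mathcal{S}$) and its normal part (of size $\lesssim n^{-1}\vi_{n,\Ga}(\xi)\lesssim \vi_{n,\Ga}(\xi)^2$, which is what makes the normal derivatives cost only $n^{2}$ each, matching the weight $\vi^{j}$ and the exponent $n^{r+j+2l}$ in Theorem~\ref{thm-10-1-00}). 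This requires a careful Taylor expansion of $Q_j$ along the segment $[\xi,\eta]$ and an honest comparison of $\vi_{n,\Ga}$ with $\vi_{2^j,\Ga}$ when $2^j\le n$ (they are comparable up to the factor $n/2^j$ in the $n^{-1}$-term), plus using that on $U(\xi,n^{-1})$ the quantity $\dist(\cdot,\Ga)$ is comparable to $\dist(\xi,\Ga)+n^{-2}$ by Lemma~\ref{metric-lem}. Once this pointwise inequality is in hand, the remaining measure-theoretic steps (bounded overlap of the balls $U(\xi,n^{-1})$, Fubini, and $\ell^q\hookrightarrow\ell^p$) are routine, and the global statement follows by summing the local estimates of Lemma~\ref{lem-15-1} over the finitely many $G_i$ of Lemma~\ref{lem-2-1-18} together with the interior bound~\eqref{10-1}.
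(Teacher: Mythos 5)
Your plan is essentially the paper's own: reduce to $q=p$ by monotonicity, telescope $P_{2^k}$ into dyadic blocks $Q_j$ using near-best approximants and the subadditivity bound $\|w_r(g,\cdot,t)_p\|_p\le C\|g\|_p$, and then, for each polynomial block, control $w_r(Q_j,\xi,n^{-1})_q$ by decomposing the chord $\eta-\xi$ into a tangential part plus a short normal part of size $O(n^{-1}\vi_n(\xi))$ and invoking the tangential Bernstein inequality from Theorem~\ref{thm-10-1-00}. The paper's proof does exactly this, but reduces to proving a clean single-polynomial estimate $\|w_r(P,\cdot,n^{-1})_p\|_{L^p(G)}\le Cn^{-r}k^r\|P\|_{L^p(\Og)}$ for $P\in\Pi_k^{d+1}$, replaces $U(\xi,n^{-1})$ by the combinatorial cells $I_{\ib,j}$ from Section~\ref{Sec:8} (which makes the Fubini/bounded-overlap bookkeeping transparent), and uses H\"older to convert the $r$-th difference into $\int_0^1|\p_{\eta-\xi}^rP(\xi+t(\eta-\xi))|^p\,dt$.

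However, your displayed pointwise estimate for $|\tr^r_{(\eta-\xi)/r}Q_j(\xi)|$ is not correct and would not survive the bookkeeping you sketch. The decomposition $\eta-\xi=\frac1n\bigl(\zeta,\p_\zeta g(u)+s\vi_n(\xi)\bigr)$ with $\|\zeta\|\le c$ and $|s|\le c_1$ gives $\p_{\eta-\xi}^r=n^{-r}\sum_{l=0}^r\binom{r}{l}\bigl(s\vi_n(\xi)\bigr)^l\p_{z_\zeta(u)}^{r-l}\p_{d+1}^l$, hence the correct pointwise bound is $|\p_{\eta-\xi}^rQ_j|\le Cn^{-r}\max_{0\le l\le r}\vi_n(\xi)^l\,\mathcal{M}^{r-l,l}_{\mu,n}Q_j$. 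The prefactor is $n^{-r}$, not $(2^j/n)^r$: the $(2^j)^r$ growth in degree arrives only after you take $L^p$-norms and apply Bernstein. As written, your $(2^j/n)^r$ in front of the maximal operators (which also contribute $(2^j)^r$) would give $4^{jr}n^{-r}\|Q_j\|$ instead of $(2^j)^rn^{-r}\|Q_j\|$, an extra $(2^j)^r$ that makes the dyadic sum diverge at the Jackson rate you want. Likewise, the weight should be $\vi_n(\xi)^l$ (natural, since the normal component of the chord is measured at scale $n^{-1}$), not the combination $(2^{-j}\vi_{2^j,\Ga}n)^l\vi_{2^j,\Ga}^l$; one then applies~\eqref{15-7}, i.e.\ $\|\vi_n^{l}\mathcal{M}_{\mu,n}^{r-l,l}Q_j\|_{L^p(G^\ast)}\le C(2^j)^r\|Q_j\|_{L^p(\Og)}$, which holds because $\vi_n\le\vi_{2^j}$ and $\Xi_{n,\mu}\subset\Xi_{2^j,\mu}$ when $2^j\le n$. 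Since you correctly state the end result $\|w_r(Q_j,\cdot,n^{-1})_q\|_{L^p(G)}\le Cn^{-r}(2^j)^rE_{2^{j-1}}(f)_p$ and flag this as the delicate step, I read this as a transcription slip in the formula rather than a wrong idea — but as displayed it is the crux of the proof, and it needs to be corrected exactly as above before the rest of your argument closes.
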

\begin{proof} By monotonicity, it is enough to consider the case $q=p$. 
	It is  easily seen from the definition that 
	\begin{equation}\label{15-1}\|w_r(f,\cdot, t)_p\|_p\leq C_{p,r}\|f\|_p.\end{equation}
	Without loss of generality, we may assume that
	$$ G:=\{(x,y):\  \  x\in (-b, b)^d,\   \   g(x)-1<y\leq g(x)\},$$
	where $b>0$ and $g\in C^2(\RR^d)$.  We may aslo assume that $n\ge N_0$, where $N_0$ is a sufficiently large positive integer depending only on $\Og$, since otherwise~\eqref{inverse:15-1} follows directly from  the inequality $\|w_r(f,\cdot, t)_p\|_p\leq C E_0 (f)_p$, which can be obtained from~\eqref{15-1}.
	
	For $0\leq k\leq n$, let $P_k\in\Pi_k^{d+1}$ be such that 
	$\|f-P_k\|_{L^p(\Og)} =E_k(f)_{L^p(\Og)}$. Let $m\in\NN$ be such that $2^{m-1} \leq n <2^m$. Then by~\eqref{15-1}, we have 
	\begin{align*}
	\Bl\| w_r (f, \cdot,  n^{-1})_p \Br\|_{L^p(G)}&\leq \Bl\| w_r (f-P_{2^m}, \cdot,  n^{-1})_p\Br\|_{L^p(G)}+\Bl\| w_r (P_{2^m}, \cdot,  n^{-1})_p \Br\|_{L^p(G)}\\
	&\leq  C \|f-P_{2^m}\|_{L^p(\Og)} +  \sum_{j=0}^{m-1} \Bl\| w_r (P_{2^{j+1}}-P_{2^j}, \cdot,  n^{-1})_p\Br\|_{L^p(G)}.
	\end{align*}
	Thus, for the proof of~\eqref{inverse:15-1},  it suffices to show that for each $P\in\Pi_k^{d+1}$,
	\begin{equation}\label{15-2}
	\Bl\| w_r (P, \cdot, n^{-1})_p \Br\|_{L^p(G)}\leq C n^{-r} k^r \|P\|_{L^p(\Og)}.
	\end{equation}

	To show~\eqref{15-2}, we first  recall  the following  partition  
	of the domain $\overline{G}$ constructed in Section~\ref{sec:5}: $
	\overline{G}=\bigcup_{\bfi\in\Ld_n^d} \bigcup_{j=0}^{n-1} I_{\mathbf{i},j}$,   where   
	\begin{equation}\label{15-4-0}
	I_{\mathbf{i},j}:=\Bl\{ (x, y):\  \  x\in \Delta_{\bfi},\  \   g(x)-y\in [\al_{j}, \al_{j+1}]\Br\}
	\end{equation}
	and 
	\begin{align*}
	\bfi&=(i_1,\dots, i_d)\in \Ld^d_n:=\{ 0, 1,\dots, n-1\}^d\subset \ZZ^d,\\
	\Delta_{\bfi}:&=[t_{i_1}, t_{i_1+1}]\times \dots [t_{i_{d}}, t_{i_{d}+1}]  \   \  \ \text{with}\  \    t_{i}=-b+\f {2i}n b,\\
	\al_j:&= \sin^2 (\f {j\pi}{2\ell_1 n})/(\sin^2\f \pi{2\ell_1}), \   \   j=0,1,\dots, \ell_1 n,\\
	&\text{ with $\ell_1>1$  being  a large  integer parameter}.
	\end{align*}
	As in  Section~\ref{Sec:8}, we also define for any   two given integer parameters $m_0, m_1>1$,  
	\begin{align*}
	\Delta_{\bfi}^\ast&=\Delta_{\bfi,m_0}^\ast:=[t_{i_1-m_0}, t_{i_1+m_0}]\times [t_{i_2-m_0}, t_{i_2+m_0}]\times \dots\times [t_{i_{d}-m_0}, t_{i_{d}+m_0}],\\
	I_{\bfi,j}^\ast:&=I_{\bfi,j, m_0,m_1}^\ast:=\Bl\{ (x, y):\  \ x\in \Delta_{\bfi}^\ast,\   \  \al^\ast_{j-m_1}\leq g(x)-y\leq \al^\ast_{j+m_1}\Br\},
	\end{align*}
	where
	$\al_j^\ast =\al_j$ if $0\leq j\leq n$,  $\al_j^\ast =0$ if $j<0$ and $\al_j^\ast =2$ if $j>n$.  By Lemma~\ref{metric-lem}, we may choose the parameters $m_0, m_1$ large enough so that 
	\begin{equation}\label{15-4}U(\xi, n^{-1}) \subset I_{\bfi, j}^\ast\   \   \text{whenever $\xi\in I_{\bfi,j}$}.\end{equation}
	Note  that for $(\bfi, j)\in\Ld_n^{d+1}$ and  $(x,y) \in I_{\bfi,j}^\ast$, 
	\begin{align}
	\al_j& \sim \f {j^2}{n^2} \sim \da(x,y):=g(x)-y,\    \   j\ge 1, \notag\\
	\al_{j+1}-\al_j &\leq C \f {j+1}{n^2} \leq  \f Cn \vi_n(x,y):=\f C n \bl( \f 1n+\sqrt{\da(x,y)}\br).\label{15-5}
	\end{align}

	Now we turn to the proof of~\eqref{15-2}. 
	Let $P\in \Pi_k^d$ and $1\leq p<\infty$. Then  using Remark~\ref{rem-6-2} ,  Lemma~\ref{metric-lem} and~\eqref{15-4}, we have 
	\begin{align*}
	&\Bl\| w_r (P, \cdot,  n^{-1})_p \Br\|^p_{L^p(G)}
	\leq C \sum_{(\bfi,j)\in\Ld_n^d} \int_{I_{\bfi,j}}  \f 1 {|I_{\bfi,j}^\ast|} \int_{I_{\bfi,j}^\ast(\xi)} |\tr_{(\eta-\xi)/r} ^r (P,\Og,\xi)|^p d\eta \, d\xi,
	\end{align*}
	where 
	$I_{\bfi,j}^\ast(\xi)=\{\eta\in I_{\bfi,j}^\ast:\  \  [\xi,\eta]\in\Og\}.$
	Note that by H\"older's inequality,
	\begin{align*}
	|\tr_{(\eta-\xi)/r}^r (f,\Og, \xi)|^p & \leq \int_{[0,1]^r} \Bl|\p_{(\eta-\xi)/r}^r f(\xi+ r^{-1}(\eta-\xi) (t_1+\dots+t_r))\Br|^p\, dt_1\dots dr_r\\&\leq C \int_{0}^1 \Bl|\p_{\eta-\xi}^r f(\xi+ t(\eta-\xi) )\Br|^p\, dt.
	\end{align*}
	Thus,
	\begin{align}
	&\Bl\| w_r (P, \cdot,  n^{-1})_p \Br\|^p_{L^p(G)}\notag\\
	&\leq C  \sum_{(\bfi,j)\in\Ld_n^d} \int_{I_{\bfi,j}}  \f 1 {|I_{\bfi,j}^\ast|} \int_{I_{\bfi,j}^\ast(\xi)} \int_0^1 \Bl|\p_{\eta-\xi}^r P(\xi+ t(\eta-\xi) )\Br|^p\, dt
	d\eta \, d\xi.\label{15-6}
	\end{align}
	
	To estimate the sum in this last equation,  we shall   use the Bernstein inequality stated in Corollary~\ref{cor-11-2}.   For convenience, given a parameter $\mu>1$, and two nonnegative integers $l_1, l_2$, we define 
	\begin{align*}
	M_{\mu,n}^{l_1,l_2} f(\xi) := &\max_{ u\in \Xi_{n,\mu}(\xi)}\max_{\zeta\in\sph} \Bigl|  ( z_\zeta(u)\cdot \nabla )^{l_1}\partial_{d+1}^{l_2}f(\xi)\Br|,\  \   \xi\in G,\  \  f\in C^\infty(\Og),	\end{align*}
	where $z_\zeta(u)=(\zeta, \p_\zeta g(u))$, and 
	$$\Xi_{n, \mu} (\xi):= \Bl\{ u\in [-2 a, 2 a]^d:\  \  \|u-\xi_x\|\leq \mu \vi_n(\xi)\Br\}.$$
	We choose the parameter $\mu$ large enough so that 
	$\Delta_{\bfi, 4m_0}^\ast \subset \Xi_{n,\mu}(\xi)$ for any $\xi\in I_{\bfi, j}^\ast$.
	By Corollary~\ref{cor-11-2} and Theorem~\ref{thm-10-1}, we have
	\begin{equation}\label{15-7}
	\| \vi_n^{l_2} M_{\mu,n}^{l_1,l_2} P\|_{L^p(G^\ast)} \leq C k^{l_1+l_2} \|P\|_{L^p(\Og)},\   \   \ \forall P\in\Pi_k^{d+1}.
	\end{equation}

	Now fix temporarily   $\xi=(\xi_x, \xi_y)\in I_{\bfi,j}$ and $\eta=(\eta_x, \eta_y)\in I_{\bfi, j}^\ast$.  Then $\|\xi_x-\eta_x\|\leq \f cn$, and
	$$ \eta_y-\xi_y =\eta_y-g(\eta_x)+g(\eta_x)-g(\xi_x)+g(\xi_x) -\xi_y.$$
	By  the mean value theorem, there exists $u\in [\xi_x, \eta_x]$ such that 
	$$ \|(\eta_y-\xi_y)-\nabla g(u) \cdot (\eta_x-\xi_x)\|\leq \al_{j+m_1}^\ast -\al_{j-m_1}^\ast \leq c_1 \f {\vi_n(\xi)}n,$$
	where 
	the last step uses~\eqref{15-5}. Thus, setting $\zeta=n(\eta_x-\xi_x)$, 
	we  have $\|\zeta\|\leq c$ and we may write $\eta-\xi$ in the form
	$$ \eta-\xi=\f 1n \Bl(\zeta, \p_{\zeta} g(u)  +s\vi_n(\xi)\Br),$$
	with 
	$$ s=\f {n(\eta_y-\xi_y) -\p_\zeta g(u)}{\vi_n(\xi)}\in [-c_1, c_1].$$
	It follows that  
	$$ \p_{\eta-\xi} =(\eta-\xi)\cdot \nabla = \f 1n \Bl(\p_{z_\zeta (u)} + s \vi_n(\xi) \p_{d+1}\Br),$$
	where $z_\zeta (u) = (\zeta, \p_{\zeta} g(u))$. 
	This  implies that  for any $(x,y)\in I_{\bfi,j}^\ast$, 
	\begin{align*}
	|\p_{\eta-\xi}^r P(x,y)|&\leq C n^{-r} \max _{0\leq k\leq r}\vi_n(\xi)^k |\p_{z_\zeta(u)}^{r-k} \p_{d+1}^{k} P(x,y)| \\
	&\leq C n^{-r} \max _{0\leq k\leq r}\vi_n(x,y)^k   M_{\mu,n}^{r-k,k} P(x,y).\end{align*}
	Thus, setting 
	$$ P_\ast (x,y):=\max _{0\leq k\leq r}\vi_n(x,y)^k   M_{\mu,n}^{r-k,k} P(x,y),$$
	we obtain from~\eqref{15-6} that 
	\begin{align*}
	&\Bl\| w_r (P, \cdot,  n^{-1})_p \Br\|^p_{L^p(G)}
	\leq C n^{-rp}  \sum_{(\bfi,j)\in\Ld_n^d} \int_{I_{\bfi,j}}  \f 1 {|I_{\bfi,j}^\ast|} \int_{I_{\bfi,j}^\ast(\xi)} \int_0^1 |P_\ast(\xi+t(\eta-\xi))|^p\, dt
	d\eta \, d\xi\\
	&\leq C n^{-rp}  \sum_{(\bfi,j)\in\Ld_n^d} \int_{I_{\bfi,j, 2m_0, 2m_1}^\ast}  |P_\ast(\eta)|^p
	d\eta\leq C n^{-rp} \|P_\ast \|_{L^p(G_\ast(2))}^p \leq C \Bl( \f kn\Br)^{rp} \|P\|_{L^p(\Og)}^p,
	\end{align*}
	where the last step uses~\eqref{15-7}. This proves~\eqref{15-2} for $1\leq p<\infty$. 
	
	Finally,~\eqref{15-2} for  $p=\infty$ can be proved  similarly. This completes the proof of Lemma~\ref{lem-15-1}.
\end{proof}

\begin{lem} \label{lem-15-2}  Let $\va\in (0,1)$ and 
	$\Og^\va:=\{\xi\in \Og:\  \  \dist(\xi, \Ga) >\va\}.$
	If  $r\in\NN$, $1\leq q\leq  p\leq \infty$ and $f\in L^p(\Og)$, then 
	\begin{equation}\label{inverse:15-8} \Bl\| w_r (f, \cdot,  n^{-1})_q \Br\|_{L^p(\Og^\va)}\leq C_{r,p} n^{-r} \sum_{s=0}^n (s+1)^{r-1} E_s (f)_{L^p(\Og)}.\end{equation}
\end{lem}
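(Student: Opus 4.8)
\textbf{Plan of proof for Lemma~\ref{lem-15-2}.}
The plan is to reduce the interior estimate~\eqref{inverse:15-8} to the well-understood situation on an open cube, much as Lemma~\ref{lem-15-1} was reduced to a domain of special type. First I would observe that on $\Og^\va$ the metric $\rho_\Og$ is equivalent to the Euclidean metric: for $\xi\in\Og^{\va}$ one has $\dist(\xi,\Ga)\ge\va$, so $\rho_\Og(\xi,\eta)\sim\|\xi-\eta\|$ for $\eta$ in a fixed $\rho_\Og$-ball of radius $\le 1$ around $\xi$ (with constants depending on $\va$). Consequently, for $n$ large (say $n\ge N_0(\va)$) the set $U(\xi,n^{-1})$ is comparable to a Euclidean ball $B_{c/n}(\xi)$, and moreover, after shrinking $\va$ slightly, $U(\xi,n^{-1})\subset\Og^{\va/2}\subset\Og$ for every $\xi\in\Og^\va$. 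The case $n<N_0(\va)$ follows trivially from the bound $\|w_r(f,\cdot,n^{-1})_q\|_{L^p(\Og^\va)}\le C\|f\|_p\le CE_0(f)_p$, which comes from the elementary estimate $\|w_r(f,\cdot,t)_q\|_p\le C_{p,r}\|f\|_p$ (monotonicity in $q$ reduces to $q=p$, and then this is a standard consequence of Fubini and the boundedness of $\tr_{(\eta-\xi)/r}^r$).

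Next, as in Lemma~\ref{lem-15-1}, cover $\overline{\Og^\va}$ by finitely many open cubes $Q_1,\dots,Q_{L}$ of a fixed small side length $\ell_0=\ell_0(\va)$ such that $4Q_i\subset\Og$ for each $i$; this is possible precisely because $\Og^\va$ stays at distance $\ge\va$ from $\Ga$. Then I would bound $\|w_r(f,\cdot,n^{-1})_q\|_{L^p(\Og^\va)}$ by $\sum_{i=1}^L\|w_r(f,\cdot,n^{-1})_q\|_{L^p(Q_i)}$ (using $p$-boundedness of the $\ell^p$ vs.\ $L^p$ sum and quasi-triangle inequalities as in the proof of Lemma~\ref{lem-15-1}), so it suffices to prove~\eqref{inverse:15-8} with $Q_i$ in place of $\Og^\va$. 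Fix $i$. Choosing $P_k\in\Pi_k^{d+1}$ of near-best approximation and writing the usual dyadic telescoping $f=f-P_{2^m}+\sum_{j=0}^{m-1}(P_{2^{j+1}}-P_{2^j})$ with $2^{m-1}\le n<2^m$, and using $\|w_r(g,\cdot,t)_q\|_p\le C\|g\|_p$ for the tail, the whole matter reduces to the Bernstein-type bound
\begin{equation}\label{15-2-interior}
\Bl\| w_r (P, \cdot, n^{-1})_q \Br\|_{L^p(Q_i)}\leq C\, n^{-r} k^r \|P\|_{L^p(\Og)},\qquad P\in\Pi_k^{d+1}.
\end{equation}

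For~\eqref{15-2-interior} I would argue exactly as in Lemma~\ref{lem-15-1}, but now the geometry is flat and there are no tangential directions to worry about. Using the Euclidean comparability of $U(\xi,n^{-1})$ with $B_{c/n}(\xi)\subset 2Q_i\subset\Og$, Hölder's inequality to pass from the $r$-th difference to the $r$-th directional derivative, and Fubini, one gets
\[
\|w_r(P,\cdot,n^{-1})_q\|_{L^p(Q_i)}^p\le C\int_{2Q_i}\max_{\zeta\in\sph}\bl|\tfrac1{n^r}\,\p_\zeta^{\,r}\!\big(\tfrac n1 (\eta-\xi)\big)^{\!?}\br|\dots
\]
— more precisely, $\p_{\eta-\xi}=\tfrac1n(n(\eta-\xi))\cdot\nabla$ with $\|n(\eta-\xi)\|\le c$, so $|\p_{\eta-\xi}^r P(\zeta)|\le Cn^{-r}\max_{|\pmb\al|=r}|\p^{\pmb\al}P(\zeta)|$ — and then the interior Bernstein inequality~\eqref{10-1} (which holds on $\Og^\da$ for any fixed $\da$, with constant depending on $\da$; here $\da\sim\mathrm{dist}(2Q_i,\Ga)\ge\va$) gives $\|\p^{\pmb\al}P\|_{L^p(2Q_i)}\le Ck^r\|P\|_{L^p(\Og)}$. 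Combining yields~\eqref{15-2-interior}; the case $p=\infty$ is handled the same way with sup in place of the integral. The only mild obstacle is bookkeeping: making the covering, the shrinking of $\va$, and the constants' dependence on $\va$ all consistent, and checking $U(\xi,n^{-1})\subset\Og$ for the relevant range of $n$; but since $\Og^\va$ is uniformly interior this is routine, and there is no boundary subtlety analogous to the one that forced the tangential Bernstein inequality in Lemma~\ref{lem-15-1}.
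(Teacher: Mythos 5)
Your proposal is correct and follows essentially the same route as the paper: reduce to $q=p$, handle small $n$ trivially, telescope dyadically, and reduce the polynomial estimate to the interior Bernstein inequality~\eqref{10-1} (which is a consequence of the univariate Markov–Bernstein estimate~\eqref{markov-bern}). The paper's own proof is a one-line remark that the argument of Lemma~\ref{lem-15-1} goes through verbatim with the tangential Bernstein inequality replaced by $\|\p^{\pmb\b}P\|_{L^p(\Og^\va)}\leq Ck^{|\pmb\b|}\|P\|_{L^p(\Og)}$, and your covering-by-cubes bookkeeping is exactly the omitted routine detail.
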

\begin{proof} 
	The proof is similar to that of Lemma~\ref{lem-15-1}, and in fact, is simpler. It relies on the following Bernstein inequality,
	$$ \|\p^{\b}  P\|_{L^p(\Og^\va)} \leq C k^{|\b|} \|P\|_{L^p(\Og)},\  \  \forall P\in\Pi_k^{d+1},\   \  \forall \b\in\ZZ_+^{d+1},$$
	which is a direct consequence of  the univariate Bernstein  inequality~\eqref{markov-Bern}.
\end{proof}

Now we are in a position to prove Theorem~\ref{thm-15-1} .

\begin{proof}[Proof of Theorem~\ref{thm-15-1}]
	By monotonicity, 	it suffices to consider the case $p=q$.
	By Lemma~\ref{lem-2-1-18},  there exist $\va\in (0,1)$ and domains $G_1,\dots, G_{m_0}\subset \Og$ of special type attached to $\Ga$ such that 
	$$\Ga_\va:=\{ \xi\in\Og:\  \  \dist(\xi, \Ga) \leq \va\} \subset \bigcup_{j=1}^{m_0} G_j.$$
	Setting  $\Og^\va:=\Og\setminus \Ga_\va$, we have 
	\begin{align*}
	\tau_r(f, n^{-1})_{p,p} &\leq \sum_{j=1}^{m_0} \|w_r (f, \cdot, n^{-1})_p\|_{L^p(G_i)} + \|w_r (f, \cdot, n^{-1})_p\|_{L^p(\Og^\va)},
	\end{align*}
	which,  using Lemma~\ref {lem-15-1}
	and  Lemma~\ref {lem-15-2}, is estimated above by a constant multiple of 
	\begin{align*}
	n^{-r}\sum_{s=0}^n (s+1)^{r-1} E_s (f)_{L^p(\Og)}.
	\end{align*}
	This completes the proof. 
\end{proof}

\chapter[Marcinkiewicz-Zygmund type ineq. and pos. cubature formula]{Marcinkiewicz-Zygmund type  inequality
	 and positive cubature formula}\label{sec:16}

%
%
%

Marcinkiewicz-Zygmund (MZ)  type inequalities provide a basic tool for the discretization of the $L^p$-norm and are widely used in the study of the convergence properties of Fourier series, interpolation processes and orthogonal expansions. Interesting results  on these inequalities  for univariate polynomials can be found in \cite{Lu1, Lu2, Lu3, MT2}. 
MZ inequalities  for multivariate polynomials  were  established  on various  multidimensional domains, including Euclidean balls, spheres,  polytopes, cones, spherical sectors, toruses, etc. (see~\cite{Kr2, MK, Da06, NPW}). More recently, these inequalities were studied in a more general setting for elements of finite dimensional spaces (see \cite{DPTT} and the references therein).  The main goal of this chapter is to extend these considerations to  general  $C^2$-domains.  Our main  tool is the  Bernstein inequality proved  in Section~\ref{sec:14}.

We start with a brief description of some necessary notations. Given a metric space $(X,\rho)$ and a number $\va>0$, we say  $\Ld\subset X$  is an  $\va$-separated subset of $X$ (with respect to the metric $\rho$)  if $\rho(\og, \og') \ge \va$ for any two distinct points $\og, \og'\in\Ld$, whereas  an  $\va$-separated  subset $\Ld$  of $X$  is called maximal  if 
$\inf_{\og\in\Ld} \rho(x, \og)<\va$ for any $x\in X$. 
As usual, we assume that $\Og\subset \RR^{d+1}$ is  the closure of an open,   connected, bounded  $C^2$-domain  in $\RR^{d+1}$ with boundary $\Ga=\p \Og$. Let $\rho_{\Og}$ denote the metric on $\Og$ defined by~\eqref{metric}.  Let   $ U( \xi, t):= \{\eta\in\Og:\  \  \rho_\Og(\xi,\eta) \leq t\}$ for  $\xi\in\Og$ and $t>0$.  According to Remark~\ref{rem-6-2}, we have 
\begin{equation}\label{16-1-00}
|U(\xi, t)| \sim t^{d+1} ( t +\sqrt{\dist (\xi, \Ga)}),\  \  \xi\in\Og,\   \  t\in (0,1).
\end{equation}
Finally, for each   bounded function $f$ on $\Og$, and  any $I\subset \Og $,  we  define 
$$\osc(f; I) :=\sup_{\xi,\eta\in I} |f(\xi)-f(\eta)|.$$
Note that unlike the definition of finite difference   we do not need to assume here  $[\xi, \eta]\subset \Og$. 

Our main result in this chapter can be stated as follows.

\begin{thm}\label{thm-16-1:MZ}  Let   $n\in\NN$, $\va\in (0,1)$ and let $\ell>1$ be a given parameter. If  $1\leq p\leq \infty$  and $\Ld\subset \Og$ is $\va/n$-separated  with respect to the metric $\rho_\Og$, then for any $f\in\Pi_{n}^{d+1}$,
	
	\begin{equation}\label{3-4:bern}\Bl( \sum_{\xi\in\Ld} \Bl|U(\xi, \f \va n)\Br|  \Bl( \osc \bl(f; U (\xi, \f {\ell \va} n )\br) \Br)^p \Br)^{\f 1p} \leq C_\ell \va \|f\|_{L^p(\Og)},\end{equation}
	with the usual change of $\ell^p$-norm when $p=\infty$,  and  the constant $C_\ell$ being  independent of $f$, $p$, $n$ and $\va$.
\end{thm}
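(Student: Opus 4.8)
The plan is to reduce the oscillation bound \eqref{3-4:bern} to the tangential Bernstein inequality of Theorem~\ref{thm-10-1} via a mean value / fundamental theorem of calculus argument adapted to the metric $\rho_\Og$. The first step is to control, for a fixed $\xi\in\Ld$ and any two points $\zeta_1,\zeta_2\in U(\xi,\ell\va/n)$, the difference $|f(\zeta_1)-f(\zeta_2)|$ by a line integral of $\nabla f$ along a path from $\zeta_1$ to $\zeta_2$ that stays inside a slightly larger neighborhood $U(\xi, c\ell\va/n)\subset\Og$ (such a path exists because $C^2$-domains satisfy an interior cone condition, so short geodesic-like broken segments stay in $\Og$). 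Along this path one splits $\nabla f$ into its component in the direction of the outward normal to the nearest boundary point and its tangential components, exactly as in the definition of $\mathcal{M}_{n,\mu}^{l_1,l_2}$. The key geometric observation, coming from Lemma~\ref{metric-lem} and the estimate \eqref{16-1-00}, is that moving a $\rho_\Og$-distance $O(\va/n)$ corresponds to a Euclidean displacement of order $\frac\va n\bigl(\frac1n+\sqrt{\dist(\xi,\Ga)}\bigr)=\frac\va n\vi_{n,\Ga}(\xi)$ in the tangential directions, but only order $\frac{\va^2}{n^2}$ in the normal direction; hence the path length weights the tangential derivative by $\frac\va n\vi_{n,\Ga}$ and the normal derivative by $\frac{\va^2}{n^2}$. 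This yields the pointwise bound
\begin{equation*}
\osc\bigl(f;U(\xi,\tfrac{\ell\va}{n})\bigr)\le C\ell\,\frac\va n\,\sup_{\eta\in U(\xi,c\ell\va/n)}\Bigl(\vi_{n,\Ga}(\eta)\,\mathcal{M}_{n,\mu}^{1,0}f(\eta)+\tfrac\va n\,\mathcal{M}_{n,\mu}^{0,1}f(\eta)\Bigr),
\end{equation*}
for a suitable $\mu=\mu(\ell)$, where the maximal operators are evaluated with a slightly enlarged aperture so that the nearest boundary point of any $\eta$ on the path lies within $\mu\vi_{n,\Ga}(\eta)$ of $\eta$.

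\textbf{Discretization step.} Having the pointwise estimate, I would sum the $p$-th powers over $\xi\in\Ld$ against the weights $|U(\xi,\va/n)|$. Because $\Ld$ is $\va/n$-separated with respect to $\rho_\Og$, the enlarged balls $U(\xi,c\ell\va/n)$ have bounded overlap: each point of $\Og$ lies in at most $N=N(\ell,d)$ of them, a standard volume-comparison fact using \eqref{16-1-00} and the quasi-metric/metric structure of $\rho_\Og$. Combining this finite-overlap property with the observation that $|U(\xi,\va/n)|\sim|U(\eta,\va/n)|$ for $\eta\in U(\xi,c\ell\va/n)$ (again by \eqref{16-1-00} and Lemma~\ref{metric-lem}, since $\dist(\cdot,\Ga)$ and $\vi_{n,\Ga}(\cdot)$ are comparable on such a neighborhood), one bounds the discrete sum by a constant multiple of the integral
\begin{equation*}
\Bigl(\frac\va n\Bigr)^p\int_\Og\Bigl(\vi_{n,\Ga}(\eta)\,\mathcal{M}_{n,\mu}^{1,0}f(\eta)+\tfrac\va n\,\mathcal{M}_{n,\mu}^{0,1}f(\eta)\Bigr)^p\,d\eta.
\end{equation*}
For $p=\infty$ the same chain of inequalities goes through with suprema in place of sums.

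\textbf{Conclusion via Bernstein.} The last step is to invoke Theorem~\ref{thm-10-1} with $r=1$: the term $\vi_{n,\Ga}\mathcal{M}_{n,\mu}^{1,0}f$ is controlled (taking $r=1$, $j=0$, $l=0$ there, noting $\vi_{n,\Ga}\mathcal{M}_{n,\mu}^{1,0}$ is dominated by $\mathcal{M}_{n,\mu}^{1,0}$ up to the scaling, or more directly using the $j=0$ case and the elementary bound $\vi_{n,\Ga}\le C$) so that $\|\vi_{n,\Ga}\mathcal{M}_{n,\mu}^{1,0}f\|_{L^p(\Og)}\le Cn\|f\|_{L^p(\Og)}$, and likewise $\|\mathcal{M}_{n,\mu}^{0,1}f\|_{L^p(\Og)}\le Cn\|f\|_{L^p(\Og)}$, so $\frac\va n\cdot\frac\va n\|\mathcal{M}_{n,\mu}^{0,1}f\|_{L^p}\le C\va^2\|f\|_{L^p}$, which is absorbed since $\va<1$. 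Putting these together gives the right-hand side $C_\ell\va\|f\|_{L^p(\Og)}$, as claimed. The main obstacle I anticipate is the geometric bookkeeping in the first step: constructing the connecting path that stays in $\Og$ while simultaneously tracking that its tangential length is $O(\frac\va n\vi_{n,\Ga})$ and its normal length is $O(\frac{\va^2}{n^2})$ uniformly in $\xi$ — this requires carefully using the $C^2$ structure (interior ball condition) together with the comparison of $\rho_\Og$ with the product-type metric $\wh\rho_G$ on the boundary charts from Lemma~\ref{metric-lem}, and making sure the enlarged neighborhoods still lie in the domains of special type where Lemma~\ref{metric-lem} applies.
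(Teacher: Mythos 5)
Your proposal takes a genuinely different route from the paper, and while the outline is appealing, there is a real gap in the discretization step that is not resolved by the ingredients you list.

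\textbf{What the paper does.} The paper never uses the maximal operator $\mathcal{M}_{n,\mu}^{l_1,l_2}$ in the proof of Theorem~\ref{thm-16-1:MZ}. Instead it splits $\Ld$ into a part near $\Ga$ (covered by domains of special type $G_j$ from Lemma~\ref{lem-2-1-18}) and an interior part. On each $G_j$ it works in the explicit coordinates $(x,\al)$ with $\al=g(x)-y$, partitions into Chebyshev-graded cells $I_{i,j}$ (side $\sim 1/n_1$ in $x$ and $\sim\sqrt{\al_j}/n_1$ in $\al$), and bounds the cell-oscillation by a tangential term involving $\p_1 F$ and a normal term involving $\p_2 F$ where $F(x,\al)=f(x,g(x)-\al)$. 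Since $\p_1 F$ and $\p_2 F$ are polynomials in each variable, the discrete sums over cells are controlled by the one-dimensional Chebyshev-grid MZ Lemma~\ref{lem-3-1:bern} followed by the two-dimensional Bernstein inequality (Theorem~\ref{THM:2D BERN}). The interior part is handled separately by an isotropic Poincaré argument (Lemma~\ref{lem-16-2b}).

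\textbf{The gap in your discretization step.} You propose to bound
\[
\sum_{\xi\in\Ld}\Bl|U(\xi,\tfrac\va n)\Br|\,\sup_{\eta\in U(\xi,c\ell\va/n)}H(\eta)^p\ \lesssim\ \int_\Og H(\eta)^p\,d\eta,
\]
with $H=\mathcal{M}_{n,\mu}^{1,0}f+\vi_{n,\Ga}\mathcal{M}_{n,\mu}^{0,1}f$, using only bounded overlap of the enlarged balls and comparability of the volumes $|U(\xi,\va/n)|$. That chain of facts does not suffice: replacing a supremum of a function by its $L^p$-average over a ball of comparable measure requires a Remez/Bernstein-type regularity for that function. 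For a polynomial of degree $n$ on $\Og$ this regularity is exactly the MZ inequality you are trying to prove (Corollary~\ref{cor-16-2} is a consequence of the theorem, so cannot be invoked). The function $H$ is a maximal function and is not itself a polynomial, so the standard MZ/Remez estimates do not apply to it directly. To repair this one would have to unpack the definition of $\mathcal{M}_{n,\mu}^{l_1,l_2}$, fix for each $\xi$ the maximizing boundary point and tangent direction, observe that the resulting $\p_{\tau}^{l_1}\p_{\nb}^{l_2}f$ \emph{is} a polynomial, and then run an MZ argument on that polynomial while carefully enlarging the aperture $\mu$ so that the frozen directions remain admissible on the whole ball $U(\xi,c\va/n)$. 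That is a nontrivial piece of work that essentially reproduces the paper's cell-by-cell bookkeeping in disguise, and it is the part your outline elides.

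\textbf{A secondary issue: the displacement claims are backwards.} If $\rho_\Og(\xi,\eta)\le\va/n$, then $\|\xi-\eta\|\le\va/n$, so the tangential Euclidean displacement is $O(\va/n)$, not $O(\tfrac\va n\vi_{n,\Ga})$; and $|\dist(\xi,\Ga)-\dist(\eta,\Ga)|\le|\sqrt{\dist(\xi,\Ga)}-\sqrt{\dist(\eta,\Ga)}|\cdot(\sqrt{\dist(\xi,\Ga)}+\sqrt{\dist(\eta,\Ga)})\lesssim\tfrac\va n\vi_{n,\Ga}$, not $(\va/n)^2$. The correct pointwise bound should therefore read $\osc\lesssim\tfrac\va n\bigl(\mathcal{M}_{n,\mu}^{1,0}f+\vi_{n,\Ga}\mathcal{M}_{n,\mu}^{0,1}f\bigr)$, matching exactly the two $j=0$ and $j=1$ cases of Theorem~\ref{thm-10-1}, each giving a factor $n$ and hence the stated $\va\|f\|_p$. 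As written, your bound $\|\mathcal{M}_{n,\mu}^{0,1}f\|_{L^p}\le Cn\|f\|_{L^p}$ is also not what Theorem~\ref{thm-10-1} gives (the $(r,j,l)=(0,0,1)$ case gives $Cn^2$); the two errors happen to cancel so the final answer is unaffected, but the pointwise oscillation estimate as stated is incorrect. Finally, the construction of a connecting path in $\Og$ with the claimed anisotropic tangential/normal length decomposition, uniformly down to the boundary, is asserted rather than proved and is itself a substantive geometric step requiring the interior ball condition and the charts of Lemma~\ref{metric-lem}.
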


We can deduce two useful corollaries from Theorem~\ref{thm-16-1:MZ}.
\begin{cor}\label{cor-16-2}
	There  exists a number $\va_0\in (0,1)$ which depends only on $\Og$ and $d$  and has  the following property:  if $\Ld$ is a maximal $\f \va n$-separated subset of $\Og$ (with respect to the metric $\rho_\Og$) with $\va \in (0, \va_0)$, then for any $1\leq p\leq \infty$ and any $f\in \Pi_n^{d+1}$,
	\begin{align*}
	C_1 \Bl(\sum_{ \xi\in\Ld} \Bl|U(\xi, \f \va n)\Br| \max_{\eta\in U(\xi, \va/n)} |f(\eta)|^p \Br)^{\f 1p}\leq \|f\|_{L^p(\Og)} \leq 	C_2 \Bl(\sum_{ \xi\in\Ld} \Bl|U(\xi, \f \va n)\Br| \min_{\eta\in U(\xi, \va/n)} |f(\eta)|^p \Br)^{\f 1p},
	\end{align*}
	with the usual change of $\ell^p$-norm when $p=\infty$, where  the constants $C_1, C_2>0$ are independent of $f$, $\va$ and $n$.	
\end{cor}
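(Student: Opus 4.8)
The plan is to deduce Corollary~\ref{cor-16-2} from Theorem~\ref{thm-16-1:MZ} (applied with $\ell=1$), the volume comparison~\eqref{16-1-00}, and the covering/packing structure of a maximal $\frac{\va}{n}$-separated set; the only ingredient with genuine analytic content is Theorem~\ref{thm-16-1:MZ}. Abbreviate $U_\xi:=U(\xi,\frac{\va}{n})$ for $\xi\in\Ld$. Since $\rho_\Og$ is a metric (it is the sum of the Euclidean metric and the pseudometric $(\xi,\eta)\mapsto|\sqrt{\dist(\xi,\Ga)}-\sqrt{\dist(\eta,\Ga)}|$, so the triangle inequality holds), maximality of $\Ld$ gives $\Og=\bigcup_{\xi\in\Ld}U_\xi$, while $\frac{\va}{n}$-separation forces the balls $U(\xi,\frac{\va}{3n})$, $\xi\in\Ld$, to be pairwise disjoint (if $\eta$ lay in two of them, the triangle inequality would give $\rho_\Og(\xi,\xi')\le\frac{2\va}{3n}<\frac{\va}{n}$, contradicting separation); moreover $|U(\xi,\frac{\va}{3n})|\sim|U_\xi|$ by~\eqref{16-1-00}, with constants depending only on $d$. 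By Theorem~\ref{thm-16-1:MZ} with $\ell=1$ there is a constant $c_0=c_0(\Og,d)$ with
\begin{equation*}
\Bl(\sum_{\xi\in\Ld}|U_\xi|\,\osc(f;U_\xi)^p\Br)^{1/p}\le c_0\va\,\|f\|_{L^p(\Og)},\qquad f\in\Pi_n^{d+1},\ \ 1\le p\le\infty,
\end{equation*}
with the usual change when $p=\infty$. I would then fix $\va_0:=\min\{1,\tfrac{1}{2c_0}\}$, which depends only on $\Og$ and $d$, and assume $\va\in(0,\va_0)$.

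For the right-hand inequality, using $\Og=\bigcup_\xi U_\xi$, the pointwise bound $|f(\eta)|\le\min_{\zeta\in U_\xi}|f(\zeta)|+\osc(f;U_\xi)$ (valid for every $\eta\in U_\xi$, since $\osc$ does not require the connecting segment to lie in $\Og$), and the triangle inequality in $\ell^p$, I would estimate
\begin{align*}
\|f\|_{L^p(\Og)}&\le\Bl(\sum_{\xi\in\Ld}\int_{U_\xi}|f|^p\Br)^{1/p}\le\Bl(\sum_{\xi\in\Ld}|U_\xi|\max_{\eta\in U_\xi}|f(\eta)|^p\Br)^{1/p}\\
&\le\Bl(\sum_{\xi\in\Ld}|U_\xi|\min_{\eta\in U_\xi}|f(\eta)|^p\Br)^{1/p}+\Bl(\sum_{\xi\in\Ld}|U_\xi|\,\osc(f;U_\xi)^p\Br)^{1/p}.
\end{align*}
By the displayed consequence of Theorem~\ref{thm-16-1:MZ}, the last term is at most $c_0\va\|f\|_{L^p(\Og)}\le\tfrac12\|f\|_{L^p(\Og)}$, which is absorbed into the left-hand side; this gives $\|f\|_{L^p(\Og)}\le2\bl(\sum_{\xi\in\Ld}|U_\xi|\min_{\eta\in U_\xi}|f(\eta)|^p\br)^{1/p}$, that is, the right-hand estimate with $C_2=2$.

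For the left-hand inequality I would use disjointness instead of covering. Fix $\xi\in\Ld$ and $\eta_\xi\in U_\xi$ with $|f(\eta_\xi)|=\max_{\eta\in U_\xi}|f(\eta)|$. For every $\zeta\in U(\xi,\frac{\va}{3n})\subset U_\xi$ one has $|f(\eta_\xi)|\le|f(\zeta)|+\osc(f;U_\xi)$; averaging the $p$-th power over $\zeta\in U(\xi,\frac{\va}{3n})$ and applying the triangle inequality in $L^p(U(\xi,\frac{\va}{3n}))$ gives
\begin{equation*}
\max_{\eta\in U_\xi}|f(\eta)|\le\Bl(\frac{1}{|U(\xi,\frac{\va}{3n})|}\int_{U(\xi,\frac{\va}{3n})}|f|^p\Br)^{1/p}+\osc(f;U_\xi).
\end{equation*}
Multiplying by $|U_\xi|^{1/p}$, using $|U_\xi|\sim|U(\xi,\frac{\va}{3n})|$ from~\eqref{16-1-00}, summing over $\xi\in\Ld$ and applying the $\ell^p$ triangle inequality once more, I obtain
\begin{equation*}
\Bl(\sum_{\xi\in\Ld}|U_\xi|\max_{\eta\in U_\xi}|f(\eta)|^p\Br)^{1/p}\le C\Bl(\sum_{\xi\in\Ld}\int_{U(\xi,\frac{\va}{3n})}|f|^p\Br)^{1/p}+C\Bl(\sum_{\xi\in\Ld}|U_\xi|\,\osc(f;U_\xi)^p\Br)^{1/p}.
\end{equation*}
Since the balls $U(\xi,\frac{\va}{3n})$, $\xi\in\Ld$, are pairwise disjoint, the first sum is $\le\|f\|_{L^p(\Og)}^p$, while the second is $\le c_0\va\|f\|_{L^p(\Og)}$ by Theorem~\ref{thm-16-1:MZ}; hence the left side is $\le C(1+c_0\va)\|f\|_{L^p(\Og)}\le C'\|f\|_{L^p(\Og)}$, which is the left-hand estimate with $C_1=1/C'$.

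The case $p=\infty$ is handled in the same way (or as the limit $p\to\infty$), the weights $|U_\xi|$ simply dropping out of the discrete norms: the left-hand bound becomes trivial, and for the right-hand one I would pick $\xi_*\in\Og$ with $|f(\xi_*)|=\|f\|_{L^\infty(\Og)}$ and $\omega\in\Ld$ with $\xi_*\in U_\omega$, so that $\min_{\eta\in U_\omega}|f(\eta)|\ge\|f\|_{L^\infty(\Og)}-\osc(f;U_\omega)\ge\tfrac12\|f\|_{L^\infty(\Og)}$ by Theorem~\ref{thm-16-1:MZ}. The whole content of the corollary is in Theorem~\ref{thm-16-1:MZ}; the only points requiring care are keeping every constant independent of $p$ (the reason for organising the argument around the $\ell^p$ triangle inequality rather than the crude bound $(a+b)^p\le 2^{p-1}(a^p+b^p)$), and invoking the covering property of $\Ld$ (from maximality) for the lower bound on $\|f\|_{L^p(\Og)}$ but the packing/disjointness property (from separation) together with the volume doubling~\eqref{16-1-00} for the upper bound.
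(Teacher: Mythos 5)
Your proof is correct and fills in the details the paper delegates to the reference \cite{DX2}*{Theorem~5.3.6}: it is exactly the standard doubling\nobreakdash-measure/oscillation argument the authors have in mind (maximality gives a covering for the upper bound, $\f{\va}{3n}$-separation gives disjointness for the lower bound, Theorem~\ref{thm-16-1:MZ} controls the oscillation error, and $\va_0\sim 1/c_0$ lets you absorb it). One small slip: Theorem~\ref{thm-16-1:MZ} is stated only for $\ell>1$, so you cannot literally set $\ell=1$; but since $\osc\bl(f;U(\xi,\f{\va}{n})\br)\le\osc\bl(f;U(\xi,\f{\ell\va}{n})\br)$ for any $\ell>1$, invoking the theorem with, say, $\ell=2$ yields exactly the displayed estimate you use, so the argument goes through unchanged.
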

\begin{proof}Corollary~\ref{cor-16-2} is a direct consequence of 	 Theorem~\ref{thm-16-1:MZ} and~\eqref{16-1-00}. Since the proof is almost identical to that of Theorem 5.3.6 of \cite[p.~116]{DX2}, we omit the details here.
\end{proof}

\begin{cor}\label{cor-16-3}There  exists a number $\va_0\in (0,1)$ which depends only on $\Og$ and $d$  and has  the following property:  if $\Ld$ is a maximal $\f \va n$-separated subset of $\Og$ (with respect to the metric $\rho_\Og$) with $\va \in (0, \va_0)$,  then there exists a sequence $\{ \ld_\xi:\  \ \xi\in\Ld\}$ of positive numbers such that 
	$$\ld_\xi \sim \Bl|U(\xi, \f \va n)\Br|,\   \  \forall \xi\in\Ld,$$
	and 
	$$ \int_{\Og} f(\eta)\, d\eta =\sum_{\xi\in\Ld} \ld_\xi f(\xi),\   \   \  \forall f\in \Pi_n^{d+1}.$$

\end{cor}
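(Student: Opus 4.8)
The statement to be proved is Corollary~\ref{cor-16-3}: the existence of a positive cubature formula exact on $\Pi_n^{d+1}$ with nodes on a maximal $\frac\va n$-separated set $\Ld$ and weights $\ld_\xi\sim|U(\xi,\frac\va n)|$. The plan is to derive this from the Marcinkiewicz--Zygmund inequality of Corollary~\ref{cor-16-2} by a standard functional-analytic duality argument (the same scheme used for cubature formulas on the sphere, ball, and other domains, e.g. in~\cite{DX2}). First I would fix $\va_0$ to be the constant supplied by Corollary~\ref{cor-16-2}, and fix a maximal $\frac\va n$-separated set $\Ld=\{\xi_j\}_{j=1}^N$ with $\va\in(0,\va_0)$. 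Since $\Ld$ is maximal, the sets $U(\xi_j,\frac\va n)$ cover $\Og$; by a standard refinement I would replace them with pairwise disjoint Borel sets $R_j\subset U(\xi_j,\frac{2\va} n)$ with $\bigcup_j R_j=\Og$ and $U(\xi_j,\frac{\va}{2n})\subset R_j$, so that by~\eqref{16-1-00}, $|R_j|\sim|U(\xi_j,\frac\va n)|$ and $\sum_j|R_j|=|\Og|$.

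\textbf{The duality argument.} Consider the linear map $T:\Pi_n^{d+1}\to\RR^N$ given by $Tf=(f(\xi_j))_{j=1}^N$, and equip $\RR^N$ with the weighted norm $\|a\|:=\sum_{j=1}^N|R_j|\,|a_j|$. By Corollary~\ref{cor-16-2} applied with $p=1$ (together with $|R_j|\sim|U(\xi_j,\frac\va n)|$), $T$ is an isomorphism of $\Pi_n^{d+1}$ onto its image $V:=T(\Pi_n^{d+1})\subset\RR^N$, with $\|f\|_{L^1(\Og)}\sim\|Tf\|$. Define the linear functional $L$ on $V$ by $L(Tf):=\int_\Og f$; it is bounded on $(V,\|\cdot\|)$ since $|L(Tf)|=|\int_\Og f|\le\|f\|_{L^1(\Og)}\le C\|Tf\|$. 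By the Hahn--Banach theorem, extend $L$ to a bounded linear functional on all of $(\RR^N,\|\cdot\|)$, whose dual is $(\RR^N,\|\cdot\|_\ast)$ with $\|b\|_\ast=\max_j|R_j|^{-1}|b_j|$; thus the extension is represented by a vector $(\mu_j)_{j=1}^N$ with $\max_j|\mu_j|\le C$, so that $\int_\Og f=\sum_{j=1}^N|R_j|\,\mu_j\,f(\xi_j)$ for all $f\in\Pi_n^{d+1}$, with coefficients $\ld_j:=|R_j|\mu_j$ satisfying $|\ld_j|\le C|R_j|\sim C|U(\xi_j,\frac\va n)|$.

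\textbf{Positivity of the weights.} The output of Hahn--Banach gives only $|\ld_j|\lesssim|U(\xi_j,\frac\va n)|$, not positivity. To upgrade to $\ld_j\sim|U(\xi_j,\frac\va n)|$ with positive weights, I would use the oscillation control in Theorem~\ref{thm-16-1:MZ}. Pick a partition-of-unity type family: for each $j$, there is $f_j\in\Pi_{c n}^{d+1}$ (coming from Theorem~\ref{polyPartition00} or the localized polynomials there, rescaled to degree $\le n$ via the $\ell>1$ freedom) with $f_j\ge 0$ on $\Og$, $f_j(\xi_j)\sim 1$, $\sum_j f_j\equiv 1$, and $f_j(\xi)\le C(1+n\rho_\Og(\xi,\xi_j))^{-\ell}$ for a large $\ell$. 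Testing the cubature formula on $g=f_j^2$ gives $\sum_k\ld_k f_j(\xi_k)^2=\int_\Og f_j^2>0$; since $\sum_{k\ne j}|\ld_k|f_j(\xi_k)^2$ can be made a small fraction of $\ld_j f_j(\xi_j)^2$ by choosing $\ell$ large and $\va$ small (using $|\ld_k|\lesssim|U(\xi_k,\frac\va n)|$, the decay of $f_j$, and the separation of $\Ld$, exactly as in the packing estimates in the proof of Theorem~\ref{polyPartition}), one deduces $\ld_j\gtrsim\int_\Og f_j^2/f_j(\xi_j)^2\sim|U(\xi_j,\frac\va n)|>0$. The main obstacle is precisely this last step: making the off-diagonal sum genuinely negligible requires a careful choice of the decay exponent $\ell$ relative to $d$ and a uniform bound on the number of $\xi_k$ in each dyadic $\rho_\Og$-annulus around $\xi_j$, which is where the geometry of the metric $\rho_\Og$ (via~\eqref{16-1-00} and Remark~\ref{rem:cardinality-max-sep}) enters. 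Once positivity and both-sided bounds on $\ld_j$ are in hand, renaming $\ld_\xi:=\ld_j$ for $\xi=\xi_j$ completes the proof.
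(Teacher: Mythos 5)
Your first step (Hahn--Banach applied to the MZ isomorphism for $p=1$) is sound and produces a cubature formula exact on $\Pi_n^{d+1}$ with signed weights satisfying $|\ld_j|\le C|U(\xi_j,\tfrac\va n)|$. The gap is in the promotion to positivity, and it is not a matter of bookkeeping: the argument cannot work as stated, for a reason intrinsic to polynomials.

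To test the formula on $f_j^2\in\Pi_n^{d+1}$ you need $f_j\in\Pi_{\lfloor n/2\rfloor}^{d+1}$. A polynomial of degree $\le n$ cannot be concentrated on a $\rho_\Og$-ball of radius $\va/n$ when $\va<1$: the best localization attainable from Theorem~\ref{polyPartition00} (with any $\ell$) is $|f_j(\xi)|\lesssim(1+n\rho_\Og(\xi,\xi_j))^{-\ell}$, which gives no decay until $\rho_\Og(\xi,\xi_j)\gtrsim 1/n$. Now $\Ld$ is $\frac\va n$-separated, so there are on the order of $\va^{-(d+1)}$ nodes $\xi_k$ with $\rho_\Og(\xi_j,\xi_k)\le 1/n$, and for each such $k$ the factor $(1+n\rho_\Og(\xi_j,\xi_k))^{-2\ell}$ is bounded below by an absolute constant. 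Consequently
\[
\sum_{k\ne j}|\ld_k|\,f_j(\xi_k)^2\ \gtrsim\ \sum_{\rho_\Og(\xi_j,\xi_k)\le 1/n}\bigl|U(\xi_k,\tfrac\va n)\bigr|\ \sim\ \bigl|B_{\rho_\Og}(\xi_j,\tfrac1n)\bigr|\ \gg\ \bigl|U(\xi_j,\tfrac\va n)\bigr|\ \gtrsim\ \int_\Og f_j^2,
\]
with the gap worsening as $\va\downarrow 0$. So increasing $\ell$ does not help and shrinking $\va$ actively hurts: the off-diagonal sum dominates rather than being negligible, and no lower bound on $\ld_j$ follows. (There are two further mismatches that would need repairing even if the sizes were favorable: the partition nodes in Theorem~\ref{polyPartition00} are its own $\f1n$-separated set, not your $\Ld$, so one cannot simply read off an $f_j$ with $f_j(\xi_j)\sim1$; and since the Hahn--Banach constant $C$ is the MZ equivalence constant, one cannot make it close to $1$.)

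The paper takes a different route precisely to avoid test functions. Positivity is extracted from the MZ inequality (Theorem~\ref{thm-16-1:MZ}) and the volume estimate \eqref{16-1-00} via the Farkas lemma, following \cite[Thm.~4.3]{NPW} and \cite[Thm.~6.3.3]{DX2}: one sets up the finite linear system $\sum_\xi\ld_\xi f(\xi)=\int_\Og f$ for $f$ in a basis of $\Pi_n^{d+1}$ with side constraints $c_1|U(\xi,\tfrac\va n)|\le\ld_\xi\le c_2|U(\xi,\tfrac\va n)|$, and shows that the Farkas alternative (existence of a $g\in\Pi_n^{d+1}$ witnessing infeasibility) would contradict the oscillation bound in Theorem~\ref{thm-16-1:MZ}. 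This is a pure LP-duality existence argument with no localization requirements, which is why it succeeds where the test-function approach cannot. I suggest replacing your positivity step by this Farkas argument; the Hahn--Banach step can then be dropped entirely.
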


\begin{proof} Corollary~\ref{cor-16-3} can be deduced from Theorem~\ref{thm-16-1:MZ},~\eqref{16-1-00} and the Farkas lemma (\cite[p.~137]{DX2}).  The proof is very similar to the proof of Theorem 4.3 of  \cite{NPW}, and is 	almost identical to that of Theorem 6.3.3 of \cite[p.~138]{DX2}. 	
\end{proof}

Note that the number of elements of any maximal $\f\va n$-separated subset of $\Omega$ with respect to the metric $\rho_\Omega$ is of order $n^{d+1}$, which is optimal as the dimension of $\Pi_{n}^{d+1}$ has the same order. See Remark~\ref{rem:cardinality-max-sep} below for details.

The rest of this chapter is devoted to the proof of Theorem~\ref{thm-16-1:MZ}, which relies on several lemmas. 
The first lemma  is a direct consequence of 	the Markov-Bernstein-type  inequality for algebraic polynomials in one variable:

\begin{lem}\label{lem-3-1:bern}
	Let   $\ell$ be a fixed positive integer, and $n>\ell$ a given positive integer. 
	Let  $0\leq \ta_1<\ta_2<\dots\leq\ta_m\leq  \pi$ satisfy that  $\min_{2\leq i\leq m} (\ta_i-\ta_{i-1})\ge \f 1n$. 
	Then for every  $f\in\Pi_k^1$, 
	$$ \sum_{ i=1}^{m}\Bl(\f {\sin \ta_i} n  +\f1{n^2} \Br) \max_{ |\ta-\ta_i|\leq \f \ell n} |f(\cos\ta )|\leq C_\ell \Bl( 1+\f kn\Br) \int_{-1}^1 |f(x)|\, dx. $$
\end{lem}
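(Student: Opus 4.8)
The strategy is to reduce the discrete sum over $\{\theta_i\}$ to a Riemann-sum comparison that can be dominated by the weighted $L^1$-norm of $f$, using the univariate Bernstein inequality with the Chebyshev (Jacobi-type) weight. First I would recall the weighted Markov-Bernstein inequality for algebraic polynomials in one variable: if $g\in\Pi_k^1$ then $\|\sqrt{1-x^2}\,g'\|_{L^1[-1,1]}\le C k\|g\|_{L^1[-1,1]}$, or, after the substitution $x=\cos\theta$, for $G(\theta):=g(\cos\theta)$ one has a pointwise control of the oscillation of $G$ on an arc of length $\ell/n$ centred at $\theta_i$ by $(\ell/n)$ times the supremum of $|\frac{d}{d\theta}G|$ on a slightly larger arc, and the latter quantity, suitably weighted, integrates against $\int|f|$. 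Concretely, for $|\theta-\theta_i|\le\ell/n$ the triangle inequality gives
\[
\max_{|\theta-\theta_i|\le\ell/n}|f(\cos\theta)|\le \min_{|\theta-\theta_i|\le\ell/n}|f(\cos\theta)| + \osc\bigl(f(\cos\cdot);[\theta_i-\tfrac\ell n,\theta_i+\tfrac\ell n]\bigr),
\]
so it suffices to bound separately the sum of the weighted minima (a genuine lower Riemann sum for $\int_0^\pi|f(\cos\theta)|\sin\theta\,d\theta\sim\int_{-1}^1|f|$) and the sum of the weighted oscillations.

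For the minima term, since the $\theta_i$ are $\tfrac1n$-separated, the arcs $J_i:=[\theta_i-\tfrac1{2n},\theta_i+\tfrac1{2n}]$ are pairwise disjoint, and on $J_i$ one has $\sin\theta\sim\sin\theta_i + O(1/n)$, hence $(\frac{\sin\theta_i}{n}+\frac1{n^2})\sim\int_{J_i}(\sin\theta+\frac1n)\,d\theta$ up to absolute constants; thus
\[
\sum_i\Bigl(\tfrac{\sin\theta_i}{n}+\tfrac1{n^2}\Bigr)\min_{|\theta-\theta_i|\le\ell/n}|f(\cos\theta)|
\le C\sum_i\int_{J_i}\Bigl(\sin\theta+\tfrac1n\Bigr)|f(\cos\theta)|\,d\theta
\le C\int_0^\pi\sin\theta\,|f(\cos\theta)|\,d\theta + \tfrac Cn\|f(\cos\cdot)\|_{L^1[0,\pi]}.
\]
The first integral equals $C\int_{-1}^1|f(x)|\,dx$ after undoing $x=\cos\theta$, and the second is controlled by $C\int_{-1}^1|f|$ via Nikolskii's inequality (or directly $\|f(\cos\cdot)\|_{L^1[0,\pi]}\le C\int_{-1}^1|f(x)|\,dx/\sqrt{1-x^2}$ combined with a weighted Remez/Nikolskii estimate giving the factor $(1+k/n)$). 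Actually it is cleaner to keep the factor $\frac1n\|f(\cos\cdot)\|_{L^1}$ and absorb it into $C_\ell(1+\frac kn)\int_{-1}^1|f|$ at the end.

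For the oscillation term, write $\osc\bigl(f(\cos\cdot);[\theta_i-\tfrac\ell n,\theta_i+\tfrac\ell n]\bigr)\le \frac{2\ell}{n}\max_{|\theta-\theta_i|\le\ell/n}\bigl|\frac{d}{d\theta}f(\cos\theta)\bigr|$. Since $\frac{d}{d\theta}f(\cos\theta)=-\sin\theta\,f'(\cos\theta)$, and since on the arc $|\theta-\theta_i|\le\ell/n$ we have $\sin\theta\le C(\sin\theta_i+\ell/n)$, the summand is dominated by $C_\ell\bigl(\tfrac{\sin\theta_i}{n}+\tfrac1{n^2}\bigr)\cdot\bigl(\tfrac1n\max_{|\theta-\theta_i|\le\ell/n}\sin\theta\,|f'(\cos\theta)|\bigr)$ — wait, more carefully: $\bigl(\tfrac{\sin\theta_i}{n}+\tfrac1{n^2}\bigr)\cdot\tfrac{2\ell}{n}\sin\theta|f'(\cos\theta)|\le C_\ell\bigl(\tfrac{\sin^2\theta}{n^2}+\tfrac{\sin\theta}{n^3}+\tfrac{\sin\theta}{n^3}\bigr)|f'(\cos\theta)|$, and then one re-applies the disjoint-arc Riemann-sum argument together with the weighted Bernstein inequality $\|(1-x^2)f'\|_{L^1}\le Ck\|f\|_{L^1}$ and $\|\sqrt{1-x^2}f'\|_{L^1}\le Ck\|f\|_{L^1}$ to get a bound $C_\ell\tfrac kn\int_{-1}^1|f|$. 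Here I would invoke the (weighted) large-sieve / Bernstein-type inequality in the form already cited in the paper, e.g. \cite[Theorem 7.3]{MT2} or \cite[Lemma 2.2]{DJL} / \cite[p.~265]{De-Lo}, which is exactly the univariate input mentioned in the remark preceding the lemma.

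\textbf{Main obstacle.} The delicate point is the bookkeeping of the weight $\frac{\sin\theta_i}{n}+\frac1{n^2}$: one must show that the discrete weighted sum over a $\frac1n$-separated net is comparable to the integral against the Chebyshev weight $\sqrt{1-x^2}$ plus lower-order terms, uniformly in the net and in $n$, and that the slightly enlarged arcs of radius $\ell/n$ (rather than $\frac1{2n}$) only cost a factor depending on $\ell$, via bounded overlap. Once the net-to-integral comparison is set up, the rest is a direct application of the univariate weighted Bernstein inequality; no multivariate machinery is needed, and the polynomial degree $k$ enters only through that Bernstein step, producing the factor $1+k/n$.
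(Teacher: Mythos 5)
Your overall plan (split $\max=\min+\osc$, dominate the minimum term by a lower Riemann sum over disjoint arcs, feed the oscillation term into the weighted univariate Bernstein inequality) is a natural route, and the minimum-term part is fine. The oscillation term as written, however, has a gap. After the mean-value bound $\osc(F;I_i)\le\frac{2\ell}{n}\max_{I_i}|F'|$, where $F(\theta)=f(\cos\theta)$ and $I_i=[\theta_i-\ell/n,\theta_i+\ell/n]$, you are again faced with a weighted sum of maxima $\sum_i(\frac{\sin\theta_i}{n}+\frac1{n^2})\max_{I_i}|F'|$ over \emph{overlapping} arcs. The ``disjoint-arc Riemann-sum argument'' you invoke at this point was an argument for sums of minima and does not apply to sums of maxima; re-splitting $\max=\min+\osc$ for $F'$ (still a trigonometric polynomial of degree $\le k$) leads to an unterminated recursion. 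The clean fix is to use $\osc(F;I_i)\le\int_{I_i}|F'(\theta)|\,d\theta$ instead of the mean-value estimate: bounded overlap of the arcs $I_i$ (each point lies in at most $2\ell+1$ of them) together with $\sin\theta_i\le\sin\theta+\ell/n$ on $I_i$ gives
\[
\sum_i\Bigl(\frac{\sin\theta_i}{n}+\frac1{n^2}\Bigr)\osc(F;I_i)\le C_\ell\int_0^\pi\frac{\sin\theta+1/n}{n}\,|F'(\theta)|\,d\theta=\frac{C_\ell}{n}\int_{-1}^1\sqrt{1-x^2}\,|f'|\,dx+\frac{C_\ell}{n^2}\int_{-1}^1|f'|\,dx,
\]
and the weighted $L^1$ Bernstein and Markov inequalities finish.

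After this fix the computation yields $C_\ell\bigl(1+\frac kn+\frac{k^2}{n^2}\bigr)\int_{-1}^1|f|$, i.e.\ the factor $(1+k/n)^2$ rather than $(1+k/n)$. This is harmless in the paper's use of the lemma (where $k\le n$, so $(1+k/n)^2$ and $(1+k/n)$ are both bounded), but the $(1+k/n)$ bound as literally stated appears to fail for $k$ much larger than $n$: with $m=1$, $\theta_1=1/n$, and $f\in\Pi_k^1$ a normalized Nikolskii extremal concentrated at $x=1$ (a rescaled Jackson kernel in the $\theta$-variable), one has $f(1)=1$ and $\int_{-1}^1|f|\sim k^{-2}$, so the left-hand side is at least of order $n^{-2}$ while the right-hand side is of order $(kn)^{-1}$, which is smaller once $k$ greatly exceeds $n$. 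The statement should therefore carry either the implicit hypothesis that $k$ is at most a fixed multiple of $n$, or the factor $(1+k/n)^2$.
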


To state the  second lemma, we let $G\subset \Og$ be a domain of special type attached to $\Ga$. Without loss of generality, we may assume that
$$ G:=\{(x,y):\  \  x\in (-b, b)^d,\   \   g(x)-1<y\leq g(x)\},$$
where $b\in (0,(2d)^{-1})$ is a  small constant and  $g$ is a $C^2$-function on $\RR^d$ satisfying that $\min_{x\in [-b, b]^d} g(x)\ge 4$. Recall the metric $\wh{\rho}_G$ on the domain $G$ is defined as
\begin{equation}\label{rhog-0}
\wh{\rho}_G(\xi, \eta):=\max\Bl\{\|\xi_x-\eta_x\|,
\Bl|\sqrt{g(\xi_x)-\xi_y}-\sqrt{g(\eta_x)-\eta_y}\Br|\Br\}.
\end{equation}
where  for  $\xi=(\xi_x, \xi_y)$ and  $\eta=(\eta_x, \eta_y)\in G$. 
According to Lemma~\ref{metric-lem}, we have 
\begin{equation}\label{6-1-metric-0}\wh{\rho}_G(\xi,\eta)\sim \rho_{\Og} (\xi,\eta),\    \    \  \xi, \eta\in G.\end{equation}
For $\xi\in G$ and $r\in (0,1)$, define  
$$ B_G (\xi, r):=\Bl\{ \eta\in G:\  \  \wh{\rho}_G(\xi,\eta)\leq r\Br\}.$$

\begin{lem}\label{lem-16-2}Let $\mu>1$ be a parameter. Assume that  $n\in\NN$, $\va\in (0,1)$ and $1\leq p\leq \infty$.  If $\Ld\subset G$ is   $\f \va n$-separated  with respect to the metric $\wh{\rho}_G$,  then for any  $f\in\Pi_{n}^{d+1}$,   
	\begin{equation}\label{3-4:berna} \Bl(\sum_{\og\in\Ld} \ld_{\va/n} (\og) \Bl|\osc \bl(f; B_G \bl(\og, \f {\mu\va}n \br )\br)\Br|^p\Br)^{\f1p} \leq C_\mu\va \|f\|_{L^p(G^\ast)},\end{equation}
	where 
	\begin{equation}\label{3-3:bern}
	\ld_{r}(\xi)=\Bl| B_G (\xi, r)\Br|\sim r^{d+1} \Bl( \sqrt{g(\xi_x)-\xi_y} +r\Br),\   \   \ \xi\in G,\   \  r\in (0,1),
	\end{equation}
	and the constant $C_\mu$ is independent of $\va$, $n$ and $f$.
\end{lem}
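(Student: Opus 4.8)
The strategy is to reduce the tangential‑oscillation bound for a polynomial $f\in\Pi_n^{d+1}$ to an application of the tangential Bernstein inequality of Section~\ref{sec:14} (specifically Corollary~\ref{cor-11-2} / Theorem~\ref{thm-10-1}) together with the one–dimensional Markov–Bernstein discretization of Lemma~\ref{lem-3-1:bern}. The first step is to bound the oscillation by an integral of the gradient along paths inside $G^\ast$. Given $\og\in\Ld$ and any two points $\xi,\eta\in B_G(\og,\tfrac{\mu\va}{n})$, I would connect $\xi$ to $\eta$ by a short polygonal path that stays inside a slightly larger $\widehat\rho_G$–ball $B_G(\og,\tfrac{c\mu\va}{n})\subset G^\ast$ (using that $G$ is attached to $\Ga$, so $G^\ast=\Og\cap Q$, and that $\widehat\rho_G\sim\rho_\Og$ by Lemma~\ref{metric-lem}); the path can be taken to first move in the tangential "$x$–directions" and then in the vertical $e_{d+1}$–direction. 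Along the tangential segment the relevant derivative is a combination of the $\mathcal D_{\tan,u}$ operators (with $u$ ranging over a ball of radius $\sim\va/n$ around the base point), and along the vertical segment it is $\partial_{d+1}$, whose increment is controlled by the factor $\varphi_n(\xi)$ arising from the $\sqrt{\dist}$ term in $\widehat\rho_G$. This yields, for $\xi=(\xi_x,\xi_y)\in I$ with $I$ a piece of $G$ of $\widehat\rho_G$–diameter $\sim\va/n$,
\[
\osc\bigl(f;B_G(\og,\tfrac{\mu\va}{n})\bigr)
\le \frac{C\mu\va}{n}\max_{u\in\Xi_{n,c\mu}(\og)}
\Bigl(\ \sum_{j=1}^d\bigl|\mathcal D_{j,d+1,u}f(\zeta_j)\bigr|
+\varphi_n(\og)\bigl|\partial_{d+1}f(\zeta)\bigr|\ \Bigr)
\]
for suitable points $\zeta,\zeta_j$ in the enlarged ball, so the left side of~\eqref{3-4:berna} is dominated by $\tfrac{C\mu\va}{n}$ times the $\ell^p(\ld_{\va/n})$–norm of the maximal functions $\varphi_n\,\mathcal M^{1,0}_{n,c\mu}f$ and $\varphi_n^0\,\mathcal M^{0,1}_{n,c\mu}f$ appearing in Corollary~\ref{cor-11-2}.

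\textbf{From the discrete sum to the $L^p$–norm.} The next step is to pass from the weighted $\ell^p$–sum over the $\tfrac\va n$–separated set $\Ld$ to a genuine $L^p(G^\ast)$–integral. Here I would use that, since $\Ld$ is $\tfrac\va n$–separated in $\widehat\rho_G$, the balls $B_G(\og,\tfrac{\va}{cn})$, $\og\in\Ld$, are pairwise disjoint and each has measure $\sim\ld_{\va/n}(\og)$ by~\eqref{3-3:bern}; moreover on each such ball the relevant maximal function (and the weight $\varphi_n$) is comparable to its value at $\og$, because $\varphi_n(\cdot)\sim\varphi_n(\og)$ on $B_G(\og,\tfrac{\va}{n})$ and the maximal operator $\mathcal M$ already takes a sup over an $O(\varphi_n/n)$–neighbourhood, so enlarging the centre by $O(\va/n)$ only enlarges $\mu$ by a bounded factor. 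Thus
\[
\sum_{\og\in\Ld}\ld_{\va/n}(\og)\,\bigl|\varphi_n(\og)^{l}\,\mathcal M^{l_1,l}_{n,c\mu}f(\og)\bigr|^p
\le C\sum_{\og\in\Ld}\int_{B_G(\og,\va/(cn))}\bigl|\varphi_n(\eta)^{l}\,\mathcal M^{l_1,l}_{n,c'\mu}f(\eta)\bigr|^p\,d\eta
\le C\bigl\|\varphi_n^{l}\,\mathcal M^{l_1,l}_{n,c'\mu}f\bigr\|_{L^p(G^\ast)}^p .
\]
Applying Corollary~\ref{cor-11-2} (equivalently the special case of Theorem~\ref{thm-10-1} on $G^\ast$, with $\ld$ chosen so that $\overline{G^\ast(\ld)}\subset\Og$) with $\pmb\al=e_j$, $i=0$, $j=0$ gives $\|\mathcal M^{1,0}_{n,c'\mu}f\|_{L^p(G^\ast)}\le Cn\|f\|_{L^p(G^{\ast\ast})}$ and with $\pmb\al=0$, $i=0$, $j=1$ gives $\|\varphi_n\,\mathcal M^{0,1}_{n,c'\mu}f\|_{L^p(G^\ast)}\le Cn\|f\|_{L^p(G^{\ast\ast})}$, where $G^{\ast\ast}=G^\ast(\ld)$. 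Combined with the factor $\tfrac{C\mu\va}{n}$ from the first step, the two powers of $n$ cancel and we obtain~\eqref{3-4:berna} with $C_\mu\va\|f\|_{L^p(G^\ast)}$ on the right (after relabelling $G^\ast$ to absorb the harmless dilation), for $p<\infty$; the case $p=\infty$ is the same argument with suprema replacing sums and integrals.

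\textbf{Remarks on the structure and the main obstacle.} One should be a little careful about the exact domains: Lemma~\ref{lem-16-2} is stated with $G^\ast$ on the right, and the Bernstein inequalities are stated on $\overline{G_\ast(\ld)}$ with $\ld<2$, so the cleanest route is to first prove the analogue of~\eqref{3-4:berna} with $G_\ast(2)=G^\ast$ playing the role of the ``small'' domain and $G_\ast(2)$'s own dilate playing the role of the ``large'' one; since $G$ is attached to $\Ga$, $\overline{G^\ast}\cap\Ga=\overline{\p'G^\ast}$ and $\p'G^\ast\subset\Ga$, so $G^\ast$ is itself a domain of special type with $\p'G^\ast$ on the boundary of $\Og$, and Corollary~\ref{cor-11-2} applies to it verbatim. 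The volume comparison~\eqref{3-3:bern} is just Lemma~\ref{lem-9-1} combined with the product structure of $B_G(\xi,r)$ in the $(\xi_x,\sqrt{g(\xi_x)-\xi_y}\,)$ coordinates, exactly as in Remark~\ref{rem-6-2}. I expect the main obstacle to be the bookkeeping in the path/oscillation estimate of Step~1: one must verify that a path joining two points of $B_G(\og,\tfrac{\mu\va}{n})$ can be chosen to stay in a controlled neighbourhood inside $G^\ast$ even near the essential boundary $\p'G^\ast$ (where $\varphi_n$ degenerates), and that along it the derivative really is expressible through the tangential operators $\mathcal D_{j,d+1,u}$ with $u$ in the admissible range $\Xi_{n,c\mu}$ — this is where the precise definition of $\widehat\rho_G$ (the $\sqrt{\cdot}$ in the second coordinate) is used to absorb the vertical displacement into $\varphi_n\partial_{d+1}$, mirroring the change‑of‑variables computations in Chapter~\ref{sec:reduction} and Section~\ref{sec:12}. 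Everything else is a routine combination of disjointness, doubling of $\ld_r$, and the already‑established Bernstein inequality.
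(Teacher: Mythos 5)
Your outline is reasonable up to and including the oscillation bound via polygonal paths (and you correctly flag the path-tracking as a delicate point), and your final invocation of Corollary~\ref{cor-11-2} would close the argument if the middle discretization step were valid. But there is a genuine gap in the step ``from the discrete sum to the $L^p$--norm''. You write that ``on each such ball the relevant maximal function $\ldots$ is comparable to its value at $\og$'', and use this to replace $\sum_{\og\in\Ld}\ld_{\va/n}(\og)\bigl(\mathcal M^{l_1,l}_{n,c\mu}f(\og)\bigr)^p$ by $\sum_{\og}\int_{B_G(\og,\va/(cn))}\bigl(\mathcal M^{l_1,l}_{n,c'\mu}f(\eta)\bigr)^p\,d\eta$. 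This comparability is false. The maximal operator $\mathcal M^{l_1,l}_{n,\mu}$ (and likewise $M^{l_1,l_2}_{\mu,n}$ in the paper's Chapter~\ref{sec:15}) takes a maximum only over the \emph{direction} parameter $u$ (and $\tau$), with the evaluation point of the derivative held fixed at $\xi$; it does \emph{not} take a supremum over nearby evaluation points. So enlarging $\mu$, as you suggest, handles the drift of the $u$-window but does nothing for the evaluation point. Concretely, if $\nabla f(\eta)=0$ at some $\eta\in B_G(\og,\va/n)$ while $\nabla f(\og)\ne 0$, then $\mathcal M^{1,0}_{n,\mu'}f(\eta)=0$ but $\mathcal M^{1,0}_{n,\mu}f(\og)>0$, and the inequality $\mathcal M f(\og)\le C\,\mathcal Mf(\eta)$ you need (to dominate the discrete value by the integral over $B_G(\og,\va/(cn))$) fails. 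To repair this you would have to prove a local Remez/doubling inequality for the polynomials $(z_\tau(u)\cdot\nabla)^{l_1}\p_{d+1}^{l_2}f$ on $\wh\rho_G$-balls of radius $\va/n$, with $\va$-independent constant; nothing of the kind is cited in your proposal.

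The paper sidesteps this entirely and the route is genuinely different from yours. It reduces the problem to the fixed Chebyshev-type partition $\{I_{i,j}\}$ of $G$ (using the $\f\va n$-separation only to inject $\Ld$ into the partition boxes), then straightens via $F(x,\al):=f(x,g(x)-\al)$ and splits the oscillation as $a_{i,j}(f)+b_{i,j}(f)$ (``pointwise minus horizontal average'' plus ``horizontal average at level $\al$ minus at level $\al_j$''). H\"older converts both pieces into integrals of $\p_1F$ and $\p_2F$ before any sup over the evaluation point is taken; the one residual sup over $\al$ in $a_{i,j}$ is absorbed by the one-dimensional Marcinkiewicz--Zygmund discretization Lemma~\ref{lem-3-1:bern} on the Chebyshev grid (a standard consequence of the 1D Markov--Bernstein inequality), after which the non-maximal tangential Bernstein inequality Theorem~\ref{THM:2D BERN} and the weighted Markov--Bernstein of Mastroianni--Totik finish the job. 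In particular the paper never needs Corollary~\ref{cor-11-2} here, and never needs a Remez-type bound on $\wh\rho_G$-balls of radius $\va/n$. You correctly identified a potential difficulty in the oscillation step near $\p'G^\ast$, but the actual obstruction is earlier, in the discretization.
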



\begin{proof}For simplicity,  we  assume that  $d=1$  (i.e., $G\subset \Og\subset \RR^2$).  The  proof below  with slight modifications works equally well for the case $d>1$.


	We first introduce  some necessary notations for  the proof. Let $n_1$ be the  integer such that $n/(c_0 \va)<n_1 \leq 1+n/(c_0 \va) $, where $c_0\in (0,1)$ is a small constant to be specified later.   
	Let  $L:=\max_{x\in [-4b, 4b]}|g(x)|+10$.   Denote by $m$ the nonnegative  integer such that   
	$$ \f {n_1 \arccos (1-\f 1L)} {\pi} -1 \leq m <  \f {n_1 \arccos (1-\f 1L)} {\pi};  $$
	that is, 
	$2L \sin^2 \f { m\pi} {2n_1} <1\leq 2L \sin^2 \f {(m+1)\pi} {2n_1}$.  Define   $\al_j:= 2L \sin^2 \f {j\pi}{2n_1}$ for $j=0,1,\dots, m-1$; $\al_m=1$;  $\al_j=0$ for $j<0$ and $\al_j:=5/4$ for $j> m$.  We also define  $x_i=-b+\f {2i}{n_1} b$ for $i=0,1,\dots, n_1$;  $x_i=-b$ for $i<0$ and $x_i =b$ for $i\ge n_1$.   
	Let 
	\begin{align*}
	I_{i,j}:&=\Bl\{ (x,y)\in G:\  \   x_{i-1}\leq  x\leq x_i,\   \  \al_{j-1}\leq  g(x)-y \leq \al_{j}\Br\},\\
	I^\ast_{i,j, \ell} :&= \Bl\{ (x,y)\in G:\  \   x_{i-1-\ell}\leq  x\leq x_{i+\ell},\   \  \al_{j-\ell-1}\leq g(x)-y \leq \al_{j+\ell}\Br\},
	\end{align*}
	where $\ell$ is a positive integer to be specified later. 
	Then $\overline{G}=\bigcup_{i=1}^{n_1}\bigcup_{j=0}^m I_{i,j}$. 
	
	Note that  for any $\xi=(\xi_x, \xi_y),\eta=(\eta_x, \eta_y)\in G$, 
	\begin{equation}\label{3-2:bern}
	\Bl| \sqrt{g(\xi_x) -\xi_y} - \sqrt{g(\eta_x) -\eta_y}\Br|=2\sqrt{2L} \Bl|\sin \f {\ta_\xi-\ta_\eta}4\Br|\cos \f {\ta_\xi+\ta_\eta} 4,
	\end{equation}
	where $$\ta_\xi =2 \arcsin  \sqrt{ \f{g(\xi_x)-\xi_y} {2L}},\   \   \xi=(\xi_x, \xi_y)\in G.$$
	It then follows from~\eqref{rhog-0}   that there exist two constants $c_1, c_2>0$ such that 
	\begin{equation}\label{3-5-0:bern}
	B_G(\og_{i,j}, c_1 n_1^{-1}) \subset I_{i,j}  \subset B_G(\og_{i,j}, c_2   n_1^{-1})
	\end{equation}	
	for  some  $\og_{i,j}\in I_{i,j}$ and all $1\leq i\leq n_1$ and $1\leq j\leq m$.   We may also choose a positive integer $\ell=\ell_\mu$  large enough so that 
	\begin{equation}\label{16-9}
	B_G(\og_{i,j}, c_2   n_1^{-1}) 	\subset B_G(\og_{i,j}, (3\mu +c_2)n_1^{-1}) \subset I_{i,j,\ell}^\ast.
	\end{equation}

	Now we specify the constant  $c_0 =\f 1 {4c_2}$.  Since $\Ld\subset G$ is $\f 1{n_1 c_0}$-separated with respect to the metric $\wh{\rho}_G$,  this   implies that   every $\og\in\Ld$ is contained in a unique set  $I_{i,j}$ with $1\leq i\leq n_1$ and $1\leq j\leq m$.  Thus,   by~\eqref{3-3:bern} and~\eqref{16-9},
	it is enough to prove   that 
	\begin{equation}\label{3-2-1:bern}
	\Bl(\sum_{i=1}^{n_1} \sum_{j=1}^{m}  |I_{i,j}|  \Bl|\osc \bl(f;  I_{i,j,\ell}^\ast\br)\Br|^p \Br)^{\f 1p}\leq C_{\ell, p} \va \|f\|_{L^p(G^\ast)},\   \   \  f\in\Pi_n^2.
	\end{equation}
	Here and in the rest of the proof, we assume that $1\leq p<\infty$ for simplicity.  The proof below with slight modifications works for $p=\infty$. 
	Note that the set $G$ may not be convex, so we cannot use the mean value theorem on the line segment $[\xi, \eta]$ to evaluate $|f(\xi)-f(\eta)|$ for $\eta, \xi\in I_{i,j,\ell}^\ast$.

	Now let $f\in\Pi_n^2$ and define $F(x,\al):= f(x, g(x)-\al)$ for $-2b\leq x\leq 2b$ and $\al\in\RR$. 
	Then we have 
	\begin{align*}
	\osc(f;  I_{i,j, \ell}^\ast) &\leq 2 \sup_{\al \in [\al_{j-1-\ell}, \al_{j+\ell}]}  \sup_{x\in [x_{i-\ell-1}, x_{i+\ell}]} \Bl|F(x, \al) -\f{n_1}{2b}\int_{x_{i-1}}^{x_i}F(u, \al_j)\, du\Br|\\
	&	\leq 2 \Bl[ a_{i,j} (f) +b_{i.j}(f)\Br],
	\end{align*}
	where 
	\begin{align*}
	a_{i,j}(f):&=	  \sup_{\sub{x\in [x_{i-\ell-1}, x_{i+\ell}]\\
			\al \in [\al_{j-1-\ell}, \al_{j+\ell}]}} \Bl|F(x, \al) -\f{n_1}{2b} \int_{x_{i-1}}^{x_i}F(u, \al)\, du\Br|,\\
	b_{i,j}(f)&:=\f{n_1}{2b} \sup_{\al\in [\al_{j-1-\ell}, \al_{j+\ell}]}\Bl| \int_{x_{i-1}}^{x_i}[F(u, \al)-F(u, \al_j)]\, du\Br|.
	\end{align*}
	It follows that 
	\begin{equation}
	\text{LHS of~\eqref{3-2-1:bern}}\leq   2 \bl( \Sigma_1+\Sigma_2\br), \label{3-7-0:bern}
	\end{equation}
	where 
	\begin{align*}
	\Sigma_1&:=\Bl(\sum_{i=1}^{n_1} \sum_{j=1}^{m}  |I_{i,j}| \bl| a_{i,j}(f)\br|^p\Br)^{\f 1p}\  \ \text{and}\  \  
	\Sigma_2:=\Bl(\sum_{j=1}^{m} \sum_{i=1}^{n_1}  |I_{i,j}|   |b_{i,j} (f)|^p\Br)^{\f1p}.
	\end{align*}
	By H\"older's inequality, we have 
	\begin{align}
	|a_{i,j} (f)|^p&\leq  \f {C_\ell} {n_1^{p-1}}  
	\int_{x_{i-1-\ell}}^{x_{i+\ell}} \sup_{\al \in [\al_{j-1-\ell}, \al_{j+\ell}]}|\p_1 F(v, \al)|^p dv,\label{16-12}\\
	|b_{i,j}(f)|^p &\leq  C n_1(\al_{j+\ell} -\al_{j-1-\ell} )^{p-1} \int_{\al_{j-1-\ell}}^{ \al_{j+\ell}} \int_{x_{i-1}}^{x_i}|\p_2  F(u, \al)|^p\, du d\al\notag\\
	&\leq \f {C_\ell}  {n_1^{p-2}}( \f 1{n_1} +\sqrt{\al_j})^{p-1}
	\int_{\al_{j-1-\ell}}^{ \al_{j+\ell}} \int_{x_{i-1}}^{x_i}|\p_2  F(u, \al)|^p\, du d\al.\label{16-13}
	\end{align}
	Also, note that 	
	by~\eqref{3-3:bern} and~\eqref{3-5-0:bern},
	\begin{equation}\label{3-6:bern}
	|I_{i,j}|\sim n_1^{-2} (\sqrt{\al_j} +n_1^{-1}),\  \  1\leq i\leq n_1,\   \   1\leq j\leq m.
	\end{equation}
	Thus, using~\eqref{16-12} and~\eqref{3-6:bern}, we have 
	\begin{align*}
	\Sigma_1
	&\leq \f {C}{n_1}\Bl[  \sum_{i=1}^{n_1}  \int_{x_{i-1-\ell}}^{x_{i+\ell}} \Bl(\sum_{j=1}^{m} \f 1 {n_1} (\sqrt{\al_j} +n_1^{-1})
	\sup_{\al \in [\al_{j-1-\ell}, \al_{j+\ell}]} |\p_1 F(v, \al)|^p\Br)dv\Br]^{\f1p}.
	\end{align*}
	Since   the function $\p_1 F(v,\al)=(\p_1+g'(v)\p_2) f(v, g(v)-\al)$ is an algebraic polynomial of degree at most $n$ in the variable $\al$ for each fixed $v\in [-2b,2b]$, it   follows from Lemma~\ref{lem-3-1:bern}  that 
	\begin{align*}
	\Sigma_1&\leq 	\f C {n_1} \Bl(  \sum_{i=1}^{n_1} \int_{x_{i-1-\ell}}^{x_{i+\ell}} \int_{0}^{\f 32 } |\p_1 F(v,\al)|^p\, d\al\, dv\Br)^{\f 1p}
	\leq \f{C_{\ell}}{n_1}\Bl( \int_{-b}^b  \int_0^{3/2} |\p_1 F(x,\al)|^p\, d\al\,  dx\Br)^{\f 1p}.\end{align*}
	Setting $\xi(v) =(1, g'(v))$ for $v\in [-2b, 2b]$, we obtain from Theorem~\ref{THM:2D BERN}  that  
	\begin{align}
	\Sigma_1\leq \f {C_{\ell} }{n_1} \Bl( \int_{-b}^b \int_{0}^{\f 32} |\p_{\xi(x)} f(x, g(x)-\al)|^p\, d\al\, dx\Br)^{\f 1p} \leq C_{\ell} \va \|f\|_{L^p(G^\ast)}.\label{3-8:bern}
	\end{align}
	
	Similarly,   using~\eqref{16-13} and~\eqref{3-6:bern}, we have  
	\begin{align*}
	\Sigma_2
	&\leq  \f C {n_1} \Bl(\sum_{i=1}^{n_1} \sum_{j=1}^m  \int_{x_{i-1}}^{x_i}\int_{\al_{j-1-\ell}}^{\al_{j+\ell}}  |\p_2 F(u,\al)|^p (\sqrt{\al} +n^{-1})^p\, d\al\, du\Br)^{\f1p}\\
	&\leq  \f C {n_1}  \int_{-b}^b  \int_0^1 |\p_2 F(u,\al) (\sqrt{\al} +n^{-1}) \, d\al\, du,\end{align*}
	which, using the Remez  type inequality  (see \cite[(7.17)]{MT2}) and 	the univariate  Markov-Bernstein-type  inequality  (see \cite[ Theorem 7.3]{MT2} ),  is estimated above by 
	\begin{align}
	&	\f C {n_1} \Bl(\int_{-b}^b  \int_0^1 |\p_2 F(u,\al)|^p (\sqrt{\al} )^p\, d\al\, du \Br)^{\f 1p}\notag\\
	&\leq C \va \Bl(\int_{-2b}^{2b} \int_0^{g(x)} | f(x,g(x)-\al)|^p d\al\, dx\Br)^{\f1p}\leq C\va \|f\|_{L^p(G^\ast)}.\label{3-9:bern}
	\end{align}
	Now combining~\eqref{3-9:bern},~\eqref{3-8:bern} with~\eqref{3-7-0:bern}, we obtain  the desired  estimate~\eqref{3-2-1:bern}.		
\end{proof}
\begin{rem}\label{rem:cardinality-dom-spec-type}
	The above proof implies that if $\Ld\subset G$ is $\f \va n$-separated with respect to the metric $\wh{\rho}_G$, then the cardinality of $\Ld$ is at most $C_\va n^{d+1}$. This follows immediately from the fact that every $\og\in\Ld$ is contained in a unique set $I_{i,j}$.
\end{rem}

To state the next lemma, let $B:=\{\xi\in\RR^{d+1}:\  \ \|\xi\|\leq 1\}$ denote   the unit ball of  $\RR^{d+1}$ equipped with the metric \begin{equation}\label{metric:ball}\rho(\xi, \eta) =\|\xi-\eta\|+|\sqrt{1-\|\xi\|^2}-\sqrt{1-\|\eta\|^2}|,\  \ \xi,\eta\in B.\end{equation}
For  $\xi\in B$ and $r>0$,  set 
$B_\rho(\xi, r):=\{\eta\in B:\  \  \rho(\xi, \eta) \leq r\}$, and define  $cB_\rho(\xi, r):=\{\eta\in B:\  \  \rho(\xi, \eta) \leq cr\}$ for $c>0$.

\begin{lem}\label{lem-16-2-0}
	Let $n\in\NN$ and $0<p\leq \infty$. 
	If  $\{\xi_j\}_{j=1}^m \subset B$ is  $\f 1n$-separated with respect to the metric $\rho$ given in~\eqref{metric:ball}, then  there exists a constant  $\va\in (0,1)$ depending only on $p$ and $d$ such that 
	$$\|f\|_{L^p(B)}\leq C \|f\|_{L^p(B\setminus E)},\   \   \  \forall f\in \Pi_n^{d+1}$$
	with  $E := \bigcup_{j=1}^m B_\rho(\xi_j, \f \va n)$,     and the constant $C$ being dependent  only on $d$ and $p$.\end{lem}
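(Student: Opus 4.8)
Lemma~\ref{lem-16-2-0} asserts a reverse inequality: after deleting a small neighborhood (in the ball metric $\rho$) of a $\frac1n$-separated set of points, the $L^p$-norm of a polynomial of degree at most $n$ on the unit ball is still comparable to its norm on all of $B$. The plan is to reduce this to the Marcinkiewicz--Zygmund / oscillation estimate already available for domains of special type attached to the boundary, namely Lemma~\ref{lem-16-2} (and, for the interior part, Lemma~\ref{lem-3-1:bern} or the univariate Remez/Bernstein inequalities). First I would cover $B$ by finitely many pieces: an interior region $\{\xi\in B:\|\xi\|\le 1-c_0\}$, on which the metric $\rho$ is comparable to the Euclidean metric and a standard MZ-type argument based on the univariate Remez inequality gives $\|f\|_{L^p}\le C\|f\|_{L^p(\text{region minus small Euclidean balls})}$; and a boundary layer $\{1-c_0<\|\xi\|\le1\}$, which (using a rotation and the graph representation of the sphere $\sqrt{1-\|x\|^2}$) is a finite union of domains of special type attached to $\partial B$, on each of which Lemma~\ref{lem-16-2} applies with the local metric $\wh\rho_G$ equivalent to $\rho$ by Lemma~\ref{metric-lem} (here $\Omega=B$).

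The core mechanism is the usual one. Write $B=\bigcup_{(i,j)} I_{i,j}$ with the partition from the proof of Lemma~\ref{lem-16-2}, so that each $I_{i,j}$ has $\rho$-diameter $\sim \frac1n$ and $|I_{i,j}|\sim |B_\rho(\eta_{i,j},\frac1n)|$ for a representative point $\eta_{i,j}$. Choose $\va$ so small that each $B_\rho(\xi_k,\frac\va n)$ meets only $O(1)$ cells $I_{i,j}$; since the $\xi_k$ are $\frac1n$-separated, each cell $I_{i,j}$ can contain at most $O(1)$ of the deleted balls' centers, hence at most $O(1)$ of the cells $I_{i,j}$ are ``heavily covered.'' For a cell $I_{i,j}$ that is not entirely swallowed by $E$, pick a point $\eta_{i,j}'\in I_{i,j}\setminus E$, and estimate
\[
\int_{I_{i,j}}|f|^p\le C|I_{i,j}|\,\Bigl(|f(\eta_{i,j}')|^p+\bigl(\osc(f;I_{i,j}^\ast)\bigr)^p\Bigr),
\]
where $I_{i,j}^\ast$ is the mildly enlarged cell. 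Summing over $(i,j)$ and invoking Corollary~\ref{cor-16-2}-type lower bounds (or directly a cubature/positive-weights argument) for the first terms, together with Lemma~\ref{lem-16-2} applied to the oscillation terms, yields
\[
\|f\|_{L^p(B)}^p\le C\sum_{(i,j)}|I_{i,j}|\,|f(\eta_{i,j}')|^p + C\va^p\|f\|_{L^p(B)}^p\le C\|f\|_{L^p(B\setminus E)}^p + C\va^p\|f\|_{L^p(B)}^p.
\]
Absorbing the last term for $\va$ small enough (using that $\|f\|_{L^p(B)}<\infty$ as $f$ is a polynomial) gives the claim. The interior region is handled the same way with the Euclidean metric and the classical Remez inequality in place of Lemma~\ref{lem-16-2}.

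The step I expect to be the main obstacle is the bookkeeping that makes the absorption work uniformly in $n$ and in the configuration of points: one must choose the separation parameter $\va$ (and the enlargement parameter $\ell$ in $I_{i,j}^\ast$) first, depending only on $p$ and $d$, then verify that with this choice the number of cells overlapping any single $B_\rho(\xi_k,\frac\va n)$ is bounded, that every non-fully-covered cell $I_{i,j}$ contains a point of $I_{i,j}\setminus E$ at controlled $\rho$-distance from $\eta_{i,j}$, and that the oscillation enlargements $I_{i,j}^\ast$ are still contained in the slightly larger special-type domain $G^\ast$ so that Lemma~\ref{lem-16-2} is applicable. A secondary technical point is patching the finitely many special-type domains covering the boundary layer with the interior region, which is routine given the covering Lemma~\ref{lem-2-1-18} specialized to $B$ but requires checking that the metrics match up on overlaps. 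None of this is deep, but it is exactly the kind of place where constants must be ordered carefully, so I would set up the quantifiers explicitly at the start of the proof.
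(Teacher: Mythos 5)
Your approach is genuinely different from the paper's, and as written it has several gaps. The paper's proof is much shorter: it simply cites the known two-sided Marcinkiewicz--Zygmund inequality for $\Pi_n^{d+1}$ on the unit ball with respect to the metric $\rho$ (Theorem~11.6.1 of \cite{DX2}), extends $\{\xi_j\}$ to a maximal $\frac{4\va}{n}$-separated subset $\Ld$ of $B$, and then observes that the annuli $(2B_\xi)\setminus B_\xi$, $\xi\in\Ld$, are pairwise disjoint, disjoint from $E$, and each has measure comparable to $|B_\xi|$ by doubling, so the ``$\min$''-side of the MZ inequality immediately bounds $\|f\|_{L^p(B)}$ by $\|f\|_{L^p(B\setminus E)}$. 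No partitions, oscillation estimates, or absorption are needed.

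Regarding your proposal specifically, there are three concrete problems. First, a scale mismatch: you take cells $I_{i,j}$ of $\rho$-diameter $\sim\frac1n$ yet claim the oscillation contribution is $C\va^p\|f\|_{L^p(B)}^p$. Lemma~\ref{lem-16-2} produces a factor $\va$ on the right only when the separation of the sampling set and the radius of the oscillation balls are $\sim\frac{\va}{n}$; applied at scale $\sim\frac1n$ it gives no small factor, so the absorption step fails. To make absorption work you would need cells of diameter $\sim\frac{\va}{n}$, but then some cells may be entirely contained in $E$ (the deleted balls have the same radius), and the argument requires further care with enlarged cells $I^\ast_{i,j}$ that stick out of $E$ and still have bounded overlap. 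Second, invoking ``Corollary~\ref{cor-16-2}-type lower bounds'' is circular: Corollary~\ref{cor-16-2} is proved from Theorem~\ref{thm-16-1:MZ}, whose proof uses Lemma~\ref{lem-16-2b}, which in turn uses the present Lemma~\ref{lem-16-2-0}. You would need an MZ-type estimate for the ball established independently, which is precisely what the citation of \cite{DX2} supplies. Third, a range-of-$p$ issue: the statement claims $0<p\le\infty$, but Lemma~\ref{lem-16-2} (and hence your whole scheme) is only available for $1\le p\le\infty$, so even a repaired version of your argument would prove a strictly weaker statement than the one asserted.
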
 
\begin{proof} For simplicity, we will prove the result for $p<\infty$ only. 	The case $p=\infty$ can be treated similarly.
	According to  Theorem 11.6.1 of \cite[p.~290]{DX2}, there exists a constant $\va\in (0,1/4)$ depending only on $d$ and $p$ such that for every maximal $\f {4\va} n$-separated subset $\Ld$ of $B$ ( with respect to the metric $\rho$), and any $f\in\Pi_n^{d+1}$,
	\begin{align} C_1 &\Bl( \sum_{\xi\in\Ld} |B_\xi| \max _{\eta\in 4B_\xi} |f(\eta)|^p \Br)^{\f 1p} \leq \|f\|_{L^p(B)}
	\leq C_2 \Bl( \sum_{\xi\in\Ld} |B_\xi| \min _{\eta\in 4B_\xi} |f(\eta)|^p \Br)^{\f 1p},\label{16-5}\end{align}
	where $B_\xi=B_\rho(\xi, \f {\va} n)$ and the constants $C_1, C_2>0$ depend only on $d$ and $p$.
	We may choose the maximal $\f {4\va} n$-separated set $\Ld$ in~\eqref{16-5} in such a way  that $\{\xi_j\}_{j=1}^m \subset \Ld$. Then  using~\eqref{16-5} and the doubling property of the measure $|B_\xi|$, we have 
	\begin{align*}
	\|f\|_{L^p(B)}
	&\leq C_2 \Bl( \sum_{\xi\in\Ld}  \f {|B_\xi|} {  |(2B_\xi)\setminus B_\xi|} \int_{(2B_\xi)\setminus B_\xi} |f(\eta)|^p\, d\eta  \Br)^{\f 1p}\leq C \Bl( \sum_{\xi\in\Ld} \int_{(2B_\xi)\setminus B_\xi} |f(\eta)|^p\, d\eta  \Br)^{\f 1p}\\
	&\leq C \|f\|_{L^p(B\setminus E)},
	\end{align*}
	where the last step uses the fact that $B_\xi \cap (2B_\eta)=\emptyset$ for any two distinct points $\xi, \eta\in\Ld$.
\end{proof}

Next, 
we set  $\Og_\da:=\{ \xi\in\Og:\  \  \dist(\xi,\Ga) \ge \da\}$ for $\da\in (0,1)$. 

\begin{lem}\label{lem-16-2b}  Assume that  $n\ge \mu>1$, $\da\in (0,1)$, and $\va\in (0,\da/4)$.  Let  $\Ld\subset \Og_\da$ be   $\f \va n$-separated  with respect to the Euclidean metric, and let 
	$$V_\xi: = B_{\f {\mu\va} n} (\xi) =\Bl\{\eta\in\RR^{d+1}:\  \  \|\eta-\xi\|\leq \f {\mu\va} n\Br\},\  \ \xi\in\Ld.$$	
	Then any  $f\in\Pi_{n}^{d+1}$ and $1\leq p\leq \infty$,   
	\begin{equation}\label{16-17} \Bl( (\f \va n)^{d+1} \sum_{\xi\in\Ld} \Bl|\osc \bl(f; V_{\xi}\br)\Br|^p\Br)^{\f1p} \leq C_{\mu,\da}\va \|f\|_{L^p(\Og)},\end{equation}
	where the constant $C_{\mu, \da}$ is independent of $\va$, $n$ and $f$.
\end{lem}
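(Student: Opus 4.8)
Lemma~\ref{lem-16-2b} is a version of the oscillation Marcinkiewicz--Zygmund inequality on the interior region $\Og_\da$, away from the boundary. Here the boundary metric $\rho_\Og$ is comparable (up to constants depending on $\da$) to the Euclidean metric on $\Og_\da$, so the inequality reduces to a standard interior estimate. The plan is to reduce the oscillation over the ball $V_\xi=B_{\mu\va/n}(\xi)$ to an $L^p$-estimate over a slightly larger ball via the mean value theorem, and then invoke the ordinary (interior) Bernstein inequality~\eqref{10-1} applied on $\Og^{\da/2}$ together with a volume/overlap counting argument.

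First I would observe that since $\Ld\subset \Og_\da$ and $\va<\da/4$, for each $\xi\in\Ld$ the ball $V_\xi=B_{\mu\va/n}(\xi)$ satisfies $2V_\xi=B_{2\mu\va/n}(\xi)\subset \Og_{\da/2}\subset \Og$ once $n\ge 4\mu$ (which we may assume, enlarging the constant if $n$ is bounded in terms of $\mu$ by using $E_0$). Fix $f\in\Pi_n^{d+1}$ and $1\le p<\infty$ (the case $p=\infty$ is simpler, handled with the same argument). For any $\eta_1,\eta_2\in V_\xi$ the segment $[\eta_1,\eta_2]$ lies in $V_\xi$, and by the mean value theorem
\[
|f(\eta_1)-f(\eta_2)|\le \tfrac{2\mu\va}{n}\sup_{\zeta\in V_\xi}\|\nabla f(\zeta)\|.
\]
Hence $\osc(f;V_\xi)\le \tfrac{2\mu\va}{n}\sup_{\zeta\in V_\xi}\|\nabla f(\zeta)\|$. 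Next, since polynomials do not oscillate too much on small balls, for each $\zeta\in V_\xi$ one has $\|\nabla f(\zeta)\|^p\le C_d\,|V_\xi|^{-1}\int_{2V_\xi}\|\nabla f(\eta)\|^p\,d\eta$ (this is Lemma~\ref{lem-4-1} applied to $\partial_k f\in\Pi_{n}^{d+1}$ on the ball $V_\xi\subset 2V_\xi$, using that $\mu\va/n\le 1$). Combining, and using $|V_\xi|\sim (\va/n)^{d+1}$,
\[
\bigl(\tfrac\va n\bigr)^{d+1}\,|\osc(f;V_\xi)|^p\le C_{\mu,d}\,\bigl(\tfrac\va n\bigr)^{p}\int_{2V_\xi}\|\nabla f(\eta)\|^p\,d\eta.
\]

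Summing over $\xi\in\Ld$, I would then use that the $\Ld$ is $\tfrac\va n$-separated with respect to the Euclidean metric, so the balls $2V_\xi=B_{2\mu\va/n}(\xi)$ have bounded overlap: each point of $\Og$ lies in at most $C_{\mu,d}$ of them (a standard packing argument, comparing volumes of the disjoint balls $B_{\va/(2n)}(\xi)$ to the volume of a ball of radius $3\mu\va/n$). Therefore
\[
\sum_{\xi\in\Ld}\int_{2V_\xi}\|\nabla f(\eta)\|^p\,d\eta\le C_{\mu,d}\int_{\Og_{\da/2}}\|\nabla f(\eta)\|^p\,d\eta=C_{\mu,d}\|\nabla f\|_{L^p(\Og_{\da/2})}^p.
\]
Finally the interior Bernstein inequality~\eqref{10-1} (with $\da/2$ in place of $\da$ there, and $\pmb\b=e_k$) gives $\|\partial_k f\|_{L^p(\Og_{\da/2})}\le C_\da\, n\,\|f\|_{L^p(\Og)}$ for each $k$, hence $\|\nabla f\|_{L^p(\Og_{\da/2})}\le C_\da\, n\,\|f\|_{L^p(\Og)}$. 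Putting the pieces together,
\[
\Bigl(\bigl(\tfrac\va n\bigr)^{d+1}\sum_{\xi\in\Ld}|\osc(f;V_\xi)|^p\Bigr)^{1/p}\le C_{\mu,d}\,\tfrac\va n\,\|\nabla f\|_{L^p(\Og_{\da/2})}\le C_{\mu,\da}\,\va\,\|f\|_{L^p(\Og)},
\]
which is~\eqref{16-17}. The case $p=\infty$ follows the same three steps, replacing $L^p$-averages by suprema and the summation by a maximum (the overlap count is not even needed there). I do not expect any serious obstacle: the only minor points are justifying the bounded-overlap count and making sure all the ball-inclusions $2V_\xi\subset\Og_{\da/2}$ hold, which is where the hypotheses $\va<\da/4$ and $n\ge\mu$ (and possibly $n$ large, absorbed via $E_0(f)_p$ for small $n$) are used.
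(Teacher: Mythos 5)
Your strategy --- mean value theorem to bound $\osc(f;V_\xi)$ by the local sup of $\|\nabla f\|$, then a local $L^\infty$-to-$L^p$ reduction, then bounded overlap, then the interior Bernstein inequality --- parallels the paper's at a high level, but the $L^\infty$-to-$L^p$ reduction is a genuine gap. You claim, for each $\zeta\in V_\xi$, that
\[
\|\nabla f(\zeta)\|^p\le C_d\,|V_\xi|^{-1}\int_{2V_\xi}\|\nabla f(\eta)\|^p\,d\eta
\]
with a constant depending only on $d$ and $\mu$. This Nikol'skii-type estimate on a ball of radius $r=\mu\va/n$ does not hold with an $n$-free constant: the sharp constant carries an extra factor $n^{(d+1)/p}$ when $p<\infty$. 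Indeed, if $q\in\Pi_n^{d+1}$ is the $L^p$-Christoffel extremizer at the origin of the unit ball, so that $q(0)=1$ and $\int_{B_1(0)}|q|^p\sim n^{-(d+1)}$, then $g(x):=q\bigl((x-\zeta_0)/(2r)\bigr)$ is a polynomial of degree $n$ with $g(\zeta_0)=1$ and $\|g\|^p_{L^p(B_{2r}(\zeta_0))}\sim (r/n)^{d+1}$, whence $|B_r(\zeta_0)|^{-1/p}\|g\|_{L^p(B_{2r}(\zeta_0))}\sim n^{-(d+1)/p}\ll 1=|g(\zeta_0)|$. The lost factor $n^{(d+1)/p}$ propagates through the rest of your argument and spoils the final bound. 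Moreover, Lemma~\ref{lem-4-1}, which you cite to justify this step, cannot give it: that lemma is a Remez-type comparison of $L^q$-norms on concentric balls with a constant $(5\ld)^{n+(d+1)/q}$ that grows exponentially in $n$; it is not an $L^\infty$-to-$L^p$ inequality.

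The paper's proof sidesteps the issue by never passing through a pointwise supremum of $\|\nabla f\|$. Instead of the mean value theorem it invokes Poincar\'e's inequality, which bounds $\osc(f;V_\xi)$ directly by a weighted $L^1$-average $\int_{V_\xi}\|\nabla f(\eta)\|\,\|\eta-\eta_\xi\|^{-d}\,d\eta$ with $\eta_\xi\in V_\xi$, and then it removes the singularity of this kernel at $\eta_\xi$ by means of the Remez-type Lemma~\ref{lem-16-2-0}, whose proof rests on a nontrivial Marcinkiewicz--Zygmund theorem on the ball from \cite{DX2}. Some such external Remez- or MZ-type input is indeed necessary: a raw local Nikol'skii inequality on a ball of radius $\sim 1/n$ for degree-$n$ polynomials simply does not come with an $n$-free constant, so your argument needs an additional idea at precisely that point.
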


\begin{proof}Let $n_1=n/\va$ .  	We  cover $\Og_\da$ with finitely many closed balls $B_1,\dots, B_{n_0}\subset \Og_{3\da/4}$ of radius $\da/4$, where $n_0$ depends only on $\da$ and $\Og$.  Clearly, it is sufficient to prove that for $f\in\Pi_n^{d+1}$, and $\Ld_i=B_i\cap \Ld$ with $i=1,\dots, n_0$,
	\begin{equation}\label{16-18} \Bl( (\f \va n)^{d+1} \sum_{\xi\in\Ld_i} \Bl|\osc \bl(f; V_\xi\br)\Br|^p\Br)^{\f1p} \leq C_{\mu,\da}\va \|f\|_{L^p(\Og)}.\end{equation}
	
	For $\xi\in \Ld$, we set 	$f_{V_\xi}=\f 1{|V_\xi|} \int_{V_\xi} f(\eta)\, d\eta$, and let 
	$\eta_\xi\in  V_{\xi}$ be such that 
	$$ \max_{ \eta\in  V_\xi} |f(\eta)-f_{ V_\xi} |=|f(\eta_\xi)-f_{ V_\xi} |.$$
	Then by  Poincare's inequality, we have that 
	for each $\xi\in\Ld_j$ with $1\leq j\leq n_0$, 
	\begin{align*}
	\osc \bl(f; V_\xi\br)& \leq 2 |f(\eta_\xi)-f_{ V_{\xi}}| \leq C_d \int_{V_\xi}
	\|\nabla f(\eta)\|\|\eta-\eta_\xi\|^{-d} \, d\eta\\
	&\leq C \int_{2B_j}
	\|\nabla f(\eta)\|\|\eta-\eta_\xi\|^{-d} (1+ n_1\| \eta-\eta_\xi\|)^{-d-2}\, d\eta,
	\end{align*}
	which, using Lemma~\ref{lem-16-2-0}, is estimated above by 
	\begin{align*}
	& C \int_{\{\eta\in 2 B_j:\  \  \|\eta-\eta_\xi\| \ge \f {\va_1} {n_1}\}}
	\|\nabla f(\eta)\|\|\eta-\eta_\xi\|^{-d} (1+ n_1\| \eta-\eta_\xi\|)^{-d-2}\, d\eta\\
	&\leq C n_1^d\int_{\Og_{\da/4}}
	\|\nabla f(\eta)\|(1+ n_1\| \eta-\eta_\xi\|)^{-2d-2}\, d\eta.
	\end{align*}
	By H\"older's inequality, this implies 
	\begin{align*}
	\Bl| \osc \bl(f; V_\xi\br)\Br|^p &\leq C n_1^{-p} n_1^{d+1}\int_{\Og_{\da/4}}
	\|\nabla f(\eta)\|^p (1+ n_1\| \eta-\eta_\xi\|)^{-2d-2}\, d\eta.
	\end{align*}
	It then follows that \begin{align}
	(\f \va n)^{d+1} &\sum_{\xi\in\Ld_j} \Bl|\osc \bl(f; V_\xi\br)\Br|^p\notag\\
	&\leq C  n_1^{-p}  \int_{\Og_{\da/4}}
	\|\nabla f(\eta)\|^p \sum_{\xi\in\Ld_j} (1+ n_1\| \eta-\eta_\xi\|)^{-2d-2}\, d\eta. \label{16-21}\end{align}
	Since $\Ld\subset \Og_\da$ is   $\f 1 {n_1}$-separated  with respect to the Euclidean metric,  we obtain that for any $\eta\in\RR^{d+1}$, 
	\begin{align*}
	\sum_{\xi\in\Ld_j} &(1+ n_1\| \eta-\eta_\xi\|)^{-2d-2}\leq C 	 n_1^{d+1} \sum_{\xi\in\Ld_j} \int_{B_{n_1^{-1}} (\xi)}  (1+ n_1\| \eta-\zeta\|)^{-2d-2}\, d\zeta\\
	&\leq C  n_1^{d+1}\int_{\RR^{d+1}}  (1+ n_1\| \eta-\zeta\|)^{-2d-2}\, d\zeta
	\leq  C_d <\infty,
	\end{align*}
	where the first step uses the fact that   $\eta_\xi\in  B_{\f {\mu}{n_1}}(\xi)$ for each $\xi\in\Ld_j$.
	This combined with~\eqref{16-21} yields  \begin{align*}
	\Bl( (\f \va n)^{d+1} \sum_{\xi\in\Ld_j} \Bl|\osc \bl(f; V_\xi\br)\Br|^p\Br)^{\f1p}&\leq C n_1^{-1} \Bl(\int_{\Og_{\da/4}}
	\|\nabla f(\eta)\|^p \, d\eta\Br)^{\f1p},\end{align*}
	which, by the Bernstein inequality, is bounded above by 
	\begin{align*}
	C  n n_1^{-1} \|f\|_{L^p(\Og)}\leq C\va \|f\|_{L^p(\Og)}.
	\end{align*}
	This completes the proof of Lemma~\ref{lem-16-2b}.
\end{proof}

\begin{proof}[Proof of Theorem~\ref{thm-16-1:MZ}]  Without loss of generality, we may assume that $1\leq p<\infty$ and $n\ge N_\Og$, where $N_\Og$ is a sufficiently large positive integer depending only on $\Og$. 
	Indeed, the case $p=\infty$ is simpler, and  can be treated similarly, while~\eqref{16-17} for $n\leq N_\Og$ can be deduced from the case $n=N_{\Og}$.
	
	By Lemma~\ref{lem-2-1-18},  we can find a constant $\ld_0\in (0,1)$ and finitely many domains $G_1,\dots, G_{m_0}\subset \Og$ of special type attached to $\Ga$ such that 
	$$ \Ga_\da :=\{\xi\in\Og:\  \ \dist(\xi, \Ga) \leq \da \} \subset \bigcup_{j=1}^{m_0} G_j(\ld_0)$$
	for some $\da\in (0,1)$  with $m_0$ and  $\da_0$ depending  only on $\Og$.  Then $\Ld=\bigcup_{j=1}^{m_0+1}\Ld_j$, where 
	$ \Ld_{j} =\Ld \cap G_j(\ld_0)$ for $j=1,\dots, m_0,$ and
	$\Ld_{m_0+1}  =\Ld\cap (\Og\setminus \Ga_\da)$.  Thus,
	\begin{align*}
	&\sum_{\xi\in\Ld} \Bl| U(\xi, \f \va n) \Br| \Bl|\osc (f; U (\xi, \f {\ell \va} n)) \Br|^p  \leq \sum_{j=1}^{m_0+1} S_j,\end{align*}
	where 
	\begin{align*}S_j:=\sum_{\xi\in\Ld_j} \Bl| U(\xi, \f \va n) \Br| \Bl|\osc (f; U (\xi, \f {\ell \va} n)) \Br|^p,\  \  \ 1\leq j\leq m_0+1.
	\end{align*}
	Clearly, by~\eqref{metric}  and~\eqref{rhog-0}, we  may choose $N_\Og$ sufficiently large so that 
	$$ B_{G_j} (\xi,  \f {\mu_1\va}n)\subset U(\xi, \f {\ell\va}n) \subset B_{G_j} (\xi,  \f {\mu_2\va}n)\subset G_j,\  \ \forall \xi\in\Ld_j,\  \ 1\leq j\leq m_0,$$
	and 
	$$B_{\f {\mu_1\va} n} (\xi)\subset  U(\xi, \f {\ell\va}n) \subset  B_{\f {\mu_2 \va} n} (\xi) \subset \Og \setminus \Ga_{\da/2},\  \  \forall \xi\in \Ld_{m_0+1},$$
	where $\mu_2>1>\mu_1>0$, and $\mu_1, \mu_2$ depend only on $\ell$ and $\Og$.

	For $1\leq j\leq m_0$,   we use~\eqref{rhog-0} and  Lemma~\ref{lem-16-2} to obtain  	
	\begin{align*}S_j&\leq C
	\sum_{\xi\in\Ld_{j}} \Bl| B_{G_j} (\xi, \f { \mu_2\va} n) \Br| \Bl|\osc (f; B_{G_j}  (\xi, \f {\mu_2\va} n)) \Br|^p	\leq \Bl(C \va \|f\|_{L^p(\Og)}\Br)^p.
	\end{align*}
	
	For $j=m_0+1$, we use~\eqref{metric}  and Lemma~\ref{lem-16-2b} to obtain 
	\begin{align*}	
	S_{m_0+1}:&\leq C\sum_{\xi\in\Ld_{m_0+1} } \Bl(\f \va n\Br)^{d+1}  \Bl|\osc (f; B_{\f {\mu_2 \va} n}(\xi) \Br|^p 
	\leq \Bl(C \va \|f\|_{L^p(\Og)}\Br)^p.
	\end{align*}
	
	Putting the above together, we prove the estimate~\eqref{3-4:bern}. This completes the proof of Theorem~\ref{thm-16-1:MZ}.
\end{proof}

\begin{rem}\label{rem:cardinality-max-sep}
	The proof implies that  the number of elements of any maximal $\f\va n$-separated subset of $\Omega$ with respect to the metric $\rho_\Omega$ is of order $n^{d+1}$. Indeed, note that with a small fixed $\delta>0$ the metric $\rho_\Omega$ is equivalent to the Euclidean metric on $\Omega_\delta$, so the lower bound on the number of elements follows from the standard volume estimate arguments. For the upper bound, one can use~\eqref{6-1-metric-0}, Remark~\ref{rem:cardinality-dom-spec-type} and the standard volume estimate in the context of Lemma~\ref{lem-16-2b}.
\end{rem}


\begin{bibsection}
	\begin{biblist}

		\bib{BS}{book}{
			author={Bennett, Colin},
			author={Sharpley, Robert},
			title={Interpolation of operators},
			series={Pure and Applied Mathematics},
			volume={129},
			publisher={Academic Press, Inc., Boston, MA},
			date={1988},
			pages={xiv+469},
		}

\bib{Be}{article}{
	author={Besicovitch, A. S.},
	title={Measure of asymmetry of convex curves},
	journal={J. London Math. Soc.},
	volume={23},
	date={1948},
	pages={237--240},
}
		
		\bib{CD}{article}{
			author={Chen, W.},
			author={Ditzian, Z.},
			title={Mixed and directional derivatives},
			journal={Proc. Amer. Math. Soc.},
			volume={108},
			date={1990},
			number={1},
			pages={177--185},
		}
		\bib{Co-Sa}{article}{
			author={Constantine, G. M.},
			author={Savits, T. H.},
			title={A multivariate Fa\`a di Bruno formula with applications},
			journal={Trans. Amer. Math. Soc.},
			volume={348},
			date={1996},
			number={2},
			pages={503--520},
		}
		
		\bib{Da06}{article}{
			author={Dai, Feng},
			title={Multivariate polynomial inequalities with respect to doubling
				weights and $A_\infty$ weights},
			journal={J. Funct. Anal.},
			volume={235},
			date={2006},
			number={1},
			pages={137--170},
		}

		\bib{DPTT}{article}{
	author={Dai, F.},
	author={Primak, A.},
	author={Temlyakov, V. N.},
	author={Tikhonov, S. Yu.},
	title={Integral norm discretization and related problems},
	language={Russian, with Russian summary},
	journal={Uspekhi Mat. Nauk},
	volume={74},
	date={2019},
	number={4(448)},
	pages={3--58},
			
		}

		\bib{DX2}{book}{
			author={Dai, Feng},
			author={Xu, Yuan},
			title={Approximation theory and harmonic analysis on spheres and balls},
			series={Springer Monographs in Mathematics},
			publisher={Springer, New York},
			date={2013},
			pages={xviii+440},
		}

		\bib{DX}{article}{
			author={Dai, Feng},
			author={Xu, Yuan},
			title={Moduli of smoothness and approximation on the unit sphere and the
				unit ball},
			journal={Adv. Math.},
			volume={224},
			date={2010},
			number={4},
			pages={1233--1310},
		}

		\bib{De-Le}{article}{
			author={Dekel, S.},
			author={Leviatan, D.},
			title={Whitney estimates for convex domains with applications to
				multivariate piecewise polynomial approximation},
			journal={Found. Comput. Math.},
			volume={4},
			date={2004},
			number={4},
			pages={345--368},
		}
		
		\bib{De-Lo}{book}{
			author={DeVore, Ronald A.},
			author={Lorentz, George G.},
			title={Constructive approximation},
			series={Grundlehren der Mathematischen Wissenschaften [Fundamental
				Principles of Mathematical Sciences]},
			volume={303},
			publisher={Springer-Verlag, Berlin},
			date={1993},
			pages={x+449},
		}
		

		\bib{Di96}{article}{
			author={Ditzian, Z.},
			title={Polynomial approximation in $L_p(S)$ for $p>0$},
			journal={Constr. Approx.},
			volume={12},
			date={1996},
			number={2},
			pages={241--269},
		}
		
		\bib{Dit07}{article}{
			author={Ditzian, Z.},
			title={Polynomial approximation and $\omega^r_\phi(f,t)$ twenty years
				later},
			journal={Surv. Approx. Theory},
			volume={3},
			date={2007},
			pages={106--151},
		}

\bib{Di14a}{article}{
	author={Ditzian, Z.},
	title={New moduli of smoothness on the unit ball and other domains,
		introduction and main properties},
	journal={Constr. Approx.},
	volume={40},
	date={2014},
	number={1},
	pages={1--36},
}

\bib{Di14b}{article}{
	author={Ditzian, Z.},
	title={New moduli of smoothness on the unit ball, applications and
		computability},
	journal={J. Approx. Theory},
	volume={180},
	date={2014},
	pages={49--76},
}

			\bib{Di-Iv}{article}{
				author={Ditzian, Z.},
				author={Ivanov, K. G.},
				title={Minimal number of significant directional moduli of smoothness},
				language={English, with Russian summary},
				journal={Anal. Math.},
				volume={19},
				date={1993},
				number={1},
				pages={13--27},
			}

		\bib{Di-Pr08}{article}{
			author={Ditzian, Z.},
			author={Prymak, A.},
			title={Ul$\prime$yanov-type inequality for bounded convex sets in $R^d$},
			journal={J. Approx. Theory},
			volume={151},
			date={2008},
			number={1},
			pages={60--85},
		}

		\bib{Di-Pr16}{article}{
			author={Ditzian, Z.},
			author={Prymak, A.},
			title={On Nikol'skii inequalities for domains in $\mathbb{R}^d$},
			journal={Constr. Approx.},
			volume={44},
			date={2016},
			number={1},
			pages={23--51},
		}
		
		\bib{Di-To}{book}{
			author={Ditzian, Z.},
			author={Totik, V.},
			title={Moduli of smoothness},
			series={Springer Series in Computational Mathematics},
			volume={9},
			publisher={Springer-Verlag, New York},
			date={1987},
			pages={x+227},
			isbn={0-387-96536-X},
		}

		\bib{DJL}{article}{
			author={Ditzian, Z.},
			author={Jiang, D.},
			author={Leviatan, D.},
			title={Inverse theorem for best polynomial approximation in
				$L_p,\;0<p<1$},
			journal={Proc. Amer. Math. Soc.},
			volume={120},
			date={1994},
			number={1},
			pages={151--155},
			issn={0002-9939},
			review={\MR{1160297}},
			doi={10.2307/2160180},
		}

\bib{Du}{article}{
	author={Dubiner, Moshe},
	title={The theory of multi-dimensional polynomial approximation},
	journal={J. Anal. Math.},
	volume={67},
	date={1995},
	pages={39--116},
	issn={0021-7670},
}

		\bib{Dz-Ko}{article}{
			author={Dzjadyk, V. K.},
			author={Konovalov, V. N.},
			title={A method of partition of unity in domains with piecewise smooth
				boundary into a sum of algebraic polynomials of two variables that have
				certain kernel properties},
			language={Russian},
			journal={Ukrain. Mat. Z.},
			volume={25},
			date={1973},
			pages={179--192, 285},
		}
		
		\bib{Er}{article}{
			author={Erd\'{e}lyi, Tam\'{a}s},
			title={Notes on inequalities with doubling weights},
			journal={J. Approx. Theory},
			volume={100},
			date={1999},
			number={1},
			pages={60--72},
		}

		\bib{GKS}{article}{
			author={Gilewicz, J.},
			author={Kryakin, Yu. V.},
			author={Shevchuk, I. A.},
			title={Boundedness by 3 of the Whitney interpolation constant},
			journal={J. Approx. Theory},
			volume={119},
			date={2002},
			number={2},
			pages={271--290},
		}

		\bib{Iv}{article}{
			author={Ivanov, K. G.},
			title={Approximation of functions of two variables by algebraic
				polynomials. I},
			conference={
				title={Anniversary volume on approximation theory and functional
					analysis},
				address={Oberwolfach},
				date={1983},
			},
			book={
				series={Internat. Schriftenreihe Numer. Math.},
				volume={65},
				publisher={Birkh\"auser, Basel},
			},
			date={1984},
			pages={249--255},
		}

		\bib{Iv2}{article}{
			author={Ivanov, K.G.},
			title={A characterization of weighted Peetre K-functionals},
			journal={J. Approx.
				Theory},
			volume={56},
			date={1989},
			number={1},
			pages={185-211},
		}	
		
		\bib{IT}{article}{
			author={Ivanov, K. G.},
			author={Takev, M. D.},
			title={$O(n\,{\rm ln}\,n)$ bound for Whitney constants},
			journal={C. R. Acad. Bulgare Sci.},
			volume={38},
			date={1985},
			number={9},
			pages={1129--1131},
		}

		\bib{ITo}{article}{
			author={Ivanov, K. G.},
			author={Totik, V.},
			title={Fast decreasing polynomials},
			journal={Constr. Approx.},
			volume={6},
			date={1990},
			number={1},
			pages={1--20},
		}
		
		\bib{Kr2}{article}{
			author={Kro\'{o}, Andr\'{a}s},
			title={On optimal polynomial meshes},
			journal={J. Approx. Theory},
			volume={163},
			date={2011},
			number={9},
			pages={1107--1124},
			
		}
		
		\bib{Lu1}{article}{
			author={Lubinsky, D. S.},
			title={Marcinkiewicz-Zygmund inequalities: methods and results},
			conference={
				title={Recent progress in inequalities},
				address={Niv{s}},
				date={1996},
			},
			book={
				series={Math. Appl.},
				volume={430},
				publisher={Kluwer Acad. Publ., Dordrecht},
			},
			date={1998},
			pages={213--240},
		}

		\bib{Lu2}{article}{
			author={Lubinsky, D. S.},
			title={On Marcinkiewicz-Zygmund inequalities at Jacobi zeros and their
				Bessel function cousins},
			conference={
				title={Complex analysis and dynamical systems VII},
			},
			book={
				series={Contemp. Math.},
				volume={699},
				publisher={Amer. Math. Soc., Providence, RI},
			},
			date={2017},
			pages={223--245},
			
		}

		\bib{Lu3}{article}{
			author={Lubinsky, D. S.},
			title={On sharp constants in Marcinkiewicz-Zygmund and Plancherel-Polya
				inequalities},
			journal={Proc. Amer. Math. Soc.},
			volume={142},
			date={2014},
			number={10},
			pages={3575--3584},
			issn={0002-9939},
			
		}

		\bib{MT2}{article}{
			author={Mastroianni, G.},
			author={Totik, V.},
			title={Weighted polynomial inequalities with doubling and $A_\infty$
				weights},
			journal={Constr. Approx.},
			volume={16},
			date={2000},
			number={1},
			pages={37--71},
		}

		\bib{MK}{article}{
			author={De Marchi, S.},
			author={Kro\'{o}, A.},
			title={Marcinkiewicz-Zygmund type results in multivariate domains},
			journal={Acta Math. Hungar.},
			volume={154},
			date={2018},
			number={1},
			pages={69--89},
		}

		\bib{NPW}{article}{
			author={Narcowich, F. J.},
			author={Petrushev, P.},
			author={Ward, J. D.},
			title={Localized tight frames on spheres},
			journal={SIAM J. Math. Anal.},
			volume={38},
			date={2006},
			number={2},
			pages={574--594},
			
		}

	\bib{Ne}{article}{
   author={Netrusov, Yu. V.},
title={Structural description of functions defined in a plane convex
	domain that have a given order of approximation by algebraic polynomials},
language={Russian, with English and Russian summaries},
journal={Zap. Nauchn. Sem. S.-Peterburg. Otdel. Mat. Inst. Steklov.
	(POMI)},
volume={215},
date={1994},
number={Differentsial\cprime naya Geom. Gruppy Li i Mekh. 14},
pages={217--225, 313--314},
issn={0373-2703},
translation={
	journal={J. Math. Sci. (New York)},
	volume={85},
	date={1997},
	number={1},
	pages={1698--1703},
},
}

		\bib{Ni}{article}{ 	author={Nikol’skii, S.M.},  title={On the best approximation by polynomials of functions which
				satisfy Lipschitz condition}, journal={ Izv. Akad. Nauk SSSR}, 		volume={10},
			date={1946},
			pages={295-318},	}
		
		\bib{Sen}{article}{
			author={Sendov, Bl.},
			title={The constants of H. Whitney are bounded},
			journal={C. R. Acad. Bulgare Sci.},
			volume={38},
			date={1985},
			number={10},
			pages={1299--1302},
		}
		
		\bib{To14}{article}{
			author={Totik, V.},
			title={Polynomial approximation on polytopes},
			journal={Mem. Amer. Math. Soc.},
			volume={232},
			date={2014},
			number={1091},
			pages={vi+112},
			issn={0065-9266},
			isbn={978-1-4704-1666-9},
		}
		
		\bib{To17}{article}{
			author={Totik, V.},
			title={Polynomial approximation in several variables},
			journal={preprint},
		}

\bib{Wa}{article}{
	author={Walther, G.},
	title={On a generalization of Blaschke's rolling theorem and the
		smoothing of surfaces},
	journal={Math. Methods Appl. Sci.},
	volume={22},
	date={1999},
	number={4},
	pages={301--316},
}

	\end{biblist}
\end{bibsection}


\end{document}